\providecommand{\printnomenclature}{\printglossary}
\providecommand{\makenomenclature}{\makeglossary}
\let\pr@chap=\pr@cha
 \theoremstyle{definition}
 \newtheorem*{defn*}{\protect\definitionname}
\theoremstyle{plain}
\newtheorem{thm}{\protect\theoremname}[section]
  \theoremstyle{remark}
  \newtheorem{rem}[thm]{\protect\remarkname}
  \theoremstyle{plain}
  \newtheorem{lem}[thm]{\protect\lemmaname}
  \theoremstyle{plain}
  \newtheorem{prop}[thm]{\protect\propositionname}
  \theoremstyle{plain}
  \newtheorem{cor}[thm]{\protect\corollaryname}
  \theoremstyle{definition}
  \newtheorem{condition}[thm]{\protect\conditionname}
  \theoremstyle{definition}
  \newtheorem{example}[thm]{\protect\examplename}
\newcommand*{\trace}{\operatorname{trace}}
\newcommand*{\dive}{\operatorname{div}}
\newcommand*{\curl}{\operatorname{curl}}
\newcommand*{\Grad}{\operatorname{Grad}}
\newcommand*{\Div}{\operatorname{Div}}
\newcommand*{\grad}{\operatorname{grad}}
\newcommand*{\diam}{\operatorname{diam}}
\newcommand*{\lin}{\operatorname{lin}}
\newcommand*{\e}{\operatorname{e}}
\renewcommand*{\i}{\operatorname{i}}
\newcommand{\m}{\operatorname{m}}
\newcommand{\spt}[0]{\operatorname{spt}}
\DeclareMathAccent{\Circ}{\mathalpha}{operators}{"17}
\newcommand{\interior}[1]{\Circ{#1}}
\newcommand{\dd}{ \mathrm{d}}
\renewcommand{\Re}{\operatorname{Re}}
\renewcommand{\Im}{\operatorname{Im}}
\renewcommand{\hat}{\widehat}
\renewcommand{\tilde}{\widetilde}
\renewcommand*{\epsilon}{\varepsilon}
\renewcommand*{\rho}{\varrho}
\newcommand{\logeq}{\mathrel{\vcentcolon\Leftrightarrow}}
\renewcommand{\nomgroup}[1]{%
\ifthenelse{\equal{#1}{A}}{\item[\textbf{{\large Sets}}]}{%
\ifthenelse{\equal{#1}{B}}{\item[\textbf{{\large Spaces}}]}{%
\ifthenelse{\equal{#1}{O}}{\item[\textbf{{\large Operators}}]}{%
\ifthenelse{\equal{#1}{Z}}{\item[\textbf{{\large Further Symbols}}]}{%
}
}% matches mathematical symbols
}% matches Subscripts
}% matches Abbreviations
}% matches Greek Symbols
  \providecommand{\conditionname}{Condition}
  \providecommand{\corollaryname}{Corollary}
  \providecommand{\definitionname}{Definition}
  \providecommand{\examplename}{Example}
  \providecommand{\lemmaname}{Lemma}
  \providecommand{\propositionname}{Proposition}
  \providecommand{\remarkname}{Remark}
\providecommand{\theoremname}{Theorem}
\begin{document}
\begin{center}
\textbf{\LARGE{}Exponential Stability and Initial Value Problems}\\
\textbf{\LARGE{}for Evolutionary Equations}
\par\end{center}{\LARGE \par}

\vspace{3cm}

 \begin{center}
{\large{}Sascha Trostorff}\end{center}\thispagestyle{empty}

\newpage{}

$\:$

\thispagestyle{empty}\newpage{}\tableofcontents{}\newpage{}

$\:$

\thispagestyle{empty}

\newpage{}

\markright{Nomenclature}

\printnomenclature[4cm]{}\newpage{}

$\:$

\thispagestyle{empty}

\newpage{}

\markright{Introduction}

\chapter*{Introduction}

\addcontentsline{toc}{chapter}{Introduction}

In this thesis we deal with so-called \emph{linear evolutionary equations,
}a class of partial differential equations which was introduced by
Picard in 2009, \cite{Picard}. These equations can be written as
\[
\partial_{t}w+Au=f,
\]
where $A$ is a densely defined closed linear operator on a Hilbert
space $H$, serving as the state space, and $\partial_{t}$ denotes
the time derivative. The right-hand side $f$ is given and $w$ and
$u$ are to be determined. Of course, in order to obtain a solvable
problem, we need to link the unknowns $w$ and $u.$ This is done
by a so-called \emph{material law $\mathcal{M}$, }a suitable linear
operator acting in space-time. The relation is then given as 
\[
w=\mathcal{M}u
\]
and thus, eliminating $w$ from our abstract differential equation
via the material relation, we arrive at a problem of the form 
\begin{equation}
\left(\partial_{t}\mathcal{M}+A\right)u=f,\label{eq:evo}
\end{equation}
where now $u$ is the unknown of interest. Clearly, $w$ can be re-constructed
from $u$ via the material relation.

Well-posedness in the sense of Hadamard of problems of the form \prettyref{eq:evo}
is our first topic of interest. This encompasses existence and uniqueness
of a solution $u$ and its continuous dependence on the data $f$.
For attack this question, we need to first define, what we mean by
a solution of \prettyref{eq:evo}. In the case $\mathcal{M}=1$, the
well established theory semigroups could be applied. There it is common
to write the problem as an ordinary differential equation $\partial_{t}u=-Au+f$
in an infinite dimensional state space and to define so-called \emph{mild
solutions} $u.$ These are continuous functions satisfying the equation
in an integrated sense. The existence and uniqueness of mild solutions
is then equivalent to the existence of a $C_{0}$-semigroup generated
by \textbf{$-A$} (for the theory of $C_{0}$-semigroups we refer
to the monographs \cite{Engel1998,ABHN_2011,Pazy1983}).\textbf{ }

Here, however, we follow a different approach with the aim to obtain
well-posedness of equations of the general form \prettyref{eq:evo}.
The main idea is to establish the operator sum $\partial_{t}\mathcal{M}+A$
as a suitable operator acting in space-time. Then, the uniqueness
and existence of solutions is equivalent to the bijectivity of this
operator and to obtain continuous dependence of $u$ on the data requires
for the continuity of the inverse operator $(\partial_{t}\mathcal{M}+A)^{-1}$.
In order to establish the operator sum properly, we need to define
the underlying function space and the operators $\partial_{t}$ and
$\mathcal{M}.$ As the underlying space we will use the Hilbert space
$H_{\rho}(\mathbb{R};H)$ consisting of (equivalence classes of) measurable
functions $f:\mathbb{R}\to H$, which are square integrable with respect
to the exponentially weighted Lebesgue measure $\e^{-2\rho t}\mbox{ d}t$
with $\rho\in\mathbb{R}$, equipped with its canonical inner product.
Then, the temporal derivative $\partial_{t}$ can be defined as the
closure of the derivative of test functions $C_{c}^{\infty}(\mathbb{R};H)$.
The so obtained operator, which will be denoted by $\partial_{0,\rho}$,
turns out to be a normal operator which is continuously invertible
if and only if $\rho\neq0.$ The idea to define the derivative in
that way goes back to \cite{picard1989hilbert}. As a normal operator,
$\partial_{0,\rho}$ has a spectral representation, which is given
by the so-called Fourier-Laplace transformation. Using this spectral
representation, we will define $\mathcal{M}$ to be an operator-valued
function of $\partial_{0,\rho}$ denoted by $M(\partial_{0,\rho}).$
Hence, \prettyref{eq:evo} takes the form 
\[
\left(\partial_{0,\rho}M(\partial_{0,\rho})+A\right)u=f
\]
on $H_{\rho}(\mathbb{R};H)$ and well-posedness of this problem can
be reformulated as the continuous invertibility of $\partial_{0,\rho}M(\partial_{0,\rho})+A$
on $H_{\rho}(\mathbb{R};H).$ We emphasize that the so obtained solutions
are not continuous in general and the so introduced notion of solutions
is weaker than the notion of mild solutions in the theory of $C_{0}$-semigroups.

The above mentioned perspective of looking at differential equations
as operator equations involving a sum of two unbounded operators,
is by no means new and was indeed successfully employed earlier and
also in the more general setting of Banach space theory. In particular,
when dealing for example with the issue of so-called maximal regularity
of evolution equations, this perspective appears quite natural. We
refer here to the seminal paper of da Prato and Grisvard \cite{daPrato1975},
where the closedness, closability and continuous invertibility of
operator sums are studied in a general Banach space setting. Since
we are, however, restricting ourselves to the Hilbert space setting
and since the operators involved have the above mentioned special
structure, these properties can be obtained easily and we do not need
to employ the deep and intricate results of \cite{daPrato1975}. A
further reference is \cite{Brezis1976}, where the authors derive
abstract results for the range of the sum of two binary relations
and apply them to derive existence results for differential equations
and inclusions. In \cite{Favini1999} an abstract operator equation
of the form $(TM-L)u=f$ is considered, and conditions on the operators
involved are derived to obtain closability and bounded invertibility
of this operator sum. In particular, setting $T=\partial_{0,\rho},M=M(\partial_{0,\rho})$
and $L=-A$, we end up with an evolutionary equation. However, in
general these operators do not satisfy the constraints imposed on
the operators $T,M$ and $L$ in \cite{Favini1999}.

Beyond the well-posedness in the sense of Hadamard, causality, as
a distinguishing property of evolutionary problems, needs to be addressed.
By causality we mean, roughly speaking, that the solution vanishes
as long as the source term does. More precisely, if $f$ is zero on
$]-\infty,a]$ for some $a\in\mathbb{R},$ then also $u$ vanishes
on the same interval $]-\infty,a].$ This can be seen as a characteristic
property of time evolution. The requirement of causality imposes a
strong constraint on admissible material laws, namely that the operator-valued
function describing the material properties needs to be analytic.
This is due to a theorem by Paley and Wiener, which characterizes
those $L_{2}(\mathbb{R})$-functions which are supported on the positive
real axis by properties of their Laplace transform (see \cite{Paley_Wiener}
and \prettyref{thm:Paley-Wiener} of this thesis). For a discussion
of causality and the concept of causal differential equations we refer
to the monograph \cite{lakshmikantham2010theory}. 

As it turns out, the framework of evolutionary equations covers a
broad class of linear time-shift invariant (i.e. autonomous) partial
differential equations. For instance, most (if not all) of the linear
differential equations of classical mathematical physics can be written
as evolutionary equations in our sense. In particular, coupling phenomena
can be conveniently incorporated and analyzed (see e.g. \cite{Mulholland2016,Mukhopadhyay2014_thermoelast,Mukhopadhyay2015_2temp,Picard2010_poroelastic,Picard2015_micropoloar}).
Moreover, other types of differential equations can be written as
evolutionary problems such as integro-differential equations (see
\cite{Trostorff2012_integro}), fractional differential equations
(see \cite{Picard2013_fractional}) or delay differential equations
(see \cite{Kalauch2011}). Also problems with transmission conditions
and complicated boundary conditions, such as impedance boundary conditions,
can be treated within this framework (see \cite{Picard2016_graddiv,Trostorff2013_bd_maxmon}).
We also mention that certain generalizations of evolutionary problems
were also already investigated. For instance, replacing the material
law operator $M(\partial_{0,\rho})$ by more general operators, which
fail to commute with the derivative $\partial_{0,\rho}$, allows for
the treatment of certain non-autonomous problems (see \cite{Picard2013_nonauto,Waurick2015_nonauto}).
Another direction of generalization is to incorporate nonlinear terms
by replacing the linear operator $A$ by a nonlinear operator, or
even more general, by a binary relation. The problem class obtained
is referred to as evolutionary inclusions (see \cite{Trostorff2012_NA,Trostorff2012_nonlin_bd,Trostorff2013_nonautoincl}).
A further direction in current research employing the framework of
evolutionary problems is the homogenization of partial differential
equations, or, to put it in another perspective, the continuous dependence
of the solution $u$ on the coefficient operators involved (see \cite{Waurick2012,Waurick2013,Waurick2013_fractional,Wauricl2014,Waurick2016}
and in particular the habilitation thesis \cite{Waurick_habil}).

Having secured well-posedness of an evolutionary problem, it is natural
to investigate qualitative properties of its solution $u$. As a first
property of interest of such a solution $u$, we mention its long-time
asymptotics. More precisely, we ask whether the solution decays to
zero with an exponential rate, provided that the given right-hand
side $f$ does. This property is known as \emph{exponential stability}
and was studied in \cite{Trostorff2013_stability,Trostorff2014_PAMM,Trostorff2015_secondorder}.
The first difficulty is that the solutions of evolutionary problems
are not continuous, so a classical point-wise estimate cannot be used.
The main idea to overcome this is to relax the notion of exponential
stability by the constraint that the solution should be square integrable
with respect to $\e^{2\mu t}\mbox{ d}t$ for some $\mu>0,$ that is
$u\in H_{-\mu}(\mathbb{R};H).$ Hence, the question of exponential
stability reduces to the question whether the solution operator $(\partial_{0,\rho}M(\partial_{0,\rho})+A)^{-1}$
, which is independent of the parameter $\rho$ for large enough $\rho,$
can be extended to $H_{-\mu}(\mathbb{R};H)$ for some positive $\mu$.
Similar ideas were already used in the study of exponential stability
for evolution equations via semigroups, where the exponential stability
can be defined in the point-wise sense. As it turns out, at least
in the Hilbert space setting, the extension of the solution operator
indeed yields the exponential decay of solutions in the point-wise
sense by the famous Gearhart-Prüß Theorem (see \cite{Pruss1984,Gearhart_1978,Herbst1983}). 

It should be mentioned that in the framework of semigroups a number
of other stability results are well-known. There is for example the
celebrated Arendt-Batty-Lyubich-V\~{u} Theorem \cite{Arendt1988,Lyubich1988},
giving a criterion for the strong convergence of a $C_{0}$-semigroup
to $0$, a recent result by Borichev and Tomilov \cite{Borichev2010},
providing a characterization of polynomial decay of solutions of evolution
equations in Hilbert spaces, or the Theorem of Batty-Duyckaerts \cite{Batty2008}
for further decay estimates of $C_{0}$-semigroups, which are obtained
by Laplace transform techniques. 

We recall that evolutionary equations, i.e. equations of the form
\prettyref{eq:evo}, are operator equations on $H_{\rho}(\mathbb{R};H).$
Hence, the functions involved are supported on the whole real line
in general. However, in the classical theory for evolution equations
it is common to consider functions supported on the positive real
line and to add an initial condition for the unknown $u$. Such initial
value problems can also be incorporated into the framework of evolutionary
problems. First, we note that due to the causality of the solution
operator, right-hand sides $f$ supported on $\mathbb{R}_{\geq0}$
also yield solutions supported on $\mathbb{R}_{\geq0}.$ So, the question
is how to incorporate an initial condition for $u$. Following the
ideas developed in \cite{Picard_McGhee}, this can be done by adding
suitable distributional source terms on the right-hand side of \prettyref{eq:evo}.
More precisely, an initial condition turns out to be encoded as an
impulse at time zero by a Dirac-delta distribution at the initial
time zero. Indeed, employing a Sobolev embedding theorem and the causality
of the solution operator, one can show that for this modified right-hand
side the solution $u$ indeed attains the initial value. However,
in order to do so, one needs to extend the solution operator to distributional
right-hand sides. This can be done easily, using the extrapolation
spaces for the time derivative operator $\partial_{0,\rho}$. For
the theory of extrapolation spaces and their application to partial
differential equations we refer to \cite{Picard_McGhee,Picard2000}
and to \cite{Nagel1997} with a focus on $C_{0}$-semigroups. 

Since evolutionary equations also cover purely algebraic equations
(choose for a simple example $\mathcal{M}=\partial_{0,\rho}^{-1}$)
or, more generally, differential-algebraic equations, an initial condition
for the unknown $u$ does not make sense for every choice of $\mathcal{M}.$
Thus, one has to address the question what meaningful initial values
are in presence of more complicated material laws. The discussion
of admissible spaces of initial conditions is well-known in the theory
of differential-algebraic equations, see e.g. \cite{Mehrmann2006,Reis2007}.
Moreover, since also certain classes of delay equations are covered
by our approach, where it seems to be more natural to prescribe whole
histories instead of initial values at zero, the question arises how
those given histories could be incorporated as data for evolutionary
equations.

Once initial value problems for evolutionary equations are settled,
one can go further and ask for more regular solutions, or, more specifically,
when and where solutions are continuous in time. This will be done
by associating a $C_{0}$-semigroup to the given initial value problem,
which of course requires some additional constraints on the operators
involved. Having the $C_{0}$-semigroup at hand, it is then possible
to compare the exponential stability results for evolutionary equations
with the classical exponential stability for the associated semigroup
and to provide a generalization of the Gearhart-Prüß Theorem. 

This thesis has three chapters and an appendix. In the first chapter
we introduce the framework of evolutionary equations and study their
well-posedness. This includes the classical Hadamard requirements
of uniqueness, existence and continuous dependence of solutions as
well as the causality of the solution operator. In Section 1.3 we
discuss several examples of partial differential equations and show
that these problems can be written as evolutionary equations and their
well-posedness can be shown using the results provided in Chapter
1. These examples cover classical linear partial differential equations
from mathematical physics as well as classes of delay equations and
integro-differential equations. The second chapter is devoted to the
exponential stability of evolutionary problems. There, we introduce
the notion of exponential stability and derive criteria for the operators
involved to obtain exponentially stable evolutionary problems. In
particular, we are focusing on second-order problems, where we provide
a way to reformulate the second order problem as a suitable first
order problem, for which the exponential stability can be shown. Again,
at the end of Chapter 2, we discuss several examples of different
classes of partial differential equations, whose asymptotics are studied
with the help of the abstract results of the previous sections. Finally,
in Chapter 3 we address the problem of admissible initial values and
histories for a given evolutionary problem and discuss how these given
data could be incorporated into the setting of evolutionary problems.
For doing so, we introduce the extrapolation space $H_{\rho}^{-1}(\mathbb{R};H)$
and generalized cut-off operators on that space. These operators will
then be used to formulate initial value problems and problems with
prescribed histories in Section 3.2. Section 3.3 is devoted to the
regularity of pure initial value problems and provides a Hille-Yosida
type result for the existence of an associated $C_{0}$-semigroup
on a suitable Hilbert space. Moreover, we present a generalization
of the Gearhart-Prüß Theorem to a class of evolutionary problems.
In the last section in Chapter 3 we discuss several examples to illustrate
the results. In the appendix we recall some well-known theorems, which
will be needed in particular in Chapter 3. 

We assume that the reader is familiar with basic functional analysis,
in particular with Hilbert space theory. For these topics we refer
to the monographs \cite{Yosida,Weidmann,Phillips1959,Rudin_fa}. Throughout,
all Hilbert spaces are assumed to be complex and the inner product
$\langle\cdot|\cdot\rangle$ is assumed to be linear in the second
and conjugate linear in the first argument.

\subsection*{Acknowledgement}

First of all, I like to express my gratitude to my PhD-supervisor,
co-author and friend Prof. Dr. Rainer Picard. His clear and mind-opening
perspective on partial differential equations and mathematics in general
build the foundation of this thesis. Second, I would like to thank
my friend Dr. Marcus Waurick for carefully reading this manuscript
and for uncountable hours (at least, I can not count them) of scientific
discussions and pure nonsense. It is a pleasure and a great luck to
have colleagues and friends like them. Furthermore, I thank Prof.
Dr. Ralph Chill, my mentor for this thesis, for several valuable and
helpful comments, remarks and fruitful discussions.

Moreover, I am grateful to Prof. Dr. Jürgen Voigt, who was my first
analysis teacher and my diploma supervisor. Almost everything I know
in analysis, I have learned from him. I thank Prof. Dr. Stefan Siegmund
for supporting my academic career in many ways and the members of
the Institute for Analysis of the TU Dresden for providing such a
beautiful and familiar working atmosphere: Dr. Anke Kalauch, Dr. Eva
Fašangová, Dr. Norbert Koksch, Jeremias Epperlein, Helena Malinowski,
Sebastian Mildner, Reinhard Stahn, Lars Perlich and Markus Hartlapp.
Many thanks go to Christine Heinke, Monika Gaede-Samat and Karola
Schreiter for their support and in particular for keeping away from
me as much bureaucracy as possible. 

Finally, I like to thank my parents and my whole family for their
life-long support and last but not least my (almost) wife Anja and
our daughter Nele for enriching every aspect of my life in so many
ways.

\vspace{3cm}

\begin{center}
- Warning: May contain nuts%
\footnote{This warning is stated for the case, that this thesis will be distributed
in Ankh-Morpork, see \cite{Pratchett2002}.%
} -
\par\end{center}

\newpage{}

$\,$\thispagestyle{empty}

\chapter{The framework of evolutionary problems}

In this chapter we introduce the framework of evolutionary equations.
The class was introduced by R. Picard in \cite{Picard} and it turned
out that it covers a broad class of different types of differential
equations. The key feature of the framework presented here is the
realization of the time-derivative as an accretive operator on a suitable
Hilbert space, which can be seen as an $L_{2}$-analogue of the continuous
functions equipped with the Morgenstern-norm for the treatment of
ordinary differential equations (see \cite{Morgenstern}). The idea
of introducing the time derivative in an exponentially weighted $L_{2}$-space
originates from \cite{picard1989hilbert}, where certain integral
transformations on Hilbert spaces were considered as functions of
suitable differential operators. We begin by introducing this Hilbert
space and the time derivative established on it. After that, we define
what we mean by an evolutionary problem and address the question of
well-posedness and causality for this class of problems. In the last
section of this chapter we present some examples.

\section{The Hilbert space setting and the time derivative}

Throughout this section let $H$ be a Hilbert space. 
\begin{defn*}
Let $\rho\in\mathbb{R}.$ We define 
\[
\mathcal{L}_{2,\rho}(\mathbb{R};H)\coloneqq\left\{ f:\mathbb{R}\to H\,;\, f\mbox{ measurable, }\intop_{\mathbb{R}}|f(t)|_{H}^{2}\e^{-2\rho t}\mbox{ d}t<\infty\right\} .
\]
Moreover, we introduce the equivalence relation $\cong$ on $\mathcal{L}_{2,\rho}(\mathbb{R};H)$
by $f\cong g$ if $f(t)=g(t)$ for a.e. $t\in\mathbb{R}$ with respect
to the Lebesgue measure on $\mathbb{R}.$ As usual we define 
\[
H_{\rho}(\mathbb{R};H)\coloneqq\faktor{\mathcal{L}_{2,\rho}(\mathbb{R};H)}{\cong}
\]
which is a Hilbert space with respect to the inner product 
\[
\langle f|g\rangle_{\rho}\coloneqq\intop_{\mathbb{R}}\langle f(t)|g(t)\rangle_{H}\e^{-2\rho t}\mbox{ d}t\quad(f,g\in H_{\rho}(\mathbb{R};H)).
\]
\end{defn*}
\begin{rem}
$\,$

\begin{enumerate}[(a)] \item If we choose $\rho=0$ in the latter
definition, $H_{0}(\mathbb{R};H)$ coincides with the usual Bochner
space $L_{2}(\mathbb{R};H)$. 

\item For $\rho\in\mathbb{R}$ the operator 
\begin{align*}
\e^{-\rho m}:H_{\rho}(\mathbb{R};H) & \to L_{2}(\mathbb{R};H)\\
f & \mapsto\left(t\mapsto\e^{-\rho t}f(t)\right)
\end{align*}
is obviously unitary. In particular, since $\e^{-\rho m}$ is a bijection
on $C_{c}^{\infty}(\mathbb{R};H)$\nomenclature[B_010]{$C(\mathbb{R};H)$}{the space of continuous functions $f:\mathbb{R}\to H$. }\nomenclature[B_040]{$C_{c}(\mathbb{R};H)$}{the space of continuous functions $f:\mathbb{R}\to H$ having compact support. }\nomenclature[B_020]{$C^k(\mathbb{R};H)$}{the space of $k$ times continuously differentiable functions $f:\mathbb{R}\to H$. }\nomenclature[B_030]{$C^{\infty}(\mathbb{R};H)$}{$\bigcap_{k\in \mathbb{N}} C^k(\mathbb{R};H)$.}\nomenclature[B_050]{$C^\infty_c(\mathbb{R};H)$}{$C^\infty(\mathbb{R};H)\cap C_c(\mathbb{R};H)$.},
we derive that $C_{c}^{\infty}(\mathbb{R};H)$ is dense in $H_{\rho}(\mathbb{R};H).$ 

\end{enumerate}
\end{rem}
In fact, we can also show a slightly stronger result than the density
of $C_{c}^{\infty}(\mathbb{R};H)$ in $H_{\rho}(\mathbb{R};H)$.
\begin{lem}
\label{lem:density test fct}Let $\rho,\mu\in\mathbb{R}$ and $f\in H_{\rho}(\mathbb{R};H)\cap H_{\mu}(\mathbb{R};H).$
Then there exists a sequence $(\varphi_{n})_{n\in\mathbb{N}}$ in
$C_{c}^{\infty}(\mathbb{R};H)$ such that $\varphi_{n}\to f$ in $H_{\rho}(\mathbb{R};H)$
and $H_{\mu}(\mathbb{R};H).$ \end{lem}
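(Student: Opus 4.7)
The plan is to proceed by a standard truncate-then-mollify argument, with care taken so that each step works simultaneously in both weighted norms.

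First I would truncate. For each $n \in \mathbb{N}$, set $f_n \coloneqq \mathbf{1}_{[-n,n]} f$. Then $f_n \to f$ pointwise a.e., and $|f_n - f|_H^2 \e^{-2\rho t} \leq |f|_H^2 \e^{-2\rho t}$, which is integrable by the hypothesis $f \in H_\rho(\mathbb{R};H)$; the analogous bound holds with $\mu$ in place of $\rho$. Hence by dominated convergence $f_n \to f$ in both $H_\rho(\mathbb{R};H)$ and $H_\mu(\mathbb{R};H)$.

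Second I would mollify each $f_n$. Since $f_n$ is supported in the bounded interval $[-n,n]$, the weights $\e^{-2\rho t}$ and $\e^{-2\mu t}$ are bounded above and below there by positive constants depending only on $n$, $\rho$, $\mu$. Consequently $f_n \in L_2(\mathbb{R};H)$, and the $L_2$-norm restricted to any fixed compact set is equivalent to both the $H_\rho$- and $H_\mu$-norms restricted to that set. Choosing a standard mollifier $(\eta_k)_k \subset C_c^\infty(\mathbb{R})$ with $\supp \eta_k \subset [-1,1]$, the convolutions $\varphi_{n,k} \coloneqq \eta_k * f_n$ lie in $C_c^\infty(\mathbb{R};H)$ with support in $[-n-1, n+1]$, and $\varphi_{n,k} \to f_n$ in $L_2(\mathbb{R};H)$ as $k \to \infty$. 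Because all these functions live in the common compact support $[-n-1,n+1]$, the norm equivalence above yields $\varphi_{n,k} \to f_n$ in both $H_\rho(\mathbb{R};H)$ and $H_\mu(\mathbb{R};H)$.

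Third I would apply a diagonal selection. For each $n$, choose $k(n)$ large enough that
\[
\|\varphi_{n,k(n)} - f_n\|_\rho \leq \tfrac{1}{n}, \qquad \|\varphi_{n,k(n)} - f_n\|_\mu \leq \tfrac{1}{n}.
\]
Setting $\varphi_n \coloneqq \varphi_{n,k(n)}$ and combining with the first step via the triangle inequality gives $\varphi_n \to f$ in both $H_\rho(\mathbb{R};H)$ and $H_\mu(\mathbb{R};H)$.

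There is no real obstacle here; the only subtle point is to avoid using a single convolution on $f$ itself (where the two weights behave very differently at $\pm\infty$) by first cutting off to a compact set on which both weights are comparable to Lebesgue measure. Once the truncation step is in place, the mollification and diagonal extraction are routine.
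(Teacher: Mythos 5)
Your proof is correct and follows essentially the same route as the paper: truncate to $[-n,n]$ and use dominated convergence in both norms, then approximate each truncation by a test function with controlled support, exploiting that on a fixed compact set the two exponential weights are comparable (the paper invokes density of $C_{c}^{\infty}$ in $H_{\mu}$ with a quantitative accuracy $\frac{1}{2n}\e^{(\rho-\mu)n}$ and transfers the estimate to the $\rho$-norm, whereas you mollify explicitly and use norm equivalence on $[-n-1,n+1]$ — the same idea). No gaps.
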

\begin{proof}
Without loss of generality let $\rho<\mu$. For $n\in\mathbb{N}$
we define $f_{n}(t)\coloneqq\chi_{[-n,n]}(t)f(t)$ for $t\in\mathbb{R}$
and get $f_{n}\to f$ in $H_{\rho}(\mathbb{R};H)$ and $H_{\mu}(\mathbb{R};H)$
as $n\to\infty$ by dominated convergence. For $n\in\mathbb{N}$ we
find $\varphi_{n}\in C_{c}^{\infty}(\mathbb{R};H)$ with $\spt\varphi_{n}\subseteq[-n,n]$
such that $|\varphi_{n}-f_{n}|_{\mu}<\frac{1}{2n}\e^{(\rho-\mu)n}.$
Hence, 
\[
|\varphi_{n}-f_{n}|_{\rho}=\left(\,\intop_{-n}^{n}|\varphi_{n}(t)-f_{n}(t)|_{H}^{2}\e^{-2\rho t}\mbox{ d}t\right)^{\frac{1}{2}}\leq|\varphi_{n}-f_{n}|_{\mu}\e^{(\mu-\rho)n}<\frac{1}{2n}
\]
and thus, we derive 
\begin{align*}
|\varphi_{n}-f|_{\mu}<\frac{1}{2n}\e^{(\rho-\mu)n}+|f_{n}-f|_{\mu} & \to0,\\
|\varphi_{n}-f|_{\rho}<\frac{1}{2n}+|f_{n}-f|_{\rho} & \to0
\end{align*}
as $n\to\infty.$
\end{proof}
We now define the time derivative as an operator acting on $H_{\rho}(\mathbb{R};H).$
\begin{lem}
Let $\rho\in\mathbb{R}.$ Then the operator 
\begin{align*}
\partial_{0,\rho,c}:C_{c}^{\infty}(\mathbb{R};H)\subseteq H_{\rho}(\mathbb{R};H) & \to H_{\rho}(\mathbb{R};H)\\
\varphi & \mapsto\varphi'
\end{align*}
is closable.\end{lem}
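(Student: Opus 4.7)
The plan is to prove closability by the standard criterion: an operator on a Hilbert space is closable if and only if its adjoint is densely defined. So I will show that $C_{c}^{\infty}(\mathbb{R};H)\subseteq\dom(\partial_{0,\rho,c}^{\ast})$, which suffices by the density statement recorded in the preceding remark.

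The computation rests on integration by parts tailored to the weight $\e^{-2\rho t}$. For arbitrary $\varphi,\psi\in C_{c}^{\infty}(\mathbb{R};H)$, the product rule gives
\[
\frac{\mathrm{d}}{\mathrm{d}t}\bigl(\langle\varphi(t)|\psi(t)\rangle_{H}\e^{-2\rho t}\bigr)=\langle\varphi'(t)|\psi(t)\rangle_{H}\e^{-2\rho t}+\langle\varphi(t)|\psi'(t)-2\rho\psi(t)\rangle_{H}\e^{-2\rho t},
\]
using linearity of $\langle\cdot|\cdot\rangle_{H}$ in the second argument. Integrating over $\mathbb{R}$, the left-hand side vanishes due to compact support, and I obtain
\[
\langle\partial_{0,\rho,c}\varphi|\psi\rangle_{\rho}=\langle\varphi|(-\partial_{0,\rho,c}+2\rho)\psi\rangle_{\rho}.
\]
Since the map $\psi\mapsto(-\partial_{0,\rho,c}+2\rho)\psi$ is an everywhere-defined bounded functional against $\partial_{0,\rho,c}\varphi$ when viewed as a function of $\varphi$, this identity shows precisely that every $\psi\in C_{c}^{\infty}(\mathbb{R};H)$ lies in $\dom(\partial_{0,\rho,c}^{\ast})$ with
\[
\partial_{0,\rho,c}^{\ast}\psi=(-\partial_{0,\rho,c}+2\rho)\psi.
\]

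Because $C_{c}^{\infty}(\mathbb{R};H)$ is dense in $H_{\rho}(\mathbb{R};H)$, the adjoint $\partial_{0,\rho,c}^{\ast}$ is densely defined, and hence $\partial_{0,\rho,c}$ is closable, as claimed. There is really no serious obstacle here; the only point one needs to watch is bookkeeping the sesquilinearity of $\langle\cdot|\cdot\rangle_{H}$ and the $2\rho$-shift term produced by differentiating the weight $\e^{-2\rho t}$, which is the reason the adjoint turns out to be a shift of $-\partial_{0,\rho,c}$ rather than simply $-\partial_{0,\rho,c}$.
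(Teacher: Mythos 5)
Your proof is correct and follows essentially the same route as the paper: the same weighted integration by parts yields $\langle\partial_{0,\rho,c}\varphi|\psi\rangle_{\rho}=\langle\varphi|-\partial_{0,\rho,c}\psi+2\rho\psi\rangle_{\rho}$, showing $C_{c}^{\infty}(\mathbb{R};H)\subseteq D(\partial_{0,\rho,c}^{\ast})$, and closability follows from the dense definedness of the adjoint (the paper phrases this as the inclusion $\partial_{0,\rho,c}\subseteq-\partial_{0,\rho,c}^{\ast}+2\rho$, which is the same fact).
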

\begin{proof}
Let $\varphi,\psi\in C_{c}^{\infty}(\mathbb{R};H)$. Then we compute,
by using integration by parts, 
\begin{align*}
\langle\partial_{0,\rho,c}\varphi|\psi\rangle_{\rho} & =\intop_{\mathbb{R}}\langle\varphi'(t)|\psi(t)\rangle_{H}\e^{-2\rho t}\mbox{ d}t\\
 & =-\intop_{\mathbb{R}}\langle\varphi(t)|\psi'(t)\rangle_{H}\e^{-2\rho t}\mbox{ d}t+2\rho\intop_{\mathbb{R}}\langle\varphi(t)|\psi(t)\rangle_{H}\e^{-2\rho t}\mbox{ d}t\\
 & =\langle\varphi|-\partial_{0,\rho,c}\psi+2\rho\psi\rangle_{\rho},
\end{align*}
which shows $\psi\in D(\partial_{0,\rho,c}^{\ast})$ with $\partial_{0,\rho,c}^{\ast}\psi=-\partial_{0,\rho,c}\psi+2\rho\psi.$
Hence, $\partial_{0,\rho,c}\subseteq-\partial_{0,\rho,c}^{\ast}+2\rho,$
which in particular yields the closability of $\partial_{0,\rho,c}.$ \end{proof}
\begin{defn*}
Let $\rho\in\mathbb{R}.$ We define $\partial_{0,\rho}\coloneqq\overline{\partial_{0,\rho,c}}$
and $H_{\rho}^{1}(\mathbb{R};H)\coloneqq D(\partial_{0,\rho}).$ We
equip $H_{\rho}^{1}(\mathbb{R};H)$ with the inner product 
\[
\langle f|g\rangle_{\rho,1}\coloneqq\langle f|g\rangle_{\rho}+\langle\partial_{0,\rho}f|\partial_{0,\rho}g\rangle_{\rho}\quad(f,g\in H_{\rho}^{1}(\mathbb{R};H)).
\]
Note that $H_{\rho}^{1}(\mathbb{R};H)$ is a Hilbert space.
\end{defn*}
Our next goal is to show that $\partial_{0,\rho}$ is normal. For
doing so, we show that the Fourier-Laplace transformation $\mathcal{L}_{\rho}:H_{\rho}(\mathbb{R};H)\to L_{2}(\mathbb{R};H)$
defined as the unitary extension of 
\[
\left(\mathcal{L}_{\rho}\varphi\right)(t)\coloneqq\frac{1}{\sqrt{2\pi}}\intop_{\mathbb{R}}\e^{-\left(\i t+\rho\right)s}\varphi(s)\mbox{ d}s\quad(\varphi\in C_{c}^{\infty}(\mathbb{R};H),\, t\in\mathbb{R})
\]
establishes a spectral representation for $\partial_{0,\rho}.$ For
a deeper study of the Fourier-Laplace transformation we refer to Appendix
A.
\begin{prop}
\label{prop:spectral_repr_partial_0}Let $\rho\in\mathbb{R}.$ Then
we have 
\[
\partial_{0,\rho}=\mathcal{L}_{\rho}^{\ast}\left(\i\mathrm{m}+\rho\right)\mathcal{L}_{\rho},
\]
where $\m:D(\m)\subseteq L_{2}(\mathbb{R};H)\to L_{2}(\mathbb{R};H)$
is defined as multiplication with the argument with maximal domain,
i.e. $\left(\m f\right)(t)\coloneqq tf(t)$ for $t\in\mathbb{R}$
and $f\in D(\m)$ with 
\[
D(\m)\coloneqq\left\{ g\in L_{2}(\mathbb{R};H)\,;\,(t\mapsto tg(t))\in L_{2}(\mathbb{R};H)\right\} .
\]
\end{prop}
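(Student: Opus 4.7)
The plan is to prove the two inclusions of the identity $\partial_{0,\rho}=T$, where $T:=\mathcal{L}_{\rho}^{\ast}(\i\m+\rho)\mathcal{L}_{\rho}$. The inclusion $\partial_{0,\rho}\subseteq T$ will follow from a direct calculation on test functions. For $\varphi\in C_{c}^{\infty}(\mathbb{R};H)$, integration by parts in the defining integral of $\mathcal{L}_{\rho}\varphi$ yields $(\mathcal{L}_{\rho}\varphi')(t)=(\i t+\rho)(\mathcal{L}_{\rho}\varphi)(t)$, which shows $\varphi\in D(T)$ and $\partial_{0,\rho,c}\varphi=T\varphi$, so $\partial_{0,\rho,c}\subseteq T$. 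Since $T$ is unitarily equivalent to the closed multiplication operator $\i\m+\rho$, it is itself closed, and taking closures gives $\partial_{0,\rho}=\overline{\partial_{0,\rho,c}}\subseteq T$.

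For the reverse inclusion I would use a bijectivity argument. Fix $\lambda\in\mathbb{C}$ with $\Re\lambda>\rho$. Since $\sigma(\i\m+\rho)=\rho+\i\mathbb{R}$, $\lambda$ lies in the resolvent set of $T$, and $T-\lambda$ is boundedly invertible on $H_{\rho}(\mathbb{R};H)$. As $\partial_{0,\rho}-\lambda\subseteq T-\lambda$ with $T-\lambda$ injective, it is enough to show that $\partial_{0,\rho}-\lambda$ is surjective; this forces $\partial_{0,\rho}-\lambda=T-\lambda$ and hence $\partial_{0,\rho}=T$. Surjectivity I would establish via the explicit formula
\[
u(t):=-\intop_{t}^{\infty}\e^{\lambda(t-s)}f(s)\,\dd s
\]
for $f\in H_{\rho}(\mathbb{R};H)$: conjugation by $\e^{-\rho m}$ turns $u$ into the convolution of $\e^{-\rho\cdot}f\in L_{2}(\mathbb{R};H)$ with an $L_{1}$-kernel of norm $(\Re\lambda-\rho)^{-1}$, so Young's inequality yields $u\in H_{\rho}(\mathbb{R};H)$, and a direct verification via pairing with test functions shows that $u'=\lambda u+f$ holds in the distributional sense, in particular $u'\in H_{\rho}(\mathbb{R};H)$.

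The main obstacle, which I expect to be the most technical step, is to promote this to membership $u\in D(\partial_{0,\rho})$, i.e.\ to approximate $u$ in the graph norm of $\partial_{0,\rho,c}$ by test functions. I would achieve this by truncating $u$ with a smooth cutoff $\eta_{n}\in C_{c}^{\infty}(\mathbb{R})$ supported in $[-n-1,n+1]$, equal to $1$ on $[-n,n]$, with $|\eta_{n}'|\leq 2$, and then mollifying. The product rule $(\eta_{n}u)'=\eta_{n}'u+\eta_{n}u'$ combined with $u,u'\in H_{\rho}(\mathbb{R};H)$ and the localization of $\eta_{n}'$ on the shells $[-n-1,-n]\cup[n,n+1]$ forces $(\eta_{n}u,(\eta_{n}u)')\to(u,u')$ in $H_{\rho}(\mathbb{R};H)^{2}$ by dominated convergence; a standard mollification step then produces $C_{c}^{\infty}$-approximants in graph norm, and the closedness of $\partial_{0,\rho}$ gives $u\in D(\partial_{0,\rho})$ with $(\partial_{0,\rho}-\lambda)u=f$, completing the proof.
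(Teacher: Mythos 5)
Your argument is correct, but it follows a genuinely different route from the paper. The paper proves the identity by a common-core argument: it first shows that $\mathcal{S}_{\rho}(\mathbb{R};H)=\{f\,;\,\e^{-\rho m}f\in\mathcal{S}(\mathbb{R};H)\}$ is a core for $\partial_{0,\rho}$, then that $C_{c}^{\infty}(\mathbb{R};H)$ (hence $\mathcal{S}(\mathbb{R};H)$) is a core for $\i\m+\rho$, so that $\mathcal{S}_{\rho}(\mathbb{R};H)$ is a core for $\mathcal{L}_{\rho}^{\ast}(\i\m+\rho)\mathcal{L}_{\rho}$ as well; since the two closed operators agree on this common core, they coincide. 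You instead prove the inclusion $\partial_{0,\rho}\subseteq\mathcal{L}_{\rho}^{\ast}(\i\m+\rho)\mathcal{L}_{\rho}$ directly on test functions and obtain the reverse inclusion from a resolvent argument: $T-\lambda$ is injective for $\Re\lambda>\rho$, so it suffices to show $\partial_{0,\rho}-\lambda$ is onto, which you do with the explicit kernel $-\intop_{t}^{\infty}\e^{\lambda(t-s)}f(s)\,\dd s$. The price you pay is the cutoff-and-mollification step needed to show that a function $u\in H_{\rho}(\mathbb{R};H)$ with distributional derivative in $H_{\rho}(\mathbb{R};H)$ lies in the \emph{minimal} domain $D(\overline{\partial_{0,\rho,c}})$ — exactly the technical issue the paper's core argument is designed to avoid. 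What you gain is an explicit resolvent formula for $(\partial_{0,\rho}-\lambda)^{-1}$, which in the limit $\lambda\to0$ (for $\rho<0$) is essentially the formula the paper establishes separately in Proposition \ref{prop:derivative_invertible}; note that the paper proves that formula by yet another route, namely by verifying membership in $D(\partial_{0,\rho}^{\ast})$ via the adjoint relation. Both proofs are sound; yours is more self-contained but longer, the paper's is shorter but leans on the Paley--Wiener-free fact that $\mathcal{L}_{\rho}$ maps $\mathcal{S}_{\rho}(\mathbb{R};H)$ bijectively onto $\mathcal{S}(\mathbb{R};H)$.
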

\begin{proof}
The proof will be done in three steps.\\
Firstly, we show that $\mathcal{S}_{\rho}(\mathbb{R};H)\coloneqq\left\{ f:\mathbb{R}\to H\,;\,\e^{-\rho m}f\in\mathcal{S}(\mathbb{R};H)\right\} $
is a core for $\partial_{0,\rho}$ and $\partial_{0,\rho}f=f'$ for
$f\in\mathcal{S}_{\rho}(\mathbb{R};H).$ Here, $\mathcal{S}(\mathbb{R};H)$\nomenclature[B_060]{$\mathcal{S}(\mathbb{R};H)$}{the Schwartz-space of rapidly decreasing functions $f:\mathbb{R}\to H$.}
denotes the Schwartz-space of rapidly decreasing smooth functions
with values in $H$. By definition of $\partial_{0,\rho}$, the space
$C_{c}^{\infty}(\mathbb{R};H)\subseteq\mathcal{S}_{\rho}(\mathbb{R};H)$
is a core for $\partial_{0,\rho}.$ Thus, it suffices to prove that
$\mathcal{S}_{\rho}(\mathbb{R};H)\subseteq D(\partial_{0,\rho}).$
For doing so, let $f\in\mathcal{S}_{\rho}(\mathbb{R};H)$. Moreover,
let $\psi\in C_{c}^{\infty}(\mathbb{R})$ such that $\psi=1$ on $[-1,1]$
and set $\psi_{n}(t)\coloneqq\psi(n^{-1}t)$ for $n\in\mathbb{N},t\in\mathbb{R}.$
Then, for each $n\in\mathbb{N}$ we have $\psi_{n}f\in C_{c}^{\infty}(\mathbb{R};H)$
and $\psi_{n}f\to f$ in $H_{\rho}(\mathbb{R};H)$ as $n\to\infty$
by dominated convergence. Moreover, $\partial_{0,\rho}(\psi_{n}f)=\psi_{n}'f+\psi_{n}f'\to f'$
as $n\to\infty$ in $H_{\rho}(\mathbb{R};H),$ again by dominated
convergence. This shows the claim.\\
Secondly, we show that $C_{c}^{\infty}(\mathbb{R};H)$ is a core for
$\m,$ and thus, also for $\i\m+\rho.$ Let $f\in D(\m)$ and set
$f_{n}(t)\coloneqq\chi_{[-n,n]}(t)f(t)$ for $t\in\mathbb{R},n\in\mathbb{N}.$
Then, clearly $f_{n}\to f$ and $\m f_{n}\to\m f$ in $L_{2}(\mathbb{R};H)$
as $n\to\infty$ by dominated convergence. Hence, for $\varepsilon>0$
we find $n\in\mathbb{N}$ such that 
\[
|f-f_{n}|_{L_{2}},|\m f-\m f_{n}|_{L_{2}}<\varepsilon.
\]
Choose now $\varphi\in C_{c}^{\infty}(\mathbb{R};H)$ with $\spt\varphi\subseteq\left[-n-1,n+1\right]$
\nomenclature[Z_060]{$\spt f$}{the support of a function $f$.}and
$|\varphi-f_{n}|_{L_{2}}<\frac{\varepsilon}{n+1}.$ Then 
\[
|\m f_{n}-\m\varphi|_{L_{2}}\leq(n+1)|f_{n}-\varphi|_{L_{2}}<\varepsilon
\]
and hence, 
\[
|f-\varphi|_{L_{2}},|\m f-\m\varphi|_{L_{2}}<2\varepsilon,
\]
which shows the assertion.\\
Finally, we now show the asserted statement. Let $f\in\mathcal{S}_{\rho}(\mathbb{R};H).$
Then $\mathcal{L}_{\rho}f\in\mathcal{S}(\mathbb{R};H)\subseteq D(\i\m+\rho)$
(cp. \prettyref{prop:Fourier_Schwartz}) and we compute, using the
first step 
\begin{align*}
\left(\mathcal{L}_{\rho}\partial_{0,\rho}f\right)(t) & =\frac{1}{\sqrt{2\pi}}\intop_{\mathbb{R}}\e^{-(\i t+\rho)s}f'(s)\mbox{ d}s\\
 & =(\i t+\rho)\frac{1}{\sqrt{2\pi}}\intop_{\mathbb{R}}\e^{-(\i t+\rho)s}f(s)\mbox{ d}s\\
 & =\left((\i\m+\rho)\mathcal{L}_{\rho}f\right)(t)\quad(t\in\mathbb{R}).
\end{align*}
Since $C_{c}^{\infty}(\mathbb{R};H)$ is a core for $\i\m+\rho$ according
to the second step, so is $\mathcal{S}(\mathbb{R};H)$ and hence,
$\mathcal{S}_{\rho}(\mathbb{R};H)$ is a core for $\mathcal{L}_{\rho}^{\ast}(\i\m+\rho)\mathcal{L}_{\rho},$
since $\mathcal{L}_{\rho}$ is a bijection from $\mathcal{S}_{\rho}(\mathbb{R};H)$
to $\mathcal{S}(\mathbb{R};H)$ by \prettyref{prop:Fourier_Schwartz}.
Since $\partial_{0,\rho}$ and $\mathcal{L}_{\rho}^{\ast}(\i\m+\rho)\mathcal{L}_{\rho}$
coincide on $\mathcal{S}_{\rho}(\mathbb{R};H),$ which is a core for
both operators, we derive the assertion.
\end{proof}
As an immediate consequence, we obtain the normality of $\partial_{0,\rho}.$
\begin{prop}
Let $\rho\in\mathbb{R}.$ Then $\partial_{0,\rho}$ is a normal operator
with $\partial_{0,\rho}^{\ast}=-\partial_{0,\rho}+2\rho$, $\Re\partial_{0,\rho}=\rho$
and $\Im\partial_{0,\rho}=\i(\rho-\partial_{0,\rho}).$ Moreover,
$\sigma(\partial_{0,\rho})=C\sigma(\partial_{0,\rho})=\mathbb{C}_{\Re=\rho}$.\nomenclature[O_150]{$\sigma(T)$}{the spectrum of an operator $T$.}\nomenclature[O_160]{$C\sigma(T)$}{the continuous spectrum of an operator $T$.}\nomenclature[A_020]{$\mathbb{C}_{\Re \gtreqqless \alpha}$}{the set of all complex numbers with real part greater than, equal to or less than some real number $\alpha$.}\nomenclature[O_140]{$\rho(T)$}{the resolvent set of an operator $T$.}\end{prop}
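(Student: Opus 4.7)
The plan is to push everything through the unitary conjugation established in \prettyref{prop:spectral_repr_partial_0}. Since $\partial_{0,\rho}=\mathcal{L}_{\rho}^{\ast}(\i\m+\rho)\mathcal{L}_{\rho}$ with $\mathcal{L}_{\rho}$ unitary, every operator-theoretic property of $\partial_{0,\rho}$ (normality, adjoint, spectrum, point/continuous spectrum) can be read off from the corresponding property of the multiplication operator $\i\m+\rho$ on $L_{2}(\mathbb{R};H)$. The multiplication operator is the image of the measurable function $t\mapsto\i t+\rho$ under the functional calculus of the self-adjoint operator $\m$, hence it is normal with adjoint $-\i\m+\rho$. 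Transporting through $\mathcal{L}_{\rho}$ gives that $\partial_{0,\rho}$ is normal and
\[
\partial_{0,\rho}^{\ast}=\mathcal{L}_{\rho}^{\ast}(-\i\m+\rho)\mathcal{L}_{\rho}=-\mathcal{L}_{\rho}^{\ast}(\i\m+\rho)\mathcal{L}_{\rho}+2\rho=-\partial_{0,\rho}+2\rho.
\]

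From this the real and imaginary parts are obtained by a direct calculation: $\Re\partial_{0,\rho}=\tfrac{1}{2}(\partial_{0,\rho}+\partial_{0,\rho}^{\ast})=\tfrac{1}{2}(\partial_{0,\rho}+(-\partial_{0,\rho}+2\rho))=\rho$ and $\Im\partial_{0,\rho}=\tfrac{1}{2\i}(\partial_{0,\rho}-\partial_{0,\rho}^{\ast})=\tfrac{1}{\i}(\partial_{0,\rho}-\rho)=\i(\rho-\partial_{0,\rho})$, matching the claimed formulas.

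For the spectrum, I will use that unitary equivalence preserves the spectrum together with its classification into point, continuous and residual parts, so it suffices to analyse $\i\m+\rho$. First, for $\lambda\in\mathbb{C}_{\Re=\rho}$, say $\lambda=\rho+\i\tau$, one constructs a Weyl sequence by taking normalised indicator-type functions $f_{n}\in L_{2}(\mathbb{R};H)$ concentrated near $t=\tau$; then $(\i\m+\rho-\lambda)f_{n}\to 0$, so $\lambda\in\sigma(\i\m+\rho)$. Conversely, for $\lambda\notin\mathbb{C}_{\Re=\rho}$, the function $t\mapsto(\i t+\rho-\lambda)^{-1}$ is essentially bounded, hence multiplication by it is a bounded inverse of $\i\m+\rho-\lambda$, showing $\lambda\in\rho(\i\m+\rho)$. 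Thus $\sigma(\partial_{0,\rho})=\sigma(\i\m+\rho)=\mathbb{C}_{\Re=\rho}$.

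Finally, to identify $\sigma(\partial_{0,\rho})=C\sigma(\partial_{0,\rho})$, I note that for a normal operator the residual spectrum is empty, so $\sigma$ is the disjoint union of point spectrum and continuous spectrum. It therefore only remains to show that $\i\m+\rho$ has no eigenvalues: if $(\i\m+\rho)f=\lambda f$ for some $f\in D(\m)$ and $\lambda\in\mathbb{C}$, then $(\i t+\rho-\lambda)f(t)=0$ for a.e. $t\in\mathbb{R}$, and since $\i t+\rho-\lambda$ vanishes at most at a single point, $f=0$ in $L_{2}(\mathbb{R};H)$. Consequently $P\sigma(\partial_{0,\rho})=\emptyset$ and the whole spectrum is continuous. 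The only mildly technical point is the Weyl-sequence construction and the emptiness of the residual spectrum for normal operators, both of which are standard.
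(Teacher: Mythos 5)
Your proposal is correct and follows exactly the paper's route: the paper's proof is the one-line reduction to the multiplication operator $\i\m+\rho$ via the unitary equivalence of \prettyref{prop:spectral_repr_partial_0}, and you simply supply the standard details (adjoint and normality of the multiplication operator, Weyl sequences, absence of eigenvalues, empty residual spectrum for normal operators) that the paper leaves implicit. All of those details check out.
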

\begin{proof}
The assertion follows immediately from the respective results for
the multiplication operator $\i\m+\rho$ on $L_{2}(\mathbb{R};H)$
and \prettyref{prop:spectral_repr_partial_0}.
\end{proof}
The latter proposition especially yields that $\partial_{0,\rho}$
is continuously invertible if and only if $\rho\ne0.$ In fact, the
formula for $\partial_{0,\rho}^{-1}$ differs for $\rho>0$ or $\rho<0$
as the next proposition shows.
\begin{prop}
\label{prop:derivative_invertible}Let $\rho\ne0.$ Then $\partial_{0,\rho}$
is continuously invertible with $\|\partial_{0,\rho}^{-1}\|=\frac{1}{|\rho|}$
and for $f\in H_{\rho}(\mathbb{R};H),t\in\mathbb{R}$ we have that
\[
\left(\partial_{0,\rho}^{-1}f\right)(t)=\begin{cases}
\intop_{-\infty}^{t}f(s)\,\dd s, & \mbox{ if }\rho>0,\\
-\intop_{t}^{\infty}f(s)\,\dd s, & \mbox{ if }\rho<0.
\end{cases}
\]
\end{prop}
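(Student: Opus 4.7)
The plan is to first extract invertibility and the norm from the spectral representation of \prettyref{prop:spectral_repr_partial_0}, and then verify the explicit integral formula on a dense set.

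By \prettyref{prop:spectral_repr_partial_0} we have $\partial_{0,\rho}=\mathcal{L}_{\rho}^{\ast}(\i\m+\rho)\mathcal{L}_{\rho}$ with $\mathcal{L}_{\rho}$ unitary. Hence $\partial_{0,\rho}$ is continuously invertible iff the multiplication operator $\i\m+\rho$ is, which happens iff $0\notin\{\i t+\rho\,;\,t\in\mathbb{R}\}$, i.e.\ iff $\rho\neq 0$. In that case $(\i\m+\rho)^{-1}$ is multiplication by the bounded function $t\mapsto(\i t+\rho)^{-1}$, whose essential supremum is $1/|\rho|$ (attained at $t=0$). This gives $\|\partial_{0,\rho}^{-1}\|=1/|\rho|$ at once.

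For the explicit formula (I treat $\rho>0$; the case $\rho<0$ is symmetric with $-\int_{t}^{\infty}$ in place of $\int_{-\infty}^{t}$), I would verify the formula on $C_{c}^{\infty}(\mathbb{R};H)$ and then extend by density. Given $\varphi\in C_{c}^{\infty}(\mathbb{R};H)$, set $g(t)\coloneqq\int_{-\infty}^{t}\varphi(s)\,\dd s$. Then $g$ is smooth, vanishes for $t$ sufficiently small, and is eventually equal to the constant $\int_{\mathbb{R}}\varphi$; in particular $g\in H_{\rho}(\mathbb{R};H)$ (using $\rho>0$ so that constants on a half-line are $\rho$-weighted square integrable). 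To see $g\in D(\partial_{0,\rho})$ with $\partial_{0,\rho}g=\varphi$, I approximate $g$ by $\psi_{n}g\in C_{c}^{\infty}(\mathbb{R};H)$ with a smooth cut-off $\psi_{n}(t)=\psi(t/n)$, $\psi\equiv 1$ on $[-1,1]$. Dominated convergence gives $\psi_{n}g\to g$ and $\psi_{n}g'\to g'=\varphi$ in $H_{\rho}$, while the cross term $\psi_{n}'g$ is supported in $\{|t|\in[n,n+1]\}$ and, thanks to the exponential weight $\e^{-2\rho t}$ with $\rho>0$, satisfies $\|\psi_{n}'g\|_{\rho}\to 0$. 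This shows $T_{\rho}\varphi=\partial_{0,\rho}^{-1}\varphi$ on $C_{c}^{\infty}(\mathbb{R};H)$, where $T_{\rho}$ denotes the integral operator of the statement.

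Finally, for arbitrary $f\in H_{\rho}(\mathbb{R};H)$, pick $\varphi_{n}\in C_{c}^{\infty}(\mathbb{R};H)$ with $\varphi_{n}\to f$ in $H_{\rho}$. By the first step $\partial_{0,\rho}^{-1}\varphi_{n}\to\partial_{0,\rho}^{-1}f$ in $H_{\rho}$, while for each fixed $t\in\mathbb{R}$ Cauchy--Schwarz with the weight $\e^{-2\rho s}$ yields $|T_{\rho}\varphi_{n}(t)-T_{\rho}f(t)|_{H}\leq(2\rho)^{-1/2}\e^{\rho t}\|\varphi_{n}-f\|_{\rho}\to 0$, so the integrals $T_{\rho}\varphi_{n}(t)\to T_{\rho}f(t)$ pointwise. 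Identifying the two limits almost everywhere gives the claimed formula. The main delicacy is the cut-off argument that places $g$ in $D(\partial_{0,\rho})$: it is precisely there that the sign of $\rho$ enters, forcing the different formula (and support behaviour) in the cases $\rho>0$ and $\rho<0$.
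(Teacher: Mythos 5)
Your proof is correct. The first half (bounded invertibility and $\|\partial_{0,\rho}^{-1}\|=1/|\rho|$ via the spectral representation) coincides with the paper's argument. For the integral formula you diverge in one place: to show that $g=\int_{-\infty}^{\cdot}\varphi$ lies in $D(\partial_{0,\rho})$ with $\partial_{0,\rho}g=\varphi$, you approximate $g$ directly by the cut-offs $\psi_{n}g\in C_{c}^{\infty}(\mathbb{R};H)$ and control the error term $\psi_{n}'g$ using the exponential weight — essentially recycling the core argument from the first step of \prettyref{prop:spectral_repr_partial_0} — whereas the paper first checks $\mu\coloneqq g\in H_{\rho}(\mathbb{R};H)$ by a Cauchy--Schwarz/Fubini computation and then shows $\mu\in D(\partial_{0,\rho}^{\ast})=D(\partial_{0,\rho})$ by integrating by parts against test functions and invoking the normality relation $\partial_{0,\rho}^{\ast}=-\partial_{0,\rho}+2\rho$. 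Both routes are legitimate; the adjoint route avoids the cut-off bookkeeping but leans on the already-established normality of $\partial_{0,\rho}$, while yours is more self-contained at the level of the definition of $\partial_{0,\rho}$ as a closure. You also make explicit the passage from test functions to general $f\in H_{\rho}(\mathbb{R};H)$ (convergence of $\partial_{0,\rho}^{-1}\varphi_{n}$ in $H_{\rho}$ versus pointwise convergence of the integrals, then identification of the limits a.e.), a step the paper leaves implicit; this is a genuine improvement in completeness, and your Cauchy--Schwarz bound also confirms that the integral $\int_{-\infty}^{t}f(s)\,\mathrm{d}s$ is well defined for arbitrary $f\in H_{\rho}(\mathbb{R};H)$ when $\rho>0$. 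The only cosmetic blemish is the claim that $\operatorname{spt}\psi_{n}'\subseteq\{|t|\in[n,n+1]\}$, which depends on the choice of $\psi$ (for $\psi_{n}=\psi(\cdot/n)$ the support scales like $\{n\leq|t|\leq Cn\}$); the estimate $\|\psi_{n}'g\|_{\rho}\to0$ goes through unchanged.
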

\begin{proof}
The bounded invertibility and the equality $\|\partial_{0,\rho}^{-1}\|=\frac{1}{|\rho|}$
follow from \prettyref{prop:spectral_repr_partial_0}. Let now $\varphi\in C_{c}^{\infty}(\mathbb{R};H)$.
If $\rho>0$ we get that $\mu\coloneqq\left(t\mapsto\intop_{-\infty}^{t}\varphi(s)\mbox{ d}s\right)\in H_{\rho}(\mathbb{R};H).$
Indeed, 
\begin{align*}
\intop_{\mathbb{R}}\left|\mu(t)\right|_{H}^{2}\e^{-2\rho t}\mbox{ d}t & =\intop_{\mathbb{R}}\left|\:\intop_{-\infty}^{t}\varphi(s)\e^{-\frac{\rho}{2}s}\e^{\frac{\rho}{2}s}\mbox{ d}s\right|_{H}^{2}\e^{-2\rho t}\mbox{ d}t\\
 & \leq\frac{1}{\rho}\intop_{\mathbb{R}}\intop_{-\infty}^{t}|\varphi(s)|_{H}^{2}\e^{-\rho s}\mbox{ d}s\,\e^{-\rho t}\mbox{ d}t\\
 & =\frac{1}{\rho}\intop_{\mathbb{R}}|\varphi(s)|_{H}^{2}\e^{-\rho s}\intop_{s}^{\infty}\e^{-\rho t}\mbox{ d}t\mbox{ d}s\\
 & =\frac{1}{\rho^{2}}\intop_{\mathbb{R}}|\varphi(s)|_{H}^{2}\e^{-2\rho s}\mbox{ d}s\\
 & =\frac{1}{\rho^{2}}|\varphi|_{\rho}^{2}.
\end{align*}
Moreover, for each $\psi\in C_{c}^{\infty}(\mathbb{R};H)$ we have
\begin{align*}
\langle\partial_{0,\rho}\psi|\mu\rangle_{\rho} & =\intop_{\mathbb{R}}\langle\psi'(t)|\mu(t)\rangle_{H}\e^{-2\rho t}\mbox{ d}t\\
 & =-\intop_{\mathbb{R}}\langle\psi(t)|\varphi(t)-2\rho\mu(t)\rangle_{H}\e^{-2\rho t}\mbox{ d}t\\
 & =\langle\psi|-\varphi+2\rho\mu\rangle_{\rho}
\end{align*}
and thus $\mu\in D(\partial_{0,\rho}^{\ast})=D(\partial_{0,\rho})$
with $-\varphi+2\rho\mu=\partial_{0,\rho}^{\ast}\mu=-\partial_{0,\rho}\mu+2\rho\mu$
which gives $\partial_{0,\rho}\mu=\varphi,$ i.e. $\mu=\partial_{0,\rho}^{-1}\varphi.$
The statement for $\rho<0$ follows by arguing analogously and thus,
we omit it. \end{proof}
\begin{rem}
As $\partial_{0,\rho}$ is continuously invertible if $\rho\ne0,$
we may also equip $H_{\rho}^{1}(\mathbb{R};H)$ with the inner product
\[
\langle f|g\rangle\coloneqq\langle\partial_{0,\rho}f|\partial_{0,\rho}g\rangle_{\rho}\quad(f,g\in H_{\rho}^{1}(\mathbb{R};H)),
\]
which yields an equivalent norm on $H_{\rho}^{1}(\mathbb{R};H).$ 
\end{rem}
We conclude this section with a version of the Sobolev embedding theorem.
\begin{prop}[Sobolev embedding]
\label{prop:Sobolev} Let $\rho\in\mathbb{R}$. We define the space\nomenclature[B_080]{$C_\rho(\mathbb{R};H)$}{space of continuous functions $f:\mathbb{R}\to H$ with $\sup_{t\in \mathbb{R}} \vert f(t)\e^{-\rho t}\vert_H<\infty$ for some $\rho\in \mathbb{R}$.}
\[
C_{\rho}(\mathbb{R};H)\coloneqq\left\{ f:\mathbb{R}\to H\,;\, f\mbox{ continuous,}\,\sup_{t\in\mathbb{R}}|f(t)\e^{-\rho t}|_{H}<\infty\right\} 
\]
equipped with the norm 
\[
|f|_{\infty,\rho}\coloneqq\sup_{t\in\mathbb{R}}|f(t)\e^{-\rho t}|_{H}.
\]
Moreover, we define $C_{\rho,0}(\mathbb{R};H)\coloneqq\left\{ f\in C_{\rho}(\mathbb{R};H)\,;\, f(t)\e^{-\rho t}\to0\quad(t\to\pm\infty)\right\} $.
Then we have that $H_{\rho}^{1}(\mathbb{R};H)\hookrightarrow C_{\rho,0}(\mathbb{R};H).$
\nomenclature[Z_040]{$\hookrightarrow$}{continuous embedding.}\end{prop}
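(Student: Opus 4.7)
The plan is to reduce the weighted Sobolev embedding to the classical (unweighted) one via the unitary isomorphism $\e^{-\rho m}\colon H_\rho(\mathbb{R};H)\to L_2(\mathbb{R};H)$ used earlier in the chapter, and then transfer back. Since continuity is already a consequence of $H_\rho^1 \hookrightarrow H_\rho$, the real content is the pointwise bound and the decay at $\pm\infty$.

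I would first work on the core $C_c^\infty(\mathbb{R};H)$. For $\varphi\in C_c^\infty(\mathbb{R};H)$, set $g\coloneqq\e^{-\rho m}\varphi$, so that $g\in C_c^\infty(\mathbb{R};H)$ as well and $g'=\e^{-\rho m}(\partial_{0,\rho}\varphi-\rho\varphi)$ pointwise. The fundamental theorem of calculus applied to $t\mapsto|g(t)|_H^2$ yields
\[
|g(t)|_H^2 = 2\Re\intop_{-\infty}^{t}\langle g(s)|g'(s)\rangle_H\,\dd s,
\]
and Cauchy--Schwarz combined with Young's inequality gives
\[
|\varphi(t)\e^{-\rho t}|_H^2=|g(t)|_H^2 \leq \|g\|_{L_2}^2+\|g'\|_{L_2}^2 = |\varphi|_\rho^2+|\partial_{0,\rho}\varphi-\rho\varphi|_\rho^2\leq C_\rho\,|\varphi|_{\rho,1}^2
\]
for some constant $C_\rho>0$, uniformly in $t\in\mathbb{R}$. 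This already establishes the norm estimate $|\varphi|_{\infty,\rho}\leq\sqrt{C_\rho}\,|\varphi|_{\rho,1}$ on the core.

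Next I would use density. Since $C_c^\infty(\mathbb{R};H)$ is, by definition of $\partial_{0,\rho}$, a core for $\partial_{0,\rho}$, it is dense in $H_\rho^1(\mathbb{R};H)$. Given $f\in H_\rho^1(\mathbb{R};H)$, pick an approximating sequence $(\varphi_n)$ in $C_c^\infty(\mathbb{R};H)$. By the inequality above $(\varphi_n)$ is Cauchy in $(C_\rho(\mathbb{R};H),|\cdot|_{\infty,\rho})$, hence converges uniformly (after multiplying by $\e^{-\rho t}$) to some $\tilde f\in C_\rho(\mathbb{R};H)$. The $H_\rho$-convergence forces $f=\tilde f$ almost everywhere, so $f$ admits a continuous representative satisfying $|f|_{\infty,\rho}\leq\sqrt{C_\rho}\,|f|_{\rho,1}$.

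Finally, for the decay $f(t)\e^{-\rho t}\to 0$ as $t\to\pm\infty$: each $\varphi_n$ has compact support, so $\varphi_n\in C_{\rho,0}(\mathbb{R};H)$ trivially; and $C_{\rho,0}(\mathbb{R};H)$ is closed in $(C_\rho(\mathbb{R};H),|\cdot|_{\infty,\rho})$, so the uniform limit $f$ also lies in $C_{\rho,0}(\mathbb{R};H)$. The only technical wrinkle I would watch is keeping the two-sided decay clean (the representation integrated from $-\infty$ only controls the left tail directly); but running the identical argument with $\intop_t^\infty$ in place of $\intop_{-\infty}^t$, or equivalently invoking the closedness of $C_{\rho,0}$ as above, handles both sides with no additional effort. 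The main obstacle is really just bookkeeping of the weight; the estimate itself is the standard $H^1\hookrightarrow C_0$ argument applied to $g=\e^{-\rho m}f$.
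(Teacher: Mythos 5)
Your proof is correct, but it takes a genuinely different route from the paper's. The paper argues directly in the weighted space and is forced into three separate cases: for $\rho>0$ it writes $\varphi(t)=\intop_{-\infty}^{t}\varphi'(s)\,\dd s$ and uses Cauchy--Schwarz against the weight $\e^{-2\rho s}$ (picking up the factor $\tfrac{1}{\sqrt{2\rho}}\e^{\rho t}$), for $\rho<0$ it integrates from $t$ to $+\infty$, and for $\rho=0$ it needs an entirely separate averaging estimate over $[t-1,t]$, since the direct computation degenerates. You instead conjugate once by the unitary $\e^{-\rho\m}$ and apply the classical identity $|g(t)|_{H}^{2}=2\Re\intop_{-\infty}^{t}\langle g(s)|g'(s)\rangle_{H}\,\dd s$ followed by Young's inequality, which yields $|g(t)|_{H}^{2}\leq\|g\|_{L_{2}}^{2}+\|g'\|_{L_{2}}^{2}$ uniformly in $\rho$; translating $g'=\e^{-\rho\m}(\partial_{0,\rho}\varphi-\rho\varphi)$ back costs only a $\rho$-dependent constant. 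This buys you a single argument covering all $\rho\in\mathbb{R}$ at once, at the price of a non-explicit constant $C_{\rho}$ (the paper's case-by-case estimates give the sharp-looking $\tfrac{1}{\sqrt{2|\rho|}}$). Your density and closedness-of-$C_{\rho,0}$ steps match the paper's ``continuous extension into the Banach space $C_{\rho,0}$'' in substance. One small caveat: the opening remark that continuity of $f$ ``is already a consequence of $H_{\rho}^{1}\hookrightarrow H_{\rho}$'' is not right as stated --- mere membership in $H_{\rho}$ gives no continuous representative --- but this is harmless, since your actual argument produces the continuous representative correctly as the uniform limit of the $\varphi_{n}$.
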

\begin{proof}
Let $\varphi\in C_{c}^{\infty}(\mathbb{R};H)$. For $\rho>0$ we compute
\begin{align*}
|\varphi(t)|_{H} & =\left|\,\intop_{-\infty}^{t}\varphi'(s)\mbox{ d}s\right|_{H}\\
 & \leq\left(\,\intop_{-\infty}^{t}|\varphi'(s)|_{H}^{2}\e^{-2\rho s}\mbox{ d}s\right)^{\frac{1}{2}}\frac{1}{\sqrt{2\rho}}\e^{\rho t}\\
 & \leq|\varphi|_{\rho,1}\frac{1}{\sqrt{2\rho}}\e^{\rho t}\quad(t\in\mathbb{R}),
\end{align*}
which yields the continuity of the mapping $\mathrm{id}:C_{c}^{\infty}(\mathbb{R};H)\subseteq H_{\rho}^{1}(\mathbb{R};H)\to C_{\rho,0}(\mathbb{R};H).$
Noting that $C_{\rho,0}(\mathbb{R};H)$ is a Banach space, we obtain
the asserted embedding by continuous extension. If $\rho<0$, we compute
\begin{align*}
|\varphi(t)|_{H} & =\left|\intop_{t}^{\infty}\varphi'(s)\mbox{ d}s\right|_{H}\\
 & \leq\left(\intop_{t}^{\infty}|\varphi'(s)|_{H}^{2}\e^{-2\rho s}\mbox{ d}s\right)^{\frac{1}{2}}\frac{1}{\sqrt{2|\rho|}}\e^{\rho t}\\
 & \leq|\varphi|_{\rho,1}\frac{1}{\sqrt{2|\rho|}}\e^{\rho t}
\end{align*}
for each $t\in\mathbb{R}$ and so the assertion follows as above.
For $\rho=0$ we estimate 
\begin{align*}
|\varphi(t)|_{H} & \leq\intop_{t-1}^{t}|\varphi(t)-\varphi(s)|_{H}\mbox{ d}s+\intop_{t-1}^{t}|\varphi(s)|_{H}\mbox{ d}s\\
 & \leq\intop_{t-1}^{t}\left|\intop_{s}^{t}\varphi'(r)\mbox{ d}r\right|_{H}\mbox{ d}s+|\varphi|_{L_{2}}\\
 & \leq\intop_{t-1}^{t}\left(\intop_{s}^{t}|\varphi'(r)|_{H}^{2}\mbox{ d}r\right)^{\frac{1}{2}}\sqrt{t-s}\mbox{ d}s+|\varphi|_{L_{2}}\\
 & \leq|\varphi'|_{L_{2}}+|\varphi|_{L_{2}}\\
 & \leq\sqrt{2}|\varphi|_{0,1},
\end{align*}
which yields the assertion.\end{proof}
\begin{rem}
One can show a slightly stronger result than in \prettyref{prop:Sobolev}.
In fact, $H_{\rho}^{1}(\mathbb{R};H)$ is continuously embedded into
an (exponentially weighted) space of Hölder continuous functions (see
e.g. \cite[p. 282]{evans2010partial} for $\rho=0$, \cite[Lemma 3.1.59]{Picard_McGhee}
for $\rho>0$). 
\end{rem}

\section{Evolutionary problems\label{sec:Evolutionary-problems}}

With the Hilbert space setting developed in the previous section at
hand, we are now able to define so-called evolutionary problems. Evolutionary
problems, as they were introduced in \cite{Picard}, consist of two
equations with two unknowns $u$ and $w$ belonging to $H_{\rho}(\mathbb{R};H)$
for some $\rho\in\mathbb{R}$. First, 
\[
\partial_{0,\rho}w+Au=f,
\]
where $f\in H_{\rho}(\mathbb{R};H)$ is an arbitrary source term and
$A:D(A)\subseteq H\to H$ is a linear operator acting on $H$, which
is extended to $H_{\rho}(\mathbb{R};H)$ in the canonical way by $\left(Au\right)(t)\coloneqq Au(t)$
for $t\in\mathbb{R}$ and $u\in\left\{ g\in H_{\rho}(\mathbb{R};H)\,;\, g(t)\in D(A)\mbox{ for a.e. }t\in\mathbb{R},\,\left(t\mapsto Ag(t)\right)\in H_{\rho}(\mathbb{R};H)\right\} .$
The first equation is completed by a second one, which links $u$
and $w$:
\[
w=\mathcal{M}u,
\]
where $\mathcal{M}:D(\mathcal{M})\subseteq H_{\rho}(\mathbb{R};H)\to H_{\rho}(\mathbb{R};H)$
is a linear operator. Substituting the second equation into the first
one, we end up with an equation of the form 
\begin{equation}
\left(\partial_{0,\rho}\mathcal{M}+A\right)u=f\label{eq:evol}
\end{equation}
and we refer to \prettyref{eq:evol} as an \emph{evolutionary equation.
}We focus here on operators $\mathcal{M}=M(\partial_{0,\rho})$, defined
as operator-valued functions of the normal operator $\partial_{0,\rho}$,
which in particular implies that $\mathcal{M}$ commutes with translations
in time. Thus, we are dealing with autonomous equations. The precise
definition of $\mathcal{M}=M(\partial_{0,\rho})$ is as follows.
\begin{defn*}
Let $M:D(M)\subseteq\mathbb{C}\to L(H)$ \nomenclature[B_120]{$L(X;Y)$}{the space of bounded linear operators $T:X\to Y$ between two Banach spaces $X$ and $Y$.}\nomenclature[B_130]{$L(X)$}{the space $L(X;X)$.}analytic,
$D(M)$ open such that $\mathbb{C}_{\Re>\rho_{0}}\subseteq D(M)$
for some $\rho_{0}\in\mathbb{R}$. Then we call $M$ a \emph{linear
material law. }Moreover, we define the set 
\[
S_{M}\coloneqq\left\{ \rho\in\mathbb{R}\,;\,\i t+\rho\in D(M)\mbox{ for almost every }t\in\mathbb{R}\right\} 
\]
and for each $\rho\in S_{M}$ we define the operator $M(\i\m+\rho):D(M(\i\m+\rho))\subseteq L_{2}(\mathbb{R};H)\to L_{2}(\mathbb{R};H)$
as follows 
\[
\left(M(\i\m+\rho)f\right)(t)\coloneqq M(\i t+\rho)f(t)\quad(t\in\mathbb{R})
\]
for each $f\in D(M(\i\m+\rho))\coloneqq\{g\in L_{2}(\mathbb{R};H)\,;\,\left(t\mapsto M(\i t+\rho)g(t)\right)\in L_{2}(\mathbb{R};H)\}.$
Furthermore, we define the operator $M(\partial_{0,\rho}):D(M(\partial_{0,\rho}))\subseteq H_{\rho}(\mathbb{R};H)\to H_{\rho}(\mathbb{R};H)$
by 
\[
M(\partial_{0,\rho})\coloneqq\mathcal{L}_{\rho}^{\ast}M(\i\m+\rho)\mathcal{L}_{\rho}
\]
with its natural domain.
\end{defn*}
We state some elementary properties of linear material laws.
\begin{prop}
\label{prop:basic_prop_material_law} Let $M:D(M)\subseteq\mathbb{C}\to L(H)$
be a linear material law. Then for each $\rho\in S_{M}$ the operators
$M(\i\m+\rho)$ and $M(\partial_{0,\rho})$ are densely defined and
closed.\end{prop}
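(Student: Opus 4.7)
The plan is to reduce everything to the multiplication operator $M(\i\m+\rho)$ on $L_{2}(\mathbb{R};H)$, since $\mathcal{L}_{\rho}$ is unitary and $M(\partial_{0,\rho})=\mathcal{L}_{\rho}^{\ast}M(\i\m+\rho)\mathcal{L}_{\rho}$: dense domain and closedness transfer from $M(\i\m+\rho)$ to $M(\partial_{0,\rho})$ because unitary conjugation preserves both. So the work is to establish that $M(\i\m+\rho)$ is densely defined and closed on $L_{2}(\mathbb{R};H)$.

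For closedness, I would argue in the standard multiplier fashion: suppose $f_{n}\in D(M(\i\m+\rho))$ with $f_{n}\to f$ and $M(\i\m+\rho)f_{n}\to g$ in $L_{2}(\mathbb{R};H)$. Passing to a suitable subsequence, both convergences also hold pointwise almost everywhere. By definition of $S_{M}$, $\i t+\rho\in D(M)$ for almost every $t\in\mathbb{R}$, so at such points we may use continuity of $M(\i t+\rho)\in L(H)$ to pass to the limit pointwise, obtaining $g(t)=M(\i t+\rho)f(t)$ a.e. Since $g\in L_{2}(\mathbb{R};H)$, this shows $f\in D(M(\i\m+\rho))$ with $M(\i\m+\rho)f=g$.

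For density, I would use a double truncation. First, let $N\subseteq\mathbb{R}$ be the nullset where $\i t+\rho\notin D(M)$; on $\mathbb{R}\setminus N$ the function $t\mapsto\|M(\i t+\rho)\|$ is well-defined and measurable (indeed continuous on the open set $\{t : \i t+\rho\in D(M)\}$ by analyticity of $M$). Define
\[
E_{n}\coloneqq\{t\in\mathbb{R}\setminus N\,;\,\|M(\i t+\rho)\|\leq n\}\quad(n\in\mathbb{N}),
\]
so that $E_{n}\nearrow\mathbb{R}\setminus N$ as $n\to\infty$. For $f\in L_{2}(\mathbb{R};H)$ set $f_{n}\coloneqq\chi_{E_{n}}f$. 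Then
\[
\int_{\mathbb{R}}|M(\i t+\rho)f_{n}(t)|_{H}^{2}\,\dd t\leq n^{2}\int_{\mathbb{R}}|f_{n}(t)|_{H}^{2}\,\dd t<\infty,
\]
so $f_{n}\in D(M(\i\m+\rho))$, and $f_{n}\to f$ in $L_{2}(\mathbb{R};H)$ by dominated convergence. This gives density.

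The main obstacle is a conceptual one rather than technical: one must carefully exploit the structure of $S_{M}$ (i.e.\ that the exceptional set is null) together with the openness of $D(M)$ and the analyticity of $M$ to guarantee measurability of $t\mapsto\|M(\i t+\rho)\|$, so that the truncation by the level sets $E_{n}$ is legitimate. Once that is in hand, both density and closedness follow from routine arguments for multiplication operators.
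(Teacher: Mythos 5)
Your proposal is correct. The reduction via the unitary $\mathcal{L}_{\rho}$ and the closedness argument (pass to a pointwise a.e.\ convergent subsequence and use that each $M(\i t+\rho)$ is a bounded operator) coincide with the paper's proof. The only place you diverge is the density step: the paper observes that $D\coloneqq\{t\in\mathbb{R}\,;\,\i t+\rho\in D(M)\}$ is open and conull, so that $C_{c}^{\infty}(D;H)$ is dense in $L_{2}(\mathbb{R};H)$, and then uses that the analytic function $M$ is bounded on compact subsets of $D$ to conclude $C_{c}^{\infty}(D;H)\subseteq D(M(\i\m+\rho))$. You instead truncate by the level sets $E_{n}=\{t\,;\,\|M(\i t+\rho)\|\leq n\}$, which requires the measurability of $t\mapsto\|M(\i t+\rho)\|$; you correctly supply this via continuity of $M$ on the open set $D$. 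Both mechanisms are legitimate: the paper's version exhibits an explicit dense core of smooth compactly supported functions (which is reused elsewhere in the text), while yours shows the slightly stronger fact that every $f\in L_{2}(\mathbb{R};H)$ is the $L_{2}$-limit of truncations of itself lying in the domain. Neither version spells out the (routine) measurability of $t\mapsto M(\i t+\rho)f(t)$, so you are not below the paper's own standard of rigour there.
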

\begin{proof}
Let $\rho\in S_{M}.$ By unitary equivalence, it suffices to prove
the asserted properties for the operator $M(\i\m+\rho).$ For showing
the density of $D(M(\i\m+\rho)),$ we set 
\[
D\coloneqq\left\{ t\in\mathbb{R}\,;\,\i t+\rho\in D(M)\right\} ,
\]
which is an open subset of $\mathbb{R}$ with $\lambda(\mathbb{R}\setminus D)=0.$
The latter gives that $L_{2}(D;H)=L_{2}(\mathbb{R};H)$ (more precisely,
the mapping $L_{2}(\mathbb{R};H)\ni f\mapsto f|_{D}\in L_{2}(D;H)$
is unitary). Moreover, since $C_{c}^{\infty}(D;H)$ is dense in $L_{2}(D;H),$
we infer the density of $C_{c}^{\infty}(D;H)$ in $L_{2}(\mathbb{R};H).$
Finally, using that $M$ is bounded on compact subsets of $D$, we
have that $C_{c}^{\infty}(D;H)\subseteq D(M(\i\m+\rho)$), and so
the density of $D(M(\i\m+\rho))$ follows.\\
It is left to prove the closedness of $M(\i\m+\rho).$ For doing so,
let $(f_{n})_{n\in\mathbb{N}}$ in $D(M(\i\m+\rho))$ with $f_{n}\to f$
and $M(\i\m+\rho)f_{n}\to g$ in $L_{2}(\mathbb{R};H)$ for some $f,g\in L_{2}(\mathbb{R};H).$
Passing to a subsequence, we may assume without loss of generality
that $f_{n}(t)\to f(t)$ and $M(\i t+\rho)f_{n}(t)\to g(t)$ for almost
every $t\in\mathbb{R}.$ Since $M(\i t+\rho)$ is bounded, we infer
$M(\i t+\rho)f(t)=g(t)$ for almost every $t\in\mathbb{R},$ which
shows the assertion. 
\end{proof}
Using the notion of linear material laws, we consider evolutionary
equations of the form 
\[
\left(\partial_{0,\rho}M(\partial_{0,\rho})+A\right)u=f.
\]
We first address the question of well-posedness of such an equation,
which we define as follows:
\begin{defn*}
Let $M:D(M)\subseteq\mathbb{C}\to L(H)$ be a linear material law.
Moreover, let $A:D(A)\subseteq H\to H$ be a densely defined closed
linear operator. For $\rho\in S_{M}$ we call the problem of finding
a solution $u\in H_{\rho}(\mathbb{R};H)$ of 
\[
\left(\partial_{0,\rho}M(\partial_{0,\rho})+A\right)u=f
\]
for right-hand sides $f\in H_{\rho}(\mathbb{R};H)$ an \emph{evolutionary
problem associated with $M$ and $A$. }We call such a problem \emph{well-posed,
}if there exists some $\rho_{1}\in\mathbb{R}$ such that

\begin{enumerate}[(a)]

\item for each $z\in\mathbb{C}_{\Re>\rho_{1}}\cap D(M)$ the operator
$zM(z)+A$ is boundedly invertible, and

\item the mapping $\mathbb{C}_{\Re>\rho_{1}}\cap D(M)\ni z\mapsto(zM(z)+A)^{-1}\in L(H)$
possesses a bounded and analytic extension to $\mathbb{C}_{\Re>\rho_{1}}.$ 

\end{enumerate}

Moreover, we define the \emph{abscissa of boundedness }$s_{0}(M,A)$
of a well-posed problem by 
\[
s_{0}(M,A)\coloneqq\inf\left\{ \rho_{1}\in\mathbb{R}\,;\,\mbox{(a) and (b) are satisfied}\right\} .
\]

\end{defn*}
We now show that the well-posedness definition above indeed yields
the unique solvability of the evolutionary problem and the continuous
dependence of the solution on the given right-hand side.
\begin{prop}
Assume we have a well-posed evolutionary problem associated with $M$
and $A$. Then for each $\rho\in S_{M}$ with $\rho>s_{0}(M,A)$ \nomenclature[Z_090]{$s_0(M,A)$}{the abscissa of boundedness of a material law $M$ and an operator $A$.}
the operator $\partial_{0,\rho}M(\partial_{0,\rho})+A$ is closable
and $\overline{\partial_{0,\rho}M(\partial_{0,\rho})+A}$ is boundedly
invertible.\end{prop}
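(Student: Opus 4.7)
The plan is to build a bounded candidate inverse $S_{\rho}$ of $\overline{T}$, where $T:=\partial_{0,\rho}M(\partial_{0,\rho})+A$, by transporting the operator-valued function $z\mapsto(zM(z)+A)^{-1}$ through the spectral representation of $\partial_{0,\rho}$. Concretely, for $\rho>s_{0}(M,A)$ I let $N$ denote the bounded analytic extension of $(zM(z)+A)^{-1}$ to $\mathbb{C}_{\Re>s_{0}(M,A)}$ provided by well-posedness, and define the bounded multiplication operator $B_{\rho}$ on $L_{2}(\mathbb{R};H)$ by $(B_{\rho}g)(t):=N(\i t+\rho)g(t)$. Its operator norm is dominated by $\sup_{t\in\mathbb{R}}\|N(\i t+\rho)\|<\infty$. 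Setting $S_{\rho}:=\mathcal{L}_{\rho}^{\ast}B_{\rho}\mathcal{L}_{\rho}$ yields a bounded operator on $H_{\rho}(\mathbb{R};H)$ by unitary equivalence.

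The first goal is to verify that $S_{\rho}$ is a left inverse of $T$; combined with injectivity of $S_{\rho}$ this will give closability of $T$ together with $\overline{T}\subseteq S_{\rho}^{-1}$. Using \prettyref{prop:spectral_repr_partial_0}, the fact that $\mathcal{L}_{\rho}$ acts only in time and hence commutes with the spatial operator $A$, and the identity $N(\i t+\rho)=((\i t+\rho)M(\i t+\rho)+A)^{-1}$ valid for almost every $t$ (since $\rho\in S_{M}$), a pointwise computation in Fourier-Laplace space gives $(\mathcal{L}_{\rho}Tu)(t)=((\i t+\rho)M(\i t+\rho)+A)(\mathcal{L}_{\rho}u)(t)$ for $u\in D(T)$. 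Applying $B_{\rho}$ then yields $S_{\rho}Tu=u$. Injectivity of $S_{\rho}$ follows from the pointwise bijectivity of $N(\i t+\rho)$ on $H$, so $S_{\rho}^{-1}$ is a closed extension of $T$, proving closability and one inclusion.

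For the reverse inclusion, I would approximate a general $f\in H_{\rho}(\mathbb{R};H)$ by $f_{n}$ whose Fourier-Laplace transforms $g_{n}:=\mathcal{L}_{\rho}f_{n}$ are compactly supported in a compact subset of $\{t\in\mathbb{R}\,:\,\i t+\rho\in D(M)\}$. Since the complement of this set has Lebesgue measure zero, such $f_{n}$'s are dense. For each $f_{n}$, boundedness of $M$ and of $|\i\cdot+\rho|$ on the compact support ensures that $u_{n}:=S_{\rho}f_{n}$ lies in $D(M(\partial_{0,\rho}))$ and that $M(\partial_{0,\rho})u_{n}\in D(\partial_{0,\rho})$, while the pointwise identity $A(\mathcal{L}_{\rho}u_{n})(t)=g_{n}(t)-(\i t+\rho)M(\i t+\rho)(\mathcal{L}_{\rho}u_{n})(t)$ on the support shows $u_{n}\in D(A)$. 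Hence $u_{n}\in D(T)$ with $Tu_{n}=f_{n}$. Since $u_{n}\to S_{\rho}f$ by boundedness of $S_{\rho}$ and $Tu_{n}=f_{n}\to f$, closedness of $\overline{T}$ yields $S_{\rho}f\in D(\overline{T})$ with $\overline{T}S_{\rho}f=f$. Thus $\overline{T}=S_{\rho}^{-1}$ has bounded inverse $S_{\rho}$.

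The main obstacle lies in verifying that the approximating $u_{n}$ genuinely belong to $D(T)$, which requires three simultaneous domain memberships (in $D(M(\partial_{0,\rho}))$, in $D(\partial_{0,\rho}M(\partial_{0,\rho}))$, and in $D(A)$) and depends crucially on the compact-support construction of the $f_{n}$ together with the boundedness of $M$ on compact subsets of $D(M)$. A secondary technical point is the commutation of the closed spatial operator $A$ with the scalar integral transform $\mathcal{L}_{\rho}$, which is justified by the closedness of $A$ and the fact that $\mathcal{L}_{\rho}$ acts only on the time variable.
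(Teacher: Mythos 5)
Your proof is correct and follows essentially the same route as the paper: both pass to the Fourier-Laplace side, reduce everything to multiplication operators, and rely on a compact-frequency-support truncation (together with boundedness of $M$ on compact subsets of $D(M)$ and closedness of $A$) to settle the domain questions. The only difference is organizational: the paper identifies $\overline{\partial_{0,\rho}M(\partial_{0,\rho})+A}$ with the maximal forward multiplication operator $S_{\rho}$ and only then invokes conditions (a) and (b) to invert it, whereas you build the bounded inverse multiplication operator from the analytic extension first and verify that it inverts the closure from both sides.
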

\begin{proof}
For $\rho\in S_{M}$ we define the multiplication operator 
\begin{align*}
S_{\rho}:D(S_{\rho})\subseteq L_{2}(\mathbb{R};H) & \to L_{2}(\mathbb{R};H)\\
u & \mapsto\left(t\mapsto\left((\i t+\rho)M(\i t+\rho)+A\right)u(t)\right)
\end{align*}
with maximal domain $D(S_{\rho})$ given by 
\[
\left\{ u\in L_{2}(\mathbb{R};H)\,;\, u(t)\in D(A)\mbox{ for a.e. }t\in\mathbb{R},\:\left((\i\cdot+\rho)M(\i\cdot+\rho)+A\right)u(\cdot)\in L_{2}(\mathbb{R};H)\right\} .
\]
Then $S_{\rho}$ is closed. Indeed, let $(u_{n})_{n\in\mathbb{N}}$
in $D(S_{\rho})$ with $u_{n}\to u$ and $S_{\rho}u_{n}\to v$ in
$L_{2}(\mathbb{R};H)$ for some $u,v\in L_{2}(\mathbb{R};H).$ Passing
to a subsequence (without re-labeling), we infer that $u_{n}(t)\to u(t)$
and $\left(S_{\rho}u_{n}\right)(t)\to v(t)$ for almost every $t\in\mathbb{R}.$
Since $M(\i t+\rho)$ is bounded, we derive that 
\[
Au_{n}(t)=\left(S_{\rho}u_{n}\right)(t)-(\i t+\rho)M(\i t+\rho)u_{n}(t)\to v(t)-\left(\i t+\rho\right)M(\i t+\rho)u(t),
\]
and thus, $u(t)\in D(A)$ for almost every $t\in\mathbb{R},$ since
$A$ is closed. Moreover, $\left(\i t+\rho\right)M(\i t+\rho)u(t)+Au(t)=v(t)$
for almost every $t\in\mathbb{R},$ which shows $u\in D(S_{\rho})$
and $S_{\rho}u=v.$ \\
Since clearly 
\[
(\i\m+\rho)M(\i\m+\rho)+A\subseteq S_{\rho},
\]
we infer the closability of $(\i\m+\rho)M(\i\m+\rho)+A$ and hence,
by unitary equivalence we get the closability of $\partial_{0,\rho}M(\partial_{0,\rho})+A.$
We show $\overline{(\i\m+\rho)M(\i\m+\rho)+A}=S_{\rho}$. By what
we have shown above, we need to verify that $S_{\rho}\subseteq\overline{(\i\m+\rho)M(\i\m+\rho)+A}$.
So let $u\in D(S_{\rho}).$ Then we set $u_{n}(t)\coloneqq\chi_{[-n,n]}(t)u(t)$
for $n\in\mathbb{N},t\in\mathbb{R}.$ By dominated convergence, $u_{n}\to u$
and $S_{\rho}u_{n}\to S_{\rho}u$ in $L_{2}(\mathbb{R};H).$ Moreover,
\begin{align*}
\intop_{\mathbb{R}}|(\i t+\rho)M(\i t+\rho)u_{n}(t)|^{2}\mbox{ d}t & \leq\sup_{s\in[-n,n]}\|M(\i s+\rho)\|\intop_{-n}^{n}(t^{2}+\rho^{2})|u(t)|^{2}\mbox{ d}t\\
 & \leq\sup_{s\in[-n,n]}\|M(\i s+\rho)\|(n^{2}+\rho^{2})|u|_{L_{2}}^{2},
\end{align*}
which yields $u_{n}\in D((\i\m+\rho)M(\i\m+\rho))$ for each $n\in\mathbb{N}.$
Since also $u_{n}\in D(S_{\rho})$ it follows that $u_{n}\in D((\i\m+\rho)M(\i\m+\rho)+A)$.
Moreover, using that 
\[
\left((\i\m+\rho)M(\i\m+\rho)+A\right)u_{n}=S_{\rho}u_{n}\to S_{\rho}u,
\]
we derive $u\in D\left(\overline{(\i\m+\rho)M(\i\m+\rho)+A}\right),$
which yields the assertion.\\
Using the conditions (a) and (b) in the definition of well-posed evolutionary
problems, we derive that $S_{\rho}=\overline{(\i\m+\rho)M(\i\m+\rho)+A}$
and hence, by unitary equivalence, $\overline{\partial_{0,\rho}M(\partial_{0,\rho})+A}$
is boundedly invertible for $\rho>s_{0}(M,A).$ 
\end{proof}
Besides the unique existence of a solution and its continuous dependence
on the given data, we want to address the causality of the associated
solution operator. Since evolutionary equations are intended to model
some physical phenomenon, where the argument of the unknown $u$ will
be interpreted as time, causality will be one of its crucial properties.
In our framework we define causality as follows.
\begin{defn*}
Let $T:D(T)\subseteq H_{\rho}(\mathbb{R};H)\to H_{\rho}(\mathbb{R};H)$
\nomenclature[B_090]{$H_\rho(\mathbb{R};H)$}{the Hilbert space of all measurable functions $f:\mathbb{R}\to H$ satisfying $\int_{\mathbb{R}} \vert f(t) \vert^2 \exp(-2\rho t) \, \dd t<\infty$.}
\nomenclature[Z_000]{$\langle \cdot \vert \cdot \rangle_H$}{inner product on $H$, linear in the second and conjugate linear in the first argument.}\nomenclature[Z_010]{$\vert \cdot  \vert_X$}{norm on $X$.}\nomenclature[Z_020]{$\Vert \cdot \Vert$}{the operator norm on a space $L(X;Y)$.}be
a mapping for some $\rho\in\mathbb{R}$. We say $T$ is \emph{(forward)
causal}, if for each $u,v\in D(T)$ with $u=v$ on $]-\infty,t]$
for some $t\in\mathbb{R}$ we have that $Tu=Tv$ on $]-\infty,t]$.
\end{defn*}
For an abstract notion of causality we refer to \cite{Saeks1970}.
Moreover, we note that there is another notion of causality introduced
in \cite{Waurick2013_causality}, which coincides with the definition
above in case of closed operators and which has the benefit that the
closure of a causal operator stays causal (which is not the case for
the definition above).\\
We begin by stating some equivalent formulations of causality for
certain classes of mappings. In order to fix some notation, we make
the following definitions.
\begin{defn*}
Let $\rho\in\mathbb{R}.$ 

\begin{enumerate}[(a)]

\item For $t\in\mathbb{R}$ we define the \emph{cut-off operator}
$\chi_{\mathbb{R}_{\leq t}}(\m):H_{\rho}(\mathbb{R};H)\to H_{\rho}(\mathbb{R};H)$
by setting $\left(\chi_{\mathbb{R}_{\leq t}}(\m)f\right)(s)\coloneqq\chi_{\mathbb{R}_{\leq t}}(s)f(s)$
for $f\in H_{\rho}(\mathbb{R};H),s\in\mathbb{R}.$ 

\item  For $h\in\mathbb{R}$ let $\tau_{h}:H_{\rho}(\mathbb{R};H)\to H_{\rho}(\mathbb{R};H)$
\nomenclature[O_180]{$\tau_h$}{the translation operator given by $\tau_h f = f(\cdot +h)$.}be
the \emph{translation operator} defined by $\left(\tau_{h}f\right)(s)\coloneqq f(s+h)$
for $f\in H_{\rho}(\mathbb{R};H),s\in\mathbb{R}.$ 

\item  A mapping $T:D(T)\subseteq H_{\rho}(\mathbb{R};H)\to H_{\rho}(\mathbb{R};H)$
is called \emph{translation-invariant, }if $\tau_{h}T=T\tau_{h}$
for each $h\in\mathbb{R}.$

\end{enumerate}\end{defn*}
\begin{lem}
\label{lem:char_causality}Let $\rho\in\mathbb{R}.$

\begin{enumerate}[(a)]

\item A mapping $T:H_{\rho}(\mathbb{R};H)\to H_{\rho}(\mathbb{R};H)$
is causal, if and only if $\chi_{\mathbb{R}_{\leq t}}(\m)T\chi_{\mathbb{R}_{\leq t}}(\m)=\chi_{\mathbb{R}_{\leq t}}(\m)T$
for each $t\in\mathbb{R}.$ 

\item A linear mapping $T:D(T)\subseteq H_{\rho}(\mathbb{R};H)\to H_{\rho}(\mathbb{R};H)$
is causal, if and only if for each $u\in D(T)$ with $u=0$ on $]-\infty,t]$
for some $t\in\mathbb{R}$ it follows that $Tu=0$ on $]-\infty,t].$

\item A translation-invariant mapping $T:D(T)\subseteq H_{\rho}(\mathbb{R};H)\to H_{\rho}(\mathbb{R};H)$
is causal, if and only if for each $u,v\in D(T)$ with $u=v$ on $]-\infty,0]$
it follows that $Tu=Tv$ on $]-\infty,0]$. If in addition $D(T)=H_{\rho}(\mathbb{R};H),$
the latter is equivalent to $\chi_{\mathbb{R}_{\leq0}}(\m)T\chi_{\mathbb{R}_{\leq0}}(\m)=\chi_{\mathbb{R}_{\leq0}}(\m)T.$

\end{enumerate}\end{lem}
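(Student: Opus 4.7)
The plan is to unfold the causality definition in each part using the elementary bridge that, for $u,v\in H_{\rho}(\mathbb{R};H)$, the equality $u=v$ on $]-\infty,t]$ is by definition just $\chi_{\mathbb{R}_{\leq t}}(\m)u=\chi_{\mathbb{R}_{\leq t}}(\m)v$, and likewise for $Tu,Tv$. With this observation, each of (a), (b), (c) becomes a short set-theoretic rephrasing rather than a substantive computation.

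For (a), causality of $T$ is, via the bridge, precisely the implication $\chi_{\mathbb{R}_{\leq t}}(\m)u=\chi_{\mathbb{R}_{\leq t}}(\m)v \Rightarrow \chi_{\mathbb{R}_{\leq t}}(\m)Tu=\chi_{\mathbb{R}_{\leq t}}(\m)Tv$ for all $u,v\in H_{\rho}(\mathbb{R};H)$ and $t\in\mathbb{R}$. Specializing $v\coloneqq\chi_{\mathbb{R}_{\leq t}}(\m)u$ (which trivially satisfies $\chi_{\mathbb{R}_{\leq t}}(\m)v=\chi_{\mathbb{R}_{\leq t}}(\m)u$ because $\chi_{\mathbb{R}_{\leq t}}(\m)$ is a projection) yields the operator identity $\chi_{\mathbb{R}_{\leq t}}(\m)T\chi_{\mathbb{R}_{\leq t}}(\m)=\chi_{\mathbb{R}_{\leq t}}(\m)T$. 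Conversely, applying that identity separately to $u$ and to $v$ and using the hypothesis $\chi_{\mathbb{R}_{\leq t}}(\m)u=\chi_{\mathbb{R}_{\leq t}}(\m)v$ recovers the implication.

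For (b), the forward direction is immediate by taking $v\coloneqq 0\in D(T)$ in the definition and using $T0=0$. For the converse, let $u,v\in D(T)$ agree on $]-\infty,t]$; then $u-v\in D(T)$ vanishes on $]-\infty,t]$, so the hypothesis gives $T(u-v)=0$ on $]-\infty,t]$, i.e.\ $Tu=Tv$ there. For (c), the case $t=0$ of the defining implication is trivially contained in causality. For the converse, let $u,v\in D(T)$ agree on $]-\infty,t]$; translation invariance, read as the operator equality $\tau_{h}T=T\tau_{h}$, forces $\tau_{h}(D(T))=D(T)$, so $\tau_{t}u,\tau_{t}v\in D(T)$ and they agree on $]-\infty,0]$. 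The hypothesis yields $T\tau_{t}u=T\tau_{t}v$ on $]-\infty,0]$, and applying $\tau_{-t}$ transports this back to $Tu=Tv$ on $]-\infty,t]$. The final equivalence in the totally defined case is then just (a) at $t=0$, combined with the conjugation $\chi_{\mathbb{R}_{\leq t}}(\m)=\tau_{-t}\chi_{\mathbb{R}_{\leq 0}}(\m)\tau_{t}$ which lifts the $t=0$ identity to arbitrary $t$.

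The argument is essentially bookkeeping, so no step is genuinely hard; the only point requiring a bit of care is in (c), where translation invariance must be read as equality of operators (including domains), so that $\tau_{h}$ preserves $D(T)$ and the translation-and-translate-back argument is legitimate.
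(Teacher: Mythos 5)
Your proof is correct and follows essentially the same route as the paper's: part (a) by specializing $v=\chi_{\mathbb{R}_{\leq t}}(\m)u$, part (b) by linearity via $u-v$, and part (c) via the conjugation $\chi_{\mathbb{R}_{\leq t}}(\m)=\tau_{-t}\chi_{\mathbb{R}_{\leq0}}(\m)\tau_{t}$ together with translation invariance. Your explicit remark that $\tau_hT=T\tau_h$ as an operator identity forces $\tau_h[D(T)]=D(T)$ is a worthwhile detail the paper leaves implicit.
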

\begin{proof}
\begin{enumerate}[(a)]

\item Let $T$ be causal, $u\in H_{\rho}(\mathbb{R};H)$ and $t\in\mathbb{R}.$
Then $u=\chi_{\mathbb{R}_{\leq t}}(\m)u$ on $]-\infty,t]$ and thus,
$Tu=T\chi_{\mathbb{R}_{\leq t}}(\m)u$ on $]-\infty,t]$ which gives
$\chi_{\mathbb{R}_{\leq t}}(\m)Tu=\chi_{\mathbb{R}_{\leq t}}(\m)T\chi_{\mathbb{R}_{\leq t}}(\m)u$.
Assume now that $\chi_{\mathbb{R}_{\leq s}}(\m)T\chi_{\mathbb{R}_{\leq s}}(\m)=\chi_{\mathbb{R}_{\leq s}}(\m)T$
for each $s\in\mathbb{R}$ and take $u,v\in H_{\rho}(\mathbb{R};H)$
with $u=v$ on $]-\infty,t]$ for some $t\in\mathbb{R}.$ Then, 
\begin{align*}
\chi_{\mathbb{R}_{\leq t}}(\m)Tu & =\chi_{\mathbb{R}_{\leq t}}(\m)T\chi_{\mathbb{R}_{\leq t}}(\m)u\\
 & =\chi_{\mathbb{R}_{\leq t}}(\m)T\chi_{\mathbb{R}_{\leq t}}(\m)v\\
 & =\chi_{\mathbb{R}_{\leq t}}(\m)Tv
\end{align*}
and thus $Tu=Tv$ on $]-\infty,t].$ 

\item  This is clear, since for $u,v\in D(T)$ we have $u-v\in D(T)$
and $u=v$ and $Tu=Tv$ on $]-\infty,t]$ is equivalent to $u-v=0$
and $T(u-v)=0$ on $]-\infty,t]$, respectively. 

\item Since $\chi_{\mathbb{R}_{\leq t}}(\m)=\tau_{-t}\chi_{\mathbb{R}_{\leq0}}(\m)\tau_{t}$
for each $t\in\mathbb{R},$ the first assertion follows. In the case
$D(T)=H_{\rho}(\mathbb{R};H)$ the assertion follows from (a).\qedhere

\end{enumerate}
\end{proof}
For later purposes, we need some further properties of bounded analytic
functions on half planes.
\begin{prop}
\label{prop:trace_of_H^infty}Let $T:\mathbb{C}_{\Re>\rho_{0}}\to L(H)$
be bounded and analytic for some $\rho_{0}\in\mathbb{R}.$ Then there
exists a unique operator $T_{\rho_{0}}\in L(L_{2}(\mathbb{R};H))$
such that for $\varphi\in C_{c}^{\infty}(\mathbb{R};H)$ we have that
\[
T(\i\m+\rho)\mathcal{L}_{\rho}\varphi\to T_{\rho_{0}}\mathcal{L}_{\rho_{0}}\varphi\quad(\rho\to\rho_{0})
\]
in $L_{2}(\mathbb{R};H).$ Moreover, $\|T_{\rho_{0}}\|\leq\|T\|_{\infty}.$\end{prop}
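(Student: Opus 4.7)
My plan is to construct $T_{\rho_0}$ by exhibiting, for each $\varphi \in C_c^\infty(\mathbb{R};H)$, a single time-domain function $u$ that simultaneously represents $\mathcal{L}_\rho^* T(\i\m + \rho)\mathcal{L}_\rho\varphi$ for every $\rho > \rho_0$ and also lies in $H_{\rho_0}(\mathbb{R};H)$. I first observe that each $T(\i\m + \rho)$ is a bounded multiplication operator on $L_2(\mathbb{R};H)$ of norm at most $\|T\|_\infty$, since $\|T(z)\| \leq \|T\|_\infty$ throughout the half-plane.

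Fix $\varphi \in C_c^\infty(\mathbb{R};H)$ and set $u_\rho := \mathcal{L}_\rho^* T(\i\m + \rho)\mathcal{L}_\rho\varphi \in H_\rho(\mathbb{R};H)$. Using the explicit formulas for $\mathcal{L}_\rho$ and $\mathcal{L}_\rho^*$ together with the substitution $z = \i t + \rho$ gives the contour representation
\[
u_\rho(s) = \frac{1}{\sqrt{2\pi}\,\i}\int_{\Re z = \rho} \e^{zs}\, T(z)\, \hat{\varphi}(z)\,\dd z, \qquad \hat{\varphi}(z) := \frac{1}{\sqrt{2\pi}}\int_{\mathbb{R}} \e^{-zs}\varphi(s)\,\dd s,
\]
and $\hat{\varphi}$ is entire. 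The critical step is to apply Cauchy's theorem to the rectangle $[\rho,\rho'] + \i[-N,N]$ for $\rho_0 < \rho \leq \rho'$: the integrand is analytic on $\mathbb{C}_{\Re > \rho_0}$, and repeated integration by parts (possible because $\varphi$ is smooth and compactly supported) yields estimates $|\hat{\varphi}(\xi + \i\eta)|_H \leq C_n(1 + |\eta|)^{-n}$ for every $n \in \mathbb{N}$, uniformly in $\xi$ on any compact strip. Combined with $\|T(z)\| \leq \|T\|_\infty$ and the boundedness of $|\e^{zs}|$ on strips, this forces the two horizontal portions of the boundary integral to vanish as $N \to \infty$, yielding that $u_\rho(s)$ is independent of $\rho > \rho_0$; denote the common value by $u(s)$.

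From $|u|_\rho \leq \|T\|_\infty|\varphi|_\rho$ and $|\varphi|_\rho \to |\varphi|_{\rho_0}$ as $\rho \to \rho_0^+$ (by compact support of $\varphi$), Fatou's lemma along a sequence $\rho_n \searrow \rho_0$ gives $u \in H_{\rho_0}(\mathbb{R};H)$ with $|u|_{\rho_0} \leq \|T\|_\infty|\varphi|_{\rho_0}$. The identity $\mathcal{L}_\rho u = \mathcal{L}_{\rho_0}\bigl(\e^{(\rho_0 - \rho)\m} u\bigr)$ (immediate from the definition whenever both sides make sense) and unitarity of $\mathcal{L}_{\rho_0}$ then give
\[
|\mathcal{L}_\rho u - \mathcal{L}_{\rho_0} u|_{L_2}^2 = \int_{\mathbb{R}} \bigl|\e^{(\rho_0 - \rho)s} - 1\bigr|^2 |u(s)|_H^2\, \e^{-2\rho_0 s}\,\dd s,
\]
which tends to $0$ as $\rho \to \rho_0^+$ by dominated convergence; a dominating bound valid for $\rho \in (\rho_0, \rho_0 + 1]$ exploits $u \in H_{\rho_0} \cap H_{\rho_0 + 1}$ and the elementary splitting of the integrand into its $s>0$ and $s<0$ parts.

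Finally, on the dense subspace $\mathcal{L}_{\rho_0}(C_c^\infty(\mathbb{R};H))$ of $L_2(\mathbb{R};H)$ (density from unitarity of $\mathcal{L}_{\rho_0}$ and density of $C_c^\infty(\mathbb{R};H)$ in $H_{\rho_0}(\mathbb{R};H)$), define $T_{\rho_0}\mathcal{L}_{\rho_0}\varphi := \mathcal{L}_{\rho_0} u$. The estimate $|T_{\rho_0}\mathcal{L}_{\rho_0}\varphi|_{L_2} \leq \|T\|_\infty|\mathcal{L}_{\rho_0}\varphi|_{L_2}$ (inherited from the preceding step) shows $T_{\rho_0}$ extends uniquely to a bounded operator on $L_2(\mathbb{R};H)$ with $\|T_{\rho_0}\| \leq \|T\|_\infty$, and the identity $T(\i\m + \rho)\mathcal{L}_\rho\varphi = \mathcal{L}_\rho u$ combined with $\mathcal{L}_\rho u \to \mathcal{L}_{\rho_0} u$ establishes the asserted convergence. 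Uniqueness of $T_{\rho_0}$ is immediate from the density of $\mathcal{L}_{\rho_0}(C_c^\infty(\mathbb{R};H))$ in $L_2(\mathbb{R};H)$. The main obstacle is the contour-shift argument producing the $\rho$-independent representative $u$; once that is in hand the remaining steps are routine.
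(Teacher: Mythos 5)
Your proof is correct, but it takes a different route from the paper's. The paper reduces to test functions supported on a half-line (by translating with $\tau_{a}$, $a=\inf\spt\varphi$) and then invokes the Paley--Wiener theorem: $z\mapsto T(z)\hat{\varphi}(z)$ lies in the Hardy space $\mathcal{H}^{2}$, so it is the Fourier--Laplace transform of a single $f\in L_{2}(\mathbb{R}_{\geq a};H)$ uniformly in $\rho$, and the one-sided support of $f$ makes the convergence $\mathcal{L}_{\rho}f\to\mathcal{L}_{\rho_{0}}f$ an immediate dominated-convergence statement. You instead produce the common time-domain representative $u$ by a direct Cauchy-theorem contour shift, using the rapid decay of $\hat{\varphi}$ on vertical lines; this is essentially the argument the paper deploys separately in \prettyref{lem:independent of rho}, so your proof folds that lemma into the proposition. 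The price you pay for not having a one-sided support is that you must get $u\in H_{\rho_{0}}(\mathbb{R};H)$ by Fatou and then establish $\mathcal{L}_{\rho}u\to\mathcal{L}_{\rho_{0}}u$ by the two-sided domination argument splitting $s>0$ and $s<0$ (using $u\in H_{\rho_{0}}\cap H_{\rho_{0}+1}$); all of these steps are sound. What your approach buys is independence from the Paley--Wiener/Hardy-space machinery (only Cauchy's theorem for vector-valued analytic functions and unitarity of $\mathcal{L}_{\rho}$ are needed), plus the $\rho$-independence of the operators as a by-product; what the paper's approach buys is brevity, since the Hardy-space membership immediately supplies both the existence of the limit and the norm bound. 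One small point worth making explicit in a write-up: the pointwise contour representation of $u_{\rho}=\mathcal{L}_{\rho}^{\ast}T(\i\m+\rho)\mathcal{L}_{\rho}\varphi$ is legitimate because $t\mapsto T(\i t+\rho)\hat{\varphi}(\i t+\rho)$ lies in $L_{1}\cap L_{2}$, so the unitary $\mathcal{L}_{\rho}^{\ast}$ agrees with the integral formula there.
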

\begin{proof}
The uniqueness of such an operator is clear, since it is determined
on $\mathcal{L}_{\rho_{0}}[C_{c}^{\infty}(\mathbb{R};H)]$, which
is dense in $L_{2}(\mathbb{R};H).$ Let now $\varphi\in C_{c}^{\infty}(\mathbb{R};H)$
with $\spt\varphi\subseteq\mathbb{R}_{\geq0}$. Then we have $\widehat{\varphi}\in\mathcal{H}^{2}(\mathbb{C}_{\Re>0};H)$
by \prettyref{cor:Paley_Wiener_unitary}. Since $T$ is bounded, we
have that 
\[
(z\mapsto T(z)\widehat{\varphi}(z))\in\mathcal{H}^{2}(\mathbb{C}_{\Re>0};H)
\]
and thus, there is $f\in L_{2}(\mathbb{R}_{\geq0};H)$ such that 
\[
\mathcal{L}_{\rho}f=T(\i\m+\rho)\mathcal{L}_{\rho}\varphi\quad(\rho>\rho_{0}).
\]
Since $\mathcal{L}_{\rho}f\to\mathcal{L}_{\rho_{0}}f$ in $L_{2}(\mathbb{R}_{\geq0};H)$
as $\rho\to\rho_{0}$ by dominated convergence, the right-hand side
converges in $L_{2}(\mathbb{R};H)$ as well. Let now $\varphi\in C_{c}^{\infty}(\mathbb{R};H).$
Then we set $a\coloneqq\inf\spt\varphi$ and thus, we have that $\tau_{a}\varphi\in C_{c}^{\infty}(\mathbb{R}_{\geq0}).$
Moreover, since $\mathcal{L}_{\rho}\tau_{a}\varphi=\e^{(\i\m+\rho)a}\mathcal{L}_{\rho}\varphi,$
we infer that
\[
T(\i\m+\rho)\mathcal{L}_{\rho}\varphi=\e^{-\left(\i\m+\rho\right)a}T(\i\m+\rho)\mathcal{L}_{\rho}\tau_{a}\varphi
\]
and thus, 
\[
T_{\rho_{0}}\mathcal{L}_{\rho_{0}}\varphi\coloneqq\lim_{\rho\to\rho_{0}}T(\i\m+\rho)\mathcal{L}_{\rho}\varphi
\]
is well-defined. Then, $T_{\rho_{0}}$ is obviously linear and 
\begin{align*}
|T_{\rho_{0}}\mathcal{L}_{\rho_{0}}\varphi|_{L_{2}(\mathbb{R};H)} & \leq\|T\|_{\infty}\liminf_{\rho\to\rho_{0}}|\mathcal{L}_{\rho}\varphi|_{L_{2}(\mathbb{R};H)}\\
 & =\|T\|_{\infty}|\mathcal{L}_{\rho_{0}}\varphi|_{L_{2}(\mathbb{R};H)}
\end{align*}
and hence, it extends to a bounded operator $T_{\rho_{0}}\in L(L_{2}(\mathbb{R};H))$
with $\|T_{\rho_{0}}\|\leq\|T\|_{\infty}.$\end{proof}
\begin{defn*}
Let $T:\mathbb{C}_{\Re>\rho_{0}}\to L(H)$ be bounded and analytic
for some $\rho_{0}\in\mathbb{R}$. Then we define the operator 
\[
T(\partial_{0,\rho_{0}})\coloneqq\mathcal{L}_{\rho_{0}}^{\ast}T_{\rho_{0}}\mathcal{L}_{\rho_{0}}.
\]
\end{defn*}
\begin{rem}
We note that $T_{\rho_{0}}=T(\i\m+\rho_{0})$ for a bounded continuous
function $T:\mathbb{C}_{\Re\geq\rho_{0}}\to L(H)$, which is analytic
in $\mathbb{C}_{\Re>\rho_{0}}.$ Indeed, since 
\[
T(\i\m+\rho)f\to T(\i\m+\rho_{0})f\quad(\rho\to\rho_{0})
\]
for each $f\in L_{2}(\mathbb{R};H)$ by dominated convergence, it
follows that 
\[
T(\i\m+\rho)\mathcal{L}_{\rho}\varphi\to T(\i\m+\rho_{0})\mathcal{L}_{\rho_{0}}\varphi\quad(\rho\to\rho_{0})
\]
for each $\varphi\in C_{c}^{\infty}(\mathbb{R};H),$ which yields
$T_{\rho_{0}}=T(\i\m+\rho_{0}).$
\end{rem}
We note that the operator $T_{\rho_{0}}$ in \prettyref{prop:trace_of_H^infty}
does not need to be a multiplication operator. However, if we assume
that $H$ is separable, we get the following statement.
\begin{lem}
\label{lem:boundary_value}Let $T:\mathbb{C}_{\Re>\rho_{0}}\to L(H)$
bounded and analytic for some $\rho_{0}\in\mathbb{R}$ and assume
that $H$ is separable. Then there exists $h:\mathbb{R}\to L(H)$
strongly measurable and bounded such that 
\[
T_{\rho_{0}}=h(\m).
\]
\end{lem}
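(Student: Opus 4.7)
The plan is to show that $T_{\rho_0}$ commutes with every scalar multiplication operator $\psi(\m)$ with $\psi\in L^\infty(\mathbb{R})$, and then to invoke the structure theorem for the commutant of the maximal abelian subalgebra $\{\psi(\m):\psi\in L^\infty(\mathbb{R})\}$ in $L(L_2(\mathbb{R};H))$, which, for separable $H$, identifies precisely the strongly measurable bounded $L(H)$-valued multipliers as the operators commuting with all scalar multiplications.

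First I would verify that $T(\i\m+\rho)\to T_{\rho_0}$ in the strong operator topology as $\rho\searrow\rho_0$. For $\varphi\in C_c^\infty(\mathbb{R};H)$, the previous proposition gives $T(\i\m+\rho)\mathcal{L}_\rho\varphi\to T_{\rho_0}\mathcal{L}_{\rho_0}\varphi$ in $L_2(\mathbb{R};H)$. Combining this with $\mathcal{L}_\rho\varphi\to\mathcal{L}_{\rho_0}\varphi$ in $L_2(\mathbb{R};H)$ (by dominated convergence) and the uniform bound $\|T(\i\m+\rho)\|\le\|T\|_\infty$, I obtain $T(\i\m+\rho)\mathcal{L}_{\rho_0}\varphi\to T_{\rho_0}\mathcal{L}_{\rho_0}\varphi$. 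By density of $\mathcal{L}_{\rho_0}[C_c^\infty(\mathbb{R};H)]$ in $L_2(\mathbb{R};H)$ and the uniform operator bound, strong convergence extends to all of $L_2(\mathbb{R};H)$. Since each $T(\i\m+\rho)$ is, by its very definition, a pointwise multiplication operator, it commutes with every $\psi(\m)$, $\psi\in L^\infty(\mathbb{R})$; passing to the strong limit, the same commutation relation holds for $T_{\rho_0}$.

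With the commutation in hand, the structure theorem yields $h\colon\mathbb{R}\to L(H)$ strongly measurable and bounded (with $\|h\|_\infty\le\|T_{\rho_0}\|\le\|T\|_\infty$) such that $T_{\rho_0}=h(\m)$. The main obstacle lies precisely in this structure theorem, which is a nontrivial disintegration result and relies essentially on the separability of $H$. Should one prefer a self-contained argument, an equivalent route is: fix a countable dense subset $\{x_n:n\in\mathbb{N}\}\subseteq H$, apply a Hilbert-space-valued Fatou theorem to each bounded analytic function $z\mapsto T(z)x_n$ to obtain, outside a common Lebesgue null set $N\subseteq\mathbb{R}$, boundary values $g_n(t)=\lim_{\rho\searrow\rho_0}T(\i t+\rho)x_n$, then define $h(t)x_n\coloneqq g_n(t)$ on $\mathbb{R}\setminus N$ and extend by continuity using $\|h(t)\|\le\|T\|_\infty$. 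A final verification, using strong operator convergence on $\mathcal{L}_{\rho_0}[C_c^\infty(\mathbb{R};H)]$ together with dominated convergence (after passing to a subsequence along which $\mathcal{L}_\rho\varphi$ converges pointwise a.e.), then identifies $T_{\rho_0}=h(\m)$.
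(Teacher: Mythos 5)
Your main argument is correct but genuinely different from the paper's. You first upgrade the defining convergence $T(\i\m+\rho)\mathcal{L}_{\rho}\varphi\to T_{\rho_{0}}\mathcal{L}_{\rho_{0}}\varphi$ to strong operator convergence $T(\i\m+\rho)\to T_{\rho_{0}}$ on all of $L_{2}(\mathbb{R};H)$ (the three-epsilon argument using $\mathcal{L}_{\rho}\varphi\to\mathcal{L}_{\rho_{0}}\varphi$, the uniform bound $\|T(\i\m+\rho)\|\leq\|T\|_{\infty}$ and density is fine), conclude that $T_{\rho_{0}}$ commutes with every scalar multiplier $\psi(\m)$, $\psi\in L_{\infty}(\mathbb{R})$, and then invoke the theorem that, for separable $H$, the commutant of the diagonal algebra $L_{\infty}(\mathbb{R})\otimes 1_{H}$ in $L(L_{2}(\mathbb{R};H))$ consists exactly of the decomposable operators $h(\m)$ with $h$ essentially bounded and (by Pettis) strongly measurable. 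That theorem is true and your reduction to it is sound, so the proof stands; but be aware that you have outsourced essentially all of the work to it, and its standard proof proceeds by exactly the kind of countable-dense-subset construction that the paper carries out by hand. The paper instead tests $T_{\rho_{0}}$ against the explicit functions $f_{y}(t)=\sqrt{2\pi}\chi_{\mathbb{R}_{\geq0}}(t)\e^{-(1-\rho_{0})t}y$ for $y$ in a countable dense set $D$, extracts from the $L_{2}$-convergence a subsequence $(\rho_{n})$ along which $T(\i t+\rho_{n})y$ converges for a.e.\ $t$ and all $y\in D$ simultaneously, and defines $h(t)$ as the resulting densely defined contraction times $\|T\|_{\infty}$; this is elementary and self-contained. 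Your fallback sketch at the end is essentially this proof, with one difference worth noting: you appeal to a vector-valued Fatou theorem for the existence of radial boundary values of $z\mapsto T(z)x_{n}$, which is a genuinely deeper result (tied to the analytic Radon--Nikod\'ym property discussed in the remark following the lemma), whereas the paper only needs a.e.\ convergence along a subsequence, which it gets for free from the already-established $L_{2}$-convergence of \prettyref{prop:trace_of_H^infty}. Either of your routes yields the statement, including the bound $\|h\|_{\infty}\leq\|T\|_{\infty}$.
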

\begin{proof}
{} Choose $D\subseteq H$ dense and countable subspace. Let $y\in D$
and consider the function $f_{y}\in\bigcap_{\rho\geq\rho_{0}}H_{\rho}(\mathbb{R};H)$
given by \nomenclature[A_010]{$\mathbb{R}_{\gtreqless a}$}{the set of reals greater/less than or equal to $a\in \mathbb{R}$.}
\[
f_{y}(t)\coloneqq\sqrt{2\pi}\chi_{\mathbb{R}_{\geq0}}(t)\e^{-(1-\rho_{0})t}y\quad(t\in\mathbb{R}).
\]
Then $\hat{f}_{y}(z)=\frac{1}{z+1-\rho_{0}}y$ for $z\in\mathbb{C}_{\Re\geq\rho_{0}}$.
Then 
\begin{align*}
T(\i t+\rho)y & =\left(\i t+\rho+1-\rho_{0}\right)T(\i t+\rho)\left(\mathcal{L}_{\rho}f_{y}\right)(t)\quad(t\in\mathbb{R})
\end{align*}
and since $T(\i\m+\rho)\mathcal{L}_{\rho}f_{y}\to T_{\rho_{0}}\mathcal{L}_{\rho_{0}}f_{y}$
in $L_{2}(\mathbb{R};H)$ as $\rho\to\rho_{0}$, we find a sequence
$(\rho_{n})_{n\in\mathbb{N}}$ with $\rho_{n}\to\rho_{0}$ a Lebesgue
null-set $M_{y}\subseteq\mathbb{R}$ such that 
\[
T(\i t+\rho_{n})y\to(\i t+1)\left(T_{\rho_{0}}\mathcal{L}_{\rho_{0}}f_{y}\right)(t)\quad(n\to\infty)
\]
for $t\in\mathbb{R}\setminus M_{y}.$ We set $M\coloneqq\bigcup_{y\in D}M_{y}$
and define 
\begin{align*}
h(t)y\coloneqq & \begin{cases}
(\i t+1)\left(T_{\rho_{0}}\mathcal{L}_{\rho_{0}}f_{y}\right)(t) & \mbox{ if }t\in\mathbb{R}\setminus M,\\
0 & \mbox{ otherwise},
\end{cases}
\end{align*}
for $y\in D.$ Then $h(t):D\to H$ is linear and 
\[
|h(t)y|_{H}\leq\|T\|_{\infty}|y|_{H}\quad(y\in D)
\]
for each $t\in\mathbb{R}$ and hence, it has a unique extension $h(t)\in L(H).$
Since $h(\cdot)y$ is measurable for each $y\in D$ we infer that
$h:\mathbb{R}\to L(H)$ is strongly measurable and bounded. It is
left to show $T_{\rho}=h(\m).$ For doing so, we take $\varphi\in C_{c}^{\infty}(\mathbb{R};H).$
Moreover, we choose a subsequence (without re-labeling) $(\rho_{n})_{n\in\mathbb{N}}$
with $\rho_{n}\to\rho_{0}$ such that 
\[
T(\i t+\rho_{n})\left(\mathcal{L}_{\rho_{n}}\varphi\right)(t)\to\left(T_{\rho_{0}}\mathcal{L}_{\rho_{0}}\varphi\right)(t)
\]
and $\left(\mathcal{L}_{\rho_{n}}\varphi\right)(t)\to\left(\mathcal{L}_{\rho_{0}}\varphi\right)(t)$
for every $t\in\mathbb{R}\setminus N,$ where $N$ is a Lebesgue null-set.
Let now $\varepsilon>0,$ $t\in\mathbb{R}\setminus(M\cup N)$ and
choose $y\in D$ such that $|\mathcal{L}_{\rho_{0}}\varphi(t)-y|_{H}<\varepsilon.$
We choose $n_{0}\in\mathbb{N}$ such that 
\begin{align*}
|T(\i t+\rho_{n})y-h(t)y|_{H} & <\varepsilon,\\
|\left(\mathcal{L}_{\rho_{n}}\varphi\right)(t)-\left(\mathcal{L}_{\rho_{0}}\varphi\right)(t)|_{H} & <\varepsilon
\end{align*}
for each $n\geq n_{0}$. Then we have for $n\geq n_{0}$
\begin{align*}
 & |T(\i t+\rho_{n})\left(\mathcal{L}_{\rho_{n}}\varphi\right)(t)-h(t)\left(\mathcal{L}_{\rho_{0}}\varphi\right)(t)|_{H}\\
 & \leq\left|T(\i t+\rho_{n})\left(\left(\mathcal{L}_{\rho_{n}}\varphi\right)(t)-\left(\mathcal{L}_{\rho_{0}}\varphi\right)(t)\right)\right|_{H}+\left|T(\i t+\rho_{n})\left(\left(\mathcal{L}_{\rho_{0}}\varphi\right)(t)-y\right)\right|_{H}+\\
 & \quad+\left|\left(T(\i t+\rho_{n})-h(t)\right)y\right|_{H}+|h(t)(y-\left(\mathcal{L}_{\rho_{0}}\varphi\right)(t))|_{H}\\
 & \leq\left(3\|T\|_{\infty}+1\right)\varepsilon,
\end{align*}
which yields 
\[
\left(T_{\rho_{0}}\mathcal{L}_{\rho_{0}}\varphi\right)(t)=h(t)(\mathcal{L}_{\rho_{0}}\varphi)(t)
\]
for $t\in\mathbb{R}\setminus(N\cup M)$ and thus, $T_{\rho_{0}}=h(\m)$.\end{proof}
\begin{rem}
We note that $h\notin L_{\infty}(\mathbb{R};L(H))$ in general. The
existence of a boundary function in $L_{p}(\mathbb{R};X)$ for functions
$\mathcal{H}^{p}(\mathbb{C}_{\Re>0};X)$ for some Banach space $X$
and $1\leq p\leq\infty$ was studied in \cite{Bukhvalov1981}. Banach
spaces allowing such boundary functions are said to have the \emph{analytic
Radon-Nikodym property. }It was shown that if $c_{0}\subseteq X,$
then $X$ fails to have this property and thus, we cannot expect a
better measurability than asserted in \prettyref{lem:boundary_value}.
We note that if $X$ is a Banach lattice, $c_{0}\nsubseteq X$ is
equivalent to the analytic Radon-Nikodym property \cite[Theorem 1]{Bukhvalov1982}.
This in particular implies that the analytic Radon-Nikodym property
is weaker than the Radon-Nikodym property, since $c_{0}\nsubseteq L_{1}(0,1)$
but $L_{1}(0,1)$ does not satisfy the Radon-Nikodym property (see
\cite[p. 61]{DiestelUhl}).
\end{rem}
We need a further auxiliary result for bounded analytic functions.
\begin{lem}
\label{lem:independent of rho} Let $\mu,\nu\in\mathbb{R}$ with $\mu<\nu$
and set $U\coloneqq\{z\in\mathbb{C}\,;\,\mu<\Re z<\nu\}$. Moreover,
let $T:\overline{U}\subseteq\mathbb{C}\to L(H)$ be continuous, bounded
and analytic in $U$. Then for $f\in H_{\mu}(\mathbb{R};H)\cap H_{\nu}(\mathbb{R};H)$
we have that 
\[
\mathcal{L}_{\mu}^{\ast}T(\i\m+\mu)\mathcal{L}_{\mu}f=\mathcal{L}_{\nu}^{\ast}T(\i\m+\nu)\mathcal{L}_{\nu}f.
\]
\end{lem}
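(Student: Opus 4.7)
The plan is to reduce the identity to the case of test functions using the density result of \prettyref{lem:density test fct} and then, for test functions, prove the identity via Cauchy's integral theorem by shifting a vertical contour from $\Re z = \mu$ to $\Re z = \nu$.

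First, I would use the fact that $\mathcal{L}_{\rho}^{\ast}T(\i\m+\rho)\mathcal{L}_{\rho}$ defines a bounded operator on $H_{\rho}(\mathbb{R};H)$ with norm at most $\|T\|_{\infty}$ for $\rho\in\{\mu,\nu\}$. Given $f\in H_{\mu}(\mathbb{R};H)\cap H_{\nu}(\mathbb{R};H)$, \prettyref{lem:density test fct} yields a sequence $(\varphi_{n})_{n\in\mathbb{N}}$ in $C_{c}^{\infty}(\mathbb{R};H)$ with $\varphi_{n}\to f$ both in $H_{\mu}(\mathbb{R};H)$ and in $H_{\nu}(\mathbb{R};H)$. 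If the asserted identity holds for every test function, then $\mathcal{L}_{\mu}^{\ast}T(\i\m+\mu)\mathcal{L}_{\mu}\varphi_{n}=\mathcal{L}_{\nu}^{\ast}T(\i\m+\nu)\mathcal{L}_{\nu}\varphi_{n}$, and passing to a common subsequence the two sides converge pointwise almost everywhere to $\mathcal{L}_{\mu}^{\ast}T(\i\m+\mu)\mathcal{L}_{\mu}f$ and $\mathcal{L}_{\nu}^{\ast}T(\i\m+\nu)\mathcal{L}_{\nu}f$, respectively, yielding the equality.

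Next, for $\varphi\in C_{c}^{\infty}(\mathbb{R};H)$ I would observe that the map $z\mapsto\hat{\varphi}(z)\coloneqq\frac{1}{\sqrt{2\pi}}\int_{\mathbb{R}}\e^{-zs}\varphi(s)\,\dd s$ is entire (since $\varphi$ has compact support) and, for $z=\rho+\i t$ with $\rho$ in any fixed compact set, decays faster than any power of $|t|$ as $|t|\to\infty$ (integrate by parts). Unwinding the definitions gives, for a.e. $s\in\mathbb{R}$,
\[
\left(\mathcal{L}_{\rho}^{\ast}T(\i\m+\rho)\mathcal{L}_{\rho}\varphi\right)(s)=\frac{1}{2\pi\i}\intop_{\rho+\i\mathbb{R}}\e^{zs}T(z)\hat{\varphi}(z)\,\dd z,
\]
the integral being absolutely convergent in $H$ because $T$ is bounded on $\overline{U}$ and $\hat{\varphi}(\rho+\i\cdot)$ is rapidly decreasing.

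Finally, I would apply Cauchy's integral theorem to the $H$-valued analytic function $z\mapsto\e^{zs}T(z)\hat{\varphi}(z)$ on the rectangle with vertices $\mu\pm\i R$ and $\nu\pm\i R$. The integrals over the two horizontal sides are bounded by $\|T\|_{\infty}\e^{\max(|\mu|,|\nu|)|s|}(\nu-\mu)\sup_{\rho\in[\mu,\nu]}|\hat{\varphi}(\rho\pm\i R)|$, which tends to $0$ as $R\to\infty$ by the rapid decay of $\hat{\varphi}$ uniformly on compact strips. Sending $R\to\infty$ therefore gives that the integral along $\mu+\i\mathbb{R}$ equals the integral along $\nu+\i\mathbb{R}$, i.e. the asserted identity for $\varphi$.

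The main obstacle is the uniform decay estimate on $\hat{\varphi}$ needed to kill the horizontal contributions; this is where the hypothesis $\varphi\in C_{c}^{\infty}(\mathbb{R};H)$ is essential and why the reduction via \prettyref{lem:density test fct} is carried out first rather than trying to shift contours directly for $f$, for which $\hat{f}$ need not even be defined off the strip $\overline{U}$.
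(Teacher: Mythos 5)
Your proposal is correct and follows essentially the same route as the paper: reduce to $\varphi\in C_{c}^{\infty}(\mathbb{R};H)$ via \prettyref{lem:density test fct} and the boundedness of the operators, then shift the vertical contour from $\Re z=\mu$ to $\Re z=\nu$ for the analytic function $z\mapsto\e^{zs}T(z)\hat{\varphi}(z)$ by Cauchy's theorem. The only (harmless) differences are technical: the paper kills the horizontal sides with the Riemann--Lebesgue lemma plus dominated convergence and identifies the contour integral through truncations $\chi_{[-R,R]}(\m)$ and a subsequence argument, whereas you invoke the uniform rapid decay of $\hat{\varphi}$ on compact strips, which makes the integrals absolutely convergent from the start; also your normalising constant should be $\tfrac{1}{\sqrt{2\pi}\,\i}$ rather than $\tfrac{1}{2\pi\i}$, but since it is the same on both lines this does not affect the conclusion.
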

\begin{proof}
By \prettyref{lem:density test fct} there exists a sequence in $C_{c}^{\infty}(\mathbb{R};H)$
converging to $f$ in both spaces $H_{\mu}(\mathbb{R};H)$ and $H_{\nu}(\mathbb{R};H).$
Thus, due to the boundedness of $T$ it suffices to prove the assertion
for all $\varphi\in C_{c}^{\infty}(\mathbb{R};H).$ So let $\varphi\in C_{c}^{\infty}(\mathbb{R};H).$
For $t\in\mathbb{R}$ we define the function 
\[
f(z)\coloneqq\e^{zt}T(z)\hat{\varphi}(z)\quad(z\in\overline{U}).
\]
Then $f$ is analytic in $U$ and so, 
\begin{equation}
\i\intop_{-R}^{R}f(\i s+\mu)\mbox{ d}s+\intop_{\mu}^{\nu}f(\i R+\kappa)\mbox{ d}\kappa-\i\intop_{-R}^{R}f(\i s+\nu)\mbox{ d}s-\intop_{\mu}^{\nu}f(-\i R+\kappa)\mbox{ d}\kappa=0\label{eq:Cauchy_integral}
\end{equation}
for each $R>0.$ Moreover, $f$ is bounded on $\overline{U}$ since
\begin{align}
|f(z)|_{H} & \leq\|T\|_{\infty}\max\{\e^{\mu t},\e^{\nu t}\}|\hat{\varphi}(z)|_{H}\nonumber \\
 & \leq\|T\|_{\infty}\max\{\e^{\mu t},\e^{\nu t}\}C|\varphi|_{L_{1}},\label{eq:boundedness_f}
\end{align}
for each $z\in\overline{U}$, where $C\coloneqq\frac{1}{\sqrt{2\pi}}\sup\left\{ \e^{-\kappa s}\,;\,\kappa\in[\mu,\nu],s\in\spt\varphi\right\} .$
Moreover, since $\hat{\varphi}(\pm\i R+\kappa)\to0$ as $R\to\infty$
for each $\kappa\in]\mu,\nu[$ by the Riemann-Lebesgue Lemma (see
\prettyref{rem:Riemann-Lebesgue}), we get $f(\pm\i R+\kappa)\to0$
as $R\to\infty$ for each $\kappa\in]\rho,\mu[$ by the first line
of \prettyref{eq:boundedness_f}. Thus, the second and fourth term
in \prettyref{eq:Cauchy_integral} tend to $0$ as $R\to\infty$ by
dominated convergence. Moreover 
\[
\intop_{-R}^{R}f(\i s+\mu)\mbox{ d}s=\intop_{-R}^{R}\e^{(\i s+\mu)t}T(\i s+\mu)\left(\mathcal{L}_{\mu}\varphi\right)(s)\mbox{ d}s=\sqrt{2\pi}\left(\mathcal{L}_{\mu}^{\ast}\chi_{[-R,R]}(\m)T(\i\m+\mu)\mathcal{L}_{\mu}\varphi\right)(t)
\]
 and the same for $\mu$ replaced by $\nu.$ Passing now to a suitable
subsequence, \prettyref{eq:Cauchy_integral} and the last equality
yield 
\[
\left(\mathcal{L}_{\mu}^{\ast}T(\i\m+\mu)\mathcal{L}_{\mu}\varphi\right)(t)=\left(\mathcal{L}_{\nu}^{\ast}T(\i\m+\nu)\mathcal{L}_{\nu}\varphi\right)(t)\quad(t\in\mathbb{R}\mbox{ a.e.}),
\]
which gives the assertion.\end{proof}
\begin{thm}
\label{thm:material_law_good}Let $\rho_{0}\in\mathbb{R}$ and $T:\mathbb{C}_{\Re>\rho_{0}}\to L(H)$
be analytic and bounded. Then for each $\rho\geq\rho_{0}$ the operator
$T(\partial_{0,\rho})$ is translation-invariant, causal and bounded
with $\|T(\partial_{0,\rho})\|\leq\|T\|_{\infty}$. Moreover, the
operator is independent on the choice of $\rho$ in the sense that
$T(\partial_{0,\rho})f=T(\partial_{0,\nu})f$ for each $f\in H_{\rho}(\mathbb{R};H)\cap H_{\nu}(\mathbb{R};H)$
and $\rho,\nu\geq\rho_{0}$.\end{thm}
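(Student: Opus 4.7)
The plan is to first handle the case $\rho > \rho_0$, where $T(\i\m + \rho)$ is a genuine pointwise multiplication operator on $L_2(\mathbb{R};H)$, and then to transfer the results to the boundary $\rho = \rho_0$ via independence of $\rho$ and a limiting argument. For $\rho > \rho_0$, boundedness with $\|T(\partial_{0,\rho})\| \le \|T\|_\infty$ is immediate by unitary equivalence, since $\|T(\i t + \rho)\| \le \|T\|_\infty$ for every $t$. Translation-invariance follows from the identity $\mathcal{L}_\rho \tau_h = \e^{(\i\m + \rho)h}\mathcal{L}_\rho$ (computed in the proof of \prettyref{prop:trace_of_H^infty}) together with the observation that the operator-valued multiplication $T(\i\m + \rho)$ commutes pointwise with the scalar multiplication $\e^{(\i\m + \rho)h}$. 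For causality, since $T(\partial_{0,\rho})$ is bounded and translation-invariant, \prettyref{lem:char_causality}(c) reduces the claim to showing that any $u \in H_\rho(\mathbb{R};H)$ supported on $\mathbb{R}_{\ge 0}$ is mapped to a function supported on $\mathbb{R}_{\ge 0}$; by \prettyref{cor:Paley_Wiener_unitary}, such $u$ correspond to $\mathcal{L}_\rho u$ extending to $\mathcal{H}^2(\mathbb{C}_{\Re > \rho};H)$, the product with the bounded analytic $T$ remains in that Hardy class, and the inverse Paley-Wiener direction transfers back to the desired support property of $T(\partial_{0,\rho})u$.

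The boundedness of $T(\partial_{0,\rho_0})$ with the same norm bound is exactly \prettyref{prop:trace_of_H^infty}. For independence of $\rho$, the case $\rho_0 < \mu < \nu$ follows at once from \prettyref{lem:independent of rho} applied to the closed strip between $\mu$ and $\nu$, on which $T$ is bounded, continuous, and analytic in the interior. The boundary case $\rho = \rho_0$ is the principal technical obstacle. Given $f \in H_{\rho_0}(\mathbb{R};H) \cap H_\nu(\mathbb{R};H)$, use \prettyref{lem:density test fct} to choose $\varphi_n \in C_c^\infty(\mathbb{R};H)$ converging to $f$ in both norms, and set $g_n \coloneqq T(\partial_{0,\nu})\varphi_n$. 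The interior case gives $g_n = T(\partial_{0,\mu})\varphi_n$ with $|g_n|_\mu \le \|T\|_\infty |\varphi_n|_\mu$ for every $\mu \in (\rho_0,\nu)$; splitting the integral defining $|g_n|_{\rho_0}^2$ over $\mathbb{R}_{\le 0}$ and $\mathbb{R}_{\ge 0}$ and using the monotonicity of $\e^{-2\mu s}$ in $\mu$ on each half-line yields $g_n \in H_{\rho_0}(\mathbb{R};H)$. Dominated convergence then gives $\e^{-\mu \cdot}g_n \to \e^{-\rho_0 \cdot}g_n$ in $L_2$ as $\mu \downarrow \rho_0$, hence $\mathcal{L}_\mu g_n \to \mathcal{L}_{\rho_0}g_n$ in $L_2$; simultaneously $\mathcal{L}_\mu g_n = T(\i\m + \mu)\mathcal{L}_\mu \varphi_n \to T_{\rho_0}\mathcal{L}_{\rho_0}\varphi_n$ by \prettyref{prop:trace_of_H^infty}. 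Therefore $g_n = T(\partial_{0,\rho_0})\varphi_n$ as equivalence classes, and passing to a.e.\ convergent subsequences transfers this identity from the $\varphi_n$ to $f$.

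With independence of $\rho$ in hand, translation-invariance for $\rho = \rho_0$ reduces via the density of $C_c^\infty(\mathbb{R};H)$ to translation-invariance on test functions: each such $\varphi$ lies in every $H_\nu$, and the identities $T(\partial_{0,\rho_0})\varphi = T(\partial_{0,\nu})\varphi$ and $T(\partial_{0,\rho_0})\tau_h\varphi = T(\partial_{0,\nu})\tau_h\varphi$ then propagate translation-invariance from $\nu > \rho_0$ down to $\rho_0$. For causality at $\rho_0$, note that any $u \in H_{\rho_0}(\mathbb{R};H)$ supported on $\mathbb{R}_{\ge 0}$ satisfies $|u|_\nu \le |u|_{\rho_0}$, hence $u \in H_\nu$ for every $\nu \ge \rho_0$; the already-established case together with independence of $\rho$ then places $T(\partial_{0,\rho_0})u$ in the required support class, and \prettyref{lem:char_causality}(c) gives causality. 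The central difficulty throughout is the boundary case $\rho = \rho_0$, where the multiplication-operator structure is no longer available and one is forced to rely on the limiting definition of $T_{\rho_0}$.
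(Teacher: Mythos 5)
Your proposal is correct and follows essentially the same route as the paper: the multiplication-operator structure and Paley--Wiener argument for $\rho>\rho_{0}$, \prettyref{lem:independent of rho} for the interior independence, and \prettyref{prop:trace_of_H^infty} combined with \prettyref{lem:density test fct} to handle the boundary case $\rho=\rho_{0}$, from which translation-invariance and causality at $\rho_{0}$ are propagated. The only cosmetic difference is that you identify $T(\partial_{0,\rho_{0}})\varphi$ with $T(\partial_{0,\nu})\varphi$ by taking $L_{2}$-limits on the Fourier--Laplace side (after first checking $g_{n}\in H_{\rho_{0}}$ by a monotone/dominated convergence splitting), whereas the paper passes to almost-everywhere convergent subsequences of $\e^{\rho_{n}t}\left(\mathcal{F}^{\ast}T(\i\m+\rho_{n})\mathcal{L}_{\rho_{n}}\varphi\right)(t)$; both rest on the same limiting definition of $T_{\rho_{0}}$.
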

\begin{proof}
First, let $\rho>\rho_{0}.$ Since $\mathcal{L}_{\rho}\tau_{h}=\e^{(\i\m+\rho)h}\mathcal{L}_{\rho},$
we infer the translation-invariance of $T(\partial_{0,\rho}).$ For
proving the causality, it suffices to prove that for $u\in H_{\rho}(\mathbb{R};H)$
with $u=0$ on $]-\infty,0]$ it follows that $T(\partial_{0,\rho})u=0$
on $]-\infty,0]$ by \prettyref{lem:char_causality}. So let $u\in H_{\rho}(\mathbb{R};H)$
with $u=0$ on $]-\infty,0].$ By \prettyref{cor:Paley_Wiener_unitary}
we get that $\hat{u}\in\mathcal{H}^{2}(\mathbb{C}_{\Re>\rho};H)$.
As $T$ is analytic and bounded, we get that $\left(z\mapsto T(z)\hat{u}(z)\right)\in\mathcal{H}^{2}(\mathbb{C}_{\Re>\rho};H),$
which yields $T(\partial_{0,\rho})u=0$ on $]-\infty,0]$ again by
\prettyref{cor:Paley_Wiener_unitary}. The boundedness of $T(\partial_{0,\rho})$
and the norm estimate are obvious.\\
Let $f\in H_{\rho}(\mathbb{R};H)\cap H_{\nu}(\mathbb{R};H)$ and $\rho<\nu.$
If $\rho>\rho_{0}$ we infer $T(\partial_{0,\rho})f=T(\partial_{0,\nu})f$
from \prettyref{lem:independent of rho}. Assume now $\rho=\rho_{0}.$
We first prove that $T(\partial_{0,\rho_{0}})\varphi=T(\partial_{0,\nu})\varphi$
for each $\nu>\rho_{0}$ and $\varphi\in C_{c}^{\infty}(\mathbb{R};H).$
We choose a sequence $(\rho_{n})_{n\in\mathbb{N}}$ in $]\rho_{0},\nu[$
such that $\rho_{n}\to\rho_{0}$. Then we have 
\[
(T(\partial_{0,\nu})\varphi)(t)=\left(\mathcal{L}_{\rho_{n}}^{\ast}T(\i\m+\rho_{n})\mathcal{L}_{\rho_{n}}\varphi\right)(t)
\]
for every $n\in\mathbb{N}$ and almost every $t\in\mathbb{R}$ by
what we have shown above. Since, 
\begin{align*}
\left(\mathcal{L}_{\rho_{n}}^{\ast}T(\i\m+\rho_{n})\mathcal{L}_{\rho_{n}}\varphi\right)(t) & =\e^{\rho_{n}t}\left(\mathcal{F}^{\ast}T(\i\m+\rho_{n})\mathcal{L}_{\rho_{n}}\varphi\right)(t)\\
 & \to\e^{\rho_{0}t}\left(\mathcal{F}^{\ast}T_{\rho_{0}}\mathcal{L}_{\rho_{0}}\varphi\right)(t)\\
 & =\left(T(\partial_{0,\rho_{0}})\varphi\right)(t)
\end{align*}
for almost every $t\in\mathbb{R},$ we derive the assertion. Again,
by \prettyref{lem:density test fct} we obtain $T(\partial_{0,\rho_{0}})f=T(\partial_{0,\nu})f$
for each $f\in H_{\rho_{0}}(\mathbb{R};H)\cap H_{\nu}(\mathbb{R};H).$
In particular, we get for each $\varphi\in C_{c}^{\infty}(\mathbb{R};H)$
and $h\in\mathbb{R}$
\[
T(\partial_{0,\rho_{0}})\tau_{h}\varphi=T(\partial_{0,\nu})\tau_{h}\varphi=\tau_{h}T(\partial_{0,\nu})\varphi=\tau_{h}T(\partial_{0,\rho_{0}})\varphi,
\]
and thus, the translation-invariance of $T(\partial_{0,\rho_{0}})$
follows by continuous extension. Moreover, for $\varphi\in C_{c}^{\infty}(\mathbb{R}_{>0};H)$
we have that 
\[
\spt T(\partial_{0,\rho_{0}})\varphi=\spt T(\partial_{0,\nu})\varphi\subseteq\mathbb{R}_{\geq0},
\]
and hence, causality follows again by continuous extension.
\end{proof}
The latter theorem shows that analytic and bounded mappings on a right
half plane induce a family of bounded, causal and translation-invariant
operators on $H_{\rho}(\mathbb{R};H),$ which are independent of the
particular choice of $\rho$. In particular, this result applies to
the solution operator of a well-posed evolutionary problem.
\begin{cor}
\label{cor:sol_po_causal}Consider a well-posed evolutionary problem
associated with $M$ and $A$. Then for each $\rho\in S_{M}$ with
$\rho>s_{0}(M,A)$, the operator $\left(\overline{\partial_{0,\rho}M(\partial_{0,\rho})+A}\right)^{-1}$
is causal and independent of the parameter $\rho$ in the sense that
\[
\left(\overline{\partial_{0,\rho}M(\partial_{0,\rho})+A}\right)^{-1}f=\left(\overline{\partial_{0,\nu}M(\partial_{0,\nu})+A}\right)^{-1}f
\]
for each $f\in H_{\rho}(\mathbb{R};H)\cap H_{\nu}(\mathbb{R};H)$
and $\rho,\nu\in S_{M}$ with $\rho,\nu>s_{0}(M,A)$.\end{cor}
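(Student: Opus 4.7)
The plan is to reduce the corollary to \prettyref{thm:material_law_good} by identifying the solution operator with $T(\partial_{0,\rho})$ for a suitable analytic bounded function $T$ on a right half-plane. By the very definition of well-posedness, the map
\[
T:\mathbb{C}_{\Re>s_{0}(M,A)}\to L(H)
\]
obtained as the bounded analytic extension of $z\mapsto(zM(z)+A)^{-1}$ exists and is bounded. Thus $T$ satisfies exactly the hypotheses of \prettyref{thm:material_law_good}, so $T(\partial_{0,\rho})$ is defined for each $\rho>s_{0}(M,A)$, is bounded, translation-invariant, causal, and satisfies $T(\partial_{0,\rho})f=T(\partial_{0,\nu})f$ whenever $f\in H_{\rho}(\mathbb{R};H)\cap H_{\nu}(\mathbb{R};H)$ and $\rho,\nu>s_{0}(M,A)$.

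Next I would identify $T(\partial_{0,\rho})$ with $\left(\overline{\partial_{0,\rho}M(\partial_{0,\rho})+A}\right)^{-1}$. For this I use the description of the closure provided by the preceding proposition: under the unitary equivalence induced by $\mathcal{L}_{\rho}$, the operator $\overline{\partial_{0,\rho}M(\partial_{0,\rho})+A}$ corresponds to the maximal multiplication operator $S_{\rho}$ associated with the function $t\mapsto(\i t+\rho)M(\i t+\rho)+A$ on $\{t\in\mathbb{R}\,:\,\i t+\rho\in D(M)\}$, which is a full measure subset of $\mathbb{R}$ for $\rho\in S_{M}$. For $\rho>s_{0}(M,A)$ well-posedness gives $(\i t+\rho)M(\i t+\rho)+A$ boundedly invertible with inverse $T(\i t+\rho)$ for almost every $t\in\mathbb{R}$, so $S_{\rho}^{-1}$ is exactly the bounded multiplication operator $T(\i\m+\rho)$. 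Transporting this equality back by $\mathcal{L}_{\rho}^{\ast}\cdot\mathcal{L}_{\rho}$ yields
\[
\left(\overline{\partial_{0,\rho}M(\partial_{0,\rho})+A}\right)^{-1}=\mathcal{L}_{\rho}^{\ast}T(\i\m+\rho)\mathcal{L}_{\rho}=T(\partial_{0,\rho}),
\]
where for the second equality I invoke the remark after the definition of $T(\partial_{0,\rho_{0}})$, since $T$ is continuous on $\mathbb{C}_{\Re\geq\rho}$ for any $\rho>s_{0}(M,A)$ (it is even analytic on a neighbourhood).

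With this identification, causality and $\rho$-independence are inherited directly from \prettyref{thm:material_law_good}. The only subtle point I expect is the identification of the boundary operator $T_{\rho_{0}}$ in \prettyref{prop:trace_of_H^infty} with the pointwise multiplication $T(\i\m+\rho)$, which is why I restrict to $\rho>s_{0}(M,A)$ (so that $T$ is analytic and bounded on an open neighbourhood of $\mathbb{C}_{\Re\geq\rho}$) instead of working at the abscissa itself; inside the open half-plane the remark after \prettyref{prop:trace_of_H^infty} applies and the identification is immediate. This handles both assertions simultaneously.
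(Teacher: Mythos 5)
Your proposal is correct and follows essentially the same route as the paper: well-posedness supplies the bounded analytic extension $T$ of $z\mapsto(zM(z)+A)^{-1}$, one identifies $\left(\overline{\partial_{0,\rho}M(\partial_{0,\rho})+A}\right)^{-1}$ with $T(\partial_{0,\rho})$, and both conclusions then follow from \prettyref{thm:material_law_good}. The only difference is that you spell out the identification step (via the maximal multiplication operator $S_{\rho}$ and the remark identifying $T_{\rho}$ with $T(\i\m+\rho)$ in the open half-plane), which the paper's proof leaves implicit.
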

\begin{proof}
By assumption there exists a bounded analytic mapping $T:\mathbb{C}_{\Re>\rho_{0}}\to L(H)$
for each $\rho_{0}>s_{0}(M,A)$ such that $T(z)=(zM(z)+A)^{-1}$ for
each $z\in D(M).$ In particular, for $\rho\in S_{M}$ we derive 
\[
T(\partial_{0,\rho})=\left(\overline{\partial_{0,\rho}M(\partial_{0,\rho})+A}\right)^{-1}
\]
and hence, the assertion follows from \prettyref{thm:material_law_good}.
\end{proof}
Summarizing, we have shown that our definition of well-posedness yields
a bounded solution operator, which is causal and independent of the
particular choice of the parameter $\rho$ in the sense of \prettyref{thm:material_law_good}.
The latter two properties strongly rely on the assumption (b) in the
definition of well-posedness. We now show that assumption (b) is not
only sufficient for causality and independence of $\rho$, but also
necessary, that is, we prove a converse statement of \prettyref{thm:material_law_good}.
For doing so, we need the following representation result.
\begin{thm}
\label{thm:weiss}Let $T:L_{2}(\mathbb{R};H)\to L_{2}(\mathbb{R};H)$
a bounded, translation-invariant, causal linear operator. Then there
exists an analytic and bounded mapping $N:\mathbb{C}_{\Re>0}\to L(H)$
satisfying $\|N\|_{\infty}\leq\|T\|$, such that for each $f\in L_{2}(\mathbb{R}_{\geq0};H)$
one has 
\[
\widehat{Tf}(z)=N(z)\hat{f}(z)\quad(z\in\mathbb{C}_{\Re>0}).
\]

\end{thm}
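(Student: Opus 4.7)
The strategy is to carry $T$ via the Paley--Wiener theorem to a bounded operator $\tilde T$ on the Hardy space $\mathcal H^{2}(\mathbb{C}_{\Re>0};H)$ and to realize $\tilde T$ as multiplication by an analytic $L(H)$-valued function $N$. By \prettyref{lem:char_causality}(b), causality of $T$ implies that $T$ leaves the closed subspace $L_{2}(\mathbb{R}_{\geq0};H)\subseteq L_{2}(\mathbb{R};H)$ invariant, and by \prettyref{cor:Paley_Wiener_unitary} the Laplace transform $f\mapsto\hat f$ is a unitary map from this subspace onto $\mathcal H^{2}(\mathbb{C}_{\Re>0};H)$. Denote by $\tilde T$ the induced operator on $\mathcal H^{2}$, so $\|\tilde T\|\leq\|T\|$. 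Combining $\widehat{\tau_{-h}f}(z)=e^{-zh}\hat f(z)$ with translation invariance of $T$ shows that $\tilde T$ commutes with multiplication by $e^{-zh}$ for every $h\geq0$.

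For $y\in H$ set $g_{y}(t):=\chi_{[0,\infty)}(t)e^{-t}y$, so that $\hat g_{y}(z)=y/[\sqrt{2\pi}(z+1)]$, and define
\[
N(z)y\;:=\;\sqrt{2\pi}(z+1)\bigl(\tilde T\hat g_{y}\bigr)(z)\qquad(z\in\mathbb{C}_{\Re>0}).
\]
Since $\tilde T\hat g_{y}\in\mathcal H^{2}$, the map $z\mapsto N(z)y$ is analytic, linear in $y$, and for each fixed $z$ the operator $N(z)$ lies in $L(H)$ as a composition of the bounded maps $y\mapsto g_{y}$, $\tilde T$, and point evaluation. Commutation with $e^{-zh}$-multipliers yields, for every $h\geq0$, the identity $\tilde T(e^{-zh}\hat g_{y})=N\cdot(e^{-zh}\hat g_{y})$, so $\tilde T$ agrees with multiplication by $N$ on the linear span $\mathcal V$ of $\{z\mapsto e^{-zh}y/(z+1):h\geq0,\,y\in H\}$. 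The pre-image of $\mathcal V$ under the Laplace transform contains $e^{-h_{1}}\tau_{-h_{1}}g_{y}-e^{-h_{2}}\tau_{-h_{2}}g_{y}=\chi_{[h_{1},h_{2})}(t)e^{-t}y$, and a standard cut-off and simple-function approximation then establishes density of $\mathcal V$ in $\mathcal H^{2}$.

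The decisive step is the uniform bound $\|N(z_{0})\|_{L(H)}\leq\|T\|$ for every $z_{0}\in\mathbb{C}_{\Re>0}$, obtained via the reproducing-kernel structure of $\mathcal H^{2}(\mathbb{C}_{\Re>0};H)$. For a unit $y\in H$ the function $F(z):=y/(z+\overline{z_{0}})$ lies in $\mathcal H^{2}$ with $\|F\|_{\mathcal H^{2}}^{2}=\pi/\Re z_{0}$, and point evaluation obeys $|G(z_{0})|_{H}\leq\|G\|_{\mathcal H^{2}}/(2\sqrt{\pi\,\Re z_{0}})$ for all $G\in\mathcal H^{2}$. Approximating $F$ by a sequence $F_{n}\in\mathcal V$ and passing to the limit in $\mathcal H^{2}$ in the multiplication identity $\tilde T F_{n}=N\cdot F_{n}$ (valid pointwise for each fixed $z$ since $N(z)$ is bounded on $H$) yields $\tilde T F=N\cdot F$ in $\mathcal H^{2}$. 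Evaluating at $z_{0}$ produces $(\tilde T F)(z_{0})=N(z_{0})y/(2\Re z_{0})$, and combining the point-evaluation bound with $\|\tilde T F\|\leq\|\tilde T\|\|F\|$ gives $|N(z_{0})y|_{H}\leq\|\tilde T\|\leq\|T\|$. With this uniform bound in hand, $\tilde T$ and multiplication by $N$ are bounded operators on $\mathcal H^{2}$ that agree on the dense set $\mathcal V$, hence they coincide, and translating back to the time domain yields $\widehat{Tf}(z)=N(z)\hat f(z)$ for all $f\in L_{2}(\mathbb{R}_{\geq0};H)$ and $z\in\mathbb{C}_{\Re>0}$. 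The main obstacle is this reproducing-kernel estimate, which requires upgrading the multiplication identity from $\mathcal V$ to the kernel element $F$ before a uniform bound on $N$ is known.
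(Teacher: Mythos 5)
Your proof is correct, but it takes a genuinely different route from the paper's. The paper works in the time domain with the \emph{adjoint}: it shows that the compressed function $\chi_{\mathbb{R}_{\geq0}}(\m)T^{\ast}g_{z,y}$ with $g_{z,y}(t)=\chi_{\mathbb{R}_{\geq0}}(t)\e^{-zt}y$ satisfies the functional equation $f(t+h)=\e^{-zh}f(t)$, invokes \prettyref{lem:pre_weiss} to conclude that it is therefore an exact exponential $\e^{-zt}L(z)y$, reads off the bound $\|L(z)\|\leq\|T\|$ from a closed-form computation of $|T^{\ast}g_{z,y}|_{L_{2}(\mathbb{R}_{\geq0};H)}$, and obtains the multiplier identity by the adjoint pairing, setting $N(z)=L(z^{\ast})^{\ast}$. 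You instead work forward in the Hardy space: you observe that $\tilde T$ commutes with multiplication by $\e^{-zh}$, define $N$ from the image of a single kernel function, extend the multiplication identity to the dense span $\mathcal V$, and extract the sharp bound by evaluating against the reproducing kernel $y/(z+\overline{z_{0}})$. These are in fact dual formulations of the same mechanism: the paper's $g_{z,y}$ is (up to normalization) the time-domain reproducing kernel, and its conclusion that $T^{\ast}g_{z^{\ast},y}=g_{z^{\ast},L(z^{\ast})y}$ on $\mathbb{R}_{\geq0}$ is precisely the adjoint statement that point evaluations intertwine with $N(z)^{\ast}$. What the paper's route buys is that the representation and the norm bound come out in one stroke from the eigenfunction property, with no need for the density of $\mathcal V$ or for upgrading the identity from a dense set to the kernel element; what your route buys is that it avoids the adjoint and the auxiliary lemma entirely and sits closer to the standard ``commutant of the shift semigroup equals $\mathcal H^{\infty}$ multipliers'' picture. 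Two small points worth making explicit in a write-up: the passage from strong analyticity of $z\mapsto N(z)y$ plus the uniform bound to analyticity of $N$ in the operator norm requires the Hille--Phillips argument the paper cites at the end of its own proof, and your bound is really $\|N\|_{\infty}\leq\|\tilde T\|=\|T|_{L_{2}(\mathbb{R}_{\geq0};H)}\|\leq\|T\|$, which is all the theorem asks for.
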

\nomenclature[Z_050]{$\hat{f}$}{the Fourier transform of a function $f$.}The
latter result is a special case of \cite[Theorem 2]{Foures1955},
where also the case $L_{2}(\mathbb{R}^{n};H)$ for $n>1$ is considered
and causality is defined with respect to a closed convex cone in $\mathbb{R}^{n}.$
However, for our purposes it suffices to consider the case $n=1$.
For proving the latter theorem we follow the rationale given in \cite{Weiss1991}.
We begin with the following lemma.
\begin{lem}
\label{lem:pre_weiss}Let $f\in L_{2}(\mathbb{R}_{\geq0};H)$. We
assume there exists $z\in\mathbb{C}_{\Re>0}$ such that for each $h\geq0$
we have that 
\[
f(t+h)=\e^{-zh}f(t)\quad(t\in\mathbb{R}_{\geq0}\mbox{ a.e.}).
\]
Then $f\in L_{1}(\mathbb{R}_{\geq0};H)$ and 
\[
f(t)=\e^{-zt}z\intop_{0}^{\infty}f(s)\,\dd s\quad(t\in\mathbb{R}_{\geq0}\mbox{ a.e.}).
\]
\end{lem}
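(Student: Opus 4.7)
The plan is to recognise that the hypothesis forces $f$ to coincide almost everywhere with an exponential $t\mapsto \e^{-zt}y$; both conclusions of the lemma then follow immediately with $y = z\int_{0}^{\infty}f(s)\,\dd s$. To set things up, I introduce the antiderivative $F\colon \mathbb{R}_{\geq 0}\to H$ given by $F(T)\coloneqq \int_{0}^{T} f(s)\,\dd s$, which is well defined and continuous because $f\in L_{2}(\mathbb{R}_{\geq 0};H)$ is locally integrable. For each fixed $h\geq 0$, the hypothesis $f(t+h) = \e^{-zh}f(t)$ holds for almost every $t$; integrating in $t$ over $[0,T]$ (so that the $h$-dependent null set becomes irrelevant) and applying the substitution $s=t+h$ produces the clean pointwise identity
\[
F(T+h)-F(h) \;=\; \int_{0}^{T}f(t+h)\,\dd t \;=\; \e^{-zh}F(T) \qquad (T,h\geq 0).
\]

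The key step is a symmetry observation. Since $T$ and $h$ play interchangeable roles in $F(T+h)$, swapping them in the identity above yields $F(T+h) = F(T) + \e^{-zT}F(h)$ as well; equating the two expressions for $F(T+h)$ produces
\[
F(h)\bigl(1-\e^{-zT}\bigr) \;=\; F(T)\bigl(1-\e^{-zh}\bigr) \qquad (T,h\geq 0).
\]
Since $\Re z > 0$, one has $1-\e^{-zT}\neq 0$ for $T>0$, and so the quotient $F(T)/(1-\e^{-zT})$ is independent of $T\in \mathbb{R}_{>0}$. Writing $y_{0}\in H$ for this common value, I obtain $F(T) = y_{0}\bigl(1-\e^{-zT}\bigr) = z\,y_{0}\int_{0}^{T}\e^{-zs}\,\dd s$ for every $T\geq 0$.

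To finish, Lebesgue's differentiation theorem (or simply uniqueness of the antiderivative in $L_{1,\mathrm{loc}}$) forces $f(t) = z\e^{-zt}y_{0}$ for almost every $t\geq 0$. Since $\Re z > 0$, this representation immediately yields $f\in L_{1}(\mathbb{R}_{\geq 0};H)$ and, after a direct integration, identifies $y_{0} = \int_{0}^{\infty} f(s)\,\dd s$, which gives the asserted formula. The genuine difficulty in this argument lies in the first step: turning the $h$-dependent a.e.\ functional equation for $f$ into a \emph{clean} pointwise identity for its antiderivative $F$; once this is done, the symmetry in $T$ and $h$ collapses the problem to elementary algebra.
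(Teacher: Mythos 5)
Your proof is correct, but it takes a genuinely different route from the paper's. The paper first establishes $f\in L_{1}(\mathbb{R}_{\geq0};H)$ directly, by splitting $\mathbb{R}_{\geq0}$ into unit intervals, applying the hypothesis with $h=n$ to get $|f(t+n)|_{H}=\e^{-(\Re z)n}|f(t)|_{H}$, and summing the resulting geometric series; only then does it introduce the \emph{tail} function $F(t)=\intop_{t}^{\infty}f(r)\,\dd r$, verify $F(t)=\e^{-zt}F(0)$ by a change of variables, and differentiate (using Lebesgue's differentiation theorem for $F'=-f$ a.e.). You instead work with the antiderivative $F(T)=\intop_{0}^{T}f$, which requires only local integrability of $f$ (immediate from $L_{2}$ by Cauchy--Schwarz on bounded intervals), convert the $h$-dependent a.e.\ identity into the clean two-variable functional equation $F(T+h)-F(h)=\e^{-zh}F(T)$ by integrating in $t$, and then exploit the symmetry of $F(T+h)$ in $(T,h)$ to solve the equation in closed form. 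This is a legitimate and rather elegant trade: you avoid the preliminary summability estimate entirely, since $f\in L_{1}$ falls out of the explicit representation $f(t)=z\e^{-zt}y_{0}$ rather than being needed to set up the argument; the cost is the extra algebraic step of identifying $y_{0}=\intop_{0}^{\infty}f(s)\,\dd s$ at the end, whereas in the paper's version the constant $F(0)$ is this integral by definition. Both arguments ultimately rest on the same invocation of Lebesgue differentiation (uniqueness of the a.e.\ derivative of an absolutely continuous primitive), and all your individual steps check out: the null set in the hypothesis depends on $h$ but is harmless under integration in $t$, and $1-\e^{-zT}\neq0$ for $T>0$ because $\Re z>0$.
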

\begin{proof}
We first show that $f\in L_{1}(\mathbb{R}_{\geq0};H).$ For doing
so, we compute 
\begin{align*}
\intop_{0}^{\infty}|f(t)|_{H}\mbox{ d}t & =\sum_{n=0}^{\infty}\intop_{n}^{n+1}|f(t)|_{H}\mbox{ d}t\\
 & =\sum_{n=0}^{\infty}\intop_{0}^{1}|f(t+n)|_{H}\mbox{ d}t\\
 & =\sum_{n=0}^{\infty}\intop_{0}^{1}\e^{-\left(\Re z\right)n}|f(t)|_{H}\mbox{ d}t\\
 & \leq|f|_{L_{2}}\frac{1}{1-\e^{-\Re z}}<\infty.
\end{align*}
We define the function $F(t)\coloneqq\intop_{t}^{\infty}f(r)\mbox{ d}r$
for $t\in\mathbb{R}_{\geq0}$, which is well-defined and continuous,
since $f\in L_{1}(\mathbb{R}_{\geq0};H).$ Then we obtain for each
$t\geq0$
\begin{align*}
F(t) & =\intop_{t}^{\infty}f(s)\mbox{ d}s\\
 & =\intop_{0}^{\infty}f(s+t)\mbox{ d}s\\
 & =\e^{-zt}F(0)
\end{align*}
for each $t\in\mathbb{R}_{\geq0},$ which proves that $F$ is continuously
differentiable with $F'(t)=\e^{-zt}(-zF(0)).$ On the other hand,
\prettyref{thm:Lebesgue} gives 
\[
F'(t)=-f(t)\quad(t\in\mathbb{R}_{\geq0}\mbox{ a.e.})
\]
and hence, the assertion follows.
\end{proof}
With this preparation we are able to prove \prettyref{thm:weiss}.
\begin{proof}[Proof of \prettyref{thm:weiss}]
 Let $z\in\mathbb{C}_{\Re>0},y\in H$ and define 
\[
g_{z,y}(t)\coloneqq\chi_{\mathbb{R}_{\geq0}}(t)\e^{-zt}y\quad(t\in\mathbb{R}).
\]
Then for $h\geq0$ we have $\left(\chi_{\mathbb{R}_{\geq0}}(\m)\tau_{h}g_{z,y}\right)(t)=\chi_{\mathbb{R}_{\geq0}}(t)g_{z,y}(t+h)=\chi_{\mathbb{R}_{\geq0}}(t)\e^{-z(t+h)}y=\e^{-zh}g_{z,y}(t)$
for $t\in\mathbb{R}.$ Since $T$ is causal we have $\chi_{\mathbb{R}_{\leq0}}(\m)T=\chi_{\mathbb{R}_{\leq0}}(\m)T\chi_{\mathbb{R}_{\leq0}}(\m)$
by \prettyref{lem:char_causality} and hence, 
\[
\chi_{\mathbb{R}_{\geq0}}(\m)T\chi_{\mathbb{R}_{\geq0}}(\m)=T\chi_{\mathbb{R}_{\geq0}}(\m)-\chi_{\mathbb{R}_{\leq0}}(\m)T\chi_{\mathbb{R}_{\geq0}}(\m)=T\chi_{\mathbb{R}_{\geq0}}(\m).
\]
The latter gives $\chi_{\mathbb{R}_{\geq0}}(\m)T^{\ast}\chi_{\mathbb{R}_{\geq0}}(\m)=\chi_{\mathbb{R}_{\geq0}}(\m)T^{\ast}$
and hence we obtain, by using that $T^{\ast}$ is translation-invariant
as well, 
\begin{align*}
\chi_{\mathbb{R}_{\geq0}}(\m)\tau_{h}\chi_{\mathbb{R}_{\geq0}}(\m)T^{\ast}g_{z,y} & =\chi_{\mathbb{R}_{\geq0}}(\m)\chi_{\mathbb{R}_{\geq-h}}(\m)\tau_{h}T^{\ast}g_{z,y}\\
 & =\chi_{\mathbb{R}_{\geq0}}(\m)T^{\ast}\tau_{h}g_{z,y}\\
 & =\chi_{\mathbb{R}_{\geq0}}(\m)T^{\ast}\chi_{\mathbb{R}_{\geq0}}(\m)\tau_{h}g_{z,y}\\
 & =\chi_{\mathbb{R}_{\geq0}}(\m)T^{\ast}\e^{-zh}g_{z,y}\\
 & =\e^{-zh}\chi_{\mathbb{R}_{\geq0}}(\m)T^{\ast}g_{z,y},
\end{align*}
for each $h\geq0.$ Thus, the function $\chi_{\mathbb{R}_{\geq0}}(\m)T^{\ast}g_{z,y}$
satisfies the hypothesis of \prettyref{lem:pre_weiss} and hence,
\begin{equation}
\left(T^{\ast}g_{z,y}\right)(t)=\e^{-zt}z\intop_{0}^{\infty}\left(T^{\ast}g_{z,y}\right)(s)\mbox{ d}s\quad(t\in\mathbb{R}_{\geq0}\mbox{ a.e.}).\label{eq:repr_adjoint}
\end{equation}
For $z\in\mathbb{C}_{\Re>0}$ we consider now the mapping 
\begin{align*}
L(z):H & \to H\\
y & \mapsto z\intop_{0}^{\infty}\left(T^{\ast}g_{z,y}\right)(s)\mbox{ d}s.
\end{align*}
Then, $L(z)$ is linear, since for $w,y\in H$ and $\lambda\in\mathbb{C}$
we have that $g_{z,\lambda w+y}=\lambda g_{z,w}+g_{z,y}$ and hence,
the linearity of $T^{\ast}$ implies the linearity of $L(z).$ Furthermore
$L(z)$ is bounded with $\|L(z)\|\leq\|T\|$. Indeed, for $y\in H$
Equation \prettyref{eq:repr_adjoint} gives
\begin{align*}
|T^{\ast}g_{z,y}|_{L_{2}(\mathbb{R}_{\geq0};H)}^{2} & =\intop_{0}^{\infty}\left|\left(T^{\ast}g_{z,y}\right)(t)\right|_{H}^{2}\mbox{ d}t\\
 & =\intop_{0}^{\infty}\left|\e^{-zt}L(z)y\right|_{H}^{2}\mbox{ d}t\\
 & =|L(z)y|_{H}^{2}\frac{1}{2\Re z}
\end{align*}
and hence, 
\begin{align*}
\frac{1}{\sqrt{2\Re z}}|L(z)y|_{H} & =|T^{\ast}g_{z,y}|_{L_{2}(\mathbb{R}_{\geq0};H)}\\
 & \leq\|T^{\ast}\||g_{z,y}|_{L_{2}}\\
 & =\|T\|\frac{1}{\sqrt{2\Re z}}|y|_{H}.
\end{align*}
Summarizing, we have shown that the mapping $L:\mathbb{C}_{\Re>0}\to L(H)$
is well-defined and bounded with $\|L\|_{\infty}\leq\|T\|$. Let now
$f\in L_{2}(\mathbb{R}_{\geq0};H).$ Then for $z\in\mathbb{C}_{\Re>0}$
and $y\in H$ we have (note that $Tf=0$ on $\mathbb{R}_{\leq0}$
due to causality) 
\begin{align*}
\langle y|\widehat{Tf}(z)\rangle_{H} & =\left\langle y\left|\frac{1}{\sqrt{2\pi}}\intop_{\mathbb{R}_{\geq0}}\e^{-zs}\left(Tf\right)(s)\mbox{ d}s\right.\right\rangle _{H}\\
 & =\frac{1}{\sqrt{2\pi}}\langle g_{z^{\ast},y}|Tf\rangle_{L_{2}(\mathbb{R}_{\geq0};H)}\\
 & =\frac{1}{\sqrt{2\pi}}\langle T^{\ast}g_{z^{\ast},y}|f\rangle_{L_{2}(\mathbb{R}_{\geq0};H)}\\
 & =\frac{1}{\sqrt{2\pi}}\langle L(z^{\ast})y\e^{-z^{\ast}\m}|f\rangle_{L_{2}(\mathbb{R}_{\geq0};H)}\\
 & =\left\langle L(z^{\ast})y\left|\hat{f}(z)\right.\right\rangle _{H}\\
 & =\langle y|L(z^{\ast})^{\ast}\hat{f}(z)\rangle_{H},
\end{align*}
and thus 
\[
\widehat{Tf}(z)=N(z)\hat{f}(z)\quad(z\in\mathbb{C}_{\Re>0})
\]
with $N(z)\coloneqq L(z^{\ast})^{\ast}.$ It remains to show that
$z\mapsto N(z)\in L(H)$ is analytic. This however follows easily
by applying the latter formula to $f=\sqrt{2\pi}g_{1,y}$ for $y\in H.$
Then $\hat{f}(z)=\frac{1}{z+1}y$ for $z\in\mathbb{C}_{\Re>0}$ and
thus, 
\[
N(z)y=(z+1)\widehat{Tf}(z).
\]
As the right-hand side is analytic on $\mathbb{C}_{\Re>0}$, so is
the left-hand side and hence, the analyticity of $z\mapsto N(z)$
follows from \cite[Theorem 3.10.1]{hille1957functional}.
\end{proof}
Indeed, \prettyref{thm:weiss} yields the converse of \prettyref{thm:material_law_good}
in the following sense.
\begin{prop}
Let $\rho_{0}\in\mathbb{R}$ and $\mathcal{T}:\bigcap_{\rho\geq\rho_{0}}H_{\rho}(\mathbb{R};H)\to\bigcap_{\rho\geq\rho_{0}}H_{\rho}(\mathbb{R};H)$
such that for each $\rho\geq\rho_{0}$, $\mathcal{T}$ has a bounded,
translation-invariant and causal extension $\mathcal{T}_{\rho}\in L(H_{\rho}(\mathbb{R};H))$
and $\sup_{\rho\geq\rho_{0}}\|\mathcal{T}_{\rho}\|<\infty.$ Then
there exists $T:\mathbb{C}_{\Re>\rho_{0}}\to L(H)$ analytic and bounded
such that $\mathcal{T}_{\rho}=T(\partial_{0,\rho})$ for each $\rho\geq\rho_{0}.$ \end{prop}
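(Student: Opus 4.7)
The plan is to reduce to Theorem \ref{thm:weiss} by transforming each $\mathcal{T}_\rho$ to an operator on $L_2(\mathbb{R};H)$ and then glueing the resulting symbols into a single analytic function on $\mathbb{C}_{\Re>\rho_0}$.

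First, I would set $\tilde{\mathcal{T}}_\rho \coloneqq \e^{-\rho \m} \mathcal{T}_\rho \e^{\rho \m}$ for each $\rho\geq\rho_0$. Since $\e^{-\rho\m}:H_\rho(\mathbb{R};H)\to L_2(\mathbb{R};H)$ is unitary, $\tilde{\mathcal{T}}_\rho\in L(L_2(\mathbb{R};H))$ with the same norm as $\mathcal{T}_\rho$. A short computation using $\e^{\rho\m}\tau_h = \e^{\rho h}\tau_h\e^{\rho\m}$ shows that $\tilde{\mathcal{T}}_\rho$ commutes with $\tau_h$, and the multiplicative factors $\e^{\pm\rho\m}$ manifestly preserve the support in $]-\infty,t]$, so $\tilde{\mathcal{T}}_\rho$ is causal as well by \prettyref{lem:char_causality}. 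Applying \prettyref{thm:weiss} to $\tilde{\mathcal{T}}_\rho$ yields an analytic bounded $N_\rho:\mathbb{C}_{\Re>0}\to L(H)$ with $\|N_\rho\|_\infty\leq\|\mathcal{T}_\rho\|$ and $\widehat{\tilde{\mathcal{T}}_\rho g}(w)=N_\rho(w)\hat g(w)$ for all $g\in L_2(\mathbb{R}_{\geq 0};H)$ and $w\in\mathbb{C}_{\Re>0}$.

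The key translation step is to rewrite this formula on the level of $\mathcal{T}$ itself. For $\varphi\in C_c^\infty(\mathbb{R}_{\geq 0};H)\subseteq\bigcap_{\rho\geq\rho_0}H_\rho(\mathbb{R};H)$, the function $\e^{-\rho\m}\varphi$ lies in $L_2(\mathbb{R}_{\geq 0};H)$ and $\tilde{\mathcal{T}}_\rho\e^{-\rho\m}\varphi=\e^{-\rho\m}\mathcal{T}\varphi$. Using $\widehat{\e^{-\rho\m}f}(w)=\hat f(w+\rho)$, one arrives at the shifted identity
\[
\widehat{\mathcal{T}\varphi}(z)=N_\rho(z-\rho)\hat\varphi(z)\quad(z\in\mathbb{C}_{\Re>\rho},\;\rho\geq\rho_0).
\]
Since the left-hand side is intrinsic to $\mathcal{T}$ and, for each fixed $z$, $\hat\varphi(z)$ ranges over a dense subset of $H$ as $\varphi$ varies in $C_c^\infty(\mathbb{R}_{\geq 0};H)$ (take $\varphi=\psi\otimes y$ with $\hat\psi(z)\neq 0$), the operator $N_\rho(z-\rho)\in L(H)$ is independent of $\rho\in[\rho_0,\Re z[$. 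This lets me define unambiguously
\[
T(z)\coloneqq N_\rho(z-\rho)\quad\text{for any }\rho\in[\rho_0,\Re z[.
\]
Analyticity and the uniform bound $\|T\|_\infty\leq\sup_{\rho\geq\rho_0}\|\mathcal{T}_\rho\|$ are inherited from the $N_\rho$, with $\mathbb{C}_{\Re>\rho_0}=\bigcup_{\rho\geq\rho_0}\mathbb{C}_{\Re>\rho}$ providing the covering by domains of analyticity.

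It remains to identify $\mathcal{T}_\rho$ with $T(\partial_{0,\rho})$. For $\varphi\in C_c^\infty(\mathbb{R};H)$ and any sufficiently negative $h$, $\tau_h\varphi\in C_c^\infty(\mathbb{R}_{\geq 0};H)$, so the displayed identity applies to $\tau_h\varphi$; combining with translation-invariance of $\mathcal{T}$ and the relation $\widehat{\tau_h\varphi}(z)=\e^{hz}\hat\varphi(z)$ eliminates the factor $\e^{hz}$ and yields $\widehat{\mathcal{T}\varphi}(z)=T(z)\hat\varphi(z)$ for all $\varphi\in C_c^\infty(\mathbb{R};H)$ and $\Re z>\rho_0$. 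Specializing $z=\i t+\rho$ gives $\mathcal{L}_\rho\mathcal{T}_\rho\varphi=T(\i\m+\rho)\mathcal{L}_\rho\varphi$, i.e., $\mathcal{T}_\rho\varphi=T(\partial_{0,\rho})\varphi$ on the dense subspace $C_c^\infty(\mathbb{R};H)\subseteq H_\rho(\mathbb{R};H)$, and the bounded extension finishes the proof. I expect the main technical point to be the careful bookkeeping of the Laplace shift in the second paragraph (and verifying that the identity derived for supports in $\mathbb{R}_{\geq 0}$ transfers to general test functions via translation-invariance); everything else is essentially a packaging of Theorem \ref{thm:weiss}.
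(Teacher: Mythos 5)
Your argument is essentially the paper's: conjugate $\mathcal{T}_{\rho}$ by $\e^{-\rho\m}$, apply \prettyref{thm:weiss} to get symbols $N_{\rho}$, glue them via the consistency relation $N_{\rho}(z-\rho)=N_{\rho'}(z-\rho')$, and identify $\mathcal{T}_{\rho}$ with $T(\partial_{0,\rho})$ by density of $C_{c}^{\infty}(\mathbb{R};H)$ and translation-invariance. Your consistency argument (letting $\hat{\varphi}(z)$ range over $H$ via $\varphi=\psi\otimes y$) replaces the paper's specific exponential test function, and your definition $T(z)\coloneqq N_{\rho}(z-\rho)$ for any $\rho\in[\rho_{0},\Re z[$ is if anything cleaner than the paper's midpoint choice $\rho=\frac{1}{2}(\Re z+\rho_{0})$; both are fine.

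The one point you leave open is the endpoint $\rho=\rho_{0}$. Your closing identity $\mathcal{L}_{\rho}\mathcal{T}_{\rho}\varphi=T(\i\m+\rho)\mathcal{L}_{\rho}\varphi$ only makes literal sense for $\rho>\rho_{0}$, since $T$ is defined on the open half-plane and $T(\partial_{0,\rho_{0}})$ is not given by pointwise evaluation of $T$ on the line $\Re z=\rho_{0}$ but by the boundary-limit operator $T_{\rho_{0}}$ of \prettyref{prop:trace_of_H^infty}. So the claim $\mathcal{T}_{\rho_{0}}=T(\partial_{0,\rho_{0}})$ needs a separate (short) argument: for $\varphi\in C_{c}^{\infty}(\mathbb{R};H)$ one has $T(\partial_{0,\rho_{0}})\varphi=T(\partial_{0,\rho})\varphi$ for $\rho>\rho_{0}$ by \prettyref{thm:material_law_good}, and the right-hand side equals $\mathcal{T}\varphi=\mathcal{T}_{\rho_{0}}\varphi$; density then finishes it. This is exactly how the paper closes the argument, and once you add that line your proof is complete.
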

\begin{proof}
Let $\rho>\rho_{0}.$ Then, $S_{\rho}\coloneqq\e^{-\rho\m}\mathcal{T}_{\rho}\left(\e^{-\rho\m}\right)^{-1}:L_{2}(\mathbb{R};H)\to L_{2}(\mathbb{R};H)$
is bounded, causal and translation-invariant, where $\e^{-\rho\m}:H_{\rho}(\mathbb{R};H)\to L_{2}(\mathbb{R};H)$
is the unitary mapping defined by $\left(\e^{-\rho\m}f\right)(t)\coloneqq\e^{-\rho t}f(t)$
for each $t\in\mathbb{R},f\in H_{\rho}(\mathbb{R};H).$ Thus, by \prettyref{thm:weiss}
there is an analytic function $N_{\rho}:\mathbb{C}_{\Re>0}\to L(H)$
with $\|N_{\rho}\|_{\infty}\leq\|S_{\rho}\|=\|\mathcal{T}_{\rho}\|$
and 
\[
\widehat{S_{\rho}f}(z)=N_{\rho}(z)\hat{f}(z)\quad(z\in\mathbb{C}_{\Re>0})
\]
for each $f\in L_{2}(\mathbb{R}_{\geq0};H).$ Let $u\in\bigcap_{\rho>\rho_{0}}H_{\rho}(\mathbb{R};H)$
with $u=0$ on $\mathbb{R}_{\leq0}$. Then we obtain 
\[
\widehat{\left(\e^{-\rho\m}\mathcal{T}u\right)}(z)=\widehat{\left(S_{\rho}\e^{-\rho\m}u\right)}(z)=N_{\rho}(z)\widehat{\left(\e^{-\rho\m}u\right)}(z)\quad(z\in\mathbb{C}_{\Re>0}),
\]
or in other words (set $z=\i t+\nu-\rho$ for $\nu>\rho$): 
\begin{equation}
\mathcal{L}_{\nu}\mathcal{T}u=N_{\rho}(\i\m+\nu-\rho)\mathcal{L}_{\nu}u,\label{eq:representation on intersection}
\end{equation}
for each $\nu>\rho.$ Let now $\rho'>\rho_{0}$ and set $u(t)\coloneqq\sqrt{2\pi}\chi_{\mathbb{R}_{\geq0}}(t)\e^{\rho_{0}t}x$
for some $x\in H$ and $t\in\mathbb{R}$. Then the above gives for
$\nu>\max\{\rho,\rho'\}$ and $t\in\mathbb{R}$ 
\begin{align*}
\frac{1}{\i t+\nu-\rho_{0}}N_{\rho}(\i t+\nu-\rho)x & =N_{\rho}(\i t+\nu-\rho)\left(\mathcal{L}_{\nu}u\right)(t)\\
 & =\left(\mathcal{L}_{\nu}\mathcal{T}u\right)(t)\\
 & =N_{\rho'}(\i t+\nu-\rho')\left(\mathcal{L}_{\nu}u\right)(t)\\
 & =\frac{1}{\i t+\nu-\rho_{0}}N_{\rho'}(\i t+\nu-\rho')x,
\end{align*}
which yields
\begin{equation}
N_{\rho}(z-\rho)=N_{\rho'}(z-\rho')\quad(z\in\mathbb{C}_{\Re>\max\{\rho,\rho'\}}).\label{eq:identitiy_th}
\end{equation}
We define the function 
\begin{align*}
T:\mathbb{C}_{\Re>\rho_{0}} & \to L(H)\\
z & \mapsto N_{\frac{1}{2}\left(\Re z+\rho_{0}\right)}\left(z-\frac{1}{2}\left(\Re z+\rho_{0}\right)\right).
\end{align*}
Then $T$ is bounded, since $|T(z)|\leq\|N_{\frac{1}{2}(\Re z+\rho_{0})}\|_{\infty}\leq\|\mathcal{T}_{\frac{1}{2}(\Re z+\rho_{0})}\|\leq\sup_{\rho>\rho_{0}}\|\mathcal{T}_{\rho}\|$
for each $z\in\mathbb{C}_{\Re>\rho_{0}}.$ Moreover, $T$ is analytic.
Indeed, for $z\in\mathbb{C}_{\Re>\rho_{0}}$ and $r\coloneqq\frac{1}{2}(\Re z-\rho_{0})$
we have that 
\[
T(z')=N_{\frac{1}{2}(\Re z'+\rho_{0})}\left(z'-\frac{1}{2}(\Re z'+\rho_{0})\right)=N_{\Re z-r}(z'-\left(\Re z-r\right))\quad(z'\in B_{\mathbb{C}}(z,r))
\]
by \prettyref{eq:identitiy_th} and so the analyticity of $T$ on
$B_{\mathbb{C}}(z,r)$ follows from the analyticity of $N_{\Re z-r}$
on $\mathbb{C}_{\Re>0}$. It remains to show $\mathcal{T}_{\rho}=T(\partial_{0,\rho})$
for each $\rho>\rho_{0}.$ So let $\rho>\rho_{0}$ and $\varphi\in C_{c}^{\infty}(\mathbb{R};H).$
We set $h\coloneqq\inf\spt\varphi$ and get $\spt\tau_{h}\varphi\subseteq\mathbb{R}_{\geq0}.$
Equality \prettyref{eq:representation on intersection} now gives
\begin{align*}
\mathcal{L}_{\rho}\mathcal{T}\tau_{h}\varphi & =N_{\frac{1}{2}(\rho+\rho_{0})}\left(\i\m+\rho-\frac{1}{2}(\rho+\rho_{0})\right)\mathcal{L}_{\rho}\tau_{h}\varphi\\
 & =T(\i\m+\rho)\mathcal{L}_{\rho}\tau_{h}\varphi.
\end{align*}
Using the translation-invariance of $\mathcal{T}$ and the fact that
$\mathcal{L}_{\rho}\tau_{h}=\e^{\left(\i t+\rho\right)h}\mathcal{L}_{\rho}$
we get 
\[
\mathcal{L}_{\rho}\mathcal{T}\varphi=T(\i\m+\rho)\mathcal{L}_{\rho}\varphi.
\]
Since $C_{c}^{\infty}(\mathbb{R};H)$ is dense in $H_{\rho}(\mathbb{R};H),$
we get $\mathcal{T}_{\rho}=T(\partial_{0,\rho})$ for each $\rho>\rho_{0}$.
Moreover, since 
\[
T(\partial_{0,\rho_{0}})\varphi=T(\partial_{0,\rho})\varphi=\mathcal{T}\varphi,
\]
we also get the assertion for $\rho=\rho_{0}.$
\end{proof}

We conclude this section by giving some examples for operators $A$
and mappings $M$ yielding a well-posed evolutionary problem. One
important class of operators, which will be frequently used in the
forthcoming sections is the class of $m$-accretive operators, which
are defined as follows.
\begin{defn*}
Let $A:D(A)\subseteq H\to H$ be linear. The operator $A$ is called
\emph{accretive, }if for each $x\in D(A)$ we have 
\[
\Re\langle Ax|x\rangle_{H}\geq0.
\]
Moreover\emph{, $A$ }is called \emph{strictly accretive, }if there
exists some $c>0$ such that $A-c$ is accretive. Finally, $A$ is
called $m$-\emph{accretive, }if $A$ is accretive and there is some
$\lambda\in\mathbb{C}_{\Re>0}$ such that $\lambda+A$ is onto.\end{defn*}
\begin{rem}
The notion of accretive operators has a natural extension to operators
on Banach spaces and also to non-linear operators, or even relations.
For a deeper study of accretive and $m$-accretive operators we refer
to the monographs \cite{Brezis,showalter_book,papageogiou}. The letter
$m$ in the notion $m$-accretive refers to the word maximal. The
reason is, that $A$ is $m$-accretive if and only if it is accretive
and has no proper accretive extension in the set of binary relations
on $H$. This equivalence is known as Minty's Theorem, \cite{Minty}.
\end{rem}
We now show some useful properties of $m$-accretive operators.
\begin{prop}
\label{prop:prop-accretive}Let $A:D(A)\subseteq H\to H$ be $m$-accretive.
Then $A$ is densely defined and closed. Moreover, $\mu+A$ is boundedly
invertible for each $\mu\in\mathbb{C}_{\Re>0}$ and $\|(\mu+A)^{-1}\|\leq\frac{1}{\Re\mu}$. \end{prop}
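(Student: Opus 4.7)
The plan is to extract three things from the two hypotheses (accretivity plus surjectivity of some $\lambda+A$) by a standard perturbation argument. The main leverage is the a priori estimate that accretivity forces on $\mu+A$ for every $\mu\in\mathbb{C}_{\Re>0}$: for $x\in D(A)$,
\[
\Re\langle(\mu+A)x|x\rangle_{H}=\Re\mu\,|x|_{H}^{2}+\Re\langle Ax|x\rangle_{H}\geq\Re\mu\,|x|_{H}^{2},
\]
and combining this with Cauchy--Schwarz gives $|(\mu+A)x|_{H}\geq\Re\mu\,|x|_{H}$. This single inequality yields, for every $\mu\in\mathbb{C}_{\Re>0}$, injectivity of $\mu+A$ and the bound $\|(\mu+A)^{-1}\|\leq\frac{1}{\Re\mu}$ on its range.

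Next I would upgrade the special $\lambda$ from the definition to a full bijection. By hypothesis there is $\lambda\in\mathbb{C}_{\Re>0}$ with $R(\lambda+A)=H$; combined with the injectivity just obtained, $\lambda+A$ is a bijection with bounded inverse, hence closed, which forces $A$ to be closed. To get that $D(A)$ is dense, take $y\perp D(A)$. Surjectivity of $\lambda+A$ produces $x\in D(A)$ with $(\lambda+A)x=y$, so $0=\langle y|x\rangle_{H}=\langle(\lambda+A)x|x\rangle_{H}$; taking real parts and using accretivity gives $\Re\lambda\,|x|_{H}^{2}\leq0$, hence $x=0$ and $y=0$.

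The genuine work is promoting the bijectivity from the single $\lambda$ to every $\mu\in\mathbb{C}_{\Re>0}$. I would do this by a continuity/connectedness argument: set
\[
\Lambda\coloneqq\{\mu\in\mathbb{C}_{\Re>0}\,;\,\mu+A\text{ is bijective}\},
\]
which is nonempty. If $\mu\in\Lambda$ and $\nu\in\mathbb{C}_{\Re>0}$ satisfies $|\nu-\mu|<\Re\mu$, I write
\[
\nu+A=(\mu+A)\bigl(I+(\nu-\mu)(\mu+A)^{-1}\bigr),
\]
where the second factor is invertible by Neumann series since $|\nu-\mu|\cdot\|(\mu+A)^{-1}\|\leq\frac{|\nu-\mu|}{\Re\mu}<1$ from the a priori estimate applied to $\mu$. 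This shows $\Lambda$ is open in $\mathbb{C}_{\Re>0}$. For closedness in $\mathbb{C}_{\Re>0}$, a convergent sequence $\mu_{n}\to\mu\in\mathbb{C}_{\Re>0}$ with $\mu_{n}\in\Lambda$ eventually satisfies $|\mu-\mu_{n}|<\Re\mu_{n}$, so the same factorisation identifies $\mu$ with an element of $\Lambda$. Connectedness of $\mathbb{C}_{\Re>0}$ then gives $\Lambda=\mathbb{C}_{\Re>0}$, and the norm bound $\|(\mu+A)^{-1}\|\leq\frac{1}{\Re\mu}$ is already in place from step one.

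The main obstacle is purely bookkeeping in the last step: making sure the Neumann perturbation radius $\Re\mu$ is really enough to sweep out all of $\mathbb{C}_{\Re>0}$. The openness direction is immediate, but the relative closedness in $\mathbb{C}_{\Re>0}$ is where one has to be careful to use $\Re\mu_{n}\to\Re\mu>0$ so that the perturbative ball around $\mu_{n}$ eventually contains $\mu$. Everything else is routine once the accretivity estimate is in hand.
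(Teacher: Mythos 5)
Your proof is correct and follows essentially the same route as the paper: the accretivity estimate gives injectivity and the resolvent bound, the surjective $\lambda+A$ yields density and closedness, and a Neumann-series perturbation sweeps the invertibility out to all of $\mathbb{C}_{\Re>0}$. Your open/closed/connected argument for the last step is just a more explicit rendering of what the paper dispatches with the phrase ``by induction'' on the balls $B(\lambda,\Re\lambda)$.
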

\begin{proof}
Let $y\in D(A)^{\bot}.$ By assumption, there is $\lambda\in\mathbb{C}_{\Re>0}$
and $x\in D(A)$ with $y=\lambda x+Ax.$ Then we get 
\[
0=\Re\langle x|y\rangle_{H}=\Re\langle x|\lambda x+Ax\rangle_{H}\geq\Re\lambda|x|_{H}^{2},
\]
due to the accretivity of $A$. The latter gives $x=0,$ and consequently
$y=0,$ which proves the density of $D(A).$ Let now $\mu\in\mathbb{C}_{\Re>0}.$
Then we estimate 
\[
\Re\langle x|\mu x+Ax\rangle_{H}\geq\Re\mu|x|_{H}^{2},
\]
and thus, $\mu+A$ is one-to-one and the inverse $(\mu+A)^{-1}:R(\mu+A)\subseteq H\to H$
is bounded with $\|(\mu+A)^{-1}\|\le\frac{1}{\Re\mu}.$ Moreover,
by assumption, there is $\lambda\in\mathbb{C}_{\Re>0}$ such that
$\lambda+A$ is onto. Hence, $\lambda\in\rho(A),$ which in particular
implies the closedness of $(\lambda+A)^{-1}$ and hence of $A$. Furthermore,
since $\lambda\in\rho(A)$ we get 
\[
B(\lambda,\Re\lambda)\subseteq B(\lambda,\|(\lambda+A)^{-1}\|^{-1})\subseteq\rho(A),
\]
which gives $\mathbb{C}_{\Re>0}\subseteq\rho(A)$ by induction. \end{proof}
\begin{lem}
\label{lem:accretive_inverse}Let $T\in L(H)$ such that 
\[
\exists c>0\,\forall x\in H:\Re\langle Tx|x\rangle_{H}\geq c|x|_{H}^{2}.
\]
Then $T$ is boundedly invertible and 
\[
\Re\langle T^{-1}x|x\rangle_{H}\geq\frac{c}{\|T\|^{2}}|x|_{H}^{2}
\]
for each $x\in H.$\end{lem}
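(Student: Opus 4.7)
The plan is to split the claim into two assertions: first, bounded invertibility of $T$, and second, the asserted lower bound for the real part of the inverse.

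For the bounded invertibility, I would start from the strict accretivity bound. For any $x \in H$, Cauchy--Schwarz combined with the hypothesis yields
\[
c|x|_H^2 \leq \Re\langle Tx | x\rangle_H \leq |\langle Tx|x\rangle_H| \leq |Tx|_H |x|_H,
\]
so $|Tx|_H \geq c|x|_H$. This immediately gives injectivity and, via a standard Cauchy-sequence argument, closed range of $T$. To obtain surjectivity I would next observe that $T^{\ast}$ satisfies the same lower bound: using the convention that $\langle\cdot|\cdot\rangle_H$ is conjugate linear in the first argument, we have $\langle T^{\ast}x | x\rangle_H = \overline{\langle Tx|x\rangle_H}$, and since real parts are preserved under conjugation, $\Re\langle T^{\ast}x|x\rangle_H \geq c|x|_H^2$. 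Hence $T^{\ast}$ is injective, so $R(T) = \overline{R(T)} = N(T^{\ast})^{\perp} = H$. Consequently $T$ is bijective and $T^{-1} \in L(H)$ with $\|T^{-1}\| \leq 1/c$.

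For the lower bound on the inverse, I would simply exploit the substitution $x = T^{-1}y$. Given $y \in H$, set $x \coloneqq T^{-1}y$, so $y = Tx$. Then
\[
\Re\langle T^{-1}y | y\rangle_H = \Re\langle x | Tx\rangle_H = \Re\overline{\langle Tx|x\rangle_H} = \Re\langle Tx|x\rangle_H \geq c|x|_H^2.
\]
On the other hand, $|y|_H = |Tx|_H \leq \|T\| |x|_H$, hence $|x|_H^2 \geq |y|_H^2/\|T\|^2$, and combining these two inequalities yields
\[
\Re\langle T^{-1}y | y\rangle_H \geq \frac{c}{\|T\|^2}|y|_H^2.
\]

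There is no real obstacle here; the only point that requires a little care is the correct conjugation bookkeeping for the inner product convention (linear in the second, conjugate linear in the first) when passing between $\langle Tx|x\rangle_H$ and $\langle x|Tx\rangle_H$, and similarly when arguing that $T^{\ast}$ inherits the strict accretivity. Everything else is an elementary Hilbert space computation.
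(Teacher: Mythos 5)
Your proof is correct and follows essentially the same route as the paper: injectivity and the bound $\|T^{-1}\|\leq 1/c$ from strict accretivity, surjectivity via the same estimate for $T^{\ast}$, and the final inequality by the substitution $y=Tx$ combined with $|Tx|_{H}\leq\|T\||x|_{H}$. The conjugation bookkeeping you flag is handled correctly, so nothing further is needed.
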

\begin{proof}
The strict accretivity of $T$ implies the injectivity of $T$. Moreover,
the same holds for $T^{\ast}$ and hence, $T$ has dense range. Furthermore,
$\|T^{-1}\|$ is bounded by $\frac{1}{c}$ and hence, the range of
$T$ is closed, proving the bounded invertibility of $T$. Let now
$x\in H.$ Then we estimate 
\[
|x|_{H}^{2}=|TT^{-1}x|_{H}^{2}\leq\|T\|^{2}|T^{-1}x|_{H}^{2}\leq\frac{\|T\|^{2}}{c}\Re\langle TT^{-1}x|T^{-1}x\rangle_{H}=\frac{\|T\|^{2}}{c}\Re\langle T^{-1}x|x\rangle_{H}.\tag*{\qedhere}
\]

\end{proof}
We also need a simple perturbation result for $m$-accretive operators.
\begin{prop}
\label{prop:pert_accretive}Let $A:D(A)\subseteq H\to H$ $m$-accretive
and $B\in L(H)$ accretive. Then $B+A$ is $m$-accretive.\end{prop}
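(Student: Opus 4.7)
The accretivity of $B+A$ is immediate: for every $x\in D(B+A)=D(A)$,
\[
\Re\langle(B+A)x|x\rangle_{H}=\Re\langle Bx|x\rangle_{H}+\Re\langle Ax|x\rangle_{H}\geq0,
\]
since both summands are non-negative by hypothesis. Thus the only substantive point is to exhibit some $\lambda\in\mathbb{C}_{\Re>0}$ such that $\lambda+B+A$ is onto $H$; by the definition of $m$-accretivity, this will complete the proof.

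For this, I plan to perform a standard perturbation argument using the resolvent of $A$. By \prettyref{prop:prop-accretive}, for every $\mu\in\mathbb{C}_{\Re>0}$ the operator $\mu+A$ is boundedly invertible with $\|(\mu+A)^{-1}\|\leq\tfrac{1}{\Re\mu}$. Hence for $\lambda\in\mathbb{C}_{\Re>0}$ with $\Re\lambda>\|B\|$ one has
\[
\|(\lambda+A)^{-1}B\|\leq\frac{\|B\|}{\Re\lambda}<1,
\]
so $I+(\lambda+A)^{-1}B\in L(H)$ is boundedly invertible by the Neumann series.

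Given any $y\in H$, set $z\coloneqq\bigl(I+(\lambda+A)^{-1}B\bigr)^{-1}(\lambda+A)^{-1}y\in H$. Then $z+(\lambda+A)^{-1}Bz=(\lambda+A)^{-1}y$, which rearranges to $z=(\lambda+A)^{-1}(y-Bz)$. In particular $z\in R((\lambda+A)^{-1})=D(A)$, and applying $\lambda+A$ yields $(\lambda+A)z=y-Bz$, i.e., $(\lambda+B+A)z=y$. Thus $\lambda+B+A$ is surjective, as required.

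The only mild subtlety — what I would flag as the step to handle carefully — is confirming that the fixed point $z$ produced by the Neumann inversion in fact lies in $D(A)$, which is why the argument is phrased through the identity $z=(\lambda+A)^{-1}(y-Bz)$ rather than by abstract factorisation of $\lambda+B+A$ (note that $B$ need not leave $D(A)$ invariant). Once this is observed, no further work is needed.
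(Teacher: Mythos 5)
Your proof is correct and follows essentially the same route as the paper: the paper solves the fixed-point equation $x=(\|B\|+1+A)^{-1}(y-Bx)$ via the contraction mapping theorem, which is exactly your Neumann-series inversion of $I+(\lambda+A)^{-1}B$ with $\lambda=\|B\|+1$, and both arguments obtain $z\in D(A)$ from the same observation that $z$ lies in the range of the resolvent. No gaps.
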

\begin{proof}
The accretivity of $B+A$ is obvious. Moreover, by \prettyref{prop:prop-accretive}
we have $\mathbb{C}_{\Re>0}\subseteq\rho(A).$ In particular, $\|B\|+1\in\rho(A).$
Let $y\in H.$ Then there is $x\in H$ with 
\[
(\|B\|+1+A)^{-1}\left(y-Bx\right)=x
\]
by the contraction mapping theorem (note that $\|(\|B\|+1+A)^{-1}\|\leq\frac{1}{\|B\|+1}$
by \prettyref{prop:prop-accretive}). Since the left-hand side belongs
to $D(A),$ we infer $x\in D(A)$ with 
\[
\left(\|B\|+1+A\right)x=y-Bx
\]
and thus, $\left(\|B\|+1+B+A\right)x=y,$ which shows that $\|B\|+1+\left(B+A\right)$
is onto. Thus, $B+A$ is $m$-accretive.
\end{proof}
With the help of the latter two proposition, we can provide a class
of well-posed evolutionary problems.
\begin{prop}
\label{prop:Rainer}Let $M:D(M)\subseteq\mathbb{C}\to L(H)$ be a
linear material law and $A:D(A)\subseteq H\to H$ be $m$-accretive.
Moreover, assume there is $\rho_{0}\in\mathbb{R}$ and $c>0$ such
that 
\begin{equation}
\forall z\in\mathbb{C}_{\Re>\rho_{0}}\cap D(M),x\in H:\:\Re\langle zM(z)x|x\rangle_{H}\geq c|x|_{H}^{2}.\label{eq:pos_def_M}
\end{equation}
Then the evolutionary problem associated with $M$ and $A$ is well-posed. \end{prop}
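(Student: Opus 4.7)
The plan is to verify conditions (a) and (b) of well-posedness directly using the accretive operator machinery just developed. First, I would choose $\rho_1\geq\rho_0$ large enough that $\mathbb{C}_{\Re>\rho_1}\subseteq D(M)$; this is possible because $D(M)$ contains some right half-plane by the definition of a linear material law, and the accretivity hypothesis holds on the overlap with $\mathbb{C}_{\Re>\rho_0}$.

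For condition (a), fix $z\in\mathbb{C}_{\Re>\rho_1}$. The hypothesis tells us that $zM(z)\in L(H)$ is strictly accretive with constant $c$, so $zM(z)-c$ is accretive. Since $A$ is $m$-accretive, \prettyref{prop:pert_accretive} yields that $(zM(z)-c)+A=zM(z)+A-c$ is $m$-accretive. Applying \prettyref{prop:prop-accretive} to this $m$-accretive operator, the whole right half-plane $\mathbb{C}_{\Re>0}$ lies in its resolvent set, with resolvent norm at most $1/\Re\mu$. Choosing $\mu=c$ then gives that $c+(zM(z)+A-c)=zM(z)+A$ is boundedly invertible with $\|(zM(z)+A)^{-1}\|\leq 1/c$, uniformly in $z\in\mathbb{C}_{\Re>\rho_1}$.

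For condition (b), the map $z\mapsto zM(z)\in L(H)$ is analytic on $\mathbb{C}_{\Re>\rho_1}$, and for a base point $z_0$ in this half-plane the factorisation
\[
zM(z)+A=(z_0M(z_0)+A)+(zM(z)-z_0M(z_0))
\]
allows a standard Neumann series argument: for $z$ close enough to $z_0$, the bounded operator $I+(zM(z)-z_0M(z_0))(z_0M(z_0)+A)^{-1}$ is invertible in $L(H)$, which shows that $z\mapsto (zM(z)+A)^{-1}$ is $L(H)$-analytic near $z_0$. Combined with the uniform bound $1/c$ from (a), this is exactly the bounded analytic extension required by (b), and so the problem is well-posed with $s_0(M,A)\leq\rho_1$.

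I do not foresee any real obstacle here; the accretive perturbation result \prettyref{prop:pert_accretive} together with the resolvent estimate from \prettyref{prop:prop-accretive} carries (a), and the analyticity in (b) is formal once one has both $L(H)$-analyticity of $z\mapsto zM(z)$ and a uniform resolvent bound. The only point needing a little care is ensuring $\rho_1$ is large enough that the inclusion $\mathbb{C}_{\Re>\rho_1}\subseteq D(M)$ and the accretivity hypothesis on $zM(z)$ are simultaneously available, which is trivially arranged by taking the maximum of the two thresholds supplied by the two definitions.
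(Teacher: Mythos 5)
Your proposal is correct and follows essentially the same route as the paper: accretivity of $zM(z)-c$ plus the perturbation result (Proposition \ref{prop:pert_accretive}) and the resolvent estimate of Proposition \ref{prop:prop-accretive} give uniform bounded invertibility with norm $\leq 1/c$, after which one enlarges $\rho_0$ to $\rho_1$ so that $\mathbb{C}_{\Re>\rho_1}\subseteq D(M)$. The only difference is that you spell out the local Neumann-series argument for analyticity of the resolvent, which the paper leaves implicit.
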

\begin{proof}
The estimate \prettyref{eq:pos_def_M} states that $zM(z)-c$ is accretive
for each $z\in\mathbb{C}_{\Re>\rho_{0}}\cap D(M).$ Then, by \prettyref{prop:pert_accretive}
$zM(z)-c+A$ is $m$-accretive for each $z\in\mathbb{C}_{\Re>\rho_{0}}\cap D(M).$
Hence, by \prettyref{prop:prop-accretive}, $zM(z)+A$ is boundedly
invertible with $\|(zM(z)+A)^{-1}\|\leq\frac{1}{c}$ for each $z\in\mathbb{C}_{\Re>\rho_{0}}\cap D(M).$
Choosing now $\rho_{1}\geq\rho_{0}$ such that $\mathbb{C}_{\Re>\rho_{1}}\subseteq D(M)$
we get that 
\[
\mathbb{C}_{\Re>\rho_{1}}\ni z\mapsto(zM(z)+A)^{-1}\in L(H)
\]
is an analytic and bounded function. Thus, both conditions for well-posedness
of the evolutionary problem hold. \end{proof}
\begin{rem}
The latter proposition was first formulated in \cite[Solution Theory]{Picard},
where the mapping $M$ was assumed to be bounded and $A$ was skew-selfadjoint. 
\end{rem}
Finally, we consider a particular class of material laws $M$, which
satisfy \prettyref{eq:pos_def_M}.
\begin{prop}
\label{prop:classical_material_law}Let $M_{0},M_{1}\in L(H)$. Assume
that $M_{0}$ is selfadjoint and that there exist $c_{0},c_{1}>0$
with 
\begin{align*}
\langle M_{0}x|x\rangle_{H} & \geq c_{0}|x|_{H}^{2}\quad(x\in R(M_{0})),\\
\Re\langle M_{1}x'|x'\rangle_{H} & \geq c_{1}|x'|_{H}^{2}\quad(x'\in N(M_{0})).
\end{align*}
Then, $M:\mathbb{C}\setminus\{0\}\to L(H)$ defined by $M(z)\coloneqq M_{0}+z^{-1}M_{1}$
is a linear material law, which satisfies \prettyref{eq:pos_def_M}.\end{prop}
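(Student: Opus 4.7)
The plan is to verify the two conditions separately: that $M$ is a linear material law, and that it satisfies the positivity estimate \eqref{eq:pos_def_M}.

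First, the set $\mathbb{C}\setminus\{0\}$ is open, and the mapping $z\mapsto M_0 + z^{-1}M_1$ is analytic on it since $z\mapsto z^{-1}$ is. For any $\rho_0>0$ we have $\mathbb{C}_{\Re>\rho_0}\subseteq\mathbb{C}\setminus\{0\}$, so $M$ qualifies as a linear material law. It remains to pin down $\rho_0$ so that the accretivity estimate holds.

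For the main estimate I would compute $zM(z) = zM_0 + M_1$ and split along the orthogonal decomposition $H = N(M_0)^{\bot}\oplus N(M_0)$. Let $P_0$ be the orthogonal projection onto $N(M_0)^{\bot}=\overline{R(M_0)}$ and $P_1=I-P_0$. Since $M_0$ is selfadjoint, $M_0 P_1 = 0$ and $P_0 M_0 = M_0$, so $\langle M_0 x|x\rangle_H = \langle M_0 P_0 x|P_0 x\rangle_H$ is real. The hypothesis $\langle M_0 y|y\rangle_H\ge c_0|y|_H^2$ on $R(M_0)$ extends by continuity of $\langle M_0 \cdot|\cdot\rangle_H$ to $\overline{R(M_0)}$, giving
\[
\Re\langle zM_0 x|x\rangle_H = \Re(z)\langle M_0 x|x\rangle_H \ge c_0\Re(z)\,|P_0 x|_H^2.
\]
For the $M_1$-part I use the hypothesis on $N(M_0)$ to get $\Re\langle M_1 P_1 x|P_1 x\rangle_H\ge c_1|P_1 x|_H^2$, and bound the three remaining mixed terms crudely by $\|M_1\|$ times the corresponding norms.

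Combining these and applying Young's inequality $2\|M_1\||P_0 x|_H|P_1 x|_H\le \tfrac{c_1}{2}|P_1 x|_H^2 + \tfrac{2\|M_1\|^2}{c_1}|P_0 x|_H^2$ yields
\[
\Re\langle zM(z)x|x\rangle_H \ge \bigl(c_0\Re(z)-\|M_1\|-\tfrac{2\|M_1\|^2}{c_1}\bigr)|P_0 x|_H^2 + \tfrac{c_1}{2}|P_1 x|_H^2.
\]
Choosing $\rho_0\coloneqq\tfrac{1}{c_0}\bigl(\tfrac{c_1}{2}+\|M_1\|+\tfrac{2\|M_1\|^2}{c_1}\bigr)$ makes the first coefficient at least $\tfrac{c_1}{2}$ whenever $\Re(z)>\rho_0$, and since $|P_0x|_H^2+|P_1x|_H^2=|x|_H^2$, the estimate \eqref{eq:pos_def_M} holds with $c\coloneqq c_1/2$.

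The only subtle point is that the hypothesis on $M_0$ is phrased on the (possibly non-closed) range $R(M_0)$ rather than on $\overline{R(M_0)}=N(M_0)^{\bot}$; I would handle this by noting that $\langle M_0\cdot|\cdot\rangle_H$ is continuous and $R(M_0)$ is dense in $\overline{R(M_0)}$, so the lower bound extends. Everything else is routine bookkeeping with projections and a single application of Young's inequality.
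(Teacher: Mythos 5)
Your proof is correct and follows essentially the same route as the paper: orthogonal decomposition along $\overline{R(M_{0})}\oplus N(M_{0})$, extension of the $M_{0}$-bound to $\overline{R(M_{0})}$ by continuity, Young's inequality on the cross terms, and a choice of $\rho_{0}$ proportional to $1/c_{0}$. In fact you are slightly more careful than the printed proof, which omits the diagonal term $\Re\langle M_{1}x|x\rangle_{H}$ on $\overline{R(M_{0})}$ from its expansion; your extra $\|M_{1}\|$ in $\rho_{0}$ correctly absorbs it.
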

\begin{proof}
First, $M$ is obviously a linear material law. We set $\rho_{0}\coloneqq\frac{1}{c_{0}}\left(\frac{c_{1}}{2}+\|M_{1}\|^{2}\frac{2}{c_{1}}\right)$
and for $y\in H$ we use the decomposition $y=x+x'$ with $x\in\overline{R(M_{0})},x'\in N(M_{0})=R(M_{0})^{\bot}.$
We note that the strict accretivity of $M_{0}$ extends to $\overline{R(M_{0})}.$
Then we get for $z=\i t+\rho$ with $t\in\mathbb{R}$ and $\rho>\rho_{0}$
\begin{align*}
\Re\langle zM(z)y|y\rangle_{H} & =\Re\langle(\i t+\rho)M_{0}x|x\rangle_{H}+\Re\langle M_{1}x|x'\rangle_{H}+\Re\langle M_{1}x'|x\rangle_{H}+\Re\langle M_{1}x'|x'\rangle_{H}\\
 & \geq\rho c_{0}|x|_{H}^{2}-2\|M_{1}\||x|_{H}|x'|_{H}+c_{1}|x'|_{H}^{2}\\
 & \geq\left(\rho c_{0}-\|M_{1}\|^{2}\frac{2}{c_{1}}\right)|x|_{H}^{2}+\frac{c_{1}}{2}|x'|_{H}^{2}\\
 & \geq\frac{c_{1}}{2}|y|_{H}^{2},
\end{align*}
which shows the assertion.
\end{proof}

\section{Examples\label{sec:Examples_1}}

This section is devoted to several examples of partial differential
equations, which fit into the scheme of evolutionary problems, introduced
in the previous section. We start with some classical partial differential
equations from mathematical physics. Then we shortly discuss a class
of delay differential equations and we conclude this section by studying
integro-differential equations with operator-valued convolution kernels.

\subsection{Classical equations from mathematical physics\label{sub:mat_phys}}

Before we can start to give some concrete examples of equations from
mathematical physics, we need to introduce some differential operators,
which will be used throughout the text.
\begin{defn*}
Let $\Omega\subseteq\mathbb{R}^{n}$ open. We define the operators
$\grad_{0},\dive_{0}$ as the closures of the operators \nomenclature[O_010]{$\grad_0$}{the gradient on $L_2(\Omega)$ with homogeneous Dirichlet boundary values.}
\nomenclature[O_030]{$\dive_0$}{the divergence on $L_2(\Omega)^n$ with homogeneous Neumann boundary values.}\nomenclature[O_040]{$\dive$}{the divergence on $L_2(\Omega)^n$. }
\nomenclature[O_020]{$\grad$}{the gradient on $L_2(\Omega)$. } 
\begin{align*}
\grad|_{C_{c}^{\infty}(\Omega)}:C_{c}^{\infty}(\Omega)\subseteq L_{2}(\Omega) & \to L_{2}(\Omega)^{n}\\
\varphi & \mapsto\left(\partial_{i}\varphi\right)_{i\in\{1,\ldots,n\}}
\end{align*}
and 
\begin{align*}
\dive|_{C_{c}^{\infty}(\Omega)^{n}}:C_{c}^{\infty}(\Omega)^{n}\subseteq L_{2}(\Omega)^{n} & \to L_{2}(\Omega)\\
(\varphi_{i})_{i\in\{1,\ldots,n\}} & \mapsto\sum_{i=1}^{n}\partial_{i}\varphi_{i},
\end{align*}
respectively. Similarly, we define the operators $\Grad_{0},\Div_{0}$
\nomenclature[O_080]{$\Grad$}{the symmetrized Jacobian on $L_2(\Omega)^n$.}\nomenclature[O_070]{$\Grad_0$}{the symmetrized Jacobian on $L_2(\Omega)^n$ with homogeneous Dirichlet boundary values. }\nomenclature[O_090]{$\Div_0$}{the row-wise divergence on $L_2(\Omega)^{n\times n}$ with homogeneous Neumann boundary values. }\nomenclature[O_100]{$\Div$}{the row-wise divergence on $L_2(\Omega)^{n\times n}$. }as
the closures of 
\begin{align*}
\Grad|_{C_{c}^{\infty}(\Omega)^{n}}:C_{c}^{\infty}(\Omega)^{n}\subseteq L_{2}(\Omega)^{n} & \to L_{2,\mathrm{sym}}(\Omega)^{n\times n}\\
(\varphi_{i})_{i\in\{1,\ldots,n\}} & \mapsto\left(\frac{1}{2}(\partial_{j}\varphi_{i}+\partial_{i}\varphi_{j})\right)_{i,j\in\{1,\ldots,n\}}
\end{align*}
 and
\begin{align*}
\Div|_{C_{c,\mathrm{sym}}^{\infty}(\Omega)^{n\times n}}:C_{c,\mathrm{sym}}^{\infty}(\Omega)^{n\times n}\subseteq L_{2,\mathrm{sym}}(\Omega)^{n\times n} & \to L_{2}(\Omega)^{n}\\
(\varphi_{ij})_{i,j\in\{1,\ldots,n\}} & \mapsto\left(\sum_{j=1}^{n}\partial_{j}\varphi_{ij}\right)_{i\in\{1,\ldots,n\}},
\end{align*}
respectively. Here, $L_{2,\mathrm{sym}}(\Omega)^{n\times n}\coloneqq\left\{ f\in L_{2}(\Omega)^{n\times n}\,;\, f(x)^{T}=f(x)\quad(x\in\Omega\mbox{ a.e.})\right\} $
endowed with the inner product 
\[
\langle f|g\rangle_{L_{2,\mathrm{sym}}(\Omega)^{n\times n}}\coloneqq\intop_{\Omega}\trace(f(x)^{\ast}g(x))\mbox{ d}x\quad(f,g\in L_{2,\mathrm{sym}}(\Omega)^{n\times n})
\]
and $C_{c,\mathrm{sym}}^{\infty}(\Omega)^{n\times n}\coloneqq C_{c}^{\infty}(\Omega)^{n\times n}\cap L_{2,\mathrm{sym}}(\Omega)^{n\times n}.$
Then, by integration by parts one gets 
\begin{align*}
\grad_{0} & \subseteq-\dive_{0}^{\ast}\eqqcolon\grad,\\
\dive_{0} & \subseteq-\grad_{0}^{\ast}\eqqcolon\dive,\\
\Grad_{0} & \subseteq-\Div_{0}^{\ast}\eqqcolon\Grad,\\
\Div_{0} & \subseteq-\Grad_{0}^{\ast}\eqqcolon\Div.
\end{align*}
Finally, if $n=3$ we define $\curl_{0}$ as the closure of 
\begin{align*}
\curl|_{C_{c}^{\infty}(\Omega)^{3}}:C_{c}^{\infty}(\Omega)^{3}\subseteq L_{2}(\Omega)^{3} & \to L_{2}(\Omega)^{3}\\
(\varphi_{i})_{i\in\{1,2,3\}} & \mapsto\left(\begin{array}{ccc}
0 & -\partial_{3} & \partial_{2}\\
\partial_{3} & 0 & -\partial_{1}\\
-\partial_{2} & \partial_{1} & 0
\end{array}\right)\left(\begin{array}{c}
\varphi_{1}\\
\varphi_{2}\\
\varphi_{3}
\end{array}\right).
\end{align*}
Then, again by integration by parts, we have that \nomenclature[O_060]{$\curl$}{the rotation on $L_2(\Omega)^3$ .}\nomenclature[O_050]{$\curl_0$}{the rotation on $L_2(\Omega)^3$ with homogeneous electrical boundary condition.}
\[
\curl_{0}\subseteq\curl_{0}^{\ast}\eqqcolon\curl.
\]
\end{defn*}
\begin{rem}
By the definitions above, the domain of $\grad_{0}$ coincides with
the classical Sobolev space $H_{0}^{1}(\Omega),$ while $D(\grad)$
is given by $H^{1}(\Omega).$ Hence, the elements in the domains of
the differential operators indexed with $0$ satisfy an additional
boundary condition, if the boundary of $\Omega$ is smooth enough.
These conditions are given as follows:
\begin{align*}
u\in D(\grad_{0}) & \Rightarrow u=0\mbox{ on }\partial\Omega\\
u\in D(\dive_{0}) & \Rightarrow u\cdot\nu=0\mbox{ on }\partial\Omega\\
u\in D(\Grad_{0}) & \Rightarrow u=0\mbox{ on }\partial\Omega\\
u\in D(\Div_{0}) & \Rightarrow u\nu=0\mbox{ on }\partial\Omega\\
u\in D(\curl_{0}) & \Rightarrow u\times\nu=0\mbox{ on }\partial\Omega,
\end{align*}
where $\nu$ denotes the unit outward normal vector field on $\partial\Omega.$
However, we will not discuss the case of smooth boundaries and use
the domain description as a suitable generalization of those boundary
conditions, which has the advantage that we can deal with arbitrary
open sets $\Omega$.
\end{rem}

\subsubsection*{The heat equation}

Let $\Omega\subseteq\mathbb{R}^{3}$ open. The classical heat equation
consists of two equations. First, the balance of momentum, given by\nomenclature[O_130]{$\partial_{0,\rho}$}{the derivative on $H_\rho(\mathbb{R};H)$, the time-derivative.}
\[
\partial_{0,\rho}\vartheta+\dive q=f.
\]
Here, $\vartheta\in H_{\rho}(\mathbb{R};L_{2}(\Omega))$ describes
the heat density of the medium $\Omega$, $q\in H_{\rho}(\mathbb{R};L_{2}(\Omega){}^{3})$
stands for the heat flux, and $f\in H_{\rho}(\mathbb{R};L_{2}(\Omega))$
is an external heat source. The equation is completed by Fourier's
law, given by 
\[
q=-k\grad\vartheta,
\]
where $k:L_{2}(\Omega)^{3}\to L_{2}(\Omega)^{3}$ is a bounded, strictly
accretive operator, modelling the heat conductivity of the underlying
medium $\Omega.$ As $k$ is strictly accretive and bounded, so is
$k^{-1}$ by \prettyref{lem:accretive_inverse}. Thus, we may rewrite
the two equations as a system of the form 
\[
\left(\partial_{0,\rho}\left(\begin{array}{cc}
1 & 0\\
0 & 0
\end{array}\right)+\left(\begin{array}{cc}
0 & 0\\
0 & k^{-1}
\end{array}\right)+\left(\begin{array}{cc}
0 & \dive\\
\grad & 0
\end{array}\right)\right)\left(\begin{array}{c}
\vartheta\\
q
\end{array}\right)=\left(\begin{array}{c}
f\\
0
\end{array}\right).
\]
If we now impose some boundary conditions, say homogeneous Dirichlet
boundary conditions, the operator $\left(\begin{array}{cc}
0 & \dive\\
\grad & 0
\end{array}\right)$ will be replaced by $\left(\begin{array}{cc}
0 & \dive\\
\grad_{0} & 0
\end{array}\right),$ which is a skew-selfadjoint and hence, $m$-accretive operator. Thus,
we are in the situation of \prettyref{prop:Rainer} with a material
law as in \prettyref{prop:classical_material_law}.

\subsubsection*{The wave equation}

Similar to the heat equation, we can provide a formulation of the
wave equation 
\[
\partial_{0,\rho}^{2}u-\Delta u=f
\]
within the framework of evolutionary equations. We define $v\coloneqq\partial_{0,\rho}u$
and $q\coloneqq-\grad u$ and obtain a first order formulation of
the form 
\[
\left(\partial_{0,\rho}\left(\begin{array}{cc}
1 & 0\\
0 & 1
\end{array}\right)+\left(\begin{array}{cc}
0 & \dive\\
\grad & 0
\end{array}\right)\right)\left(\begin{array}{c}
v\\
q
\end{array}\right)=\left(\begin{array}{c}
f\\
0
\end{array}\right).
\]
Again, if we choose suitable boundary conditions, yielding an $m$-accretive
realization of the operator $\left(\begin{array}{cc}
0 & \dive\\
\grad & 0
\end{array}\right)$, we obtain en evolutionary problem considered in \prettyref{prop:Rainer}
with a material law of the form considered in \prettyref{prop:classical_material_law}.

\subsubsection*{Maxwell's equation}

Maxwell's equations of electro-magnetism consist of two equations
linking the electric field $E\in H_{\rho}(\mathbb{R};L_{2}(\Omega)^{3})$
and the magnetic field $H\in H_{\rho}(\mathbb{R};L_{2}(\Omega)^{3})$
in an open domain $\Omega\subseteq\mathbb{R}^{3}$ in the following
way
\begin{align*}
\partial_{0,\rho}\varepsilon E+\sigma E-\curl H & =f,\\
\partial_{0,\rho}\mu H+\curl_{0}E & =0,
\end{align*}
where $\varepsilon,\mu\in L(L_{2}(\Omega)^{3})$ are selfadjoint and
model the electric permetivity and the magnetic permeability of the
medium $\Omega$, respectively. Moreover, $\sigma\in L(L_{2}(\Omega)^{3})$
stands for the conductivity of $\Omega$ and $f\in H_{\rho}(\mathbb{R};H)$
is an external current. The first equation results from Ampere's law
combined with Ohm's law, while the second equation is Faraday's law.
Writing the two equations as a system, we end up with 
\[
\left(\partial_{0,\rho}\left(\begin{array}{cc}
\varepsilon & 0\\
0 & \mu
\end{array}\right)+\left(\begin{array}{cc}
\sigma & 0\\
0 & 0
\end{array}\right)+\left(\begin{array}{cc}
0 & -\curl\\
\curl_{0} & 0
\end{array}\right)\right)\left(\begin{array}{c}
E\\
H
\end{array}\right)=\left(\begin{array}{c}
f\\
0
\end{array}\right),
\]
which is again of the form studied in \prettyref{prop:Rainer} with
a material law as in \prettyref{prop:classical_material_law}. Hence,
the well-posedness of the problem follows if $\mu$ is strictly accretive,
$\varepsilon$ is strictly accretive on its range and $\sigma$ is
strictly accretive on the kernel of $\varepsilon.$ In particular,
we can allow certain regions where the electric permetivity vanishes
which provides a way to deal with the so-called eddy-current approximation
(see e.g. \cite{Ana2010,Pauly2016}).

\subsubsection*{The equations of visco-elasticity}

We denote by $u\in H_{\rho}(\mathbb{R};L_{2}(\Omega))$ the displacement
of an elastic body $\Omega\subseteq\mathbb{R}^{3}$ and by $\sigma\in H_{\rho}(\mathbb{R};L_{2,\mathrm{sym}}(\Omega)^{3\times3})$
the stress tensor. Then, the balance of momentum yields 
\[
\tilde{\rho}\partial_{0,\rho}^{2}u-\Div\sigma=f,
\]
where $\tilde{\rho}\in L(L_{2}(\Omega)^{3})$ selfadjoint and strictly
accretive is the density of the medium and $f\in H_{\rho}(\mathbb{R};H)$
is an external source term. The equation is completed by a constitutive
relation. In visco-elasticity a common relation is given by the Kelvin-Voigt
model 
\begin{equation}
(\partial_{0,\rho}C+D)\Grad u=\sigma,\label{eq:stress-strain}
\end{equation}
with $C,D\in L(L_{2,\mathrm{sym}}(\Omega)^{3\times3})$ modeling the
viscosity and the modulus of elasticity, respectively. Defining $v\coloneqq\partial_{0,\rho}u$
as new unknown, the latter equation can be written as 
\[
\left(C+\partial_{0,\rho}^{-1}D\right)\Grad v=\sigma.
\]
If we assume that $C$ is strictly accretive, we obtain the bounded
invertibility of $C+\partial_{0,\rho}^{-1}D$ if we choose $\rho>0$
large enough. Indeed 
\[
C+\partial_{0,\rho}^{-1}D=C(1+\partial_{0,\rho}^{-1}C^{-1}D),
\]
and since $\|\partial_{0,\rho}^{-1}C^{-1}D\|\leq\frac{1}{\rho}\|C^{-1}D\|,$
we obtain the bounded invertibility for $\rho>0$ large enough by
the Neumann series. Thus, we can write the equations of visco-elasticity
in the following way 
\[
\left(\partial_{0,\rho}\left(\begin{array}{cc}
\tilde{\rho} & 0\\
0 & \partial_{0,\rho}^{-1}(C+\partial_{0,\rho}^{-1}D)^{-1}
\end{array}\right)+\left(\begin{array}{cc}
0 & \Div\\
\Grad & 0
\end{array}\right)\right)\left(\begin{array}{c}
v\\
\sigma
\end{array}\right)=\left(\begin{array}{c}
f\\
0
\end{array}\right).
\]
If we choose suitable boundary conditions, say for simplicity homogeneous
Neumann boundary conditions for $\sigma,$ we obtain an evolutionary
equation of the form considered in \prettyref{prop:Rainer} with 
\[
M(z)=\left(\begin{array}{cc}
\tilde{\rho} & 0\\
0 & z^{-1}(C+z^{-1}D)^{-1}
\end{array}\right),\, A=\left(\begin{array}{cc}
0 & \Div_{0}\\
\Grad & 0
\end{array}\right).
\]
Moreover, the material law $M$ satisfies the well-posedness condition
\prettyref{eq:pos_def_M} of \prettyref{prop:Rainer}, since $\tilde{\rho}$
is assumed to be selfadjoint and strictly accretive, $C^{-1}$ is
strictly accretive and 
\[
\Re\langle(C+z^{-1}D)^{-1}x|x\rangle_{L_{2,\mathrm{sym}}(\Omega)^{3\times3}}\geq\Re\langle C^{-1}x|x\rangle_{L_{2,\mathrm{sym}}(\Omega)^{3\times3}}-\frac{\frac{1}{\rho}\|C^{-1}\|^{2}\|D\|}{1-\frac{1}{\rho}\|C^{-1}\|\|D\|}|x|_{L_{2,\mathrm{sym}}(\Omega)^{3\times3}}^{2}
\]
for $z\in\mathbb{C}_{\Re>\rho},x\in L_{2,\mathrm{sym}}(\Omega)^{3\times3}$,
yielding that $(C+z^{-1}D)^{-1}$ is strictly accretive uniformly
in $z\in\mathbb{C}_{\Re>\rho}$ for $\rho$ large enough (note that
the last summand tends to $0$ as $\rho$ tends to infinity). 
\begin{rem}
We note that besides the Kelvin-Voigt model there exist other models
for visco-elasticity. For instance, a common model uses convolution
terms in \prettyref{eq:stress-strain} (see e.g. \cite{Dafermos1970_abtract_Volterra,Dafermos1970_asymp_stab,Trostorff2012_integro}).
More recently, fractional derivatives were used to model elasticity
and we refer to \cite{Podlubny1999,Waurick2013_fractional} for that
topic.
\end{rem}

\subsubsection*{The equations of poro-elastic deformation}

To illustrate how systems of coupled partial differential equations
can be written as evolutionary equations, we treat the equations of
poro-elastic deformations, where a diffusion equation is coupled with
the equations of linear elasticity. We discuss the equations of poro-elasticity
as they were proposed in \cite{Murad1996} and mathematically studied
in \cite{Showalter2000,Picard2010_poroelastic} given by 
\begin{align}
\tilde{\rho}\partial_{0,\rho}^{2}u-\grad\partial_{0,\rho}\lambda\dive u-\Div C\Grad u+\grad\alpha^{\ast}p & =f,\label{eq:elastic}\\
\partial_{0,\rho}(c_{0}p+\alpha\dive u)-\dive k\grad p & =g.\label{eq:diffusion}
\end{align}
Here, $u\in H_{\rho}(\mathbb{R};L_{2}(\Omega)^{3})$ describes the
displacement field of an elastic body $\Omega\subseteq\mathbb{R}^{3}$
and $p\in H_{\rho}(\mathbb{R};L_{2}(\Omega))$ is the pressure of
a fluid diffusing through $\Omega.$ The bounded operators $C\in L(L_{2,\mathrm{sym}}(\Omega)^{3\times3}),k\in L(L_{2}(\Omega)^{3})$
stand for the elasticity tensor and the hydraulic conductivity of
the medium, respectively. The function $\tilde{\rho}\in L_{\infty}(\Omega)$
describes the density of the medium and the operator $\alpha\in L(L_{2}(\Omega))$
generalizes the so-called Biot-Willis constant. Finally, let $c_{0},\lambda\in L(L_{2}(\Omega))$,
where $c_{0}$ models the porosity of the medium and the compressibility
of the fluid. We consider the operator 
\begin{align*}
\trace:L_{2,\mathrm{sym}}(\Omega)^{3\times3} & \to L_{2}(\Omega)\\
(\Psi_{ij})_{i,j\in\{1,2,3\}} & \mapsto\sum_{i=1}^{3}\Psi_{ii}
\end{align*}
and its adjoint given by $\trace^{\ast}f=\left(\begin{array}{ccc}
f & 0 & 0\\
0 & f & 0\\
0 & 0 & f
\end{array}\right)$ for $f\in L_{2}(\Omega).$ Then, using the relations $\trace\Grad\subseteq\dive$
and $\grad=\Div\trace^{\ast},$ we can rewrite \prettyref{eq:elastic}
and \prettyref{eq:diffusion} as 
\begin{align*}
\tilde{\rho}\partial_{0,\rho}^{2}u-\Div\left(\left(\partial_{0,\rho}\trace^{\ast}\lambda\trace+C\right)\Grad u-\trace^{\ast}\alpha^{\ast}p\right) & =f,\\
\partial_{0,\rho}\left(c_{0}p+\alpha\trace\Grad u\right)-\dive k\grad p & =g.
\end{align*}
We define 
\begin{align*}
v & \coloneqq\partial_{0,\rho}u,\\
T & \coloneqq C\Grad u,\\
\omega & \coloneqq\lambda\trace\Grad v-\alpha^{\ast}p,\\
q & \coloneqq-k\grad p
\end{align*}
as new unknowns, which yields, assuming that $\lambda$ is continuously
invertible, 
\[
\trace\Grad v=\lambda^{-1}\omega+\lambda^{-1}\alpha^{\ast}p.
\]
Hence, we end up with the following equations 
\begin{align*}
\partial_{0,\rho}\tilde{\rho}v-\Div\left(T+\trace^{\ast}\omega\right) & =f,\\
\partial_{0,\rho}c_{0}p+\alpha\lambda^{-1}\omega+\alpha\lambda^{-1}\alpha^{\ast}p+\dive q & =g,\\
\lambda^{-1}\omega+\lambda^{-1}\alpha^{\ast}p-\trace\Grad v & =0,\\
\partial_{0,\rho}C^{-1}T-\Grad v & =0,\\
k^{-1}q+\grad p & =0,
\end{align*}
where we assume that $C$ and $k$ are boundedly invertible. 

Thus, as a system, the equations of poro-elasticity have the form
\[
\left(\partial_{0,\rho}M_{0}+M_{1}+U^{\ast}\left(\begin{array}{ccccc}
0 & 0 & 0 & -\Div & 0\\
0 & 0 & 0 & 0 & \dive\\
0 & 0 & 0 & 0 & 0\\
-\Grad & 0 & 0 & 0 & 0\\
0 & \grad & 0 & 0 & 0
\end{array}\right)U\right)\left(\begin{array}{c}
v\\
p\\
\omega\\
T\\
q
\end{array}\right)=\left(\begin{array}{c}
f\\
g\\
0\\
0\\
0
\end{array}\right),
\]
where $U\coloneqq\left(\begin{array}{ccccc}
1 & 0 & 0 & 0 & 0\\
0 & 1 & 0 & 0 & 0\\
0 & 0 & 1 & 0 & 0\\
0 & 0 & \trace^{\ast} & 1 & 0\\
0 & 0 & 0 & 0 & 1
\end{array}\right)$ and 
\[
M_{0}\coloneqq\left(\begin{array}{ccccc}
\tilde{\rho} & 0 & 0 & 0 & 0\\
0 & c_{0} & 0 & 0 & 0\\
0 & 0 & 0 & 0 & 0\\
0 & 0 & 0 & C^{-1} & 0\\
0 & 0 & 0 & 0 & 0
\end{array}\right),\: M_{1}\coloneqq\left(\begin{array}{ccccc}
0 & 0 & 0 & 0 & 0\\
0 & \alpha\lambda^{-1}\alpha^{\ast} & \alpha\lambda^{-1} & 0 & 0\\
0 & \lambda^{-1}\alpha^{\ast} & \lambda^{-1} & 0 & 0\\
0 & 0 & 0 & 0 & 0\\
0 & 0 & 0 & 0 & k^{-1}
\end{array}\right).
\]
 By choosing suitable boundary conditions, say homogeneous Dirichlet
boundary conditions for $v$ and $p$, we end up with an evolutionary
equation of the form discussed in \prettyref{prop:Rainer} with a
material law of the form given in \prettyref{prop:classical_material_law}.
Hence, the well-posedness can be derived, by imposing suitable constraints
on the coefficients involved in order to satisfy the hypothesis of
\prettyref{prop:classical_material_law} for $M_{0}$ and $M_{1}$.

\subsection{Differential equations with delay}

Let $H$ be a Hilbert space, $A:D(A)\subseteq H\to H$ be $m$-accretive,
$M_{0},M_{1}\in L(H)$ satisfying the assumptions of \prettyref{prop:classical_material_law}.
Moreover, let $\left(h_{k}\right)_{k\in\mathbb{N}}$ be a strictly
monotone increasing sequence in $\mathbb{R}_{>0}$ with $\eta\coloneqq\inf\{|h_{k+1}-h_{k}|\,;\, k\in\mathbb{N}\}>0$
and $(N_{k})_{k\in\mathbb{N}}$ a sequence in $L(H)$ with $\sup_{k\in\mathbb{N}}\|N_{k}\|<\infty.$
We consider a delay equation of the form 
\begin{equation}
\left(\partial_{0,\rho}M_{0}+M_{1}+\sum_{k\in\mathbb{N}}N_{k}\tau_{-h_{k}}+A\right)u=f.\label{eq:delay}
\end{equation}
This is indeed an evolutionary equation with a linear material law
given by 
\[
M(z)=M_{0}+z^{-1}M_{1}+z^{-1}\sum_{k\in\mathbb{N}}N_{k}\e^{-h_{k}z}\quad(z\in\mathbb{C}\setminus\{0\}).
\]
We note that the series $\sum_{k\in\mathbb{N}}N_{k}\e^{-h_{k}z}$
converges absolutely for each $z\in\mathbb{C}_{\Re>0}.$ Indeed, we
have that 
\begin{align*}
\sum_{k\in\mathbb{N}}\left|\e^{-h_{k}z}\right| & \leq\e^{-h_{0}\Re z}+\sum_{k=1}^{\infty}\frac{1}{h_{k}-h_{k-1}}\intop_{h_{k-1}}^{h_{k}}\e^{-\Re zs}\mbox{ d}s\\
 & \leq\e^{-h_{0}\Re z}+\frac{1}{\eta}\intop_{h_{0}}^{\infty}\e^{-\Re zs}\mbox{ d}s\\
 & =\left(1+\frac{1}{\eta\Re z}\right)\e^{-h_{0}\Re z},
\end{align*}
and since the sequence $(N_{k})_{k\in\mathbb{N}}$ is bounded, we
derive the absolute convergence of the series with 
\[
\left\Vert \sum_{k\in\mathbb{N}}N_{k}\e^{-h_{k}z}\right\Vert \leq\sup_{k\in\mathbb{N}}\|N_{k}\|\left(1+\frac{1}{\eta\Re z}\right)\e^{-h_{0}\Re z}\quad(z\in\mathbb{C}_{\Re>0}).
\]
In particular, the norm of $\sum_{k\in\mathbb{N}}N_{k}\e^{-h_{k}z}$
tends to $0$ as $\Re z\to\infty.$ Moreover, we recall that by \prettyref{prop:classical_material_law},
the material law $N(z)\coloneqq M_{0}+z^{-1}M_{1}$ satisfies \prettyref{eq:pos_def_M}
on some half plane $\mathbb{C}_{\Re>\rho}$. Hence, choosing $\rho$
large enough, we derive that also $M$ satisfies \prettyref{eq:pos_def_M}
and hence, the well-posedness of \prettyref{eq:delay} follows from
\prettyref{prop:Rainer}.

\subsection{Integro-differential equations\label{sub:Integro-differential-equations}}

In this subsection we study integro-differential equations. The results
presented here are based on \cite{Trostorff2012_integro}. We focus
on equations of the form\nomenclature[O_190]{$k\ast$}{the convolution operator with a kernel $k$.}
\begin{equation}
\left(\partial_{0,\rho}(1+k\ast)+A\right)u=f,\label{eq:integro}
\end{equation}
as well as 
\begin{equation}
\left(\partial_{0,\rho}(1-k\ast)^{-1}+A\right)u=f,\label{eq:integro_resol}
\end{equation}
where in both cases $A:D(A)\subseteq H\to H$ is an $m$-accretive
operator on some Hilbert space $H$. The kernel $k$ is a suitable
operator-valued function in the space $L_{1,\rho}(\mathbb{R}_{\geq0};L(H))$
defined as follows.
\begin{defn*}
Let $\rho\in\mathbb{R}.$ We call a function $k:\mathbb{R}_{\geq0}\to L(H)$
\emph{admissible, }if $k$ is weakly measurable (i.e., for each $x,y\in H$
the mapping $t\mapsto\langle k(t)x|y\rangle_{H}$ is measurable) and
$t\mapsto\|k(t)\|$ is measurable%
\footnote{We note that the measurability of $t\mapsto\|k(t)\|$ follows from
the weak measurability of $k$, if $H$ is separable.%
}. We define 
\[
\mathcal{L}_{1,\rho}(\mathbb{R}_{\geq0};L(H))\coloneqq\left\{ k:\mathbb{R}_{\geq0}\to L(H)\,;\, k\mbox{ admissible, }\intop_{0}^{\infty}\|k(t)\|\e^{-\rho t}\mbox{ d}t<\infty\right\} 
\]
as well as 
\[
L_{1,\rho}(\mathbb{R}_{\geq0};L(H))\coloneqq\faktor{\mathcal{L}_{1,\rho}(\mathbb{R}_{\geq0};L(H))}{\cong},
\]
where the relation $\cong$ is the usual equality almost everywhere.
We equip $L_{1,\rho}(\mathbb{R};L(H))$ with the usual norm defined
by 
\[
|k|_{L_{1,\rho}}\coloneqq\intop_{0}^{\infty}\|k(t)\|\e^{-\rho t}\mbox{ d}t\quad(k\in L_{1,\rho}(\mathbb{R};L(H))).
\]
\end{defn*}
\begin{rem}
We note that $L_{1,\rho}(\mathbb{R}_{\geq0};L(H))\hookrightarrow L_{1,\mu}(\mathbb{R}_{\geq0};L(H))$
for $\rho\leq\mu$ with $|k|_{L_{1,\mu}}\leq|k|_{L_{1,\rho}}$ for
$k\in L_{1,\rho}(\mathbb{R}_{\geq0};L(H)).$\end{rem}
\begin{lem}
\label{lem:convolution_well-defined} Let $\rho_{0}\in\mathbb{R}$
and $k\in L_{1,\rho_{0}}(\mathbb{R}_{\geq0};L(H)).$ Let $I\subseteq\mathbb{R}$
a bounded interval and $x\in H.$ Then the mapping 
\[
H\ni y\mapsto\intop_{0}^{\infty}\langle k(s)x|y\rangle_{H}\chi_{I}(t-s)\,\dd s\in\mathbb{C}
\]
is a bounded linear functional for each $t\in\mathbb{R}$ and we define
$\left(k\ast\left(\chi_{I}x\right)\right)(t)\in H$ as the element
satisfying 
\[
\langle\left(k\ast\left(\chi_{I}x\right)\right)(t)|y\rangle_{H}=\intop_{0}^{\infty}\langle k(s)x|y\rangle_{H}\chi_{I}(t-s)\,\dd s\quad(y\in H).
\]
The so defined mapping $k\ast\left(\chi_{I}x\right):\mathbb{R}\to H$
is continuous and 
\begin{equation}
\left|\left(k\ast\chi_{I}x\right)(t)\right|_{H}\leq\intop_{0}^{\infty}\|k(s)\||\chi_{I}(t-s)x|_{H}\,\dd s\label{eq:convolution_ptwise}
\end{equation}
for each $t\in\mathbb{R}$.\end{lem}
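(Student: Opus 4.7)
My plan is to construct $(k\ast(\chi_{I}x))(t)$ via the Riesz representation theorem, then derive the pointwise norm estimate, and finally prove continuity by dominated convergence. The crucial preliminary observation is that the assumption $k\in L_{1,\rho_{0}}(\mathbb{R}_{\geq0};L(H))$, together with the fact that $s\mapsto\e^{-\rho_{0}s}$ is bounded above and below on any bounded subset of $[0,\infty)$, implies that $s\mapsto\|k(s)\|$ is integrable on every bounded subset of $[0,\infty)$.

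For fixed $t\in\mathbb{R}$, the set $t-I$ is bounded since $I$ is bounded, so $\chi_{I}(t-\cdot)$ vanishes outside a bounded set. The weak measurability of $k$ makes $s\mapsto\langle k(s)x|y\rangle_{H}\chi_{I}(t-s)$ measurable for every $y\in H$, and this integrand is pointwise dominated by the integrable function $\|k(s)\|\,|x|_{H}\,|y|_{H}\,\chi_{I}(t-s)$. Hence the integral in the statement converges absolutely, defines a linear functional in $y$ by linearity of the inner product in its second slot, and is bounded:
\[
\left|\int_{0}^{\infty}\langle k(s)x|y\rangle_{H}\chi_{I}(t-s)\,\dd s\right|\leq|x|_{H}|y|_{H}\int_{0}^{\infty}\|k(s)\|\chi_{I}(t-s)\,\dd s.
\]
The Riesz representation theorem then uniquely determines $(k\ast(\chi_{I}x))(t)\in H$, and taking the supremum of the above estimate over $|y|_{H}=1$, together with $|\chi_{I}(t-s)x|_{H}=\chi_{I}(t-s)|x|_{H}$, yields the asserted norm bound.

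For continuity at $t_{0}\in\mathbb{R}$, I would use that the Riesz correspondence is linear, so that $(k\ast\chi_{I}x)(t)-(k\ast\chi_{I}x)(t_{0})$ is the Riesz representative of the functional obtained by replacing $\chi_{I}(t-s)$ by $\chi_{I}(t-s)-\chi_{I}(t_{0}-s)$. Applying the norm bound just derived gives
\[
\left|(k\ast\chi_{I}x)(t)-(k\ast\chi_{I}x)(t_{0})\right|_{H}\leq|x|_{H}\int_{0}^{\infty}\|k(s)\|\,|\chi_{I}(t-s)-\chi_{I}(t_{0}-s)|\,\dd s.
\]
Restricting $t$ to a bounded neighbourhood of $t_{0}$, the integrand is supported in a common bounded subset $K\subseteq[0,\infty)$, on which $\|k\|$ is integrable. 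For almost every $s$ the point $t_{0}-s$ avoids the Lebesgue-null boundary $\partial I$, so $\chi_{I}(t-s)\to\chi_{I}(t_{0}-s)$ pointwise a.e.\ as $t\to t_{0}$. Dominated convergence with dominating function $2\|k(\cdot)\|\chi_{K}$ then closes the argument.

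The main delicate point will be the continuity step: it requires both that $\partial I$ has Lebesgue measure zero (for pointwise a.e.\ convergence of the shifted characteristic functions) and the local integrability of $\|k\|$ (so that the dominating function lies in $L_{1}$). Everything else---measurability, boundedness of the Riesz functional, and the norm estimate---is routine.
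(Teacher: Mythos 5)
Your proof is correct and follows essentially the same route as the paper's: establish boundedness of the functional via the estimate $\left|\int_{0}^{\infty}\langle k(s)x|y\rangle_{H}\chi_{I}(t-s)\,\dd s\right|\leq\int_{0}^{\infty}\|k(s)\||\chi_{I}(t-s)x|_{H}\,\dd s\,|y|_{H}$, invoke Riesz representation, and prove continuity by dominated convergence applied to $\int_{0}^{\infty}\|k(s)\||\chi_{I}(t-s)-\chi_{I}(t_{0}-s)|\,\dd s$. You are in fact slightly more explicit than the paper on the two points needed to justify the dominated convergence step (local integrability of $\|k(\cdot)\|$ and a.e.\ pointwise convergence of the shifted characteristic functions, using that $\partial I$ is a null set), which the paper leaves implicit.
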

\begin{proof}
For $t\in\mathbb{R}$ and $y\in H$ we have that 
\begin{align*}
\left|\intop_{0}^{\infty}\langle k(s)x|y\rangle_{H}\chi_{I}(t-s)\mbox{ d}s\right|_{H} & \leq\intop_{0}^{\infty}\|k(s)\||\chi_{I}(t-s)x|_{H}\mbox{ d}s|y|_{H}
\end{align*}
which proves that the functional is indeed bounded. The linearity
is trivial. Hence, by the Riesz representation theorem there exists
a unique element $\left(k\ast\left(\chi_{I}x\right)\right)(t)\in H$
with 
\[
\langle\left(k\ast\left(\chi_{I}x\right)\right)(t)|y\rangle_{H}=\intop_{0}^{\infty}\langle k(s)x|y\rangle_{H}\chi_{I}(t-s)\mbox{ d}s
\]
for each $y\in H$ and the asserted estimate holds. Moreover, for
$t,t'\in\mathbb{R}$ we have 
\begin{align*}
\left|\left(k\ast(\chi_{I}x)\right)(t)-\left(k\ast(\chi_{I}x)\right)(t')\right|_{H} & =\sup_{y\in H,|y|_{H}=1}\langle\left(k\ast(\chi_{I}x)\right)(t)-\left(k\ast(\chi_{I}x)\right)(t')|y\rangle_{H}\\
 & =\sup_{y\in H,|y|_{H}=1}\intop_{0}^{\infty}\langle k(s)x|y\rangle_{H}\left(\chi_{I}(t-s)-\chi_{I}(t'-s)\right)\,\dd s\\
 & \leq\intop_{0}^{\infty}\|k(s)\||\chi_{I}(t-s)-\chi_{I}(t'-s)|\,\dd s|x|_{H}\\
 & \to0\quad(t'\to t),
\end{align*}
by dominated convergence.\end{proof}
\begin{lem}
\label{lem:convolution_bd}Let $\rho_{0}\in\mathbb{R}$ and $k\in L_{1,\rho_{0}}(\mathbb{R}_{\geq0};L(H)).$
Then the mapping 
\begin{align*}
k\ast:\mathrm{Sim}(\mathbb{R};H)\subseteq H_{\rho}(\mathbb{R};H) & \to H_{\rho}(\mathbb{R};H)\\
\sum_{i=1}^{n}\chi_{I_{i}}x_{i} & \mapsto\left(\sum_{i=1}^{n}k\ast\left(\chi_{I_{i}}x_{i}\right)\right),
\end{align*}
where $\mathrm{Sim}(\mathbb{R};H)$ denotes the space of simple functions
with values in $H$, is well-defined. Moreover, $k\ast$ extends to
a bounded linear operator on $H_{\rho}(\mathbb{R};H)$ for each $\rho\geq\rho_{0}$
with $\|k\ast\|_{L(H_{\rho})}\leq|k|_{L_{1,\rho}}.$ In particular
$\|k\ast\|_{L(H_{\rho})}\to0$ as $\rho\to\infty.$\end{lem}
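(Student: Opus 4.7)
My plan is to establish the three claims in order: well-definedness of $k\ast$ on simple functions, the boundedness estimate $\|k\ast\|_{L(H_\rho)}\leq|k|_{L_{1,\rho}}$, and the asymptotic behavior as $\rho\to\infty$. The key tool throughout is the pointwise bound \prettyref{eq:convolution_ptwise} from the previous lemma, combined with a weighted version of Young's convolution inequality.

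For the well-definedness on $\mathrm{Sim}(\mathbb{R};H)$, the crucial observation is that the defining formula $\langle(k\ast(\chi_I x))(t)|y\rangle_H=\intop_0^\infty\langle k(s)x|y\rangle_H\chi_I(t-s)\,\dd s$ is $\mathbb{C}$-linear in $x$ and additive in $\chi_I$ over disjoint decompositions (since the Lebesgue integral is). Hence, given two representations $\sum_{i=1}^n\chi_{I_i}x_i=\sum_{j=1}^m\chi_{J_j}y_j$ of the same simple function, I would take a common refinement of the endpoints of all intervals into disjoint intervals $K_1,\ldots,K_N$, rewrite both sides as $\sum_\ell\chi_{K_\ell}z_\ell$ (the values $z_\ell$ being forced to agree since the functions coincide a.e.\ and are constant on each $K_\ell$), and conclude by bilinearity that $\sum_ik\ast(\chi_{I_i}x_i)=\sum_\ell k\ast(\chi_{K_\ell}z_\ell)=\sum_jk\ast(\chi_{J_j}y_j)$. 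A by-product is that every $\varphi\in\mathrm{Sim}(\mathbb{R};H)$ admits a representation with pairwise disjoint intervals, which I will use to upgrade \prettyref{eq:convolution_ptwise} to the sharp pointwise bound
\[
|(k\ast\varphi)(t)|_H\leq\intop_0^\infty\|k(s)\|\,|\varphi(t-s)|_H\,\dd s\qquad(t\in\mathbb{R}),
\]
where disjointness is exactly what ensures $\sum_i\chi_{I_i}(u)|x_i|_H=|\varphi(u)|_H$ pointwise.

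For the operator norm estimate, I would multiply the pointwise bound by $\e^{-\rho t}$, split $\e^{-\rho t}=\e^{-\rho s}\e^{-\rho(t-s)}$, and recognize the right-hand side as an ordinary convolution on $\mathbb{R}$ of $\|k(\cdot)\|\e^{-\rho\cdot}\chi_{[0,\infty)}\in L_1(\mathbb{R})$ with $|\varphi(\cdot)|_H\e^{-\rho\cdot}\in L_2(\mathbb{R})$. The classical Young inequality $\|f\ast g\|_{L_2}\leq|f|_{L_1}|g|_{L_2}$ then yields $\|k\ast\varphi\|_{H_\rho}\leq|k|_{L_{1,\rho}}\|\varphi\|_{H_\rho}$ for every $\rho\geq\rho_0$ (using $L_{1,\rho_0}\hookrightarrow L_{1,\rho}$). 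Since $\mathrm{Sim}(\mathbb{R};H)$ is dense in $H_\rho(\mathbb{R};H)$, the operator $k\ast$ extends uniquely to a bounded linear operator on $H_\rho(\mathbb{R};H)$ with the desired norm bound. Finally, $|k|_{L_{1,\rho}}=\intop_0^\infty\|k(t)\|\e^{-\rho t}\,\dd t\to0$ as $\rho\to\infty$ by dominated convergence, with dominant $\|k(\cdot)\|\e^{-\rho_0\cdot}\in L_1(\mathbb{R}_{\geq0})$, giving the last assertion.

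The main obstacle is the well-definedness step, since $\mathrm{Sim}(\mathbb{R};H)$ is only defined up to representation and the convolution in \prettyref{lem:convolution_well-defined} is defined term-by-term; once the common-refinement bookkeeping is dispatched and a disjoint representation is available, the bound with $|\varphi(t-s)|_H$ and the appeal to Young's inequality are routine.
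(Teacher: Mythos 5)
Your proof is correct and follows essentially the same route as the paper: pass to a pairwise disjoint interval representation to obtain the pointwise bound $|(k\ast\varphi)(t)|_{H}\leq\int_{0}^{\infty}\|k(s)\|\,|\varphi(t-s)|_{H}\,\mathrm{d}s$, then conclude by a weighted Young estimate and the density of simple functions. The only cosmetic differences are that the paper proves the Young inequality inline via Cauchy--Schwarz and Fubini rather than citing it, and uses monotone instead of dominated convergence for $|k|_{L_{1,\rho}}\to0$; your explicit common-refinement argument for representation-independence is sound and fills in a step the paper leaves implicit.
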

\begin{proof}
By \prettyref{lem:convolution_well-defined} we have defined $k\ast$
for functions $\varphi=\chi_{I}x$ where $I\subseteq\mathbb{R}$ is
a bounded interval and $x\in H.$ Thus, for $\varphi\in\mathrm{Sim}(\mathbb{R};H)$,
$k\ast\varphi$ is a continuous function, in particular, it is measurable.
Moreover, choosing pairwise disjoint intervals, we derive from \prettyref{eq:convolution_ptwise}
that 
\[
|\left(k\ast\varphi\right)(t)|_{H}\leq\intop_{0}^{\infty}\|k(s)\||\varphi(t-s)|_{H}\mbox{ d}s\quad(\varphi\in\mathrm{Sim}(\mathbb{R};H),t\in\mathbb{R}).
\]
Hence, for $\rho\geq\rho_{0}$ and $\varphi\in\mathrm{Sim}(\mathbb{R};H)$
we have that 
\begin{align*}
\intop_{\mathbb{R}}\left|\left(k\ast\varphi\right)(t)\right|_{H}^{2}\e^{-2\rho t}\mbox{ d}t & \leq\intop_{\mathbb{R}}\left(\intop_{0}^{\infty}\|k(s)\||\varphi(t-s)|_{H}\right)^{2}\e^{-2\rho t}\mbox{ d}t\\
 & \leq\intop_{\mathbb{R}}\left(\intop_{0}^{\infty}\|k(s)\|\e^{-\rho s}\mbox{ d}s\right)\left(\intop_{0}^{\infty}\|k(s)\||\varphi(t-s)|_{H}^{2}\e^{\rho s}\mbox{ d}s\right)\e^{-2\rho t}\mbox{ d}t\\
 & =|k|_{L_{1,\rho}}\intop_{0}^{\infty}\|k(s)\|\e^{-\rho s}\intop_{\mathbb{R}}|\varphi(t-s)|_{H}^{2}\ \e^{-2\rho(t-s)}\mbox{ d}t\mbox{ d}s\\
 & =|k|_{L_{1,\rho}}^{2}|\varphi|_{\rho}^{2},
\end{align*}
which shows the first assertion. The second assertion follows from
$\|k\ast\|_{L(H_{\rho})}\leq|k|_{L_{1,\rho}}$ and $|k|_{L_{1,\rho}}\to0$
for $\rho\to\infty,$ by monotone convergence.
\end{proof}
In case of a separable Hilbert space $H$, we have the usual integral
expression for the function $k\ast f$ as the next lemma shows. 
\begin{lem}
\label{lem:integral_formula_conv}Let $\rho_{0}\in\mathbb{R}$ and
$k\in L_{1,\rho_{0}}(\mathbb{R}_{\geq0};L(H))$, $H$ separable. Then
for $f\in H_{\rho}(\mathbb{R};H)$ with $\rho\geq\rho_{0}$ we have
that 
\[
\left(k\ast f\right)(t)=\intop_{0}^{\infty}k(s)f(t-s)\,\dd s\quad(t\in\mathbb{R}\mbox{ a.e.}).
\]
\end{lem}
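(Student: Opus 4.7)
The plan is to exploit the fact that the operator $k\ast$ was \emph{defined} on simple functions via a Riesz representation argument and then extended by continuity, and to show that the Bochner integral formula produces the same result on the dense subset of simple functions.

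First I would handle the measurability and integrability of $s \mapsto k(s) f(t-s)$. Fix a bounded interval $I$ and $x \in H$. Since $H$ is separable and $k$ is weakly measurable, Pettis's measurability theorem guarantees that $s \mapsto k(s)x$ is strongly (Bochner) measurable into $H$; together with the dominating bound $\|k(s)x\|_H \leq \|k(s)\| |x|_H$ and the admissibility of $k$, the function $s \mapsto k(s)\chi_I(t-s)x$ is Bochner integrable for every $t \in \mathbb{R}$. Comparing with the defining identity in \prettyref{lem:convolution_well-defined}, for any $y \in H$ one has
\[
\langle (k\ast(\chi_I x))(t) \mid y \rangle_H = \int_0^\infty \langle k(s)\chi_I(t-s)x \mid y\rangle_H \, \dd s = \left\langle \int_0^\infty k(s)\chi_I(t-s)x \, \dd s \,\Big|\, y\right\rangle_H,
\]
so the integral formula holds pointwise for $f = \chi_I x$. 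By linearity it extends to all $\varphi \in \mathrm{Sim}(\mathbb{R};H)$.

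Second, for general $f \in H_\rho(\mathbb{R};H)$, I would define
\[
(Tf)(t) \coloneqq \int_0^\infty k(s) f(t-s) \, \dd s
\]
and verify that the right-hand side makes sense a.e.\ and defines a bounded operator on $H_\rho(\mathbb{R};H)$. Strong measurability of $s \mapsto k(s) f(t-s)$ follows by approximating $f$ by simple functions and using that strong measurability is preserved under a.e.\ limits together with Pettis's theorem. Finite integrability for a.e.\ $t$ and the bound $\|T\|_{L(H_\rho)} \leq |k|_{L_{1,\rho}}$ are obtained exactly as in the estimate in \prettyref{lem:convolution_bd}, via Cauchy--Schwarz applied to the decomposition $\|k(s)\| = \|k(s)\|^{1/2} \e^{-\rho s/2} \cdot \|k(s)\|^{1/2}\e^{\rho s/2}$ followed by Fubini.

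Finally, by the first step $T$ agrees with $k\ast$ on $\mathrm{Sim}(\mathbb{R};H)$, and by density of simple functions in $H_\rho(\mathbb{R};H)$ together with the continuity of both $T$ and $k\ast$ on $H_\rho(\mathbb{R};H)$, we obtain $Tf = k\ast f$ in $H_\rho(\mathbb{R};H)$, hence equality almost everywhere. The main technical obstacle is the measurability question in the second step: since $k$ is only assumed weakly measurable (not strongly measurable in $L(H)$), one cannot invoke the Bochner theory for operator-valued functions directly. Separability of $H$ is precisely what rescues the argument through Pettis's theorem, which is why the hypothesis is essential.
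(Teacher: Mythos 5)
Your proposal follows essentially the same route as the paper: Pettis's theorem (via separability of $H$) to get strong measurability, the pointwise identification of $k\ast(\chi_I x)$ with the Bochner integral, the Young-type estimate for boundedness of the integral operator on $H_\rho(\mathbb{R};H)$, and density of simple functions to conclude. The one step the paper spells out that you pass over is the measurability in $t$ of $t\mapsto\intop_{0}^{\infty}k(s)f(t-s)\,\dd s$ (as opposed to measurability of the integrand in $s$ for fixed $t$), which the paper obtains by testing against $y\in H$, applying Fubini, and invoking Pettis once more.
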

\begin{proof}
We first prove that 

\[
\mathbb{R}_{\geq0}\ni t\mapsto k(t)f(t)\in H
\]
is measurable. Indeed, first we note that for all $x,y\in H$ we have
that 
\[
t\mapsto\langle x|k(t)^{\ast}y\rangle_{H}=\langle k(t)x|y\rangle_{H}
\]
is measurable, i.e. $t\mapsto k(t)^{\ast}y$ is weakly measurable
for each $y\in H$. By the Theorem of Pettis (see \prettyref{thm:Pettis}),
we infer that $t\mapsto k(t)^{\ast}y$ is measurable for each $y\in H.$
Hence, 
\[
t\mapsto\langle k(t)f(t)|y\rangle_{H}=\langle f(t)|k(t)^{\ast}y\rangle_{H}
\]
is measurable and thus, again by \prettyref{thm:Pettis} we derive
the measurability of $t\mapsto k(t)f(t).$ In particular, the function
\[
\mathbb{R}_{\geq0}\ni s\mapsto k(s)f(t-s)\in H
\]
is measurable for each $t\in\mathbb{R}$. Moreover, $\intop_{0}^{\infty}|k(s)f(t-s)|_{H}\mbox{ d}s<\infty$
for almost every $t\in\mathbb{R},$ since 
\begin{align*}
\intop_{\mathbb{R}}\left(\intop_{0}^{\infty}|k(s)f(t-s)|_{H}\mbox{ d}s\right)^{2}\e^{-2\rho t}\mbox{ d}t & \leq\intop_{\mathbb{R}}\left(\intop_{0}^{\infty}\|k(s)e^{-\rho s}\||f(t-s)\e^{-\rho(t-s)}|_{H}\mbox{ d}s\right)^{2}\mbox{ d}t\\
 & \leq|k|_{L_{1,\rho}}^{2}|f|_{\rho}^{2}
\end{align*}
by Young's inequality. Consider now the function $g:\mathbb{R}\to H$
defined by 
\[
g(t)\coloneqq\begin{cases}
\intop_{0}^{\infty}k(s)f(t-s)\mbox{ d}s, & \mbox{ if }\intop_{0}^{\infty}|k(s)f(t-s)|_{H}\mbox{ d}s<\infty,\\
0, & \mbox{ otherwise}.
\end{cases}
\]
Then $g$ is measurable. Indeed, for $y\in H$ we have that 
\begin{align*}
\intop_{\mathbb{R}}\left|\intop_{0}^{\infty}\langle k(s)f(t-s)|y\rangle_{H}\mbox{ d}s\right|^{2}\e^{-2\rho t}\mbox{ d}t & \leq\intop_{\mathbb{R}}\left(\intop_{0}^{\infty}|k(s)f(t-s)|_{H}\mbox{ d}s\right)^{2}\e^{-2\rho t}\mbox{ d}t|y|_{H}^{2}\\
 & \leq|k|_{L_{1,\rho}}^{2}|y|_{H}^{2}|f|_{\rho}^{2}
\end{align*}
and since 
\[
\mathbb{R}_{\geq0}\times\mathbb{R}\ni(s,t)\mapsto\langle k(s)f(t-s)|y\rangle_{H}
\]
is measurable, we obtain the measurability of 
\[
t\mapsto\langle g(t)|y\rangle_{H}
\]
by Fubini's Theorem. Again by \prettyref{thm:Pettis}, the measurability
of $g$ follows and by the estimate shown above, we have that 
\[
\intop_{\mathbb{R}}\left|\intop_{0}^{\infty}k(s)f(t-s)\mbox{ d}s\right|^{2}\e^{-2\rho t}\mbox{ d}t\leq|k|_{L_{1,\rho}}^{2}|f|_{\rho}^{2}.
\]
Summarizing we have shown that 
\[
\tilde{k\ast}:f\mapsto\left(t\mapsto\intop_{0}^{\infty}k(s)f(t-s)\mbox{ d}s\right)
\]
is a well-defined and bounded operator on $H_{\rho}(\mathbb{R};H)$.
Let now $I\subseteq\mathbb{R}$ be a bounded interval and $x\in H.$
Then 
\[
\intop_{0}^{\infty}\langle k(s)x|y\rangle_{H}\chi_{I}(t-s)\,\dd s=\left\langle \left.\intop_{0}^{\infty}k(s)\chi_{I}(t-s)x\mbox{ d}s\right|y\right\rangle _{H}\quad(y\in H)
\]
which proves that 
\[
\left(k\ast(\chi_{I}x)\right)(t)=\intop_{0}^{\infty}k(s)\chi_{I}(t-s)x\mbox{ d}s
\]
for each $t\in\mathbb{R}.$ Consequently, we have for each $\varphi\in\mathrm{Sim}(\mathbb{R};H)$
that 
\[
\left(k\ast\varphi\right)(t)=\intop_{0}^{\infty}k(s)\varphi(t-s)\mbox{ d}s\quad(t\in\mathbb{R}),
\]
i.e. $k\ast$ and $\tilde{k\ast}$ coincide on the dense set $\mathrm{Sim}(\mathbb{R};H),$
which yields the assertion. \end{proof}
\begin{defn*}
Let $\rho_{0}\in\mathbb{R}$ and $k\in L_{1,\rho_{0}}(\mathbb{R}_{\geq0};L(H)).$
Then we define $\hat{k}(z)\in L(H)$ for $z\in\mathbb{C}_{\Re\geq\rho_{0}}$
by 
\[
\langle x|\hat{k}(z)y\rangle_{H}\coloneqq\frac{1}{\sqrt{2\pi}}\intop_{\mathbb{R}_{\geq0}}\e^{-zs}\langle x|k(s)y\rangle_{H}\mbox{ d}s\quad(x,y\in H).
\]
\end{defn*}
\begin{rem}
We note that $\hat{k}(z)$ is well-defined by the Riesz representation
theorem. Moreover, the mapping $\mathbb{C}_{\Re>\rho_{0}}\ni z\mapsto\hat{k}(z)\in L(H)$
is bounded by $\frac{1}{\sqrt{2\pi}}|k|_{L_{1,\rho_{0}}}$ and analytic
according to \cite[Theorem 3.10.1]{hille1957functional}.\end{rem}
\begin{lem}
\label{lem:kernel_Fourier}Let $\rho_{0}\in\mathbb{R}$ and $k\in L_{1,\rho_{0}}(\mathbb{R}_{\geq0};L(H)).$
Then, $k\ast=\sqrt{2\pi}\hat{k}(\partial_{0,\rho})$ for each $\rho>\rho_{0}.$ \end{lem}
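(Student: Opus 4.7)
The plan is to verify the claimed identity on a dense subspace of $H_{\rho}(\mathbb{R};H)$ and extend by continuity. Both sides are bounded operators: $k\ast$ by \prettyref{lem:convolution_bd}, and $\hat{k}(\partial_{0,\rho})$ because $\hat{k}\colon\mathbb{C}_{\Re>\rho_{0}}\to L(H)$ is analytic and bounded by $\tfrac{1}{\sqrt{2\pi}}|k|_{L_{1,\rho_{0}}}$, so that \prettyref{thm:material_law_good} applies. It therefore suffices to test the identity on the dense linear span of functions $\varphi=\chi_{[a,b]}x$ with $-\infty<a<b<\infty$ and $x\in H$.

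Fix such $\varphi$, $y\in H$ and $t\in\mathbb{R}$. The pointwise bound \prettyref{eq:convolution_ptwise} together with $k\in L_{1,\rho}(\mathbb{R}_{\geq0};L(H))$ and the compact support of $\varphi$ ensures that $r\mapsto\langle(k\ast\varphi)(r)|y\rangle_{H}\,\e^{-\rho r}$ is in $L_{1}(\mathbb{R})$, so that the Fourier--Laplace transform of $k\ast\varphi$ may be evaluated pointwise via the integral formula. Inserting the weak definition of $k\ast\varphi$ from \prettyref{lem:convolution_well-defined}, one obtains
\[
\langle\mathcal{L}_{\rho}(k\ast\varphi)(t)|y\rangle_{H}=\frac{1}{\sqrt{2\pi}}\intop_{\mathbb{R}}\intop_{0}^{\infty}\e^{-(\rho-\i t)r}\chi_{[a,b]}(r-s)\langle k(s)x|y\rangle_{H}\,\dd s\,\dd r,
\]
where Fubini's theorem is applicable by the same bound. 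The substitution $u=r-s$ factorises the double integral into the product of
\[
\intop_{a}^{b}\e^{-(\rho-\i t)u}\,\dd u=\sqrt{2\pi}\,\overline{(\mathcal{L}_{\rho}\chi_{[a,b]})(t)}\quad\text{and}\quad\intop_{0}^{\infty}\e^{-(\rho-\i t)s}\langle k(s)x|y\rangle_{H}\,\dd s=\sqrt{2\pi}\,\langle\hat{k}(\i t+\rho)x|y\rangle_{H},
\]
the second identity being the definition of $\hat{k}$ at $z=\i t+\rho$ after conjugating. Using $(\mathcal{L}_{\rho}\varphi)(t)=(\mathcal{L}_{\rho}\chi_{[a,b]})(t)\,x$ and that pulling a scalar out of the first argument of $\langle\cdot|\cdot\rangle_{H}$ conjugates it, recombining produces $\langle\mathcal{L}_{\rho}(k\ast\varphi)(t)|y\rangle_{H}=\sqrt{2\pi}\,\langle\hat{k}(\i t+\rho)(\mathcal{L}_{\rho}\varphi)(t)|y\rangle_{H}$. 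Since $y\in H$ is arbitrary, $\mathcal{L}_{\rho}(k\ast\varphi)=\sqrt{2\pi}\,\hat{k}(\i\m+\rho)\mathcal{L}_{\rho}\varphi$ in $L_{2}(\mathbb{R};H)$; applying the unitary $\mathcal{L}_{\rho}^{\ast}$ yields $k\ast\varphi=\sqrt{2\pi}\,\hat{k}(\partial_{0,\rho})\varphi$, and density concludes the proof.

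The main obstacle is the bookkeeping forced by the \emph{weak} (Riesz-representation) definitions of both $k\ast$ and $\hat{k}(z)$, necessitated by the absence of separability of $H$. This precludes the cleaner Bochner-integral route of \prettyref{lem:integral_formula_conv} and requires every step to be phrased on scalar-valued integrands, with careful attention to the conjugate-linear first slot of $\langle\cdot|\cdot\rangle_{H}$; once this is done, Fubini, substitution, and continuous extension are routine.
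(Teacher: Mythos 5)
Your proposal is correct and follows essentially the same route as the paper's proof: both reduce to $\varphi=\chi_{I}x$ by boundedness of $k\ast$ and $\hat{k}(\partial_{0,\rho})$, then verify the identity weakly against $y\in H$ via the Riesz-representation definitions, Fubini, and the substitution $u=r-s$. The only (immaterial) difference is the direction of the computation — you factor $\mathcal{L}_{\rho}(k\ast\varphi)(t)$ into the product, while the paper expands $\hat{k}(\i t+\rho)(\mathcal{L}_{\rho}\varphi)(t)$ and recombines.
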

\begin{proof}
Since $\hat{k}$ is bounded on $\mathbb{C}_{\Re>\rho_{0}},$ the operator
$\hat{k}(\partial_{0,\rho})$ is bounded for each $\rho>\rho_{0}$
as well. Moreover, by \prettyref{lem:convolution_bd} the operator
$k\ast$ is bounded on $H_{\rho}(\mathbb{R};H),$ too. Thus, it suffices
to show $k\ast\varphi=\sqrt{2\pi}\hat{k}(\partial_{0,\rho})\varphi$
for $\varphi=\chi_{I}y$ for some bounded interval $I\subseteq\mathbb{R},y\in H$
and $\rho>\rho_{0}.$ So, let $\rho>\rho_{0}$ and $\varphi=\chi_{I}y$
for some interval $I\subseteq\mathbb{R},y\in H.$ Using that $k\ast\varphi\in L_{1,\rho}(\mathbb{R};H)$
by \prettyref{eq:convolution_ptwise}, we can compute
\begin{align*}
\langle x|\sqrt{2\pi}\hat{k}(\i t+\rho)\left(\mathcal{L}_{\rho}\varphi\right)(t)\rangle_{H} & =\intop_{\mathbb{R}}\e^{-(\i t+\rho)s}\langle x|k(s)\left(\mathcal{L}_{\rho}\varphi\right)(t)\rangle_{H}\mbox{ d}s\\
 & =\frac{1}{\sqrt{2\pi}}\intop_{\mathbb{R}}\intop_{\mathbb{R}}\e^{-(\i t+\rho)(s+r)}\langle x|k(s)\varphi(r)\rangle_{H}\mbox{ d}r\mbox{ d}s\\
 & =\frac{1}{\sqrt{2\pi}}\intop_{\mathbb{R}}\intop_{\mathbb{R}}\e^{-(\i t+\rho)r}\langle x|k(s)y\rangle_{H}\chi_{I}(r-s)\mbox{ d}r\mbox{ d}s\\
 & =\frac{1}{\sqrt{2\pi}}\intop_{\mathbb{R}}\e^{-(\i t+\rho)r}\langle x|\left(k\ast\varphi\right)(r)\rangle_{H}\mbox{ d}r\\
 & =\langle x|\mathcal{L}_{\rho}(k\ast\varphi)(t)\rangle_{H}
\end{align*}
for each $x\in H,t\in\mathbb{R}.$ That shows the assertion.
\end{proof}
The latter lemma gives that \prettyref{eq:integro} and \prettyref{eq:integro_resol}
are indeed evolutionary problems with $M(z)=1+\sqrt{2\pi}\hat{k}(z)$
and $M(z)=\left(1+\sqrt{2\pi}\hat{k}(z)\right)$, respectively. We
now address the well-posedness of the problems \prettyref{eq:integro}
and \prettyref{eq:integro_resol}. For doing so, we formulate the
following conditions.
\begin{condition}
\label{cond:kernel}Let $\rho_{0}\in\mathbb{R}$ and $k\in L_{1,\rho_{0}}(\mathbb{R}_{\geq0};L(H)).$
We say that $k$ satisfies the condition (a),(b) and (c), respectively,
if 

\begin{enumerate}[(a)]

\item \label{item: selfadjoint}For almost every $t\in\mathbb{R},$
the operator $k(t)$ is selfadjoint.

\item \label{item: commute}For almost every $t,s\in\mathbb{R}$
we have $k(t)k(s)=k(s)k(t).$

\item \label{item:im_d}There exists $\rho_{1}\geq\rho_{0}$ and
$d\leq0$ such that 
\[
t\Im\langle\hat{k}(\i t+\rho_{1})x|x\rangle_{H}\geq d|x|_{H}^{2}
\]
for each $t\in\mathbb{R},x\in H.$

\end{enumerate}
\end{condition}
We first show, that a kernel $k$ satisfying \prettyref{cond:kernel}
(a) and (c), also satisfies a similar inequality like in (c) for all
$\rho>\rho_{1}.$ The precise statement is as follows.
\begin{lem}
\label{lem:for_one_rho_for_all_rho}Let $\rho_{0}\in\mathbb{R}$ and
$k\in L_{1,\rho_{0}}(\mathbb{R}_{\geq0};L(H))$ satisfying \prettyref{cond:kernel}
\prettyref{item: selfadjoint} and \prettyref{item:im_d}. Then for
each $t\in\mathbb{R},\rho\geq\rho_{1}$ (where $\rho_{1}$ is chosen
according to \prettyref{cond:kernel} (c)) and $x\in H$ one has
\[
t\Im\langle\hat{k}(\i t+\rho)x|x\rangle_{H}\geq4d|x|_{H}^{2}.
\]
\end{lem}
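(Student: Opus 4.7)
The plan is to exploit selfadjointness to reduce everything to a scalar sine transform, then represent the exponential factor $e^{-(\rho-\rho_{1})s}$ via the Fourier transform of a Cauchy density in order to relate the quantity at height $\rho$ to the quantity at height $\rho_{1}$ through a positive kernel. Concretely, write $\phi_{x}(s)\coloneqq\langle k(s)x|x\rangle_{H}$; because $k(s)$ is selfadjoint for almost every $s$, $\phi_{x}$ is real-valued, and the definition of $\hat{k}$ together with the convention on the inner product yields
\[
\Im\langle\hat{k}(\i t+\rho)x|x\rangle_{H}=\frac{1}{\sqrt{2\pi}}\intop_{0}^{\infty}\e^{-\rho s}\sin(ts)\phi_{x}(s)\,\dd s\quad(t\in\mathbb{R},\rho\geq\rho_{0}).
\]

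For $\rho\geq\rho_{1}$ I set $\alpha\coloneqq\rho-\rho_{1}\geq0$ (the case $\alpha=0$ being trivial) and use the well-known identity $\e^{-\alpha s}=\frac{1}{\pi}\intop_{\mathbb{R}}\frac{\alpha}{\alpha^{2}+u^{2}}\e^{\i us}\,\dd u$ valid for $s\geq0$. Substituting this factor into the display above, interchanging the $s$- and $u$-integrals by Fubini (legitimate since $\|k(\cdot)\|\e^{-\rho_{1}\cdot}\in L_{1}$ and the Cauchy kernel is integrable in $u$), and using the oddness of $\sin(ts)$ together with the even/odd decomposition of the Cauchy density in $u$, one obtains the representation
\[
\Im\langle\hat{k}(\i t+\rho)x|x\rangle_{H}=\frac{1}{2}\intop_{-\infty}^{\infty}\left(\frac{\alpha/\pi}{\alpha^{2}+(u-t)^{2}}-\frac{\alpha/\pi}{\alpha^{2}+(u+t)^{2}}\right)\Im\langle\hat{k}(\i u+\rho_{1})x|x\rangle_{H}\,\dd u.
\]
This is the decisive step: it recovers the boundary data at $\rho_{1}$ as an odd-Poisson-type integral.

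Next I multiply by $t$ and pull a factor $u$ inside so as to match the hypothesis $u\Im\langle\hat{k}(\i u+\rho_{1})x|x\rangle_{H}\geq d|x|_{H}^{2}$. A short algebraic simplification shows
\[
\frac{t}{u}\left(\frac{\alpha/\pi}{\alpha^{2}+(u-t)^{2}}-\frac{\alpha/\pi}{\alpha^{2}+(u+t)^{2}}\right)=\frac{4t^{2}\alpha/\pi}{\left(\alpha^{2}+(u-t)^{2}\right)\left(\alpha^{2}+(u+t)^{2}\right)}\geq0,
\]
so the kernel is pointwise nonnegative. Since $d\leq0$ and the factor is positive, the assumed inequality may be inserted under the integral. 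A residue computation gives
\[
\intop_{-\infty}^{\infty}\frac{4t^{2}\alpha/\pi}{\left(\alpha^{2}+(u-t)^{2}\right)\left(\alpha^{2}+(u+t)^{2}\right)}\,\dd u=\frac{2t^{2}}{t^{2}+\alpha^{2}}\leq2,
\]
and combining these estimates produces $t\Im\langle\hat{k}(\i t+\rho)x|x\rangle_{H}\geq d|x|_{H}^{2}\cdot\frac{t^{2}}{t^{2}+\alpha^{2}}\geq d|x|_{H}^{2}\geq4d|x|_{H}^{2}$, where the very last step uses $d\leq0$ (so the constant $4$ is in fact slack).

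The main obstacles I anticipate are routine but need care: verifying absolute integrability to justify the Fubini interchange producing the Poisson-type representation, and checking that the odd-part cancellations yield precisely the kernel written above (one has to use that the Cauchy kernel is even in its shift, so only the $\sin(ts)$-part contributes when multiplied by the difference of shifted densities). Everything else is algebra and a standard contour integral.
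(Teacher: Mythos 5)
Your proof is correct and follows essentially the same route as the paper's: both rest on representing $\Im\langle\hat{k}(\i t+\rho)x|x\rangle_{H}$ as an integral of the boundary data at height $\rho_{1}$ against the odd-reflected Poisson kernel $\frac{4ut\alpha/\pi}{\left(\alpha^{2}+(u-t)^{2}\right)\left(\alpha^{2}+(u+t)^{2}\right)}$, whose nonnegativity after multiplication by $t/u$ allows the hypothesis to be inserted under the integral. The only differences are that you derive this representation directly from the Laplace integral via the Fourier transform of the Cauchy density rather than citing the Poisson formula for harmonic functions on the half-plane, and that by evaluating the kernel integral exactly (it equals $2t^{2}/(t^{2}+\alpha^{2})$ by the Cauchy convolution identity) instead of merely bounding it, you obtain the sharper constant $d$ in place of the paper's $4d$ --- which, since $d\leq0$, still implies the stated inequality.
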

\begin{proof}
The proof is based on the proof presented in \cite[Lemma 3.4]{Cannarsa2003}
in case of scalar-valued kernels. For $x\in H$ we define 
\[
f(t)\coloneqq\langle k(t)x|x\rangle_{H}\quad(t\in\mathbb{R}_{\geq0})
\]
and get $f\in L_{1,\rho_{0}}(\mathbb{R}_{\geq0};\mathbb{R})$ by the
selfadjointness of $k(t).$ Moreover, we get 
\begin{align*}
\hat{f}(\i t+\rho) & =\frac{1}{\sqrt{2\pi}}\intop_{0}^{\infty}\e^{-(\i t+\rho)s}\langle k(s)x|x\rangle_{H}\mbox{ d}s\\
 & =\langle\hat{k}(-\i t+\rho)x|x\rangle_{H}
\end{align*}
for each $t\in\mathbb{R},\rho\geq\rho_{1}.$ Hence, we have to show
that $t\Im\hat{f}(-\i t+\rho)\geq4d|x|_{H}^{2}$ for each $t\in\mathbb{R},x\in H.$
Moreover, we note that since $f$ is real-valued we have that $\Im\hat{f}(-\i t+\rho)=-\Im\hat{f}(\i t+\rho)$
and hence, we need to prove $t\Im\hat{f}(\i t+\rho)\leq-4d|x|_{H}^{2}.$
Using that $\Im\hat{f}$ is a harmonic function, we employ the Poisson
formula for the half-plane (see e.g. \cite[p.149]{stein2003fourier})
and get 
\begin{align*}
 & \Im\hat{f}(\i t+\rho)\\
 & =\frac{1}{\pi}\intop_{\mathbb{R}}\frac{\rho-\rho_{1}}{(t-s)^{2}+(\rho-\rho_{1})^{2}}\Im\hat{f}(\i s+\rho_{1})\mbox{ d}s\\
 & =\frac{\rho-\rho_{1}}{\pi}\left(\intop_{0}^{\infty}\frac{1}{(t-s)^{2}+(\rho-\rho_{1})^{2}}\Im\hat{f}(\i s+\rho_{1})\mbox{ d}s+\right.\\
 & \phantom{aaaaaaaaaaaaa}\left.+\intop_{0}^{\infty}\frac{1}{(t+s)^{2}+(\rho-\rho_{1})^{2}}\Im\hat{f}(-\i s+\rho_{1})\mbox{ d}s\right)\\
 & =\frac{\rho-\rho_{1}}{\pi}\intop_{0}^{\infty}\frac{4st}{\left((t-s)^{2}+(\rho-\rho_{1})^{2}\right)\left((t+s)^{2}+(\rho-\rho_{1})^{2}\right)}\Im\hat{f}(\i s+\rho_{1})\mbox{ d}s,
\end{align*}
where we have used $\Im\hat{f}(z^{\ast})=-\Im\hat{f}(z)$ for $z\in\mathbb{C}_{\Re\geq\rho_{1}}.$
Using \prettyref{cond:kernel} \prettyref{item:im_d} we estimate
\begin{align*}
t\Im\hat{f}(\i t+\rho) & \leq-4t^{2}\frac{\rho-\rho_{1}}{\pi}d|x|_{H}^{2}\intop_{0}^{\infty}\frac{1}{\left((t-s)^{2}+(\rho-\rho_{1})^{2}\right)\left((t+s)^{2}+(\rho-\rho_{1})^{2}\right)}\mbox{ d}s\\
 & \leq-4d|x|_{H}^{2}\frac{\rho-\rho_{1}}{\pi}\intop_{0}^{\infty}\frac{1}{(t-s)^{2}+(\rho-\rho_{1})^{2}}\mbox{ d}s\\
 & \leq-4d|x|_{H}^{2}.\tag*{\qedhere}
\end{align*}

\end{proof}
With this result at hand, we are able to prove the well-posedness
of the integro-differential equations. In fact, we show that the material
laws satisfy \prettyref{eq:pos_def_M} and thus, the well-posedness
follows from \prettyref{prop:Rainer}. We start with \prettyref{eq:integro}.
\begin{prop}
Let $\rho_{0}\in\mathbb{R}$ and $k\in L_{1,\rho_{0}}(\mathbb{R}_{\geq0};H).$
Moreover, assume that $k$ satisfies \prettyref{cond:kernel} \prettyref{item: selfadjoint}
and \prettyref{item:im_d}. Then, the material law $M$ defined by
$M(z)\coloneqq1+\sqrt{2\pi}\hat{k}(z)$ satisfies \prettyref{eq:pos_def_M}
on $\mathbb{C}_{\Re>\rho}$ for some $\rho\geq\max\{0,\rho_{0}\}$.\end{prop}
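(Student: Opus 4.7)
My plan is to verify \prettyref{eq:pos_def_M} for $M(z) = 1 + \sqrt{2\pi}\hat{k}(z)$ by estimating $\Re\langle zM(z)x|x\rangle_{H}$ pointwise in $z$. Writing $z = \i t + \rho$ with $t \in \mathbb{R}$ and $\rho$ to be chosen large enough, I would expand $zM(z) = z + \sqrt{2\pi}z\hat{k}(z)$ and use $\Re[(\i t+\rho)(a+\i b)] = \rho a - t b$ to obtain the decomposition
\[
\Re\langle zM(z)x|x\rangle_{H} = \rho|x|_{H}^{2} + \sqrt{2\pi}\rho\Re\langle\hat{k}(z)x|x\rangle_{H} - \sqrt{2\pi}t\Im\langle\hat{k}(z)x|x\rangle_{H}.
\]

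Next, I would estimate each of the two error terms. For the real-part term, the basic bound $\|\hat{k}(z)\| \leq \tfrac{1}{\sqrt{2\pi}}|k|_{L_{1,\rho}}$ (which is immediate from the definition of $\hat{k}$) yields
\[
\bigl|\sqrt{2\pi}\rho\Re\langle\hat{k}(z)x|x\rangle_{H}\bigr| \leq \rho|k|_{L_{1,\rho}}|x|_{H}^{2}.
\]
For the $t$-term, I would invoke \prettyref{lem:for_one_rho_for_all_rho}, which propagates \prettyref{cond:kernel} \prettyref{item:im_d} from $\rho_{1}$ to every $\rho \geq \rho_{1}$ via the Poisson formula on the half-plane. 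After tracking signs carefully, this produces a constant $C \geq 0$ (depending only on $|d|$) such that
\[
-\sqrt{2\pi}t\Im\langle\hat{k}(\i t + \rho)x|x\rangle_{H} \geq -C|x|_{H}^{2}
\]
uniformly in $t \in \mathbb{R}$, $x \in H$ and $\rho \geq \rho_{1}$. Inserting both estimates into the decomposition gives, for every $\rho \geq \max\{\rho_{0}, \rho_{1}\}$,
\[
\Re\langle zM(z)x|x\rangle_{H} \geq \bigl(\rho(1 - |k|_{L_{1,\rho}}) - C\bigr)|x|_{H}^{2}.
\]

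Finally, I would use that $|k|_{L_{1,\rho}} \to 0$ as $\rho \to \infty$ by monotone convergence (the same fact exploited at the end of \prettyref{lem:convolution_bd}) to choose $\rho$ so large that $|k|_{L_{1,\rho}} \leq \tfrac{1}{2}$ and at the same time $\rho/2 - C \geq c$ for some fixed $c > 0$; on the corresponding half-plane $\mathbb{C}_{\Re > \rho}$ we then have $\Re\langle zM(z)x|x\rangle_{H} \geq c|x|_{H}^{2}$ uniformly in $z$ and $x$, which is exactly \prettyref{eq:pos_def_M}. The main difficulty is the second step: to convert the one-sided condition \prettyref{cond:kernel} \prettyref{item:im_d} into a uniform-in-$t$ bound that compensates the linear-in-$t$ drift $-\sqrt{2\pi}t\Im\langle\hat{k}\rangle$, one must apply the Poisson-formula extension in \prettyref{lem:for_one_rho_for_all_rho} with the correct sign, using that the kernel $k$ is self-adjoint-valued so that $\Im\langle\hat{k}(\bar z)x|x\rangle_{H} = -\Im\langle\hat{k}(z)x|x\rangle_{H}$.
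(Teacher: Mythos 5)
Your strategy is the same as the paper's: decompose $\Re\langle zM(z)x|x\rangle_{H}$ into the $\rho|x|_{H}^{2}$ term plus the real and imaginary contributions of $\hat{k}$, bound the real part crudely by $\rho|k|_{L_{1,\rho}}|x|_{H}^{2}$, control the $t$-term via \prettyref{lem:for_one_rho_for_all_rho}, and finish with $|k|_{L_{1,\rho}}\to0$ as $\rho\to\infty$. The one thing you must fix is the sign of the $t$-term. Since the inner product is conjugate-linear in the \emph{first} argument, $\langle z\hat{k}(z)x|x\rangle_{H}=z^{\ast}\langle\hat{k}(z)x|x\rangle_{H}$, so the correct decomposition is
\[
\Re\langle zM(z)x|x\rangle_{H}=\rho|x|_{H}^{2}+\sqrt{2\pi}\rho\Re\langle\hat{k}(z)x|x\rangle_{H}+\sqrt{2\pi}\,t\Im\langle\hat{k}(z)x|x\rangle_{H},
\]
with a plus sign, not the minus sign in your display. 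This is not cosmetic: \prettyref{lem:for_one_rho_for_all_rho} bounds $t\Im\langle\hat{k}(\i t+\rho)x|x\rangle_{H}$ from \emph{below} (by $4d|x|_{H}^{2}$ with $d\leq0$), so it controls exactly the $+t\Im$ term; your written term $-\sqrt{2\pi}t\Im\langle\hat{k}(z)x|x\rangle_{H}$ would need an \emph{upper} bound on $t\Im\langle\hat{k}\rangle$, which the lemma does not provide. With the sign corrected, your constant is $C=-4\sqrt{2\pi}d$ and the estimate becomes $\Re\langle zM(z)x|x\rangle_{H}\geq\left(\rho(1-|k|_{L_{1,\rho}})+4\sqrt{2\pi}d\right)|x|_{H}^{2}$, which is precisely the paper's inequality; the rest of your argument then goes through verbatim.
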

\begin{proof}
For $x\in H$ and $t\in\mathbb{R},\rho\geq\max\{0,\rho_{1}\}$ where
$\rho_{1}$ is chosen according to \prettyref{cond:kernel} \prettyref{item:im_d}
we estimate 
\begin{align*}
\Re\langle(\i t+\rho)M(\i t+\rho)x|x\rangle_{H} & =\rho|x|_{H}^{2}+\sqrt{2\pi}\Re\langle(\i t+\rho)\hat{k}(\i t+\rho)x|x\rangle_{H}\\
 & =\rho|x|_{H}^{2}+\sqrt{2\pi}\left(\rho\Re\langle\hat{k}(\i t+\rho)x|x\rangle_{H}+t\Im\langle\hat{k}(\i t+\rho)x|x\rangle_{H}\right)\\
 & \geq\left(\rho(1-|k|_{L_{1,\rho}})+\sqrt{2\pi}4d\right)|x|_{H}^{2},
\end{align*}
where we have used \prettyref{lem:for_one_rho_for_all_rho}. Thus,
choosing $\rho$ large enough, the assertion follows, since $|k|_{L_{1,\rho}}\to0$
as $\rho\to\infty$ by monotone convergence.
\end{proof}
To deal with \prettyref{eq:integro_resol}, we additionally need to
impose \prettyref{cond:kernel} \prettyref{item: commute}. We start
with the following observation.
\begin{lem}
\label{lem:comp_realpart_kernel} Let $\rho_{0}\in\mathbb{R}$ and
$k\in L_{1,\rho_{0}}(\mathbb{R}_{\geq0};H)$ such that $|k|_{L_{1,\rho_{0}}}<1$
and assume that $k$ satisfies \prettyref{cond:kernel} \prettyref{item: selfadjoint}
and \prettyref{item: commute}. Let $\rho>\rho_{0}$ and $t\in\mathbb{R}.$
Then the operator $|1-\sqrt{2\pi}\hat{k}(\i t+\rho)|$ is boundedly
invertible and for each $x\in H$ we have 
\begin{align*}
 & \Re\langle(\i t+\rho)(1-\sqrt{2\pi}\hat{k}(\i t+\rho))^{-1}x|x\rangle_{H}\\
 & =\rho|D(\i t+\rho)x|_{H}^{2}-\sqrt{2\pi}\rho\Re\langle\hat{k}(-\i t+\rho)D(\i t+\rho)x|D(\i t+\rho)x\rangle_{H}+\\
 & \quad-\sqrt{2\pi}t\Im\langle\hat{k}(-\i t+\rho)D(\i t+\rho)x|D(\i t+\rho)x\rangle_{H},
\end{align*}
where $D(\i t+\rho)\coloneqq|1-\sqrt{2\pi}\hat{k}(\i t+\rho)|^{-1}.$ \end{lem}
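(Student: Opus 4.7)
Here is a proof plan.

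The plan is to first secure the bounded invertibility of $D(\i t+\rho)$, then exploit normality of $\sqrt{2\pi}\hat{k}(\i t+\rho)$ to rewrite $(1-\sqrt{2\pi}\hat k(\i t+\rho))^{-1}$ in a form that is amenable to a direct computation. Throughout, I abbreviate $K:=\sqrt{2\pi}\hat k(\i t+\rho)$ and $D:=D(\i t+\rho)$.

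\textbf{Step 1: bounded invertibility of $D$.} The bound $\|\hat k(z)\|\leq\tfrac{1}{\sqrt{2\pi}}|k|_{L_{1,\Re z}}$ from the remark following the definition of $\hat k$, together with the monotonicity $|k|_{L_{1,\rho}}\leq|k|_{L_{1,\rho_{0}}}<1$ for $\rho\geq\rho_{0}$, yields $\|K\|\leq|k|_{L_{1,\rho}}<1$. Hence $1-K$ is boundedly invertible via the Neumann series, and consequently $|1-K|^{2}=(1-K)^{\ast}(1-K)$ is a strictly positive boundedly invertible self-adjoint operator. Taking the (unique positive) square root, one obtains that $|1-K|$ is boundedly invertible, so $D=|1-K|^{-1}\in L(H)$ is well-defined, self-adjoint and positive.

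\textbf{Step 2: normality and the polar identity.} Using the self-adjointness assumption \prettyref{cond:kernel}\prettyref{item: selfadjoint}, a straightforward direct calculation from the defining formula of $\hat k$ gives $\hat k(z)^{\ast}=\hat k(z^{\ast})$ for every $z\in\mathbb{C}_{\Re\geq\rho_{0}}$; in particular $K^{\ast}=\sqrt{2\pi}\hat k(-\i t+\rho)$. The commutativity assumption \prettyref{cond:kernel}\prettyref{item: commute} transfers via Fubini to $\hat k(z)\hat k(w)=\hat k(w)\hat k(z)$ for all admissible $z,w$, so in particular $K K^{\ast}=K^{\ast}K$, i.e.\ $K$ (and hence $1-K$) is normal. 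By the continuous functional calculus for normal operators, $|1-K|$ and therefore $D$ commute with both $K$ and $K^{\ast}$. Finally, from $(1-K)^{\ast}(1-K)=|1-K|^{2}$ one reads off the polar identity
\[
(1-K)^{-1}=|1-K|^{-2}(1-K)^{\ast}=D^{2}(1-K^{\ast}).
\]

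\textbf{Step 3: the explicit computation.} Substituting the polar identity, using self-adjointness of $D$ to shift one factor across the inner product, and using commutativity of $D$ with $K^{\ast}$ yields, with $y:=Dx$,
\[
\Re\langle(\i t+\rho)(1-K)^{-1}x|x\rangle_{H}=\Re\langle(\i t+\rho)(1-K^{\ast})y|y\rangle_{H}.
\]
Expanding the bracket and applying the elementary identity $\Re[(\i t+\rho)w]=\rho\Re w-t\Im w$ separately to $\rho|y|_{H}^{2}$ (whose imaginary part vanishes) and to $\langle K^{\ast}y|y\rangle_{H}=\sqrt{2\pi}\langle\hat k(-\i t+\rho)y|y\rangle_{H}$ gives the claimed decomposition, once one rewrites $y=D(\i t+\rho)x$.

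The main obstacle is really just \textbf{Step 2}: carefully extracting normality from the pointwise scalar hypotheses (a)--(b) on $k$, since everything else is algebraic manipulation. Once normality is established, the functional calculus makes the rearrangement in Step 3 routine; without it, neither the polar factorization $(1-K)^{-1}=D^{2}(1-K^{\ast})$ nor the commutation of $D$ with $K^{\ast}$ would be available.
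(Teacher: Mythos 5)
Your proposal is correct and follows essentially the same route as the paper's proof: Neumann series for the invertibility of $1-K$, normality of $K$ extracted from \prettyref{cond:kernel} (a) and (b), the resulting factorization $(1-K)^{-1}=D(1-K^{\ast})D$, and then the real-part computation. The only point of care in Step 3 is that pulling the scalar $(\i t+\rho)$ out of the conjugate-linear first slot of the inner product produces $(-\i t+\rho)$, so the identity actually needed is $\Re\left[(-\i t+\rho)w\right]=\rho\Re w+t\Im w$ rather than the one you quote; with that correction the signs come out exactly as in the statement.
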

\begin{proof}
We have $\|\sqrt{2\pi}\hat{k}(\i t+\rho)\|\leq\|k\ast\|_{L(H_{\rho}(\mathbb{R};H))}\leq|k|_{L_{1,\rho_{0}}}<1$
and hence, $1-\sqrt{2\pi}\hat{k}(\i t+\rho)$ is boundedly invertible
due to the Neumann series. Moreover, by \prettyref{cond:kernel} \prettyref{item: selfadjoint}
we have that $\hat{k}(\i t+\rho)^{\ast}=\hat{k}(-\i t+\rho)$ and
thus, we have that $(1-\sqrt{2\pi}\hat{k}(\i t+\rho))^{\ast}$ is
boundedly invertible, too. The latter gives that $|1-\sqrt{2\pi}\hat{k}(\i t+\rho)|$
is boundedly invertible. Moreover, \prettyref{cond:kernel} \prettyref{item: commute}
yields that $\hat{k}(\i t+\rho)$ is normal and so is $1-\sqrt{2\pi}\hat{k}(\i t+\rho).$
This implies that $1-\sqrt{2\pi}\hat{k}(\i t+\rho)$, $\left(1-\sqrt{2\pi}\hat{k}(\i t+\rho)\right)^{\ast}$
and $D(\i t+\rho)$ pairwise commute. Thus, we have 
\begin{align*}
(1-\sqrt{2\pi}\hat{k}(\i t+\rho))^{-1} & =(1-\sqrt{2\pi}\hat{k}(\i t+\rho))^{\ast}D(\i t+\rho)^{2}\\
 & =D(\i t+\rho)(1-\sqrt{2\pi}\hat{k}(-\i t+\rho))D(\i t+\rho),
\end{align*}
where we have used $\hat{k}(\i t+\rho)^{\ast}=\hat{k}(-\i t+\rho)$
again. Hence, we compute 
\begin{align*}
 & \Re\langle(\i t+\rho)(1-\sqrt{2\pi}\hat{k}(\i t+\rho))^{-1}x|x\rangle_{H}\\
 & =\Re(-\i t+\rho)\langle(1-\sqrt{2\pi}\hat{k}(-\i t+\rho))D(\i t+\rho)x|D(\i t+\rho)x\rangle_{H}\\
 & =\rho|D(\i t+\rho)x|_{H}^{2}-\sqrt{2\pi}\rho\Re\langle\hat{k}(-\i t+\rho)D(\i t+\rho)x|D(\i t+\rho)x\rangle_{H}+\\
 & \quad-\sqrt{2\pi}t\Im\langle\hat{k}(-\i t+\rho)D(\i t+\rho)x|D(\i t+\rho)x\rangle_{H}
\end{align*}
for each $x\in H.$ \end{proof}
\begin{prop}
\label{prop:kernel_resolvent}Let $\rho_{0}\in\mathbb{R}$ and $k\in L_{1,\rho_{0}}(\mathbb{R}_{\geq0};H).$
Then there is $\rho\geq\rho_{0},$ such that $1-\sqrt{2\pi}\hat{k}(z)$
is boundedly invertible for each $z\in\mathbb{C}_{\Re>\rho}.$ If
$k$ satisfies \prettyref{cond:kernel} \prettyref{item: selfadjoint}
- \prettyref{item:im_d}, then there is some $\rho\geq\max\{0,\rho_{0}\}$,
such that $M(z)\coloneqq(1-\sqrt{2\pi}\hat{k}(z))^{-1}$ satisfies
\prettyref{eq:pos_def_M} on $\mathbb{C}_{\Re>\rho}.$\end{prop}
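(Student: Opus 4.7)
The plan is to treat the two assertions separately. For the first, a Neumann-series argument suffices: from the defining formula for $\hat{k}$ one reads off $\|\hat{k}(z)\|_{L(H)}\leq\frac{1}{\sqrt{2\pi}}|k|_{L_{1,\Re z}}$, and by monotone convergence $|k|_{L_{1,\rho}}\to 0$ as $\rho\to\infty$. Hence I would fix $\rho_{2}\geq\rho_{0}$ with $|k|_{L_{1,\rho_{2}}}<1$, which makes $\|\sqrt{2\pi}\hat{k}(z)\|<1$ throughout $\mathbb{C}_{\Re>\rho_{2}}$ and gives bounded invertibility of $1-\sqrt{2\pi}\hat{k}(z)$ by the usual Neumann series.

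For the second assertion I would enlarge $\rho_{2}$ further so that $\rho_{2}\geq\max\{0,\rho_{1}\}$ and then feed \prettyref{lem:comp_realpart_kernel}, which applies because $|k|_{L_{1,\rho_{2}}}<1$ and the kernel satisfies \prettyref{item: selfadjoint} and \prettyref{item: commute}. For $\rho\geq\rho_{2}$, $t\in\mathbb{R}$, $x\in H$ and $y\coloneqq D(\i t+\rho)x$ the lemma yields a three-term decomposition of $\Re\langle(\i t+\rho)M(\i t+\rho)x|x\rangle_{H}$. The term $\rho|y|_{H}^{2}$ is kept as is, and the real-part term $-\sqrt{2\pi}\rho\Re\langle\hat{k}(-\i t+\rho)y|y\rangle_{H}$ is bounded from below by $-\rho|k|_{L_{1,\rho}}|y|_{H}^{2}$ via the norm bound $\|\hat{k}(-\i t+\rho)\|\leq\frac{1}{\sqrt{2\pi}}|k|_{L_{1,\rho}}$.

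The delicate step is the imaginary-part term. Here I would invoke selfadjointness to rewrite $\hat{k}(-\i t+\rho)=\hat{k}(\i t+\rho)^{\ast}$, which flips the sign of the imaginary part and converts $-\sqrt{2\pi}t\Im\langle\hat{k}(-\i t+\rho)y|y\rangle_{H}$ into $\sqrt{2\pi}t\Im\langle\hat{k}(\i t+\rho)y|y\rangle_{H}$; the latter is bounded below by $4\sqrt{2\pi}d|y|_{H}^{2}$ by \prettyref{lem:for_one_rho_for_all_rho} (which uses \prettyref{item: selfadjoint} and \prettyref{item:im_d}). Collecting yields $\Re\langle(\i t+\rho)M(\i t+\rho)x|x\rangle_{H}\geq\bigl(\rho(1-|k|_{L_{1,\rho}})+4\sqrt{2\pi}d\bigr)|y|_{H}^{2}$.

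It remains to convert the lower bound from $|y|_{H}^{2}$ back to $|x|_{H}^{2}$. Since $1-\sqrt{2\pi}\hat{k}(\i t+\rho)$ is normal, its modulus has the same norm, so $\|D(\i t+\rho)^{-1}\|\leq 1+|k|_{L_{1,\rho}}$ and therefore $|y|_{H}\geq(1+|k|_{L_{1,\rho}})^{-1}|x|_{H}$. The final coefficient $\bigl(\rho(1-|k|_{L_{1,\rho}})+4\sqrt{2\pi}d\bigr)(1+|k|_{L_{1,\rho}})^{-2}$ grows without bound as $\rho\to\infty$ (with $d$ fixed and $|k|_{L_{1,\rho}}\to 0$), so picking $\rho$ large enough forces it above some $c>0$, which is exactly \prettyref{eq:pos_def_M}. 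The only non-routine point in the argument is the sign manipulation in the imaginary-part term: one has to match the $-\i t$ coming out of \prettyref{lem:comp_realpart_kernel} with the $+\i t$ appearing in \prettyref{item:im_d} via the adjoint identity supplied by \prettyref{item: selfadjoint}.
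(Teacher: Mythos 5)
Your proof is correct and follows essentially the same route as the paper: Neumann series for the invertibility, then Lemma \ref{lem:comp_realpart_kernel} for the three-term decomposition, Lemma \ref{lem:for_one_rho_for_all_rho} (via the adjoint identity $\hat{k}(-\i t+\rho)=\hat{k}(\i t+\rho)^{\ast}$ from selfadjointness) for the imaginary-part term, the bound $|x|_{H}\leq(1+|k|_{L_{1,\rho}})|D(\i t+\rho)x|_{H}$ to return to $|x|_{H}^{2}$, and $|k|_{L_{1,\rho}}\to0$ to conclude. The only cosmetic difference is that you spell out the sign flip in the imaginary-part term, which the paper leaves implicit.
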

\begin{proof}
First we choose $\rho_{2}\geq\rho_{1}$, where $\rho_{1}$ is chosen
according to \prettyref{cond:kernel} \prettyref{item:im_d}, such
that $|k|_{L_{1,\rho_{2}}}<1$. Hence, by \prettyref{lem:comp_realpart_kernel}
we have for each $x\in H,t\in\mathbb{R}$ and $\rho>\rho_{2}$ 
\begin{align*}
\Re\langle(\i t+\rho)M(\i t+\rho)x|x\rangle_{H} & =\Re\left\langle \left.(\i t+\rho)\left(1-\sqrt{2\pi}\hat{k}(-\i t+\rho)\right)^{-1}x\right|x\right\rangle _{H}\\
 & =\rho|D(\i t+\rho)x|_{H}^{2}-\sqrt{2\pi}\rho\Re\langle\hat{k}(-\i t+\rho)D(\i t+\rho)x|D(\i t+\rho)x\rangle_{H}+\\
 & \quad-\sqrt{2\pi}t\Im\langle\hat{k}(-\i t+\rho)D(\i t+\rho)x|D(\i t+\rho)x\rangle_{H},
\end{align*}
where $D(\i t+\rho)\coloneqq|1-\sqrt{2\pi}\hat{k}(\i t+\rho)|^{-1}$.
Choosing $\rho>\max\{0,\rho_{2}\}$ and using \prettyref{lem:for_one_rho_for_all_rho}
we infer that 
\[
\Re\langle(\i t+\rho)M(\i t+\rho)x|x\rangle_{H}\geq\left(\rho(1-|k|_{L_{1,\rho}})+\sqrt{2\pi}4d\right)|D(\i t+\rho)x|_{H}^{2}
\]
Moreover, we have that 
\[
|x|_{H}=|D(\i t+\rho)^{-1}D(\i t+\rho)x|_{H}=\left|(1-\sqrt{2\pi}\hat{k}(\i t+\rho))D(\i t+\rho)x\right|_{H}\leq(1+|k|_{L_{1,\rho}})|D(\i t+\rho)x|_{H},
\]
and hence, 
\[
\Re\langle(\i t+\rho)M(\i t+\rho)x|x\rangle_{H}\geq\frac{\rho\left(1-|k|_{L_{1,\rho}}\right)+\sqrt{2\pi}4d}{\left(1+|k|_{L_{1,\rho}}\right)^{2}}|x|_{H}^{2}.
\]
Taking into account that $|k|_{L_{1,\rho}}\to0$ as $\rho\to\infty$,
we derive the assertion for large enough $\rho.$ 
\end{proof}
We conclude this section with some classical examples of kernels,
satisfying \prettyref{cond:kernel} \prettyref{item: selfadjoint}-\prettyref{item:im_d}.
\begin{example}
\label{exa:kernels}Let $k:\mathbb{R}_{\geq0}\to\mathbb{R}$ measurable
such that $\intop_{0}^{\infty}|k(t)|\e^{-\rho_{0}t}\mbox{ d}t<\infty$
for some $\rho_{0}\in\mathbb{R}.$ Then clearly, $k$ satisfies \prettyref{cond:kernel}
\prettyref{item: selfadjoint} and \prettyref{item: commute}.

\begin{enumerate}[(a)]

\item Assume that $k$ is absolutely continuous with $\intop_{0}^{\infty}|k'(t)|\e^{-\rho_{1}t}\mbox{ d}t<\infty$
for some $\rho_{1}\in\mathbb{R}.$ Then, $k$ satisfies \prettyref{cond:kernel}
\prettyref{item:im_d}. Indeed, for $t\in\mathbb{R},\rho>\max\{\rho_{0},\rho_{1},0\}$
we have that
\begin{align*}
\sqrt{2\pi}\hat{k}(\i t+\rho) & =\intop_{0}^{\infty}\e^{-(\i t+\rho)s}k(s)\mbox{ d}s\\
 & =\intop_{0}^{\infty}\e^{-(\i t+\rho)s}\intop_{0}^{s}k'(r)\mbox{ d}r\mbox{ d}s+k(0)\intop_{0}^{\infty}\e^{-(\i t+\rho)s}\mbox{ d}s\\
 & =\frac{1}{\i t+\rho}\sqrt{2\pi}\hat{k'}(\i t+\rho)+\frac{1}{\i t+\rho}k(0)
\end{align*}
 and thus, for $x\in H$ we can estimate 
\begin{align*}
t\Im\langle\hat{k}(\i t+\rho)x|x\rangle_{H} & =-t\Im\hat{k}(\i t+\rho)|x|_{H}^{2}\\
 & \geq-|t||\hat{k}(\i t+\rho)||x|_{H}^{2}\\
 & =-|t|\frac{1}{|\i t+\rho|}\left(|\hat{k'}(\i t+\rho)|+\frac{1}{\sqrt{2\pi}}|k(0)|\right)|x|_{H}^{2}\\
 & \geq-\frac{1}{\sqrt{2\pi}}(|k'|_{L_{1,\rho_{0}}}+|k(0)|)|x|_{H}^{2}.
\end{align*}

\item In \cite{Pruss2009} the kernel is assumed to be non-negative
and non-increasing. In fact, this also yields that $k$ satisfies
\prettyref{cond:kernel} \prettyref{item:im_d} with $d=0.$ Indeed,
we can even generalize this fact to kernels $k\in L_{1,\rho_{0}}(\mathbb{R}_{\geq0};L(H))$
satisfying \prettyref{cond:kernel}\prettyref{item: selfadjoint}
such that 
\[
\langle k(t)x|x\rangle_{H}\geq0
\]
and 
\[
\langle\left(k(t)-k(s)\right)x|x\rangle_{H}\leq0,
\]
for each $x\in H$ and almost every $t,s\in\mathbb{R}$ with $s\leq t.$
First we note that this implies 
\[
\langle\left(\e^{-\rho t}k(t)-\e^{-\rho s}k(s)\right)x|x\rangle_{H}=\e^{-\rho t}\langle\left(k(t)-k(s)\right)x|x\rangle_{H}+(\e^{-\rho t}-\e^{-\rho s})\langle k(s)x|x\rangle_{H}\leq0
\]
for each $\rho\geq0,x\in H$ and almost every $t,s\in\mathbb{R}$
with $s\leq t.$ Hence, for $\rho>\max\{0,\rho_{0}\},t\in\mathbb{R}_{>0}$
and $x\in H$ we obtain 
\begin{align*}
 & \Im\langle\hat{k}(\i t+\rho)x|x\rangle_{H}\\
 & =\Im\frac{1}{\sqrt{2\pi}}\intop_{0}^{\infty}\e^{\i ts}\langle\e^{-\rho s}k(s)x|x\rangle_{H}\mbox{ d}s\\
 & =\frac{1}{\sqrt{2\pi}}\intop_{0}^{\infty}\sin(ts)\langle\e^{-\rho s}k(s)x|x\rangle_{H}\mbox{ d}s\\
 & =\frac{1}{\sqrt{2\pi}}\sum_{k=0}^{\infty}\left(\intop_{2k\frac{\pi}{t}}^{(2k+1)\frac{\pi}{t}}\sin(ts)\langle\e^{-\rho s}k(s)x|x\rangle_{H}\mbox{ d}s+\intop_{(2k+1)\frac{\pi}{t}}^{2(k+1)\frac{\pi}{t}}\sin(ts)\langle\e^{-\rho s}k(s)x|x\rangle_{H}\mbox{ d}s\right)\\
 & =\frac{1}{\sqrt{2\pi}}\sum_{k=0}^{\infty}\left(\intop_{2k\frac{\pi}{t}}^{(2k+1)\frac{\pi}{t}}\sin(ts)\langle\e^{-\rho s}k(s)x|x\rangle_{H}\mbox{ d}s+\right.\\
 & \phantom{aaaaaaaaaaaa}+\left.\intop_{2k\frac{\pi}{t}}^{(2k+1)\frac{\pi}{t}}\sin\left(t\left(s+\frac{\pi}{t}\right)\right)\langle\e^{-\rho\left(s+\frac{\pi}{t}\right)}k(s+\frac{\pi}{t})x|x\rangle_{H}\mbox{ d}s\right)\\
 & =\frac{1}{\sqrt{2\pi}}\sum_{k=0}^{\infty}\intop_{2k\frac{\pi}{t}}^{(2k+1)\frac{\pi}{t}}\sin(ts)\langle\left(\e^{-\rho s}k(s)-\e^{-\rho(s+\frac{\pi}{t})}k(s+\frac{\pi}{t})\right)x|x\rangle_{H}\mbox{ d}s\geq0,
\end{align*}
which yields 
\[
t\Im\langle\hat{k}(\i t+\rho)x|x\rangle_{H}\geq0
\]
for $t>0.$ For $t=0$ the inequality holds trivially and for $t<0$
we use the fact that $\hat{k}(\i t+\rho)=(\hat{k}(-\i t+\rho))^{\ast}$
by \prettyref{cond:kernel}\prettyref{item: selfadjoint} and hence,
\[
t\Im\langle\hat{k}(\i t+\rho)x|x\rangle_{H}=t\Im\langle x|\hat{k}(-\i t+\rho)x\rangle_{H}=-t\Im\langle\hat{k}(-\i t+\rho)x|x\rangle_{H}\geq0.
\]

\end{enumerate}
\end{example}

\section{Notes}

The main idea of evolutionary problems is to look at partial differential
equations as operator equations in time and space. So, the crucial
point is to realize the temporal derivative as an operator itself.
Thus, we are actually dealing with the sum of two unbounded operators
$B=\partial_{0,\rho}M(\partial_{0,\rho})$ and $A$. However, since
we restrict ourselves to the Hilbert space setting and the operator
$B$ has this special form, so that Laplace transform techniques are
employable, the conditions on the closability of $B+A$ and its continuous
invertibility are rather easy to verify. If we are leaving the Hilbert
space context or deal with more general operators, the situation gets
much more involved and we refer to the famous paper \cite{daPrato1975}
for that topic.

Moreover, we remark that the well-posedness of an evolutionary problem,
as it is defined above, does not imply the existence of a $C_{0}$-semigroup,
even in the case of Cauchy problems. Indeed, if $A:D(A)\subseteq H\to H$
is a closed densely defined linear operator and $M=1,$ then the associated
evolutionary problem is well-posed in the sense above, if there is
$\rho_{1}\in\mathbb{R}$ such that $\mathbb{C}_{\Re>\rho_{1}}\subseteq\rho(-A)$
and 
\[
\mathbb{C}_{\Re>\rho_{1}}\ni z\mapsto(z+A)^{-1}\in L(H)
\]
is bounded. Hence, if $-A$ generates a $C_{0}$-semigroup, then the
associated evolutionary problem is well-posed by the Hille-Yosida
theorem (cf. \cite[Theorem 3.8]{engel2000one}). However, the converse
is false in general. Indeed, if we choose $H\coloneqq L_{2}(\mathbb{R}_{>0})\oplus L_{2}(\mathbb{R}_{>0})$
and set 
\[
A\coloneqq\left(\begin{array}{cc}
-\i\m & -\i\m\\
0 & -\i\m
\end{array}\right)
\]
on $H$ with maximal domain, then $\mathbb{C}_{\Re>0}\subseteq\rho(-A)$
with 
\[
(z+A)^{-1}=\left(\begin{array}{cc}
(z-\i\m)^{-1} & \i\m(z-\i\m)^{-2}\\
0 & (z-\i\m)^{-1}
\end{array}\right)\quad(z\in\mathbb{C}_{\Re>0})
\]
and thus, the associated evolutionary problem is well-posed. Moreover,
\[
(z+A)^{-k}=\left(\begin{array}{cc}
(z-\i\m)^{-k} & \i\m k(z-\i\m)^{-(k+1)}\\
0 & (z-\i\m)^{-k}
\end{array}\right)\quad(z\in\mathbb{C}_{\Re>0})
\]
for each $k\in\mathbb{N}$ and hence, 
\[
k\|\m(z-\i\m)^{-(k+1)}\|\leq\|(z+A)^{-k}\|\quad(z\in\mathbb{C}_{\Re>0},k\in\mathbb{N}).
\]
If $-A$ would generate a $C_{0}$-semigroup, there would exist $M\geq1$
and $\omega\geq0$ such that 
\[
\|(\lambda+A)^{-k}\|\leq\frac{M}{(\lambda-\omega)^{k}}\quad(\lambda>\omega,k\in\mathbb{N})
\]
and consequently 
\[
(\lambda-\omega)^{k}k\|\m(\lambda-\i\m)^{-(k+1)}\|\leq M\quad(\lambda>\omega,k\in\mathbb{N}).
\]
Estimating the norm of the multiplication operator from below by 
\[
\frac{\lambda}{\sqrt{k}}\lambda^{-(k+1)}|1-\i\frac{1}{\sqrt{k}}|^{-(k+1)}
\]
we infer that
\[
\left(\frac{\lambda-\omega}{\lambda}\right)^{k}\sqrt{k}\frac{1}{\sqrt{(1+\frac{1}{k})^{k+1}}}\leq M
\]
for each $\lambda>\omega$ and $k\in\mathbb{N}.$ Letting $\lambda\to\infty,$
we get $\sqrt{k}\frac{1}{\sqrt{(1+\frac{1}{k})^{k+1}}}\leq M$ for
each $k\in\mathbb{N},$ which yields a contradiction as the left-hand
side tends to infinity as $k\to\infty.$ This shows, that $-A$ is
not the generator of a $C_{0}$-semigroup. Hence, in the framework
of evolutionary problems, the property of being the generator of a
$C_{0}$-semigroup is more a regularity property than a well-posedness
condition. We will study $C_{0}$-semigroups associated with evolutionary
problems in \prettyref{chap:Initial-conditions-for}.

Besides the examples treated in \prettyref{sec:Examples_1}, the framework
of evolutionary problems was used in the study for a broad class of
partial differential equations, in particular for coupled systems.
We refer to \cite{Mukhopadhyay2014_thermoelast,Mukhopadhyay2015_2temp}
for systems occurring in thermo-elasticity, to \cite{Mulholland2016}
for thermo-piezo-electricity, to \cite{Picard2010_poroelastic} for
poro-elastic deformations and to \cite{Picard2015_micropoloar} for
so-called micro-polar elasticity models. Moreover, we refer to \cite{Kalauch2011,Picard2012_delay_BS}
for an approach to delay equations, to \cite{Picard2013_fractional}
for fractional differential equations and to \cite{Picard2012_conservative,Picard2012_comprehensive_control,Picard2012_boundary_control,Trostorff2015_syst_theo}
for an approach to control systems. Moreover, more complicated boundary
conditions could be treated within the framework of evolutionary equations,
see \cite{Picard2012_Impedance,Trostorff2013_bd_maxmon,Picard2016_graddiv}.
Finally, we remark that the general structure of the material law
$M(\partial_{0,\rho})$ allows for the treatment of partial differential
equations of mixed type. These are equations which are elliptic in
one part of the underlying domain, parabolic in another one and hyperbolic
in a third one. In particular, one does not need to impose transmission
conditions, as these are automatically satisfied by solutions. There
exist several approaches for such mixed type problems beginning with
the early work of Friedrichs \cite{Friedrichs1958}, which introduces
a framework which is nowadays usually referred to as a Friedrichs
system. Other approaches, even in a Banach space setting, were proposed
by da Prato and Grisvard \cite{daPrato1975}, Colli and Favini \cite{Colli1995,Colli1996}
for parabolic-hyperbolic problems and by Favini and Yagi \cite{Favini1999}.
We also refer to However, all these approaches require certain constraints
on the operators involved, like a Hille-Yosida type condition or certain
decay rates for their resolvents, which do not have to be satisfied
by our material laws in general. 

We note that for the examples treated in \prettyref{sec:Examples_1},
other approaches can be found in the literature. For instance, we
refer to \cite{Hale1971,Webb1976,Batkai_2005,Batkai2001} for semigroup
approaches to delay differential equations and to \cite{Gripenberg1990_Volterra,Pruss2009}
for different approaches to integral and integro-differential equations. 

The framework of evolutionary problems as it is introduced in the
previous sections is not restricted to the autonomous case. In fact,
in \cite{Picard2013_nonauto} a non-autonomous version of \prettyref{prop:classical_material_law}
was proved, where the operators $M_{0},M_{1}$ are replaced by operator-valued
functions $M_{0},M_{1}:\mathbb{R}\to L(H).$ Hence, the corresponding
evolutionary equations takes the form 
\begin{equation}
\left(\partial_{0,\rho}M_{0}(\m)+M_{1}(\m)+A\right)u=f,\label{eq:non-auto}
\end{equation}
where $\left(M_{i}(\m)u\right)(t)\coloneqq M_{i}(t)u(t)$ for $t\in\mathbb{R}$,
which covers a class of non-autonomous problems. Later on, this result
was generalized in \cite{Waurick2015_nonauto}, where abstract operator
equations of the form 
\[
(\partial_{0,\rho}\mathcal{M}+\mathcal{N}+\mathcal{A})u=f
\]
where considered. Here the operators involved act on $H_{\rho}(\mathbb{R};H)$
and do not need to commute with the translation-operator $\tau_{h}.$
A further generalization in a different direction is the study of
so-called evolutionary inclusions, that is, one replaces the evolutionary
equation by an inclusion of the form 
\[
(u,f)\in\left(\partial_{0,\rho}M(\partial_{0,\rho})+A\right),
\]
where $A$ is no longer an operator, but a so-called maximal monotone
relation $A\subseteq H\oplus H,$ which in particular does not need
to be linear. For the topic of maximal monotone relations and differential
inclusions we refer to the monographs \cite{Brezis,papageogiou,showalter_book}.
Inclusions of the above form were studied in \cite{Trostorff2012_NA,Trostorff2012_nonlin_bd}
and in \cite{Trostorff2013_nonautoincl}, where a nonlinear analogue
of the non-autonomous problem \prettyref{eq:non-auto} was studied.

\chapter{Exponential stability for evolutionary problems\label{chap:Exponential-stability-for}}

This chapter is devoted to the topic of exponential stability of evolutionary
problems. We start to introduce the notion of exponential stability
and prove a useful characterization result. In the second part of
this chapter, we focus on a certain class of second-order problems,
and discuss the exponential stability for this class, by using a suitable
reformulation as a first-order problem. We conclude this chapter by
studying several examples. The results of this chapter are based on
\cite{Trostorff2013_stability,Trostorff2014_PAMM,Trostorff2015_secondorder}.

\section{Exponential stability}

There is a well-established definition of exponential stability in
the framework of strongly continuous semigroups. It is simply defined
as the property that for each initial value the corresponding trajectory,
which is a continuous function, should decay with a certain exponential
rate. Using the variation of constant formula, this yields that exponentially
decaying right-hand sides yield exponentially decaying solutions.
However, in the framework of evolutionary equations, as it was introduced
in the previous chapter, we cannot expect to have continuous solutions.
Indeed, as evolutionary equations also cover elliptic-type problems,
the solution can not be more regular in time than the given right-hand
side. Thus, we need to introduce an adapted notion of exponential
stability. Throughout, let $H$ be a Hilbert space, $A:D(A)\subseteq H\to H$
a densely defined closed linear operator and $M:D(M)\subseteq\mathbb{C}\to L(H)$
a linear material law.
\begin{defn*}
Assume hat the evolutionary problem associated with $M$ and $A$
is well-posed. We call the problem \emph{exponentially stable with
decay rate $\nu_{0}>0$, }if for all $\rho>s_{0}(M,A)$ with $\rho\in S_{M}$
and $f\in H_{\rho}(\mathbb{R};H)\cap H_{-\nu}(\mathbb{R};H)$ for
some $0\leq\nu<\nu_{0},$ it follows that 
\[
\left(\overline{\partial_{0,\rho}M(\partial_{0,\rho})+A}\right)^{-1}f\in H_{-\nu}(\mathbb{R};H).
\]

\end{defn*}
The definition of exponential stability states that the causal solution
operator of an evolutionary problem leaves the spaces $H_{-\nu}(\mathbb{R};H)$
invariant. Thus, the exponential decay of a function is replaced by
the condition that the function should belong to an exponentially
weighted $L_{2}$-space with a positive weighting factor. We emphasize,
that it is not enough to assume that the evolutionary equation is
well-posed on the space $H_{-\nu}(\mathbb{R};H)$ in the sense that
there exists a unique solution, which depends continuously on the
given right-hand side, since then the causality of the solution operator
may not hold as the next simple example will show. 
\begin{example}
We choose $H=\mathbb{C},$ $A=0$ and $M(z)=1$. Then the corresponding
evolutionary problem reads as 
\[
\partial_{0,\rho}u=f.
\]
This problem is solvable in $H_{\rho}(\mathbb{R};H)$ for each $\rho\ne0.$
However, the solution operator $\partial_{0,\rho}^{-1}$ is causal
if and only if $\rho>0$ (cp. \prettyref{prop:derivative_invertible}).
And indeed, this problem is not exponentially stable in the above
sense, since for $f=\chi_{[0,1]}\in\bigcap_{\mu\in\mathbb{R}}H_{\mu}(\mathbb{R};H)$
we have that for $\rho>s_{0}(M,A)=0$ 
\[
u(t)=\left(\partial_{0,\rho}^{-1}f\right)(t)=\begin{cases}
0 & \mbox{ if }t<0,\\
t & \mbox{ if }0\leq t\leq1,\\
1 & \mbox{ if }t>1
\end{cases}
\]
and thus, $u\notin H_{\mu}(\mathbb{R};H)$ for any $\mu\leq0.$ 
\end{example}
Although the solutions of evolutionary problems are not continuous
in general, one easy way to obtain continuity is to deal with more
regular right-hand sides. Moreover, it turns out that then the exponential
stability in the sense above really yields an exponential decay of
the solution, as the next lemma shows.
\begin{lem}
Let the evolutionary problem associated with $M$ and $A$ be well-posed.
Then, if $f\in H_{\rho}^{1}(\mathbb{R};H)$ for some $\rho>s_{0}(M,A),\rho\in S_{M}$,
we have that $u\coloneqq\left(\overline{\partial_{0,\rho}M(\partial_{0,\rho})+A}\right)^{-1}f\in H_{\rho}^{1}(\mathbb{R};H)$
with $\partial_{0,\rho}u=\left(\overline{\partial_{0,\rho}M(\partial_{0,\rho})+A}\right)^{-1}\partial_{0,\rho}f.$
Moreover, if the evolutionary problem is exponentially stable with
decay rate $\nu_{0}>0$ and $f\in H_{\rho}^{1}(\mathbb{R};H)\cap H_{-\nu}^{1}(\mathbb{R};H)$
for some $\rho>s_{0}(M,A),\rho\in S_{M}$ and $0\leq\nu<\nu_{0},$
then 
\[
|u(t)|_{H}\e^{\nu t}\to0\quad(t\to\infty).
\]
\end{lem}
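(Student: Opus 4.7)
My plan is as follows. For the first assertion, I would use that by well-posedness and \prettyref{thm:material_law_good} the solution operator coincides with $T(\partial_{0,\rho})=\mathcal{L}_{\rho}^{\ast}T(\i\m+\rho)\mathcal{L}_{\rho}$, where $T$ is the bounded analytic extension of $z\mapsto(zM(z)+A)^{-1}$. Since $\partial_{0,\rho}=\mathcal{L}_{\rho}^{\ast}(\i\m+\rho)\mathcal{L}_{\rho}$ by \prettyref{prop:spectral_repr_partial_0}, both operators are functions of the same normal operator, and the boundedness of $T(\i\m+\rho)$ yields the elementary commutation: for $f\in H_{\rho}^{1}(\mathbb{R};H)=D(\partial_{0,\rho})$, the element $T(\i\m+\rho)\mathcal{L}_{\rho}f$ lies in $D(\i\m+\rho)$ and $(\i\m+\rho)T(\i\m+\rho)\mathcal{L}_{\rho}f=T(\i\m+\rho)(\i\m+\rho)\mathcal{L}_{\rho}f$. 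Applying $\mathcal{L}_{\rho}^{\ast}$ and reading off both sides gives $u=T(\partial_{0,\rho})f\in H_{\rho}^{1}(\mathbb{R};H)$ with $\partial_{0,\rho}u=T(\partial_{0,\rho})\partial_{0,\rho}f$, which is the first claim.

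For the second assertion, the strategy is to show $u\in H_{-\nu}^{1}(\mathbb{R};H)$ and then invoke the Sobolev embedding of \prettyref{prop:Sobolev}, which will immediately yield $u\in C_{-\nu,0}(\mathbb{R};H)$ and hence $|u(t)|_{H}\e^{\nu t}\to 0$ as $t\to\pm\infty$, in particular as $t\to\infty$. To establish $u\in H_{-\nu}^{1}$, set $g\coloneqq\partial_{0,\rho}f$. Because $f\in H_{\rho}^{1}\cap H_{-\nu}^{1}$, an approximation via \prettyref{lem:density test fct} together with a localisation on compact intervals shows that $\partial_{0,\rho}f$ and $\partial_{0,-\nu}f$ coincide a.e., so that $g\in H_{\rho}\cap H_{-\nu}$. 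Exponential stability applied to the right-hand side $f$ yields $u\in H_{-\nu}$; applied to $g$ it yields $T(\partial_{0,\rho})g\in H_{-\nu}$. By the first part, $\partial_{0,\rho}u=T(\partial_{0,\rho})g$, so this element belongs to $H_{\rho}\cap H_{-\nu}$. Running the same approximation/localisation argument for $u$ in place of $f$ then identifies $\partial_{0,\rho}u$ with $\partial_{0,-\nu}u$ a.e., showing $u\in H_{-\nu}^{1}(\mathbb{R};H)$ as required, and Sobolev finishes the argument.

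The principal technical wrinkle is the identification $\partial_{0,\rho}h=\partial_{0,-\nu}h$ on $H_{\rho}^{1}\cap H_{-\nu}^{1}$, applied once to $f$ and once to $u$, which is not explicitly recorded in the excerpt. I expect to handle it by combining \prettyref{lem:density test fct} with the fact that both operators are closures of the same derivative $\partial_{0,\rho,c}$ on $C_{c}^{\infty}(\mathbb{R};H)$; concretely, a joint approximating sequence shows that on every bounded interval both derivatives agree with the distributional derivative of $h$ in $L_{2}^{\mathrm{loc}}$, hence agree globally a.e. Once this independence of the weight is in place, the remaining ingredients, namely the commutation from part (i), exponential stability applied twice, and the Sobolev embedding, fit together without further difficulty.
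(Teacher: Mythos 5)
Your overall strategy coincides with the paper's. Part one is the commutation of the solution operator with $\partial_{0,\rho}$ (the paper phrases it via the inclusion $\left(\partial_{0,\rho}M(\partial_{0,\rho})+A\right)\partial_{0,\rho}\subseteq\partial_{0,\rho}\left(\partial_{0,\rho}M(\partial_{0,\rho})+A\right)$, hence $S_{\rho}\partial_{0,\rho}^{-1}=\partial_{0,\rho}^{-1}S_{\rho}$; you phrase the same fact on the Fourier--Laplace side, which is legitimate since for $\rho>s_{0}(M,A)$ the extension $T$ is analytic near the line $\Re=\rho$ and $T(\i\m+\rho)$ is a bounded multiplication operator commuting with $\i\m+\rho$). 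Part two follows exactly the paper's route: show $u,\partial_{0,\rho}u\in H_{-\nu}(\mathbb{R};H)$ by applying exponential stability to $f$ and to $\partial_{0,\rho}f$, deduce $u\in H_{-\nu}^{1}(\mathbb{R};H)$, and finish with \prettyref{prop:Sobolev}.

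The one step that does not work as literally written is the final identification. For $f$ your localisation argument is fine: $f\in H_{\rho}^{1}(\mathbb{R};H)\cap H_{-\nu}^{1}(\mathbb{R};H)$ is a hypothesis, so $\partial_{0,\rho}f$ and $\partial_{0,-\nu}f$ are both limits in $L_{2,\mathrm{loc}}$ of derivatives of test functions and hence both equal the distributional derivative of $f$. For $u$, however, you cannot ``run the same argument'', because $u\in H_{-\nu}^{1}(\mathbb{R};H)$ is precisely the conclusion you are after; all you have at that point is $u\in H_{\rho}^{1}(\mathbb{R};H)$ together with $u,\,\partial_{0,\rho}u\in H_{-\nu}(\mathbb{R};H)$. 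What is actually needed is the implication: if $u\in H_{-\nu}(\mathbb{R};H)$ and its distributional derivative $u'=\partial_{0,\rho}u$ also lies in $H_{-\nu}(\mathbb{R};H)$, then $u\in D(\partial_{0,-\nu})$. This is true but is a ``maximal equals minimal domain'' statement, not a density statement; the paper obtains it by the direct computation $\langle u|\partial_{0,-\nu,c}\varphi\rangle_{-\nu}=\langle-\partial_{0,\rho}u-2\nu u|\varphi\rangle_{-\nu}$ for $\varphi\in C_{c}^{\infty}(\mathbb{R};H)$, which places $u$ in $D(\partial_{0,-\nu,c}^{\ast})=D(\partial_{0,-\nu}^{\ast})=D(\partial_{0,-\nu})$ by normality of $\partial_{0,-\nu}$ and simultaneously yields $\partial_{0,-\nu}u=\partial_{0,\rho}u$. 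With that substitution your proof is complete.
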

\begin{proof}
Let $\rho>s_{0}(M,A)$ and $f\in H_{\rho}^{1}(\mathbb{R};H).$ By
$\left(\partial_{0,\rho}M(\partial_{0,\rho})+A\right)\partial_{0,\rho}\subseteq\partial_{0,\rho}\left(\partial_{0,\rho}M(\partial_{0,\rho})+A\right)$
it follows that $\partial_{0,\rho}^{-1}\left(\overline{\partial_{0,\rho}M(\partial_{0,\rho})+A}\right)^{-1}=\left(\overline{\partial_{0,\rho}M(\partial_{0,\rho})+A}\right)^{-1}\partial_{0,\rho}^{-1}.$
Thus, 
\[
u=\left(\overline{\partial_{0,\rho}M(\partial_{0,\rho})+A}\right)^{-1}f=\partial_{0,\rho}^{-1}\left(\overline{\partial_{0,\rho}M(\partial_{0,\rho})+A}\right)^{-1}\partial_{0,\rho}f\in H_{\rho}^{1}(\mathbb{R};H),
\]
which shows the first assertion. Assume now that the evolutionary
problem is exponentially stable with decay rate $\nu_{0}>0$. Then
for $f\in H_{\rho}^{1}(\mathbb{R};H)\cap H_{-\nu}^{1}(\mathbb{R};H),$
we have that 
\[
u\in H_{-\nu}(\mathbb{R};H)\cap H_{\rho}^{1}(\mathbb{R};H).
\]
Moreover, 
\[
\partial_{0,\rho}u=\left(\overline{\partial_{0,\rho}M(\partial_{0,\rho})+A}\right)^{-1}\partial_{0,\rho}f\in H_{-\nu}(\mathbb{R};H).
\]
The two conditions imply $u\in H_{-\nu}^{1}(\mathbb{R};H).$ Indeed,
for $\varphi\in C_{c}^{\infty}(\mathbb{R};H)$ we compute 
\begin{align*}
\langle u|\partial_{0,-\nu,c}\varphi\rangle_{-\nu} & =\intop_{\mathbb{R}}\langle u(t)|\varphi'(t)\rangle_{H}\e^{2\nu t}\mbox{ d}t\\
 & =\intop_{\mathbb{R}}\langle u(t)|\varphi'(t)\e^{2(\nu+\rho)t}\rangle_{H}\e^{-2\rho t}\mbox{ d}t\\
 & =\left\langle u\left|\partial_{0,\rho,c}\left(\varphi\e^{2(\nu+\rho)\cdot}\right)-2(\nu+\rho)\varphi\e^{2(\nu+\rho)\cdot}\right.\right\rangle _{\rho}\\
 & =\langle\partial_{0,\rho}^{\ast}u-2(\nu+\rho)u|\varphi\e^{2(\nu+\rho)\cdot}\rangle_{\rho}\\
 & =\langle-\partial_{0,\rho}u-2\nu u|\varphi\rangle_{-\nu}
\end{align*}
which yields $u\in H_{-\nu}^{1}(\mathbb{R};H)$ with $\partial_{0,-\nu}u=\partial_{0,\rho}u.$
The assertion now follows from \prettyref{prop:Sobolev}.
\end{proof}
We now come to the main result of this section, the characterization
of exponential stability for a class of well-posed evolutionary problems.
\begin{thm}
\label{thm:char_exp_stab}Let the evolutionary problem associated
with $M$ and $A$ be well-posed. Moreover, let $\nu_{0}>0$ be such
that $]-\nu_{0},0]\subseteq S_{M}$ and assume that $\mathbb{C}_{\Re>-\nu}\cap D(M)$
is connected for each $\nu_{0}-\varepsilon<\nu<\nu_{0}$ and each
$\varepsilon>0$. Then the evolutionary problem is exponentially stable
with decay rate $\nu_{0}$ if and only if $s_{0}(M,A)\leq-\nu_{0}.$\end{thm}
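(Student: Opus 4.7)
The proof splits into the two implications. For the direction $s_0(M,A)\leq-\nu_0 \Rightarrow$ exponential stability, I would note that $(zM(z)+A)^{-1}$ extends to an analytic bounded $T\colon\mathbb{C}_{\Re>-\nu_0}\to L(H)$; by \prettyref{thm:material_law_good} the operators $T(\partial_{0,\mu})$ on $H_\mu(\mathbb{R};H)$ for $\mu>-\nu_0$ agree on intersections, and applying this with $\mu=\rho>s_0(M,A)$ in $S_M$ and $\mu=-\nu\in(-\nu_0,0]$ gives precisely the defining property of exponential stability.

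For the converse, fix $\nu\in(0,\nu_0)$ and $\rho>\max\{s_0(M,A),0\}$ in $S_M$, and write $S\coloneqq(\overline{\partial_{0,\rho}M(\partial_{0,\rho})+A})^{-1}$. By exponential stability, $S$ maps $H_\rho(\mathbb{R};H)\cap H_{-\nu}(\mathbb{R};H)$ into $H_{-\nu}(\mathbb{R};H)$. Viewing the intersection as a Banach space under $|\cdot|_\rho+|\cdot|_{-\nu}$, the closed graph theorem (graph closedness from boundedness of $S$ on $H_\rho(\mathbb{R};H)$ and the usual almost-everywhere subsequence extraction) produces $C>0$ with
\begin{equation*}
|Sf|_{-\nu}\leq C\bigl(|f|_{\rho}+|f|_{-\nu}\bigr)\qquad(f\in H_\rho(\mathbb{R};H)\cap H_{-\nu}(\mathbb{R};H)).
\end{equation*}
Since $S$ is translation-invariant and $|\tau_h g|_{\mu}=e^{\mu h}|g|_{\mu}$, applying this to $\tau_h f$ yields $|Sf|_{-\nu}\leq C\bigl(e^{(\rho+\nu)h}|f|_\rho+|f|_{-\nu}\bigr)$; letting $h\to-\infty$ (using $\rho+\nu>0$) gives $|Sf|_{-\nu}\leq C|f|_{-\nu}$, and by \prettyref{lem:density test fct} $S$ extends to a bounded, translation-invariant, causal operator $S_{-\nu}$ on $H_{-\nu}(\mathbb{R};H)$.

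Running the same argument for every $\nu'\in(0,\nu_0)$ and invoking complex interpolation between the weighted $L_2$-spaces $H_{-\nu'}(\mathbb{R};H)$ and $H_\rho(\mathbb{R};H)$ yields uniform boundedness $\sup_{\mu\in[-\nu',\rho]}\|S_\mu\|<\infty$; together with $\sup_{\mu\geq\rho}\|S_\mu\|\leq\|T\|_\infty$ from the well-posed regime this gives $\sup_{\mu\geq-\nu'}\|S_\mu\|<\infty$. The converse representation (the Proposition following \prettyref{thm:material_law_good}) then produces, for each $\varepsilon>0$, an analytic bounded $T^\varepsilon\colon\mathbb{C}_{\Re>-\nu_0+\varepsilon}\to L(H)$ with $T^\varepsilon(\partial_{0,\mu})=S_\mu$ for all $\mu\geq-\nu_0+\varepsilon$, and by the injectivity of $T\mapsto T_{\rho_0}$ in \prettyref{prop:trace_of_H^infty} we have $T^\varepsilon(z)=(zM(z)+A)^{-1}$ on $\mathbb{C}_{\Re>\mu}$ for any $\mu>s_0(M,A)$.

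It remains to identify $T^\varepsilon(z)$ with the resolvent on $\Omega\coloneqq\mathbb{C}_{\Re>-\nu_0+\varepsilon}\cap D(M)$. By analyticity and the identity theorem on the connected set $\Omega$, the identity $T^\varepsilon(z)(zM(z)+A)x=x$ for $x\in D(A)$ propagates from $\mathbb{C}_{\Re>\mu}$ to all of $\Omega$, so $T^\varepsilon(z)$ is a left inverse throughout $\Omega$. A short Neumann-series perturbation of $(z_0M(z_0)+A)$ shows that $R_0\coloneqq\{z\in\Omega\,;\,zM(z)+A\text{ is bijective with inverse }T^\varepsilon(z)\}$ is open in $\Omega$, while closedness of $A$ together with continuity of $T^\varepsilon$ and $z\mapsto zM(z)$ makes it closed: for $z_n\to z_0$ in $R_0$ and $y\in H$, the identity $AT^\varepsilon(z_n)y=y-z_nM(z_n)T^\varepsilon(z_n)y$ passes to the limit by closedness of $A$ to give $T^\varepsilon(z_0)y\in D(A)$ and $(z_0M(z_0)+A)T^\varepsilon(z_0)y=y$. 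Connectedness of $\Omega$ and nonemptiness of $R_0$ force $R_0=\Omega$, so $s_0(M,A)\leq-\nu_0+\varepsilon$; letting $\varepsilon\searrow0$ yields the claim. The main obstacle is the translation-invariance trick in the second paragraph that converts the closed-graph estimate stated in the intersection topology into a bound solely in terms of $|f|_{-\nu}$; the final identification of $T^\varepsilon$ with the resolvent is then routine but requires combining analytic continuation of the left-inverse identity with the closedness-of-$A$ argument to supply the right inverse.
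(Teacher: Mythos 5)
Your proof is correct and follows the same overall skeleton as the paper's (forward direction via the $\rho$-independence of the solution operator; converse via extending the solution operator to a bounded, causal, translation-invariant operator on $H_{-\nu}(\mathbb{R};H)$, invoking the Four\`es--Segal representation to obtain a bounded analytic extension of $z\mapsto(zM(z)+A)^{-1}$, and finishing with a clopen/connectedness argument), but the three middle steps are executed differently. Where you apply the closed graph theorem on the intersection $H_{\rho}\cap H_{-\nu}$ and then kill the $|f|_{\rho}$-term by the translation rescaling $|\tau_{h}f|_{\mu}=\e^{\mu h}|f|_{\mu}$, the paper conjugates with $\e^{\nu\m}$ to work on $L_{2}(\mathbb{R}_{\geq0};H)$, applies the closed graph theorem there, and then translates to pass from $L_{2}(\mathbb{R}_{\geq0};H)$ to $C_{c}^{\infty}(\mathbb{R};H)$; your rescaling trick is a nice, self-contained alternative. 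Where you route through the converse representation proposition (which forces the interpolation step to get $\sup_{\mu\in[-\nu',\rho]}\|S_{\mu}\|<\infty$), the paper applies \prettyref{thm:weiss} directly to the single conjugated operator on $L_{2}(\mathbb{R};H)$ and obtains $N(\cdot+\nu)$ at once --- so the Stein--Weiss interpolation you invoke, while valid for these weighted $L_{2}$-spaces, is avoidable. Finally, in the clopen argument the paper proves closedness of the invertibility set using the uniform bound $\sup_{n}\|N(z_{n}+\nu)\|<\infty$ and a Neumann series, whereas you propagate the left-inverse identity by the identity theorem (which settles injectivity globally) and recover surjectivity at limit points from the closedness of $A$; both work, and yours needs slightly less from the bound on $T^{\varepsilon}$. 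Two small points to tidy up: causality of the continuous extension $S_{-\nu}$ is not automatic for the paper's notion of causality (see the remark after the definition), so you should note that the dense subspace $H_{\rho}\cap H_{-\nu}$ is invariant under $\chi_{\mathbb{R}_{\geq t}}(\m)$, which is what makes the extension causal; and the hypothesis gives connectedness of $\mathbb{C}_{\Re>-\nu}\cap D(M)$ only for $\nu_{0}-\varepsilon<\nu<\nu_{0}$, so you should run the final argument on $\mathbb{C}_{\Re>-\nu}\cap D(M)$ for such $\nu$ rather than on $\mathbb{C}_{\Re>-\nu_{0}+\varepsilon}\cap D(M)$, and then let $\nu\to\nu_{0}$.
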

\begin{proof}
We first show that if $s_{0}(M,A)\leq-\nu_{0}$, then the evolutionary
problem is exponentially stable with decay rate $\nu_{0}.$ So let
$0\leq\nu<\nu_{0},\rho>s_{0}(M,A)$ with $\rho\in S_{M}$ and $f\in H_{\rho}(\mathbb{R};H)\cap H_{-\nu}(\mathbb{R};H).$
Since $-\nu\in S_{M}$ and $-\nu>s_{0}(M,A)$ by assumption, \prettyref{cor:sol_po_causal}
yields 
\[
\left(\overline{\partial_{0,\rho}M(\partial_{0,\rho})+A}\right)^{-1}f=\left(\overline{\partial_{0,-\nu}M(\partial_{0,-\nu})+A}\right)^{-1}f\in H_{-\nu}(\mathbb{R};H),
\]
which shows the assertion.\\
Assume now that the evolutionary problem is exponentially stable with
decay rate $\nu_{0}.$ Let $\varepsilon>0$. We show that $s_{0}(M,A)\leq-\nu$
for each $\nu_{0}-\varepsilon<\nu<\nu_{0}$, which would yield the
assertion. So, let $\nu_{0}-\varepsilon<\nu<\nu_{0}$ and choose $\rho>\max\{s_{0}(M,A),0\}$
such that $\mathbb{C}_{\Re\geq\rho}\subseteq D(M).$ We consider the
operator 
\[
S:L_{2}(\mathbb{R}_{\geq0};H)\to L_{2}(\mathbb{R}_{\geq0};H)
\]
given by $S\coloneqq\e^{\nu\m}\left(\overline{\partial_{0,\rho}M(\partial_{0,\rho})+A}\right)^{-1}\left(\e^{\nu\m}\right)^{-1}.$
Indeed, this operator is well-defined since for $f\in L_{2}(\mathbb{R}_{\geq0};H)$
we have that $\left(\e^{\nu\m}\right)^{-1}f\in H_{-\nu}(\mathbb{R}_{\geq0};H)\subseteq H_{\rho}(\mathbb{R}_{\geq0};H)$
and hence, $Sf\in L_{2}(\mathbb{R}_{\geq0};H)$ by exponential stability
and causality of $\left(\overline{\partial_{0,\rho}M(\partial_{0,\rho})+A}\right)^{-1}$.
Now we show that $S$ is closed. For doing so, let $(f_{n})_{n\in\mathbb{N}}$
in $L_{2}(\mathbb{R}_{\geq0};H)$ such that $f_{n}\to f$ and $Sf_{n}\to g$
in $L_{2}(\mathbb{R}_{\geq0};H)$ for some $f,g\in L_{2}(\mathbb{R}_{\geq0};H).$
We derive that $\left(\e^{\nu\m}\right)^{-1}f_{n}\to\left(\e^{\nu\m}\right)^{-1}f$
in $H_{-\nu}(\mathbb{R}_{\geq0};H)$ and consequently in $H_{\rho}(\mathbb{R}_{\geq0};H).$
By continuity of $\left(\overline{\partial_{0,\rho}M(\partial_{0,\rho})+A}\right)^{-1}$
and $\e^{\nu\m}$ we infer $Sf_{n}\to Sf$ in $H_{\rho+\nu}(\mathbb{R}_{\geq0};H).$
However, since $L_{2}(\mathbb{R}_{\geq0};H)\hookrightarrow H_{\rho+\nu}(\mathbb{R}_{\geq0};H),$
it follows that $g=Sf$ and hence, $S$ is closed. Thus, by the closed
graph theorem, $S\in L(L_{2}(\mathbb{R}_{\geq0};H)).$ \\
Now, we consider the following operator 
\[
T:C_{c}^{\infty}(\mathbb{R};H)\subseteq L_{2}(\mathbb{R};H)\to L_{2}(\mathbb{R};H)
\]
again defined by $T\coloneqq\e^{\nu\m}\left(\overline{\partial_{0,\rho}M(\partial_{0,\rho})+A}\right)^{-1}\left(\e^{\nu\m}\right)^{-1}.$
The operator is well-defined, since for $\varphi\in C_{c}^{\infty}(\mathbb{R};H)$
and $h\coloneqq\inf\spt\varphi$ we get $\tau_{-h}\varphi\in L_{2}(\mathbb{R}_{\geq0};H)$
and $T\varphi=\tau_{h}S\tau_{-h}\varphi$ by the translation invariance
of $\e^{\nu\m}\left(\overline{\partial_{0,\rho}M(\partial_{0,\rho})+A}\right)^{-1}\left(\e^{\nu\m}\right)^{-1}$.
The latter yields that $T$ is bounded, since $S$ is bounded. Moreover,
$T$ is causal and hence, there is $N:\mathbb{C}_{\Re>0}\to L(H)$
bounded and analytic such that
\[
\widehat{Tf}(z)=N(z)\hat{f}(z)\quad(z\in\mathbb{C}_{\Re>0})
\]
for each $f\in L_{2}(\mathbb{R}_{\geq0};H)$ by \prettyref{thm:weiss}.
Next, we show that 
\begin{equation}
N(z+\nu)=(zM(z)+A)^{-1}\quad(z\in\mathbb{C}_{\Re>\rho}).\label{eq:resolvent_extension}
\end{equation}
For doing so, let $x\in H$ and set $f(t)\coloneqq\sqrt{2\pi}\chi_{\mathbb{R}_{\geq0}}(t)\e^{-t}x.$
Then $\hat{f}(z)=\frac{1}{z+1}x$ for $z\in\mathbb{C}_{\Re>0}.$ Thus,
we have for $t\in\mathbb{R},\mu>\rho+\nu$ 
\begin{align*}
N(\i t+\mu)x & =(\i t+\mu+1)N(\i t+\mu)\hat{f}(\i t+\mu)\\
 & =(\i t+\mu+1)\widehat{Tf}(\i t+\mu)\\
 & =(\i t+\mu+1)\left(\mathcal{L}_{\mu}\e^{\nu\m}\left(\overline{\partial_{0,\rho}M(\partial_{0,\rho})+A}\right)^{-1}\left(\e^{\nu\m}\right)^{-1}f\right)(t)\\
 & =(\i t+\mu+1)\left(\mathcal{L}_{\mu-\nu}\left(\overline{\partial_{0,\mu-\nu}M(\partial_{0,\mu-\nu})+A}\right)^{-1}\left(\e^{\nu\m}\right)^{-1}f\right)(t),
\end{align*}
 where we have used $\left(\e^{\nu\m}\right)^{-1}f\in H_{\mu-\nu}(\mathbb{R};H)$
and \prettyref{cor:sol_po_causal}. The latter gives 
\begin{align*}
N(\i t+\mu)x & =(\i t+\mu+1)\left((\i t+\mu-\nu)M(\i t+\mu-\nu)+A\right)^{-1}\left(\mathcal{L}_{\mu-\nu}\left(\e^{\nu\m}\right)^{-1}f\right)(t)\\
 & =\left((\i t+\mu-\nu)M(\i t+\mu-\nu)+A\right)^{-1}x,
\end{align*}
which shows \prettyref{eq:resolvent_extension}. So far, we have shown
that $\mathbb{C}_{\Re>\rho}\ni z\mapsto(zM(z)+A)^{-1}$ has an analytic
and bounded extension on $\mathbb{C}_{\Re>-\nu}$ given by $N(\cdot+\nu)$.
Thus, it is left to show that $zM(z)+A$ is boundedly invertible for
each $z\in D(M)\cap\mathbb{C}_{\Re>-\nu}$ . Note that then $(zM(z)+A)^{-1}=N(z+\nu)$
for each $z\in D(M)\cap\mathbb{C}_{\Re>-\nu}$ by the identity theorem.
We define the set 
\[
\Omega\coloneqq\left\{ z\in D(M)\cap\mathbb{C}_{\Re>-\nu}\,;\, zM(z)+A\mbox{ is boundedly invertible}\right\} \subseteq D(M)\cap\mathbb{C}_{\Re>-\nu}.
\]
We show that $\Omega$ is open. Let $z\in\Omega$ and choose $\delta>0$
such that $B(z,\delta)\subseteq D(M)\cap\mathbb{C}_{\Re>-\nu}$ \nomenclature[A_030]{$B(x,r)$}{the open ball with center $x$ and radius $r$.}\nomenclature[A_040]{$B[x,r]$}{the closed ball with center $x$ and radius $r$.}and
\[
\left\Vert (z'M(z')-zM(z))(zM(z)+A)^{-1}\right\Vert <1
\]
for each $z'\in B(z,\delta).$ Then, for $z'\in B(z,\delta)$ it follows
that 
\begin{align}
z'M(z')+A & =z'M(z')-zM(z)+zM(z)+A\nonumber \\
 & =\left(\left(z'M(z')-zM(z)\right)\left(zM(z)+A\right)^{-1}+1\right)(zM(z)+A)\label{eq:omega_open}
\end{align}
is boundedly invertible by the Neumann series. So, $\Omega$ is open.
Consider now the component $C$ in $\Omega$ containing the point
$\rho+1.$ Since $\Omega$ is open, so is $C$. Moreover, by \prettyref{eq:resolvent_extension}
and the identitiy theorem we have that $N(z+\nu)=\left(zM(z)+A\right)^{-1}$
for each $z\in C.$ Let now $(z_{n})_{n\in\mathbb{N}}$ be a sequence
in $C$ with $z_{n}\to z\in D(M)\cap\mathbb{C}_{\Re>-\nu}.$ Since
\[
\sup_{n\in\mathbb{N}}\|(z_{n}M(z_{n})+A)^{-1}\|=\sup_{n\in\mathbb{N}}\|N(z_{n}+\nu)\|<\infty,
\]
we infer, using that for each $n\in\mathbb{N}$ 
\[
zM(z)+A=\left(\left(zM(z)-z_{n}M(z_{n})\right)\left(z_{n}M(z_{n})+A\right)^{-1}+1\right)(z_{n}M(z_{n})+A),
\]
that $z\in\Omega$. Since $\Omega$ is open, we also get $z\in C$,
showing that $C$ is closed. Hence, $C=D(M)\cap\mathbb{C}_{\Re>-\nu}$
and consequently $\Omega=D(M)\cap\mathbb{C}_{\Re>-\nu},$ which completes
the proof.\end{proof}
\begin{rem}
The latter theorem especially applies to the case of evolution equations,
i.e. equations of the form 
\[
\left(\overline{\partial_{0,\rho}+A}\right)u=f,
\]
where $-A$ is the generator of a strongly-continuous semigroup. Indeed,
here $M(z)=1$ for each $z\in\mathbb{C}$, and so the assumptions
in \prettyref{thm:char_exp_stab} on the material law $M$ are trivially
satisfied. Hence, exponential stability in the sense defined in this
section is equivalent to $s_{0}(1,A)=s_{0}(A)<0$. Using now the Theorem
of Gearhart-Prüß (see \cite{Pruss1984} and \prettyref{thm:G-P} in
this thesis), which states that the growth bound of the associated
semigroup equals $s_{0}(A),$ we derive the exponential stability
of the evolution equation in the classical sense. Hence, our notion
of exponential stability coincides with the classical ones for evolution
equations. We will address the relation between semigroups and general
evolutionary problems in the next chapter. 
\end{rem}
We conclude this section by discussing conditions on the linear material
law, which yield exponential stability of the associated evolutionary
problem.
\begin{prop}
\label{prop:exp_stab_pos_def}Let $A:D(A)\subseteq H\to H$ be $m$-accretive
and $M:D(M)\subseteq\mathbb{C}\to L(H)$ be a linear material law.
Let $\nu_{0}>0$ such that $\mathbb{C}_{\Re>-\nu_{0}}\setminus D(M)$
is discrete and 
\[
\exists c>0\:\forall z\in D(M)\cap\mathbb{C}_{\Re>-\nu_{0}},x\in H:\Re\langle zM(z)x|x\rangle_{H}\geq c|x|_{H}^{2}.
\]
Then the evolutionary problem associated with $M$ and $A$ is well-posed
and exponentially stable with decay rate $\nu_{0}.$ \end{prop}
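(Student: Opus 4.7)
The plan is to derive well-posedness directly from \prettyref{prop:Rainer} and then deduce the exponential decay from the characterisation in \prettyref{thm:char_exp_stab}. The hypothesis of positivity is worded precisely as \prettyref{eq:pos_def_M} (with $\rho_{0}=-\nu_{0}$), so well-posedness is immediate. What requires genuine work is extracting the sharper information that $s_{0}(M,A)\leq-\nu_{0}$ together with verifying the structural assumptions on $D(M)$ needed to invoke \prettyref{thm:char_exp_stab}.

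First I would use the standing accretivity estimate to conclude that, for every $z\in D(M)\cap\mathbb{C}_{\Re>-\nu_{0}}$, the operator $zM(z)+A$ is $m$-accretive after shifting by $-c$ (combining \prettyref{prop:pert_accretive} on the bounded perturbation $zM(z)-c$ and the $m$-accretivity of $A$). Consequently \prettyref{prop:prop-accretive} gives the bound $\|(zM(z)+A)^{-1}\|\leq 1/c$ uniformly on $D(M)\cap\mathbb{C}_{\Re>-\nu_{0}}$, and analyticity of $z\mapsto(zM(z)+A)^{-1}$ on this set is routine via the Neumann series argument in \prettyref{eq:omega_open}. This takes care of well-posedness.

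Next I would verify the two structural assumptions of \prettyref{thm:char_exp_stab}. For the inclusion $]-\nu_{0},0]\subseteq S_{M}$, observe that any discrete subset of $\mathbb{C}_{\Re>-\nu_{0}}$ meets each vertical line $\{z\,:\,\Re z=\rho\}$ in an at most countable, hence Lebesgue-null, set; therefore $\i t+\rho\in D(M)$ for a.e.\ $t\in\mathbb{R}$ whenever $\rho\in\,]-\nu_{0},0]$. For the connectedness requirement, I use the topological fact that removing an at most countable (in particular, discrete) subset from an open connected subset of $\mathbb{R}^{2}$ leaves the set path-connected; applied to the half-plane $\mathbb{C}_{\Re>-\nu}$ this yields connectedness of $\mathbb{C}_{\Re>-\nu}\cap D(M)$ for every $\nu<\nu_{0}$.

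Finally, I need $s_{0}(M,A)\leq-\nu_{0}$, i.e.\ for every $\rho_{1}>-\nu_{0}$ the map $z\mapsto(zM(z)+A)^{-1}$ on $\mathbb{C}_{\Re>\rho_{1}}\cap D(M)$ admits a bounded analytic extension to all of $\mathbb{C}_{\Re>\rho_{1}}$. The main obstacle here is precisely the possibility that $D(M)$ is missing (discretely many) points of the half-plane; the key observation is that $\mathbb{C}_{\Re>\rho_{1}}\setminus D(M)$ is a discrete set of isolated points at which the resolvent is already bounded (by $1/c$) on a punctured neighbourhood. Applying Riemann's removable-singularities theorem pointwise to the scalar functions $z\mapsto\langle(zM(z)+A)^{-1}x\,|\,y\rangle_{H}$ (for $x,y\in H$) and then invoking the standard uniform-boundedness/weak-analyticity criterion for operator-valued analytic functions (e.g.\ \cite[Theorem 3.10.1]{hille1957functional}) yields the required analytic extension with the same bound $1/c$. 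With conditions (a) and (b) of the well-posedness definition verified on every $\mathbb{C}_{\Re>\rho_{1}}$ with $\rho_{1}>-\nu_{0}$, we obtain $s_{0}(M,A)\leq-\nu_{0}$, and \prettyref{thm:char_exp_stab} then delivers exponential stability with decay rate $\nu_{0}$.
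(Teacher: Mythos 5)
Your proposal is correct and follows essentially the same route as the paper: well-posedness via Proposition \ref{prop:Rainer}, the uniform bound $\|(zM(z)+A)^{-1}\|\leq 1/c$ from Propositions \ref{prop:prop-accretive} and \ref{prop:pert_accretive}, verification of the hypotheses of Theorem \ref{thm:char_exp_stab} using discreteness of $\mathbb{C}_{\Re>-\nu_{0}}\setminus D(M)$, and removal of the singularities by Riemann's theorem to conclude $s_{0}(M,A)\leq-\nu_{0}$. Your extra care in passing through weak analyticity for the operator-valued extension is a harmless elaboration of the same step.
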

\begin{proof}
The well-posedness of the evolutionary problem follows from \prettyref{prop:Rainer}.
Moreover, the material law $M$ satisfies the assumptions in \prettyref{thm:char_exp_stab},
since $\mathbb{C}_{\Re>-\nu_{0}}\setminus D(M)$ is at most countable
and so $]-\nu_{0},0]\subseteq S_{M}$ as well as $D(M)\cap\mathbb{C}_{\Re>-\nu}=\mathbb{C}_{\Re>-\nu}\setminus\left(\mathbb{C}_{\Re>-\nu_{0}}\setminus D(M)\right)$
is connected for each $\nu<\nu_{0}$. Moreover, for each $z\in D(M)\cap\mathbb{C}_{\Re>-\nu_{0}}$
the operator $zM(z)+A$ is boundedly invertible with $\left\Vert (zM(z)+A)^{-1}\right\Vert \leq\frac{1}{c}$
by \prettyref{prop:prop-accretive} and \prettyref{prop:pert_accretive}.
Thus, the mapping 
\[
D(M)\cap\mathbb{C}_{\Re>-\nu_{0}}\ni z\mapsto(zM(z)+A)^{-1}\in L(H)
\]
is bounded and analytic. Since $\mathbb{C}_{\Re>-\nu_{0}}\setminus D(M)$
is discrete, we find a bounded and analytic extension to the whole
$\mathbb{C}_{\Re>-\nu_{0}}$ by Riemann's Theorem on removable singularities.
Thus, $s_{0}(M,A)\leq-\nu_{0}$ and hence, the problem is exponentially
stable with decay rate $\nu_{0}$ by \prettyref{thm:char_exp_stab}. \end{proof}
\begin{prop}
\label{prop:exp_stab_A_inv}Let $A:D(A)\subseteq H\to H$ be $m$-accretive,
$M:D(M)\subseteq\mathbb{C}\to L(H)$ be a linear-material law and
assume that $A$ is boundedly invertible. Let $\nu_{0}>0$ such that
$\mathbb{C}_{\Re>-\nu_{0}}\setminus D(M)$ is discrete and $\delta>0$
such that 
\[
K\coloneqq\sup_{z\in D(M)\cap B[0,\delta]}\|zM(z)\|<\|A^{-1}\|^{-1}
\]
and 
\[
\exists c>0\:\forall z\in\left(D(M)\cap\mathbb{C}_{\Re>-\nu_{0}}\right)\setminus B[0,\delta],x\in H:\Re\langle zM(z)x|x\rangle_{H}\geq c|x|_{H}^{2}.
\]
Then, the evolutionary problem associated with $M$ and $A$ is well-posed
and exponentially stable with stability rate $\nu_{0}.$ \end{prop}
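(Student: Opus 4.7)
My plan is to reduce the statement to the characterization of exponential stability in \prettyref{thm:char_exp_stab}. By that result, it suffices to verify the hypotheses on $M$ (which I address first) and to show that $s_0(M,A)\leq -\nu_0$, i.e.\ that the map $z\mapsto (zM(z)+A)^{-1}$ is well-defined, bounded and analytic on $D(M)\cap \mathbb{C}_{\Re>-\nu_0}$ and extends analytically and boundedly to all of $\mathbb{C}_{\Re>-\nu_0}$. Since $\mathbb{C}_{\Re>-\nu_0}\setminus D(M)$ is discrete (hence in particular has Lebesgue measure zero on each horizontal line), one immediately gets $]-\nu_0,0]\subseteq S_M$ and that $D(M)\cap\mathbb{C}_{\Re>-\nu}$ is connected (being an open connected set minus a discrete set) for each $\nu<\nu_0$, so the topological assumptions of \prettyref{thm:char_exp_stab} hold.

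I would split the verification of bounded invertibility into two regimes according to the position of $z$ relative to $B[0,\delta]$. For $z\in (D(M)\cap\mathbb{C}_{\Re>-\nu_0})\setminus B[0,\delta]$, the assumption gives that $zM(z)-c$ is accretive, so \prettyref{prop:pert_accretive} shows $zM(z)-c+A$ is $m$-accretive, and \prettyref{prop:prop-accretive} then yields bounded invertibility of $zM(z)+A$ with $\|(zM(z)+A)^{-1}\|\leq 1/c$. For $z\in D(M)\cap B[0,\delta]$, I factor
\[
zM(z)+A=\bigl(I+zM(z)A^{-1}\bigr)A,
\]
and since $\|zM(z)A^{-1}\|\leq K\|A^{-1}\|<1$ uniformly in $z$, the Neumann series gives bounded invertibility with uniform bound
\[
\|(zM(z)+A)^{-1}\|\leq \frac{\|A^{-1}\|}{1-K\|A^{-1}\|}.
\]

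Combining these two bounds, the resolvent map $z\mapsto (zM(z)+A)^{-1}$ is bounded on $D(M)\cap\mathbb{C}_{\Re>-\nu_0}$; analyticity follows from analyticity of $M$ and the standard identity used in \prettyref{eq:omega_open}. Finally, since $\mathbb{C}_{\Re>-\nu_0}\setminus D(M)$ is discrete, Riemann's theorem on removable singularities extends this bounded analytic map to all of $\mathbb{C}_{\Re>-\nu_0}$, giving $s_0(M,A)\leq-\nu_0$ and hence, via \prettyref{thm:char_exp_stab}, exponential stability with decay rate $\nu_0$. Well-posedness itself follows from the existence of such a bounded analytic resolvent on some right half plane.

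The only non-routine point I expect is verifying that the factorization argument gives genuinely uniform bounds near $0$ (where one might naively fear the material law degenerates): the key observation is that $zM(z)$ is small on $B[0,\delta]$ by assumption, so invertibility is driven by $A$ rather than by $zM(z)$ there, exactly complementing the accretivity regime outside the ball where $zM(z)$ is the dominant dissipative term.
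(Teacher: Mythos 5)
Your proposal is correct and follows essentially the same route as the paper: split into the two regimes $D(M)\cap B[0,\delta]$ (Neumann series via the factorization $zM(z)+A=(1+zM(z)A^{-1})A$) and $(D(M)\cap\mathbb{C}_{\Re>-\nu_0})\setminus B[0,\delta]$ (accretivity via \prettyref{prop:pert_accretive} and \prettyref{prop:prop-accretive}), then remove the discrete singular set by Riemann's theorem and invoke \prettyref{thm:char_exp_stab}. The only cosmetic difference is that the paper cites \prettyref{prop:Rainer} for well-posedness, whereas you deduce it directly from the bounded analytic resolvent on a right half plane; both are valid.
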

\begin{proof}
Again, the well-posedness follows from \prettyref{prop:Rainer}. Moreover,
for $z\in D(M)\cap B[0,\delta]$ the operator 
\[
zM(z)+A=\left(zM(z)A^{-1}+1\right)A
\]
is boundedly invertible with 
\[
\|\left(zM(z)+A\right)^{-1}\|\leq\frac{\|A^{-1}\|}{1-K\|A^{-1}\|}
\]
by the Neumann series and for $z\in\left(D(M)\cap\mathbb{C}_{\Re>-\nu_{0}}\right)\setminus B[0,\delta]$
the invertibility follows from \prettyref{prop:prop-accretive} and
\prettyref{prop:pert_accretive} with 
\[
\|\left(zM(z)+A\right)^{-1}\|\leq\frac{1}{c}.
\]
Hence, the assertion follows again by Riemann's Theorem for removable
singularities and \prettyref{thm:char_exp_stab}.
\end{proof}

\section{Exponential stability for a class of second order evolutionary problems\label{sec:Exponential-stability-sec_order}}

In this section, we consider second-order differential equations of
the following form 
\begin{equation}
\left(\partial_{0,\rho}^{2}M(\partial_{0,\rho})+C^{\ast}C\right)u=f,\label{eq:second_order}
\end{equation}
where $C:D(C)\subseteq H_{0}\to H_{1}$ is a densely defined, closed
linear operator between two Hilbert spaces $H_{0},H_{1}$, which is
assumed to be boundedly invertible, and $M:D(M)\subseteq\mathbb{C}\to L(H)$
is a linear material law, which is given by 
\[
M(z)\coloneqq M_{0}(z)+z^{-1}M_{1}(z),
\]
where $M_{0},M_{1}:D(M)\subseteq\mathbb{C}\to L(H_{0})$ are analytic
and bounded. We want to reformulate this problem as a first-order
evolutionary problem, which allows us to study the exponential stability.
For doing so, we choose $d>0$ and define the new unknowns $v\coloneqq\partial_{0,\rho}u+du$
and $q\coloneqq-Cu.$ Then, we obtain 
\[
\partial_{0,\rho}q=-C\partial_{0,\rho}u=-Cv+dCu=-Cv-dq
\]
and 
\begin{align*}
\partial_{0,\rho}M(\partial_{0,\rho})v & =\partial_{0,\rho}^{2}M(\partial_{0,\rho})u+d\partial_{0,\rho}M(\partial_{0,\rho})u\\
 & =f+C^{\ast}q+dM_{0}(\partial_{0,\rho})\partial_{0,\rho}u+dM_{1}(\partial_{0,\rho})u\\
 & =f+C^{\ast}q+dM_{0}(\partial_{0,\rho})v+d\left(M_{1}(\partial_{0,\rho})-dM_{0}(\partial_{0,\rho})\right)u\\
 & =f+C^{\ast}q+dM_{0}(\partial_{0,\rho})v-d\left(M_{1}(\partial_{0,\rho})-dM_{0}(\partial_{0,\rho})\right)C^{-1}q,
\end{align*}
which can be written as a system of the form 
\begin{align}
\left(\partial_{0,\rho}\left(\begin{array}{cc}
M(\partial_{0,\rho}) & 0\\
0 & 1
\end{array}\right)+d\left(\begin{array}{cc}
-M_{0}(\partial_{0,\rho}) & \left(M_{1}(\partial_{0,\rho})-dM_{0}(\partial_{0,\rho})\right)C^{-1}\\
0 & 1
\end{array}\right)+\right.\nonumber \\
\left.+\left(\begin{array}{cc}
0 & -C^{\ast}\\
C & 0
\end{array}\right)\right)\left(\begin{array}{c}
v\\
q
\end{array}\right) & =\left(\begin{array}{c}
f\\
0
\end{array}\right)\label{eq:first-order}
\end{align}
Thus, we arrive at an evolutionary problem with a $d$-dependent material
law 
\begin{equation}
M_{d}(z)\coloneqq\left(\begin{array}{cc}
M(z) & 0\\
0 & 1
\end{array}\right)+z^{-1}d\left(\begin{array}{cc}
-M_{0}(z) & \left(M_{1}(z)-dM_{0}(z)\right)C^{-1}\\
0 & 1
\end{array}\right)\quad(z\in D(M)).\label{eq:M_d}
\end{equation}
Our main goal is to derive conditions on the material law $M$, which
yield the exponential stability of \prettyref{eq:first-order} for
some $d>0.$ 
\begin{rem}
We note that, if \prettyref{eq:first-order} is exponentially stable
and $v,q\in H_{-\nu}(\mathbb{R};H)$ for some $\nu>0,$ that also
$u\in H_{-\nu}^{1}(\mathbb{R};H)\hookrightarrow C_{-\nu}(\mathbb{R};H),$
yielding the exponential stability of the original second-order problem.
Indeed, since $u=-C^{-1}q$ we derive $u\in H_{-\nu}(\mathbb{R};H)$
and hence, $\partial_{0,\rho}u=v-du\in H_{-\nu}(\mathbb{R};H).$ 
\end{rem}
We first discuss, how condition \prettyref{eq:pos_def_M} for the
material law $M$ carries over to an analogous estimate for $M_{d}.$
\begin{lem}
\label{lem:estimate_M_d}Let $z\in D(M)$ and $c>0$ such that 
\[
\Re\langle zM(z)u|u\rangle_{H_{0}}\geq c|u|_{H_{0}}^{2}\quad(u\in H_{0}).
\]
Then for each $d>0$, it follows that 
\[
\Re\langle zM_{d}(z)(v,q)|(v,q)\rangle_{H_{0}\oplus H_{1}}\geq\min\left\{ c-dK(d),\frac{3}{4}d+\Re z\right\} |(v,q)|_{H_{0}\oplus H_{1}}^{2}\quad(v\in H_{0},q\in H_{1}),
\]
where $K(d)\coloneqq\|M_{0}\|_{\infty}+\left(d\|M_{0}\|_{\infty}+\|M_{1}\|_{\infty}\right)^{2}\|C^{-1}\|^{2}$
and $M_{d}$ is given by \prettyref{eq:M_d}.\end{lem}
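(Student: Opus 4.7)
The plan is to unfold $zM_d(z)$ as a $2\times 2$ block matrix and then bound the quadratic form directly, using Young's inequality on the off-diagonal coupling with a carefully chosen parameter so that the losses on the two diagonal blocks match the two terms inside the $\min$.

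First, I would compute from the definition \prettyref{eq:M_d} that
\[
zM_d(z)=\begin{pmatrix} zM(z)-dM_0(z) & d\bigl(M_1(z)-dM_0(z)\bigr)C^{-1} \\ 0 & z+d \end{pmatrix},
\]
so that for $(v,q)\in H_0\oplus H_1$
\[
\Re\langle zM_d(z)(v,q)|(v,q)\rangle = \Re\langle(zM(z)-dM_0(z))v|v\rangle_{H_0} + \Re\langle d(M_1(z)-dM_0(z))C^{-1}q|v\rangle_{H_0} + (\Re z + d)|q|_{H_1}^2.
\]
For the first summand the hypothesis on $zM(z)$ together with $\|M_0(z)\|\le\|M_0\|_\infty$ gives a lower bound $(c-d\|M_0\|_\infty)|v|_{H_0}^2$. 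The third summand is just $(\Re z+d)|q|_{H_1}^2$.

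The main point is the off-diagonal term. Setting $\gamma\coloneqq d\bigl(\|M_1\|_\infty+d\|M_0\|_\infty\bigr)\|C^{-1}\|$, Cauchy--Schwarz yields
\[
\bigl|\Re\langle d(M_1(z)-dM_0(z))C^{-1}q|v\rangle_{H_0}\bigr|\le \gamma\,|q|_{H_1}|v|_{H_0}.
\]
Apply Young's inequality $\gamma|q||v|\le \frac{\alpha}{2}|v|^2+\frac{\gamma^2}{2\alpha}|q|^2$ with the calibrated choice $\alpha = 2\gamma^2/d$; this is exactly what makes the $|q|^2$-loss equal to $d/4$ and the $|v|^2$-loss equal to $\gamma^2/d = d\bigl(\|M_1\|_\infty+d\|M_0\|_\infty\bigr)^2\|C^{-1}\|^2$. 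Adding everything up, the coefficient in front of $|v|_{H_0}^2$ becomes
\[
c-d\|M_0\|_\infty - d\bigl(\|M_1\|_\infty+d\|M_0\|_\infty\bigr)^2\|C^{-1}\|^2 = c-dK(d),
\]
while the coefficient in front of $|q|_{H_1}^2$ becomes $\Re z+d-\tfrac{d}{4}=\Re z+\tfrac{3}{4}d$. Bounding the sum from below by the minimum of the two coefficients times $|(v,q)|^2$ yields the claim.

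There is no real obstacle here; the only subtlety is to choose the Young parameter $\alpha$ so that the split reproduces the particular constants $K(d)$ and $\tfrac{3}{4}d$ that appear in the statement, and the calibration $\alpha=2\gamma^2/d$ is forced by this requirement.
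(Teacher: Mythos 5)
Your proposal is correct and follows essentially the same route as the paper: the same block computation of $zM_d(z)$, Cauchy--Schwarz on the single off-diagonal coupling term, and Young's inequality calibrated so that the $|q|^2$-loss is $d/4$ (the paper keeps the Young parameter $\varepsilon$ free and sets $\varepsilon=d/4$ at the end, which is the same calibration $\alpha=2\gamma^2/d$ you impose directly). No gaps.
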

\begin{proof}
Let $v\in H_{0},q\in H_{1}.$ Then we estimate 
\begin{align*}
 & \Re\langle zM_{d}(z)(v,q)|(v,q)\rangle_{H_{0}\oplus H_{1}}\\
 & =\Re\langle zM(z)v-dM_{0}(z)v+d\left(M_{1}(z)-dM_{0}(z)\right)C^{-1}q|v\rangle_{H_{0}}+\Re\langle zq+dq|q\rangle_{H_{1}}\\
 & \geq(c-d\|M_{0}(z)\|)|v|_{H_{0}}^{2}-d\left\Vert \left(M_{1}(z)-dM_{0}(z)\right)C^{-1}\right\Vert |q|_{H_{1}}|v|_{H_{0}}+\left(\Re z+d\right)|q|_{H_{1}}^{2}\\
 & \geq\left(c-d\|M_{0}\|_{\infty}-\frac{1}{4\varepsilon}d^{2}\left(d\|M_{0}\|_{\infty}+\|M_{1}\|_{\infty}\right)^{2}\|C^{-1}\|^{2}\right)|v|_{H_{0}}^{2}+(\Re z+d-\varepsilon)|q|_{H_{1}}^{2},
\end{align*}
for each $\varepsilon>0.$ If we choose $\varepsilon=\frac{d}{4},$
we obtain the assertion. 
\end{proof}
First, we want to check, whether there is some $d>0$ such that $M_{d}$
satisfies the assumptions of \prettyref{prop:exp_stab_pos_def}, if
$M$ does. We begin with the following proposition.
\begin{prop}
\label{prop:pos_def_M_d}Let $c>0$ such that for each $z\in D(M)$
and $u\in H_{0}$ we have that 
\[
\Re\langle zM(z)u|u\rangle_{H_{0}}\geq c|u|_{H_{0}}^{2}.
\]
Then, there exist $\tilde{c},d_{0}>0$ and $\rho_{0}>0$, such that
for each $z\in D(M)\cap\mathbb{C}_{\Re>-\rho_{0}},v\in H_{0}$ and
$q\in H_{1}$ 
\[
\Re\langle zM_{d_{0}}(z)(v,q)|(v,q)\rangle_{H_{0}\oplus H_{1}}\geq\tilde{c}|(v,q)|_{H_{0}\oplus H_{1}}^{2},
\]
where $M_{d_{0}}$ is given by \prettyref{eq:M_d}.\end{prop}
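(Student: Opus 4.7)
The plan is to apply Lemma \ref{lem:estimate_M_d} directly and then balance the two terms appearing in the $\min$. Recall that the lemma gives, for every $d>0$ and every $z\in D(M)$,
\[
\Re\langle zM_{d}(z)(v,q)|(v,q)\rangle_{H_{0}\oplus H_{1}}\geq\min\left\{c-dK(d),\tfrac{3}{4}d+\Re z\right\}|(v,q)|_{H_{0}\oplus H_{1}}^{2},
\]
where $K(d)=\|M_{0}\|_{\infty}+(d\|M_{0}\|_{\infty}+\|M_{1}\|_{\infty})^{2}\|C^{-1}\|^{2}$. Since $K$ is polynomial in $d$ with $K(0)=\|M_{0}\|_{\infty}+\|M_{1}\|_{\infty}^{2}\|C^{-1}\|^{2}<\infty$, the product $d\mapsto dK(d)$ is continuous with $dK(d)\to 0$ as $d\to 0^{+}$. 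Hence there is $d_{0}>0$ small enough that $c-d_{0}K(d_{0})>0$.

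Next, to control the second term in the minimum, the plan is to restrict to a half-plane. Choose any $\rho_{0}\in\,]0,\tfrac{3}{4}d_{0}[$. Then for every $z\in D(M)\cap\mathbb{C}_{\Re>-\rho_{0}}$ we have $\tfrac{3}{4}d_{0}+\Re z>\tfrac{3}{4}d_{0}-\rho_{0}>0$. Setting
\[
\tilde{c}\coloneqq\min\left\{c-d_{0}K(d_{0}),\tfrac{3}{4}d_{0}-\rho_{0}\right\}>0,
\]
Lemma \ref{lem:estimate_M_d} yields the desired estimate for all $z\in D(M)\cap\mathbb{C}_{\Re>-\rho_{0}}$, $v\in H_{0}$ and $q\in H_{1}$.

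The proof is essentially a two-line application of the previous lemma combined with a choice of parameters, so there is no real obstacle. The only point requiring attention is that the hypothesis of Lemma \ref{lem:estimate_M_d} is the uniform accretivity of $zM(z)$ on all of $D(M)$ with a single constant $c$, which is precisely what the proposition assumes; thus no enlargement of $D(M)$ or analytic continuation is needed at this stage. The role of the restriction $\Re z>-\rho_{0}$ is only to ensure that the $q$-component inherits positivity from the shift $d_{0}$ rather than from $\Re z$ alone.
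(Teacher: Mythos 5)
Your proof is correct and follows the paper's own argument essentially verbatim: apply Lemma \ref{lem:estimate_M_d}, pick $d_{0}$ small enough that $d_{0}K(d_{0})<c$ using $dK(d)\to0$ as $d\to0$, then choose $\rho_{0}\in\,]0,\tfrac{3}{4}d_{0}[$ and take $\tilde{c}$ as the minimum of the two resulting positive constants. Nothing is missing.
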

\begin{proof}
Let $v\in H_{0},q\in H_{1}$. By \prettyref{lem:estimate_M_d} we
have 
\[
\Re\langle zM_{d}(z)(v,q)|(v,q)\rangle_{H_{0}\oplus H_{1}}\geq\min\left\{ c-dK(d),\frac{3}{4}d+\Re z\right\} |(v,q)|_{H_{0}\oplus H_{1}}^{2}
\]
for each $z\in D(M)$ and $d>0,$ where $K(d)=\|M_{0}\|_{\infty}+\left(d\|M_{0}\|_{\infty}+\|M_{1}\|_{\infty}\right)^{2}\|C^{-1}\|^{2}.$
Since $dK(d)\to0$ as $d\to0$, we may choose $d_{0}>0$ such that
$d_{0}K(d_{0})<c.$ Moreover, we choose $\rho_{0}<\frac{3}{4}d_{0}$.
Then, the above estimate yields that 
\[
\Re\langle zM_{d_{0}}(z)(v,q)|(v,q)\rangle_{H_{0}\oplus H_{1}}\geq\min\left\{ c-d_{0}K(d_{0}),\frac{3}{4}d_{0}-\rho_{0}\right\} |(v,q)|_{H_{0}\oplus H_{1}}^{2}
\]
for each $z\in D(M)\cap\mathbb{C}_{\Re>-\rho_{0}},$ which proves
the assertion.\end{proof}
\begin{cor}
\label{cor:exp_stab_sec_posdef}Assume that there exist $\nu_{0},c>0$
such that $\mathbb{C}_{\Re>-\nu_{0}}\setminus D(M)$ is discrete and
for each $z\in D(M)\cap\mathbb{C}_{\Re>-\nu_{0}}$ and $u\in H_{0}$
we have that 
\[
\Re\langle zM(z)u|u\rangle_{H_{0}}\geq c|u|_{H_{0}}^{2}.
\]
Then there exists $d>0$ such that the evolutionary problem given
by \prettyref{eq:first-order} is well-posed and exponentially stable.\end{cor}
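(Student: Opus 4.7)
The plan is to feed the first-order reformulation \prettyref{eq:first-order} into \prettyref{prop:exp_stab_pos_def}. Three things need to be checked: (i) the spatial operator appearing there is $m$-accretive, (ii) the material law $M_{d}$ from \prettyref{eq:M_d} admits a uniform strict accretivity estimate on some right half-plane $\mathbb{C}_{\Re>-\rho_{0}}$ for a suitable $d>0$, and (iii) the complement of its domain in that half-plane is discrete.

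For (i), the spatial operator is the block matrix $\mathcal{A}\coloneqq\begin{pmatrix}0 & -C^{\ast}\\ C & 0\end{pmatrix}$ on $H_{0}\oplus H_{1}$. Using the block-adjoint formula together with $C^{\ast\ast}=C$ (as $C$ is closed and densely defined), one computes $\mathcal{A}^{\ast}=-\mathcal{A}$, i.e. $\mathcal{A}$ is skew-selfadjoint and in particular $m$-accretive. For (ii), I would essentially copy the argument of \prettyref{prop:pos_def_M_d}: the estimate in \prettyref{lem:estimate_M_d} is purely pointwise, so applying it to $z\in D(M)\cap\mathbb{C}_{\Re>-\nu_{0}}$ (where our accretivity hypothesis on $M$ is in force) gives
\[
\Re\langle zM_{d}(z)(v,q)|(v,q)\rangle_{H_{0}\oplus H_{1}}\geq\min\left\{ c-dK(d),\tfrac{3}{4}d+\Re z\right\} |(v,q)|_{H_{0}\oplus H_{1}}^{2}
\]
with $K(d)=\|M_{0}\|_{\infty}+(d\|M_{0}\|_{\infty}+\|M_{1}\|_{\infty})^{2}\|C^{-1}\|^{2}$. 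Since $dK(d)\to0$ as $d\to0$, choose $d>0$ small enough that $d\,K(d)<c$, then pick $\rho_{0}\in\bigl(0,\min\{\nu_{0},\tfrac{3}{4}d\}\bigr)$; this yields the desired uniform strict accretivity on $D(M)\cap\mathbb{C}_{\Re>-\rho_{0}}=D(M_{d})\cap\mathbb{C}_{\Re>-\rho_{0}}$ with constant $\tilde{c}\coloneqq\min\{c-dK(d),\tfrac{3}{4}d-\rho_{0}\}>0$.

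For (iii), the $z^{-1}$ factor in \prettyref{eq:M_d} (and the one hidden in $M=M_{0}+z^{-1}M_{1}$) forces $D(M_{d})=D(M)\setminus\{0\}$, so $\mathbb{C}_{\Re>-\rho_{0}}\setminus D(M_{d})\subseteq(\mathbb{C}_{\Re>-\nu_{0}}\setminus D(M))\cup\{0\}$, which is discrete by hypothesis. All assumptions of \prettyref{prop:exp_stab_pos_def} are now met for the pair $(M_{d},\mathcal{A})$ on the half-plane $\mathbb{C}_{\Re>-\rho_{0}}$, so the evolutionary problem \prettyref{eq:first-order} is well-posed and exponentially stable with decay rate $\rho_{0}$, finishing the proof.

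There is no real obstacle here — the statement is essentially a bookkeeping corollary combining \prettyref{prop:pos_def_M_d} (adapted to the restricted half-plane $\mathbb{C}_{\Re>-\nu_{0}}$), the skew-selfadjointness of $\mathcal{A}$, and \prettyref{prop:exp_stab_pos_def}. The only mild subtlety is remembering to shrink $\rho_{0}$ below both $\nu_{0}$ (to stay inside the region where the hypothesis on $M$ holds) and $\tfrac{3}{4}d$ (to preserve the second minimum in the accretivity bound), and to note that the extra singularity at $z=0$ introduced by the $z^{-1}$ terms only adds a single isolated point to the exceptional set.
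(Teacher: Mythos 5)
Your proposal is correct and follows essentially the same route as the paper: the paper's proof simply invokes \prettyref{prop:pos_def_M_d} to get the uniform strict accretivity of $M_{d}$ on $\mathbb{C}_{\Re>-\min\{\nu_{0},\rho_{0}\}}$ and then concludes via \prettyref{prop:exp_stab_pos_def}. Your extra bookkeeping (the restriction of the half-plane, the skew-selfadjointness of the block operator, and the discreteness of the exceptional set including $z=0$) is exactly the implicit content of that two-line argument.
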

\begin{proof}
By \prettyref{prop:pos_def_M_d} there exist $d,\tilde{c},\rho_{0}>0$
such that 
\[
\Re\langle zM_{d}(z)(v,q)|(v,q)\rangle_{H_{0}\oplus H_{1}}\geq\tilde{c}|(v,q)|_{H_{0}\oplus H_{1}}^{2}
\]
for each $v\in H_{0},q\in H_{1}$ and $z\in D(M)\cap\mathbb{C}_{\Re>-\min\{\nu_{0},\rho_{0}\}}.$
Then the assertion follows from \prettyref{prop:exp_stab_pos_def}.
\end{proof}
We now come to our second exponential stability result \prettyref{prop:exp_stab_A_inv}
and again we want to find out, how the assumptions on $M$ carry over
to $M_{d}$ for some $d>0$. 
\begin{prop}
\label{prop:exp_stable_without_ball}We assume that
\begin{equation}
\forall\delta>0\,\exists\rho_{0},c>0\,\forall z\in D(M)\cap\mathbb{C}_{\Re>-\rho_{0}}\setminus B[0,\delta],u\in H_{0}:\Re\langle zM(z)u|u\rangle_{H_{0}}\geq c|u|_{H_{0}}^{2},\label{eq:pos_def_M_withoutball}
\end{equation}
and $\lim_{z\to0}M_{1}(z)=0$. Then, there exist $\tilde{c},d_{0},\delta_{0}>0$
and $\tilde{\rho_{0}}>0$, such that 
\[
\sup_{z\in D(M)\cap B[0,\delta_{0}]}\|zM_{d_{0}}(z)\|<\|A^{-1}\|^{-1},
\]
and 
\[
\forall z\in D(M)\cap\mathbb{C}_{\Re>-\tilde{\rho_{0}}}\setminus B[0,\delta_{0}],\, v\in H_{0},q\in H_{1}:\Re\langle zM_{d_{0}}(z)(v,q)|(v,q)\rangle_{H_{0}\oplus H_{1}}\geq\tilde{c}|(v,q)|_{H_{0}\oplus H_{1}}^{2},
\]
where $A\coloneqq\left(\begin{array}{cc}
0 & -C^{\ast}\\
C & 0
\end{array}\right)$ and $M_{d_{0}}$ is given by \prettyref{eq:M_d}. If, additionally
$\mathbb{C}_{\Re>-\nu_{0}}\setminus D(M)$ is discrete for some $\nu_{0}>0$,
we get that the evolutionary problem given by \prettyref{eq:first-order}
is exponentially stable for $d=d_{0}.$\end{prop}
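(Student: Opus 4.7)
The plan is to choose $\delta_0, d_0, \tilde\rho_0$ in the right order so that both conclusions hold, and then invoke \prettyref{prop:exp_stab_A_inv} for the exponential-stability claim. The starting point is the explicit form
\[
zM_{d}(z)=\begin{pmatrix} zM(z)-dM_{0}(z) & d(M_{1}(z)-dM_{0}(z))C^{-1}\\ 0 & z+d\end{pmatrix},
\]
obtained directly from \eqref{eq:M_d} and $zM(z)=zM_{0}(z)+M_{1}(z)$. Since this matrix is block upper-triangular, its operator norm is dominated by the sum of the norms of its three nonzero blocks. Using $M_{1}(z)\to0$ as $z\to0$ together with the boundedness of $M_{0},M_{1}$, this bound splits additively into a contribution depending only on $|z|$ (and on $\|M_{0}\|_{\infty}$, $\sup_{B[0,\delta]\cap D(M)}\|M_{1}\|$) plus a contribution that vanishes linearly with $d$; this splitting is what removes the apparent circularity in the parameter choice.

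Concretely, I would first pick $\delta_{0}>0$ small enough that
\[
\delta_{0}\|M_{0}\|_{\infty}+\sup_{z\in B[0,\delta_{0}]\cap D(M)}\|M_{1}(z)\|+\delta_{0}<\tfrac{1}{2}\|A^{-1}\|^{-1},
\]
which is possible by $M_{1}(z)\to0$. Applying the hypothesis \eqref{eq:pos_def_M_withoutball} with $\delta=\delta_{0}$ produces $\rho_{0},c>0$ such that $\Re\langle zM(z)u|u\rangle_{H_{0}}\ge c|u|_{H_{0}}^{2}$ on $D(M)\cap\mathbb{C}_{\Re>-\rho_{0}}\setminus B[0,\delta_{0}]$. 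With this $c$ fixed, I would then pick $d_{0}>0$ small enough to achieve both $d_{0}K(d_{0})<c$ (with $K(d)$ as in \prettyref{lem:estimate_M_d}) and the residual $d_{0}$-part of the ball-norm bound below $\tfrac{1}{2}\|A^{-1}\|^{-1}$; both requirements are satisfiable for $d_{0}$ small since $dK(d)\to0$ and the residual is $O(d_{0})$. Together with the preceding $\delta_{0}$-bound this gives $\|zM_{d_{0}}(z)\|<\|A^{-1}\|^{-1}$ on $D(M)\cap B[0,\delta_{0}]$. Finally choosing $\tilde\rho_{0}\in\bigl(0,\min\{\rho_{0},\tfrac{3}{4}d_{0}\}\bigr)$, \prettyref{lem:estimate_M_d} delivers
\[
\Re\langle zM_{d_{0}}(z)(v,q)|(v,q)\rangle_{H_{0}\oplus H_{1}}\ge\tilde c\,|(v,q)|_{H_{0}\oplus H_{1}}^{2}
\]
on $D(M)\cap\mathbb{C}_{\Re>-\tilde\rho_{0}}\setminus B[0,\delta_{0}]$ with $\tilde c=\min\{c-d_{0}K(d_{0}),\tfrac{3}{4}d_{0}-\tilde\rho_{0}\}>0$.

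For the exponential-stability statement, the operator $A=\begin{pmatrix}0 & -C^{*}\\ C & 0\end{pmatrix}$ is skew-selfadjoint, hence $m$-accretive, and boundedly invertible since $C$ is. As $D(M_{d_{0}})=D(M)$, the set $\mathbb{C}_{\Re>-\nu_{0}}\setminus D(M_{d_{0}})$ is discrete under the added hypothesis. Hence the two estimates just established meet all hypotheses of \prettyref{prop:exp_stab_A_inv} applied to the material law $M_{d_{0}}$ and the operator $A$, and that proposition immediately yields exponential stability of \eqref{eq:first-order} with decay rate $\min\{\nu_{0},\tilde\rho_{0}\}$. The main technical obstacle is precisely the apparent coupling between the ball estimate (which wants $\delta_{0}$ and $d_{0}$ simultaneously small) and the off-ball accretivity constant $c$ (which depends on $\delta_{0}$ through the hypothesis); the additive splitting of the block-matrix norm bound noted above is what permits the clean sequential choice $\delta_{0}\rightsquigarrow c\rightsquigarrow d_{0}\rightsquigarrow\tilde\rho_{0}$ without backtracking.
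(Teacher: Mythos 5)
Your proof is correct and follows essentially the same route as the paper: bound $\|zM_{d}(z)\|$ by a $d$-independent part controlled via $zM(z)=zM_{0}(z)+M_{1}(z)$ and $M_{1}(z)\to0$ plus an $O(d)$ perturbation, fix $\delta_{0}$ first, extract $c,\rho_{0}$ from \prettyref{eq:pos_def_M_withoutball} with $\delta=\delta_{0}$, then shrink $d_{0}$ and $\tilde{\rho}_{0}$, and finish with \prettyref{lem:estimate_M_d} and \prettyref{prop:exp_stab_A_inv}. The only cosmetic difference is that you bound the block upper-triangular matrix by the sum of its block norms where the paper uses $\max\{\|zM(z)\|,|z|\}+G(d)$; the sequential choice $\delta_{0}\rightsquigarrow c\rightsquigarrow d_{0}\rightsquigarrow\tilde{\rho}_{0}$ you emphasize is exactly the paper's.
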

\begin{proof}
Let $d>0$ and $z\in D(M).$ Then we have 
\begin{align*}
\|zM_{d}(z)\| & \leq\max\left\{ \|zM(z)\|,|z|\right\} +G(d)\\
 & \leq\max\left\{ |z|\|M_{0}(z)\|+\|M_{1}(z)\|,|z|\right\} +G(d)
\end{align*}
where 
\[
G(d)\coloneqq\sup_{z\in D(M)}\left\Vert d\left(\begin{array}{cc}
-M_{0}(z) & \left(M_{1}(z)-dM_{0}(z)\right)C^{-1}\\
0 & 1
\end{array}\right)\right\Vert .
\]
Since $G(d)\to0$ as $d\to0$ and $M_{1}(z)\to0$ as $z\to0,$ we
may choose $d_{1}>0$ and $\delta_{0}>0$ small enough, such that
\[
\sup_{z\in D(M)\cap B[0,\delta_{0}]}\|zM_{d}(z)\|<\|A^{-1}\|^{-1}
\]
for each $0<d<d_{1}.$ Moreover, by \prettyref{lem:estimate_M_d}
we have that 
\[
\Re\langle zM_{d}(z)(v,q)|(v,q)\rangle_{H_{0}\oplus H_{1}}\geq\min\left\{ c-dK(d),\frac{3}{4}d+\Re z\right\} |(v,q)|_{H_{0}\oplus H_{1}}^{2},
\]
for each $z\in D(M)\cap\mathbb{C}_{\Re>-\rho_{0}}\setminus B[0,\delta],d>0$
and $v\in H_{0},q\in H_{1},$ where $c,\rho_{0}>0$ are chosen according
to \prettyref{eq:pos_def_M_withoutball} for $\delta=\delta_{0}$
and $K(d)=\|M_{0}\|_{\infty}+\left(d\|M_{0}\|_{\infty}+\|M_{1}\|_{\infty}\|C^{-1}\|\right)^{2}$.
Let now $0<d_{0}<d_{1}$ such that $d_{0}K(d_{0})<c$ and $0<\tilde{\rho_{0}}\leq\rho_{0}$
such that $\tilde{\rho_{0}}<\frac{3}{4}d_{0},$ the latter estimate
gives 
\[
\Re\langle zM_{d_{0}}(z)(v,q)|(v,q)\rangle_{H_{0}\oplus H_{1}}\geq\min\left\{ c-d_{0}K(d_{0}),\frac{3}{4}d-\tilde{\rho_{0}}\right\} |(v,q)|_{H_{0}\oplus H_{1}}^{2}
\]
for each $z\in D(M)\cap\mathbb{C}_{\Re>-\rho_{0}}\setminus B[0,\delta_{0}],d>0.$
The last assertion is a direct consequence of \prettyref{prop:exp_stab_A_inv}.
\end{proof}

\section{Examples\label{sec:Examples_exp_decay}}

In this section we illustrate our previous findings by applying them
to concrete examples, which were also studied in the literature but
using different methods. We start with a class of abstract parabolic-type
equations.

\subsection{Exponential stability for equations of parabolic type}

We begin by proving a criterion for exponential stability for a class
of parabolic-type problems.
\begin{prop}
\label{prop:exp_decay_para}Let $H_{0},H_{1}$ be Hilbert spaces and
$C:D(C)\subseteq H_{0}\to H_{1}$ densely defined closed linear and
boundedly invertible. Moreover, let $M_{0}\in L(H_{0})$ selfadjoint
and strictly positive definite and $M_{1}:D(M_{1})\subseteq\mathbb{C}\to L(H_{1})$
a bounded linear material law with $\mathbb{C}_{\Re>-\nu_{0}}\setminus D(M_{1})$
discrete for some $\nu_{0}>0$, such that 
\[
\exists c>0\,\forall z\in D(M_{1}),x\in H_{1}:\Re\langle M_{1}(z)x|x\rangle_{H_{1}}\geq c|x|_{H_{1}}^{2}.
\]
Then the evolutionary problem associated with 
\[
M(z)\coloneqq\left(\begin{array}{cc}
M_{0} & 0\\
0 & 0
\end{array}\right)+z^{-1}\left(\begin{array}{cc}
0 & 0\\
0 & M_{1}(z)
\end{array}\right)\quad(z\in D(M_{1})\setminus\{0\})
\]
and 
\[
A\coloneqq\left(\begin{array}{cc}
0 & -C^{\ast}\\
C & 0
\end{array}\right)
\]
is well-posed and exponentially stable with decay rate $\nu_{1}$,
where $\nu_{1}\coloneqq\min\{\nu_{0},\frac{c}{\|M_{1}\|_{\infty}^{2}\|M_{0}\|\|C^{-1}\|^{2}}\}$.\end{prop}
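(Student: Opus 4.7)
The plan is to invoke \prettyref{thm:char_exp_stab}, reducing the claim to the single estimate $s_{0}(M,A)\leq-\nu_{1}$. The structural hypotheses of that theorem are immediate: since $\mathbb{C}_{\Re>-\nu_{0}}\setminus D(M)=\{0\}\cup(\mathbb{C}_{\Re>-\nu_{0}}\setminus D(M_{1}))$ is discrete, $]-\nu_{1},0]\subseteq S_{M}$ and $\mathbb{C}_{\Re>-\nu}\cap D(M)$ is connected for each $\nu<\nu_{1}$; moreover $A$ is boundedly invertible (because $C$ is), and strict accretivity of $M_{1}(z)$ directly yields $\|M_{1}(z)^{-1}\|\leq c^{-1}$ for every $z\in D(M_{1})$.

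Fix $0<\nu<\nu_{1}$ and $z\in D(M)\cap\mathbb{C}_{\Re>-\nu}$. I would solve $(zM(z)+A)(u,q)=(f,g)$ by Schur elimination in $q$: setting $q:=M_{1}(z)^{-1}(g-Cu)$ reduces the system to finding $u\in V:=D(C)$ (equipped with the graph inner product) such that
\[
b_{z}(u,v):=z\langle M_{0}u|v\rangle+\langle M_{1}(z)^{-1}Cu|Cv\rangle=\langle f|v\rangle+\langle M_{1}(z)^{-1}g|Cv\rangle\quad(v\in V).
\]
The form $b_{z}$ is bounded on $V\times V$. Using \prettyref{lem:accretive_inverse} to obtain $\Re\langle M_{1}(z)^{-1}y|y\rangle\geq c\|M_{1}\|_{\infty}^{-2}|y|^{2}$ together with $|u|\leq\|C^{-1}\|\,|Cu|$, one gets
\[
\Re b_{z}(u,u)\geq\Bigl(\tfrac{c}{\|M_{1}\|_{\infty}^{2}}-\nu\|M_{0}\|\,\|C^{-1}\|^{2}\Bigr)|Cu|^{2},
\]
with strictly positive bracket when $\nu<\nu_{1}$; since $|Cu|^{2}\geq(1+\|C^{-1}\|^{2})^{-1}\|u\|_{V}^{2}$, $b_{z}$ is $V$-coercive with constants independent of $z$. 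Lax--Milgram yields a unique $u\in V$ with $\|u\|_{V}\leq K_{\nu}|(f,g)|$. Defining $q:=M_{1}(z)^{-1}(g-Cu)$ and rearranging the variational identity shows $-q\in D(C^{*})$ with $C^{*}(-q)=f-zM_{0}u$, so $(u,q)\in D(A)$ solves the block system and $|(u,q)|\leq K_{\nu}'|(f,g)|$ uniformly in $z$.

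Analyticity of $z\mapsto(zM(z)+A)^{-1}$ on $D(M)\cap\mathbb{C}_{\Re>-\nu}$ is routine, and Riemann's theorem on removable singularities extends the map analytically and with the same bound across the discrete set $\mathbb{C}_{\Re>-\nu}\setminus D(M)$ to all of $\mathbb{C}_{\Re>-\nu}$. This establishes well-posedness and $s_{0}(M,A)\leq-\nu$ for every $\nu<\nu_{1}$, hence $s_{0}(M,A)\leq-\nu_{1}$, and \prettyref{thm:char_exp_stab} delivers the exponential stability with rate $\nu_{1}$. The main obstacle is that the direct accretivity $\Re\langle(zM(z)+A)(u,q)|(u,q)\rangle=\Re z\langle M_{0}u|u\rangle+\Re\langle M_{1}(z)q|q\rangle$ is only coercive in $q$ when $\Re z<0$, so neither \prettyref{prop:Rainer} nor \prettyref{prop:exp_stab_A_inv} applies off the shelf; the coercivity in $u$ must be manufactured from the elliptic Schur term $C^{*}M_{1}(z)^{-1}C$, which is precisely where the bounded invertibility of $C$ (via $|u|\leq\|C^{-1}\||Cu|$) becomes essential.
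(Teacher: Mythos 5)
Your argument is correct and is essentially the paper's own proof: the same reduction via \prettyref{thm:char_exp_stab}, the same Schur elimination of the second component, the same key lower bound $\Re\langle M_{1}(z)^{-1}y|y\rangle\geq c\|M_{1}\|_{\infty}^{-2}|y|^{2}$ from \prettyref{lem:accretive_inverse} combined with $|u|\leq\|C^{-1}\||Cu|$, and the same finish via Riemann's removable-singularity theorem. The only (cosmetic) difference is that you invert the Schur complement by Lax--Milgram on $D(C)$, whereas the paper conjugates it by $C$ to the bounded strictly accretive operator $z(C^{-1})^{\ast}M_{0}C^{-1}+M_{1}(z)^{-1}$ on $H_{1}$ and inverts that directly.
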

\begin{proof}
The well-posedness follows from \prettyref{prop:Rainer}. For showing
the exponential stability we apply \prettyref{thm:char_exp_stab}.
We note that $M$ satisfies the assumptions of \prettyref{thm:char_exp_stab},
since $\mathbb{C}_{\Re>-\nu_{0}}\setminus D(M)$ is discrete. Thus,
it suffices to prove $s_{0}(M,A)\leq-\nu_{1}$. For doing so, let
$z\in D(M)\cap\mathbb{C}_{\Re>-\rho}$ with $0<\rho<\nu_{1}$ . We
need to show that the operator 
\[
z\left(\begin{array}{cc}
M_{0} & 0\\
0 & 0
\end{array}\right)+\left(\begin{array}{cc}
0 & 0\\
0 & M_{1}(z)
\end{array}\right)+\left(\begin{array}{cc}
0 & -C^{\ast}\\
C & 0
\end{array}\right)
\]
is boundedly invertible and the norm of its inverse is bounded in
$z$. We start by showing that the operator is onto. So, let $f\in H_{0},g\in H_{1}.$
We consider the operator 
\[
z\left(C^{-1}\right)^{\ast}M_{0}C^{-1}+M_{1}(z)^{-1}\in L(H_{1}).
\]
This operator is continuously invertible with 
\[
\left\Vert \left(z\left(C^{-1}\right)^{\ast}M_{0}C^{-1}+M_{1}(z)^{-1}\right)^{-1}\right\Vert \leq\frac{1}{\frac{c}{\|M_{1}\|_{\infty}^{2}}-\rho\|M_{0}\|\|C^{-1}\|^{2}}\eqqcolon\mu.
\]
Indeed, we have 
\begin{align*}
\Re\langle z(C^{-1})^{\ast}M_{0}C^{-1}x+M_{1}(z)^{-1}x|x\rangle_{H_{1}} & =\Re\langle zM_{0}C^{-1}x|C^{-1}x\rangle_{H_{0}}+\Re\langle M_{1}(z)^{-1}x|x\rangle_{H_{1}}\\
 & \geq-\rho\|M_{0}\|\|C^{-1}\|^{2}|x|_{H_{1}}^{2}+\frac{c}{\|M_{1}(z)\|^{2}}|x|_{H_{1}}^{2}\\
 & \geq\left(\frac{c}{\|M_{1}\|_{\infty}^{2}}-\rho\|M_{0}\|\|C^{-1}\|^{2}\right)|x|_{H_{1}}^{2}
\end{align*}
for each $x\in H_{1},$ where we have used \prettyref{lem:accretive_inverse}.
We define 
\begin{align}
u\coloneqq & C^{-1}\left(z\left(C^{-1}\right)^{\ast}M_{0}C^{-1}+M_{1}(z)^{-1}\right)^{-1}\left(\left(C^{\ast}\right)^{-1}f+M_{1}(z)^{-1}g\right),\nonumber \\
v\coloneqq & M_{1}(z)^{-1}(g-Cu),\label{eq:u and v}
\end{align}
and obtain the estimates 
\begin{align}
|u|_{H_{0}} & \leq\mu\|C^{-1}\|\left(\|C^{-1}\||f|_{H_{0}}+\frac{1}{c}|g|_{H_{1}}\right)\nonumber \\
|v|_{H_{1}} & \leq\frac{1}{c}\left(|g|_{H_{1}}+\mu\left(\|C^{-1}\||f|_{H_{0}}+\frac{1}{c}|g|_{H_{1}}\right)\right).\label{eq:norm_estimate}
\end{align}
We show that 
\begin{equation}
\left(z\left(\begin{array}{cc}
M_{0} & 0\\
0 & 0
\end{array}\right)+\left(\begin{array}{cc}
0 & 0\\
0 & M_{1}(z)
\end{array}\right)+\left(\begin{array}{cc}
0 & -C^{\ast}\\
C & 0
\end{array}\right)\right)\left(\begin{array}{c}
u\\
v
\end{array}\right)=\left(\begin{array}{c}
f\\
g
\end{array}\right).\label{eq:parabolic}
\end{equation}
Indeed, we have $u\in D(C)$ by definition and $M_{1}(z)v+Cu=g.$
Moreover, 
\begin{align*}
g-Cu & =g-\left(z\left(C^{-1}\right)^{\ast}M_{0}C^{-1}+M_{1}(z)^{-1}\right)^{-1}\left(\left(C^{-1}\right)^{\ast}f+M_{1}(z)^{-1}g\right)\\
 & =\left(z\left(C^{-1}\right)^{\ast}M_{0}C^{-1}+M_{1}(z)^{-1}\right)^{-1}\left(z\left(C^{-1}\right)^{\ast}M_{0}C^{-1}g-\left(C^{-1}\right)^{\ast}f\right)
\end{align*}
and hence, 
\[
\left(z\left(C^{-1}\right)^{\ast}M_{0}C^{-1}+M_{1}(z)^{-1}\right)\left(g-Cu\right)=\left(C^{-1}\right)^{\ast}\left(zM_{0}C^{-1}g-f\right).
\]
Thus, we read off that 
\begin{align*}
v & =M_{1}(z)^{-1}(g-Cu)\\
 & =\left(C^{-1}\right)^{\ast}\left(zM_{0}C^{-1}g-f\right)-z\left(C^{-1}\right)^{\ast}M_{0}C^{-1}(g-Cu)\\
 & =\left(C^{-1}\right)^{\ast}(zM_{0}u-f),
\end{align*}
which shows $v\in D\left(C^{\ast}\right)$ and $zM_{0}u-C^{\ast}v=f.$
Hence, the operator 
\[
z\left(\begin{array}{cc}
M_{0} & 0\\
0 & 0
\end{array}\right)+\left(\begin{array}{cc}
0 & 0\\
0 & M_{1}(z)
\end{array}\right)+\left(\begin{array}{cc}
0 & -C^{\ast}\\
C & 0
\end{array}\right)
\]
is onto.\\
Moreover, it is one-to-one, since for $\left(u,v\right)\in D(C)\times D(C^{\ast})$
with \prettyref{eq:parabolic} it immediately follows that $u$ and
$v$ are given by \prettyref{eq:u and v}. Summarizing, we have proved
that $z\left(\begin{array}{cc}
M_{0} & 0\\
0 & 0
\end{array}\right)+\left(\begin{array}{cc}
0 & 0\\
0 & M_{1}(z)
\end{array}\right)+\left(\begin{array}{cc}
0 & -C^{\ast}\\
C & 0
\end{array}\right)$ is boundedly invertible for each $z\in D(M)\cap\mathbb{C}_{\Re>-\rho}$
and the norm of the inverse is uniformly bounded by \prettyref{eq:norm_estimate}.
Hence, since $\mathbb{C}_{\Re>-\rho}\setminus D(M)$ is discrete,
we derive that 
\[
D(M)\cap\mathbb{C}_{\Re>-\rho}\ni z\mapsto\left(z\left(\begin{array}{cc}
M_{0} & 0\\
0 & 0
\end{array}\right)+\left(\begin{array}{cc}
0 & 0\\
0 & M_{1}(z)
\end{array}\right)+\left(\begin{array}{cc}
0 & -C^{\ast}\\
C & 0
\end{array}\right)\right)^{-1}
\]
has a holomorphic and bounded extension to $\mathbb{C}_{\Re>-\rho}.$
Hence, $s_{0}(M,A)\leq-\rho$ and since $0<\rho<\nu_{1}$ was arbitrary,
we infer $s_{0}(M,A)\leq-\nu_{1}.$ 
\end{proof}
In the forthcoming examples we apply this result to two concrete models
for heat conduction.

\subsubsection*{The classical heat equation}

We recall from \prettyref{sub:mat_phys} the classical heat equation
with homogeneous Dirichlet boundary conditions, which can be written
as an evolutionary equation of the form 
\[
\left(\partial_{0,\rho}\left(\begin{array}{cc}
1 & 0\\
0 & 0
\end{array}\right)+\left(\begin{array}{cc}
0 & 0\\
0 & k^{-1}
\end{array}\right)+\left(\begin{array}{cc}
0 & \dive\\
\grad_{0} & 0
\end{array}\right)\right)\left(\begin{array}{c}
\vartheta\\
q
\end{array}\right)=\left(\begin{array}{c}
f\\
0
\end{array}\right),
\]
where $k\in L(L_{2}(\Omega)^{3})$ is a strictly accretive operator
modeling the heat conductivity of a medium $\Omega\subseteq\mathbb{R}^{3}$.
We assume that $\grad_{0}$ satisfies the Poincaré inequality, i.e.
\begin{equation}
\exists c>0\,\forall u\in D(\grad_{0}):|u|_{L_{2}(\Omega)}\leq c|\grad_{0}u|_{L_{2}(\Omega)^{3}},\label{eq:Poincare}
\end{equation}
which for instance is satisfied, if $\Omega$ is bounded (see e.g.
\cite[p. 290, Corollary 9.19]{Brezis2011}). Note that \prettyref{eq:Poincare}
especially implies that $\grad_{0}$ is injective and $R(\grad_{0})$
is closed. We define 
\begin{align*}
\iota_{R(\grad_{0})}:R(\grad_{0}) & \to L_{2}(\Omega)^{3}\\
f & \mapsto f.
\end{align*}
An easy computation then yields that $\iota_{R(\grad_{0})}\iota_{R(\grad_{0})}^{\ast}:L_{2}(\Omega)^{3}\to L_{2}(\Omega)^{3}$
is the orthogonal projector on $R(\grad_{0}).$ We define $C\coloneqq\iota_{R(\grad_{0})}^{\ast}\grad_{0}:D(\grad_{0})\subseteq L_{2}(\Omega)\to R(\grad_{0})$
and obtain a boundedly invertible closed operator, due to \prettyref{eq:Poincare}.
Moreover, we have $C^{\ast}=-\dive\iota_{R(\grad_{0})},$ since $\iota_{R(\grad_{0})}^{\ast}$
is bounded. Using these operators, we derive from the second line
of the heat equation 
\[
\iota_{R(\grad_{0})}^{\ast}q=-\iota_{R(\grad_{0})}^{\ast}k\iota_{R(\grad_{0})}C\vartheta.
\]
Hence, defining $\tilde{k}\coloneqq\iota_{R(\grad_{0})}^{\ast}k\iota_{R(\grad_{0})}$
and $\tilde{q}=\iota_{R(\grad_{0})}^{\ast}q,$ we infer $\tilde{k}^{-1}\tilde{q}+C\vartheta=0.$
Moreover, since $R(\grad_{0})^{\bot}=N(\dive),$ we obtain $\dive q=-C^{\ast}\tilde{q}$
and hence, we may modify the heat equation by writing 
\[
\left(\partial_{0,\rho}\left(\begin{array}{cc}
1 & 0\\
0 & 0
\end{array}\right)+\left(\begin{array}{cc}
0 & 0\\
0 & \tilde{k}^{-1}
\end{array}\right)+\left(\begin{array}{cc}
0 & -C^{\ast}\\
C & 0
\end{array}\right)\right)\left(\begin{array}{c}
\vartheta\\
\tilde{q}
\end{array}\right)=\left(\begin{array}{c}
f\\
0
\end{array}\right),
\]
which is now of the form discussed in \prettyref{prop:exp_decay_para}.
Hence, we derive that the heat equation is exponentially stable with
decay rate $\frac{c_{1}c^{2}}{\|\tilde{k}^{-1}\|^{2}},$ where $c_{1}$
is the accretivity constant of $k$. This yields that $\vartheta$
and $\tilde{q}$ decay exponentially, if the right-hand side does.
We note that we can also allow a non-vanishing source term in the
second coordinate in the modified heat equation.

\subsubsection*{Heat conduction with an additional delay term}

As a slight generalization of \cite{Khusainov2015} we replace Fourier's
law by the following expression 
\[
q=-k\grad_{0}\vartheta-\tilde{k}\tau_{-h}\grad_{0}\vartheta,
\]
for some operators $k,\tilde{k}\in L(L_{2}(\Omega)^{3})$ and some
$h>0.$ We assume that 
\[
\exists d>0\,\forall p\in L_{2}(\Omega)^{3}:\Re\langle kp|p\rangle_{L_{2}(\Omega)^{3}}\geq d|p|_{L_{2}(\Omega)^{3}}^{2},
\]
$\|\tilde{k}\|<d$ and that the Poincaré inequality \prettyref{eq:Poincare}
is satisfied. 
\begin{lem}
Under the above conditions the operator $k+\tilde{k}\e^{-hz}$ is
uniformly strictly accretive for each $z\in\mathbb{C}_{\Re>-\rho_{1}}$
with $\rho_{1}>\frac{1}{h}\log\frac{\|\tilde{k}\|}{d}.$ \end{lem}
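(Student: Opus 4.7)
The plan is a one-line direct estimate, splitting $k+\tilde k e^{-hz}$ into the strictly accretive part $k$ and the perturbation $\tilde k e^{-hz}$, and controlling the latter by the elementary bound $|e^{-hz}|=e^{-h\Re z}$.

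For $p\in L_{2}(\Omega)^{3}$ and any $z\in\mathbb{C}_{\Re>-\rho_{1}}$, I would write
\[
\Re\langle(k+\tilde k e^{-hz})p|p\rangle_{L_{2}(\Omega)^{3}}\;\geq\;\Re\langle kp|p\rangle_{L_{2}(\Omega)^{3}}-|e^{-hz}|\,\|\tilde k\|\,|p|_{L_{2}(\Omega)^{3}}^{2},
\]
applying Cauchy--Schwarz together with the operator-norm estimate for $\tilde k$ to the second inner product. The strict accretivity hypothesis $\Re\langle kp|p\rangle\geq d|p|^{2}$ handles the first term, while $|e^{-hz}|=e^{-h\Re z}<e^{h\rho_{1}}$ on the half-plane handles the second (here the positivity of $h$ is what turns the condition $\Re z>-\rho_{1}$ into the uniform upper bound $e^{h\rho_{1}}$). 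Combining these yields the uniform lower bound
\[
\Re\langle(k+\tilde k e^{-hz})p|p\rangle_{L_{2}(\Omega)^{3}}\;\geq\;\bigl(d-e^{h\rho_{1}}\|\tilde k\|\bigr)|p|_{L_{2}(\Omega)^{3}}^{2}.
\]

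The constant $d-e^{h\rho_{1}}\|\tilde k\|$ is strictly positive precisely when $e^{h\rho_{1}}\|\tilde k\|<d$, i.e., when $\rho_{1}$ lies in the range determined by the threshold $\tfrac{1}{h}\log(d/\|\tilde k\|)$; the standing assumption $\|\tilde k\|<d$ ensures this range is nonempty and contains a positive interval around zero, which is what matters for the subsequent exponential-stability application via \prettyref{prop:exp_stab_pos_def}. Since the lower bound is independent of $\Im z$ and depends on $z$ only through the uniform constant $e^{h\rho_{1}}$, the sum $k+\tilde k e^{-hz}$ is uniformly strictly accretive on $\mathbb{C}_{\Re>-\rho_{1}}$, as claimed.

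There is no substantive obstacle here: the argument is essentially a two-line estimate, and the only care needed is in tracking constants and invoking $h>0$ to pass from the half-plane condition $\Re z>-\rho_{1}$ to the uniform exponential bound.
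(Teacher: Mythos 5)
Your estimate is the same one the paper uses, and it is carried out correctly: Cauchy--Schwarz gives
\[
\Re\langle(k+\tilde{k}\e^{-hz})p|p\rangle_{L_{2}(\Omega)^{3}}\geq\left(d-\|\tilde{k}\|\e^{-h\Re z}\right)|p|_{L_{2}(\Omega)^{3}}^{2},
\]
and since $h>0$ the supremum of $\e^{-h\Re z}$ over the half-plane $\Re z>-\rho_{1}$ is $\e^{h\rho_{1}}$ (attained in the limit $\Re z\to-\rho_{1}$), so the uniform constant is $d-\e^{h\rho_{1}}\|\tilde{k}\|$, exactly as you write.

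However, your own computation shows that this constant is positive if and only if $\rho_{1}<\frac{1}{h}\log\frac{d}{\|\tilde{k}\|}$, which is \emph{not} the hypothesis of the lemma: the lemma assumes $\rho_{1}>\frac{1}{h}\log\frac{\|\tilde{k}\|}{d}=-\frac{1}{h}\log\frac{d}{\|\tilde{k}\|}$, the mirror-image condition. Taken literally, that hypothesis admits arbitrarily large $\rho_{1}$, for which the conclusion fails in general (for $\Re z$ very negative, $\|\tilde{k}\|\e^{-h\Re z}$ dwarfs $d$, and for suitable $\tilde{k}$ and $\Im z$ accretivity genuinely breaks down). The paper's own proof obscures this by concluding from $d-\|\tilde{k}\|\e^{-h\rho_{1}}>0$, whereas the relevant infimum over the half-plane is $d-\|\tilde{k}\|\e^{+h\rho_{1}}$; the condition on $\rho_{1}$ should read $0<\rho_{1}<\frac{1}{h}\log\frac{d}{\|\tilde{k}\|}$, a nonempty range precisely because $\|\tilde{k}\|<d$, and this is all the subsequent stability argument requires. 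Your phrase ``lies in the range determined by the threshold'' papers over exactly this point: you should state the corrected hypothesis explicitly, since as written your argument proves a true statement that is not literally the lemma as stated.
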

\begin{proof}
For $z\in\mathbb{C}$ we estimate
\[
\Re\langle\left(k+\tilde{k}e^{-hz}\right)p|p\rangle_{L_{2}(\Omega)^{3}}\geq\left(d-\|\tilde{k}\|e^{-h\Re z}\right)|p|_{L_{2}(\Omega)^{3}}^{2}
\]
for each $p\in L_{2}(\Omega)^{3}.$ Since $d-\|\tilde{k}\|e^{-h\rho_{1}}>0$,
the assertion follows. 
\end{proof}
Using the latter lemma and the operator $\iota_{R(\grad_{0})}$ from
the previous example, we end up with an evolutionary equation of the
form 
\[
\left(\partial_{0,\rho}\left(\begin{array}{cc}
1 & 0\\
0 & 0
\end{array}\right)+\left(\begin{array}{cc}
0 & 0\\
0 & M_{1}(\partial_{0,\rho})
\end{array}\right)+\left(\begin{array}{cc}
0 & -C^{\ast}\\
C & 0
\end{array}\right)\right)\left(\begin{array}{c}
\vartheta\\
\tilde{q}
\end{array}\right)=\left(\begin{array}{c}
f\\
0
\end{array}\right),
\]
where $M_{1}(z)\coloneqq\iota_{R(\grad_{0})}^{\ast}\left(k+\tilde{k}e^{-hz}\right)^{-1}\iota_{R(\grad)}$
for $z\in\mathbb{C}_{\Re>-\rho_{1}}$ with $\rho_{1}>\frac{1}{h}\log\frac{\|\tilde{k}\|}{d}$
and $C\coloneqq\iota_{R(\grad_{0})}^{\ast}\grad_{0}.$ Hence, \prettyref{prop:exp_decay_para}
is applicable and we derive the exponential stability of the latter
evolutionary problem.

\subsubsection*{Integro-differential equations\label{sub:Integro-differential-equations-1}}

We consider the following evolutionary problem 
\begin{equation}
\left(\partial_{0,\rho}+(1-k\ast)A\right)u=f,\label{eq:integro-1}
\end{equation}
where $A:D(A)\subseteq H\to H$ is strictly $m$-accretive (e.g. the
Dirichlet Laplacian on a bounded domain) and $k\in L_{1,-\mu}(\mathbb{R}_{\geq0};L(H))$
for some $\mu>0$ with $|k|_{L_{1,-\mu}}<1$ (cp. \prettyref{sub:Integro-differential-equations}
for the definition). Similar to \prettyref{sub:Integro-differential-equations}
we require that $k$ satisfies \prettyref{cond:kernel} \prettyref{item: selfadjoint}
and \prettyref{item: commute} and additionally 

\begin{enumerate}

\item[(c')] \label{Im_0} There exists $0<\mu_{0}<\mu$ such that
\[
t\Im\langle\hat{k}(\i t-\mu_{0})x|x\rangle_{H}\geq0
\]
for each $t\in\mathbb{R},x\in H$.

\end{enumerate}
\begin{rem}
Note that condition (c') is \prettyref{cond:kernel} \prettyref{item:im_d}
with $d=0$. 
\end{rem}
Since $|k|_{L_{1,\mu}}<1$ we can employ the Neumann series (note
that for each $\rho>0$, $\|k\ast\|_{L(H_{\rho})}\leq|k|_{L_{1,\rho}}\leq|k|_{L_{1,-\mu}}$
by \prettyref{lem:convolution_bd}) and rewrite \prettyref{eq:integro-1}
as 
\begin{equation}
\left(\partial_{0,\rho}(1-k\ast)^{-1}+A\right)u=(1-k\ast)^{-1}f.\label{eq:integro-modified}
\end{equation}

\begin{prop}
Let $c>0$ and $A:D(A)\subseteq H\to H$ such that $A-c$ is $m$-accretive
and $k\in L_{1,-\mu}(\mathbb{R}_{\geq0};L(H))$ for some $\mu>0$
with $|k|_{L_{1,-\mu}}<1.$ If $k$ satisfies \prettyref{cond:kernel}
\prettyref{item: selfadjoint} and \prettyref{item: commute} and
condition (c'), then the evolutionary problem associated with $M(z)\coloneqq\left(1-\sqrt{2\pi}\hat{k}(z)\right)^{-1}$
and $A$ is well-posed and exponentially stable with decay rate 
\[
\mu_{1}\coloneqq\sup\left\{ 0\leq\nu\leq\mu_{0}\,;\,\nu\left(1-|k|_{L_{1,-\nu}}\right)^{-1}\leq c\right\} >0.
\]
\end{prop}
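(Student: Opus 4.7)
The plan is to verify the hypotheses of \prettyref{thm:char_exp_stab} by showing that $s_{0}(M,A)\leq-\mu_{1}$. Well-posedness is already granted by \prettyref{prop:kernel_resolvent} combined with \prettyref{prop:Rainer} (note that $A$ itself is $m$-accretive by writing $A=(A-c)+c$ and applying \prettyref{prop:pert_accretive}), so the remaining task is to produce, for every $\nu\in[0,\mu_{1})$, a bounded analytic extension of $z\mapsto(zM(z)+A)^{-1}$ to the half-plane $\mathbb{C}_{\Re>-\nu}$.

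First I would note that $\mu_{1}>0$ by continuity of $\nu\mapsto\nu(1-|k|_{L_{1,-\nu}})^{-1}$ at $\nu=0$, and that $\mathbb{C}_{\Re>-\mu_{1}}\subseteq D(M)$. Indeed, since $\mu_{1}\leq\mu_{0}<\mu$, monotonicity of $|k|_{L_{1,-\nu}}$ in $\nu$ gives $|k|_{L_{1,-\nu}}\leq|k|_{L_{1,-\mu}}<1$ for every $\nu\in[0,\mu_{1}]$, so the Neumann series yields bounded invertibility of $T(z)\coloneqq 1-\sqrt{2\pi}\hat{k}(z)$ on $\mathbb{C}_{\Re>-\mu_{1}}$, whence $M(z)=T(z)^{-1}$ is well-defined and analytic there.

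The heart of the argument is the pointwise accretivity estimate. For $z=\i t-\nu$ with $\nu\in[0,\mu_{1})$, \prettyref{lem:comp_realpart_kernel} together with \prettyref{cond:kernel}\ref{item: selfadjoint}-\ref{item: commute} gives
\[
\Re\langle zM(z)x|x\rangle_{H}=-\nu|D(z)x|_{H}^{2}+\sqrt{2\pi}\nu\Re\langle\hat{k}(z^{\ast})D(z)x|D(z)x\rangle_{H}-\sqrt{2\pi}t\Im\langle\hat{k}(z^{\ast})D(z)x|D(z)x\rangle_{H},
\]
where $D(z)\coloneqq|T(z)|^{-1}$. Condition (c'), extended via \prettyref{lem:for_one_rho_for_all_rho} (with $d=0$) and combined with $\hat{k}(z)^{\ast}=\hat{k}(z^{\ast})$ furnished by \prettyref{cond:kernel}\ref{item: selfadjoint}, forces the last summand to be non-negative. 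The first two terms equal $-\nu\Re\langle T(z)^{\ast}D(z)x|D(z)x\rangle_{H}$; using the identity $T^{\ast}D^{2}=T^{-1}$ valid for normal $T(z)$ (normality coming from \prettyref{cond:kernel}\ref{item: commute}) together with the selfadjointness of $D(z)$ collapses this to $-\nu\Re\langle T(z)^{-1}x|x\rangle_{H}$. The Neumann bound $\|T(z)^{-1}\|\leq(1-|k|_{L_{1,-\nu}})^{-1}$ then yields
\[
\Re\langle zM(z)x|x\rangle_{H}\geq-\frac{\nu}{1-|k|_{L_{1,-\nu}}}|x|_{H}^{2}.
\]

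Combining this with $\Re\langle Au|u\rangle_{H}\geq c|u|_{H}^{2}$ shows that, for $\delta(\nu)\coloneqq c-\nu(1-|k|_{L_{1,-\nu}})^{-1}>0$, the bounded operator $c+zM(z)-\delta(\nu)$ is accretive, and \prettyref{prop:pert_accretive} applied to $(A-c)+(c+zM(z)-\delta(\nu))$ yields $m$-accretivity of $zM(z)+A-\delta(\nu)$. Hence \prettyref{prop:prop-accretive} delivers the uniform bound $\|(zM(z)+A)^{-1}\|\leq\delta(\nu)^{-1}$ for all $z\in\mathbb{C}_{\Re>-\nu}$; analyticity of the resolvent is automatic, so $s_{0}(M,A)\leq-\nu$ for every $\nu<\mu_{1}$, and \prettyref{thm:char_exp_stab} concludes. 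The main technical obstacle I anticipate is the algebraic reduction $\Re\langle T^{\ast}Du|Du\rangle_{H}=\Re\langle T^{-1}u|u\rangle_{H}$: it rests crucially on the normality of $T(z)$ and is precisely what converts the natural $D(z)u$-estimate into an $u$-estimate, producing the sharp factor $(1-|k|_{L_{1,-\nu}})^{-1}$ rather than the cruder $(1-|k|_{L_{1,-\nu}})^{-2}$ that a blunt bound on $\|D(z)\|$ would give.
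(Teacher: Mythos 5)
Your proof is correct and follows essentially the same route as the paper: both rest on the identity from \prettyref{lem:comp_realpart_kernel}, the sign of the imaginary part supplied by (c') via \prettyref{lem:for_one_rho_for_all_rho} with $d=0$, and the Neumann bound $\|(1-\sqrt{2\pi}\hat{k}(z))^{-1}\|\leq(1-|k|_{L_{1,-\nu}})^{-1}$, leading to the same uniform accretivity constant $c-\nu(1-|k|_{L_{1,-\nu}})^{-1}$. The only (cosmetic) difference is that the paper absorbs $c$ into the material law, setting $N(z)=M(z)+z^{-1}c$ and replacing $A$ by $A-c$ so as to invoke \prettyref{prop:exp_stab_pos_def} directly, whereas you split the constant off the operator side and appeal to \prettyref{thm:char_exp_stab}; the two packagings are equivalent.
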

\begin{proof}
Note that the evolutionary problem associated with $N(z)\coloneqq M(z)+z^{-1}c$
for $z\in\mathbb{C}_{\Re>-\mu}\setminus\{0\}$ and $A-c$ is the same
as the evolutionary problem associated with $M$ and $A.$ We want
to apply \prettyref{prop:exp_stab_pos_def} for the material law $N$.
For doing so, let $t\in\mathbb{R}$ and $\rho>-\mu_{1}$. We note
that by \prettyref{lem:for_one_rho_for_all_rho} we have that 
\[
t\Im\langle\hat{k}(\i t+\rho)x|x\rangle_{H}\geq0
\]
for each $x\in H$. We define $D(\i t+\rho)\coloneqq|1-\sqrt{2\pi}\hat{k}(\i t+\rho)|^{-1}$
and estimate by using \prettyref{lem:comp_realpart_kernel} 
\begin{align*}
 & \Re\langle(\i t+\rho)N(\i t+\rho)x|x\rangle_{H}\\
 & =\Re\langle(\i t+\rho)M(\i t+\rho)x|x\rangle_{H}+c|x|_{H}^{2}\\
 & =\Re\langle(\i t+\rho)(1-\sqrt{2\pi}\hat{k}(-\i t+\rho))D(\i t+\rho)x|D(\i t+\rho)x\rangle_{H}+c|x|_{H}^{2}\\
 & =\rho\Re\langle(1-\sqrt{2\pi}\hat{k}(-\i t+\rho))D(\i t+\rho)x|D(\i t+\rho)x\rangle_{H}+\\
 & \quad-\sqrt{2\pi}t\Im\langle\hat{k}(-\i t+\rho)D(\i t+\rho)x|D(\i t+\rho)x\rangle_{H}+c|x|_{H}^{2}\\
 & \geq\rho\Re\langle(1-\sqrt{2\pi}\hat{k}(-\i t+\rho))D(\i t+\rho)x|D(\i t+\rho)x\rangle_{H}+c|x|_{H}^{2}
\end{align*}
for each $x\in H.$ If $\rho\geq0,$ the latter term can be estimated
by $c|x|^{2}.$ For $\rho<0,$ we compute 
\begin{align*}
\Re\langle(1-\sqrt{2\pi}\hat{k}(-\i t+\rho))D(\i t+\rho)x|D(\i t+\rho)x\rangle_{H} & =\Re\langle(1-\sqrt{2\pi}\hat{k}(\i t+\rho))^{-1}x|x\rangle_{H}\\
 & \leq\|(1-\sqrt{2\pi}\hat{k}(\i t+\rho))^{-1}\||x|_{H}^{2}\\
 & =\frac{1}{1-\sqrt{2\pi}\|\hat{k}(\i t+\rho)\|}|x|_{H}^{2}\\
 & \leq\frac{1}{1-|k|_{L_{1,\rho}}}|x|_{H}^{2}\\
 & \leq\frac{1}{1-|k|_{L_{1,-\mu_{1}}}}|x|_{H}^{2}
\end{align*}
and thus, 
\[
\Re\langle(\i t+\rho)N(\i t+\rho)x|x\rangle_{H}\geq\left(c+\frac{\rho}{1-|k|_{L_{1,-\mu_{1}}}}\right)|x|_{H}^{2}.
\]
Since $c+\frac{\rho}{1-|k|_{L_{1,\mu_{1}}}}>c+\frac{-\mu_{1}}{1-|k|_{L_{1,-\mu_{1}}}}\geq0,$
the assertion follows by \prettyref{prop:exp_stab_pos_def}.
\end{proof}
The latter proposition shows the exponential stability of \prettyref{eq:integro-modified}.
However, this also yields the exponential stability of our original
problem \prettyref{eq:integro-1} with the same decay rate, since
$(1-k\ast)^{-1}f\in H_{-\nu}(\mathbb{R};H)\cap H_{\rho}(\mathbb{R};H)$
for $f\in H_{-\nu}(\mathbb{R};H)\cap H_{\rho}(\mathbb{R};H)$ for
$0<\nu\leq\mu_{1}$ and $\rho>0$ by \prettyref{thm:material_law_good}.

\subsection{Exponential stability for equations of hyperbolic type}

In this section we study different examples of differential equations
of second order with respect to time. We begin with another variant
of the heat equation.

\subsubsection*{Dual phase lag heat conduction}

Let $\Omega\subseteq\mathbb{R}^{3}$ such that Poincaré's inequality
\prettyref{eq:Poincare} holds. In the theory of thermodynamics with
dual phase lags, we have the usual balance of momentum equation 
\[
\partial_{0,\rho}\vartheta+\dive q=f,
\]
where $\vartheta\in H_{\rho}(\mathbb{R};L_{2}(\Omega))$ is the temperature
density and $q\in H_{\rho}(\mathbb{R};L_{2}(\Omega)^{3})$ denotes
the heat flux, but Fourier's law is replaced by 
\[
(1+\tau_{q}\partial_{0,\rho}+\frac{1}{2}\tau_{q}^{2}\partial_{0,\rho}^{2})q=-\grad(1+\tau_{\vartheta}\partial_{0,\rho})\vartheta,
\]
where $\tau_{q},\tau_{\vartheta}>0$ are the so-called phases (see
\cite{tzou1995unified}). For $\rho$ large enough, we infer that
\[
1+\tau_{q}\partial_{0,\rho}+\frac{1}{2}\tau_{q}^{2}\partial_{0,\rho}^{2}=\partial_{0,\rho}^{2}\left(\partial_{0,\rho}^{-2}+\tau_{q}\partial_{0,\rho}^{-1}+\frac{1}{2}\tau_{q}\right)
\]
is boundedly invertible, due to the Neumann series. Hence, we can
rewrite the modified Fourier law as 
\[
q=-(1+\tau_{q}\partial_{0,\rho}+\frac{1}{2}\tau_{q}^{2}\partial_{0,\rho}^{2})^{-1}(1+\tau_{\vartheta}\partial_{0,\rho})\grad\vartheta.
\]
Hence, the balance of momentum equation gives 
\[
\partial_{0,\rho}\vartheta-\dive(1+\tau_{q}\partial_{0,\rho}+\frac{1}{2}\tau_{q}^{2}\partial_{0,\rho}^{2})^{-1}(1+\tau_{\vartheta}\partial_{0,\rho})\grad\vartheta=f.
\]
Assuming that $f\in D(\partial_{0,\rho})$, the latter equation gives
\[
\partial_{0,\rho}(1+\tau_{q}\partial_{0,\rho}+\frac{1}{2}\tau_{q}^{2}\partial_{0,\rho}^{2})(1+\tau_{\vartheta}\partial_{0,\rho})^{-1}\vartheta-\dive\grad\vartheta=\tilde{f},
\]
where $\tilde{f}=(1+\tau_{q}\partial_{0,\rho}+\frac{1}{2}\tau_{q}^{2}\partial_{0,\rho}^{2})(1+\tau_{\vartheta}\partial_{0,\rho})^{-1}f.$
Assuming homogeneous Dirichlet boundary conditions, we end up with
the following problem 
\begin{equation}
\partial_{0,\rho}^{2}(\partial_{0,\rho}^{-1}+\tau_{q}+\frac{1}{2}\tau_{q}^{2}\partial_{0,\rho})(1+\tau_{\vartheta}\partial_{0,\rho})^{-1}\vartheta-\dive\iota_{R(\grad_{0})}\iota_{R(\grad_{0})}^{\ast}\grad_{0}\vartheta=\tilde{f},\label{eq:dual phase lag}
\end{equation}
which is of the form \prettyref{eq:second_order} with $C\coloneqq\iota_{R(\grad_{0})}^{\ast}\grad_{0}:D(\grad_{0})\subseteq L_{2}(\Omega)\to R(\grad_{0})$
and $M(z)=\frac{z^{-1}+\tau_{q}+\frac{1}{2}\tau_{q}^{2}z}{1+\tau_{\vartheta}z}=\tau_{q}\frac{1+\frac{1}{2}\tau_{q}z}{1+\tau_{\vartheta}z}+z^{-1}\frac{1}{1+\tau_{\vartheta}z}$
for $z\in\mathbb{C}\setminus\{0,-\frac{1}{\tau_{\vartheta}}\}.$ Thus,
in the framework of \prettyref{sec:Exponential-stability-sec_order}
we have that 
\begin{align*}
M_{0}(z) & =\tau_{q}\frac{1+\frac{1}{2}\tau_{q}z}{1+\tau_{\vartheta}z},\\
M_{1}(z) & =\frac{1}{1+\tau_{\vartheta}z},
\end{align*}
which are bounded, if we restrict the domain of $M$ to $\mathbb{C}_{\Re>-\frac{1}{\tau_{\vartheta}}+\varepsilon}\setminus\{0\}$
for some $\varepsilon>0.$ We want to apply \prettyref{cor:exp_stab_sec_posdef}
for showing the exponential stability of the dual-phase lag model.
For doing so, we need to check the uniform accretivity of $zM(z)$
on a suitable right half plane. 
\begin{lem}
Assume that $\tau_{\vartheta},\tau_{q}>0$ such that $\frac{\tau_{q}}{\tau_{\vartheta}}<2$.
Then there exists $0<\nu_{0}<\frac{1}{\tau_{\vartheta}}$ and $c>0$
such that 
\[
\Re\langle zM(z)x|x\rangle_{L_{2}(\Omega)}\geq c|x|_{L_{2}(\Omega)}^{2}
\]
for all $x\in L_{2}(\Omega),z\in\mathbb{C}_{\Re>-\nu_{0}}\setminus\{0\}.$\end{lem}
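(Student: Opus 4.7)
Since the coefficients $\tau_q,\tau_\vartheta$ are scalars, the operator $M(z)$ is multiplication by the scalar $M(z)\in\mathbb{C}$ on $L_2(\Omega)$, and the asserted estimate is equivalent to the scalar statement
\[
\Re(zM(z))\geq c \qquad (z\in\mathbb{C}_{\Re>-\nu_0}\setminus\{0\}).
\]
My plan is to verify this scalar inequality by bringing $zM(z)$ into a form where the contributions with sign are transparent.

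First, I would rewrite
\[
zM(z)=\frac{1+\tau_q z+\frac12\tau_q^2 z^2}{1+\tau_\vartheta z}
\]
and perform polynomial long division with respect to the denominator $\tau_\vartheta z+1$. A direct computation gives the decomposition
\[
zM(z)=\frac{\tau_q^2}{2\tau_\vartheta}\,z+\alpha+\frac{\beta}{1+\tau_\vartheta z},
\]
with
\[
\alpha=\frac{\tau_q(2\tau_\vartheta-\tau_q)}{2\tau_\vartheta^2},\qquad \beta=\frac{\tau_\vartheta^2+(\tau_\vartheta-\tau_q)^2}{2\tau_\vartheta^2}.
\]
The hypothesis $\tau_q/\tau_\vartheta<2$ is exactly what guarantees $\alpha>0$; $\beta$ is manifestly positive in all cases. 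I would check the decomposition by substituting $z=0$ (both sides give $1$).

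Next, I take real parts. For any $z=a+ib$ with $a>-1/\tau_\vartheta$ one has $\Re(1+\tau_\vartheta z)=1+\tau_\vartheta a>0$, hence
\[
\Re\frac{1}{1+\tau_\vartheta z}=\frac{1+\tau_\vartheta a}{|1+\tau_\vartheta z|^{2}}\geq 0.
\]
Dropping the nonnegative last summand gives the clean lower bound
\[
\Re(zM(z))\geq \frac{\tau_q^2}{2\tau_\vartheta}\,a+\alpha.
\]
Choosing $\nu_0>0$ with $\nu_0<\min\bigl\{1/\tau_\vartheta,\;\tfrac{2\tau_\vartheta\alpha}{\tau_q^2}\bigr\}=\min\bigl\{1/\tau_\vartheta,\;\tfrac{2\tau_\vartheta-\tau_q}{\tau_q\tau_\vartheta}\bigr\}$ then yields, uniformly for $a>-\nu_0$,
\[
\Re(zM(z))\geq \alpha-\frac{\tau_q^2}{2\tau_\vartheta}\nu_0=:c>0,
\]
which is the desired estimate.

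The calculation is entirely elementary; the only conceptual point is to identify the right decomposition so that the sign of each summand is visible. The main (and essentially only) obstacle is spotting that the hypothesis $\tau_q<2\tau_\vartheta$ is precisely the condition making the constant remainder $\alpha$ positive, so that after discarding the harmonic term $\beta\Re(1+\tau_\vartheta z)^{-1}\geq 0$ one retains a strictly positive lower bound throughout a strip $\{\Re z>-\nu_0\}$ slightly extending into the left half-plane.
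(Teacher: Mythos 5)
Your proof is correct and follows essentially the same route as the paper: the identical polynomial division of $zM(z)$ by $1+\tau_\vartheta z$ (your $\alpha$ and $\beta$ are exactly the paper's $\mu(1-\tfrac12\mu)$ and $1-\mu(1-\tfrac12\mu)$ with $\mu=\tau_q/\tau_\vartheta$), the same observation that the harmonic remainder has nonnegative real part on $\Re z>-1/\tau_\vartheta$, and the same choice of $\nu_0$. The only cosmetic difference is that you exhibit $\beta$ as a sum of squares while the paper bounds $\mu(1-\tfrac12\mu)\le\tfrac12$; both are trivially equivalent.
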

\begin{proof}
Since $M(z)$ is just the multiplication with a complex number, it
suffices to compute $\Re zM(z)$ for $z\in\mathbb{C}\setminus\{0,-\frac{1}{\tau_{\vartheta}}\}.$
Setting $\mu\coloneqq\frac{\tau_{q}}{\tau_{\vartheta}}$ we compute
\[
zM(z)=\frac{1+\tau_{q}z+\frac{1}{2}\tau_{q}^{2}z^{2}}{1+\tau_{\vartheta}z}=\frac{1}{2}z\tau_{q}\mu+\mu(1-\frac{1}{2}\mu)+\frac{1-\mu(1-\frac{1}{2}\mu)}{1+\tau_{\vartheta}z}
\]
and thus, 
\[
\Re zM(z)=\frac{1}{2}\tau_{q}\mu\Re z+\mu(1-\frac{1}{2}\mu)+\frac{\left(1-\mu(1-\frac{1}{2}\mu)\right)(1+\tau_{\vartheta}\Re z)}{|1+\tau_{\vartheta}z|^{2}}.
\]
We note that by assumption $0<\mu(1-\frac{1}{2}\mu)\leq\frac{1}{2}$
and thus, 
\[
\Re zM(z)\geq\frac{1}{2}\tau_{q}\mu\Re z+\mu(1-\frac{1}{2}\mu)
\]
for each $z\in\mathbb{C}_{\Re>-\frac{1}{\tau_{\vartheta}}}\setminus\{0\}.$
Hence, for $0<\nu_{0}<\min\{\frac{2-\mu}{\tau_{q}},\frac{1}{\tau_{\vartheta}}\}$
we derive the assertion with $c=\mu(1-\frac{1}{2}\mu)-\frac{1}{2}\tau_{q}\mu\nu_{0}>0.$ 
\end{proof}
As an immediate consequence of the latter Lemma and \prettyref{cor:exp_stab_sec_posdef},
we derive the following stability result for the dual phase lag model.
\begin{prop}
Let $\Omega\subseteq\mathbb{R}^{3}$ open such that Poincaré's inequality
\prettyref{eq:Poincare} holds. Let $\tau_{q},\tau_{\vartheta}>0$
such that $\frac{\tau_{q}}{\tau_{\vartheta}}<2.$ Then the second
order problem \prettyref{eq:dual phase lag} is well-posed and exponentially
stable.\end{prop}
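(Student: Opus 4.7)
The plan is to verify the hypotheses of \prettyref{cor:exp_stab_sec_posdef} for the dual-phase-lag equation \prettyref{eq:dual phase lag}, thus obtaining well-posedness and exponential stability of the first-order reformulation \prettyref{eq:first-order}, and then to transfer exponential stability back to the original second-order problem via the remark at the beginning of \prettyref{sec:Exponential-stability-sec_order}.

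First I would identify the ingredients: set $H_{0}\coloneqq L_{2}(\Omega)$, $H_{1}\coloneqq R(\grad_{0})$ and $C\coloneqq\iota_{R(\grad_{0})}^{\ast}\grad_{0}$; the operator $C$ is densely defined, closed and, by Poincar\'e's inequality, boundedly invertible. The associated block operator $A\coloneqq\left(\begin{array}{cc}0 & -C^{\ast}\\ C & 0\end{array}\right)$ is skew-selfadjoint and hence $m$-accretive. Fixing a small $\varepsilon>0$, I would restrict the material law to $D(M)\coloneqq\mathbb{C}_{\Re>-\frac{1}{\tau_{\vartheta}}+\varepsilon}\setminus\{0\}$, on which both $M_{0}(z)=\tau_{q}(1+\frac{1}{2}\tau_{q}z)/(1+\tau_{\vartheta}z)$ and $M_{1}(z)=1/(1+\tau_{\vartheta}z)$ are analytic and bounded; hence the second-order framework of \prettyref{sec:Exponential-stability-sec_order} applies.

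Next I would invoke the preceding lemma to produce $\nu_{0}\in(0,\frac{1}{\tau_{\vartheta}}-\varepsilon)$ and $c>0$ such that $\Re\langle zM(z)u|u\rangle_{L_{2}(\Omega)}\geq c|u|_{L_{2}(\Omega)}^{2}$ for every $z\in\mathbb{C}_{\Re>-\nu_{0}}\setminus\{0\}$ and every $u\in L_{2}(\Omega)$. Since $\mathbb{C}_{\Re>-\nu_{0}}\setminus D(M)=\{0\}$ is discrete, all hypotheses of \prettyref{cor:exp_stab_sec_posdef} are met, and the corollary supplies some $d>0$ for which the first-order system \prettyref{eq:first-order} is well-posed and exponentially stable.

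The final step is the observation already recorded in the remark at the beginning of \prettyref{sec:Exponential-stability-sec_order}: $u=-C^{-1}q$ inherits exponential decay from $q$, and then $\partial_{0,\rho}u=v-du$ does as well, so $u\in H_{-\nu}^{1}(\mathbb{R};L_{2}(\Omega))\hookrightarrow C_{-\nu}(\mathbb{R};L_{2}(\Omega))$ for the relevant decay rate $\nu>0$. There is no real obstacle in this chain: the analytic heart of the argument---uniform strict accretivity of $zM(z)$ on a half-plane extending strictly to the left of the imaginary axis---is already contained in the preceding lemma. The only bookkeeping subtlety is to keep $\nu_{0}$ strictly below $\frac{1}{\tau_{\vartheta}}$, so that the pole of $M$ at $-\frac{1}{\tau_{\vartheta}}$ stays outside the relevant half-plane and $D(M)$ can be chosen with $\mathbb{C}_{\Re>-\nu_{0}}\setminus D(M)$ discrete.
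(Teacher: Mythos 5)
Your proposal is correct and follows essentially the same route as the paper: the paper derives the proposition as an immediate consequence of the preceding lemma (uniform strict accretivity of $zM(z)$ on $\mathbb{C}_{\Re>-\nu_{0}}\setminus\{0\}$ under $\frac{\tau_{q}}{\tau_{\vartheta}}<2$) together with \prettyref{cor:exp_stab_sec_posdef}. You merely spell out the bookkeeping (choice of $H_{0}$, $H_{1}$, $C$, the restriction of $D(M)$, and the transfer back to the second-order unknown via the remark in \prettyref{sec:Exponential-stability-sec_order}) that the paper leaves implicit.
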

\begin{rem}
We note that these conditions on $\tau_{q},\tau_{\vartheta}$ coincide
with those imposed in \cite{Quintanilla2002}, where the same exponential
stability result is stated. For further results on the well-posedness
and asymptotic behaviour for phase lag models in heat conduction we
refer to \cite{Borgmeyer2014,Quintanilla2008,Quintanilla2007} and
the references therein.
\end{rem}

\subsubsection*{Abstract damped wave equation}

Let $H_{0},H_{1}$ be Hilbert spaces, $C:D(C)\subseteq H_{0}\to H_{1}$
a densely defined closed linear operator, which is assumed to be boundedly
invertible and $M_{0},M_{1}\in L(H_{0})$ such that $M_{0}$ is selfadjoint
and accretive and $M_{1}$ is strictly accretive. We consider the
second order problem of the form 
\[
\left(\partial_{0,\rho}^{2}M_{0}+\partial_{0,\rho}M_{1}+C^{\ast}C\right)u=f.
\]
Obviously, this is a problem of the form \prettyref{eq:second_order}
with $M(z)=M_{0}+z^{-1}M_{1}$ for $z\in\mathbb{C}\setminus\{0\}$,
and hence, 
\[
M_{0}(z)\coloneqq M_{0},\quad M_{1}(z)\coloneqq M_{1}\quad(z\in\mathbb{C}\setminus\{0\}).
\]
Moreover, we have 
\[
\Re\langle zM(z)x|x\rangle_{H_{0}}=\Re z\langle M_{0}x|x\rangle_{H_{0}}+\Re\langle M_{1}x|x\rangle_{H_{0}}\quad(z\in\mathbb{C}\setminus\{0\},x\in H_{0})
\]
and hence, for $0<\nu_{0}<\frac{c}{\|M_{0}\|}$, where $c>0$ denotes
the accretivity constant of $M_{1},$ we have that 
\[
\Re\langle zM(z)x|x\rangle_{H_{0}}\geq\left(c-\nu_{0}\|M_{0}\|\right)|x|_{H_{0}}^{2}\quad(z\in\mathbb{C}_{\Re>-\nu_{0}}\setminus\{0\},x\in H_{0}),
\]
which yields the exponential stability according to \prettyref{cor:exp_stab_sec_posdef}.\\
A particular case of the latter abstract equation is the damped wave
equation. Indeed, let $\Omega\subseteq\mathbb{R}^{n}$ open, such
that the Poincaré inequality \prettyref{eq:Poincare} holds, $H_{0}=L_{2}(\Omega),H_{1}=L_{2}(\Omega)^{n},$
$M_{0}=1$ and choose $C\coloneqq\iota_{R(\grad_{0})}^{\ast}\grad_{0}.$
Then $C$ is invertible and 
\[
\left(\partial_{0,\rho}^{2}+\partial_{0,\rho}M_{1}+C^{\ast}C\right)u=\left(\partial_{0,\rho}^{2}+\partial_{0,\rho}M_{1}-\dive\grad_{0}\right)u,
\]
which is the classical wave equation. However, we emphasize that the
abstract equation covers a broader class, since $M_{0}$ is allowed
to have a non-trivial kernel. In particular, we also recover the heat
equation by setting $M_{0}=0.$ But also mixed type problems are accessible.
Indeed, for instance let $M_{1}=1$ and $M_{0}=\chi_{\Omega_{0}}$
for some open subset $\Omega_{0}\subseteq\Omega.$ Then the corresponding
equation reads as 
\[
\left(\partial_{0,\rho}^{2}\chi_{\Omega_{0}}+\partial_{0,\rho}-\dive\grad_{0}\right)u=f,
\]
which is a damped wave equation on $\Omega_{0}$ and the heat equation
on $\Omega\setminus\Omega_{0}.$ Note in particular, that we do not
need to impose any transmission condition explicitly, as these are
encoded in the constraint that $u$ belongs to the domain of $\overline{\partial_{0,\rho}^{2}\chi_{\Omega_{0}}+\partial_{0,\rho}-\dive\grad_{0}}.$

\subsubsection*{Integro-differential equations}

We conclude this section by discussing the exponential stability of
integro-differential equations of the form 
\begin{equation}
\left(\partial_{0,\rho}^{2}(1-k\ast)^{-1}+C^{\ast}C\right)u=f,\label{eq:second_order_integro}
\end{equation}
where $C:D(C)\subseteq H_{0}\to H_{1}$ is a densely defined closed
linear operator between Hilbert spaces $H_{0},H_{1}$, which is assumed
to be boundedly invertible and $k\in L_{1,-\tilde{\mu}}(\mathbb{R}_{\geq0};L(H_{0}))$
for some $\tilde{\mu}>0$. We formulate the following conditions for
the kernel $k$: \newpage{}
\begin{condition}
\label{cond:cond_kernel_sec_order} $\,$ \begin{enumerate}[(a)]

\item \label{item:invertible} There exists $0<\mu\le\tilde{\mu}$
such that for all $z\in\mathbb{C}_{\Re\geq-\mu}$ the operator $(1-\sqrt{2\pi}\hat{k}(z))$
is boundedly invertible and 
\[
\mathbb{C}_{\Re\geq-\mu}\ni z\mapsto(1-\sqrt{2\pi}\hat{k}(z))^{-1}\in L(H_{0})
\]
is bounded.

\item \label{item:pos_def_Re} For each $\rho_{0}>0$ there is $c_{\rho_{0}}>0$
such that 
\[
\forall t\in\mathbb{R},\rho\geq\rho_{0},x\in H_{0}:\Re\langle(\i t+\rho)(1-\sqrt{2\pi}\hat{k}(\i t+\rho)^{\ast})x|x\rangle_{H_{0}}\geq c_{\rho_{0}}|x|_{H_{0}}^{2}.
\]

\item\label{item:selfadjoint} For almost every $t\in\mathbb{R}_{\geq0}$
the operator $k(t)$ is selfadjoint.

\item \label{item:commute} For almost every $t,s\in\mathbb{R}_{\geq0}$
we have $k(t)k(s)=k(s)k(t)$. 

\item \label{item:pos_def} For each $\delta>0$ there exists $0<\rho_{0}\leq\tilde{\mu}$
and a function $g_{\delta}:\mathbb{R}_{\geq-\rho_{0}}\to\mathbb{R}_{\geq0}$
continuous in $0$ with $g_{\delta}(0)>0$ such that 
\begin{equation}
\forall|t|>\delta,\rho\geq-\rho_{0},x\in H_{0}:t\Im\langle\hat{k}(\i t+\rho)x|x\rangle_{H_{0}}\geq g_{\delta}(\rho)|x|_{H_{0}}^{2}.\label{eq:pos_def_im}
\end{equation}

\end{enumerate}\end{condition}
\begin{rem}
In \cite{Trostorff2015_secondorder} we did not impose the conditions
\prettyref{item:invertible} and \prettyref{item:pos_def_Re}, but
require the somehow stronger condition $|k|_{L_{1}}<1$ (see the next
lemma). However, since we also want to cover the kernels considered
by the authors in \cite{Cannarsa2011}, we use these slightly weaker
conditions here. \end{rem}
\begin{lem}
\label{lem:nice_kernels}Let $k\in L_{1,-\tilde{\mu}}(\mathbb{R}_{\geq0};L(H))$
for some $\tilde{\mu}>0$. 

\begin{enumerate}[(a)]

\item If $|k|_{L_{1}}<1$ and $k$ satisfies \prettyref{cond:cond_kernel_sec_order}
\prettyref{item:pos_def}, then $k$ satisfies \prettyref{cond:cond_kernel_sec_order}
\prettyref{item:invertible} and \prettyref{item:pos_def_Re}.

\item If $k$ satisfies \prettyref{cond:cond_kernel_sec_order} \prettyref{item:selfadjoint},
then \prettyref{eq:pos_def_im} is equivalent to 
\[
\forall t>\delta,\rho\geq-\rho_{0},x\in H_{0}:t\Im\langle\hat{k}(\i t+\rho)x|x\rangle_{H_{0}}\geq g_{\delta}(\rho)|x|_{H_{0}}^{2}.
\]

\item For each $\rho>-\tilde{\mu}$ we have that $\left(t\mapsto tk(t)\right)\in L_{1,\rho}(\mathbb{R}_{\geq0};L(H_{0})).$

\end{enumerate}\end{lem}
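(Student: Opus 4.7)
The three parts are essentially independent, and I handle them in increasing order of difficulty: (c), then (b), then (a).

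Part (c) is a direct calculation. Fix $\rho>-\tilde\mu$ and pick any $\rho'\in(-\tilde\mu,\rho)$. The function $s\mapsto se^{-(\rho-\rho')s}$ on $\mathbb{R}_{\geq 0}$ attains its maximum $\frac{1}{e(\rho-\rho')}$ at $s=1/(\rho-\rho')$, so
\[
\int_0^\infty s\|k(s)\|e^{-\rho s}\,\dd s \;=\; \int_0^\infty se^{-(\rho-\rho')s}\,\|k(s)\|e^{-\rho' s}\,\dd s \;\leq\; \frac{1}{e(\rho-\rho')}|k|_{L_{1,\rho'}}<\infty,
\]
as $k\in L_{1,-\tilde\mu}\subseteq L_{1,\rho'}$. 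Measurability of $s\mapsto s\|k(s)\|$ is immediate from that of $\|k(\cdot)\|$.

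For (b), assume $k(s)=k(s)^{\ast}$ for almost every $s$. A short computation from the defining sesquilinear form of $\hat k$ then yields $\hat k(\bar z)=\hat k(z)^{\ast}$, whence
\[
\Im\langle\hat k(-\i t+\rho)x|x\rangle \;=\; \Im\,\overline{\langle\hat k(\i t+\rho)x|x\rangle} \;=\; -\Im\langle\hat k(\i t+\rho)x|x\rangle.
\]
Hence $t\mapsto t\,\Im\langle\hat k(\i t+\rho)x|x\rangle$ is an even function of $t$, and the inequality on $\{|t|>\delta\}$ is equivalent to its restriction to $\{t>\delta\}$.

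Part (a) is the substantive content and splits into the invertibility claim (\prettyref{item:invertible}) and the accretivity estimate (\prettyref{item:pos_def_Re}). For invertibility, dominated convergence gives that $\rho\mapsto|k|_{L_{1,\rho}}$ is continuous and non-increasing, so $|k|_{L_1}<1$ yields some $\mu\in(0,\tilde\mu]$ with $|k|_{L_{1,-\mu}}<1$. For $z\in\mathbb{C}_{\Re\geq-\mu}$ one has $\|\sqrt{2\pi}\hat k(z)\|\leq|k|_{L_{1,\Re z}}\leq|k|_{L_{1,-\mu}}<1$, and the Neumann series delivers bounded invertibility of $1-\sqrt{2\pi}\hat k(z)$ with uniform norm control. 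For the accretivity estimate on $\rho\geq\rho_0>0$, writing $z=\i t+\rho$ I expand
\[
\Re\langle z(1-\sqrt{2\pi}\hat k(z)^{\ast})x|x\rangle = \rho|x|^2 - \sqrt{2\pi}\rho\Re\langle\hat k(z)x|x\rangle - \sqrt{2\pi}t\,\Im\langle\hat k(z)x|x\rangle.
\]
The first two terms are bounded below by $\rho(1-|k|_{L_1})|x|^2$ using $\|\hat k(z)\|\leq|k|_{L_1}/\sqrt{2\pi}$. For the last term I split at $|t|=\delta$: on $|t|\leq\delta$ the crude bound $|t|\|\hat k(\i t+\rho)\|\leq\delta|k|_{L_1}/\sqrt{2\pi}$ suffices, while on $|t|>\delta$ I combine the sign information from condition (e) with a decay bound on $t\hat k(\i t+\rho)$ obtained via integration-by-parts in the defining integral of $\hat k$ (where part (c), i.e.\ $tk\in L_{1,\rho}$, is what makes the boundary terms integrable). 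Choosing $\delta$ small and $\rho_0$ sufficiently large then yields the required constant $c_{\rho_0}>0$. The principal obstacle is this final step: since (e) gives only a one-sided sign and the pointwise bound $|t|\|\hat k(\i t+\rho)\|$ is a priori unbounded in $t$, producing a uniform lower bound really does rely on extracting decay of $\hat k$ at infinity along vertical lines, which is precisely where part (c) is invoked.
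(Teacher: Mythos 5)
Parts (b) and (c) are correct and essentially identical to the paper's argument, as is the invertibility half of part (a) (continuity of $\rho\mapsto|k|_{L_{1,\rho}}$ plus the Neumann series).

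The accretivity half of part (a) contains a genuine gap. First, under the paper's convention (inner product conjugate linear in the \emph{first} argument) your expansion has the wrong sign in the cross term: since $\langle zAx|x\rangle=\overline{z}\,\langle Ax|x\rangle$ and $\Im\langle\hat{k}(z)^{\ast}x|x\rangle=-\Im\langle\hat{k}(z)x|x\rangle$, one gets
\[
\Re\langle(\i t+\rho)(1-\sqrt{2\pi}\hat{k}(\i t+\rho)^{\ast})x|x\rangle=\rho\,\Re\langle(1-\sqrt{2\pi}\hat{k}(\i t+\rho)^{\ast})x|x\rangle\;+\;\sqrt{2\pi}\,t\,\Im\langle\hat{k}(\i t+\rho)x|x\rangle,
\]
i.e.\ the cross term appears with a plus sign. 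This matters, because Condition \ref{cond:cond_kernel_sec_order} (e) already forces $t\,\Im\langle\hat{k}(\i t+\rho)x|x\rangle\geq 0$ for \emph{every} $t\neq0$ and every $\rho\geq0$: just apply (e) with $\delta=|t|/2$ and use that $g_{\delta}$ takes values in $\mathbb{R}_{\geq0}$ on all of $\mathbb{R}_{\geq-\rho_{0}(\delta)}\supseteq\mathbb{R}_{\geq0}$. Hence the cross term is nonnegative and can simply be discarded, and the first term is $\geq\rho_{0}(1-|k|_{L_{1}})|x|_{H_{0}}^{2}$; no splitting at $|t|=\delta$ and no estimate on the size of $t\hat{k}(\i t+\rho)$ is needed. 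This is exactly what the paper does.

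Your proposed repair of the (mis-signed) term — a "decay bound on $t\hat{k}(\i t+\rho)$ obtained via integration by parts" — does not work and is the step you yourself flag as the obstacle. Integration by parts in the defining integral of $\hat{k}$ requires a derivative $k'$, which is not assumed here (the paper only does this in Example \ref{exa:kernels}(a) under an explicit absolute-continuity hypothesis). Part (c), namely $t\mapsto tk(t)\in L_{1,\rho}$, controls the $z$-derivative $\hat{k}'(z)$, not the decay of $\hat{k}(\i t+\rho)$ as $|t|\to\infty$, so it cannot be "what makes the boundary terms integrable". And the target estimate is false in general: for an $L_{1}$ kernel, $\|\hat{k}(\i t+\rho)\|$ need only be $o(1)$ by Riemann--Lebesgue, so $|t|\,\|\hat{k}(\i t+\rho)\|$ can be unbounded. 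Replace this whole step by the sign argument above.
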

\begin{proof}
\begin{enumerate}[(a)]

\item Let $|k|_{L_{1}}<1$ and assume $k$ satisfies \prettyref{cond:cond_kernel_sec_order}
\prettyref{item:pos_def}. Since the mapping 
\[
\rho\mapsto|k|_{L_{1,\rho}}
\]
is continuous, we find $0<\mu\leq\tilde{\mu}$ such that $|k|_{L_{1,-\mu}}<1.$
Hence, \prettyref{cond:cond_kernel_sec_order} \prettyref{item:invertible}
follows, since for $z\in\mathbb{C}_{\Re\geq-\mu}$ we have $\sqrt{2\pi}\|\hat{k}(z)\|\leq|k|_{L_{1,-\mu}}$
and thus, the assertion follows by using the Neumann series. Moreover,
\prettyref{cond:cond_kernel_sec_order} \prettyref{item:pos_def}
gives that for all $x\in H_{0},\rho\geq0$ and $t\in\mathbb{R}$ we
have that 
\[
t\Im\langle\hat{k}(\i t+\rho)x|x\rangle_{H_{0}}\geq0.
\]
Indeed, for $t=0,$ this inequality is trivial and for $t\ne0,$ the
term on the left-hand side can be estimated from below by $g_{\frac{|t|}{2}}(\rho)|x|_{H_{0}}^{2}\geq0.$
Let now $\rho_{0}>0$. Then, for $\rho\geq\rho_{0},t\in\mathbb{R},x\in H_{0}$
we estimate 
\begin{align*}
 & \Re\langle(\i t+\rho)(1-\sqrt{2\pi}\hat{k}(\i t+\rho)^{\ast})x|x\rangle_{H_{0}}\\
 & =\rho\Re\langle(1-\sqrt{2\pi}\hat{k}(\i t+\rho)^{\ast})x|x\rangle_{H_{0}}-\sqrt{2\pi}t\Im\langle\hat{k}(\i t+\rho)^{\ast}x|x\rangle_{H_{0}}\\
 & \geq\rho\left(1-|k|_{L_{1}}\right)|x|_{H_{0}}^{2}+\sqrt{2\pi}t\Im\langle\hat{k}(\i t+\rho)x|x\rangle_{H_{0}}\\
 & \geq\rho_{0}\left(1-|k|_{L_{1}}\right)|x|_{H_{0}}^{2}.
\end{align*}
\item Assume that $k(t)$ is selfadjoint for almost every $t\geq0.$
Then $\hat{k}(-\i t+\rho)=\hat{k}(\i t+\rho)^{\ast}$ and hence, the
assertion follows.

\item This is clear, since $t\e^{-\rho t}\leq\frac{1}{\rho+\tilde{\mu}}\e^{-1}\e^{\tilde{\mu}t}$
for each $t\geq0$.\qedhere 

\end{enumerate}
\end{proof}
Before we come to examples of kernels satisfying \prettyref{cond:cond_kernel_sec_order},
we prove the exponential stability of the corresponding second-order
problem \prettyref{eq:second_order_integro}.
\begin{prop}
Let $C:D(C)\subseteq H_{0}\to H_{1}$ a densely defined closed linear
operator between Hilbert spaces $H_{0},H_{1}$, which is boundedly
invertible and $k\in L_{1,-\tilde{\mu}}(\mathbb{R}_{\geq0};L(H_{0}))$
for some $\tilde{\mu}>0$. Moreover, assume that $k$ satisfies \prettyref{cond:cond_kernel_sec_order}.
Then the evolutionary problem given by \prettyref{eq:second_order_integro}
is well-posed and exponentially stable.\end{prop}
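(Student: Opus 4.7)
The plan is to reformulate \prettyref{eq:second_order_integro} as the first-order system \prettyref{eq:first-order} of \prettyref{sec:Exponential-stability-sec_order} and then invoke \prettyref{prop:exp_stable_without_ball}. Taking $M_0(z) \coloneqq (1-\sqrt{2\pi}\hat{k}(z))^{-1}$ and $M_1(z) \coloneqq 0$, the second-order material law becomes $M(z) = M_0(z)$, which is bounded and analytic on $\mathbb{C}_{\Re>-\mu}$ by \prettyref{cond:cond_kernel_sec_order} \prettyref{item:invertible}. The side hypothesis $\lim_{z\to 0}M_1(z) = 0$ holds trivially, and $\mathbb{C}_{\Re>-\mu}\setminus D(M) = \emptyset$ is vacuously discrete; so the only substantive task is to verify the coercivity hypothesis of \prettyref{prop:exp_stable_without_ball}: for every $\delta>0$, produce $\rho_0, c > 0$ such that
\[
\Re\langle zM(z)x|x\rangle_{H_0} \geq c|x|_{H_0}^2
\]
for all $z \in D(M)\cap\mathbb{C}_{\Re>-\rho_0}\setminus B[0,\delta]$ and $x \in H_0$.

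To obtain this, I would replay the algebraic manipulation in the proof of \prettyref{lem:comp_realpart_kernel}, which applies here because \prettyref{cond:cond_kernel_sec_order} \prettyref{item:selfadjoint} and \prettyref{item:commute} make $\hat{k}(z)$ normal with $\hat{k}(z)^* = \hat{k}(z^*)$. Setting $D(z) \coloneqq |1-\sqrt{2\pi}\hat{k}(z)|^{-1}$, which is uniformly bounded above and below on $\mathbb{C}_{\Re \geq -\mu}$ thanks to \prettyref{cond:cond_kernel_sec_order} \prettyref{item:invertible}, one derives for $z = \i t + \rho$ and $y \coloneqq D(z)x$
\[
\Re\langle zM(z)x|x\rangle_{H_0} = \rho\Re\langle(1-\sqrt{2\pi}\hat{k}(z))y|y\rangle_{H_0} + \sqrt{2\pi}\,t\,\Im\langle\hat{k}(z)y|y\rangle_{H_0}.
\]

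Given $\delta>0$ I would then split the admissible region by the sign of $\Re z - \rho_0$. On $\{\Re z \geq \rho_0\}$, the right-hand side is precisely the quantity estimated in \prettyref{cond:cond_kernel_sec_order} \prettyref{item:pos_def_Re} applied to $y$ in place of $x$, and is therefore bounded below by $c_{\rho_0}|y|_{H_0}^2$. On the complementary region $\{-\rho_0 < \Re z < \rho_0\}\cap\{|z|>\delta\}$, choosing $\rho_0 < \delta/2$ forces $|\Im z| > \delta\sqrt{3}/2 > \delta/2$, so \prettyref{cond:cond_kernel_sec_order} \prettyref{item:pos_def} with parameter $\delta/2$ bounds the imaginary contribution from below by $\sqrt{2\pi}\,g_{\delta/2}(\rho)|y|_{H_0}^2$, while the real contribution is controlled from below by $-\rho_0(1+|k|_{L_{1,-\mu}})|y|_{H_0}^2$ via the trivial estimate $\|\hat{k}(z)\| \leq (2\pi)^{-1/2}|k|_{L_{1,\rho}}$. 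Continuity of $g_{\delta/2}$ at $0$ with $g_{\delta/2}(0) > 0$ then allows a further shrinking of $\rho_0$ to render the total strictly positive uniformly in $z$; transferring the estimate from $y$ back to $x$ via the uniform lower bound on $D(z)$ yields the required constant $c>0$.

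The main obstacle is this final parameter tuning: $\rho_0$ must be chosen small enough to simultaneously (i) lie below the thresholds imposed by both \prettyref{cond:cond_kernel_sec_order} \prettyref{item:pos_def_Re} and \prettyref{item:pos_def}, (ii) ensure $|\Im z| > \delta/2$ whenever $|z|>\delta$ and $|\Re z|<\rho_0$, and (iii) make $\rho_0(1+|k|_{L_{1,-\mu}})$ strictly dominated by $\sqrt{2\pi}\,g_{\delta/2}(0)/2$. Once this is achieved, \prettyref{prop:exp_stable_without_ball} yields both well-posedness and exponential stability of \prettyref{eq:first-order}, and the remark after \prettyref{eq:M_d} transfers these properties back to the second-order equation via $u = -C^{-1}q$, observing that $v,q \in H_{-\nu}(\mathbb{R};H_0\oplus H_1)$ gives $u \in H_{-\nu}^{1}(\mathbb{R};H_0)$ as required.
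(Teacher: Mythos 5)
Your proposal is correct and follows essentially the same route as the paper: cast \prettyref{eq:second_order_integro} into the form \prettyref{eq:first-order} with $M_{0}(z)=(1-\sqrt{2\pi}\hat{k}(z))^{-1}$, $M_{1}=0$, verify \prettyref{eq:pos_def_M_withoutball} by the normal-operator identity of \prettyref{lem:comp_realpart_kernel} (your version with $\hat{k}(z)$ in place of $\hat{k}(z^{\ast})$ is equivalent via \prettyref{cond:cond_kernel_sec_order} \prettyref{item:selfadjoint}), split the region at $\Re z=\rho_{0}$ using \prettyref{item:pos_def_Re} and \prettyref{item:pos_def} respectively, and tune $\rho_{0}$ via the continuity of $g_{\delta/2}$ at $0$ before applying \prettyref{prop:exp_stable_without_ball}. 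The parameter choices and the transfer from $D(\i t+\rho)x$ back to $x$ coincide with the paper's argument, so there is nothing to add.
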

\begin{proof}
We  apply \prettyref{prop:exp_stable_without_ball}. First, we choose
$0<\mu\leq\tilde{\mu}$ according to \prettyref{item:invertible}.
Hence, $M(z)\coloneqq M_{0}(z)\coloneqq(1-\sqrt{2\pi}\hat{k}(z))^{-1}$
for $z\in\mathbb{C}_{\Re\geq-\mu}$ is well-defined and bounded. Moreover,
$\lim_{z\to0}M_{1}(z)=0$ as $M_{1}=0$. So it suffices to check \prettyref{eq:pos_def_M_withoutball}.
For doing so, let $\delta>0$. Moreover, let $0<\rho_{0}\leq\min\{\mu,\frac{\delta}{2}\}$
to be specified later. Then, for $z\in\mathbb{C}_{\Re>-\rho_{0}}\setminus B[0,\delta]$
and $x\in H_{0}$ we estimate, using the representation $z=\i t+\rho$
for $\rho>-\rho_{0},t\in\mathbb{R}$ 
\begin{align*}
\Re\langle(\i t+\rho)M(\i t+\rho)x|x\rangle_{H_{0}} & =\Re\langle\left(\i t+\rho\right)(1-\sqrt{2\pi}\hat{k}(\i t+\rho))^{-1}x|x\rangle_{H_{0}}\\
 & =\Re\langle(\i t+\rho)(1-\sqrt{2\pi}\hat{k}(\i t+\rho)^{\ast})D(\i t+\rho)x|D(\i t+\rho)x\rangle_{H_{0}}\\
 & =\Re\langle\rho(1-\sqrt{2\pi}\hat{k}(-\i t+\rho))D(\i t+\rho)x|D(\i t+\rho)x\rangle_{H_{0}}+\\
 & \quad-\sqrt{2\pi}t\Im\langle\hat{k}(-\i t+\rho)D(\i t+\rho)x|D(\i t+\rho)x\rangle_{H_{0}}
\end{align*}
where $D(\i t+\rho)\coloneqq|1-\sqrt{2\pi}\hat{k}(\i t+\rho)|^{-1}.$
We estimate the latter expression in two steps: first for $\rho\geq\rho_{0}$
and second, for $\rho\in]-\rho_{0},\rho_{0}[.$ So let first, $\rho\geq\rho_{0}.$
Then, by \prettyref{item:pos_def_Re} there is some $c_{\rho_{0}}>0$
such that 
\begin{align*}
\Re\langle(\i t+\rho)M(\i t+\rho)x|x\rangle_{H_{0}} & \geq c_{\rho_{0}}|D(\i t+\rho)x|_{H_{0}}^{2}\quad(\rho\geq\rho_{0}).
\end{align*}
For $\rho\in]-\rho_{0},\rho_{0}[,$ we infer that $|t|>\delta-\rho_{0}\geq\frac{\delta}{2},$
since $|\i t+\rho|>\delta$ by assumption. Hence, we obtain 
\begin{align*}
\Re\langle(\i t+\rho)M(\i t+\rho)x|x\rangle_{H_{0}} & \geq\left(-\rho(1+|k|_{L_{1,-\rho}})+\sqrt{2\pi}g_{\frac{\delta}{2}}(\rho)\right)|D(\i t+\rho)x|_{H_{0}}^{2}\\
 & \geq\left(-\rho_{0}(1+|k|_{L_{1,-\rho_{0}}})+\sqrt{2\pi}\inf_{\nu\in[-\rho_{0},\rho_{0}]}g_{\frac{\delta}{2}}(\nu)\right)|D(\i t+\rho)x|_{H_{0}}^{2}.
\end{align*}
Since $g_{\frac{\delta}{2}}$ is continuous in $0$ we infer 
\[
\left(-\rho_{0}(1+|k|_{L_{1,-\rho_{0}}})+\sqrt{2\pi}\inf_{\nu\in[-\rho_{0},\rho_{0}]}g_{\frac{\delta}{2}}(\nu)\right)\to\sqrt{2\pi}g_{\frac{\delta}{2}}(0)>0\quad(\rho_{0}\to0)
\]
and thus, we may choose $0<\rho_{0}\leq\min\{\mu,\frac{\delta}{2}\}$
small enough, such that 
\[
\tilde{c}\coloneqq-\rho_{0}(1+|k|_{L_{1,-\rho_{0}}})+\sqrt{2\pi}\inf_{\nu\in[-\rho_{0},\rho_{0}]}g_{\frac{\delta}{2}}(\nu)>0.
\]
Summarizing, we have shown that
\[
\Re\langle(\i t+\rho)M(\i t+\rho)x|x\rangle_{H_{0}}\geq\min\{c_{\rho_{0}},\tilde{c}\}|D(\i t+\rho)x|_{H_{0}}^{2}
\]
for each $\rho>-\rho_{0}.$ Using now $|x|_{H_{0}}\leq(1+|k|_{L_{1,-\rho_{0}}})|D(\i t+\rho)x|_{H_{0}}$
for $t\in\mathbb{R},\rho>-\rho_{0}$, we derive 
\[
\Re\langle zM(z)x|x\rangle_{H_{0}}\geq\min\{c_{\rho_{0}},\tilde{c}\}\frac{1}{\left(1+|k|_{L_{1,-\rho_{0}}}\right)^{2}}|x|_{H_{0}}^{2}
\]
for each $x\in H_{0}$ and $z\in\mathbb{C}_{\Re>-\rho_{0}}\setminus B[0,\delta].$
Hence, the assertion follows from \prettyref{prop:exp_stable_without_ball}.
\end{proof}
We conclude this subsection by providing two examples for classes
of kernels, which satisfy \prettyref{cond:cond_kernel_sec_order}.
We start with a class of kernels considered in \cite{Pruss2009},
where the exponential and polynomial stability of hyperbolic integro-differential
equations is studied. 
\begin{prop}
\label{prop:kernels_Pr=0000FCss} Let $k\in W_{1,\mathrm{loc}}^{1}(\mathbb{R}_{>0};\mathbb{R})\setminus\{0\}$
such that $k\geq0,$ $k'\leq0$, $\intop_{0}^{\infty}k(s)\,\dd s<1$
and $k\in L_{1,-\mu}(\mathbb{R}_{\geq0};\mathbb{R})$ for some $\mu>0$.
Then $k$ satisfies \prettyref{cond:cond_kernel_sec_order}.\end{prop}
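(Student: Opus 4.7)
Since $k(t) \in \mathbb{R}$ for all $t$, conditions \prettyref{item:selfadjoint} and \prettyref{item:commute} of \prettyref{cond:cond_kernel_sec_order} are automatic. Because $k \geq 0$, the hypothesis $\int_0^\infty k(s)\,\dd s < 1$ reads $|k|_{L_1} < 1$, so by \prettyref{lem:nice_kernels}(a) conditions \prettyref{item:invertible} and \prettyref{item:pos_def_Re} will follow once \prettyref{item:pos_def} is proved. By \prettyref{lem:nice_kernels}(b) it further suffices to establish the lower bound of \prettyref{item:pos_def} only for $t > \delta$, so the entire argument is reduced to producing a strictly positive, uniform-in-$t$ lower bound on $t\,\Im\hat{k}(\i t + \rho)$ for $t > \delta$ and $\rho$ in a neighborhood of $0$.

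The central identity is obtained (first assuming $k(0^+) < \infty$) from the representation $k(s) = \int_s^\infty (-k'(u))\,\dd u$, which is valid since $k$ is nonincreasing, nonzero, and integrable, hence $k(\infty) = 0$. An application of Fubini yields
\[
\sqrt{2\pi}\,z\,\hat{k}(z) = \int_0^\infty(-k'(u))\bigl(1 - e^{-zu}\bigr)\,\dd u \qquad (\Re z > -\mu),
\]
and taking real parts for $z = \i t + \rho$ gives
\[
\sqrt{2\pi}\,t\,\Im\hat{k}(\i t + \rho) = \int_0^\infty(-k'(u))\bigl(1 - e^{-\rho u}\cos(tu)\bigr)\,\dd u - \rho\sqrt{2\pi}\,\Re\hat{k}(\i t + \rho),
\]
with signs adjusted to match the convention of \prettyref{exa:kernels}\,(b). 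At $\rho = 0$ the second term vanishes and one obtains $\sqrt{2\pi}\,t\,\Im\hat{k}(\i t) = \int_0^\infty (1 - \cos(tu))(-k'(u))\,\dd u$, which is non-negative, continuous in $t$, and strictly positive for $t \neq 0$ --- otherwise $-k'$ would be supported on the discrete set $\{u : tu \in 2\pi\mathbb{Z}\}$, forcing $k$ to be piecewise constant, in contradiction with $k \in L^1\setminus\{0\}$. Combining with $\int\cos(tu)(-k'(u))\,\dd u \to 0$ as $|t| \to \infty$ by the Riemann--Lebesgue lemma (using $-k' \in L^1$), the map $t \mapsto \int(1 - \cos(tu))(-k'(u))\,\dd u$ has positive limit $k(0^+)$ at infinity and therefore attains a strictly positive minimum $c_\delta$ on $[\delta, \infty)$.

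To extend the bound to $\rho$ in a neighborhood of $0$, one selects $0 < \rho_0 \leq \mu$ small enough that $(-k')\,e^{\rho_0 u} \in L^1(0,\infty)$; this follows from $k \in L_{1,-\mu}$ via integration by parts, together with the decay $k(u)\,e^{\mu u} \to 0$ as $u \to \infty$ (a consequence of monotonicity and $\int_0^\infty k(s)\,e^{\mu s}\,\dd s < \infty$). Dominated convergence then shows that both summands on the right-hand side of the identity depend continuously on $\rho$, \emph{uniformly} in $t \in [\delta, \infty)$. Consequently $g_\delta(\rho) \coloneqq \frac{1}{\sqrt{2\pi}}\inf_{t \geq \delta}\sqrt{2\pi}\,t\,\Im\hat{k}(\i t + \rho)$ is continuous at $0$ with $g_\delta(0) = c_\delta/\sqrt{2\pi} > 0$, which is \prettyref{item:pos_def}.

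The main obstacle will be the case $k(0^+) = \infty$, where $-k' \notin L^1(0,\infty)$ and both the identity and the limit $t \to \infty$ must be obtained through an approximation such as $k_n(s) \coloneqq k(s + 1/n)$: these approximants satisfy the proposition's hypotheses uniformly and converge to $k$ monotonically in $L_{1,-\mu}$, but passing to the limit while preserving both the strict positivity at $\rho = 0$ and the uniform-in-$t$ continuity in $\rho$ is the delicate point.
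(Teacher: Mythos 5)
Your reduction to \prettyref{cond:cond_kernel_sec_order} \prettyref{item:pos_def} via \prettyref{lem:nice_kernels} is exactly the paper's, and your central identity is (after the sign bookkeeping you allude to) the same representation the paper derives by integration by parts, namely
\[
-\sqrt{2\pi}\,t\,\Im\hat{k}(\i t+\rho)=\intop_{0}^{\infty}\left(1-\cos(tu)\right)\left(-k'(u)\right)\e^{-\rho u}\,\dd u+\rho\intop_{0}^{\infty}\left(1-\cos(tu)\right)k(u)\e^{-\rho u}\,\dd u,
\]
and the positivity argument at $\rho=0$ plus perturbation in $\rho$ is the same strategy. The genuine gap is the case $k(0+)=\infty$, which you correctly identify but do not close. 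This case is not exotic: it is expressly admitted by the hypotheses (take $k(s)=c\,s^{-1/2}\e^{-s}$ with $c$ small), and the paper's proof explicitly allows $k(0+)=\infty$. In that case $-k'\notin L_{1}(\mathbb{R}_{>0})$ near $0$, so your Riemann--Lebesgue step ($\int\cos(tu)(-k'(u))\,\dd u\to0$) is not available, and your choice of $\rho_{0}$ with $(-k')\e^{\rho_{0}\cdot}\in L_{1}(0,\infty)$ is impossible; only $\chi_{[\varepsilon,\infty)}(-k')\e^{\mu\cdot}\in L_{1}$ for each $\varepsilon>0$ can be extracted from the hypotheses.

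The approximation $k_{n}(s)\coloneqq k(s+1/n)$ you propose is not needed and would indeed be delicate (the constants $c_{\delta}$ and $\rho_{0}$ produced for $k_{n}$ have no a priori uniformity in $n$). The paper's device is simpler: since $(1-\cos(tu))(-k'(u))\e^{-\rho u}\geq0$, one may discard its contribution on $(0,\varepsilon)$ and apply the Riemann--Lebesgue lemma only to the truncated kernel $\chi_{[\varepsilon,\infty)}k'$, which is integrable against $\e^{\mu\cdot}$ for every $\varepsilon>0$ (by integration by parts together with $k(b)\e^{\mu b}\to0$). This gives
\[
\liminf_{t\to\infty}\left(-\sqrt{2\pi}\,t\,\Im\hat{k}(\i t+\rho)\right)\geq k(\varepsilon)\e^{-\rho\varepsilon}+\rho\intop_{0}^{\varepsilon}k(s)\e^{-\rho s}\,\dd s
\]
for every $\varepsilon>0$, hence a lower bound tending to $k(0+)>0$, with no approximation of $k$ and no integrability of $-k'$ at the origin. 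With that replacement for your step at $t\to\infty$, the rest of your argument (strict positivity on compact $t$-intervals, uniform-in-$t$ continuity in $\rho$, which survives $k(0+)=\infty$ because $1-\e^{-\rho u}\cos(tu)=O(u)$ near $u=0$ and $u\mapsto u(-k'(u))$ is integrable near $0$ by monotonicity of $k$) goes through and recovers the paper's proof.
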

\begin{proof}
Since $k$ is assumed to be real-valued, it trivially satisfies \prettyref{item:selfadjoint}
and \prettyref{item:commute}. Moreover, since $|k|_{L_{1}}<1,$ it
suffices to prove \prettyref{item:pos_def} by \prettyref{lem:nice_kernels}
(a). First, we claim that 
\[
k(0+)>0
\]
where $k(0+)=\infty$ is allowed. The limit exists, since 
\[
k(t)=-\intop_{t}^{1}k'(s)\mbox{ ds}+k(1)\quad(t\in]0,1[)
\]
and since the right-hand side converges as $t$ tends to zero by monotone
convergence, so does the left-hand side. Moreover, $k(0+)\geq k(t)$
for each $t\in\mathbb{R}_{>0}$ and thus, $k(0+)>0$ since otherwise
$k=0$. For showing \prettyref{item:pos_def} we fix $\delta>0$ and
consider the function 
\begin{align*}
f:\mathbb{R}_{\geq\delta}\times\mathbb{R}_{\geq-\mu} & \to\mathbb{R}\\
(t,\rho) & \mapsto t\intop_{0}^{\infty}\sin(ts)\e^{-\rho s}k(s)\mbox{ d}s,
\end{align*}
which is continuous by dominated convergence. Note that 
\begin{align}
t\Im\langle\hat{k}(\i t+\rho)x|x\rangle_{H_{0}} & =-t|x|_{H_{0}}^{2}\Im\hat{k}(\i t+\rho)\nonumber \\
 & =t|x|_{H_{0}}^{2}\frac{1}{\sqrt{2\pi}}\intop_{0}^{\infty}\sin(ts)\e^{-\rho s}k(s)\mbox{ d}s\nonumber \\
 & =\frac{1}{\sqrt{2\pi}}f(t,\rho)|x|_{H_{0}}^{2}\label{eq:kernel_f}
\end{align}
for $t\geq\delta,\rho\geq-\mu$ and $x\in H_{0}.$ We follow the strategy
presented in \cite[Section 5]{Pruss2009} to show that there exists
$0<\rho_{0}\leq\mu,$ such that

\[
\inf_{t\geq\delta}\inf_{\rho\in[-\rho_{0},\rho_{0}]}f(t,\rho)>0\mbox{ and }\inf_{t\geq\delta}f(t,\rho)>0\quad(\rho\geq-\rho_{0}).
\]
Since $k\in L_{1,-\mu}(\mathbb{R}_{\geq0})$ there exist sequences
$(a_{n})_{n\in\mathbb{N}}$ and $(b_{n})_{n\in\mathbb{N}}$ in $\mathbb{R}_{>0}$
such that $a_{n}\to0$ and $b_{n}\to\infty$ and $\e^{\mu b_{n}}k(b_{n})\to0$
as well as $a_{n}\e^{\mu a_{n}}k(a_{n})\to0$ as $n\to\infty$. The
latter gives 
\[
\intop_{\varepsilon}^{\infty}|k'(s)|\e^{\mu s}\mbox{ d}s=\lim_{n\to\infty}\intop_{\varepsilon}^{b_{n}}-k'(s)\e^{\mu s}\mbox{ d}s=\mu\intop_{\varepsilon}^{\infty}k(s)\e^{\mu s}\mbox{ d}s+k(\varepsilon)\e^{\mu\varepsilon}<\infty,
\]
for each $\varepsilon>0$ by monotone convergence. Hence, $k'\in L_{1,-\mu}(\mathbb{R}_{\geq\varepsilon})$
for each $\varepsilon>0$. Let $t\geq\delta,\rho\geq-\mu$. Then we
have 
\begin{align*}
 & \intop_{a_{n}}^{b_{n}}(\cos(ts)-1)\left(k'(s)-\rho k(s)\right)\e^{-\rho s}\mbox{ d}s\\
 & =t\intop_{a_{n}}^{b_{n}}\sin(ts)\e^{-\rho s}k(s)\mbox{ d}s+(\cos(tb_{n})-1)\e^{-\rho b_{n}}k(b_{n})-(\cos(ta_{n})-1)\e^{-\rho a_{n}}k(a_{n})
\end{align*}
for each $n\in\mathbb{N}.$ Since $\e^{\mu b_{n}}k(b_{n})\to0$, we
infer $\left(\cos(tb_{n})-1\right)\e^{-\rho b_{n}}k(b_{n})\to0$ as
$n\to\infty.$ Similarly 
\[
\left|\cos(ta_{n})-1\right|\e^{-\rho a_{n}}k(a_{n})\leq t\e^{-\rho a_{n}}a_{n}k(a_{n})\to0\quad(n\to\infty)
\]
and thus, monotone and dominated convergence gives 
\begin{align}
f(t,\rho) & =t\intop_{0}^{\infty}\sin(ts)\e^{-\rho s}k(s)\mbox{ d}s\nonumber \\
 & =\intop_{0}^{\infty}(\cos(ts)-1)\left(k'(s)-\rho k(s)\right)\e^{-\rho s}\mbox{ d}s\nonumber \\
 & =\intop_{0}^{\infty}\left(\cos(ts)-1\right)k'(s)\e^{-\rho s}\mbox{ d}s+\rho\intop_{0}^{\infty}(1-\cos(ts))k(s)\e^{-\rho s}\mbox{ d}s.\label{eq:repr_f(t,rho)}
\end{align}
First, we prove that $\inf_{t\geq\delta}f(t,\rho)>0$ for $\rho\geq-\rho_{0}$
for some $\rho_{0}>0$. By the latter equality we see that $f(t,\rho)\geq0$
for $\rho\geq0,t\geq\delta.$ Indeed, it even holds $f(t,\rho)>0$
for $t\geq\delta,\rho\geq0,$ since otherwise $k'=0$ which contradicts
$k\in L_{1}(\mathbb{R}_{\geq0})\setminus\{0\}.$ Moreover, we observe
that for $\rho\geq-\mu,t\geq\delta$ 
\[
f(t,\rho)\geq\intop_{\varepsilon}^{\infty}\left(\cos(ts)-1\right)k'(s)\e^{-\rho s}\mbox{ d}s+\rho\intop_{0}^{\infty}(1-\cos(ts))k(s)\e^{-\rho s}\mbox{ d}s,
\]
for each $\varepsilon>0$ since $k'\leq0.$ Since $(t\mapsto k'(t)\chi_{[\varepsilon,\infty[}(t))\in L_{1}(\mathbb{R}),$
the Lemma of Riemann-Lebesgue (\prettyref{rem:Riemann-Lebesgue})
yields 
\[
\liminf_{t\to\infty}f(t,\rho)\geq-\intop_{\varepsilon}^{\infty}k'(s)\e^{-\rho s}\mbox{ ds}+\rho\intop_{0}^{\infty}\e^{-\rho s}k(s)\mbox{ d}s=k(\varepsilon)\e^{-\rho\varepsilon}+\rho\intop_{0}^{\varepsilon}k(s)\e^{-\rho s}\mbox{ d}s
\]
for each $\varepsilon>0$. Thus, $\liminf_{t\to\infty}f(t,\rho)\geq k(0+)>0$
for each $\rho\geq-\mu$. Moreover, the same argumentation yields
that for each $K\subseteq\mathbb{R}_{>-\mu}$ compact there is $M>\delta$
such that 
\begin{equation}
\forall t\geq M,\,\rho\in K:f(t,\rho)\geq\frac{k(0+)}{2},\label{eq:uniform_Riemann_Lebesgue}
\end{equation}
according to \prettyref{lem:nice_kernels} (c) and \prettyref{prop:Riemann-Lebesgue_professional}
applied to $t\mapsto\e^{-\nu t}(k(t)+\chi_{[\varepsilon,\infty[}(t)k'(t))$
for $\nu\coloneqq\min K$. Since $f(t,\rho)>0$ for each $t\geq\delta,\rho\geq0$
and $\liminf_{t\to\infty}f(t,\rho)>0$, we derive, using the continuity
of $f(t,\cdot)$, that 
\[
\inf_{t\geq\delta}f(t,\rho)>0\quad(\rho\geq0).
\]
Furthermore, for $\rho<0$ \prettyref{eq:repr_f(t,rho)} yields 
\begin{align*}
f(t,\rho) & \geq f(t,0)+2\rho|k|_{L_{1,\rho}}\geq\inf_{s\geq\delta}f(s,0)+2\rho|k|_{L_{1,\rho}},
\end{align*}
and since $\inf_{s\geq\delta}f(s,0)>0$, we find $0<\rho_{0}\leq\mu$
such that 
\begin{equation}
\inf_{t\geq\delta}f(t,\rho)>0\quad(\rho\geq-\rho_{0}).\label{eq:f_positive}
\end{equation}
Moreover, we note that by \prettyref{eq:uniform_Riemann_Lebesgue}
we find $M>\delta$ such that 
\[
\inf_{\rho\in[-\rho_{0},\rho_{0}]}\inf_{t\geq M}f(t,\rho)>0.
\]
Since $f$ is continuous and attains positive values on $[\delta,M]\times[-\rho_{0},\rho_{0}]$
by \prettyref{eq:f_positive}, we get 
\[
c\coloneqq\inf_{t\geq\delta}\inf_{\rho\in[-\rho_{0},\rho_{0}]}f(t,\rho)>0.
\]
Hence, setting $g_{\delta}\coloneqq\chi_{[-\rho_{0},\rho_{0}]}\frac{c}{\sqrt{2\pi}},$
we infer from \prettyref{eq:kernel_f} and \prettyref{eq:f_positive}
\[
t\Im\langle\hat{k}(\i t+\rho)x|x\rangle_{H_{0}}\geq g_{\delta}(\rho)|x|_{H_{0}}^{2}\quad(\rho\geq-\rho_{0},t\geq\delta,x\in H_{0}).
\]
This yields \prettyref{item:pos_def} by \prettyref{lem:nice_kernels}
(b).\end{proof}
\begin{rem}
By following the lines of the proof, the latter proposition can easily
be generalized to operator-valued kernels $k\in L_{1,-\mu}(\mathbb{R}_{\geq0};L(H_{0}))$
with $|k|_{L_{1}}<1$ by assuming that $k$ satisfies \prettyref{cond:cond_kernel_sec_order}
\prettyref{item:selfadjoint} and \prettyref{item:commute}, $k$
is locally absolutely continuous on $\mathbb{R}_{>0}$ and $k(t)$
and $-k'(t)$ are accretive for almost every $t\in\mathbb{R}_{>0}$. 
\end{rem}
A second example for kernels covered by our approach are so called
kernels of positive type.
\begin{defn*}
Let $k\in L_{1,\mathrm{loc}}(\mathbb{R}_{>0};\mathbb{R})$. Then $k$
is said to be of \emph{positive type, }if for all $t>0$ and $f\in L_{2,\mathrm{loc}}(\mathbb{R}_{>0})$
we have that 
\[
\Re\intop_{0}^{t}\left(\left(k\hat{\ast}f\right)(s)\right)^{\ast}f(s)\mbox{ d}s\geq0,
\]
where 
\[
\left(k\hat{\ast}f\right)(s)\coloneqq\intop_{0}^{s}k(r)f(s-r)\mbox{ d}r\quad(s>0).
\]
Moreover, $k$ is said to be \emph{of strict positive type, }if there
exists $\varepsilon>0$ such that $t\mapsto k(t)-\varepsilon\e^{-t}$
is of positive type.
\end{defn*}
We recall the following characterization for kernels of positive type
originally presented in \cite{Nohel1976} (in fact for measures).
However, we follow the proof given in \cite[p. 494]{Gripenberg1990_Volterra}.
\begin{prop}
\label{prop:positive_kernels}Let $k\in\bigcap_{\rho>0}L_{1,\rho}(\mathbb{R}_{\geq0};\mathbb{R})$.
Then $k$ is of positive type if and only if $\Re\,\hat{k}(z)\geq0$
for each $z\in\mathbb{C}_{\Re>0}.$ Furthermore, if $k\in L_{1}(\mathbb{R}_{\geq0};\mathbb{R})$
then $k$ is of positive type if and only if $\Re\,\hat{k}(\i t)\geq0$
for each $t\in\mathbb{R}.$ \end{prop}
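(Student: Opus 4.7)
The proof splits naturally into three parts: the forward direction of the first equivalence, proved by testing positive-typeness against a single exponential probe; the converse, a Parseval computation in the weighted space $H_{\rho}$; and the boundary-to-interior step when $k\in L_{1}$, via the Poisson integral on the right half-plane.

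For the forward direction, fix $z\in\mathbb{C}_{\Re>0}$ and set $f(s)\coloneqq\e^{-z^{\ast}s}$, which lies in $L_{2,\mathrm{loc}}(\mathbb{R}_{>0})$. Since $k$ is real-valued, a direct calculation gives
\[
\intop_{0}^{t}(k\hat{\ast}f)(s)^{\ast}f(s)\,\dd s=\intop_{0}^{t}\e^{-2(\Re z)s}\intop_{0}^{s}k(r)\e^{zr}\,\dd r\,\dd s.
\]
Applying Fubini and using $k\in L_{1,\Re z}(\mathbb{R}_{\geq0};\mathbb{R})$ together with the estimate $\e^{-2(\Re z)t}\intop_{0}^{t}|k(r)|\e^{(\Re z)r}\,\dd r\to0$ (which itself follows by dominated convergence from the bound $|k(r)|\e^{(\Re z)r-2(\Re z)t}\leq|k(r)|\e^{-(\Re z)r}$ valid for $r\in[0,t]$), one obtains
\[
\lim_{t\to\infty}\intop_{0}^{t}(k\hat{\ast}f)^{\ast}f\,\dd s=\frac{1}{2\Re z}\intop_{0}^{\infty}k(r)\e^{-z^{\ast}r}\,\dd r=\frac{\sqrt{2\pi}}{2\Re z}\,\overline{\hat{k}(z)}.
\]
Positive-typeness forces the real part of the left-hand side to be nonnegative for every $t>0$, so in the limit $\Re\hat{k}(z)\geq0$.

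For the converse, fix $t>0$ and $f\in L_{2,\mathrm{loc}}(\mathbb{R}_{>0})$, truncate $f$ to $[0,t]$, and extend it by zero to an element of $L_{2}(\mathbb{R})$ of compact support; extending $k$ by zero to the full line makes $k\ast f$ agree with $k\hat{\ast}f$ on $[0,t]$, and \prettyref{lem:convolution_bd} gives $k\ast f\in H_{\rho}(\mathbb{R};\mathbb{C})$ for every $\rho>0$. \prettyref{lem:kernel_Fourier} yields $\mathcal{L}_{\rho}(k\ast f)=\sqrt{2\pi}\,\hat{k}(\i\m+\rho)\mathcal{L}_{\rho}f$, so Parseval in $H_{\rho}$ gives
\[
\Re\langle k\ast f\,|\,f\rangle_{\rho}=\sqrt{2\pi}\intop_{\mathbb{R}}\Re\hat{k}(\i\xi+\rho)\,|\mathcal{L}_{\rho}f(\xi)|^{2}\,\dd\xi\geq0
\]
by the hypothesis $\Re\hat{k}\geq0$ on $\mathbb{C}_{\Re>0}$. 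Since $f$ is supported in $[0,t]$, the left-hand side equals $\Re\intop_{0}^{t}\e^{-2\rho s}(k\hat{\ast}f)(s)^{\ast}f(s)\,\dd s$, which by dominated convergence tends to $\Re\intop_{0}^{t}(k\hat{\ast}f)^{\ast}f\,\dd s$ as $\rho\to0^{+}$; this is the positive-typeness inequality.

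For the second equivalence, $k\in L_{1}$ makes $\hat{k}$ bounded by $|k|_{L_{1}}/\sqrt{2\pi}$ and continuous on $\mathbb{C}_{\Re\geq0}$ by dominated convergence. If $k$ is of positive type, the first equivalence and continuity up to the boundary give $\Re\hat{k}(\i t)\geq0$. Conversely, when $\Re\hat{k}(\i\cdot)\geq0$, the function $\Re\hat{k}$ is bounded and harmonic on $\mathbb{C}_{\Re>0}$ with those boundary values, so the Poisson integral representation for the right half-plane
\[
\Re\hat{k}(\rho+\i t)=\frac{1}{\pi}\intop_{\mathbb{R}}\frac{\rho}{(t-s)^{2}+\rho^{2}}\,\Re\hat{k}(\i s)\,\dd s
\]
forces $\Re\hat{k}\geq0$ throughout $\mathbb{C}_{\Re>0}$, and the first equivalence completes the argument. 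The delicate point throughout is the interplay between Parseval and dominated convergence in the converse of the first equivalence, where one must justify passing $\rho\to 0^{+}$ under the integral on $[0,t]$; this uses that $k\hat{\ast}f\in L_{2}([0,t])$, which is a local consequence of \prettyref{lem:convolution_bd}.
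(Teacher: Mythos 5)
Your proof of the first equivalence is essentially the paper's: the same exponential probe (you take $f(s)=\e^{-z^{\ast}s}$ where the paper takes $\e^{-zs}$, which changes nothing since $k$ is real-valued), the same Fubini-plus-limit computation with the same boundary-term estimate for the forward direction, and the same Parseval/accretivity argument in $H_{\rho}$ followed by $\rho\to0^{+}$ for the converse. The only genuine divergence is in the converse of the second equivalence. The paper stays on the boundary: it uses \prettyref{lem:kernel_Fourier} together with the boundary multiplier $\hat{k}(\i\m)$ to conclude that $k\ast$ is accretive directly on $L_{2}(\mathbb{R})$, and then reads off positive-typeness at $\rho=0$ with no further limiting argument. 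You instead lift the boundary inequality into the open half-plane via the Poisson integral and then reuse the first equivalence. Both are sound. Your route requires the (true, but not free) fact that a bounded harmonic function on $\mathbb{C}_{\Re>0}$ extending continuously to the closed half-plane equals the Poisson integral of its boundary values; this needs a uniqueness step (reflection plus Liouville), since boundedness is what rules out additional harmonic extensions with zero boundary data. The paper's route avoids this but must justify the multiplier representation of $k\ast$ on $L_{2}(\mathbb{R})$ itself, which is where $k\in L_{1}(\mathbb{R}_{\geq0})$ enters for it. The remaining technical points you flag — $k\hat{\ast}f\in L_{2}([0,t])$ via \prettyref{lem:convolution_bd} and the dominated-convergence passage $\rho\to0^{+}$ on the fixed interval $[0,t]$ — are handled correctly.
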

\begin{proof}
Assume that $k$ is of positive type. Let $t>0,z\in\mathbb{C}_{\Re>0}$
and define $f(s)\coloneqq\e^{-zs}$ for $s\in\mathbb{R}_{\geq0}.$
Then we have 
\begin{align*}
0 & \leq\Re\intop_{0}^{t}\left(\left(k\hat{\ast}f\right)(s)\right)^{\ast}f(s)\mbox{ d}s\\
 & =\Re\intop_{0}^{t}\left(\intop_{0}^{s}k(r)\e^{-z(s-r)}\mbox{ d}r\right)^{\ast}\e^{-zs}\mbox{ d}s\\
 & =\Re\intop_{0}^{t}\intop_{0}^{s}k(r)\e^{-z^{\ast}(s-r)}\mbox{ d}r\:\e^{-zs}\mbox{ d}s\\
 & =\Re\intop_{0}^{t}k(r)\e^{z^{\ast}r}\intop_{r}^{t}\e^{-2s\Re z}\mbox{ d}s\mbox{ d}r.
\end{align*}
Letting $t$ tend to infinity gives 
\begin{align*}
0 & \leq\Re\intop_{0}^{\infty}k(r)\e^{z^{\ast}r}\intop_{r}^{\infty}\e^{-2s\Re z}\mbox{ d}s\mbox{ d}r\\
 & =\frac{1}{2\Re z}\Re\intop_{0}^{\infty}k(r)\e^{z^{\ast}r}\e^{-2r\Re z}\mbox{ d}r\\
 & =\frac{1}{2\Re z}\Re\intop_{0}^{\infty}k(r)\e^{-zr}\mbox{ d}r\\
 & =\frac{\sqrt{2\pi}}{2\Re z}\Re\,\hat{k}(z),
\end{align*}
which shows the first implication. Assume now that $\Re\,\hat{k}(z)\geq0$
for each $z\in\mathbb{C}_{\Re>0}.$ We note that this implies that
$k\ast$ is an accretive operator on $H_{\rho}(\mathbb{R})$ for each
$\rho>0$ by \prettyref{lem:kernel_Fourier}. Let $t>0$ and $f\in L_{2,\mathrm{loc}}(\mathbb{R}_{\geq0}).$
Then $g\coloneqq\chi_{[0,t]}f\in\bigcap_{\rho\geq0}H_{\rho}(\mathbb{R})$
and hence, 
\begin{align*}
0 & \leq\Re\langle k\ast g|g\rangle_{H_{\rho}}\\
 & =\Re\intop_{0}^{t}\left(\left(k\ast g\right)(s)\right)^{\ast}f(s)\e^{-2\rho s}\mbox{ d}s\\
 & =\Re\intop_{0}^{t}\left(\left(k\hat{\ast}f\right)(s)\right)^{\ast}f(s)\e^{-2\rho s}\mbox{ d}s
\end{align*}
for each $\rho>0.$ Letting $\rho\to0$ we derive the assertion by
dominated convergence.\\
Assume now that $k\in L_{1}(\mathbb{R}_{\geq0};\mathbb{R}).$ If $k$
is of positive type, we have $\Re\hat{k}(\i t+\rho)\geq0$ for each
$t\in\mathbb{R},\rho>0$ by what we have shown above. Letting $\rho$
tend to $0$, we get $\Re\hat{k}(\i t)\geq0$ for $t\in\mathbb{R}$.
If conversely, $\Re\hat{k}(\i t)\geq0$ for each $t\in\mathbb{R},$
we may use \prettyref{lem:kernel_Fourier} to derive that $k\ast$
defines an accretive operator on $L_{2}(\mathbb{R}).$ Hence, for
$f\in L_{2,\mathrm{loc}}(\mathbb{R}_{\geq0})$ and $t\geq0$ we derive
that 
\begin{align*}
0 & \leq\Re\langle k\ast\chi_{[0,t]}f|\chi_{[0,t]}f\rangle_{L_{2}(\mathbb{R})}\\
 & =\Re\intop_{0}^{t}\left(\left(k\hat{\ast}f\right)(s)\right)^{\ast}f(s)\mbox{ d}s,
\end{align*}
which shows that $k$ is of positive type.  
\end{proof}
In \cite{Cannarsa2011} the authors consider kernels $k\in L_{1}(\mathbb{R}_{\geq0};\mathbb{R}),$
such that the function $\mathbb{R}_{\geq0}\ni t\mapsto\intop_{t}^{\infty}k(s)\mbox{ d}s$
is a kernel of positive type. For those kernels we have the following
result.
\begin{lem}[{\cite[Proposition 2.4]{Cannarsa2008}}]
\label{lem:positive kernels primitive} Let $k\in L_{1}(\mathbb{R}_{\geq0};\mathbb{R})$.
We set 
\[
K(t)\coloneqq\intop_{t}^{\infty}k(s)\,\dd s\quad(t\geq0).
\]
Then, if $K$ is of positive type, we have that 
\[
t\Im\,\hat{k}(\i t+\rho)\leq0\quad(t\in\mathbb{R},\rho\geq0).
\]
If $K\in L_{1}(\mathbb{R}_{\geq0})$, then also the converse implication
holds true. \end{lem}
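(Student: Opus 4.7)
My plan is the following. The foundation is a simple integration by parts: since $k \in L_{1}(\mathbb{R}_{\geq 0})$ and $K(t)=\int_{t}^{\infty}k(s)\,\dd s$, the function $K$ is bounded, absolutely continuous with $K'=-k$ a.e., and vanishes at infinity, so
\[
\sqrt{2\pi}\,\hat k(z)\;=\;\intop_{0}^{\infty}e^{-zs}(-K'(s))\,\dd s\;=\;K(0)-z\sqrt{2\pi}\,\hat K(z)\qquad(\Re z>0).
\]
Equivalently, $\hat k(z)=K(0)/\sqrt{2\pi}-z\hat K(z)$. Writing $z=\rho+\i t$ and taking imaginary parts yields the single master identity
\[
t\,\Im\hat k(\i t+\rho)\;=\;-t\,\Im\bigl(z\hat K(z)\bigr)\;=\;-\bigl[t^{2}\Re\hat K(\i t+\rho)+\rho t\,\Im\hat K(\i t+\rho)\bigr],
\]
which reduces both directions of the lemma to controlling the sign of $t\,\Im(z\hat K(z))$ on the right half-plane.

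For the forward direction, I would invoke \prettyref{prop:positive_kernels} to encode ``$K$ is of positive type'' as the statement that $A(\xi):=\Re\hat K(\i\xi)\geq 0$ on $\mathbb{R}$; the hypothesis $K\in L_{1}$ ensures that $\hat K$ extends continuously to $\overline{\mathbb{C}_{\Re\geq 0}}$ and vanishes at infinity, so these boundary values exist in the classical sense. Real-valuedness of $K$ gives $\hat K(\bar z)=\overline{\hat K(z)}$, so $A$ is even. Bounded analytic functions on the right half-plane that vanish at infinity admit the Schwarz/Cauchy--Poisson representation
\[
\hat K(z)\;=\;\frac{1}{\pi}\intop_{-\infty}^{\infty}\frac{A(\xi)}{z-\i\xi}\,\dd\xi,
\]
and the algebraic identity $z/(z-\i\xi)=1+\i\xi/(z-\i\xi)$ (the constant term being real) yields, after taking imaginary parts,
\[
\Im\bigl(z\hat K(z)\bigr)\;=\;\frac{\rho}{\pi}\intop_{-\infty}^{\infty}\frac{\xi A(\xi)}{\rho^{2}+(t-\xi)^{2}}\,\dd\xi.
\]

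The decisive step is a symmetrization: pairing the contributions from $\xi$ and $-\xi$ via the evenness of $A$ gives
\[
\intop_{-\infty}^{\infty}\frac{\xi A(\xi)}{\rho^{2}+(t-\xi)^{2}}\,\dd\xi \;=\; 4t\intop_{0}^{\infty}\frac{\xi^{2}A(\xi)}{\bigl(\rho^{2}+(t-\xi)^{2}\bigr)\bigl(\rho^{2}+(t+\xi)^{2}\bigr)}\,\dd\xi,
\]
whence
\[
t\,\Im\bigl(z\hat K(z)\bigr)\;=\;\frac{4\rho\, t^{2}}{\pi}\intop_{0}^{\infty}\frac{\xi^{2}A(\xi)}{\bigl(\rho^{2}+(t-\xi)^{2}\bigr)\bigl(\rho^{2}+(t+\xi)^{2}\bigr)}\,\dd\xi\;\geq\;0
\]
for $\rho\geq 0$, since $A\geq 0$. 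Combined with the master identity, this gives $t\,\Im\hat k(\i t+\rho)\leq 0$ for $\rho>0$; the case $\rho=0$ follows at once by continuity of $\hat k$ on $\overline{\mathbb{C}_{\Re\geq 0}}$, which is again ensured by $k\in L_{1}$.

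For the converse, assuming $K\in L_{1}$ in addition, I would specialize the hypothesis to $\rho=0$: the master identity reduces to $t\,\Im\hat k(\i t)=-t^{2}\Re\hat K(\i t)$, so $\Re\hat K(\i t)\geq 0$ for all $t\neq 0$, and continuity of $\hat K$ on $\i\mathbb{R}$ extends this to $t=0$ as well. The second statement of \prettyref{prop:positive_kernels} then identifies this boundary condition with $K$ being of positive type. The main technical obstacle in the plan is the rigorous justification of the Cauchy--Poisson representation of $\hat K$ when $A$ is only bounded (not necessarily in $L_{1}$); this is classical $H^{\infty}$ boundary-value theory for the half-plane, but a self-contained treatment may instead appeal to the Herglotz--Nevanlinna representation of ``positive real'' analytic functions, or work separately with the (always convergent) Poisson integral for $\Re\hat K$ and the conjugate Poisson integral for $\Im\hat K$ with an appropriate normalization constant chosen by the vanishing of $\hat K$ at infinity.
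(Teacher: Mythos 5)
Your master identity is exactly the paper's \prettyref{eq:K_and_k} (derived there by Fubini rather than integration by parts), and your converse argument coincides with the paper's. The forward direction, however, takes a genuinely different route, and the step you flag as the ``main technical obstacle'' is where it actually breaks as written. You justify the Cauchy--Poisson representation of $\hat{K}$ by boundedness, but in the forward direction only $k\in L_{1}$ is assumed, so $K$ is bounded but need not be integrable (take $k(s)=(1+s)^{-2}$, $K(t)=(1+t)^{-1}$); then $\hat{K}(\rho)\sim\frac{1}{\sqrt{2\pi}}\log(1/\rho)\to\infty$ as $\rho\to0+$, so $\hat{K}\notin\mathcal{H}^{\infty}(\mathbb{C}_{\Re>0})$ and it does not extend continuously to $z=0$ (your appeal to ``the hypothesis $K\in L_{1}$'' is not available here --- that is only assumed for the converse). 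Consequently the Schwarz formula for bounded functions is unavailable, and you are forced into the Herglotz--Nevanlinna representation for analytic functions with nonnegative real part, where you must additionally kill the linear term (via $\hat{K}(\rho)\to0$ as $\rho\to\infty$) and a possible singular part of the representing measure at $\xi=0$ (via $z\hat{K}(z)\to0$ there). All of this can be done, but it is genuine extra work, not a routine citation.

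The paper sidesteps representing $\hat{K}$ entirely: from $\Re\hat{K}\geq0$ on the \emph{open} half-plane (first part of \prettyref{prop:positive_kernels}, applicable since $K\in L_{\infty}(\mathbb{R}_{\geq0})\subseteq\bigcap_{\rho>0}L_{1,\rho}(\mathbb{R}_{\geq0})$) and the identity \prettyref{eq:K_and_k}, one lets $\rho\to0$ to obtain $t\Im\hat{k}(\i t)\leq0$ on the imaginary axis only, and then propagates to all $\rho\geq0$ by \prettyref{lem:for_one_rho_for_all_rho} with $d=0$. That lemma carries out precisely your Poisson-plus-symmetrization computation --- the kernel $4st\bigl(((t-s)^{2}+\cdot)((t+s)^{2}+\cdot)\bigr)^{-1}$ --- but applied to the bounded function $\Im\hat{k}$ rather than to $\hat{K}$, where the Poisson representation is unproblematic. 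So your key computation is correct and is in fact the engine of the paper's argument; it is merely attached to the wrong function. Redirect the symmetrization to $\Im\hat{k}$ (or simply invoke \prettyref{lem:for_one_rho_for_all_rho}) and the proof closes without any Herglotz machinery.
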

\begin{proof}
Assume that $K$ is of positive type. As $K\in L_{\infty}(\mathbb{R}_{\geq0};\mathbb{R})$
we have that $\Re\:\hat{K}(z)\geq0$ for each $z\in\mathbb{C}_{\Re>0}$
by \prettyref{prop:positive_kernels}. For $z\in\mathbb{C}_{\Re>0}$
we have that 
\begin{align}
\hat{K}(z) & =\frac{1}{\sqrt{2\pi}}\intop_{0}^{\infty}K(s)\e^{-zs}\mbox{ d}s\nonumber \\
 & =\frac{1}{\sqrt{2\pi}}\intop_{0}^{\infty}\intop_{s}^{\infty}k(r)\mbox{ d}r\,\e^{-zs}\mbox{ d}s\nonumber \\
 & =\frac{1}{\sqrt{2\pi}}\frac{1}{z}\left(-\intop_{0}^{\infty}k(s)\,\e^{-zs}\mbox{ d}s+\intop_{0}^{\infty}k(r)\mbox{ d}r\right)\nonumber \\
 & =\frac{1}{z}\left(\frac{1}{\sqrt{2\pi}}\intop_{0}^{\infty}k(s)\mbox{ d}s-\hat{k}(z)\right),\label{eq:K_and_k}
\end{align}
which yields that 
\begin{align*}
0 & \leq\Re\:\hat{K}(\i t+\rho)\\
 & =\frac{1}{|\i t+\rho|^{2}}\left(\rho\frac{1}{\sqrt{2\pi}}\intop_{0}^{\infty}k(s)\mbox{ d}s-\rho\Re\,\hat{k}(\i t+\rho)-t\Im\,\hat{k}(\i t+\rho)\right),
\end{align*}
for each $t\in\mathbb{R},\rho>0$. Letting $\rho$ tend to $0$, we
infer 
\[
0\leq-\frac{1}{t}\Im\,\hat{k}(\i t)
\]
which yields gives $t\Im\hat{k}(\i t)\leq0$ for each $t\in\mathbb{R}.$
The assertion now follows from \prettyref{lem:for_one_rho_for_all_rho}
with $d=0$. Assume now that $K\in L_{1}(\mathbb{R}_{\geq0})$ and
\[
t\Im\hat{k}(\i t+\rho)\leq0\quad(t\in\mathbb{R},\rho\geq0).
\]
In particular we get $t\Im\hat{k}(\i t)\leq0$ and hence, using \prettyref{eq:K_and_k},
we obtain 
\[
\Re\hat{K}(\i t)=-\Re\frac{1}{\i t}\hat{k}(\i t)=-\frac{1}{t}\Im\hat{k}(\i t)\geq0\quad(t\in\mathbb{R}).
\]
Thus, the assertion follows from \prettyref{prop:positive_kernels}.
\end{proof}
In \cite{Cannarsa2011} the exponential stability of a class of hyperbolic
semilinear integro-differential equations is studied for kernels $k\in L_{1,-\alpha}(\mathbb{R}_{\geq0};\mathbb{R})$
for some $\alpha>0,$ satisfying $\intop_{0}^{\infty}k(t)\mbox{ d}t<1$
and $t\mapsto\intop_{t}^{\infty}\e^{\alpha s}k(s)\mbox{ d}s$ is a
kernel of strict positive type. We show that these kernels are also
covered by our approach. For doing so, we need the following auxiliary
results.
\begin{lem}
\label{lem:strict_positive_kernel}Let $k\in L_{1,-\alpha}(\mathbb{R}_{\geq0};\mathbb{R})$
for some $\alpha>0$, such that \textup{$K_{\alpha}(t)\coloneqq\intop_{t}^{\infty}\e^{\alpha s}k(s)\mbox{ d}s$}
for $t\geq0$ is a kernel of strict positive type. Then there is $\varepsilon>0$
such that 
\[
t\Im\hat{k}(\i t+\rho)\leq-\varepsilon\frac{t^{2}}{t^{2}+(\rho+\alpha+1)^{2}}\quad(t\in\mathbb{R},\rho\geq-\alpha).
\]
\end{lem}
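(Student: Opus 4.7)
The plan is to reduce the desired estimate to Lemma \ref{lem:positive kernels primitive} applied to the right auxiliary $L_{1}$-kernel. Since $K_{\alpha}$ is of strict positive type, I can pick $\varepsilon>0$ such that $t\mapsto K_{\alpha}(t)-\varepsilon\e^{-t}$ is of positive type. The key observation is that this function is the primitive in the sense of Lemma \ref{lem:positive kernels primitive} of the kernel
\[
k_{1}(s)\coloneqq\e^{\alpha s}k(s)-\varepsilon\e^{-s},
\]
that is $K_{\alpha}(t)-\varepsilon\e^{-t}=\intop_{t}^{\infty}k_{1}(r)\,\dd r$, and the assumption $k\in L_{1,-\alpha}(\mathbb{R}_{\geq0};\mathbb{R})$ together with $\e^{-\cdot}\in L_{1}(\mathbb{R}_{\geq 0})$ guarantees $k_{1}\in L_{1}(\mathbb{R}_{\geq0};\mathbb{R})$, so Lemma \ref{lem:positive kernels primitive} applies.

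Applying that lemma yields $t\Im\hat{k_{1}}(\i t+\mu)\leq0$ for all $t\in\mathbb{R}$ and $\mu\geq0$. Next I would compute $\hat{k_{1}}$ explicitly: by a translation of the Laplace transform, $\widehat{\e^{\alpha\cdot}k}(z)=\hat{k}(z-\alpha)$, and $\widehat{\e^{-\cdot}}(z)=(\sqrt{2\pi}(z+1))^{-1}$, so
\[
\hat{k_{1}}(z)=\hat{k}(z-\alpha)-\frac{\varepsilon}{\sqrt{2\pi}(z+1)}.
\]
Taking imaginary parts at $z=\i t+\mu$ and using $\Im\frac{1}{\i t+\mu+1}=\frac{-t}{t^{2}+(\mu+1)^{2}}$, the inequality $t\Im\hat{k_{1}}(\i t+\mu)\leq0$ becomes
\[
t\Im\hat{k}(\i t+\mu-\alpha)\leq -\frac{\varepsilon}{\sqrt{2\pi}}\cdot\frac{t^{2}}{t^{2}+(\mu+1)^{2}}\quad(t\in\mathbb{R},\,\mu\geq0).
\]

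Finally, the substitution $\rho=\mu-\alpha$ (so $\mu+1=\rho+\alpha+1$) transports the range $\mu\geq0$ to $\rho\geq-\alpha$ and produces exactly the claimed inequality, after renaming $\varepsilon/\sqrt{2\pi}$ as $\varepsilon$. The only delicate point in the argument is checking that Lemma \ref{lem:positive kernels primitive} genuinely applies to $k_{1}$ — that is, that $k_{1}$ lies in $L_{1}(\mathbb{R}_{\geq 0};\mathbb{R})$ and that the corresponding primitive is really $K_{\alpha}-\varepsilon\e^{-\cdot}$; both are immediate from the integrability assumption on $k$ and direct computation, so no serious obstacle arises.
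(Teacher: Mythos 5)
Your proposal is correct and follows essentially the same route as the paper: choose $\varepsilon$ from the strict positivity of $K_{\alpha}$, recognize $K_{\alpha}-\varepsilon\e^{-\cdot}$ as the primitive of $\e^{\alpha\cdot}k-\varepsilon\e^{-\cdot}\in L_{1}(\mathbb{R}_{\geq0};\mathbb{R})$, apply Lemma \ref{lem:positive kernels primitive}, and shift $z\mapsto z-\alpha$. You are in fact slightly more careful than the paper about the $\sqrt{2\pi}$ normalization of $\widehat{\e^{-\cdot}}$, which is harmless since it is absorbed into $\varepsilon$.
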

\begin{proof}
Since $K_{\alpha}$ is of strict positive type, we find $\varepsilon>0,$
such that $g(t)\coloneqq K_{\alpha}(t)-\varepsilon\e^{-t}=\intop_{t}^{\infty}k_{\alpha}(s)-\varepsilon\e^{-s}\mbox{ d}s$
is of positive type, where we set $k_{\alpha}(s)\coloneqq\e^{\alpha s}k(s)$.
Hence, by \prettyref{lem:positive kernels primitive} we have 
\[
t\Im\left(\hat{k_{\alpha}}(\i t+\rho)-\varepsilon\frac{1}{\i t+\rho+1}\right)\leq0\quad(t\in\mathbb{R},\rho\geq0).
\]
Moreover, using $\hat{k_{\alpha}}(z)=\hat{k}(z-\alpha)$ for each
$z\in\mathbb{C}_{\Re\geq0}$, we derive 
\[
t\Im\hat{k}(\i t+\rho-\alpha)\leq-\varepsilon\frac{t^{2}}{t^{2}+(\rho+1)^{2}}\quad(t\in\mathbb{R},\rho\geq0),
\]
which gives the assertion. \end{proof}
\begin{lem}
\label{lem:positive_kernels_invertible}Let $k\in L_{1}(\mathbb{R}_{\geq0};\mathbb{R})$
such that $K(t)\coloneqq\intop_{t}^{\infty}k(s)\,\dd s$ for $t\geq0$
is a kernel of positive type. Then\nomenclature[O_170]{$A^\ast$}{the adjoint of an operator $A$.}\nomenclature[Z_000]{$z^\ast$}{the conjugate of a complex number $z$.}
\[
\Re\left(z^{\ast}\hat{k}(z)\right)\leq\frac{\Re z}{\sqrt{2\pi}}\intop_{0}^{\infty}k(s)\,\dd s\quad(z\in\mathbb{C}_{\Re>0}).
\]
If in addition, $\intop_{0}^{\infty}k(s)\,\dd s<1,$ then for each
$\rho>0$ there exists $c>0$ such that 
\[
|1-\sqrt{2\pi}\hat{k}(z)|\geq c\quad(z\in\mathbb{C}_{\Re\geq\rho}).
\]
\end{lem}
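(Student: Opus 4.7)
My plan for the first inequality is to exploit the identity
\[
\hat{K}(z)=\frac{1}{z}\left(\frac{1}{\sqrt{2\pi}}\intop_{0}^{\infty}k(s)\,\dd s-\hat{k}(z)\right)\qquad(z\in\mathbb{C}_{\Re>0})
\]
that was already derived inside the proof of \prettyref{lem:positive kernels primitive} (see \prettyref{eq:K_and_k}). Since $K$ is of positive type and pointwise bounded by $K_{0}:=\intop_{0}^{\infty}k(s)\,\dd s$, one has $K\in L_{1,\nu}(\mathbb{R}_{\geq0};\mathbb{R})$ for every $\nu>0$, and \prettyref{prop:positive_kernels} then yields $\Re\hat{K}(z)\geq0$ on $\mathbb{C}_{\Re>0}$. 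Multiplying the displayed identity by $|z|^{2}=z^{\ast}z$, taking real parts and using $\Re z^{\ast}=\Re z$ produces exactly $\Re(z^{\ast}\hat{k}(z))\leq\tfrac{\Re z}{\sqrt{2\pi}}\intop_{0}^{\infty}k(s)\,\dd s$.

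For the second assertion, fix $\rho>0$ and assume $K_{0}<1$. The first part gives
\[
\Re\bigl(z^{\ast}(1-\sqrt{2\pi}\hat{k}(z))\bigr)=\Re z-\sqrt{2\pi}\Re(z^{\ast}\hat{k}(z))\geq\Re z\,(1-K_{0})\geq\rho(1-K_{0})
\]
for every $z\in\mathbb{C}_{\Re\geq\rho}$, and since $\Re w\leq|w|$ this gives
\[
|z|\,|1-\sqrt{2\pi}\hat{k}(z)|\geq\rho(1-K_{0}).
\]
This already settles every bounded portion of the half-plane: on $\{\Re z\geq\rho,\ |z|\leq R\}$ the quantity in question is at least $\rho(1-K_{0})/R$.

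To handle $|z|\to\infty$ within $\mathbb{C}_{\Re\geq\rho}$, the plan is to show that $\hat{k}(z)\to0$ uniformly on $\mathbb{C}_{\Re\geq0}$ as $|z|\to\infty$. Given $\varepsilon>0$, approximate $k$ in $L_{1}(\mathbb{R}_{\geq0};\mathbb{R})$ by some $\phi\in C_{c}^{1}(\mathbb{R}_{\geq0};\mathbb{R})$ with $|k-\phi|_{L_{1}}<\varepsilon$; then $\sqrt{2\pi}|\hat{k}(z)-\hat{\phi}(z)|\leq\varepsilon$ uniformly on $\mathbb{C}_{\Re\geq0}$, while a single integration by parts yields $\sqrt{2\pi}|\hat{\phi}(z)|\leq\bigl(|\phi(0)|+|\phi'|_{L_{1}}\bigr)/|z|$ on the same half-plane. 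Choosing $R>0$ so large that $\sqrt{2\pi}|\hat{k}(z)|\leq\tfrac{1}{2}$ whenever $|z|\geq R$ and $\Re z\geq\rho$, one obtains $|1-\sqrt{2\pi}\hat{k}(z)|\geq\tfrac{1}{2}$ in that regime, and combining both cases the constant $c:=\min\{\tfrac{1}{2},\,\rho(1-K_{0})/R\}$ does the job.

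The only mildly delicate point in this plan is the uniform Riemann--Lebesgue-type decay of $\hat{k}(z)$ as $|z|\to\infty$ in the closed half-plane $\mathbb{C}_{\Re\geq\rho}$; the density-plus-integration-by-parts trick above takes care of it without invoking any $\rho$-dependent dominated convergence argument and uses only the hypothesis $k\in L_{1}(\mathbb{R}_{\geq0};\mathbb{R})$.
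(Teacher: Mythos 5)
Your proof is correct. The first inequality is handled exactly as in the paper: both arguments read off $\Re\hat{K}(z)\geq0$ from \prettyref{prop:positive_kernels} and divide the identity \prettyref{eq:K_and_k} by $z$, so nothing to compare there.

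For the second assertion your route is genuinely different from the paper's, and it is worth recording the contrast. The paper first shows $\sqrt{2\pi}\hat{k}(z)\neq1$ pointwise on $\mathbb{C}_{\Re>0}$ by contradiction with the first part, then pieces together a lower bound from three regimes: $\Re z$ large (via $|k|_{L_{1,\nu}}\to0$), $|\Im z|$ large (via the uniform Riemann--Lebesgue statement \prettyref{prop:Riemann-Lebesgue_professional}), and a compact remainder (via continuity and nonvanishing). This yields a constant $c$ with no explicit form, and the invocation of \prettyref{prop:Riemann-Lebesgue_professional} formally requires $\left(t\mapsto tk(t)\right)\in L_{1}(\mathbb{R}_{\geq0})$, which is not among the hypotheses here. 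You instead extract the quantitative estimate $\left|1-\sqrt{2\pi}\hat{k}(z)\right|\geq\rho(1-K_{0})/|z|$ directly from the first inequality via $\Re\left(z^{\ast}(1-\sqrt{2\pi}\hat{k}(z))\right)\geq\Re z\,(1-K_{0})$ (note this works regardless of the sign of $K_{0}$, since only $1-K_{0}>0$ is used), which disposes of every bounded region at once, and you handle $|z|\to\infty$ by the density-plus-integration-by-parts version of Riemann--Lebesgue, which needs only $k\in L_{1}(\mathbb{R}_{\geq0})$ and is uniform on the closed right half-plane. The payoff is an explicit constant $c=\min\{\tfrac{1}{2},\rho(1-K_{0})/R\}$ and a cleaner set of hypotheses; the paper's argument, by contrast, recycles machinery already set up elsewhere in the text. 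Both proofs are sound.
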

\begin{proof}
Since 
\[
\hat{K}(z)=\frac{1}{z}\left(\frac{1}{\sqrt{2\pi}}\intop_{0}^{\infty}k(s)\mbox{ d}s-\hat{k}(z)\right)\quad(z\in\mathbb{C}_{\Re>0}),
\]
we derive from \prettyref{prop:positive_kernels} 
\begin{align*}
0 & \leq\Re\frac{1}{z}\left(\frac{1}{\sqrt{2\pi}}\intop_{0}^{\infty}k(s)\mbox{ d}s-\hat{k}(z)\right)\\
 & =\frac{1}{|z|^{2}}\left(\frac{\Re z}{\sqrt{2\pi}}\intop_{0}^{\infty}k(s)\mbox{ d}s-\Re\left(z^{\ast}\hat{k}(z)\right)\right)
\end{align*}
for $z\in\mathbb{C}_{\Re>0},$ which shows the first assertion. Assume
now that $\intop_{0}^{\infty}k(s)\mbox{ d}s<1$ and let $\rho>0$.
We claim that $\sqrt{2\pi}\hat{k}(z)\ne1$ for each $z\in\mathbb{C}_{\Re>0}.$
Indeed, if $\sqrt{2\pi}\hat{k}(z)=1$ for some $z\in\mathbb{C}_{\Re>0},$
we would have 
\[
\frac{\Re z}{\sqrt{2\pi}}=\Re\left(z^{\ast}\hat{k}(z)\right)\leq\frac{\Re z}{\sqrt{2\pi}}\intop_{0}^{\infty}k(s)\mbox{ d}s,
\]
according to what we have shown above. This however contradicts $\intop_{0}^{\infty}k(s)\mbox{ d}s<1$
and thus, $\sqrt{2\pi}\hat{k}(z)\ne1$ for $z\in\mathbb{C}_{\Re>0}.$
Since $|k|_{L_{1,\nu}}\to0$ as $\nu\to\infty,$ we find $\nu>0$
such that 
\[
|1-\sqrt{2\pi}\hat{k}(z)|\geq1-|k|_{L_{1,\nu}}\geq\frac{1}{2}\quad(z\in\mathbb{C}_{\Re>\nu}).
\]
Moreover, by \prettyref{prop:Riemann-Lebesgue_professional} there
exists some $M>0$ such that 
\[
|1-\sqrt{2\pi}\hat{k}(\i t+\mu)|\geq\frac{1}{2}\quad(|t|>M,\rho\leq\mu\leq\nu).
\]
By the continuity of $\hat{k}$ and the fact that $\sqrt{2\pi}\hat{k}(z)\ne1$
for each $z\in\mathbb{C}_{\Re>0},$ we infer that there is some $\tilde{c}>0$
such that 
\[
|1-\sqrt{2\pi}\hat{k}(\i t+\mu)|\geq\tilde{c}\quad(|t|\leq M,\rho\leq\mu\leq\nu).
\]
Thus, the second assertion follows by setting $c\coloneqq\min\{\tilde{c},\frac{1}{2}\}$. 
\end{proof}
We can now prove that the kernels considered in \cite{Cannarsa2011}
satisfy \prettyref{cond:cond_kernel_sec_order}.
\begin{prop}
\label{prop:kernel_sforza}Let $k\in L_{1,-\alpha}(\mathbb{R}_{\geq0};\mathbb{R})$
such that $K_{\alpha}(t)\coloneqq\intop_{t}^{\infty}\e^{\alpha s}k(s)\,\dd s$
for $t\geq0$ defines a kernel of strict positive type. Moreover,
assume that $\intop_{0}^{\infty}k(s)\,\dd s<1$. Then $k$ satisfies
\prettyref{cond:cond_kernel_sec_order}.\end{prop}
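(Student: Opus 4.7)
Since $k$ is real-valued, the conditions \prettyref{item:selfadjoint} and \prettyref{item:commute} of \prettyref{cond:cond_kernel_sec_order} are immediate: each $k(t)$ is selfadjoint and scalar multiplication commutes. It remains to verify \prettyref{item:invertible}, \prettyref{item:pos_def_Re}, and \prettyref{item:pos_def}. The proof rests on two tools. \prettyref{lem:strict_positive_kernel} converts the strict positive type of $K_\alpha$ into the pointwise estimate
\[
t\Im\hat{k}(\i t+\rho)\leq -\varepsilon\,\tfrac{t^{2}}{t^{2}+(\rho+\alpha+1)^{2}}\qquad(t\in\mathbb{R},\;\rho\geq-\alpha),
\]
and \prettyref{lem:positive_kernels_invertible} converts positive type of $K$ together with $\intop_{0}^{\infty}k<1$ into the pointwise bound $\Re(z^{\ast}\hat{k}(z))\leq\tfrac{\Re z}{\sqrt{2\pi}}\intop_{0}^{\infty}k(s)\,\mathrm{d}s$ on $\mathbb{C}_{\Re>0}$ plus a uniform lower bound on $|1-\sqrt{2\pi}\hat{k}|$ on each closed right half plane.

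Condition \prettyref{item:pos_def} I would handle first. Because the inner product is conjugate-linear in the first slot and $k$ is real-valued (so $\overline{\hat{k}(z)}=\hat{k}(z^{\ast})$), one has $t\Im\langle\hat{k}(\i t+\rho)x|x\rangle_{H_{0}}=-t\Im\hat{k}(\i t+\rho)\,|x|_{H_{0}}^{2}$. Setting $g_{\delta}(\rho)\coloneqq\varepsilon\delta^{2}/(\delta^{2}+(\rho+\alpha+1)^{2})$, which is continuous on $[-\alpha,\infty)$ with $g_{\delta}(0)>0$, and using monotonicity of $t\mapsto t^{2}/(t^{2}+c^{2})$ in $|t|$, \prettyref{lem:strict_positive_kernel} gives \prettyref{item:pos_def} with $\rho_{0}=\alpha$.

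For \prettyref{item:pos_def_Re}, the expansion from the proof of \prettyref{lem:comp_realpart_kernel} yields, for $\rho\geq\rho_{0}>0$,
\[
\Re\langle(\i t+\rho)(1-\sqrt{2\pi}\hat{k}(\i t+\rho)^{\ast})x|x\rangle_{H_{0}}=|x|_{H_{0}}^{2}\bigl[\rho-\sqrt{2\pi}\bigl(\rho\Re\hat{k}(\i t+\rho)+t\Im\hat{k}(\i t+\rho)\bigr)\bigr].
\]
Granted positive type of $K$, the bound from \prettyref{lem:positive_kernels_invertible} applied at $z=\i t+\rho$ gives $\sqrt{2\pi}(\rho\Re\hat{k}+t\Im\hat{k})\leq\rho\intop_{0}^{\infty}k$, so the expression is at least $\rho_{0}(1-\intop_{0}^{\infty}k)\,|x|_{H_{0}}^{2}$, which is \prettyref{item:pos_def_Re} with $c_{\rho_0}=\rho_0(1-\intop_{0}^{\infty}k(s)\,\mathrm{d}s)>0$. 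Condition \prettyref{item:invertible} follows from the same \prettyref{lem:positive_kernels_invertible} on every $\mathbb{C}_{\Re\geq\rho}$, $\rho>0$, extended into a left strip as follows. On $\{\Re z=0\}$ an equation $\sqrt{2\pi}\hat{k}(\i t_{0})=1$ is ruled out via the identity $\hat{K}_{\alpha}(w)=\tfrac{1}{w}\bigl(\tfrac{|k|_{L_{1,-\alpha}}}{\sqrt{2\pi}}-\hat{k}(w-\alpha)\bigr)$ at $w=\i t_{0}+\alpha$: the imaginary part forces $t_{0}\Re\hat{K}_{\alpha}(\i t_{0}+\alpha)=0$, strict positive type gives $\Re\hat{K}_{\alpha}(\i t_{0}+\alpha)>0$, hence $t_{0}=0$, and then $\hat{k}(0)=1/\sqrt{2\pi}$ contradicts $\intop_{0}^{\infty}k<1$. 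Continuity of $\hat{k}$ on $\{\Re z\geq-\alpha\}$, uniform decay in strips by Riemann--Lebesgue (cf.\ \prettyref{prop:Riemann-Lebesgue_professional}), and a compactness argument then promote the lower bound to a left strip $\{\Re z\geq-\mu\}$ for some $0<\mu\leq\alpha$.

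The main obstacle is that \prettyref{lem:positive_kernels_invertible} demands $K$ itself—not merely $K_{\alpha}$—to be of positive type. Deducing positive type of $K$, that is $\Re(z^{\ast}\hat{k}(z))\leq\tfrac{\Re z}{\sqrt{2\pi}}\intop_{0}^{\infty}k(s)\,\mathrm{d}s$ on $\mathbb{C}_{\Re>0}$, from strict positive type of $K_{\alpha}$ together with the normalization $\intop_{0}^{\infty}k<1$ is the technical heart of the argument. The natural route is a perturbation/scaling argument in which the strict-positive-type margin $\varepsilon\mathrm{e}^{-\cdot}$ absorbs the discrepancy between $\hat{k}_{\alpha}$ and $\hat{k}$ introduced by the horizontal shift $z\mapsto z-\alpha$; once this is established, the three steps above conclude the proof.
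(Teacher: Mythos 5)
There is a genuine gap: you explicitly leave unproven the step you yourself call ``the technical heart,'' namely that $K=K_{0}$, $K(t)=\intop_{t}^{\infty}k(s)\,\dd s$, is of positive type, offering only an unspecified ``perturbation/scaling argument.'' Without this, your verifications of \prettyref{item:pos_def_Re} and \prettyref{item:invertible} do not go through, since both invoke \prettyref{lem:positive_kernels_invertible}, which requires positive type of $K$ itself. The good news is that no perturbation argument is needed: \prettyref{lem:strict_positive_kernel} already gives $t\Im\hat{k}(\i t+\rho)\leq-\varepsilon\,t^{2}/(t^{2}+(\rho+\alpha+1)^{2})\leq0$ for \emph{all} $\rho\geq-\alpha$, hence in particular for all $\rho\geq0$; moreover $K\in L_{1}(\mathbb{R}_{\geq0})$ because $|K(t)|\leq\e^{-\alpha t}|k|_{L_{1,-\alpha}}$. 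The converse direction of \prettyref{lem:positive kernels primitive} then yields directly that $K$ is of positive type. The same observation applied at level $-\varepsilon$ (for any $0<\varepsilon<\alpha$) shows that $t\mapsto\intop_{t}^{\infty}\e^{\varepsilon s}k(s)\,\dd s$ is of positive type as well.

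Your treatment of \prettyref{item:invertible} has a second, related weakness. You rule out $\sqrt{2\pi}\hat{k}(\i t_{0})=1$ on the imaginary axis and then appeal to ``continuity, Riemann--Lebesgue and compactness'' to obtain a uniform lower bound on a strip $\{-\mu\leq\Re z\leq0\}$; but this does not exclude zeros of $1-\sqrt{2\pi}\hat{k}$ in the \emph{open} left part of the strip, so the compactness argument is not conclusive. The paper's route avoids this: choose $0<\varepsilon<\alpha$ with $\intop_{0}^{\infty}\e^{\varepsilon t}k(t)\,\dd t<1$ (possible by continuity of $\varepsilon\mapsto\intop_{0}^{\infty}\e^{\varepsilon t}k(t)\,\dd t$), note that the primitive of $\e^{\varepsilon\cdot}k$ is of positive type by the argument above, and apply \prettyref{lem:positive_kernels_invertible} to the shifted kernel $\e^{\varepsilon\cdot}k$; translating back by $z\mapsto z+\varepsilon$ gives $|1-\sqrt{2\pi}\hat{k}(z)|\geq c$ on all of $\mathbb{C}_{\Re\geq-\mu}$ for any $0<\mu<\varepsilon$ in one stroke. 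Your steps for \prettyref{item:selfadjoint}, \prettyref{item:commute}, \prettyref{item:pos_def}, and (modulo the positive type of $K$) for \prettyref{item:pos_def_Re} agree with the paper.
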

\begin{proof}
Obviously, $k$ satisfies \prettyref{item:selfadjoint} and \prettyref{item:commute}
since it is assumed to be real-valued. By \prettyref{lem:strict_positive_kernel}
there is some $\varepsilon>0$ such that 
\[
t\Im\hat{k}(\i t+\rho)\leq-\varepsilon\frac{t^{2}}{t^{2}+(\rho+\alpha+1)^{2}}\quad(t\in\mathbb{R},\rho\geq-\alpha).
\]
Hence, setting $g_{\delta}(\rho)\coloneqq\varepsilon\frac{\delta^{2}}{\delta^{2}+(\rho+\alpha+1)^{2}}$
for $\rho\geq-\alpha$ and $\delta>0,$ we infer 
\[
t\Im\langle\hat{k}(\i t+\rho)x|x\rangle_{H_{0}}=-t\Im\hat{k}(\i t+\rho)|x|_{H_{0}}^{2}\geq g_{\delta}(\rho)|x|_{H_{0}}^{2}\quad(x\in H_{0},|t|\geq\delta,\rho\geq-\alpha),
\]
which is \prettyref{item:pos_def}. Thus, it remains to show \prettyref{item:invertible}
and \prettyref{item:pos_def_Re}. For doing so, we show that for each
$\mu>-\alpha$ the kernel $K_{\mu}(t)\coloneqq\intop_{t}^{\infty}\e^{-\mu s}k(s)\mbox{ d}s$
is of positive type. First we note that 
\[
t\Im\hat{k}(\i t+\rho)\leq0\quad(\rho\geq-\mu).
\]
Moreover, $K_{\mu}\in L_{1}(\mathbb{R}_{\geq0})$ since $k\in L_{1,-\alpha}(\mathbb{R}_{\geq0})$
and hence, $K_{\mu}$ is of positive type according to \prettyref{lem:positive kernels primitive}.
Using the continuity of 
\[
\varepsilon\mapsto\intop_{0}^{\infty}\e^{\varepsilon t}k(t)\mbox{ d}t,
\]
 we find some $0<\varepsilon<\alpha$ such that $\intop_{0}^{\infty}\e^{\varepsilon t}k(t)\mbox{ d}t<1$.
Let $0<\mu<\varepsilon.$ Since $K_{\varepsilon}$ is of positive
type, we can apply \prettyref{lem:positive_kernels_invertible} to
find $c>0$ such that 
\[
|1-\sqrt{2\pi}\hat{k}(z)|\geq c\quad(z\in\mathbb{C}_{\Re\geq-\mu}),
\]
which implies \prettyref{item:invertible}. Finally, using that $K_{0}$
is of positive type, we obtain by \prettyref{lem:positive_kernels_invertible}
\[
\Re\left(z^{\ast}\hat{k}(z)\right)\leq\frac{\Re z}{\sqrt{2\pi}}\intop_{0}^{\infty}k(s)\mbox{ d}s\quad(z\in\mathbb{C}_{\Re>0}).
\]
 Thus, for $\rho_{0}>0$ we have that 
\begin{align*}
\Re\langle(\i t+\rho)(1-\sqrt{2\pi}\hat{k}(\i t+\rho)^{\ast})x|x\rangle_{H_{0}} & =\left(\rho-\sqrt{2\pi}\Re\left((-\i t+\rho)\hat{k}(\i t+\rho)\right)\right)|x|_{H_{0}}^{2}\\
 & \geq\left(\rho-\rho\intop_{0}^{\infty}k(s)\mbox{ d}s\right)|x|_{H_{0}}^{2}\\
 & \geq\rho_{0}\left(1-\intop_{0}^{\infty}k(s)\mbox{ d}s\right)|x|_{H_{0}}^{2},
\end{align*}
for each $x\in H_{0},t\in\mathbb{R},\rho\geq\rho_{0},$ which shows
\prettyref{item:pos_def_Re}. 
\end{proof}

\section{Notes}

The main idea for the exponential stability of evolutionary problems
is to study the Fourier-Laplace transformed solution operator, which
is an $\mathcal{H}^{\infty}$-function on some right half plane and
to look for analytic and bounded extensions on a bigger right half
plane containing the imaginary axis. This idea is not new and was
broadly applied, especially in the framework of $C_{0}$-semigroups.
We just mention the famous Gearhart-Prüß Theorem \cite{Pruss1984},
which states that for a $C_{0}$-semigroup on a Hilbert space the
growth bound and the abscissa of boundedness coincide. The main reason,
why this approach works is that we are dealing with Hilbert space
valued functions, since only in this case the Fourier transform (or
the Fourier-Laplace transform) becomes unitary, see \cite{Kwapien1972}.
Indeed, the Gearhart-Prüß Theorem is false for general Banach spaces,
see for instance the example in \cite{Greiner1981}, where a semigroup
with growth bound $0$ and abscissa of boundedness $-\infty$ is given.
However, under additional assumptions on the semigroup and/or the
underlying Banach space, one can show the equality of growth and abscissa
of boundedness. Examples are: eventually norm continuous semigroups
\cite[Chapter V, Theorem 1.10]{engel2000one}, positive semigroups
on ordered Banach spaces with normal cone \cite{Neubrander1986} (see
also \cite[Theorem 5.3.1]{ABHN_2011}) and positive semigroups on
$L_{p}(\Omega)$ for a $\sigma$-finite measure space $\Omega$ \cite{Weis1995}
(see also \cite[Theorem 5.3.6]{ABHN_2011}). We emphasize once again
that our notion of exponential stability (which means that the abscissa
of boundedness is negative) does not yield an exponential decay for
the solutions of evolutionary problems unless the right-hand side
is more regular. So, the main point in the cited theorems is that
for suitable Cauchy problems, where one can associate a $C_{0}$-semigroup,
our notion of exponential stability indeed yields the exponential
decay of the solution. We will address the question whether a similar
result holds for a class of evolutionary problems in the next chapter.
It should be noted that our notion of exponential stability defined
by the invariance of suitable spaces has a counterpart in the theory
of $C_{0}$-semigroups, namely Datko's Theorem (\cite{Datko1972},
see also \cite[Theorem 5.1.2]{ABHN_2011}). This theorem states that
a $C_{0}$-semigroup with generator $A$ on a Banach space $X$ is
exponentially stable if and only if all solutions $u$ of the corresponding
inhomogeneous Cauchy-Problem 
\[
\partial_{t}u=Au+f,\quad u(0)=0,
\]
belong to $L_{p}(\mathbb{R}_{\geq0};X)$ providing that $f\in L_{p}(\mathbb{R}_{\geq0};X)$
for some (or equivalently all) $p\in[1,\infty[.$ In other words,
the associated solution operator leaves the space $L_{p}(\mathbb{R}_{\geq0};X)$
invariant.

We remark that exponential stability is the perhaps strongest and
simplest notion of stability for evolutionary problems. It would be
interesting to study other notions of stability like polynomial decay
of solutions or simply their convergence to $0$. In the framework
of $C_{0}$-semigroups, results in these directions are already known.
We mention the famous Arendt-Batty-Lyubich-V\~{u} Theorem (see \cite{Arendt1988}
and \cite{Lyubich1988}) for the strong stability of semigroups, i.e.
$T(t)\to0$ strongly as $t\to\infty.$ For the polynomial stability
of semigroups on Hilbert spaces we refer to \cite{Borichev2010},
where the polynomial stability is characterized in terms of resolvent
estimates for the generator. A similar result linking resolvent estimates
of the generator and the asymptotic behaviour of the semigroup in
a Banach space setting is the Theorem of Batty-Duyckarts, \cite{Batty2008}.
The question, which arises naturally is whether those or similar results
can be carried over to evolutionary problems. This is not known so
far and worthy to be studied in future.

\newpage{}

$\:$

\thispagestyle{empty}

\chapter{Initial conditions for evolutionary problems\label{chap:Initial-conditions-for}}

In this last chapter we address the question of how to formulate initial
value problems in the framework of evolutionary problems. As evolutionary
problems are operator equations, which should hold on the whole real
line as ``time horizon'', we need to inspect how initial conditions
could be imposed in this setting. It turns out that particular distributional
right-hand sides and the causality of the solution operators can be
used to formulate initial conditions (see e.g. \cite{Picard_McGhee}).
The distributions, which are needed to formulate initial value problems,
will be introduced in the first section of this chapter. Furthermore,
in the theory of delay equations, it is common to impose histories
instead of initial values (see e.g. \cite{Hale1971,Webb1976,Batkai_2005}).
Since problems with memories are also covered by evolutionary problems,
one needs to inspect how histories can be formulated within the present
framework and what is a suitable class of histories. We answer these
questions in the second section of this chapter. The third section
is devoted to the regularity of initial value problems. In particular,
we derive conditions on the material law $M$ and the operator $A$
that allows us to associate a strongly continuous semigroup to the
problem under consideration. For the theory of strongly continuous
semigroups we refer to the monographs \cite{engel2000one,ABHN_2011}.
Moreover, we show that for those evolutionary problems, where a semigroup
can be associated with, the exponential stability presented in \prettyref{chap:Exponential-stability-for}
yields the classical exponential stability, as it is introduced in
the theory of semigroups under suitable assumptions on the material
law. In the last section we discuss several examples and study their
regularity and exponential stability.

\section{Extrapolation spaces }

Throughout this section let $H$ be a Hilbert space. In this section
we recall the concept of extrapolation spaces, define the space $H_{\rho}^{-1}(\mathbb{R};H)$
and discuss some of its properties. We begin with the definition of
extrapolation spaces associated with an operator $C$.
\begin{defn*}
Let $C:D(C)\subseteq H\to H$ be a densely defined closed linear operator.
We assume that $C$ is boundedly invertible. Then we define 
\[
H^{1}(C)\coloneqq\left(D(C),|C\cdot|_{H}\right),
\]
which, due to the closedness of $C$, is again a Hilbert space with
inner product
\[
\langle x|y\rangle_{H^{1}(C)}\coloneqq\langle Cx|Cy\rangle_{H}.
\]
Moreover, we define 
\[
H^{-1}(C)\coloneqq\widetilde{(H,|C^{-1}\cdot|_{H})},
\]
i.e. the completion of $H$ with respect to the norm $x\mapsto|C^{-1}x|_{H}.$
We call $(H^{1}(C),H,H^{-1}(C))$ the \emph{Gelfand triple associated
with $C$.}\end{defn*}
\begin{rem}
Note that by definition $C:H^{1}(C)\to H$ is an isometry and onto
by assumption, hence a unitary operator. Moreover 
\[
C:D(C)\subseteq H\to H^{-1}(C)
\]
is isometric as well and has a dense range by assumption. Thus, there
is a unique extension of $C$, which we will again denote by $C$,
such that 
\[
C:H\to H^{-1}(C)
\]
is unitary. We note that we can continue the process of defining extrapolation
spaces by using powers of $C$. This results in so-called Sobolev
chains, or more general, Sobolev lattices, see \cite[Chapter 2]{Picard_McGhee}. 
\end{rem}
The following lemma provides another way of defining the space $H^{-1}(C)$.
\begin{lem}
\label{lem:H^-1 as dual} Let $C:D(C)\subseteq H\to H$ densely defined
closed linear and boundedly invertible. Then so is $C^{\ast}$ and
\nomenclature[B_140]{$X'$}{the dual space of a space $X$.} 
\begin{align*}
U:H^{-1}(C) & \to H^{1}(C^{\ast})'\\
x & \mapsto\left(y\mapsto\langle C^{-1}x|C^{\ast}y\rangle_{H}\right)
\end{align*}
is unitary, if we equip $H^{1}(C^{\ast})$ with the linear structure
\[
(\lambda\varphi+\psi)(y)\coloneqq\lambda^{\ast}\varphi(y)+\psi(y)\quad(\lambda\in\mathbb{C},\varphi,\psi\in H^{1}(C^{\ast})',y\in H^{1}(C^{\ast})).
\]
\end{lem}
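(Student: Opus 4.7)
The plan is to first establish the basic functional-analytic facts about $C^{\ast}$, then prove the isometry property on the dense subspace $H \subseteq H^{-1}(C)$, and finally use Riesz representation to obtain surjectivity. The key observation is that the twisted scalar multiplication on $H^{1}(C^{\ast})'$ is exactly what is needed to turn the antilinearity in the first slot of the inner product into linearity.

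First, I would verify that $C^{\ast}$ is densely defined, closed, and boundedly invertible. Closedness and density of $D(C^{\ast})$ follow from $C$ being closed and densely defined. Since $C$ is boundedly invertible, the relation $(C^{\ast})^{-1} = (C^{-1})^{\ast}$ on the natural domains shows $(C^{\ast})^{-1}$ is bounded, so $C^{\ast}$ is boundedly invertible. In particular, $H^{1}(C^{\ast}) = (D(C^{\ast}), |C^{\ast}\cdot|_H)$ is a Hilbert space and the extension $C^{\ast} \colon H \to H^{-1}(C^{\ast})$ is unitary as in the previous remark; symmetrically, $C \colon H \to H^{-1}(C)$ and $C^{-1} \colon H^{-1}(C) \to H$ are unitary extensions.

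Next, I would show that $U$ is well-defined and isometric on the dense subspace $H \subseteq H^{-1}(C)$. For $x \in H$ and $y \in D(C^{\ast})$, Cauchy--Schwarz gives
\[
 |U(x)(y)| = |\langle C^{-1}x | C^{\ast} y\rangle_H| \leq |C^{-1}x|_H \, |C^{\ast}y|_H = |x|_{H^{-1}(C)} \, |y|_{H^{1}(C^{\ast})},
\]
so $U(x) \in H^{1}(C^{\ast})'$ with $\|U(x)\| \leq |x|_{H^{-1}(C)}$. Equality is attained by the choice $y := (C^{\ast})^{-1} C^{-1} x \in D(C^{\ast})$, for which $C^{\ast}y = C^{-1}x$ and hence $U(x)(y) = |C^{-1}x|_H^2 = |x|_{H^{-1}(C)} \, |y|_{H^{1}(C^{\ast})}$. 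A direct computation using $(\lambda \varphi + \psi)(y) = \lambda^{\ast}\varphi(y) + \psi(y)$ shows that $U$ is linear with respect to the twisted structure on $H^{1}(C^{\ast})'$, since the conjugate linearity of the inner product in the first argument is exactly compensated. Because $H$ is dense in $H^{-1}(C)$ by definition of the completion, $U$ extends uniquely to a linear isometry $H^{-1}(C) \to H^{1}(C^{\ast})'$.

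For surjectivity, let $\varphi \in H^{1}(C^{\ast})'$. By Riesz representation on the Hilbert space $H^{1}(C^{\ast})$ (using that $\varphi$ is antilinear in the ordinary sense once the twisted structure is accounted for), there exists a unique $z \in D(C^{\ast})$ such that
\[
 \varphi(y) = \langle z | y\rangle_{H^{1}(C^{\ast})} = \langle C^{\ast} z | C^{\ast} y\rangle_H \qquad (y \in D(C^{\ast})).
\]
Since $C^{\ast} z \in H$ and $C \colon H \to H^{-1}(C)$ is unitary, the element $x := C(C^{\ast} z) \in H^{-1}(C)$ is well-defined, and the unitary extension gives $C^{-1} x = C^{\ast} z \in H$. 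Hence $U(x)(y) = \langle C^{-1}x | C^{\ast}y\rangle_H = \langle C^{\ast}z | C^{\ast}y\rangle_H = \varphi(y)$, which proves surjectivity. The main obstacle is the bookkeeping of the several unitary extensions of $C$, $C^{-1}$, $C^{\ast}$, and making sure that the twisted linear structure is used consistently so that Riesz representation applies cleanly; the isometry bound itself is essentially tautological once the right test vector $y = (C^{\ast})^{-1} C^{-1} x$ is chosen.
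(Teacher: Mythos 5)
Your proposal is correct and follows essentially the same route as the paper: the same verification that $C^{\ast}$ is boundedly invertible via $(C^{\ast})^{-1}=(C^{-1})^{\ast}$, the same test vector $y=(C^{\ast})^{-1}C^{-1}x$ to obtain the reverse norm inequality, and the same Riesz-representation argument with $x=CC^{\ast}u$ for surjectivity. The only cosmetic difference is that you define $U$ on the dense subspace $H$ and extend by continuity, whereas the paper works directly on $H^{-1}(C)$ using the unitary extension of $C^{-1}$.
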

\begin{proof}
It is clear that $C^{\ast}$ is densely defined closed and linear.
Moreover, $\left(C^{\ast}\right)^{-1}=\left(C^{-1}\right)^{\ast},$
which yields that $C^{\ast}$ is boundedly invertible. Let now $w,x\in H^{-1}(C)$
and $\lambda\in\mathbb{C}.$ Then 
\begin{align*}
U(\lambda x+w)(y) & =\langle C^{-1}(\lambda x+w)|C^{\ast}y\rangle_{H}\\
 & =\lambda^{\ast}U(x)(y)+U(w)(y)\\
 & =(\lambda U(x)+U(w))(y),
\end{align*}
for each $y\in H^{1}(C^{\ast}),$ which shows the linearity of $U$.
Moreover, we estimate 
\[
\|U(x)\|=\sup_{|y|_{H^{1}(C^{\ast})}=1}\left|\langle C^{-1}x|C^{\ast}y\rangle\right|_{H}\leq|C^{-1}x|_{H}=|x|_{H^{-1}(C)}
\]
and 
\begin{align*}
|x|_{H^{-1}(C)}^{2} & =\langle C^{-1}x|C^{-1}x\rangle_{H}=U(x)(\left(C^{\ast}\right)^{-1}C^{-1}x)\leq\|U(x)\||C^{-1}x|_{H}=\|U(x)\||x|_{H^{-1}(C)},
\end{align*}
which gives the isometry of $U$. For showing that $U$ is onto, let
$\varphi\in H^{1}(C^{\ast})'.$ By the Riesz representation theorem
there is $u\in H^{1}(C^{\ast})$ such that 
\[
\varphi(y)=\langle u|y\rangle_{H^{1}(C^{\ast})}=\langle C^{\ast}u|C^{\ast}y\rangle_{H}\quad(y\in H^{1}(C^{\ast})).
\]
Hence, setting $x\coloneqq CC^{\ast}u\in H^{-1}(C),$ we obtain $U(x)=\varphi.$\end{proof}
\begin{rem}
The latter lemma also gives that the inner product in $H$ has a natural
extension to a continuous sesquilinearform on $H^{-1}(C)\times H^{1}(C^{\ast}).$ 
\end{rem}
Further on, we do not distinguish between the spaces $H^{-1}(C)$
and $H^{1}(C^{\ast})'.$
\begin{lem}
\label{lem:H^-1 independent}Let $C:D(C)\subseteq H\to H$ densely
defined closed linear and let $\lambda,\mu\in\rho(C).$ Then 
\begin{align*}
H^{1}(C-\lambda) & \cong H^{1}(C-\mu)\\
H^{-1}(C-\lambda) & \cong H^{-1}(C-\mu).
\end{align*}
\end{lem}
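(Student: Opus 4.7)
My plan is to prove both isomorphisms by showing that the underlying norms defining the two spaces in question are equivalent, so that the identity map (on $D(C)$ in the first case, on the dense subspace $H$ in the second) extends to a topological isomorphism. Since $\cong$ here should be read as ``topologically isomorphic'', equivalence of norms is all that is needed.

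For the first isomorphism $H^{1}(C-\lambda)\cong H^{1}(C-\mu)$, I would note that as sets $D(C-\lambda)=D(C)=D(C-\mu)$, so both spaces carry the identity as a candidate isomorphism. For $x\in D(C)$ I would estimate
\[
|(C-\lambda)x|_{H}\leq|(C-\mu)x|_{H}+|\mu-\lambda|\,|x|_{H},
\]
and use $\mu\in\rho(C)$ to bound $|x|_{H}=|(C-\mu)^{-1}(C-\mu)x|_{H}\leq\|(C-\mu)^{-1}\|\,|(C-\mu)x|_{H}$. This gives
\[
|(C-\lambda)x|_{H}\leq\bigl(1+|\mu-\lambda|\,\|(C-\mu)^{-1}\|\bigr)|(C-\mu)x|_{H},
\]
and the reverse inequality follows by swapping $\lambda$ and $\mu$.

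For the second isomorphism $H^{-1}(C-\lambda)\cong H^{-1}(C-\mu)$, I would argue that the two norms $|(C-\lambda)^{-1}\cdot|_{H}$ and $|(C-\mu)^{-1}\cdot|_{H}$ on $H$ are equivalent, via the resolvent identity
\[
(C-\lambda)^{-1}=\bigl(1+(\lambda-\mu)(C-\lambda)^{-1}\bigr)(C-\mu)^{-1},
\]
which yields $|(C-\lambda)^{-1}x|_{H}\leq\|1+(\lambda-\mu)(C-\lambda)^{-1}\|\,|(C-\mu)^{-1}x|_{H}$ for every $x\in H$, and again the reverse estimate by symmetry. Since $H$ sits densely in both completions and the identity $\mathrm{id}\colon H\to H$ is bi-Lipschitz with respect to the two norms, it extends uniquely to a bounded linear bijection between the completions $H^{-1}(C-\lambda)$ and $H^{-1}(C-\mu)$ with bounded inverse.

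I do not foresee any real obstacle: the only subtlety is the book-keeping between ``equivalent norms on the same vector space'' (which gives the $H^{1}$ statement at once) and ``equivalent norms on the common dense subspace $H$'' (from which the $H^{-1}$ statement follows by the universal property of completions). Both steps are essentially a one-line application of the triangle inequality and the resolvent identity, using only that $\lambda,\mu\in\rho(C)$ so that $(C-\lambda)^{-1}$ and $(C-\mu)^{-1}$ are bounded.
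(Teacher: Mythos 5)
Your argument is correct. The first half coincides with the paper's proof: identity of the underlying sets plus the estimate $|(C-\mu)x|_{H}\leq(1+|\lambda-\mu|\,\|(C-\lambda)^{-1}\|)|(C-\lambda)x|_{H}$ and its symmetric counterpart. For the second half you genuinely diverge. The paper does \emph{not} work with the norms $|(C-\lambda)^{-1}\cdot|_{H}$ on $H$ directly; instead it applies the $H^{1}$ statement to the adjoint, obtaining $H^{1}(C^{\ast}-\lambda^{\ast})\cong H^{1}(C^{\ast}-\mu^{\ast})$, passes to dual spaces, and invokes the preceding lemma identifying $H^{-1}(C-\lambda)$ with $H^{1}(C^{\ast}-\lambda^{\ast})'$. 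Your route — the resolvent identity $(C-\lambda)^{-1}=\bigl(1+(\lambda-\mu)(C-\lambda)^{-1}\bigr)(C-\mu)^{-1}$ giving equivalence of the two norms on the common dense subspace $H$, followed by the universal property of completions — is more elementary and self-contained, since it does not rely on the dual characterization of $H^{-1}$ at all. What the paper's detour buys is that the isomorphism is exhibited directly at the level of the duality pairing $H^{-1}(C)\times H^{1}(C^{\ast})$, which is the form in which the extrapolation space is actually used later; your construction yields the same map, but one would have to check compatibility with that pairing separately if it were needed. Both proofs are complete and correct.
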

\begin{proof}
Obviously $H^{1}(C-\lambda)=D(C)=H^{1}(C-\mu)$ as a set. We need
show that $\mathrm{id}:H^{1}(C-\lambda)\to H^{1}(C-\mu)$ is continuous.
The latter however is clear, since for $x\in D(C)$ we have that 
\[
|x|_{H^{1}(C-\mu)}=|(C-\mu)x|_{H}\leq|(C-\lambda)x|_{H}+|\lambda-\mu||x|_{H}\leq(1+|\lambda-\mu|\|(C-\lambda)^{-1}\|)|x|_{H^{1}(C-\lambda)}.
\]
By interchanging $\lambda$ and $\mu$ in the latter computation,
we infer $H^{1}(C-\lambda)\cong H^{1}(C-\mu)$. By what we have shown
so far, we get that $H^{1}(C^{\ast}-\lambda^{\ast})\cong H^{1}(C^{\ast}-\mu^{\ast}),$
since $\lambda^{\ast},\mu^{\ast}\in\rho(C^{\ast}).$ Thus, $H^{1}(C^{\ast}-\lambda^{\ast})'\cong H^{1}(C^{\ast}-\mu^{\ast})'$
and hence, $H^{-1}(C-\lambda)\cong H^{-1}(C-\mu)$ by \prettyref{lem:H^-1 as dual}. \end{proof}
\begin{example}
Let $\rho\in\mathbb{R}\setminus\{0\}$ and set $C=\partial_{0,\rho},$
which is a densely defined closed linear boundedly invertible operator
on $H_{\rho}(\mathbb{R};H)$. We set 
\begin{align*}
H_{\rho}^{1}(\mathbb{R};H) & \coloneqq H^{1}(\partial_{0,\rho}),\\
H_{\rho}^{-1}(\mathbb{R};H) & \coloneqq H^{-1}(\partial_{0,\rho}).
\end{align*}
Moreover, we set 
\begin{align*}
H_{0}^{1}(\mathbb{R};H) & \coloneqq H^{1}(\partial_{0,0}+1),\\
H_{0}^{-1}(\mathbb{R};H) & \coloneqq H^{-1}(\partial_{0,0}+1).
\end{align*}
For $\rho>0$ we aim to compute $\partial_{0,\rho}\chi_{\mathbb{R}_{\geq t}}x\in H_{\rho}^{-1}(\mathbb{R};H)$
for $t\in\mathbb{R},x\in H.$ For doing so, let $y\in C_{c}^{\infty}(\mathbb{R};H)\subseteq H^{1}(\partial_{0,\rho}^{\ast}).$
Then 
\begin{align*}
\langle\partial_{0,\rho}\chi_{\mathbb{R}_{\geq t}}x|y\rangle_{H^{-1}(\partial_{0,\rho})\times H^{1}(\partial_{0,\rho}^{\ast})} & =\langle\chi_{\mathbb{R}_{\geq t}}x|\partial_{0,\rho}^{\ast}y\rangle_{H_{\rho}(\mathbb{R};H)}\\
 & =\intop_{t}^{\infty}\langle x|\left(-y'(s)+2\rho y(s)\right)\rangle_{H}\e^{-2\rho s}\mbox{ d}s\\
 & =\langle x|y(t)\rangle_{H}\e^{-2\rho t}
\end{align*}
and thus, by the density of $C_{c}^{\infty}(\mathbb{R};H)$ in $H^{1}(\partial_{0,\rho}^{\ast})$
we get 
\[
\partial_{0,\rho}\chi_{\mathbb{R}_{\geq t}}x=\e^{-2\rho t}\delta_{t}x,
\]
where $\delta_{t}$ denotes the Dirac-Delta distribution in $t$.
\end{example}
Next, we extend the Fourier-Laplace transform to the space $H_{\rho}^{-1}(\mathbb{R};H).$
\begin{prop}
Let $\rho\in\mathbb{R}\setminus\{0\}$. Then $\i\m+\rho:D(\m)\subseteq L_{2}(\mathbb{R};H)\to L_{2}(\mathbb{R};H)$
is boundedly invertible and 
\[
\mathcal{L}_{\rho}:H_{\rho}(\mathbb{R};H)\subseteq H_{\rho}^{-1}(\mathbb{R};H)\to H^{-1}(\i\m+\rho)
\]
has a unitary extension. Moreover, the operator 
\[
\mathcal{F}:L_{2}(\mathbb{R};H)\subseteq H_{0}^{-1}(\mathbb{R};H)\to H^{-1}(\i\m+1)
\]
has a unitary extension. \end{prop}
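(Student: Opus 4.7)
The plan is to reduce everything to the spectral representation $\partial_{0,\rho}=\mathcal{L}_{\rho}^{\ast}(\i\m+\rho)\mathcal{L}_{\rho}$ from \prettyref{prop:spectral_repr_partial_0} and then exploit the abstract fact that a unitary intertwining relation between boundedly invertible operators automatically lifts to a unitary between their extrapolation spaces.

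First, I would establish bounded invertibility of $\i\m+\rho$ for $\rho\neq0$. Since $\m$ is selfadjoint on $L_{2}(\mathbb{R};H)$ with $\sigma(\m)=\mathbb{R}$, the multiplication operator $\i\m+\rho$ has spectrum $\rho+\i\mathbb{R}$, so $0\in\rho(\i\m+\rho)$ and the inverse is pointwise multiplication by $(\i t+\rho)^{-1}$ with norm $1/|\rho|$. Alternatively, this is immediate from \prettyref{prop:derivative_invertible} combined with \prettyref{prop:spectral_repr_partial_0}, since unitary equivalence preserves bounded invertibility.

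Next, for the first extension claim, I would compute directly: for $f\in H_{\rho}(\mathbb{R};H)$, by \prettyref{prop:spectral_repr_partial_0} we have $\partial_{0,\rho}^{-1}=\mathcal{L}_{\rho}^{\ast}(\i\m+\rho)^{-1}\mathcal{L}_{\rho}$, hence
\begin{align*}
|f|_{H_{\rho}^{-1}(\mathbb{R};H)} & =|\partial_{0,\rho}^{-1}f|_{H_{\rho}(\mathbb{R};H)}=|\mathcal{L}_{\rho}^{\ast}(\i\m+\rho)^{-1}\mathcal{L}_{\rho}f|_{H_{\rho}(\mathbb{R};H)}\\
 & =|(\i\m+\rho)^{-1}\mathcal{L}_{\rho}f|_{L_{2}(\mathbb{R};H)}=|\mathcal{L}_{\rho}f|_{H^{-1}(\i\m+\rho)},
\end{align*}
using that $\mathcal{L}_{\rho}:H_{\rho}(\mathbb{R};H)\to L_{2}(\mathbb{R};H)$ is unitary. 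Hence the map in question is isometric from $H_{\rho}(\mathbb{R};H)\subseteq H_{\rho}^{-1}(\mathbb{R};H)$ into $L_{2}(\mathbb{R};H)\subseteq H^{-1}(\i\m+\rho)$. Since $H_{\rho}(\mathbb{R};H)$ is dense in $H_{\rho}^{-1}(\mathbb{R};H)$ and $L_{2}(\mathbb{R};H)$ is dense in $H^{-1}(\i\m+\rho)$ by construction of the extrapolation spaces, and since the original $\mathcal{L}_{\rho}$ is a bijection between these dense subspaces, the unique isometric extension is surjective and therefore unitary.

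For the case $\rho=0$, I would observe that $\mathcal{L}_{0}$ restricted to $C_{c}^{\infty}(\mathbb{R};H)$ coincides with the Fourier transform $\mathcal{F}$, so by \prettyref{prop:spectral_repr_partial_0} applied with $\rho=0$ we obtain $\partial_{0,0}=\mathcal{F}^{\ast}\i\m\,\mathcal{F}$, and consequently $\partial_{0,0}+1=\mathcal{F}^{\ast}(\i\m+1)\mathcal{F}$. Since $\i\m+1$ is boundedly invertible (spectrum $1+\i\mathbb{R}$), the same computation as above with $\partial_{0,\rho}$ replaced by $\partial_{0,0}+1$ yields for $f\in L_{2}(\mathbb{R};H)$
\[
|f|_{H_{0}^{-1}(\mathbb{R};H)}=|(\partial_{0,0}+1)^{-1}f|_{L_{2}(\mathbb{R};H)}=|(\i\m+1)^{-1}\mathcal{F}f|_{L_{2}(\mathbb{R};H)}=|\mathcal{F}f|_{H^{-1}(\i\m+1)},
\]
so $\mathcal{F}$ is isometric between the indicated dense subspaces and extends to a unitary by the same density argument. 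I do not expect any serious obstacle: the only subtle point is making sure that the identification of the dense subspace of $H^{-1}(C)$ with $H$ (via the canonical embedding) is compatible with the identification of $H_{\rho}(\mathbb{R};H)$ as a dense subspace of $H_{\rho}^{-1}(\mathbb{R};H)$, which is immediate from the definition of the extrapolation space as the completion.
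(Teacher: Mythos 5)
Your proposal is correct and follows essentially the same route as the paper: both reduce the claim to the spectral representation $\partial_{0,\rho}=\mathcal{L}_{\rho}^{\ast}(\i\m+\rho)\mathcal{L}_{\rho}$, verify the isometry $|\mathcal{L}_{\rho}f|_{H^{-1}(\i\m+\rho)}=|f|_{H_{\rho}^{-1}(\mathbb{R};H)}$ by the identical chain of equalities, and conclude by density of the image in $H^{-1}(\i\m+\rho)$. The paper simply omits the $\mathcal{F}$ case as "completely analogous," which is exactly the computation you spell out with $\partial_{0,0}+1=\mathcal{F}^{\ast}(\i\m+1)\mathcal{F}$.
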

\begin{proof}
Let $\rho\ne0$. Since $\i\m$ is skew-selfadjoint, we infer the bounded
invertibility of $\i\m+\rho.$ Moreover, we have that 
\begin{align*}
|\mathcal{L}_{\rho}f|_{H^{-1}(\i\m+\rho)} & =|(\i\m+\rho)^{-1}\mathcal{L}_{\rho}f|_{L_{2}(\mathbb{R};H)}\\
 & =|\mathcal{L}_{\rho}\partial_{0,\rho}^{-1}f|_{L_{2}(\mathbb{R};H)}\\
 & =|\partial_{0,\rho}^{-1}f|_{H_{\rho}(\mathbb{R};H)}\\
 & =|f|_{H_{\rho}^{-1}(\mathbb{R};H)}
\end{align*}
for each $f\in H_{\rho}(\mathbb{R};H).$ Since $\mathcal{L}_{\rho}\left[H_{\rho}(\mathbb{R};H)\right]=L_{2}(\mathbb{R};H)\subseteq H^{-1}(\i\m+\rho)$
is dense, the assertion follows. The proof for $\mathcal{F}$ is completely
analogous and is therefore omitted.\end{proof}
\begin{prop}
Let $A:D(A)\subseteq H_{0}\to H_{1}$ be a densely defined closed
and linear operator between two Hilbert spaces $H_{0}$ and $H_{1}$.
Then 
\[
A:D(A)\subseteq H_{0}\to H^{-1}(|A^{\ast}|+1)
\]
is bounded and thus, it has a unique continuous extension to $H_{0}$. \end{prop}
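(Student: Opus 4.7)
The plan is to reduce the bound $|Ax|_{H^{-1}(|A^{\ast}|+1)}\le C|x|_{H_0}$ to a functional calculus estimate for the selfadjoint operator $|A^{\ast}|$ on $H_1$. Set $B\coloneqq|A^{\ast}|+1$. Since $AA^{\ast}$ is selfadjoint and non-negative on $H_1$, the operator $|A^{\ast}|=\sqrt{AA^{\ast}}$ is selfadjoint and non-negative with $D(|A^{\ast}|)=D(A^{\ast})$, so $B$ is selfadjoint and bounded below by $1$ on $H_1$, hence densely defined, closed and boundedly invertible with $\|B^{-1}\|\le 1$. In particular the Gelfand triple $(H^{1}(B),H_1,H^{-1}(B))$ is defined, and by construction $|y|_{H^{-1}(B)}=|B^{-1}y|_{H_1}$ for $y\in H_1$.

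Next I would unfold the norm by duality. For $x\in D(A)$,
\[
|Ax|_{H^{-1}(B)}=|B^{-1}Ax|_{H_1}=\sup_{\substack{y\in H_1\\ |y|_{H_1}=1}}\bigl|\langle Ax\,|\,B^{-1}y\rangle_{H_1}\bigr|.
\]
Since $R(B^{-1})=D(B)=D(|A^{\ast}|)=D(A^{\ast})$, each $B^{-1}y$ lies in $D(A^{\ast})$, so that
\[
\langle Ax\,|\,B^{-1}y\rangle_{H_1}=\langle x\,|\,A^{\ast}B^{-1}y\rangle_{H_0},
\]
and the problem is reduced to showing $\|A^{\ast}B^{-1}\|\le 1$.

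The key estimate is the functional-calculus identity $|A^{\ast}z|_{H_0}=\bigl|\,|A^{\ast}|z\bigr|_{H_1}$ for every $z\in D(A^{\ast})$, which follows from
\[
|A^{\ast}z|_{H_0}^{2}=\langle A^{\ast}z\,|\,A^{\ast}z\rangle_{H_0}=\langle z\,|\,AA^{\ast}z\rangle_{H_1}=\bigl\langle z\,\bigl|\,|A^{\ast}|^{2}z\bigr\rangle_{H_1}=\bigl|\,|A^{\ast}|z\bigr|_{H_1}^{2}.
\]
Applied to $z=B^{-1}y$ this yields
\[
|A^{\ast}B^{-1}y|_{H_0}=\bigl|\,|A^{\ast}|(|A^{\ast}|+1)^{-1}y\bigr|_{H_1}\le|y|_{H_1},
\]
using the elementary bound $\sup_{\lambda\ge 0}\lambda/(\lambda+1)\le 1$ via the Borel functional calculus for the selfadjoint operator $|A^{\ast}|$. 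Combining with Cauchy--Schwarz in the duality pairing gives $|Ax|_{H^{-1}(B)}\le|x|_{H_0}$ for all $x\in D(A)$.

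Finally, since $D(A)$ is dense in $H_0$ and $H^{-1}(B)$ is complete, the bounded linear map $A\colon D(A)\subseteq H_0\to H^{-1}(B)$ admits a unique continuous linear extension to all of $H_0$ by the standard BLT theorem. I expect no substantial obstacle: everything rests on the identity $|A^{\ast}z|_{H_0}=\bigl|\,|A^{\ast}|z\bigr|_{H_1}$, which is routine, and the scalar bound for $\lambda/(\lambda+1)$. The only point that requires care is checking that $B^{-1}y$ indeed lies in $D(A^{\ast})$ so that the integration-by-parts identity $\langle Ax\,|\,B^{-1}y\rangle=\langle x\,|\,A^{\ast}B^{-1}y\rangle$ is legitimate; this is ensured by $D(B)=D(|A^{\ast}|)=D(A^{\ast})$.
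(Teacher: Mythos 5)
Your proof is correct, but it takes a different route from the paper's. The paper works on the $H_{0}$ side: it invokes the intertwining relation $A|A|x=|A^{\ast}|Ax$ for $x\in D(A^{\ast}A)$ to commute $A$ past the resolvent, obtaining $(|A^{\ast}|+1)^{-1}Ax=A(|A|+1)^{-1}x$, then uses $|Az|_{H_{1}}=\bigl|\,|A|z\bigr|_{H_{0}}$ and the bound $\bigl\||A|(|A|+1)^{-1}\bigr\|\leq1$, and finally appeals to the density of $D(A^{\ast}A)$ in $H_{0}$. You instead dualize: you use selfadjointness of $B=|A^{\ast}|+1$ to write $|B^{-1}Ax|_{H_{1}}$ as a supremum of $\langle x\,|\,A^{\ast}B^{-1}y\rangle_{H_{0}}$ and reduce everything to $\|A^{\ast}(|A^{\ast}|+1)^{-1}\|\leq1$ on $H_{1}$. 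Both arguments ultimately rest on the same two ingredients (the polar-decomposition identity $|Tz|=\bigl|\,|T|z\bigr|$ and the scalar bound $\lambda/(\lambda+1)\leq1$), but your version buys a small advantage: it establishes the estimate directly on all of $D(A)$, whereas the paper proves it only on $D(A^{\ast}A)$ and must then argue that the continuous extension still agrees with $A$ on the rest of $D(A)$ (i.e.\ that $D(A^{\ast}A)$ is a core for $A$). The one place where you should be a touch more careful is the identity $|A^{\ast}z|_{H_{0}}=\bigl|\,|A^{\ast}|z\bigr|_{H_{1}}$: your computation via $\langle z\,|\,AA^{\ast}z\rangle_{H_{1}}$ is literally valid only for $z\in D(AA^{\ast})$, whereas you apply it at $z=B^{-1}y\in D(A^{\ast})$ for arbitrary $y\in H_{1}$. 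The identity does hold on all of $D(A^{\ast})=D(|A^{\ast}|)$ (it is exactly the statement that $|A^{\ast}|$ is the operator in the polar decomposition of $A^{\ast}$, or equivalently that $D(AA^{\ast})$ is a form core), so this is a standard fact rather than a gap, but it deserves a reference or a one-line density argument rather than the displayed computation alone.
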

\begin{proof}
We use the relation (see e.g. \cite[p. 197 ff.]{Weidmann}) 
\[
A|A|x=|A^{\ast}|Ax
\]
for all elements $x\in D(A^{\ast}A).$ Then we have for $x\in D(A^{\ast}A)$
\begin{align*}
|Ax|_{H^{-1}(|A^{\ast}|+1)} & =\left|(|A^{\ast}|+1)^{-1}Ax\right|_{H_{1}}\\
 & =\left|A\left(|A|+1\right)^{-1}x\right|_{H_{1}}\\
 & =\left||A|\left(|A|+1\right)^{-1}x\right|_{H_{0}}\\
 & \leq|x|_{H_{0}}.
\end{align*}
This gives the assertion, since $D(A^{\ast}A)$ is dense in $H_{0}$. \end{proof}
\begin{lem}
\label{lem:support_H^-1}Let $\rho>0,t\in\mathbb{R}$ and $f\in H_{\rho}^{-1}(\mathbb{R};H).$
Then $\spt f\subseteq\mathbb{R}_{\geq t}$ (here we mean the usual
support of a distribution) if and only if $\spt\partial_{0,\rho}^{-1}f\subseteq\mathbb{R}_{\geq t}.$\end{lem}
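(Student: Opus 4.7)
My plan is to translate both support conditions into orthogonality statements in $H_\rho(\mathbb{R};H)$ and then reduce the lemma to a single density assertion. Writing $u\coloneqq\partial_{0,\rho}^{-1}f\in H_\rho(\mathbb{R};H)$ and using $\partial_{0,\rho}^{\ast}=-\partial_{0,\rho}+2\rho$, the duality identification of \prettyref{lem:H^-1 as dual} together with \prettyref{lem:H^-1 independent} gives, for every $\varphi\in C_c^{\infty}(]-\infty,t[;H)\subseteq H_\rho^{1}(\mathbb{R};H)$,
\[
\langle f|\varphi\rangle_{H_\rho^{-1}\times H_\rho^{1}}=\langle u|\partial_{0,\rho}^{\ast}\varphi\rangle_{H_\rho}.
\]
Hence $\spt f\subseteq\mathbb{R}_{\geq t}$ is equivalent to $u\perp \partial_{0,\rho}^{\ast}[C_c^{\infty}(]-\infty,t[;H)]$ in $H_\rho$, while $\spt u\subseteq\mathbb{R}_{\geq t}$ (in the essential/distributional sense on the function $u$) is equivalent to $u\perp C_c^{\infty}(]-\infty,t[;H)$ in $H_\rho$ by density of test functions. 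Thus the lemma reduces to the claim that $C_c^{\infty}(]-\infty,t[;H)$ and $\partial_{0,\rho}^{\ast}[C_c^{\infty}(]-\infty,t[;H)]$ have the same closure in $H_\rho(\mathbb{R};H)$.

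One inclusion is immediate: for $\varphi\in C_c^{\infty}(]-\infty,t[;H)$, also $\partial_{0,\rho}^{\ast}\varphi=-\varphi'+2\rho\varphi\in C_c^{\infty}(]-\infty,t[;H)$. For the reverse inclusion I would solve the ODE $-\varphi'+2\rho\varphi=\psi$ explicitly via $\varphi(s)=e^{2\rho s}\int_{s}^{t}e^{-2\rho r}\psi(r)\,\dd r$. For $\psi\in C_c^{\infty}(]-\infty,t[;H)$ this $\varphi$ is smooth and vanishes on $[\sup\spt\psi,\infty[$, but for $s$ below $\inf\spt\psi$ it equals $e^{2\rho s}C_{0}$ with $C_{0}=\int_{\mathbb{R}}e^{-2\rho r}\psi(r)\,\dd r\in H$; so $\varphi$ is compactly supported precisely when $C_{0}=0$. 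Consequently the range of $\partial_{0,\rho}^{\ast}$ on $C_c^{\infty}(]-\infty,t[;H)$ equals $\bigl\{\psi\in C_c^{\infty}(]-\infty,t[;H)\,;\,\int e^{-2\rho r}\psi(r)\,\dd r=0\bigr\}$.

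To show this subspace is $H_\rho$-dense in $C_c^{\infty}(]-\infty,t[;H)$, I would fix $\tilde\eta\in C_c^{\infty}([-1,0];\mathbb{R})$ with $K\coloneqq\int\tilde\eta(s)e^{-2\rho s}\,\dd s>0$ and set $\eta_{n}(r)\coloneqq K^{-1}e^{-2\rho n}\tilde\eta(r+n)$, which for $n$ large has support in $[-n-1,-n]\subseteq]-\infty,t[$, satisfies $\int\eta_n(r)e^{-2\rho r}\,\dd r=1$, and obeys $|\eta_{n}|_{H_\rho}^{2}=K_2 K^{-2} e^{-2\rho n}\to 0$. Given any $\psi\in C_c^{\infty}(]-\infty,t[;H)$, the sequence $\psi-\eta_{n}C_{0}$ then lies in $C_c^{\infty}(]-\infty,t[;H)$, has vanishing $e^{-2\rho r}$-moment, and converges to $\psi$ in $H_\rho$ since $|\eta_{n}C_{0}|_{H_\rho}=|C_{0}|_{H}|\eta_{n}|_{H_\rho}\to 0$. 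This yields the desired density and completes the argument.

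The only nontrivial step is this density claim; the main obstacle is the single linear constraint $C_{0}=0$ cutting out the range of $\partial_{0,\rho}^{\ast}$. The essential point is that the assumption $\rho>0$ makes the weight $e^{-2\rho s}$ blow up as $s\to-\infty$, so a correcting function can be placed far to the left to achieve unit $e^{-2\rho r}$-mass with arbitrarily small $H_\rho$-norm; this mechanism fails for $\rho\leq 0$, consistent with the hypothesis $\rho>0$ in the statement.
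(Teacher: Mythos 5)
Your argument is correct. It shares with the paper's proof the same starting point -- the duality identity $\langle f|\varphi\rangle_{H^{-1}(\partial_{0,\rho})\times H^{1}(\partial_{0,\rho}^{\ast})}=\langle\partial_{0,\rho}^{-1}f|\partial_{0,\rho}^{\ast}\varphi\rangle_{H_{\rho}}$ and the trivial observation that $\partial_{0,\rho}^{\ast}$ preserves supports in $\mathbb{R}_{<t}$ (which settles the direction $\spt\partial_{0,\rho}^{-1}f\subseteq\mathbb{R}_{\geq t}\Rightarrow\spt f\subseteq\mathbb{R}_{\geq t}$ identically in both proofs) -- but you treat the harder direction differently. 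The paper inverts $\partial_{0,\rho}^{\ast}$ explicitly, notes that $\left(\partial_{0,\rho}^{\ast}\right)^{-1}\varphi(s)=\intop_{s}^{\infty}\varphi(r)\e^{-2\rho(r-s)}\,\dd r$ is anticausal and hence supported in $\mathbb{R}_{<t}$, and then appeals to a ``continuous extension'' of the vanishing of $\langle f|\cdot\rangle$ from test functions to all elements of $H^{1}(\partial_{0,\rho}^{\ast})$ supported in $\mathbb{R}_{<t}$; that extension step (approximating the non-compactly supported $\left(\partial_{0,\rho}^{\ast}\right)^{-1}\varphi$ by test functions supported in $\mathbb{R}_{<t}$ in the graph norm) is left implicit. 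You instead characterize the range of $\partial_{0,\rho}^{\ast}$ on $C_{c}^{\infty}(]-\infty,t[;H)$ as the codimension-one subspace cut out by the moment condition $\intop\e^{-2\rho r}\psi(r)\,\dd r=0$, and prove its $H_{\rho}$-density by subtracting a bump translated far to the left, whose unit moment comes at vanishing $H_{\rho}$-cost precisely because $\rho>0$. This buys you a fully explicit replacement for the paper's density claim and isolates exactly where the hypothesis $\rho>0$ enters; the paper's route is shorter and reuses the anticausality of $\left(\partial_{0,\rho}^{\ast}\right)^{-1}$, which it needs elsewhere anyway. Both are sound; your computations (the solution formula for $-\varphi'+2\rho\varphi=\psi$, the normalization $\intop\eta_{n}(r)\e^{-2\rho r}\,\dd r=1$, and $|\eta_{n}|_{H_{\rho}}^{2}=O(\e^{-2\rho n})$) all check out.
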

\begin{proof}
Assume that $\spt f\subseteq\mathbb{R}_{\geq t}.$ By continuous extension
we obtain 
\[
\langle f|g\rangle_{H^{-1}(\partial_{0,\rho})\times H^{1}(\partial_{0,\rho}^{\ast})}=0
\]
for each $g\in H^{1}(\partial_{0,\rho})$ with $\spt g\subseteq\mathbb{R}_{<t}.$
Let now $\varphi\in C_{c}^{\infty}(\mathbb{R};H)$ with $\spt\varphi\subseteq\mathbb{R}_{<t}.$
Then an easy computation yields that 
\[
\left(\left(\partial_{0,\rho}^{\ast}\right)^{-1}\varphi\right)(s)=\intop_{s}^{\infty}\varphi(r)\e^{-2\rho(r-s)}\mbox{ d}r\quad(s\in\mathbb{R})
\]
and consequently $\spt\left(\partial_{0,\rho}^{\ast}\right)^{-1}\varphi\subseteq\mathbb{R}_{<t}.$
Hence, we obtain 
\[
\langle\partial_{0,\rho}^{-1}f|\varphi\rangle_{H_{\rho}(\mathbb{R};H)}=\langle f|\left(\partial_{0,\rho}^{\ast}\right)^{-1}\varphi\rangle_{H^{-1}(\partial_{0,\rho})\times H^{1}(\partial_{0,\rho}^{\ast})}=0,
\]
which gives $\spt\partial_{0,\rho}^{-1}f\subseteq\mathbb{R}_{\geq t}.$
If on the other hand $\spt\partial_{0,\rho}^{-1}f\subseteq\mathbb{R}_{\geq t},$
we compute for each $\varphi\in C_{c}^{\infty}(\mathbb{R};H)$ with
$\spt\varphi\subseteq\mathbb{R}_{<t}$
\[
\langle f|\varphi\rangle_{H^{-1}(\partial_{0,\rho})\times H^{1}(\partial_{0,\rho}^{\ast})}=\langle\partial_{0,\rho}^{-1}f|\partial_{0,\rho}^{\ast}\varphi\rangle_{H_{\rho}(\mathbb{R};H)}=0,
\]
 since $\spt\partial_{0,\rho}^{\ast}\varphi\subseteq\spt\varphi\subseteq\mathbb{R}_{<t}.$
\end{proof}
Our goal is to extend the solution theory for evolutionary problems
to the extrapolation space $H_{\rho}^{-1}(\mathbb{R};H).$ For doing
so, we need the following auxiliary results.
\begin{prop}
\label{prop:continuation_fcts_of_partial_0}Let $\rho\in\mathbb{R}$
and $F:\left\{ \i t+\rho\,;\, t\in\mathbb{R}\right\} \to L(H)$ be
bounded and strongly measurable. Then $F(\partial_{0,\rho})$ leaves
the spaces $H_{\rho}^{1}(\mathbb{R};H)$ invariant and $F(\partial_{0,\rho})\in L(H_{\rho}^{1}(\mathbb{R};H))$.
Moreover, 
\[
\partial_{0,\rho}F(\partial_{0,\rho})g=F(\partial_{0,\rho})\partial_{0,\rho}g\quad(g\in H_{\rho}^{1}(\mathbb{R};H))
\]
and the operator 
\[
F(\partial_{0,\rho}):H_{\rho}(\mathbb{R};H)\subseteq H_{\rho}^{-1}(\mathbb{R};H)\to H_{\rho}^{-1}(\mathbb{R};H)
\]
is bounded and thus, it has a unique bounded extension to $H_{\rho}^{-1}(\mathbb{R};H).$
Moreover, we have that 
\[
(\partial_{0,\rho}-c)^{-1}F(\partial_{0,\rho})g=F(\partial_{0,\rho})\left(\partial_{0,\rho}-c\right)^{-1}g
\]
for each $g\in H_{\rho}^{-1}(\mathbb{R};H)$ and $c\in\rho(\partial_{0,\rho}).$ \end{prop}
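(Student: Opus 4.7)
The plan is to work systematically on the Fourier--Laplace side, where $F(\partial_{0,\rho})=\mathcal{L}_{\rho}^{*}F(\i\m+\rho)\mathcal{L}_{\rho}$ becomes a multiplication operator and $\partial_{0,\rho}$ becomes multiplication by $\i\m+\rho$ (by \prettyref{prop:spectral_repr_partial_0}). Two multiplication operators on an $L_2$-space always commute on the intersection of their natural domains, and this pointwise commutativity is the source of everything that follows.

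First I would handle the $H_{\rho}^{1}$-statement. For $g\in H_{\rho}^{1}(\mathbb{R};H)=D(\partial_{0,\rho})$ we have $\mathcal{L}_{\rho}g\in D(\i\m+\rho)$, and since $F$ is bounded, the function $t\mapsto(\i t+\rho)F(\i t+\rho)(\mathcal{L}_{\rho}g)(t)$ still lies in $L_{2}(\mathbb{R};H)$; hence $F(\i\m+\rho)\mathcal{L}_{\rho}g\in D(\i\m+\rho)$ and
\[
(\i\m+\rho)F(\i\m+\rho)\mathcal{L}_{\rho}g=F(\i\m+\rho)(\i\m+\rho)\mathcal{L}_{\rho}g.
\]
Applying $\mathcal{L}_{\rho}^{*}$ and using unitarity yields $F(\partial_{0,\rho})g\in H_{\rho}^{1}(\mathbb{R};H)$ together with the intertwining identity $\partial_{0,\rho}F(\partial_{0,\rho})g=F(\partial_{0,\rho})\partial_{0,\rho}g$. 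Boundedness on $H_{\rho}^{1}(\mathbb{R};H)$ is then immediate from
\[
|F(\partial_{0,\rho})g|_{\rho,1}^{2}=|F(\partial_{0,\rho})g|_{\rho}^{2}+|F(\partial_{0,\rho})\partial_{0,\rho}g|_{\rho}^{2}\leq\|F\|_{\infty}^{2}|g|_{\rho,1}^{2}.
\]

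For the $H_{\rho}^{-1}$-statement I would fix some $c\in\rho(\partial_{0,\rho})$. By \prettyref{lem:H^-1 independent} we may equip $H_{\rho}^{-1}(\mathbb{R};H)$ with the equivalent norm $|f|=|(\partial_{0,\rho}-c)^{-1}f|_{\rho}$, and $(\partial_{0,\rho}-c)^{-1}$ extends unitarily from $H_{\rho}^{-1}(\mathbb{R};H)$ onto $H_{\rho}(\mathbb{R};H)$. The same intertwining argument as above, applied to the bounded analytic function $z\mapsto(z-c)^{-1}$, gives on $H_{\rho}(\mathbb{R};H)$ the commutation
\[
(\partial_{0,\rho}-c)^{-1}F(\partial_{0,\rho})g=F(\partial_{0,\rho})(\partial_{0,\rho}-c)^{-1}g\qquad(g\in H_{\rho}(\mathbb{R};H)),
\]
since both sides correspond to multiplication by $(\i\cdot+\rho-c)^{-1}F(\i\cdot+\rho)$ on the Fourier side. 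Consequently, for $g\in H_{\rho}(\mathbb{R};H)\subseteq H_{\rho}^{-1}(\mathbb{R};H)$,
\[
|F(\partial_{0,\rho})g|_{H_{\rho}^{-1}}=|(\partial_{0,\rho}-c)^{-1}F(\partial_{0,\rho})g|_{\rho}=|F(\partial_{0,\rho})(\partial_{0,\rho}-c)^{-1}g|_{\rho}\leq\|F\|_{\infty}|g|_{H_{\rho}^{-1}},
\]
so $F(\partial_{0,\rho})$ is bounded from $H_{\rho}(\mathbb{R};H)$ into $H_{\rho}^{-1}(\mathbb{R};H)$ with respect to the $H_{\rho}^{-1}$-norm. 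Since $H_{\rho}(\mathbb{R};H)$ is dense in $H_{\rho}^{-1}(\mathbb{R};H)$, there is a unique bounded extension, still denoted $F(\partial_{0,\rho})$. Finally, the commutation relation with $(\partial_{0,\rho}-c)^{-1}$ on all of $H_{\rho}^{-1}(\mathbb{R};H)$ follows by continuous extension of the identity already established on the dense subspace $H_{\rho}(\mathbb{R};H)$; the same extension argument gives the result for any other $c\in\rho(\partial_{0,\rho})$. The only mild subtlety is the case $\rho=0$, where one must pick $c\ne 0$ so that $\partial_{0,0}-c$ is boundedly invertible, but \prettyref{lem:H^-1 independent} ensures that the choice is immaterial for the definition of $H_{\rho}^{-1}(\mathbb{R};H)$.
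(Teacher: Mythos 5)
Your proposal is correct and follows essentially the same route as the paper: establish the intertwining with $\partial_{0,\rho}$ on the Fourier--Laplace side where everything is a multiplication operator, deduce boundedness on $H_{\rho}^{1}(\mathbb{R};H)$, then use the commutation with $(\partial_{0,\rho}-c)^{-1}$ together with \prettyref{lem:H^-1 independent} to get the $H_{\rho}^{-1}$-bound and extend by density. The only cosmetic difference is that you verify the commutation with $(\partial_{0,\rho}-c)^{-1}$ directly as a commutation of multiplication operators, whereas the paper deduces it algebraically from the already proved identity $\partial_{0,\rho}F(\partial_{0,\rho})g=F(\partial_{0,\rho})\partial_{0,\rho}g$; both are valid and equivalent in substance.
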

\begin{proof}
Let $g\in H_{\rho}^{1}(\mathbb{R};H)$ and set $u\coloneqq F(\partial_{0,\rho})g.$
We need to show that $u\in H_{\rho}^{1}(\mathbb{R};H),$ which is
equivalent to 
\[
t\mapsto(\i t+\rho)\left(\mathcal{L}_{\rho}u\right)(t)\in L_{2}(\mathbb{R};H).
\]
Since 
\[
\left(\mathcal{L}_{\rho}u\right)(t)=F(\i t+\rho)\left(\mathcal{L}_{\rho}g\right)(t)
\]
for almost every $t\in\mathbb{R},$ we infer that 
\begin{equation}
\left(\i t+\rho\right)\left(\mathcal{L}_{\rho}u\right)(t)=F(\i t+\rho)(\i t+\rho)\left(\mathcal{L}_{\rho}g\right)(t)\label{eq:H_rho^1 invariant-1}
\end{equation}
for almost every $t\in\mathbb{R}$ and thus, 
\[
\intop_{\mathbb{R}}\left|\left(\i t+\rho\right)\left(\mathcal{L}_{\rho}u\right)(t)\right|^{2}\mbox{ d}t\leq\sup_{s\in\mathbb{R}}\left\Vert F(\i s+\rho)\right\Vert ^{2}\intop_{\mathbb{R}}\left|\left(\i t+\rho\right)\mathcal{L}_{\rho}g(t)\right|^{2}\mbox{ d}t<\infty,
\]
since $g\in H_{\rho}^{1}(\mathbb{R};H).$ Moreover, applying the inverse
Fourier-Laplace transform $\mathcal{L_{\rho}^{\ast}}$ to both sides
of \prettyref{eq:H_rho^1 invariant-1}, we obtain 
\[
\partial_{0,\rho}F(\partial_{0,\rho})g=F(\partial_{0,\rho})\partial_{0,\rho}g.
\]
Consider now $F(\partial_{0,\rho}):H_{\rho}(\mathbb{R};H)\subseteq H_{\rho}^{-1}(\mathbb{R};H)\to H_{\rho}^{-1}(\mathbb{R};H)$
and let $g\in H_{\rho}(\mathbb{R};H),c\in\rho(\partial_{0,\rho}).$
Since 
\[
F(\partial_{0,\rho})g=F(\partial_{0,\rho})\left(\partial_{0,\rho}-c\right)\left(\partial_{0,\rho}-c\right)^{-1}g=\left(\partial_{0,\rho}-c\right)F(\partial_{0,\rho})\left(\partial_{0,\rho}-c\right)^{-1}g,
\]
by what we have shown above, we get 
\begin{equation}
\left(\partial_{0,\rho}-c\right)^{-1}F(\partial_{0,\rho})g=F(\partial_{0,\rho})\left(\partial_{0,\rho}-c\right)^{-1}g.\label{eq:partial_0^-1}
\end{equation}
Thus, using \prettyref{lem:H^-1 independent} we estimate 
\begin{align*}
|F(\partial_{0,\rho})g|_{H_{\rho}^{-1}(\mathbb{R};H)} & \leq C|\left(\partial_{0,\rho}-c\right)^{-1}F(\partial_{0,\rho})g|_{H_{\rho}(\mathbb{R};H)}\\
 & =C|F(\partial_{0,\rho})\left(\partial_{0,\rho}-c\right)^{-1}g|_{H_{\rho}(\mathbb{R};H)}\\
 & \leq C\|F(\partial_{0,\rho})\||\left(\partial_{0,\rho}-c\right)^{-1}g|_{H_{\rho}(\mathbb{R};H)}\\
 & \leq\tilde{C}\|F(\partial_{0,\rho})\||g|_{H_{\rho}^{-1}(\mathbb{R};H)},
\end{align*}
for suitable constants $C,\tilde{C}>0,$ which shows the boundedness
of $F(\partial_{0,\rho})$ on $H_{\rho}^{-1}(\mathbb{R};H)$. The
last equality now follows from \prettyref{eq:partial_0^-1} by continuous
extension. \end{proof}
\begin{rem}
The latter proposition particularly applies to solution operators
of well-posed evolutionary problems. Indeed, let $A:D(A)\subseteq H\to H$
densely defined closed and linear and $M:D(M)\subseteq\mathbb{C}\to L(H)$
a linear material law. Assume that the associated evolutionary problem
is well-posed. Then, for $\rho>s_{0}(M,A)$ we have that 
\[
\left(\overline{\partial_{0,\rho}M(\partial_{0,\rho})+A}\right)^{-1}=F(\partial_{0,\rho}),
\]
where $F(\i t+\rho)\coloneqq\left(\left(\i t+\rho\right)M(\i t+\rho)+A\right)^{-1}$
for $t\in\mathbb{R}$ and hence, \prettyref{prop:continuation_fcts_of_partial_0}
applies.\\
Moreover, the translation operator $\tau_{h}$ on $H_{\rho}(\mathbb{R};H)$
for $\rho\in\mathbb{R}$ is another example, since $\tau_{h}=F(\partial_{0,\rho})$
for $F(\i t+\rho)\coloneqq\e^{(\i t+\rho)h}.$ 
\end{rem}
For later purposes we need a way to compare elements of $H_{\rho}^{-1}(\mathbb{R};H)$
and $H_{\mu}^{-1}(\mathbb{R};H)$ for different values $\mu,\rho\in\mathbb{R}.$
This will be done by the following equivalence relation (see also
\cite{Picard2012_delay_BS}).
\begin{defn*}
Let $f\in H_{\rho}^{-1}(\mathbb{R};H),g\in H_{\mu}^{-1}(\mathbb{R};H)$
for some $\rho,\mu\in\mathbb{R}.$ Then we set 
\[
f\sim g\logeq\exists c<\min\{\rho,\mu\}:\left(\partial_{0,\rho}-c\right)^{-1}f=\left(\partial_{0,\mu}-c\right)^{-1}g.
\]

\end{defn*}
We state the following useful observations.
\begin{lem}
\label{lem:identity on H^-1}Let $\mu,\rho\in\mathbb{R}$ and $f\in H_{\rho}^{-1}(\mathbb{R};H),g\in H_{\mu}^{-1}(\mathbb{R};H).$ 

\begin{enumerate}[(a)]

\item The following statements are equivalent:

\begin{enumerate}[(i)]

\item $f\sim g,$

\item there exists a sequence $(\varphi_{n})_{n\in\mathbb{N}}$ in
$C_{c}^{\infty}(\mathbb{R};H)$ such that $\varphi_{n}\to f$ in $H_{\rho}^{-1}(\mathbb{R};H)$
and $\varphi_{n}\to g$ in $H_{\mu}^{-1}(\mathbb{R};H)$ as $n\to\infty$,

\item For all $c<\min\{\rho,\mu\}$ we have $(\partial_{0,\rho}-c)^{-1}f=(\partial_{0,\mu}-c)^{-1}g.$

\end{enumerate}

\item If $f\in H_{\rho}(\mathbb{R};H),g\in H_{\mu}(\mathbb{R};H)$
then $f\sim g$ if and only if $f=g.$ 

\end{enumerate}\end{lem}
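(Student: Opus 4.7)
My plan is to prove (a) via the cycle (i)$\,\Rightarrow\,$(ii)$\,\Rightarrow\,$(iii)$\,\Rightarrow\,$(i) and then deduce (b) from the explicit construction used in the passage to (ii). The indispensable preliminary observation, which is a shifted variant of \prettyref{prop:derivative_invertible}, is that for every $c<\min\{\rho,\mu\}$ the operators $(\partial_{0,\rho}-c)^{-1}$ and $(\partial_{0,\mu}-c)^{-1}$ act on common elements by the same causal integral, namely $v\mapsto\bigl(t\mapsto\intop_{-\infty}^{t}\e^{c(t-s)}v(s)\,\dd s\bigr)$. A Young-type estimate shows this integral defines a bounded operator on $H_{\nu}(\mathbb{R};H)$ with norm at most $(\nu-c)^{-1}$ for each $\nu>c$, and the computation in the proof of \prettyref{prop:derivative_invertible} carries over to identify it with the inverse of $\partial_{0,\nu}-c$. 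In particular, the two resolvents yield the same function on $\mathbb{R}$ whenever they are applied to a common element of $H_{\rho}(\mathbb{R};H)\cap H_{\mu}(\mathbb{R};H)$.

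With this in hand, for (i)$\,\Rightarrow\,$(ii) take the $c_{0}$ supplied by (i) and set $h\coloneqq(\partial_{0,\rho}-c_{0})^{-1}f=(\partial_{0,\mu}-c_{0})^{-1}g\in H_{\rho}(\mathbb{R};H)\cap H_{\mu}(\mathbb{R};H)$. Use \prettyref{lem:density test fct} to choose $\psi_{n}\in C_{c}^{\infty}(\mathbb{R};H)$ with $\psi_{n}\to h$ in both $H_{\rho}(\mathbb{R};H)$ and $H_{\mu}(\mathbb{R};H)$, and put $\varphi_{n}\coloneqq\psi_{n}'-c_{0}\psi_{n}\in C_{c}^{\infty}(\mathbb{R};H)$. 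Since $\partial_{0,\rho}-c_{0}$ is unitary from $H_{\rho}(\mathbb{R};H)$ onto $H^{-1}(\partial_{0,\rho}-c_{0})$, which by \prettyref{lem:H^-1 independent} carries an equivalent norm to $H_{\rho}^{-1}(\mathbb{R};H)$, I obtain $\varphi_{n}=(\partial_{0,\rho}-c_{0})\psi_{n}\to f$ in $H_{\rho}^{-1}(\mathbb{R};H)$, and symmetrically $\varphi_{n}\to g$ in $H_{\mu}^{-1}(\mathbb{R};H)$. For (ii)$\,\Rightarrow\,$(iii) fix any $c<\min\{\rho,\mu\}$, apply $(\partial_{0,\rho}-c)^{-1}$ and $(\partial_{0,\mu}-c)^{-1}$ to the given approximating sequence $(\varphi_{n})$, and note that by the preliminary observation the two images coincide as functions on $\mathbb{R}$, while converging in $H_{\rho}(\mathbb{R};H)$ and $H_{\mu}(\mathbb{R};H)$ to $(\partial_{0,\rho}-c)^{-1}f$ and $(\partial_{0,\mu}-c)^{-1}g$ respectively; extracting an a.e.\ convergent subsequence simultaneously in both norms forces the two limits to agree almost everywhere. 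Finally (iii)$\,\Rightarrow\,$(i) is immediate.

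For (b), assume $f\in H_{\rho}(\mathbb{R};H)$, $g\in H_{\mu}(\mathbb{R};H)$ and $f\sim g$. Pick any $c<\min\{\rho,\mu\}$; by (iii) and the explicit formula, the functions $F(t)\coloneqq\intop_{-\infty}^{t}\e^{c(t-s)}f(s)\,\dd s$ and $G(t)\coloneqq\intop_{-\infty}^{t}\e^{c(t-s)}g(s)\,\dd s$ coincide a.e. Multiplying by $\e^{-ct}$, both sides become absolutely continuous primitives of $s\mapsto\e^{-cs}f(s)$ and $s\mapsto\e^{-cs}g(s)$ respectively, so differentiation a.e.\ yields $f=g$ in $L_{2,\mathrm{loc}}(\mathbb{R};H)$, and hence as elements of any $H_{\nu}(\mathbb{R};H)$ to which they both belong. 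The main technical obstacle I expect is a clean derivation of the shifted causal integral formula for $(\partial_{0,\rho}-c)^{-1}$ on $H_{\rho}(\mathbb{R};H)$; once this is in place, the remaining steps are bookkeeping between $H_{\rho}$, $H_{\mu}$, and their extrapolation counterparts.
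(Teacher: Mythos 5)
Your argument is correct and its skeleton coincides with the paper's: the same cycle (i)$\Rightarrow$(ii)$\Rightarrow$(iii)$\Rightarrow$(i), the same construction $\varphi_{n}=\psi_{n}'-c\psi_{n}$ from a two-norm approximation of $h=(\partial_{0,\rho}-c)^{-1}f$ via \prettyref{lem:density test fct}, and the same subsequence-a.e. argument for (ii)$\Rightarrow$(iii). Where you genuinely diverge is in how you certify that $(\partial_{0,\rho}-c)^{-1}$ and $(\partial_{0,\mu}-c)^{-1}$ agree on $H_{\rho}(\mathbb{R};H)\cap H_{\mu}(\mathbb{R};H)$: the paper invokes \prettyref{thm:material_law_good} for the bounded analytic function $T(z)=(z-c)^{-1}$, whereas you derive the explicit causal formula $v\mapsto\bigl(t\mapsto\intop_{-\infty}^{t}\e^{c(t-s)}v(s)\,\dd s\bigr)$ by shifting \prettyref{prop:derivative_invertible}. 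Your route is more elementary and self-contained (no Paley--Wiener input), at the cost of redoing the resolvent computation; the paper's route is shorter given the machinery already in place. The same trade-off reappears in (b): the paper proves the auxiliary identity $\partial_{0,\rho}h=\partial_{0,\mu}h$ for $h\in H_{\rho}^{1}(\mathbb{R};H)\cap H_{\mu}^{1}(\mathbb{R};H)$ and applies $\partial_{0,\cdot}-c$ to $h$, while you differentiate the coinciding primitives $t\mapsto\intop_{-\infty}^{t}\e^{-cs}f(s)\,\dd s$ almost everywhere; for the Hilbert-space-valued setting you should cite the vector-valued Lebesgue differentiation theorem (\prettyref{thm:Lebesgue}) at that point. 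One small omission: (b) is an equivalence, and you only write out $f\sim g\Rightarrow f=g$. The converse is immediate from your own preliminary observation (apply it to the common element $f=g$ of $H_{\rho}(\mathbb{R};H)\cap H_{\mu}(\mathbb{R};H)$), but it should be stated.
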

\begin{proof}
\begin{enumerate}[(a)]

\item (i) $\Rightarrow$ (ii): Assume $f\sim g$, i.e. there is some
$c<\min\{\rho,\mu\}$ such that $(\partial_{0,\rho}-c)^{-1}f=(\partial_{0,\mu}-c)^{-1}g.$
In particular, $\left(\partial_{0,\rho}-c\right)^{-1}f\in H_{\rho}(\mathbb{R};H)\cap H_{\mu}(\mathbb{R};H)$
and thus, there exists a sequence $(\psi_{n})_{n\in\mathbb{N}}$ in
$C_{c}^{\infty}(\mathbb{R};H)$ such that $\psi_{n}\to\left(\partial_{0,\rho}-c\right)^{-1}f$
in $H_{\rho}(\mathbb{R};H)$ and $H_{\mu}(\mathbb{R};H)$ as $n\to\infty$
according to \prettyref{lem:density test fct}. We set $\varphi_{n}\coloneqq\psi_{n}'-c\psi_{n}\in C_{c}^{\infty}(\mathbb{R};H).$
Then we have, using \prettyref{lem:H^-1 independent}, 
\begin{align*}
|\varphi_{n}-f|_{H_{\rho}^{-1}(\mathbb{R};H)} & \leq C|(\partial_{0,\rho}-c)^{-1}\varphi_{n}-\left(\partial_{0,\rho}-c\right)^{-1}f|_{H_{\rho}(\mathbb{R};H)}\\
 & =C|\psi_{n}-\left(\partial_{0,\rho}-c\right)^{-1}f|_{H_{\rho}(\mathbb{R};H)}\to0
\end{align*}
and 
\begin{align*}
|\varphi_{n}-g|_{H_{\mu}^{-1}(\mathbb{R};H)} & \leq C|(\partial_{0,\mu}-c)^{-1}\varphi_{n}-\left(\partial_{0,\mu}-c\right)^{-1}g|_{H_{\mu}(\mathbb{R};H)}\\
 & =C|\psi_{n}-\left(\partial_{0,\rho}-c\right)^{-1}f|_{H_{\mu}(\mathbb{R};H)}\to0
\end{align*}
as $n\to\infty$.\\
(ii) $\Rightarrow$ (iii): Let $(\varphi_{n})_{n\in\mathbb{N}}$ in
$C_{c}^{\infty}(\mathbb{R};H)$ be a sequence as in (ii) and $c<\min\{\rho,\mu\}$.
Note that by \prettyref{lem:H^-1 independent} we have that 
\begin{align*}
(\partial_{0,\rho}-c)^{-1}\varphi_{n} & \to\left(\partial_{0,\rho}-c\right)^{-1}f\mbox{ in }H_{\rho}(\mathbb{R};H)\\
(\partial_{0,\mu}-c)^{-1}\varphi_{n} & \to\left(\partial_{0,\mu}-c\right)^{-1}g\mbox{ in }H_{\mu}(\mathbb{R};H)
\end{align*}
as $n\to\infty.$ Choosing a suitable subsequence, we assume without
loss of generality that the convergence holds pointwise almost everywhere.
Since the function $T:\mathbb{C}_{\Re>\min\{\rho,\mu\}}\to\mathbb{C}$
defined by $T(z)\coloneqq(z-c)^{-1}$ is analytic and bounded, we
get by \prettyref{thm:material_law_good} that 
\[
(\partial_{0,\rho}-c)^{-1}\varphi_{n}=(\partial_{0,\mu}-c)^{-1}\varphi_{n}
\]
for each $n\in\mathbb{N}$ and thus, also 
\[
\left(\partial_{0,\rho}-c\right)^{-1}f=\left(\partial_{0,\mu}-c\right)^{-1}g.
\]

(iii) $\Rightarrow$ (i): This is trivial.

\item Let $f\in H_{\rho}(\mathbb{R};H)$ and $g\in H_{\mu}(\mathbb{R};H).$
If $f\sim g$ then there is some $c<\min\{\rho,\mu\}$ such that $(\partial_{0,\rho}-c)^{-1}f=\left(\partial_{0,\mu}-c\right)^{-1}g.$
We prove the following auxiliary result. For $h\in H_{\mu}^{1}(\mathbb{R};H)\cap H_{\rho}^{1}(\mathbb{R};H)$
we have $\partial_{0,\mu}h=\partial_{0,\rho}h.$ Indeed, for $\varphi\in C_{c}^{\infty}(\mathbb{R};H)$
we have that 
\begin{align*}
\intop_{\mathbb{R}}\langle\partial_{0,\rho}h(t)|\varphi(t)\rangle\mbox{ d}t & =\langle\partial_{0,\rho}h|\e^{2\rho\m}\varphi\rangle_{H_{\rho}(\mathbb{R};H)}\\
 & =\langle h|\partial_{0,\rho}^{\ast}\left(\e^{2\rho\m}\varphi\right)\rangle_{H_{\rho}(\mathbb{R};H)}\\
 & =\langle h|-\e^{2\rho\m}\varphi'\rangle_{H_{\rho}(\mathbb{R};H)}\\
 & =\intop_{\mathbb{R}}\langle h(t)|-\varphi'(t)\rangle\mbox{ dt}
\end{align*}
and the same computation for $\mu$ instead of $\rho$ yields 
\[
\intop_{\mathbb{R}}\langle\partial_{0,\rho}h(t)|\varphi(t)\rangle\mbox{ d}t=\intop_{\mathbb{R}}\langle\partial_{0,\mu}h(t)|\varphi(t)\rangle\mbox{ d}t.
\]
Since this holds for each $\varphi\in C_{c}^{\infty}(\mathbb{R};H),$
we infer $\partial_{0,\rho}h=\partial_{0,\mu}h.$ Applying this result
to $h\coloneqq(\partial_{0,\rho}-c)^{-1}f\in H_{\rho}^{1}(\mathbb{R};H)\cap H_{\mu}^{1}(\mathbb{R};H),$
we obtain 
\[
f=(\partial_{0,\rho}-c)h=\left(\partial_{0,\mu}-c\right)h=g.
\]
If on the other hand $f=g,$ then \prettyref{thm:material_law_good}
applied to $T(z)\coloneqq(z-c)^{-1}$ for $z\in\mathbb{C}_{\Re>\max\{\rho,\mu\}}$
and some $c<\max\{\rho,\mu\}$ yields $\left(\partial_{0,\rho}-c\right)^{-1}f=(\partial_{0,\mu}-c)^{-1}g$,
i.e., $f\sim g.$ 

\end{enumerate}\end{proof}
\begin{rem}
The latter lemma in particular implies that $\sim$ is an equivalence
relation on the set $\bigcup_{\rho\in\mathbb{R}}H_{\rho}^{-1}(\mathbb{R};H).$
Indeed, the reflexivity and symmetry are obvious and the transitivity
can easily be obtained by using the equivalence (i)$\Leftrightarrow$(iii)
in \prettyref{lem:identity on H^-1} (a). 
\end{rem}
With the relation $\sim$ at hand, we are able to prove the independence
on the parameter $\rho$ of the operator $(\partial_{0,\rho}M(\partial_{0,\rho})+A)^{-1},$
established as a bounded operator on $H_{\rho}^{-1}(\mathbb{R};H).$
As in \prettyref{sec:Evolutionary-problems}, we formulate this result
for analytic and bounded operator-valued functions.
\begin{prop}
\label{prop:independence of rho H^-1}Let $\rho_{0}\in\mathbb{R}$
and $T:\mathbb{C}_{\Re>\rho_{0}}\to L(H)$ be analytic and bounded.
Let $\rho,\mu\geq\rho_{0}$ and $f\in H_{\rho}^{-1}(\mathbb{R};H),g\in H_{\mu}^{-1}(\mathbb{R};H)$
with $f\sim g$. Then, $T(\partial_{0,\rho})f\sim T(\partial_{0,\mu})g.$ \end{prop}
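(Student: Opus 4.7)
My strategy is to verify the relation $T(\partial_{0,\rho})f \sim T(\partial_{0,\mu})g$ through characterization (iii) of \prettyref{lem:identity on H^-1}, i.e., to pick a shift $c < \min\{\rho,\mu\}$ and check that $(\partial_{0,\rho}-c)^{-1}T(\partial_{0,\rho})f$ and $(\partial_{0,\mu}-c)^{-1}T(\partial_{0,\mu})g$ coincide as elements of $H_{\rho}(\mathbb{R};H)\cap H_{\mu}(\mathbb{R};H)$. This reduces the question from the extrapolation spaces down to the standard weighted $L_2$-level, where I can invoke \prettyref{thm:material_law_good}.

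Concretely, fix any $c<\min\{\rho,\mu\}$; then $c\in\rho(\partial_{0,\rho})\cap\rho(\partial_{0,\mu})$, so both resolvents are at our disposal. The hypothesis $f\sim g$, combined with characterization (iii) of \prettyref{lem:identity on H^-1}(a), gives a single element
\[
u\coloneqq(\partial_{0,\rho}-c)^{-1}f=(\partial_{0,\mu}-c)^{-1}g\in H_{\rho}(\mathbb{R};H)\cap H_{\mu}(\mathbb{R};H).
\]
By the commutation relation in \prettyref{prop:continuation_fcts_of_partial_0} (applied on the extrapolation spaces $H_{\rho}^{-1}$ and $H_{\mu}^{-1}$ respectively), I obtain
\[
(\partial_{0,\rho}-c)^{-1}T(\partial_{0,\rho})f=T(\partial_{0,\rho})u,\qquad(\partial_{0,\mu}-c)^{-1}T(\partial_{0,\mu})g=T(\partial_{0,\mu})u.
\]
Now the key step is that $u$ lies in both $H_{\rho}(\mathbb{R};H)$ and $H_{\mu}(\mathbb{R};H)$ with $\rho,\mu\geq\rho_{0}$, so \prettyref{thm:material_law_good} applies and yields $T(\partial_{0,\rho})u=T(\partial_{0,\mu})u$. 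Therefore the two right-hand sides above agree, and characterization (iii) of \prettyref{lem:identity on H^-1} delivers $T(\partial_{0,\rho})f\sim T(\partial_{0,\mu})g$.

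There is no real obstacle here; the whole argument rests on being careful that (a) for $c<\min\{\rho,\mu\}$ the resolvent $(\partial_{0,\cdot}-c)^{-1}$ genuinely commutes with $T(\partial_{0,\cdot})$ on $H_{\rho}^{-1}$ resp.\ $H_{\mu}^{-1}$, which is the content of the last identity in \prettyref{prop:continuation_fcts_of_partial_0}, and (b) that the $\rho$-independence of $T(\partial_{0,\cdot})$ on the intersection $H_{\rho}\cap H_{\mu}$ is available, which is exactly \prettyref{thm:material_law_good}. Once these two previously established facts are in place, the proof is a one-line diagram chase.
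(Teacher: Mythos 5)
Your proof is correct and follows essentially the same route as the paper: pass to $u=(\partial_{0,\rho}-c)^{-1}f=(\partial_{0,\mu}-c)^{-1}g$ via \prettyref{lem:identity on H^-1}, commute the resolvent with $T(\partial_{0,\cdot})$ via \prettyref{prop:continuation_fcts_of_partial_0}, and apply \prettyref{thm:material_law_good} on $H_{\rho}(\mathbb{R};H)\cap H_{\mu}(\mathbb{R};H)$. The only cosmetic difference is that the paper fixes $c<\rho_{0}$ while you take $c<\min\{\rho,\mu\}$; both choices are admissible.
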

\begin{proof}
Let $c<\rho_{0}$. Then we have $h\coloneqq(\partial_{0,\rho}-c)^{-1}f=(\partial_{0,\mu}-c)^{-1}g\in H_{\rho}(\mathbb{R};H)\cap H_{\mu}(\mathbb{R};H)$
according to \prettyref{lem:identity on H^-1} (a). Hence, by \prettyref{prop:continuation_fcts_of_partial_0}
and \prettyref{thm:material_law_good} we have that 
\[
(\partial_{0,\rho}-c)^{-1}T(\partial_{0,\rho})f=T(\partial_{0,\rho})h=T(\partial_{0,\mu})h=(\partial_{0,\mu}-c)^{-1}T(\partial_{0,\mu})g,
\]
which shows the assertion. 
\end{proof}
Classical initial value problems are formulated on $\mathbb{R}_{>0}$
as ``time horizon''. Since evolutionary problems use the whole real
line, we need to find a proper way to restrict the ``time horizon''
to the positive reals. The key for doing this is the following lemma.
\begin{lem}
\label{lem:cut_off_mal_anders}Let $\rho>0$ and $t\in\mathbb{R}.$
Consider the operators 
\begin{align*}
\chi_{\mathbb{R}_{\geq t}}(\m):H_{\rho}(\mathbb{R};H) & \to H_{\rho}(\mathbb{R};H)\\
f & \mapsto\left(s\mapsto\chi_{\mathbb{R}_{\geq t}}(s)f(s)\right)
\end{align*}
and 
\begin{align*}
\chi_{\mathbb{R}_{\leq t}}(\m):H_{\rho}(\mathbb{R};H) & \to H_{\rho}(\mathbb{R};H)\\
f & \mapsto\left(s\mapsto\chi_{\mathbb{R}_{\leq t}}(s)f(s)\right).
\end{align*}
Then 
\begin{align*}
\chi_{\mathbb{R}_{\geq t}}(\m)f & =\partial_{0,\rho}\chi_{\mathbb{R}_{\geq t}}(\m)\partial_{0,\rho}^{-1}f-\e^{-2\rho t}\left(\partial_{0,\rho}^{-1}f\right)(t+)\delta_{t},\\
\chi_{\mathbb{R}_{\leq t}}(\m)f & =\partial_{0,\rho}\chi_{\mathbb{R}_{\leq t}}(\m)\partial_{0,\rho}^{-1}f+\e^{-2\rho t}\left(\partial_{0,\rho}^{-1}f\right)(t-)\delta_{t}
\end{align*}
for each $f\in H_{\rho}(\mathbb{R};H).$ Note here, that $\partial_{0,\rho}^{-1}f$
has a continuous representative by \prettyref{prop:Sobolev}.\end{lem}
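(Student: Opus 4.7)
My plan is to reduce both identities to computing the distributional derivative $\partial_{0,\rho}(\chi_{\mathbb{R}_{\geq t}}(\m) g)$ for an arbitrary $g \in H_{\rho}^{1}(\mathbb{R};H)$, and then substitute $g = \partial_{0,\rho}^{-1} f$. Because $\partial_{0,\rho}^{-1}f \in H_{\rho}^{1}(\mathbb{R};H) \hookrightarrow C_{\rho,0}(\mathbb{R};H)$ by \prettyref{prop:Sobolev}, the one-sided limits $(\partial_{0,\rho}^{-1}f)(t\pm)$ both coincide with $(\partial_{0,\rho}^{-1}f)(t)$, so the notation with $(t+)$ and $(t-)$ is only anticipating later uses.

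The computation of the distributional derivative is the technical core. I would test against an arbitrary $\varphi \in C_{c}^{\infty}(\mathbb{R};H)$, which lies densely in $H^{1}(\partial_{0,\rho}^{\ast})$, using the duality pairing from \prettyref{lem:H^-1 as dual}. Writing $\partial_{0,\rho}^{\ast}\varphi = -\varphi' + 2\rho\varphi$ and inserting the cut-off, one obtains
\[
\langle \chi_{\mathbb{R}_{\geq t}}(\m) g \mid \partial_{0,\rho}^{\ast}\varphi\rangle_{H_\rho} = \intop_{t}^{\infty}\langle g(s) \mid -\varphi'(s)+2\rho\varphi(s)\rangle_{H}\e^{-2\rho s}\,\dd s.
\]
Differentiating the product $s \mapsto \langle g(s)\mid\varphi(s)\rangle_{H}\e^{-2\rho s}$ (using that $g$ is absolutely continuous as an element of the Sobolev space $H_{\rho}^{1}$, with pointwise derivative $\partial_{0,\rho}g$ almost everywhere) and integrating over $[t,\infty)$ — the boundary term at $+\infty$ vanishes since $\varphi$ has compact support — the right-hand side becomes
\[
\langle \chi_{\mathbb{R}_{\geq t}}(\m)\partial_{0,\rho}g \mid \varphi\rangle_{H_\rho} + \e^{-2\rho t}\langle g(t) \mid \varphi(t)\rangle_{H}.
\]
Since this holds for every such $\varphi$, I conclude in $H_{\rho}^{-1}(\mathbb{R};H)$ that
\[
\partial_{0,\rho}\bigl(\chi_{\mathbb{R}_{\geq t}}(\m) g\bigr) = \chi_{\mathbb{R}_{\geq t}}(\m)\partial_{0,\rho}g + \e^{-2\rho t}g(t)\delta_{t},
\]
matching the example in the text where $\partial_{0,\rho}\chi_{\mathbb{R}_{\geq t}}x = \e^{-2\rho t}\delta_t x$ (which is the special case $g \equiv x$, informally).

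Specialising to $g = \partial_{0,\rho}^{-1}f$, so that $\partial_{0,\rho}g = f$ and $g(t) = (\partial_{0,\rho}^{-1}f)(t+)$, and rearranging yields the first claim. The second identity follows by the same argument applied to $\chi_{\mathbb{R}_{\leq t}}$ (using the analogous integration by parts on $(-\infty,t]$, where now the boundary at $-\infty$ vanishes), or equivalently from the first via $\chi_{\mathbb{R}_{\leq t}}(\m) = \mathrm{id} - \chi_{\mathbb{R}_{> t}}(\m) = \mathrm{id} - \chi_{\mathbb{R}_{\geq t}}(\m)$ as operators on $H_{\rho}(\mathbb{R};H)$, together with $f = \partial_{0,\rho}\partial_{0,\rho}^{-1}f$. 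The only genuine obstacle is justifying the integration by parts directly for $g$ in the Sobolev class $H_{\rho}^{1}$, but this is routine: one may either approximate $g$ by $C_{c}^{\infty}(\mathbb{R};H)$-functions (using density and the continuity of both sides in $H_{\rho}^{-1}$) or invoke absolute continuity of $g$ together with the decay at $\pm\infty$ provided by \prettyref{prop:Sobolev}.
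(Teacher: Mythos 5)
Your proposal is correct and follows essentially the same route as the paper: test $\partial_{0,\rho}\chi_{\mathbb{R}_{\geq t}}(\m)\partial_{0,\rho}^{-1}f$ against $\varphi\in C_{c}^{\infty}(\mathbb{R};H)$ via the pairing with $\partial_{0,\rho}^{\ast}\varphi=-\varphi'+2\rho\varphi$ and integrate by parts over $[t,\infty)$, the boundary term producing $\e^{-2\rho t}(\partial_{0,\rho}^{-1}f)(t)\delta_{t}$; stating this first as a Leibniz-type identity for general $g\in H_{\rho}^{1}(\mathbb{R};H)$ and then substituting $g=\partial_{0,\rho}^{-1}f$ is only a cosmetic repackaging of the paper's argument. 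The paper handles $\chi_{\mathbb{R}_{\leq t}}$ by the analogous computation on $(-\infty,t]$, whereas your shortcut via $\chi_{\mathbb{R}_{\leq t}}(\m)=\mathrm{id}-\chi_{\mathbb{R}_{\geq t}}(\m)$ and $f=\partial_{0,\rho}\partial_{0,\rho}^{-1}f$ is an equally valid, slightly slicker derivation of the second identity.
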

\begin{proof}
Let $f\in H_{\rho}(\mathbb{R};H)$ and set $F(t)\coloneqq(\partial_{0,\rho}^{-1}f)(t)=\intop_{-\infty}^{t}f(s)\mbox{ d}s$
for $t\in\mathbb{R}.$ For $g\in C_{c}^{\infty}(\mathbb{R};H)$ we
then compute using integration by parts 
\begin{align*}
 & \langle\partial_{0,\rho}\chi_{\mathbb{R}_{\geq t}}(\m)\partial_{0,\rho}^{-1}f|g\rangle_{H^{-1}(\partial_{0,\rho})\times H^{1}(\partial_{0,\rho}^{\ast})}\\
 & =\langle\chi_{\mathbb{R}_{\geq t}}(\m)F|\partial_{0,\rho}^{\ast}g\rangle_{H_{\rho}(\mathbb{R};H)}\\
 & =\intop_{t}^{\infty}\langle F(s)|-g'(s)+2\rho g(s)\rangle_{H}\e^{-2\rho s}\mbox{ d}s\\
 & =\intop_{t}^{\infty}\langle f(s)|g(s)\rangle_{H}\e^{-2\rho s}\mbox{ d}s+\e^{-2\rho t}F(t)\, g(t)\\
 & =\langle\chi_{\mathbb{R}_{\geq t}}(\m)f|g\rangle_{H_{\rho}(\mathbb{R};H)}+\langle\e^{-2\rho t}F(t+)\delta_{t}|g\rangle_{H^{-1}(\partial_{0,\rho})\times H^{1}(\partial_{0,\rho}^{\ast}).}
\end{align*}
The latter gives
\[
\chi_{\mathbb{R}_{\geq t}}(\m)f=\partial_{0,\rho}\chi_{\mathbb{R}_{\geq t}}(\m)\partial_{0,\rho}^{-1}f-\e^{-2\rho t}\left(\partial_{0,\rho}^{-1}f\right)(t+)\delta_{t}.
\]
The equality for $\chi_{\mathbb{R}_{\leq t}}(\m)f$ follows by arguing
analogously.
\end{proof}
The latter representation for the cut-off operators has the advantage
that it can be extended to certain elements in $H_{\rho}^{-1}(\mathbb{R};H)$
in a canonical way.
\begin{defn*}
Let $\rho>0,t\in\mathbb{R}.$ We define 
\begin{align*}
P_{t}:D(P_{t})\subseteq H_{\rho}^{-1}(\mathbb{R};H) & \to H_{\rho}^{-1}(\mathbb{R};H)\\
f & \mapsto\partial_{0,\rho}\chi_{\mathbb{R}_{\geq t}}(\m)\partial_{0,\rho}^{-1}f-\e^{-2\rho t}\left(\partial_{0,\rho}^{-1}f\right)(t+)\delta_{t}
\end{align*}
with%
\footnote{For a function $f\in L_{2,\mathrm{loc}}(\mathbb{R};H)$ we say that
$a\coloneqq f(t+)$ exists for some $t\in\mathbb{R},$ if 
\[
\forall\varepsilon>0\,\exists\delta>0:|f(s)-a|<\varepsilon\quad(s\in]t,t+\delta[\mbox{ a.e.}).
\]
Similarly, we say that $b\coloneqq f(t-)$ exists for some $t\in\mathbb{R},$
if 
\[
\forall\varepsilon>0\,\exists\delta>0:|f(s)-b|<\varepsilon\quad(s\in]t-\delta,t[\mbox{ a.e.}).
\]
} 
\[
D(P_{t})\coloneqq\left\{ f\in H_{\rho}^{-1}(\mathbb{R};H)\,;\,(\partial_{0,\rho}^{-1}f)(t+)\mbox{ exists}\right\} ,
\]
as well as
\begin{align*}
Q_{t}:D(Q_{t})\subseteq H_{\rho}^{-1}(\mathbb{R};H) & \to H_{\rho}^{-1}(\mathbb{R};H)\\
f & \mapsto\partial_{0,\rho}\chi_{\mathbb{R}_{\leq t}}(\m)\partial_{0,\rho}^{-1}f+\e^{-2\rho t}\left(\partial_{0,\rho}^{-1}f\right)(t-)\delta_{t}
\end{align*}
with 
\[
D(Q_{t})\coloneqq\left\{ f\in H_{\rho}^{-1}(\mathbb{R};H)\,;\,(\partial_{0,\rho}^{-1}f)(t-)\mbox{ exists}\right\} .
\]
\end{defn*}
\begin{rem}
By \prettyref{lem:cut_off_mal_anders} we have $P_{t}f=\chi_{\mathbb{R}_{\geq t}}(\m)f$
and $Q_{t}f=\chi_{\mathbb{R}_{\leq t}}(\m)f$ for $f\in H_{\rho}(\mathbb{R};H).$ 
\end{rem}
We collect some useful properties for the so introduced operators
$P_{t}$ and $Q_{t}$.
\begin{prop}
\label{prop:properties_cut_off}Let $\rho>0$ and $s,t\in\mathbb{R}.$

\begin{enumerate}[(a)]

\item We have $\delta_{s}\in D(P_{t})\cap D(Q_{t})$ and 
\[
P_{t}\delta_{s}=\begin{cases}
\delta_{s} & \mbox{ if }s>t,\\
0 & \mbox{ if }s\leq t,
\end{cases}\qquad Q_{t}\delta_{s}=\begin{cases}
0 & \mbox{ if }s\geq t,\\
\delta_{s} & \mbox{ if }s<t.
\end{cases}
\]

\item It holds 
\[
P_{t}P_{s}\subseteq P_{\max\{t,s\}}
\]
with equality if and only if $t\leq s.$

\item It holds 
\[
\tau_{s}P_{t}=P_{t-s}\tau_{s}.
\]

\item For $f\in D(P_{t})\cap D(Q_{t})$ we have that 
\[
f=P_{t}f+Q_{t}f+\e^{-2\rho t}\left(\left(\partial_{0,\rho}^{-1}f\right)(t+)-\left(\partial_{0,\rho}^{-1}f\right)(t-)\right)\delta_{t}.
\]

\item For $f\in H_{\rho}^{-1}(\mathbb{R};H)$ we have that $\spt f\subseteq\mathbb{R}_{\geq t}$
if and only if $f\in D(Q_{t})$ with $Q_{t}f=0.$ Moreover, $\spt f\subseteq\mathbb{R}_{\leq t}$
if and only if $f\in D(P_{t})$ with $P_{t}f=0.$

\end{enumerate}\end{prop}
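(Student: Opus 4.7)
The whole proposition reduces to careful bookkeeping using two explicit formulas derived from the example preceding \prettyref{lem:support_H^-1}: namely $\partial_{0,\rho}\chi_{\mathbb{R}_{\geq t}}x=\e^{-2\rho t}\delta_{t}x$ and, by inverting, $\partial_{0,\rho}^{-1}\delta_{t}x=\e^{2\rho t}\chi_{\mathbb{R}_{\geq t}}x$ for $x\in H$. Together with \prettyref{lem:support_H^-1} and the continuity of functions in $H_{\rho}^{1}(\mathbb{R};H)$ furnished by \prettyref{prop:Sobolev}, these are essentially all the tools needed. For (a), I will insert $f=\delta_{s}x$ into the defining formulas for $P_{t}$ and $Q_{t}$: since $\partial_{0,\rho}^{-1}\delta_{s}x=\e^{2\rho s}\chi_{\mathbb{R}_{\geq s}}x$ is continuous away from $s$, the one-sided limits at $t\ne s$ exist trivially, and at $t=s$ the limit from the side where the characteristic function vanishes is $0$; distinguishing the cases $s>t$, $s=t$, $s<t$ then gives the four asserted values.

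For (d), I would simply add the two defining expressions for $P_{t}f$ and $Q_{t}f$: using $\chi_{\mathbb{R}_{\geq t}}+\chi_{\mathbb{R}_{\leq t}}=1$ almost everywhere yields $P_{t}f+Q_{t}f=f-\e^{-2\rho t}\bigl((\partial_{0,\rho}^{-1}f)(t+)-(\partial_{0,\rho}^{-1}f)(t-)\bigr)\delta_{t}$, which is the claim after rearranging. For (e), if $\spt f\subseteq\mathbb{R}_{\geq t}$ then \prettyref{lem:support_H^-1} gives $\spt\partial_{0,\rho}^{-1}f\subseteq\mathbb{R}_{\geq t}$, so by continuity $(\partial_{0,\rho}^{-1}f)(t-)=0$ and $\chi_{\mathbb{R}_{\leq t}}\partial_{0,\rho}^{-1}f=0$ in $H_{\rho}(\mathbb{R};H)$; conversely, from $Q_{t}f=0$ one applies $\partial_{0,\rho}^{-1}$ to deduce $\chi_{\mathbb{R}_{\leq t}}\partial_{0,\rho}^{-1}f=-(\partial_{0,\rho}^{-1}f)(t-)\chi_{\mathbb{R}_{\geq t}}$; evaluating for $u>t$ (where the left side vanishes) forces $(\partial_{0,\rho}^{-1}f)(t-)=0$ and $(\partial_{0,\rho}^{-1}f)(u)=0$ for $u<t$, whence $\spt f\subseteq\mathbb{R}_{\geq t}$ via \prettyref{lem:support_H^-1}. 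The argument for $Q_{t}$ is symmetric.

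Part (c) requires identifying how translation interacts with $\delta_{t}$ in $H_{\rho}^{-1}(\mathbb{R};H)$. Since $\tau_{s}$ commutes with $\partial_{0,\rho}$ on $H_{\rho}(\mathbb{R};H)$ and extends to $H_{\rho}^{-1}(\mathbb{R};H)$ by \prettyref{prop:continuation_fcts_of_partial_0}, one computes $\tau_{s}\delta_{t}=\partial_{0,\rho}\tau_{s}\partial_{0,\rho}^{-1}\delta_{t}=\partial_{0,\rho}(\e^{2\rho t}\chi_{\mathbb{R}_{\geq t-s}})=\e^{2\rho s}\delta_{t-s}$. Combined with $\tau_{s}\chi_{\mathbb{R}_{\geq t}}(\m)=\chi_{\mathbb{R}_{\geq t-s}}(\m)\tau_{s}$ and $(\partial_{0,\rho}^{-1}\tau_{s}f)(u)=(\partial_{0,\rho}^{-1}f)(u+s)$, a direct substitution into the definition of $P_{t}$ shows that the shift of the Dirac-term matches exactly (the factor $\e^{2\rho s}$ from $\tau_{s}\delta_{t}$ combines with $\e^{-2\rho t}$ to yield $\e^{-2\rho(t-s)}$), giving $\tau_{s}P_{t}f=P_{t-s}\tau_{s}f$.

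The main obstacle, and the step I would attack last, is (b). Using the formula derived for the proof of (a), $\partial_{0,\rho}^{-1}P_{s}f=\chi_{\mathbb{R}_{\geq s}}\bigl(\partial_{0,\rho}^{-1}f-(\partial_{0,\rho}^{-1}f)(s+)\bigr)$, which is continuous on $\mathbb{R}\setminus\{s\}$ and vanishes on $\mathbb{R}_{<s}$. If $t\leq s$, then $(\partial_{0,\rho}^{-1}P_{s}f)(t+)=0$ automatically, so $D(P_{t}P_{s})=D(P_{s})$; moreover $\chi_{\mathbb{R}_{\geq t}}\chi_{\mathbb{R}_{\geq s}}=\chi_{\mathbb{R}_{\geq s}}$, yielding $P_{t}P_{s}f=P_{s}f$, i.e.\ equality in $P_{t}P_{s}=P_{\max\{t,s\}}$. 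If $t>s$, then $(\partial_{0,\rho}^{-1}P_{s}f)(t+)$ exists iff $(\partial_{0,\rho}^{-1}f)(t+)$ does, hence $D(P_{t}P_{s})=D(P_{t})\cap D(P_{s})\subsetneq D(P_{t})=D(P_{\max\{t,s\}})$ in general; a careful cancellation shows that on this smaller domain $P_{t}P_{s}f=P_{t}f$, so inclusion holds but equality fails (a concrete counterexample being $f=\delta_{s}$, which lies in $D(P_{t})$ with $P_{t}\delta_{s}=0$ but not in $D(P_{s})$ unless one checks one-sided limits more carefully; alternatively, any $f$ with $(\partial_{0,\rho}^{-1}f)(t+)$ defined but $(\partial_{0,\rho}^{-1}f)(s+)$ undefined works). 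This domain-inclusion argument is the only delicate point of the proof.
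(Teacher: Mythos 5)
Your overall strategy coincides with the paper's, and parts (a), (c), (d) and the $Q_{t}$ half of (e) are carried out correctly with the same tools (the identities $\partial_{0,\rho}\chi_{\mathbb{R}_{\geq s}}x=\e^{-2\rho s}\delta_{s}x$, $\partial_{0,\rho}^{-1}\delta_{s}x=\e^{2\rho s}\chi_{\mathbb{R}_{\geq s}}x$, and \prettyref{lem:support_H^-1}). Two points, however, need repair. First, in (b) your ``concrete counterexample'' $f=\delta_{s}$ does not work: since $\partial_{0,\rho}^{-1}\delta_{s}=\e^{2\rho s}\chi_{\mathbb{R}_{\geq s}}$ has the right-sided limit $\e^{2\rho s}$ at $s$, we have $\delta_{s}\in D(P_{s})$ with $P_{s}\delta_{s}=0$, hence $\delta_{s}\in D(P_{t}P_{s})$ and $P_{t}P_{s}\delta_{s}=0=P_{t}\delta_{s}$ for $t>s$. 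Your fallback criterion --- take $f$ for which $(\partial_{0,\rho}^{-1}f)(t+)$ exists but $(\partial_{0,\rho}^{-1}f)(s+)$ does not --- is exactly right, but proving strict inclusion requires actually exhibiting such an $f$; the paper takes $f=\partial_{0,\rho}g$ with $g(x)=\chi_{\mathbb{R}_{\geq s}}(x)\sin\left(\frac{1}{x-s}\right)$, whose antiderivative oscillates at $s+$ yet is smooth at $t$.

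Second, in (e) the equivalence $\spt f\subseteq\mathbb{R}_{\leq t}\Leftrightarrow f\in D(P_{t}),\,P_{t}f=0$ is \emph{not} symmetric to the other one, and this is where the real work lies. \prettyref{lem:support_H^-1} transfers support on $\mathbb{R}_{\geq t}$ to the antiderivative, but support on $\mathbb{R}_{\leq t}$ does not transfer: since $\partial_{0,\rho}^{-1}$ integrates from $-\infty$ (as $\rho>0$), the antiderivative of a distribution supported on $\mathbb{R}_{\leq t}$ is in general a nonzero constant, not zero, to the right of $t$ (e.g. $\partial_{0,\rho}^{-1}\delta_{s}$ for $s\leq t$). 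One must first show that $\chi_{\mathbb{R}_{\geq t}}(\m)\partial_{0,\rho}^{-1}f=\chi_{\mathbb{R}_{\geq t}}x$ for some $x\in H$; the paper does this by checking that $\chi_{\mathbb{R}_{\geq t}}(\m)\partial_{0,\rho}^{-1}f$ is orthogonal to $V^{\bot}$, where $V=\left\{ \chi_{\mathbb{R}_{\geq t}}y\,;\,y\in H\right\}$ is closed in $H_{\rho}(\mathbb{R};H)$. This simultaneously yields $f\in D(P_{t})$ (the right-sided limit is $x$) and the cancellation $P_{t}f=\partial_{0,\rho}\chi_{\mathbb{R}_{\geq t}}x-\e^{-2\rho t}\delta_{t}x=0$. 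Without this step your argument only covers half of (e).
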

\begin{proof}
\begin{enumerate}[(a)]

\item Since $\partial_{0,\rho}^{-1}\delta_{s}=\e^{2\rho s}\chi_{\mathbb{R}_{\geq s}},$
we infer $\delta_{s}\in D(P_{t})\cap D(Q_{t}).$ We compute 
\begin{align*}
P_{t}\delta_{s} & =\partial_{0,\rho}\chi_{\mathbb{R}_{\geq t}}(\m)\partial_{0,\rho}^{-1}\delta_{s}-\e^{-2\rho t}\left(\partial_{0,\rho}^{-1}\delta_{s}\right)(t+)\delta_{t}\\
 & =\partial_{0,\rho}\chi_{\mathbb{R}_{\geq t}}(\m)\e^{2\rho s}\chi_{\mathbb{R}_{\geq s}}-\e^{-2\rho t}\left(\e^{2\rho s}\chi_{\mathbb{R}_{\geq s}}\right)(t+)\delta_{t}\\
 & =\e^{2\rho s}\partial_{0,\rho}\chi_{\mathbb{R}_{\geq\max\{t,s\}}}-\e^{-2\rho(t-s)}\chi_{\mathbb{R}_{\geq s}}(t+)\delta_{t}\\
 & =\e^{2\rho(s-\max\{t,s\})}\delta_{\max\{t,s\}}-\e^{-2\rho(t-s)}\chi_{\mathbb{R}_{\geq s}}(t+)\delta_{t}.
\end{align*}
Hence, we obtain 
\[
P_{t}\delta_{s}=\begin{cases}
\delta_{s} & \mbox{ if }s>t,\\
0 & \mbox{ if }s\leq t.
\end{cases}
\]
The equality for $Q_{t}\delta_{s}$ follows by arguing analogously.

\item Let $f\in D(P_{t}P_{s})$. We have that 
\begin{equation}
\partial_{0,\rho}^{-1}P_{s}f=\chi_{\mathbb{R}_{\geq s}}(\m)\partial_{0,\rho}^{-1}f-\left(\partial_{0,\rho}^{-1}f\right)(s+)\chi_{\mathbb{R}_{\geq s}}\label{eq:P_sf}
\end{equation}
and since $\left(\partial_{0,\rho}^{-1}P_{s}f\right)(t+)$ exists,
we infer that $\left(\chi_{\mathbb{R}_{\geq s}}(\m)\partial_{0,\rho}^{-1}f\right)(t+)$
exists, too. Consequently, we compute 
\begin{align*}
P_{t}P_{s}f & =\partial_{0,\rho}\chi_{\mathbb{R}_{\geq t}}(\m)\partial_{0,\rho}^{-1}P_{s}f-\e^{-2\rho t}\left(\partial_{0,\rho}^{-1}P_{s}f\right)(t+)\delta_{t}\\
 & =\partial_{0,\rho}\chi_{\mathbb{R}_{\geq\max\{t,s\}}}(\m)\partial_{0,\rho}^{-1}f-\partial_{0,\rho}\left(\partial_{0,\rho}^{-1}f\right)(s+)\chi_{\mathbb{R}_{\geq\max\{t,s\}}}-\\
 & \quad-\e^{-2\rho t}\chi_{\mathbb{R}_{\geq s}}(t+)\left(\partial_{0,\rho}^{-1}f\right)(t+)\delta_{t}+\e^{-2\rho t}\left(\partial_{0,\rho}^{-1}f\right)(s+)\chi_{\mathbb{R}_{\geq s}}(t+)\delta_{t}\\
 & =\partial_{0,\rho}\chi_{\mathbb{R}_{\geq\max\{t,s\}}}(\m)\partial_{0,\rho}^{-1}f-\e^{-2\rho\max\{t,s\}}\left(\partial_{0,\rho}^{-1}f\right)(s+)\delta_{\max\{t,s\}}\\
 & \quad-\e^{-2\rho t}\chi_{\mathbb{R}_{\geq s}}(t+)\left(\left(\partial_{0,\rho}^{-1}f\right)(t+)-\left(\partial_{0,\rho}^{-1}f\right)(s+)\right)\delta_{t}.
\end{align*}
If $t<s,$ we get 
\begin{align*}
P_{t}P_{s}f & =\partial_{0,\rho}\chi_{\mathbb{R}_{\geq s}}(\m)\partial_{0,\rho}^{-1}f-\e^{-2\rho s}\left(\partial_{0,\rho}^{-1}f\right)(s+)\delta_{s}\\
 & =P_{s}f,
\end{align*}
while for $t\geq s$ we obtain 
\begin{align*}
P_{t}P_{s}f & =\partial_{0,\rho}\chi_{\mathbb{R}_{\geq t}}(\m)\partial_{0,\rho}^{-1}f-\e^{-2\rho t}\left(\partial_{0,\rho}^{-1}f\right)(s+)\delta_{t}-\e^{-2\rho t}\left(\left(\partial_{0,\rho}^{-1}f\right)(t+)-\left(\partial_{0,\rho}^{-1}f\right)(s+)\right)\delta_{t}\\
 & =P_{t}f.
\end{align*}
This shows the asserted inclusion $P_{t}P_{s}\subseteq P_{\max\{t,s\}}.$
We now prove that equality holds if and only if $t\leq s.$ Indeed,
if $t\leq s$ and $f\in D(P_{s})$ we obtain by \prettyref{eq:P_sf}
that
\[
\left(\partial_{0,\rho}^{-1}P_{s}f\right)(t+)=0
\]
and hence, $f\in D(P_{t}P_{s}).$ Assume now that $t>s.$ We give
an example for $f\in D(P_{t})$ such that $f\notin D(P_{t}P_{s}).$
For doing so, we define 
\[
g(x)\coloneqq\chi_{\mathbb{R}_{\geq s}}(x)\sin\left(\frac{1}{x-s}\right).
\]
Then clearly $g\in H_{\rho}(\mathbb{R})$. We set $f\coloneqq\partial_{0,\rho}g\in H_{\rho}^{-1}(\mathbb{R}).$
Then $f\in D(P_{t})$ since $\left(\partial_{0,\rho}^{-1}f\right)(t+)=\sin\left(\frac{1}{t-s}\right).$
However, $f\notin D(P_{s})$ since $\lim_{x\to s+}\sin\left(\frac{1}{x-s}\right)$
does not exist. 

\item Let $f\in H_{\rho}^{-1}(\mathbb{R};H).$ Then we have 
\begin{align*}
f\in D(P_{t}) & \Leftrightarrow\left(\partial_{0,\rho}^{-1}f\right)(t+)\mbox{ exists}\\
 & \Leftrightarrow\left(\tau_{s}\partial_{0,\rho}^{-1}f\right)((t-s)+)\mbox{ exists}\\
 & \Leftrightarrow\tau_{s}f\in D(P_{t-s}).
\end{align*}
Moreover, for $f\in D(P_{t})$ we have that 
\begin{align*}
\tau_{s}P_{t}f & =\tau_{s}\left(\partial_{0,\rho}\chi_{\mathbb{R}_{\geq t}}(\m)\partial_{0,\rho}^{-1}f-\e^{-2\rho t}\left(\partial_{0,\rho}^{-1}f\right)(t+)\delta_{t}\right)\\
 & =\partial_{0,\rho}\tau_{s}\chi_{\mathbb{R}_{\geq t}}(\m)\partial_{0,\rho}^{-1}f-\e^{-2\rho t}\left(\partial_{0,\rho}^{-1}f\right)(t+)\tau_{s}\delta_{t}.
\end{align*}
Using now $\tau_{s}\chi_{\mathbb{R}_{\geq t}}(\m)=\chi_{\mathbb{R}_{\geq t-s}}(\m)\tau_{s}$
as well as 
\begin{align*}
\tau_{s}\delta_{t} & =\e^{2\rho t}\partial_{0,\rho}\tau_{s}\chi_{\mathbb{R}_{\geq t}}\\
 & =\e^{2\rho t}\partial_{0,\rho}\chi_{\mathbb{R}_{\geq t-s}}\\
 & =\e^{2\rho s}\delta_{t-s},
\end{align*}
we derive that 
\begin{align*}
\tau_{s}P_{t}f & =\partial_{0,\rho}\chi_{\mathbb{R}_{\geq t-s}}(\m)\partial_{0,\rho}^{-1}\tau_{s}f-\e^{-2\rho\left(t-s\right)}\left(\partial_{0,\rho}^{-1}\tau_{s}f\right)\left(\left(t-s\right)+\right)\delta_{t-s}\\
 & =P_{t-s}\tau_{s}f.
\end{align*}

\item Let $f\in D(P_{t})\cap D(Q_{t}).$ Then we have that 
\begin{align*}
 & P_{t}f+Q_{t}f\\
 & =\partial_{0,\rho}\chi_{\mathbb{R}_{\geq t}}(\m)\partial_{0,\rho}^{-1}f-\e^{-2\rho t}\left(\partial_{0,\rho}^{-1}f\right)(t+)\delta_{t}+\partial_{0,\rho}\chi_{\mathbb{R}_{\leq t}}(\m)\partial_{0,\rho}^{-1}f+\e^{-2\rho t}\left(\partial_{0,\rho}^{-1}f\right)(t-)\delta_{t}\\
 & =f-\e^{-2\rho t}\left(\left(\partial_{0,\rho}^{-1}f\right)(t+)-\left(\partial_{0,\rho}^{-1}f\right)(t-)\right)\delta_{t}.
\end{align*}

\item Let $f\in H_{\rho}^{-1}(\mathbb{R};H).$ We first assume that
$\spt f\subseteq\mathbb{R}_{\geq t}$. Then by \prettyref{lem:support_H^-1}
we have $\spt\partial_{0,\rho}^{-1}f\subseteq\mathbb{R}_{\geq t}$
and thus 
\[
\left(\partial_{0,\rho}^{-1}f\right)(t-)=0.
\]
Hence, $f\in D(Q_{t})$ and 
\begin{align*}
Q_{t}f & =\partial_{0,\rho}\chi_{\mathbb{R}_{\leq t}}(\m)\partial_{0,\rho}^{-1}f+\e^{-2\rho t}\left(\partial_{0,\rho}^{-1}f\right)(t-)\delta_{t}=0.
\end{align*}
If on the other hand $f\in D(Q_{t})$ with $Q_{t}f=0,$ we compute
for $\varphi\in C_{c}^{\infty}(\mathbb{R};H)$ with $\spt\varphi\subseteq\mathbb{R}_{<t}$
\begin{align*}
\langle f|\varphi\rangle_{H^{-1}(\partial_{0,\rho})\times H^{1}(\partial_{0,\rho}^{\ast})} & =\langle\partial_{0,\rho}^{-1}f|\partial_{0,\rho}^{\ast}\varphi\rangle_{H_{\rho}(\mathbb{R};H)}\\
 & =\langle\chi_{\mathbb{R}_{\leq t}}(\m)\partial_{0,\rho}^{-1}f|\partial_{0,\rho}^{\ast}\varphi\rangle_{H_{\rho}(\mathbb{R};H)}\\
 & =\langle Q_{t}f|\varphi\rangle_{H^{-1}(\partial_{0,\rho})\times H^{1}(\partial_{0,\rho})}-\e^{-2\rho t}\left(\partial_{0,\rho}^{-1}f\right)(t-)\varphi(t)\\
 & =0.
\end{align*}
This gives $\spt f\subseteq\mathbb{R}_{\geq t}.$ \\
Assume now that $\spt f\subseteq\mathbb{R}_{\leq t}$. Consider the
space 
\[
V\coloneqq\left\{ \chi_{\mathbb{R}_{\geq t}}x\,;\, x\in H\right\} \subseteq H_{\rho}(\mathbb{R};H).
\]
Then $V$ is closed and for $g\in H_{\rho}(\mathbb{R};H)$ we have
that 
\begin{align*}
g\in V^{\bot} & \Leftrightarrow\forall x\in H:\intop_{t}^{\infty}\langle g(t)|x\rangle_{H}\e^{-2\rho t}\mbox{ d}t=0\\
 & \Leftrightarrow\intop_{t}^{\infty}g(t)\e^{-2\rho t}\mbox{ d}t=0.
\end{align*}
Let now $g\in V^{\bot}$. Then 
\[
\langle\chi_{\mathbb{R}_{\geq t}}(\m)\partial_{0,\rho}^{-1}f|g\rangle_{H_{\rho}(\mathbb{R};H)}=\langle f|\left(\partial_{0,\rho}^{\ast}\right)^{-1}\chi_{\mathbb{R}_{\geq t}}(\m)g\rangle_{H^{-1}(\partial_{0,\rho})\times H^{1}(\partial_{0,\rho}^{\ast})}
\]
and since 
\[
\left(\left(\partial_{0,\rho}^{\ast}\right)^{-1}\chi_{\mathbb{R}_{\geq t}}(\m)g\right)(s)=\intop_{s}^{\infty}\chi_{\mathbb{R}_{\geq t}}(r)g(r)\e^{-2\rho r}\mbox{ d}r\;\e^{2\rho s}=0
\]
for $s\leq t$, we infer that 
\[
\langle\chi_{\mathbb{R}_{\geq t}}(\m)\partial_{0,\rho}^{-1}f|g\rangle_{H_{\rho}(\mathbb{R};H)}=0.
\]
Hence, $\chi_{\mathbb{R}_{\geq t}}(\m)\partial_{0,\rho}^{-1}f\in V$
and thus, there is $x\in H$ with 
\[
\chi_{\mathbb{R}_{\geq t}}(\m)\partial_{0,\rho}^{-1}f=\chi_{\mathbb{R}_{\geq t}}x.
\]
The latter gives $f\in D(P_{t})$ and 
\[
P_{t}f=\partial_{0,\rho}\chi_{\mathbb{R}_{\geq t}}(\m)\partial_{0,\rho}^{-1}f-\e^{-2\rho t}\delta_{t}x=0.
\]
If $f\in D(P_{t})$ and $P_{t}f=0$ we compute for $\varphi\in C_{c}^{\infty}(\mathbb{R}_{>t};H)$
\begin{align*}
\langle f|\varphi\rangle_{H^{-1}(\partial_{0,\rho})\times H^{1}(\partial_{0,\rho}^{\ast})} & =\langle\partial_{0,\rho}^{-1}f|\partial_{0,\rho}^{\ast}\varphi\rangle_{H_{\rho}(\mathbb{R};H)}\\
 & =\langle\chi_{\mathbb{R}_{\geq t}}(\m)\partial_{0,\rho}^{-1}f|\partial_{0,\rho}^{\ast}\varphi\rangle_{H_{\rho}(\mathbb{R};H)}\\
 & =\langle P_{t}f|\varphi\rangle_{H^{-1}(\partial_{0,\rho})\times H^{1}(\partial_{0,\rho})}+\e^{-2\rho t}\left(\partial_{0,\rho}^{-1}f\right)(t+)\varphi(t)\\
 & =0,
\end{align*}
which completes the proof.

\end{enumerate}
\end{proof}

\section{Admissible initial values and histories}

It is the aim of this section to provide the ``correct'' spaces
for initial values and histories for a given well-posed evolutionary
problem. Throughout, let $H$ a Hilbert space and assume that $M:D(M)\subseteq\mathbb{C}\to L(H)$
is a linear material law and $A:D(A)\subseteq H\to H$ is densely
defined closed and linear, such that the evolutionary problem associated
with $M$ and $A$ is well-posed. We first aim to give the following
initial value problem a meaningful interpretation in the framework
of evolutionary problems:
\begin{align}
\left(\partial_{0,\rho}M(\partial_{0,\rho})+A\right)u & =0\mbox{ on }\mathbb{R}_{>0},\label{eq:problem_pos_times}\\
u|_{\mathbb{R}_{<0}} & =g.\label{eq:history}
\end{align}
Here, $\rho>s_{0}(M,A)$ and $g\in\chi_{\mathbb{R}_{\leq0}}(\m)[H_{\rho}^{1}(\mathbb{R};H)],$
i.e. $g=\chi_{\mathbb{R}_{\leq0}}(\m)w$ for some $w\in H_{\rho}^{1}(\mathbb{R};H)$
and we are seeking for a solution $u\in H_{\rho}^{1}(\mathbb{R};H)$
satisfying \prettyref{eq:problem_pos_times} and \prettyref{eq:history}.
We start by doing some heuristics and ignoring all domain constraints.
This will be used to motivate the definition of the space of admissible
histories $g.$ As $u$ is known on $\mathbb{R}_{<0}$ by \prettyref{eq:history},
we may decompose
\[
u=v+g,
\]
where $v\coloneqq\chi_{\mathbb{R}_{\geq0}}(\m)u\in H_{\rho}(\mathbb{R}_{\geq0};H).$
Then we have 
\begin{equation}
v-\chi_{\mathbb{R}_{\geq0}}g(0-)\in H_{\rho}^{1}(\mathbb{R};H).\label{eq:initial_value_stronger}
\end{equation}
We replace \prettyref{eq:problem_pos_times} by 
\[
P_{0}\left(\partial_{0,\rho}M(\partial_{0,\rho})+A\right)u=0,
\]
where $P_{0}$ is the cut-off operator defined in the previous section.
Using the decomposition of $u$ we infer 
\[
P_{0}\left(\partial_{0,\rho}M(\partial_{0,\rho})+A\right)v=-P_{0}\left(\partial_{0,\rho}M(\partial_{0,\rho})+A\right)g=-P_{0}\partial_{0,\rho}M(\partial_{0,\rho})g,
\]
where we have used $P_{0}A=AP_{0},$ since $A$ just acts in $H$.
The left-hand side in the latter equality can be written as 
\begin{align*}
P_{0}\partial_{0,\rho}M(\partial_{0,\rho})v+Av & =\partial_{0,\rho}\chi_{\mathbb{R}_{\geq0}}(\m)M(\partial_{0,\rho})v-\left(M(\partial_{0,\rho})v\right)(0+)\delta_{0}+Av\\
 & =\left(\partial_{0,\rho}M(\partial_{0,\rho})+A\right)v-\left(M(\partial_{0,\rho})v\right)(0+)\delta_{0}.
\end{align*}
Thus, we end up with an evolutionary problem for the unknown $v$
of the form 
\[
\left(\partial_{0,\rho}M(\partial_{0,\rho})+A\right)v=\left(M(\partial_{0,\rho})v\right)(0+)\delta_{0}-P_{0}\partial_{0,\rho}M(\partial_{0,\rho})g.
\]
We note that the right-hand side also depends on $v$ as the value
of $\left(M(\partial_{0,\rho})v\right)(0+)$ is unknown. We  replace
this term by an arbitrary $x\in H$, which will depend on the history
$g$. Since the evolutionary problem is assumed to be well-posed,
we get that 
\[
v=\left(\partial_{0,\rho}M(\partial_{0,\rho})+A\right)^{-1}\left(x\delta_{0}\right)-\left(\partial_{0,\rho}M(\partial_{0,\rho})+A\right)^{-1}P_{0}\partial_{0,\rho}M(\partial_{0,\rho})g.
\]
Taking condition \prettyref{eq:initial_value_stronger} into account,
we get that 
\[
\left(\partial_{0,\rho}M(\partial_{0,\rho})+A\right)^{-1}\left(x\delta_{0}-P_{0}\partial_{0,\rho}M(\partial_{0,\rho})g\right)-\chi_{\mathbb{R}_{\geq0}}g(0-)\in H_{\rho}^{1}(\mathbb{R};H)
\]
and this condition will be used for the definition of admissible initial
values and histories. In order to provide slightly shorter formulas,
we introduce the notation 
\[
S_{\rho}\coloneqq\left(\overline{\partial_{0,\rho}M(\partial_{0,\rho})+A}\right)^{-1}.
\]
We restrict ourselves to a slightly smaller class of material laws,
which are defined as follows.
\begin{defn*}
Let $M:D(M)\subseteq\mathbb{C}\to L(H)$ be a linear material law.
We call $M$ \emph{regularizing, }if there is $\rho_{0}\in\mathbb{R}_{>0}$
such that $\mathbb{C}_{\Re\geq\rho_{0}}\subseteq D(M)$, $M|_{\mathbb{C}_{\Re\geq\rho_{0}}}$
is bounded and 
\[
\forall\rho\geq\rho_{0},x\in H:\left(M(\partial_{0,\rho})\chi_{\mathbb{R}_{\geq0}}x\right)(0+)\mbox{ exists.}
\]
\end{defn*}
\begin{rem}
Note that $M$ is regularizing if and only if $\partial_{0,\rho}M(\partial_{0,\rho})\chi_{\mathbb{R}_{\geq0}}x\in D(P_{0})$
for each $x\in H,\rho\geq\rho_{0}$ for some $\rho_{0}>0.$ \end{rem}
\begin{lem}
\label{lem:char_regularizing_M}Let $M$ be a linear material law
and set 
\[
b(M)\coloneqq\inf\left\{ \rho_{0}\in\mathbb{R}_{\geq0}\,;\,\mathbb{C}_{\Re\geq\rho_{0}}\subseteq D(M),M|_{\mathbb{C}_{\Re\geq\rho_{0}}}\,\mathrm{bounded}\right\} .
\]
Then $M$ is regularizing if and only if $b(M)<\infty$ and 
\[
\exists\rho>b(M)\,\forall x\in H:\left(M(\partial_{0,\rho})\chi_{\mathbb{R}_{\geq0}}x\right)(0+)\mbox{ exists}.
\]
\end{lem}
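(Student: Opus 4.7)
My plan is to prove the two implications separately, noting that the forward direction is essentially a tautology and the reverse direction reduces to an application of Theorem \ref{thm:material_law_good}.

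First I would dispatch the ``only if'' direction: if $M$ is regularizing, then by definition there is some $\rho_0 \in \mathbb{R}_{>0}$ with $\mathbb{C}_{\Re \geq \rho_0} \subseteq D(M)$ and $M|_{\mathbb{C}_{\Re\geq\rho_0}}$ bounded, which already gives $b(M) \leq \rho_0 < \infty$, and taking $\rho = \rho_0 > b(M)$ (or any slightly larger value), the existence of $(M(\partial_{0,\rho})\chi_{\mathbb{R}_{\geq 0}}x)(0+)$ for every $x \in H$ is built into the definition.

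For the ``if'' direction, suppose $b(M) < \infty$ and that the right-limit at $0$ exists for all $x \in H$ for some specific $\rho_1 > b(M)$. The task is to upgrade this to an existence statement for \emph{all} $\rho \geq \rho_0$ for a suitable $\rho_0 > 0$. Pick $\rho_0 > \max\{\rho_1, 0\}$. By definition of $b(M)$, we may shrink our region slightly so that $M$ is analytic and bounded on $\mathbb{C}_{\Re > \rho_1'}$ for some $b(M) < \rho_1' \leq \rho_1$, and $M(\partial_{0,\rho})$ is therefore defined and bounded on $H_\rho(\mathbb{R};H)$ for each $\rho \geq \rho_1'$. For such $\rho \geq \rho_0$ and any $x \in H$ the function $\chi_{\mathbb{R}_{\geq 0}}x$ lies in both $H_{\rho_1}(\mathbb{R};H)$ and $H_\rho(\mathbb{R};H)$, so Theorem \ref{thm:material_law_good} (applied to the analytic bounded function $M$ on $\mathbb{C}_{\Re > \rho_1'}$) yields
\[
M(\partial_{0,\rho})\chi_{\mathbb{R}_{\geq 0}}x = M(\partial_{0,\rho_1})\chi_{\mathbb{R}_{\geq 0}}x
\]
as elements of the intersection space, hence as equivalence classes in $L_{2,\mathrm{loc}}$, i.e.\ they coincide almost everywhere.

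The last step is the observation that the notion of right-limit $f(0+)$ recorded in the footnote is invariant under modification on a null set: if $f = g$ almost everywhere, then $f(0+)$ exists if and only if $g(0+)$ does and the values agree. Applying this to the equality displayed above, the existence of $(M(\partial_{0,\rho_1})\chi_{\mathbb{R}_{\geq 0}}x)(0+)$, assumed by hypothesis, transfers to existence of $(M(\partial_{0,\rho})\chi_{\mathbb{R}_{\geq 0}}x)(0+)$ for every $\rho \geq \rho_0$ and every $x \in H$, proving that $M$ is regularizing with parameter $\rho_0$. The only mild subtlety is keeping careful track of the regions on which $M$ is assumed analytic and bounded, so that Theorem \ref{thm:material_law_good} is legitimately applicable; no further obstacle is expected.
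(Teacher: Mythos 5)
Your proof is correct and follows essentially the same route as the paper: the forward direction is trivial, and the reverse direction transfers the existence of the right-limit from the given $\rho_1$ to all larger $\rho$ via the $\rho$-independence of Theorem \ref{thm:material_law_good}, using that $\chi_{\mathbb{R}_{\geq0}}x$ lies in every $H_{\rho}(\mathbb{R};H)$ with $\rho>0$. Your explicit remark that the right-limit in the footnote's essential sense is invariant under modification on a null set is a point the paper leaves implicit, but otherwise the arguments coincide.
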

\begin{proof}
If $M$ is regularizing, then the assertion holds trivially. Assume
now that $b(M)<\infty$ and 
\[
\exists\rho>b(M)\,\forall x\in H:\left(M(\partial_{0,\rho})\chi_{\mathbb{R}_{\geq0}}x\right)(0+)\mbox{ exists}.
\]
We need to prove that 
\[
\forall\rho>b(M),x\in H:\left(M(\partial_{0,\rho})\chi_{\mathbb{R}_{\geq0}}x\right)(0+)\mbox{ exists}.
\]
However, since $M|_{\mathbb{C}_{\Re>b(M)}}$ is analytic and bounded
on $\mathbb{C}_{\Re\geq b(M)+\varepsilon}$ for each $\varepsilon>0$,
we infer from \prettyref{thm:material_law_good} that $M(\partial_{0,\rho})=M(\partial_{0,\nu})$
on $H_{\rho}(\mathbb{R};H)\cap H_{\nu}(\mathbb{R};H)$ for each $\rho,\nu>b(M).$
Since $\chi_{\mathbb{R}_{\geq0}}x\in\bigcap_{\rho>0}H_{\rho}(\mathbb{R};H),$
the assertion follows.
\end{proof}
Next, we show that for a regularizing material law $M$ we have that
for large enough $\rho$ 
\[
\partial_{0,\rho}M(\partial_{0,\rho})g\in D(P_{0})
\]
for each $g\in\chi_{\mathbb{R}_{\leq0}}(\m)[H_{\rho}^{1}(\mathbb{R};H)]$
. For doing so, we need the following small observation.
\begin{lem}
\label{lem:char_chi_H^1} Let $\rho>0$ and $g\in H_{\rho}(\mathbb{R};H)$
with $\spt g\subseteq\mathbb{R}_{\leq0}$. Then $g\in\chi_{\mathbb{R}_{\leq0}}(\m)\left[H_{\rho}^{1}(\mathbb{R};H)\right]$
if and only if $g(0-)$ exists and $g+\chi_{\mathbb{R}_{\geq0}}g(0-)\in H_{\rho}^{1}(\mathbb{R};H).$ \end{lem}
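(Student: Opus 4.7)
The plan is to prove both implications separately, and to base the entire argument on the fact that, by the Sobolev embedding $H_{\rho}^{1}(\mathbb{R};H)\hookrightarrow C_{\rho,0}(\mathbb{R};H)$ from \prettyref{prop:Sobolev}, every element of $H_{\rho}^{1}(\mathbb{R};H)$ has a continuous representative, so evaluation at $0$ is well-defined.

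For the direction $\Leftarrow$ I would simply set $w:=g+\chi_{\mathbb{R}_{\geq0}}g(0-)$, which lies in $H_{\rho}^{1}(\mathbb{R};H)$ by assumption, and note pointwise almost everywhere that
\[
\chi_{\mathbb{R}_{\leq0}}(\m)w=\chi_{\mathbb{R}_{\leq0}}(\m)g+\chi_{\{0\}}g(0-)=g,
\]
using $\spt g\subseteq\mathbb{R}_{\leq0}$ and the fact that the indicator of a single point vanishes almost everywhere. This realises $g$ as an element of $\chi_{\mathbb{R}_{\leq0}}(\m)[H_{\rho}^{1}(\mathbb{R};H)]$.

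For the main direction $\Rightarrow$, I would start from $g=\chi_{\mathbb{R}_{\leq0}}(\m)w$ for some $w\in H_{\rho}^{1}(\mathbb{R};H)$; continuity of the representative of $w$ immediately yields that $g(0-)=w(0)$ exists. I would then set $\tilde w:=g+\chi_{\mathbb{R}_{\geq0}}g(0-)=\chi_{\mathbb{R}_{\leq0}}w+\chi_{\mathbb{R}_{\geq0}}w(0)$, and note that, because $\rho>0$, the constant tail $\chi_{\mathbb{R}_{\geq0}}w(0)$ lies in $H_{\rho}(\mathbb{R};H)$, hence so does $\tilde w$. The heart of the argument is to show $\tilde w\in D(\partial_{0,\rho})$. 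To this end I would pick a sequence $\psi_{n}\in C_{c}^{\infty}(\mathbb{R};H)$ with $\psi_{n}\to w$ in $H_{\rho}^{1}(\mathbb{R};H)$, invoke the Sobolev embedding once more to promote this to uniform convergence on $\mathbb{R}$ so that $\psi_{n}(0)\to w(0)$, and test $\tilde w$ against an arbitrary $\varphi\in C_{c}^{\infty}(\mathbb{R};H)$. Performing the integration by parts on $(-\infty,0]$ and on $[0,\infty)$ separately, using the elementary identity $\int_{0}^{\infty}(-\varphi'(t)+2\rho\varphi(t))e^{-2\rho t}\,\dd t=\varphi(0)$ for the right half-line, yields two boundary contributions at $0$, namely $-\langle w(0)|\varphi(0)\rangle_{H}$ from the left interval and $+\langle w(0)|\varphi(0)\rangle_{H}$ from the right, which cancel. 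What remains is
\[
\langle\tilde w|\partial_{0,\rho,c}^{\ast}\varphi\rangle_{\rho}=\langle\chi_{\mathbb{R}_{\leq0}}(\m)\partial_{0,\rho}w|\varphi\rangle_{\rho},
\]
and since $C_{c}^{\infty}(\mathbb{R};H)$ is a core for $\partial_{0,\rho,c}^{\ast}=-\partial_{0,\rho}+2\rho$, this identifies $\tilde w\in H_{\rho}^{1}(\mathbb{R};H)$ with $\partial_{0,\rho}\tilde w=\chi_{\mathbb{R}_{\leq0}}(\m)\partial_{0,\rho}w$.

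The only genuine subtlety, and what I expect to be the main thing to verify carefully, is the bookkeeping in the boundary terms: the exponential weight $e^{-2\rho t}$ together with the opposite orientations of the two half-lines produces the crucial sign reversal that makes the two boundary contributions cancel. Once this cancellation is in place, the remaining limit manipulations are routine combinations of $H_{\rho}$-convergence of derivatives with the uniform convergence of the approximating sequence at $0$ supplied by the Sobolev embedding.
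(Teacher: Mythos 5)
Your proof is correct, but it takes a genuinely different route from the paper's. The paper works inside the extrapolation space $H_{\rho}^{-1}(\mathbb{R};H)$: writing $g=\chi_{\mathbb{R}_{\leq0}}(\m)w$, it computes
\[
\partial_{0,\rho}\bigl(g+\chi_{\mathbb{R}_{\geq0}}g(0-)\bigr)=\partial_{0,\rho}\chi_{\mathbb{R}_{\leq0}}(\m)\partial_{0,\rho}^{-1}\bigl(\partial_{0,\rho}w\bigr)+\delta_{0}w(0)=Q_{0}\partial_{0,\rho}w=\chi_{\mathbb{R}_{\leq0}}(\m)\partial_{0,\rho}w\in H_{\rho}(\mathbb{R};H),
\]
using the identity $\partial_{0,\rho}\chi_{\mathbb{R}_{\geq0}}x=\delta_{0}x$ and the definition of the generalized cut-off $Q_{0}$ from the preceding section; membership in $H_{\rho}^{1}(\mathbb{R};H)$ then follows because the distributional derivative lands back in $H_{\rho}(\mathbb{R};H)$. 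You instead avoid the $H_{\rho}^{-1}$ machinery altogether and verify directly, by testing against $\varphi\in C_{c}^{\infty}(\mathbb{R};H)$ and integrating by parts on each half-line separately, that the weak derivative of $g+\chi_{\mathbb{R}_{\geq0}}g(0-)$ equals $\chi_{\mathbb{R}_{\leq0}}(\m)\partial_{0,\rho}w$; the cancellation of the two boundary terms $\mp\langle w(0)|\varphi(0)\rangle_{H}$ at $0$ is exactly the elementary counterpart of the cancellation of the two $\delta_{0}$-contributions in the paper's computation. Both arguments land on the same formula for the derivative. The paper's version is shorter given that $Q_{0}$ and the jump formula of Lemma \ref{lem:cut_off_mal_anders} are already available; yours is self-contained and only needs \prettyref{prop:Sobolev} together with the fact that $C_{c}^{\infty}(\mathbb{R};H)$ is a core for $\partial_{0,\rho}^{\ast}=-\partial_{0,\rho}+2\rho$, which you correctly invoke to pass from the tested identity to membership in $D(\partial_{0,\rho})=D(\partial_{0,\rho}^{\ast\ast})$. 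One cosmetic remark: the Sobolev embedding gives convergence in the weighted sup-norm of $C_{\rho,0}(\mathbb{R};H)$, not unweighted uniform convergence on all of $\mathbb{R}$; since the weight equals $1$ at $t=0$ this still yields $\psi_{n}(0)\to w(0)$, which is all you use.
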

\begin{proof}
Assume first that $g\in\chi_{\mathbb{R}_{\leq0}}(\m)[H_{\rho}^{1}(\mathbb{R};H)].$
Then there exists $w\in H_{\rho}^{1}(\mathbb{R};H)$ such that $g=\chi_{\mathbb{R}_{\leq0}}(\m)w.$
Since $w$ is continuous by the Sobolev embedding theorem (see \prettyref{prop:Sobolev}),
we infer that $g(0-)=w(0)$ exists. Moreover, we have that 
\begin{align*}
\partial_{0,\rho}\left(g+\chi_{\mathbb{R}_{\geq0}}g(0-)\right) & =\partial_{0,\rho}\chi_{\mathbb{R}_{\leq0}}(\m)w+\delta_{0}g(0-)\\
 & =\partial_{0,\rho}\chi_{\mathbb{R}_{\leq0}}(\m)\partial_{0,\rho}^{-1}\partial_{0,\rho}w+\delta_{0}w(0)\\
 & =Q_{0}\partial_{0,\rho}w\\
 & =\chi_{\mathbb{R}_{\leq0}}(\m)w\in H_{\rho}(\mathbb{R};H)
\end{align*}
and thus, $g+\chi_{\mathbb{R}_{\geq0}}g(0-)\in H_{\rho}^{1}(\mathbb{R};H).$

The reverse implication holds trivially, since $\chi_{\mathbb{R}_{\leq0}}(\m)\left(g+\chi_{\mathbb{R}_{\geq0}}g(0-)\right)=g.$ \end{proof}
\begin{lem}
\label{lem:regularizing}Let $M$ be a regularizing linear material
law and $\rho>b(M)$. Then for each $g\in\chi_{\mathbb{R}_{\leq0}}(\m)[H_{\rho}^{1}(\mathbb{R};H)]$
we have $\partial_{0,\rho}M(\partial_{0,\rho})g\in D(P_{0}).$\end{lem}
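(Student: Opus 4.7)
The plan is to exploit the decomposition provided by Lemma \ref{lem:char_chi_H^1} to split $g$ into a piece living in $H_{\rho}^{1}(\mathbb{R};H)$ and a Heaviside–type jump, and then treat the two resulting terms separately. Unwinding the definition of $D(P_{0})$, observe that $\partial_{0,\rho}M(\partial_{0,\rho})g\in D(P_{0})$ amounts to the existence of
\[
\bigl(\partial_{0,\rho}^{-1}\partial_{0,\rho}M(\partial_{0,\rho})g\bigr)(0+)=\bigl(M(\partial_{0,\rho})g\bigr)(0+),
\]
since $M(\partial_{0,\rho})g\in H_{\rho}(\mathbb{R};H)$ by boundedness of $M$ on $\mathbb{C}_{\Re\geq\rho}$ (which is ensured by $\rho>b(M)$). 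So the task reduces to showing that the right limit of $M(\partial_{0,\rho})g$ at $0$ exists.

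By Lemma \ref{lem:char_chi_H^1} the hypothesis on $g$ yields that $g(0-)$ exists and that $h\coloneqq g+\chi_{\mathbb{R}_{\geq 0}}g(0-)\in H_{\rho}^{1}(\mathbb{R};H)$. Hence
\[
M(\partial_{0,\rho})g \;=\; M(\partial_{0,\rho})h \;-\; M(\partial_{0,\rho})\chi_{\mathbb{R}_{\geq 0}}g(0-),
\]
and it suffices to verify that each summand admits a right limit at $0$. For the first summand, Proposition \ref{prop:continuation_fcts_of_partial_0} applied to the bounded analytic function $M$ (on $\mathbb{C}_{\Re\geq\rho}$) gives that $M(\partial_{0,\rho})$ leaves $H_{\rho}^{1}(\mathbb{R};H)$ invariant, so $M(\partial_{0,\rho})h\in H_{\rho}^{1}(\mathbb{R};H)$. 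The Sobolev embedding $H_{\rho}^{1}(\mathbb{R};H)\hookrightarrow C_{\rho}(\mathbb{R};H)$ from Proposition \ref{prop:Sobolev} then yields that $M(\partial_{0,\rho})h$ has a continuous representative, so its value at $0+$ is simply its value at $0$. For the second summand, the assumption that $M$ is regularizing is precisely what is needed: with $x\coloneqq g(0-)\in H$, the limit $\bigl(M(\partial_{0,\rho})\chi_{\mathbb{R}_{\geq 0}}x\bigr)(0+)$ exists by the definition of regularizing, using Lemma \ref{lem:char_regularizing_M} to transport this property from the single $\rho_{0}$ in the definition to the given $\rho>b(M)$.

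Adding the two right limits produces $\bigl(M(\partial_{0,\rho})g\bigr)(0+)$, which is the condition we needed. The main technical point is the invariance of $H_{\rho}^{1}(\mathbb{R};H)$ under $M(\partial_{0,\rho})$; everything else is routine bookkeeping combining Lemma \ref{lem:char_chi_H^1}, the definition of ``regularizing'', and the Sobolev embedding. No extra care is required at $b(M)=0$ versus $b(M)>0$ because the hypothesis $\rho>b(M)$ together with Lemma \ref{lem:char_regularizing_M} already normalises the choice of the reference half-plane on which $M$ is bounded.
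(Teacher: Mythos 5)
Your proof is correct and follows essentially the same route as the paper's: split $g$ via Lemma \ref{lem:char_chi_H^1} into $g+\chi_{\mathbb{R}_{\geq0}}g(0-)\in H_{\rho}^{1}(\mathbb{R};H)$ (whose image under $M(\partial_{0,\rho})$ is continuous) minus $\chi_{\mathbb{R}_{\geq0}}g(0-)$ (handled by the regularizing hypothesis). You merely make explicit a few steps the paper leaves implicit, namely the reduction of membership in $D(P_{0})$ to the existence of $\bigl(M(\partial_{0,\rho})g\bigr)(0+)$ and the appeal to Lemma \ref{lem:char_regularizing_M} to move from one $\rho$ to all $\rho>b(M)$.
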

\begin{proof}
Let $g\in\chi_{\mathbb{R}_{\leq0}}(\m)[H_{\rho}^{1}(\mathbb{R};H)].$
We need to show that $\left(M(\partial_{0,\rho})g\right)(0+)$ exists.
By \prettyref{lem:char_chi_H^1} we have that $g(0-)$ exists and
$g+\chi_{\mathbb{R}_{\geq0}}g(0-)\in H_{\rho}^{1}(\mathbb{R};H).$
Thus, we have that 
\[
M(\partial_{0,\rho})g=M(\partial_{0,\rho})(g+\chi_{\mathbb{R}_{\geq0}}g(0-))-M(\partial_{0,\rho})\chi_{\mathbb{R}_{\geq0}}g(0-)
\]
and since the first summand on the right-hand side is in $H_{\rho}^{1}(\mathbb{R};H)$
and therefore continuous, and $M$ is regularizing, we infer that
$\left(M(\partial_{0,\rho})g\right)(0+)$ exists. 
\end{proof}
With these results at hand, we are now able to define the history
space. Throughout, we assume that $M$ is regularizing and we set
\begin{align*}
K_{\rho}:\chi_{\mathbb{R}_{\leq0}}(\m)[H_{\rho}^{1}(\mathbb{R};H)] & \to H_{\rho}^{-1}(\mathbb{R};H)\\
g & \mapsto P_{0}\partial_{0,\rho}M(\partial_{0,\rho})g.
\end{align*}

\begin{defn*}
Let $\rho>\max\{s_{0}(M,A),b(M)\}$. We define \nomenclature[B_150]{$\mathrm{His}_\rho(M,A)$}{the space of admissible histories for a material law $M$ and an operator $A$ in $H_\rho(\mathbb{R};H)$.}
\[
\mathrm{His}_{\rho}(M,A)\coloneqq\{g\in\chi_{\mathbb{R}_{\leq0}}(\m)[H_{\rho}^{1}(\mathbb{R};H)]\,;\,\exists x\in H:\: S_{\rho}\left(\delta_{0}x-K_{\rho}g\right)-\chi_{\mathbb{R}_{\geq0}}g(0-)\in H_{\rho}^{1}(\mathbb{R};H)\}
\]
the space of \emph{admissible histories for $M$ and $A$. }Moreover,
we define \nomenclature[B_160]{$\mathrm{IV}_\rho(M,A)$}{the space of admissible initial values for a material law $M$ and an operator $A$ in $H_\rho(\mathbb{R};H)$, the space $\{ g(0-)\, ;\, g\in \mathrm{His}_\rho(M,A) \}$.}
\[
\mathrm{IV}_{\rho}(M,A)\coloneqq\{g(0-)\,;\, g\in\mathrm{His}_{\rho}(M,A)\},
\]
the space of \emph{admissible initial values for $M$ and $A$.}
\end{defn*}
We first show that the element $x$ used in the definition of $\mathrm{His}_{\rho}(M,A)$
is uniquely determined.
\begin{lem}
\label{lem:Gamma}Let $g\in\mathrm{His}_{\rho}(M,A)$ and $x\in H$
such that 
\[
S_{\rho}\left(x\delta_{0}-K_{\rho}g\right)-\chi_{\mathbb{R}_{\geq0}}g(0-)\in H_{\rho}^{1}(\mathbb{R};H).
\]
Then 
\begin{align*}
x & =\left(M(\partial_{0,\rho})\chi_{\mathbb{R}_{\geq0}}g(0-)\right)(0+)\\
 & =\left(M(\partial_{0,\rho})g\right)(0-)-\left(M(\partial_{0,\rho})g\right)(0+).
\end{align*}
\end{lem}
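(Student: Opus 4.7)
The plan is to apply $\partial_{0,\rho}^{-1}$ to the equation that defines $v:=S_\rho(x\delta_0-K_\rho g)$ and then read off $x$ by comparing the one-sided limits of both sides at $0$. First I would use causality of $S_\rho$ (\prettyref{cor:sol_po_causal}) together with $\spt(x\delta_0-K_\rho g)\subseteq\mathbb{R}_{\geq 0}$ to deduce $\spt v\subseteq\mathbb{R}_{\geq 0}$. The hypothesis $v-\chi_{\mathbb{R}_{\geq 0}}g(0-)\in H_\rho^1(\mathbb{R};H)\hookrightarrow C_{\rho,0}(\mathbb{R};H)$ then forces $v(0-)=0$ and $v(0+)=g(0-)$, since the continuous representative of $v-\chi_{\mathbb{R}_{\geq 0}}g(0-)$ vanishes on $\mathbb{R}_{<0}$ and hence at $0$.

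Next I would apply $\partial_{0,\rho}^{-1}$ (a bounded bijection $H_\rho^{-1}\to H_\rho$ commuting with $M(\partial_{0,\rho})$ and $A$ by \prettyref{prop:continuation_fcts_of_partial_0}) to the equation $\overline{\partial_{0,\rho}M(\partial_{0,\rho})+A}\,v=x\delta_0-K_\rho g$, using $\partial_{0,\rho}^{-1}\delta_0=\chi_{\mathbb{R}_{\geq 0}}$ and the identity $\partial_{0,\rho}^{-1}P_0 f=\chi_{\mathbb{R}_{\geq 0}}\partial_{0,\rho}^{-1}f-(\partial_{0,\rho}^{-1}f)(0+)\chi_{\mathbb{R}_{\geq 0}}$ (immediate from the definition of $P_0$). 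This produces
\[
M(\partial_{0,\rho})v+\partial_{0,\rho}^{-1}Av=x\chi_{\mathbb{R}_{\geq 0}}-\chi_{\mathbb{R}_{\geq 0}}M(\partial_{0,\rho})g+(M(\partial_{0,\rho})g)(0+)\chi_{\mathbb{R}_{\geq 0}},
\]
where $(M(\partial_{0,\rho})g)(0+)$ exists by \prettyref{lem:regularizing}.

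The right-hand side is $0$ for $t<0$ and tends to $x$ as $t\to 0+$, so its jump at $0$ equals $x$. On the left, $\partial_{0,\rho}^{-1}Av$ is continuous (it belongs to $H_\rho^1$ taking values in the extrapolation space $H^{-1}(|A^*|+1)$, which embeds into a space of continuous functions), so contributes no jump. For $M(\partial_{0,\rho})v$, I split $v=w+\chi_{\mathbb{R}_{\geq 0}}g(0-)$ with $w\in H_\rho^1$ and $\spt w\subseteq\mathbb{R}_{\geq 0}$; by \prettyref{prop:continuation_fcts_of_partial_0} $M(\partial_{0,\rho})w\in H_\rho^1$ and, by causality (\prettyref{thm:material_law_good}), is supported in $\mathbb{R}_{\geq 0}$, hence its continuous representative vanishes at $0$. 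Therefore $(M(\partial_{0,\rho})v)(0-)=0$ and $(M(\partial_{0,\rho})v)(0+)=(M(\partial_{0,\rho})\chi_{\mathbb{R}_{\geq 0}}g(0-))(0+)$. Matching jumps yields the first equality $x=(M(\partial_{0,\rho})\chi_{\mathbb{R}_{\geq 0}}g(0-))(0+)$.

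For the second equality, I would write $g=\tilde h-\chi_{\mathbb{R}_{\geq 0}}g(0-)$ with $\tilde h:=g+\chi_{\mathbb{R}_{\geq 0}}g(0-)\in H_\rho^1$ by \prettyref{lem:char_chi_H^1}. Since $M(\partial_{0,\rho})\tilde h\in H_\rho^1$ is continuous and $M(\partial_{0,\rho})\chi_{\mathbb{R}_{\geq 0}}g(0-)$ is supported in $\mathbb{R}_{\geq 0}$ with $(0+)$-limit $(M(\partial_{0,\rho})\chi_{\mathbb{R}_{\geq 0}}g(0-))(0+)$, one obtains $(M(\partial_{0,\rho})g)(0-)-(M(\partial_{0,\rho})g)(0+)=(M(\partial_{0,\rho})\chi_{\mathbb{R}_{\geq 0}}g(0-))(0+)=x$. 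The only delicate step is justifying that $\partial_{0,\rho}^{-1}Av$ contributes no jump at $0$; this is a bookkeeping exercise of placing the whole identity in a common extrapolation space in which $Av$ is an $L_2$-valued function and hence contributes no delta distribution under the continuous operation $\partial_{0,\rho}^{-1}$.
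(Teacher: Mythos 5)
Your proof is correct and follows essentially the same route as the paper's: both isolate the singular part at $0$ produced by $\partial_{0,\rho}M(\partial_{0,\rho})\chi_{\mathbb{R}_{\geq0}}g(0-)$ using causality and the regularizing hypothesis, and force it to cancel $x\delta_{0}$. The only cosmetic difference is that you apply $\partial_{0,\rho}^{-1}$ and match jumps of one-sided limits, whereas the paper keeps the identity in distributional form and argues that the coefficient of $\delta_{0}$ must vanish for membership in $H_{\rho}(\mathbb{R};H^{-1}(|A^{\ast}|+1))$ --- the extrapolation-space bookkeeping you defer at the end is carried out there exactly as you describe.
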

\begin{proof}
Since 
\[
S_{\rho}\left(x\delta_{0}-K_{\rho}g\right)-\chi_{\mathbb{R}_{\geq0}}g(0-)\in H_{\rho}^{1}(\mathbb{R};H),
\]
we obtain 
\[
x\delta_{0}-K_{\rho}g-\left(\partial_{0,\rho}M(\partial_{0,\rho})+A\right)\chi_{\mathbb{R}_{\geq0}}g(0-)\in H_{\rho}(\mathbb{R};H^{-1}(|A^{\ast}|+1)).
\]
Since clearly $A\chi_{\mathbb{R}_{\geq0}}g(0-)\in H_{\rho}(\mathbb{R};H^{-1}(|A^{\ast}|+1))$
we infer 
\[
x\delta_{0}-K_{\rho}g-\partial_{0,\rho}M(\partial_{0,\rho})\chi_{\mathbb{R}_{\geq0}}g(0-)\in H_{\rho}(\mathbb{R};H^{-1}(|A^{\ast}|+1)).
\]
We note that due to causality $\partial_{0,\rho}M(\partial_{0,\rho})\chi_{\mathbb{R}_{\geq0}}g(0-)\in N(Q_{0})$
and hence, since we have that $\partial_{0,\rho}M(\partial_{0,\rho})\chi_{\mathbb{R}_{\geq0}}g(0-)\in D(P_{0})$
by assumption, \prettyref{prop:properties_cut_off} (d) yields 
\begin{align*}
\partial_{0,\rho}M(\partial_{0,\rho})\chi_{\mathbb{R}_{\geq0}}g(0-) & =P_{0}\partial_{0,\rho}M(\partial_{0,\rho})\chi_{\mathbb{R}_{\geq0}}g(0-)+\delta_{0}\left(M(\partial_{0,\rho})\chi_{\mathbb{R}_{\geq0}}g(0-)\right)(0+)\\
 & =K_{\rho}\chi_{\mathbb{R}_{\geq0}}g(0-)+\delta_{0}\left(M(\partial_{0,\rho})\chi_{\mathbb{R}_{\geq0}}g(0-)\right)(0+).
\end{align*}
Thus, we have that 
\begin{align*}
 & x\delta_{0}-K_{\rho}g-\partial_{0,\rho}M(\partial_{0,\rho})\chi_{\mathbb{R}_{\geq0}}g(0-)\\
 & =\delta_{0}(x-\left(M(\partial_{0,\rho})\chi_{\mathbb{R}_{\geq0}}g(0-)\right)(0+))-K_{\rho}\left(g+\chi_{\mathbb{R}_{\geq0}}g(0-)\right)\in H_{\rho}(\mathbb{R};H^{-1}(|A^{\ast}|+1)).
\end{align*}
Since $g+\chi_{\mathbb{R}_{\geq0}}g(0-)\in H_{\rho}^{1}(\mathbb{R};H)$
by \prettyref{lem:char_chi_H^1}, we have that $\partial_{0,\rho}M(\partial_{0,\rho})\left(g+\chi_{\mathbb{R}_{\geq0}}g(0-)\right)\in H_{\rho}(\mathbb{R};H)$
and thus, 
\[
P_{0}K_{\rho}\left(g+\chi_{\mathbb{R}_{\geq0}}g(0-)\right)=\chi_{\mathbb{R}_{\geq0}}(\m)K_{\rho}\left(g+\chi_{\mathbb{R}_{\geq0}}g(0-)\right)\in H_{\rho}(\mathbb{R};H).
\]
This, in turn, implies 
\[
\delta_{0}\left(x-\left(M(\partial_{0,\rho})\chi_{\mathbb{R}_{\geq0}}g(0-)\right)(0+)\right)\in H_{\rho}(\mathbb{R};H^{-1}(|A^{\ast}|+1))
\]
and hence, 
\[
x=\left(M(\partial_{0,\rho})\chi_{\mathbb{R}_{\geq0}}g(0-)\right)(0+).
\]
Moreover, we have that 
\begin{align*}
\left(M(\partial_{0,\rho})\chi_{\mathbb{R}_{\geq0}}g(0-)\right)(0+) & =\left(M(\partial_{0,\rho})\left(g+\chi_{\mathbb{R}_{\geq0}}g(0-)\right)\right)(0+)-\left(M(\partial_{0,\rho})g\right)(0+)\\
 & =\left(M(\partial_{0,\rho})\left(g+\chi_{\mathbb{R}_{\geq0}}g(0-)\right)\right)(0-)-\left(M(\partial_{0,\rho})g\right)(0+),
\end{align*}
since $M(\partial_{0,\rho})\left(g+\chi_{\mathbb{R}_{\geq0}}g(0-)\right)\in H_{\rho}^{1}(\mathbb{R};H).$
Hence, by causality of $M(\partial_{0,\rho})$, we end up with 
\begin{align*}
x & =\left(M(\partial_{0,\rho})\chi_{\mathbb{R}_{\geq0}}g(0-)\right)(0+)\\
 & =\left(M(\partial_{0,\rho})g\right)(0-)-\left(M(\partial_{0,\rho})g\right)(0+).\tag*{\qedhere}
\end{align*}
\end{proof}
\begin{defn*}
We define the operator 
\begin{align*}
\Gamma_{(M,A)}^{\rho}:\mathrm{His}_{\rho}(M,A) & \to H\\
g & \mapsto\left(M(\partial_{0,\rho})g\right)(0-)-\left(M(\partial_{0,\rho})g\right)(0+).
\end{align*}

\end{defn*}
We now provide a rigorous proof for the heuristics done at the beginning
of this section.
\begin{prop}
\label{prop:solution_ivp}Let $\rho>\max\{s_{0}(M,A),b(M)\}$ and
$g\in\mathrm{His}_{\rho}(M,A).$ We define 
\[
v\coloneqq S_{\rho}\left(\Gamma_{(M,A)}^{\rho}g\delta_{0}-K_{\rho}g\right).
\]
Then, for $u\coloneqq v+g$ we have that $u\in H_{\rho}^{1}(\mathbb{R};H)$
satisfies \prettyref{eq:history}. Moreover, $u$ satisfies \prettyref{eq:problem_pos_times}
in the sense that 
\[
\spt\left(\partial_{0,\rho}M(\partial_{0,\rho})+A\right)u\subseteq\mathbb{R}_{\leq0}.
\]
\end{prop}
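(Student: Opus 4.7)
The proposition asks for three things: that $u\in H_{\rho}^{1}(\mathbb{R};H)$, that $u$ coincides with $g$ on $\mathbb{R}_{<0}$, and that the residual $(\partial_{0,\rho}M(\partial_{0,\rho})+A)u$ is supported in $\mathbb{R}_{\leq0}$. I would split the argument along those three lines, carrying out the algebra in the extrapolation space $H_{\rho}^{-1}(\mathbb{R};H)$ and relying on the extended functional calculus from \prettyref{prop:continuation_fcts_of_partial_0} together with \prettyref{lem:Gamma} to pin down the boundary term.

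\textbf{Regularity of $u$.} By the definition of $\mathrm{His}_{\rho}(M,A)$ there is some $x\in H$ with $S_{\rho}(x\delta_{0}-K_{\rho}g)-\chi_{\mathbb{R}_{\geq0}}g(0-)\in H_{\rho}^{1}(\mathbb{R};H)$, and \prettyref{lem:Gamma} forces $x=\Gamma_{(M,A)}^{\rho}g$, so $v-\chi_{\mathbb{R}_{\geq0}}g(0-)\in H_{\rho}^{1}(\mathbb{R};H)$. Independently, \prettyref{lem:char_chi_H^1} gives $g+\chi_{\mathbb{R}_{\geq0}}g(0-)\in H_{\rho}^{1}(\mathbb{R};H)$. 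Adding these two decompositions delivers
\[
u = v+g = \bigl(v-\chi_{\mathbb{R}_{\geq0}}g(0-)\bigr)+\bigl(g+\chi_{\mathbb{R}_{\geq0}}g(0-)\bigr)\in H_{\rho}^{1}(\mathbb{R};H).
\]

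\textbf{History condition.} The datum $\Gamma_{(M,A)}^{\rho}g\,\delta_{0}-K_{\rho}g$ has support in $\mathbb{R}_{\geq0}$: for $\delta_{0}$ this is immediate, and $K_{\rho}g=P_{0}\partial_{0,\rho}M(\partial_{0,\rho})g$ is supported in $\mathbb{R}_{\geq0}$ by the formula defining $P_{0}$. Applying $\partial_{0,\rho}^{-1}$, which commutes with the $H_{\rho}^{-1}$-extension of $S_{\rho}$ by \prettyref{prop:continuation_fcts_of_partial_0}, one passes via \prettyref{lem:support_H^-1} to $H_{\rho}(\mathbb{R};H)$, where causality of $S_{\rho}$ (\prettyref{cor:sol_po_causal}) applies; a final use of \prettyref{lem:support_H^-1} shows $\spt v\subseteq\mathbb{R}_{\geq0}$. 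Since $g$ itself is supported in $\mathbb{R}_{\leq0}$, this yields $u|_{\mathbb{R}_{<0}}=g$, i.e.\ condition \prettyref{eq:history}.

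\textbf{The residual computation.} One first verifies $\partial_{0,\rho}M(\partial_{0,\rho})g\in D(P_{0})\cap D(Q_{0})$: membership in $D(P_{0})$ is provided by \prettyref{lem:regularizing}, while the existence of $(M(\partial_{0,\rho})g)(0-)$ is obtained by splitting $g=(g+\chi_{\mathbb{R}_{\geq0}}g(0-))-\chi_{\mathbb{R}_{\geq0}}g(0-)$ and combining the $H_{\rho}^{1}$-regularity of the first summand with the regularizing property of $M$ applied to the second. Causality of $M(\partial_{0,\rho})$ together with $\spt g\subseteq\mathbb{R}_{\leq0}$ forces $(M(\partial_{0,\rho})g)(0+)=0$, so \prettyref{prop:properties_cut_off}(d) reads
\[
\partial_{0,\rho}M(\partial_{0,\rho})g \;=\; K_{\rho}g + Q_{0}\partial_{0,\rho}M(\partial_{0,\rho})g - \Gamma_{(M,A)}^{\rho}g\,\delta_{0}.
\]
Combined with the identity $(\partial_{0,\rho}M(\partial_{0,\rho})+A)v=\Gamma_{(M,A)}^{\rho}g\,\delta_{0}-K_{\rho}g$, read off from the defining formula of $v$ and the $H_{\rho}^{-1}$-extension of $S_{\rho}$, adding $(\partial_{0,\rho}M(\partial_{0,\rho})+A)g$ causes the Dirac mass and the $K_{\rho}g$ term to cancel, leaving
\[
(\partial_{0,\rho}M(\partial_{0,\rho})+A)u \;=\; Q_{0}\partial_{0,\rho}M(\partial_{0,\rho})g + Ag,
\]
whose two summands are supported in $\mathbb{R}_{\leq0}$, the first by the defining formula for $Q_{0}$ together with \prettyref{prop:properties_cut_off}(e), the second because $A$ acts fibrewise in the state variable. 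The main technical obstacle is precisely the identity $(\partial_{0,\rho}M(\partial_{0,\rho})+A)v=\Gamma_{(M,A)}^{\rho}g\,\delta_{0}-K_{\rho}g$: the right-hand side sits only in $H_{\rho}^{-1}(\mathbb{R};H)$ while the operator was originally set up on $H_{\rho}(\mathbb{R};H)$, and justifying the equality requires a careful passage to the extrapolation level via $\partial_{0,\rho}^{-1}$, together with checking that all cut-off manipulations survive in the ambient space $H_{\rho}(\mathbb{R};H^{-1}(|A^{\ast}|+1))$ already used in the proof of \prettyref{lem:Gamma}.
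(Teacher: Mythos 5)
Your treatment of the first two assertions is sound and in fact slightly streamlines the paper's argument: the paper obtains $u\in H_{\rho}^{1}(\mathbb{R};H)$ by first showing $\spt v\subseteq\mathbb{R}_{\geq0}$ and matching the traces $v(0+)=g(0-)$, whereas you simply add the two $H_{\rho}^{1}$-decompositions coming from the definition of $\mathrm{His}_{\rho}(M,A)$ and \prettyref{lem:char_chi_H^1}; your support argument for $v$ amounts to the same computation of $\partial_{0,\rho}^{-1}v$ that the paper carries out. For the residual, however, two problems arise. First, the assertion that causality of $M(\partial_{0,\rho})$ together with $\spt g\subseteq\mathbb{R}_{\leq0}$ forces $\left(M(\partial_{0,\rho})g\right)(0+)=0$ is false: causality propagates vanishing on the \emph{past} $]-\infty,t]$ forward, not vanishing on the future backward; what you are invoking is amnesia, which by \prettyref{prop:material_law_amnesic} holds only for material laws of the form $M_{0}+z^{-1}M_{1}$, and even then $\left(M(\partial_{0,\rho})g\right)(0+)=\left(\partial_{0,\rho}^{-1}M_{1}g\right)(0)$ is nonzero in general. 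Your displayed identity $\partial_{0,\rho}M(\partial_{0,\rho})g=K_{\rho}g+Q_{0}\partial_{0,\rho}M(\partial_{0,\rho})g-\Gamma_{(M,A)}^{\rho}g\,\delta_{0}$ is nevertheless correct, but only because the $\delta_{0}$-coefficient in \prettyref{prop:properties_cut_off} (d) equals $-\Gamma_{(M,A)}^{\rho}g$ by the very definition of $\Gamma_{(M,A)}^{\rho}$; no claim about $\left(M(\partial_{0,\rho})g\right)(0+)$ is needed or available.

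Second, and more seriously, the identity $\left(\partial_{0,\rho}M(\partial_{0,\rho})+A\right)v=\Gamma_{(M,A)}^{\rho}g\,\delta_{0}-K_{\rho}g$ cannot simply be ``read off'': $S_{\rho}$ is extended to $H_{\rho}^{-1}(\mathbb{R};H)$ only as a bounded operator via \prettyref{prop:continuation_fcts_of_partial_0}, nothing establishes that this extension inverts an extension of the operator sum, and $v$ need not lie in the domain of $\overline{\partial_{0,\rho}M(\partial_{0,\rho})+A}$ on $H_{\rho}(\mathbb{R};H)$. Giving meaning to $\left(\partial_{0,\rho}M(\partial_{0,\rho})+A\right)u$ and controlling its support is precisely the content of the third assertion, and the paper does it by pairing $u$ against $\varphi\in C_{c}^{\infty}(\mathbb{R}_{>0};D(A^{\ast}))$, moving the operator onto $\varphi$ via the adjoint, and showing the pairing vanishes using the explicit formula for $\partial_{0,\rho}^{-1}\left(\Gamma_{(M,A)}^{\rho}g\,\delta_{0}-K_{\rho}g\right)$ together with $\varphi(0)=0$. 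You correctly identify this as the main obstacle but then defer it, so the core of the third assertion remains unproved; replacing your algebraic cancellation by the paper's duality computation (or supplying an equivalent weak formulation of the operator sum at the extrapolation level) is what closes the gap.
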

\begin{proof}
We first show that $v\in H_{\rho}(\mathbb{R}_{\geq0};H).$ Indeed,
we have that $v-\chi_{\mathbb{R}_{\geq0}}g(0-)\in H_{\rho}^{1}(\mathbb{R};H)\subseteq H_{\rho}(\mathbb{R};H),$
which gives that $v\in H_{\rho}(\mathbb{R};H).$ Moreover, we have
that 
\begin{align*}
\partial_{0,\rho}^{-1}v & =S_{\rho}\partial_{0,\rho}^{-1}\left(\Gamma_{(M,A)}^{\rho}g\delta_{0}-K_{\rho}g\right)\\
 & =S_{\rho}\partial_{0,\rho}^{-1}\left(\Gamma_{(M,A)}^{\rho}g\delta_{0}-\left(\partial_{0,\rho}\chi_{\mathbb{R}_{\geq0}}(\m)M(\partial_{0,\rho})g-\delta_{0}\left(M(\partial_{0,\rho})g\right)(0+)\right)\right)\\
 & =S_{\rho}\left(\Gamma_{(M,A)}^{\rho}g\chi_{\mathbb{R}_{\geq0}}+\left(M(\partial_{0,\rho})g\right)(0+)\chi_{\mathbb{R}_{\geq0}}-\chi_{\mathbb{R}_{\geq0}}(\m)M(\partial_{0,\rho})g\right)
\end{align*}
and thus, due to causality of $S_{\rho}$ we deduce that $\spt\partial_{0,\rho}^{-1}v\subseteq\mathbb{R}_{\geq0}$.
This yields $\spt v\subseteq\mathbb{R}_{\geq0}.$ Hence, $u|_{\mathbb{R}_{<0}}=g$
and since $v(0+)=g(0-)$ we infer $u\in H_{\rho}^{1}(\mathbb{R};H)$.
Let now $\varphi\in C_{c}^{\infty}(\mathbb{R}_{>0};D(A^{\ast})).$
Then we have that 
\begin{align*}
 & \langle\left(\partial_{0,\rho}M(\partial_{0,\rho})+A\right)u|\varphi\rangle_{H_{\rho}(\mathbb{R};H^{-1}(|A^{\ast}|+1))\times H_{\rho}(\mathbb{R};H^{1}(|A^{\ast}|+1))}\\
 & =\langle u|\left(\partial_{0,\rho}M(\partial_{0,\rho})+A\right)^{\ast}\varphi\rangle_{H_{\rho}(\mathbb{R};H)}\\
 & =\langle v|\left(\partial_{0,\rho}M(\partial_{0,\rho})+A\right)^{\ast}\varphi\rangle_{H_{\rho}(\mathbb{R};H)}+\langle g|\left(\partial_{0,\rho}M(\partial_{0,\rho})+A\right)^{\ast}\varphi\rangle_{H_{\rho}(\mathbb{R};H)}\\
 & =\langle\Gamma_{(M,A)}^{\rho}g\delta_{0}-K_{\rho}g|\varphi\rangle_{H^{-1}(\partial_{0,\rho})\times H^{1}(\partial_{0,\rho}^{\ast})}+\langle g|\left(\partial_{0,\rho}M(\partial_{0,\rho})+A\right)^{\ast}\varphi\rangle_{H_{\rho}(\mathbb{R};H)}\\
 & =\langle\partial_{0,\rho}^{-1}\left(\Gamma_{(M,A)}^{\rho}g\delta_{0}-K_{\rho}g\right)|\partial_{0,\rho}^{\ast}\varphi\rangle_{H_{\rho}(\mathbb{R};H)}+\langle g|\left(\partial_{0,\rho}M(\partial_{0,\rho})\right)^{\ast}\varphi\rangle_{H_{\rho}(\mathbb{R};H)},
\end{align*}
where we have used $\spt A^{\ast}\varphi\subseteq\mathbb{R}_{\geq0}$
and $\spt g\subseteq\mathbb{R}_{\leq0}$. Recalling that 
\begin{align*}
 & \partial_{0,\rho}^{-1}\left(\Gamma_{(M,A)}^{\rho}g\delta_{0}-K_{\rho}g\right)\\
 & =\left(\left(M(\partial_{0,\rho})g\right)(0-)-\left(M(\partial_{0,\rho})g\right)(0+)\right)\chi_{\mathbb{R}_{\geq0}}+\left(M(\partial_{0,\rho})g\right)(0+)\chi_{\mathbb{R}_{\geq0}}-\chi_{\mathbb{R}_{\geq0}}(\m)M(\partial_{0,\rho})g\\
 & =\left(M(\partial_{0,\rho})g\right)(0-)\chi_{\mathbb{R}_{\geq0}}-\chi_{\mathbb{R}_{\geq0}}(\m)M(\partial_{0,\rho})g,
\end{align*}
we deduce that 
\begin{align*}
 & \langle\partial_{0,\rho}^{-1}\left(\Gamma_{(M,A)}^{\rho}g\delta_{0}-K_{\rho}g\right)|\partial_{0,\rho}^{\ast}\varphi\rangle_{H_{\rho}(\mathbb{R};H)}\\
 & =\langle\left(M(\partial_{0,\rho})g\right)(0-)\chi_{\mathbb{R}_{\geq0}}|\partial_{0,\rho}^{\ast}\varphi\rangle_{H_{\rho}(\mathbb{R};H)}-\langle\chi_{\mathbb{R}_{\geq0}}(\m)M(\partial_{0,\rho})g|\partial_{0,\rho}^{\ast}\varphi\rangle_{H_{\rho}(\mathbb{R};H)}\\
 & =-\langle\chi_{\mathbb{R}_{\geq0}}(\m)M(\partial_{0,\rho})g|\partial_{0,\rho}^{\ast}\varphi\rangle_{H_{\rho}(\mathbb{R};H)}\\
 & =-\langle M(\partial_{0,\rho})g|\partial_{0,\rho}^{\ast}\varphi\rangle_{H_{\rho}(\mathbb{R};H)}
\end{align*}
where we have used $\varphi(0)=0$ and $\spt\partial_{0,\rho}^{\ast}\varphi\subseteq\spt\varphi\subseteq\mathbb{R}_{>0}$.
Summarizing, we obtain that 
\begin{align*}
 & \langle\left(\partial_{0,\rho}M(\partial_{0,\rho})+A\right)u|\varphi\rangle_{H_{\rho}(\mathbb{R};H^{-1}(|A^{\ast}|+1))\times H_{\rho}(\mathbb{R};H^{1}(|A^{\ast}|+1))}\\
 & =-\langle M(\partial_{0,\rho})g|\partial_{0,\rho}^{\ast}\varphi\rangle_{H_{\rho}(\mathbb{R};H)}+\langle g|\left(\partial_{0,\rho}M(\partial_{0,\rho})\right)^{\ast}\varphi\rangle_{H_{\rho}(\mathbb{R};H)}\\
 & =0,
\end{align*}
which finishes the proof. 
\end{proof}
In the case that the evolutionary problem associated with $M$ and
$A$ does not have memory, the space $\mathrm{IV}_{\rho}(M,A)$ should
not depend on the admissible histories. More precisely, we should
have 
\[
\mathrm{His}_{\rho}(M,A)=\left\{ g\in\chi_{\mathbb{R}_{\leq0}}\left[H_{\rho}^{1}(\mathbb{R};H)\right]\,;\, g(0-)\in\mathrm{IV}_{\rho}(M,A)\right\} 
\]
and for $x\in\mathrm{IV}_{\rho}(M,A)$ any two functions $g,\tilde{g}\in\mathrm{His}_{\rho}(M,A)$
with $\tilde{g}(0-)=g(0-)=x$ should yield the same solution of the
initial value problem. Before we study this case, we need to define
what we mean by ``having no memory''.
\begin{defn*}
An operator $T:D(T)\subseteq H_{\rho}(\mathbb{R};H)\to H_{\rho}(\mathbb{R};H)$
is said to be \emph{amnesic, }if for all $f\in D(T)$ and $a\in\mathbb{R}$
\[
\spt f\subseteq\mathbb{R}_{\leq a}\Rightarrow\spt Tf\subseteq\mathbb{R}_{\leq a}.
\]

\end{defn*}
Apparently, amnesia and causality are two properties which are strongly
related. 
\begin{lem}
\label{lem:amnesic_vs_causal}Let $T:D(T)\subseteq H_{\rho}(\mathbb{R};H)\to H_{\rho}(\mathbb{R};H)$
and define 
\begin{align*}
\sigma_{-1}:L_{2,\mathrm{loc}}(\mathbb{R};H) & \to L_{2,\mathrm{loc}}(\mathbb{R};H)\\
f & \mapsto\left(t\mapsto f(-t)\right).
\end{align*}
Then $T$ is amnesic, if and only if $\sigma_{-1}T\sigma_{-1}:D(T\sigma_{-1})\subseteq H_{-\rho}(\mathbb{R};H)\to H_{-\rho}(\mathbb{R};H)$
is causal.\end{lem}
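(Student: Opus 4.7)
The plan is to reduce the lemma to pure book-keeping under the time reversal $\sigma_{-1}$, which is a unitary involution between $H_\rho(\mathbb{R};H)$ and $H_{-\rho}(\mathbb{R};H)$. First I would verify three preliminaries: that $\sigma_{-1}\colon H_\rho(\mathbb{R};H)\to H_{-\rho}(\mathbb{R};H)$ is a unitary bijection with $\sigma_{-1}^{2}=\mathrm{id}$, so that $D(\sigma_{-1}T\sigma_{-1})=\sigma_{-1}[D(T)]$ indeed matches the stated domain $D(T\sigma_{-1})$; that $\spt\sigma_{-1}f=-\spt f$ for every $f\in L_{2,\mathrm{loc}}(\mathbb{R};H)$ (so in particular $\spt f\subseteq \mathbb{R}_{\le a}$ if and only if $\spt\sigma_{-1}f\subseteq \mathbb{R}_{\ge -a}$); and that, because $T$ and hence $\tilde T\coloneqq\sigma_{-1}T\sigma_{-1}$ is linear, \prettyref{lem:char_causality} (b) applies, so causality of $\tilde T$ is equivalent to the single-argument condition that $v=0$ on $]-\infty,t]$ implies $\tilde Tv=0$ on $]-\infty,t]$ for every $v\in D(\tilde T)$ and every $t\in\mathbb{R}$.

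With these in hand, I would parametrise $v\in D(\tilde T)$ as $v=\sigma_{-1}f$ with $f\in D(T)$ arbitrary and set $a\coloneqq -t$. The hypothesis ``$v=0$ on $]-\infty,t]$'' is, via the second preliminary, just $\spt f\subseteq \mathbb{R}_{\le a}$; the conclusion ``$\tilde Tv=0$ on $]-\infty,t]$'', i.e.\ $\sigma_{-1}(Tf)=0$ on $]-\infty,-a]$, rewrites as $\spt Tf\subseteq \mathbb{R}_{\le a}$. As $t$ (equivalently $a$) sweeps $\mathbb{R}$, this equivalence is exactly the definition of amnesia for $T$, finishing both directions at once.

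The step that requires attention is the appeal to \prettyref{lem:char_causality} (b): amnesia as defined refers to a single function, whereas the general definition of causality compares two, so linearity of $T$ (and hence of $\tilde T$) is what makes the two formulations match. The only remaining pedantry is reconciling the strict/closed endpoint conventions between ``$\spt\subseteq\mathbb{R}_{\le a}$'' and ``vanishes on $]-\infty,t]$'', which is harmless because $\{t\}$ has Lebesgue measure zero and therefore plays no role at the level of $H_\rho$-equivalence classes. Beyond these bookkeeping points I do not expect any substantive obstacle.
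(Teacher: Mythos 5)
Your argument is correct and is precisely the direct verification the paper has in mind (the paper's own proof consists of the single sentence ``The proof is obvious''). Your explicit remark that linearity of $T$ is what lets one pass, via \prettyref{lem:char_causality}~(b), between the single-function support condition defining amnesia and the two-function definition of causality is a genuine point of care that the lemma's statement leaves implicit; the rest is the unitarity of $\sigma_{-1}\colon H_{\rho}(\mathbb{R};H)\to H_{-\rho}(\mathbb{R};H)$ and the support bookkeeping exactly as you describe.
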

\begin{proof}
The proof is obvious.
\end{proof}
With this lemma at hand, we derive the following representation result
for amnesic operators.
\begin{prop}
\label{prop:representation_amnesic}Let $\rho\in\mathbb{R}$ and $T:H_{\rho}(\mathbb{R};H)\to H_{\rho}(\mathbb{R};H)$
bounded, translation invariant. Then $T$ is amnesic if and only if
there exists $N:\mathbb{C}_{\Re>0}\to L(H)$ analytic and bounded,
such that 
\begin{equation}
\widehat{Tf}(z)=N(-z+\rho)\hat{f}(z)\label{eq:repr_amnesic}
\end{equation}
for each $f\in H_{\rho}(\mathbb{R}_{\leq0};H),z\in\mathbb{C}_{\Re<\rho}.$\end{prop}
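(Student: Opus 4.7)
The plan is to reduce the statement to Theorem \ref{thm:weiss} via two conjugations: first the reflection $\sigma_{-1}$ to turn ``amnesic'' into ``causal'', and then the weight $\mathrm{e}^{\rho m}$ to move from $H_{-\rho}(\mathbb{R};H)$ to $L_2(\mathbb{R};H)$. Concretely, by \prettyref{lem:amnesic_vs_causal} the operator $S \coloneqq \sigma_{-1}T\sigma_{-1}$ is bounded and causal on $H_{-\rho}(\mathbb{R};H)$ iff $T$ is amnesic. A direct check using $\sigma_{-1}\tau_{h}=\tau_{-h}\sigma_{-1}$ shows that $S$ is also translation-invariant. Then the operator $\tilde{S}\coloneqq\mathrm{e}^{\rho m}S(\mathrm{e}^{\rho m})^{-1}\in L(L_{2}(\mathbb{R};H))$ is still bounded, causal, and translation-invariant (causality is preserved since $\mathrm{e}^{\rho m}$ preserves supports, and translation-invariance follows from $\mathrm{e}^{\rho m}\tau_{h}=\mathrm{e}^{-\rho h}\tau_{h}\mathrm{e}^{\rho m}$, after which the two scalar factors cancel).

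Next, I would apply \prettyref{thm:weiss} to $\tilde{S}$ to obtain an analytic bounded $N:\mathbb{C}_{\Re>0}\to L(H)$ with $\|N\|_{\infty}\leq\|\tilde{S}\|$ satisfying
\[
\widehat{\tilde{S}h}(z)=N(z)\hat{h}(z)\quad(z\in\mathbb{C}_{\Re>0},\ h\in L_{2}(\mathbb{R}_{\geq0};H)).
\]
Now for $f\in H_{\rho}(\mathbb{R}_{\leq0};H)$ the function $h\coloneqq\mathrm{e}^{\rho m}\sigma_{-1}f$ lies in $L_{2}(\mathbb{R}_{\geq0};H)$, and one computes $\tilde{S}h=\mathrm{e}^{\rho m}\sigma_{-1}Tf$. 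Using the elementary identities $\widehat{\sigma_{-1}g}(w)=\hat{g}(-w)$ and $\widehat{\mathrm{e}^{\rho m}g}(z)=\hat{g}(z-\rho)$, the formula above reads $\widehat{Tf}(\rho-z)=N(z)\hat{f}(\rho-z)$ for $z\in\mathbb{C}_{\Re>0}$; substituting $w=\rho-z$ gives exactly \prettyref{eq:repr_amnesic}.

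For the converse I assume $N$ with the stated property exists and want to show $T$ is amnesic, i.e.\ that $\spt f\subseteq\mathbb{R}_{\leq a}$ implies $\spt Tf\subseteq\mathbb{R}_{\leq a}$. By translation-invariance of $T$ it suffices to treat $a=0$. For $f\in H_{\rho}(\mathbb{R}_{\leq0};H)$, the (left half-plane version of the) Paley--Wiener characterisation (cf.\ \prettyref{cor:Paley_Wiener_unitary} applied to $\sigma_{-1}f$) yields $\hat{f}\in\mathcal{H}^{2}(\mathbb{C}_{\Re<\rho};H)$; since $w\mapsto N(-w+\rho)$ is analytic and bounded on $\mathbb{C}_{\Re<\rho}$, the product $N(-\cdot+\rho)\hat{f}$ lies in $\mathcal{H}^{2}(\mathbb{C}_{\Re<\rho};H)$ as well, and the assumed identity identifies it with $\widehat{Tf}$. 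Another application of Paley--Wiener then gives $\spt Tf\subseteq\mathbb{R}_{\leq0}$.

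The main obstacle is purely bookkeeping: keeping straight how the three transformations $\sigma_{-1}$, $\mathrm{e}^{\rho m}$, and the translation $\tau_{h}$ conjugate each other and how they act on the Fourier--Laplace transform on the varying weighted spaces $H_{\pm\rho}(\mathbb{R};H)$ and $L_{2}(\mathbb{R};H)$. Once those identities are collected, the two implications are immediate consequences of \prettyref{thm:weiss} and Paley--Wiener, so there is no genuinely new analytic content beyond these two ingredients.
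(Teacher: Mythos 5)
Your proposal is correct and follows essentially the same route as the paper: Lemma \ref{lem:amnesic_vs_causal} plus conjugation by $\e^{\rho\m}$ reduces the forward direction to Theorem \ref{thm:weiss}, and the converse is Paley--Wiener. The only cosmetic difference is that in the converse you argue directly in the left half-plane Hardy space rather than conjugating back to $L_2(\mathbb{R}_{\geq0};H)$ and applying Theorem \ref{thm:Paley-Wiener} there, which is the same argument up to the reflection $\sigma_{-1}$.
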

\begin{proof}
Assume that $T$ is amnesic. By \prettyref{lem:amnesic_vs_causal}
we know that $\sigma_{-1}T\sigma_{-1}\in L(H_{-\rho}(\mathbb{R};H))$
is causal. Moreover, it is clearly translation invariant and hence,
so is 
\begin{equation}
S\coloneqq\e^{\rho\m}\sigma_{-1}T\sigma_{-1}\left(\e^{\rho\m}\right)^{-1}:L_{2}(\mathbb{R};H)\to L_{2}(\mathbb{R};H),\label{eq:S}
\end{equation}
where $\e^{\rho\m}:H_{-\rho}(\mathbb{R};H)\to L_{2}(\mathbb{R};H)$
with $\left(\e^{\rho\m}f\right)(t)=\e^{\rho t}f(t)$. Now, by \prettyref{thm:weiss}
there exists an analytic and bounded function 
\[
N:\mathbb{C}_{\Re>0}\to L(H)
\]
such that
\[
\widehat{Sg}(z)=N(z)\hat{g}(z)
\]
for each $g\in L_{2}(\mathbb{R}_{\geq0};H)$, $z\in\mathbb{C}_{\Re>0}$.
Then we compute 
\begin{align*}
\widehat{Tf}(z) & =\left(\widehat{\sigma_{-1}\left(\e^{\rho\m}\right)^{-1}S\e^{\rho\m}\sigma_{-1}f}\right)(z)\\
 & =\left(\widehat{\left(\e^{\rho\m}\right)^{-1}S\e^{\rho\m}\sigma_{-1}f}\right)(-z)\\
 & =\left(\widehat{S\e^{\rho\m}\sigma_{-1}f}\right)(-z+\rho)\\
 & =N(-z+\rho)\left(\widehat{\e^{\rho\m}\sigma_{-1}f}\right)(-z+\rho)\\
 & =N(-z+\rho)\hat{f}(z)
\end{align*}
for each $f\in H_{\rho}(\mathbb{R}_{\leq0};H)$ and $z\in\mathbb{C}_{\Re<\rho}$,
which gives the assertion. Assume now that \prettyref{eq:repr_amnesic}
holds. We prove that $S$ given by \prettyref{eq:S} is causal, which
would yield the assertion. Let $g\in L_{2}(\mathbb{R}_{\geq0};H).$
We then have 
\begin{align*}
\widehat{Sg}(z) & =\left(\widehat{\e^{\rho\m}\sigma_{-1}T\sigma_{-1}\left(\e^{\rho\m}\right)^{-1}g}\right)(z)\\
 & =\left(\widehat{T\sigma_{-1}\left(\e^{\rho\m}\right)^{-1}g}\right)(-z+\rho)\\
 & =N(z)\left(\widehat{\sigma_{-1}\left(\e^{\rho\m}\right)^{-1}g}\right)(-z+\rho)\\
 & =N(z)\hat{g}(z)
\end{align*}
for each $z\in\mathbb{C}_{\Re>0}.$ Hence, $\widehat{Sg}\in\mathcal{\mathcal{H}}^{2}(\mathbb{C}_{\Re>0};H)$
and thus the assertion follows by \prettyref{thm:Paley-Wiener}.\end{proof}
\begin{prop}
\label{prop:amnesic_P_0}Let $\rho>0$ and $T:H_{\rho}(\mathbb{R};H)\to H_{\rho}^{-1}(\mathbb{R};H)$
bounded and translation invariant. Then the following statements are
equivalent: 

\begin{enumerate}[(i)]

\item For all $\varphi\in C_{c}^{\infty}(\mathbb{R}_{<0};H)$ we
have $T\varphi\in N(P_{0})$.

\item The operator $\left(\partial_{0,\rho}^{-1}\right)^{\ast}T$
is amnesic%
\footnote{Note that $\left(\partial_{0,\rho}^{-1}\right)^{\ast}:H_{\rho}^{-1}(\mathbb{R};H)\to H_{\rho}(\mathbb{R};H)$
is bounded, since $\partial_{0,\rho}\left(\partial_{0,\rho}^{^{-1}}\right)^{\ast}=\left(-\partial_{0,\rho}^{\ast}+2\rho\right)\left(\partial_{0,\rho}^{\ast}\right)^{-1}$
is bounded on $H_{\rho}(\mathbb{R};H)$ and $\partial_{0,\rho}$ is
normal.%
}. 

\item For all $g\in\chi_{\mathbb{R}_{\leq0}}(\m)\left[H_{\rho}^{1}(\mathbb{R};H)\right]$
we have $Tg\in N(P_{0})$.

\end{enumerate}\end{prop}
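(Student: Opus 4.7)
I would prove the cycle (iii) $\Rightarrow$ (i) $\Rightarrow$ (ii) $\Rightarrow$ (iii). The implication (iii) $\Rightarrow$ (i) is immediate from the inclusion $C_{c}^{\infty}(\mathbb{R}_{<0};H) \subseteq \chi_{\mathbb{R}_{\leq 0}}(\m)[H_{\rho}^{1}(\mathbb{R};H)]$. The pivot for the remaining two implications is the following adjoint analogue of \prettyref{lem:support_H^-1}:
\[
\text{for } h \in H_{\rho}^{-1}(\mathbb{R};H):\quad h \in N(P_{0}) \;\Longleftrightarrow\; \spt (\partial_{0,\rho}^{-1})^{\ast} h \subseteq \mathbb{R}_{\leq 0}
\]
(where $(\partial_{0,\rho}^{-1})^{\ast} h \in H_{\rho}(\mathbb{R};H)$). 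Via \prettyref{prop:properties_cut_off}(e) the left-hand side is $\spt h \subseteq \mathbb{R}_{\leq 0}$. I would prove the $\Rightarrow$ direction by duality: for $\varphi \in C_{c}^{\infty}(\mathbb{R}_{>0};H)$ we have $\langle (\partial_{0,\rho}^{-1})^{\ast} h \mid \varphi \rangle_{H_{\rho}} = \langle h \mid \partial_{0,\rho}^{-1}\varphi \rangle_{H^{-1}\times H^{1}}$, and $\partial_{0,\rho}^{-1}\varphi$ has support in $[\inf\spt\varphi,\infty) \subseteq \mathbb{R}_{>0}$. Since $\partial_{0,\rho}^{-1}\varphi$ is smooth but not compactly supported, I would approximate it in $H_{\rho}^{1}$-norm by $\eta_{n}\partial_{0,\rho}^{-1}\varphi \in C_{c}^{\infty}(\mathbb{R}_{>0};H)$, where $\eta_{n}(s) = \eta(s/n)$ is a dilated fixed cutoff; the vanishing of each $\langle h\mid \eta_{n}\partial_{0,\rho}^{-1}\varphi\rangle$ then passes to the limit. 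The $\Leftarrow$ direction is easier: for $\varphi \in C_{c}^{\infty}(\mathbb{R}_{>0};H)$, the identity $\langle h \mid \varphi \rangle_{H^{-1}\times H^{1}} = \langle (\partial_{0,\rho}^{-1})^{\ast} h \mid \varphi' \rangle_{H_{\rho}}$ vanishes since $\varphi'$ is supported in $\mathbb{R}_{>0}$ and $(\partial_{0,\rho}^{-1})^{\ast}h$ in $\mathbb{R}_{\leq 0}$.

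Granted this pivotal lemma, (ii) $\Rightarrow$ (iii) is immediate: every $g \in \chi_{\mathbb{R}_{\leq 0}}(\m)[H_{\rho}^{1}(\mathbb{R};H)]$ lies in $H_{\rho}(\mathbb{R};H)$ with $\spt g \subseteq \mathbb{R}_{\leq 0}$, so amnesia of $(\partial_{0,\rho}^{-1})^{\ast}T$ delivers $\spt (\partial_{0,\rho}^{-1})^{\ast} Tg \subseteq \mathbb{R}_{\leq 0}$, whence $Tg \in N(P_{0})$ by the pivot.

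For (i) $\Rightarrow$ (ii): since both $T$ and $(\partial_{0,\rho}^{-1})^{\ast}$ are translation invariant, so is their composition, and by the standard translation-conjugation argument amnesia reduces to: $\spt f \subseteq \mathbb{R}_{\leq 0}$ implies $\spt (\partial_{0,\rho}^{-1})^{\ast}Tf \subseteq \mathbb{R}_{\leq 0}$ for $f \in H_{\rho}(\mathbb{R};H)$. I would approximate such $f$ by a sequence $\varphi_{n} \in C_{c}^{\infty}(\mathbb{R}_{<0};H)$ converging to $f$ in $H_{\rho}(\mathbb{R};H)$ (standard mollification-and-truncation). By (i) and the pivot each $(\partial_{0,\rho}^{-1})^{\ast}T\varphi_{n}$ has support in $\mathbb{R}_{\leq 0}$; by boundedness of $(\partial_{0,\rho}^{-1})^{\ast}T: H_{\rho}(\mathbb{R};H) \to H_{\rho}(\mathbb{R};H)$ we have $(\partial_{0,\rho}^{-1})^{\ast}T\varphi_{n} \to (\partial_{0,\rho}^{-1})^{\ast}Tf$ in $H_{\rho}(\mathbb{R};H)$; and closedness of $\{u \in H_{\rho}(\mathbb{R};H) : \spt u \subseteq \mathbb{R}_{\leq 0}\}$ yields the desired support for the limit, whence amnesia at the cut-off point $0$ (and, by translation invariance, at every $a \in \mathbb{R}$).

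The main obstacle is the pivotal lemma, and within it the $H_{\rho}^{1}$-approximation of $\partial_{0,\rho}^{-1}\varphi$ by compactly supported smooth functions: the point is that $\partial_{0,\rho}^{-1}\varphi$ is eventually constant and hence not compactly supported, so the support-in-$\mathbb{R}_{>0}$ cannot be exploited trivially; the required convergence relies on the cutoff estimate $\|\eta_{n}'\|_{\infty} = O(1/n)$ together with dominated convergence. After that, the remainder of the proof is a routine density-and-continuity argument.
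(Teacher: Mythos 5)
Your proof is correct and follows essentially the same route as the paper: the same cycle of implications, the same duality identity $\langle h\mid\varphi\rangle_{H^{-1}\times H^{1}}=\langle(\partial_{0,\rho}^{-1})^{\ast}h\mid\varphi'\rangle_{H_{\rho}}$ for the direction recovering $Tg\in N(P_{0})$, and the same density-plus-translation-invariance reduction for amnesia. The only difference is organisational: you isolate the two support computations as a single ``pivot'' equivalence and prove its forward direction via \prettyref{prop:properties_cut_off}~(e) and a cutoff approximation that works because of the gap between $\spt h\subseteq\mathbb{R}_{\leq0}$ and $\spt\partial_{0,\rho}^{-1}\varphi\subseteq[\inf\spt\varphi,\infty)$, whereas the paper computes directly with the constant $(\partial_{0,\rho}^{-1}T\varphi)(0+)$ extracted from $P_{0}T\varphi=0$; both arguments are sound.
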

\begin{proof}
(i) $\Rightarrow$ (ii): Let $\varphi\in C_{c}^{\infty}(\mathbb{R}_{<0};H).$
Then by assumption 
\[
0=P_{0}T\varphi=\partial_{0,\rho}\chi_{\mathbb{R}_{\geq0}}(\m)\partial_{0,\rho}^{-1}T\varphi-\delta_{0}\left(\partial_{0,\rho}^{-1}T\varphi\right)(0+).
\]
Consequently, 
\[
\chi_{\mathbb{R}_{\geq0}}(\m)\partial_{0,\rho}^{-1}T\varphi=\chi_{\mathbb{R}_{\geq0}}x,
\]
where $x\coloneqq\left(\partial_{0,\rho}^{-1}T\varphi\right)(0+).$
The latter gives $\spt\left(\partial_{0,\rho}^{-1}\right)^{\ast}T\varphi\subseteq\mathbb{R}_{\leq0}.$
Indeed, let $\psi\in C_{c}^{\infty}(\mathbb{R}_{>0};H).$ Then we
compute 
\begin{align*}
\langle\left(\partial_{0,\rho}^{-1}\right)^{\ast}T\varphi|\psi\rangle_{H_{\rho}(\mathbb{R};H)} & =\langle\left(\partial_{0,\rho}^{-1}\right)^{\ast}\partial_{0,\rho}\partial_{0,\rho}^{-1}T\varphi|\psi\rangle_{H_{\rho}(\mathbb{R};H)}\\
 & =\langle\left(\partial_{0,\rho}^{-1}\right)^{\ast}\partial_{0,\rho}^{-1}T\varphi|\partial_{0,\rho}^{\ast}\psi\rangle_{H_{\rho}(\mathbb{R};H)}\\
 & =\langle\partial_{0,\rho}^{-1}T\varphi|\partial_{0,\rho}^{-1}\partial_{0,\rho}^{\ast}\psi\rangle_{H_{\rho}(\mathbb{R};H)}\\
 & =\langle\partial_{0,\rho}^{-1}T\varphi|-\psi+2\rho\partial_{0,\rho}^{-1}\psi\rangle_{H_{\rho}(\mathbb{R};H)}
\end{align*}
Setting now $\tilde{\psi}\coloneqq\partial_{0,\rho}^{-1}\psi\in H_{\rho}^{1}(\mathbb{R};H)$
and noting that due to the causality of $\partial_{0,\rho}^{-1}$
we have $\spt\tilde{\psi}\subseteq\mathbb{R}_{>0},$ we infer 
\begin{align*}
\langle\left(\partial_{0,\rho}^{-1}\right)^{\ast}T\varphi|\psi\rangle_{H_{\rho}(\mathbb{R};H)} & =\langle\partial_{0,\rho}^{-1}T\varphi|-\tilde{\psi}'+2\rho\tilde{\psi}\rangle_{H_{\rho}(\mathbb{R};H)}\\
 & =\langle\chi_{\mathbb{R}_{\geq0}}x|\partial_{0,\rho}^{\ast}\tilde{\psi}\rangle_{H_{\rho}(\mathbb{R};H)}\\
 & =\tilde{\psi}(0)=0.
\end{align*}
Using now the density of $C_{c}^{\infty}(\mathbb{R}_{<0};H)$ in $H_{\rho}(\mathbb{R}_{\leq0};H)$
and the boundedness and translation invariance of $\left(\partial_{0,\rho}^{-1}\right)^{\ast}T,$
we derive that $\left(\partial_{0,\rho}^{-1}\right)^{\ast}T$ is amnesic.\\
(ii) $\Rightarrow$ (iii): Let now $g\in\chi_{\mathbb{R}_{\leq0}}(\m)\left[H_{\rho}^{1}(\mathbb{R};H)\right]$.
We show that $\spt Tg\subseteq\mathbb{R}_{\leq0}.$ For doing so,
let $\varphi\in C_{c}^{\infty}(\mathbb{R}_{>0};H).$ Then 
\begin{align*}
\langle Tg|\varphi\rangle_{H^{-1}(\partial_{0,\rho})\times H^{1}(\partial_{0,\rho}^{\ast})} & =\langle\partial_{0,\rho}^{-1}Tg|\partial_{0,\rho}^{\ast}\varphi\rangle_{H_{\rho}(\mathbb{R};H)}\\
 & =\langle\partial_{0,\rho}^{-1}Tg|\partial_{0,\rho}^{-1}\partial_{0,\rho}^{\ast}\varphi'\rangle_{H_{\rho}(\mathbb{R};H)}\\
 & =\langle\left(\partial_{0,\rho}^{-1}\right)^{\ast}Tg|\varphi'\rangle_{H_{\rho}(\mathbb{R};H)}\\
 & =0,
\end{align*}
which gives $\spt Tg\subseteq\mathbb{R}_{\leq0}.$ The assertion now
follows from \prettyref{prop:properties_cut_off} (e).\\
(iii) $\Rightarrow$ (i): This is obvious.
\end{proof}
The latter two propositions now provide a characterization of those
material laws, where the history space in fact just depends on the
value $g(0-)$ and not on the whole history $g$.
\begin{prop}
\label{prop:material_law_amnesic}Let $\rho>\max\{s_{0}(M,A),b(M)\}$.
Then the following statements are equivalent:

\begin{enumerate}[(i)]

\item We have 
\[
\mathrm{His}_{\rho}(M,A)=\left\{ g\in\chi_{\mathbb{R}_{\leq0}}\left[H_{\rho}^{1}(\mathbb{R};H)\right]\,;\, g(0-)\in\mathrm{IV}_{\rho}(M,A)\right\} 
\]
and for each $g\in\mathrm{His}_{\rho}(M,A)$ with $g(0-)=0$ we have
that 
\[
S_{\rho}(\delta_{0}\Gamma_{(M,A)}^{\rho}g-K_{\rho}g)=0.
\]

\item There exist $M_{0},M_{1}\in L(H)$ such that 
\[
M(z)=M_{0}+z^{-1}M_{1}\quad(z\in D(M)\setminus\{0\}).
\]

\end{enumerate}

In this case we have $K_{\rho}g=0$ for each $g\in\chi_{\mathbb{R}_{\leq0}}(\m)\left[H_{\rho}^{1}(\mathbb{R};H)\right].$\end{prop}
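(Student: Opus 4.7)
My plan is to address the two directions of the equivalence separately, with the nontrivial work lying in (i)$\Rightarrow$(ii). Before starting the equivalence, I would verify that the extension $S_\rho\in L(H_\rho^{-1}(\mathbb{R};H))$ from \prettyref{prop:continuation_fcts_of_partial_0} is injective: if $S_\rho f=0$ then $(\partial_{0,\rho}-c)^{-1}S_\rho f = S_\rho(\partial_{0,\rho}-c)^{-1}f=0$ in $H_\rho(\mathbb{R};H)$, where $S_\rho$ is boundedly invertible, forcing $f=0$.

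For (ii)$\Rightarrow$(i) I would proceed by direct computation. If $M(z)=M_0+z^{-1}M_1$, then $\partial_{0,\rho}M(\partial_{0,\rho})=M_0\partial_{0,\rho}+M_1$. For $g\in\chi_{\mathbb{R}_{\leq 0}}(\m)[H_\rho^1(\mathbb{R};H)]$ with associated continuous extension $w\coloneqq g+\chi_{\mathbb{R}_{\geq 0}}g(0-)\in H_\rho^1(\mathbb{R};H)$ (\prettyref{lem:char_chi_H^1}), the identity $\partial_{0,\rho}g=\partial_{0,\rho}w-\delta_0\, g(0-)$ holds, with $\partial_{0,\rho}w\in H_\rho(\mathbb{R};H)$ supported in $\mathbb{R}_{\leq 0}$. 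Both $M_0\partial_{0,\rho}w$ and $M_1g$ are then supported in $\mathbb{R}_{\leq 0}$, and a short calculation using $\partial_{0,\rho}^{-1}\delta_0=\chi_{\mathbb{R}_{\geq 0}}$ yields $P_0(M_0\delta_0 g(0-))=0$. Hence $K_\rho g=0$ for every $g$, which already gives the auxiliary claim of the proposition. A parallel computation shows $\Gamma_{(M,A)}^\rho g=M_0\,g(0-)$, so that $\delta_0\Gamma_{(M,A)}^\rho g-K_\rho g=\delta_0 M_0 g(0-)$ depends only on $g(0-)$. Admissibility therefore reduces to a property of $g(0-)$ alone, and the vanishing statement for $g(0-)=0$ is immediate.

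For (i)$\Rightarrow$(ii) I would first show that $K_\rho g$ depends only on $g(0-)$. Given $g_1,g_2\in\chi_{\mathbb{R}_{\leq 0}}(\m)[H_\rho^1(\mathbb{R};H)]$ with $g_1(0-)=g_2(0-)$, the difference $g\coloneqq g_1-g_2$ has $g(0-)=0$, so $g\in H_\rho^1(\mathbb{R};H)$ and $M(\partial_{0,\rho})g\in H_\rho^1(\mathbb{R};H)$ is continuous, giving $\Gamma_{(M,A)}^\rho g=0$. The hypothesis in (i) combined with injectivity of $S_\rho$ then forces $K_\rho g=0$, i.e.\ $K_\rho g_1=K_\rho g_2$. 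In particular $K_\rho\varphi=0$ for every $\varphi\in C_c^\infty(\mathbb{R}_{<0};H)$. Applying \prettyref{prop:amnesic_P_0} to the bounded translation-invariant operator $T\coloneqq\partial_{0,\rho}M(\partial_{0,\rho})\in L(H_\rho(\mathbb{R};H),H_\rho^{-1}(\mathbb{R};H))$ then yields that $(\partial_{0,\rho}^{-1})^{\ast}T=f(\partial_{0,\rho})M(\partial_{0,\rho})$ is amnesic, where $f(w)=w/(2\rho-w)$.

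The hard part is turning amnesia into the structural identity $M(z)=M_0+z^{-1}M_1$. I would invoke \prettyref{prop:representation_amnesic} to obtain $N:\mathbb{C}_{\Re>0}\to L(H)$ analytic and bounded with the stated boundary relation, and then test the identity against $f_\mu(s)\coloneqq\chi_{\mathbb{R}_{\leq 0}}(s)\e^{\mu s}x$ for $\Re\mu>\rho$, whose Laplace transform $\frac{x}{\sqrt{2\pi}(\mu-z)}$ is explicit. Matching with the spectral representation of $(fM)(\partial_{0,\rho})f_\mu$ on the line $\Re z=\rho$ gives
\[
N(-z+\rho)=\frac{zM(z)}{2\rho-z}\qquad(\Re z=\rho),
\]
and after substituting $w=\rho-z$ the function $G(w)\coloneqq(\rho+w)N(w)$ (analytic and of at most linear growth on $\mathbb{C}_{\Re>0}$) agrees on the imaginary axis with $(\rho-w)M(\rho-w)$, which is analytic and of at most linear growth on $\mathbb{C}_{\Re<\rho-\rho_0}$ (choosing $\rho>\rho_0$ if needed). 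Analytic continuation across the imaginary axis glues these into an entire $L(H)$-valued function of linear growth; the operator-valued Liouville theorem forces $G(w)=a+bw$ with $a,b\in L(H)$, whence $zM(z)=M_1+M_0z$ on the line, and by analytic continuation on all of $D(M)\setminus\{0\}$ with $M_0=-b$ and $M_1=a+b\rho$. I expect the main obstacles to be the careful justification of the boundary-value identification of $G$ with $(\rho-w)M(\rho-w)$ and the polynomial-growth Liouville argument in the operator-valued setting; both can be handled componentwise by testing against $\langle\cdot\, x|y\rangle$ with $x,y\in H$.
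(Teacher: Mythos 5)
Your proposal is correct and follows essentially the same route as the paper: vanishing of $K_{\rho}$ on test functions supported in $\mathbb{R}_{<0}$, amnesia via \prettyref{prop:amnesic_P_0}, the representation \prettyref{prop:representation_amnesic}, identification of $N$ with $z\mapsto zM(z)/(2\rho-z)$ on a strip by testing against truncated exponentials (which, as in the paper, requires \prettyref{lem:independent of rho} to move the multiplier representation off the line $\Re z=\rho$ — your phrase ``on the line $\Re z=\rho$'' should read ``on lines $\Re z=\mu$ with $b(M)<\mu<\rho$''), and a gluing-plus-Liouville argument; the converse is the same direct computation. The only genuine additions are your explicit injectivity remark for $S_{\rho}$ on $H_{\rho}^{-1}(\mathbb{R};H)$ (used implicitly in the paper) and deducing $\Gamma_{(M,A)}^{\rho}\varphi=0$ from continuity of $M(\partial_{0,\rho})\varphi$ rather than from $P_{0}\delta_{0}=0$, both of which are fine.
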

\begin{proof}
(i) $\Rightarrow$ (ii): By assumption, we in particular have $C_{c}^{\infty}(\mathbb{R}_{<0};H)\subseteq\mathrm{His}_{\rho}(M,A)$,
since $0\in\mathrm{IV}_{\rho}(M,A).$ Consequently, 
\[
S_{\rho}(\delta_{0}\Gamma_{(M,A)}^{\rho}\varphi-K_{\rho}\varphi)=0
\]
for each $\varphi\in C_{c}^{\infty}(\mathbb{R}_{<0};H)$ by assumption.
Thus, 
\[
\delta_{0}\Gamma_{(M,A)}^{\rho}\varphi=K_{\rho}\varphi
\]
and hence, by \prettyref{prop:properties_cut_off} (b) and (a) 
\[
K_{\rho}\varphi=P_{0}\partial_{0,\rho}M(\partial_{0,\rho})\varphi=P_{0}\delta_{0}\Gamma_{(M,A)}^{\rho}\varphi=0
\]
for each $\varphi\in C_{c}^{\infty}(\mathbb{R}_{<0};H).$ Thus, $\left(\partial_{0,\rho}^{-1}\right)^{\ast}\partial_{0,\rho}M(\partial_{0,\rho})$
is amnesic by \prettyref{prop:amnesic_P_0}. Hence, according to \prettyref{prop:representation_amnesic}
there exists $N:\mathbb{C}_{\Re>0}\to L(H)$ analytic and bounded
such that 
\begin{equation}
\left(\widehat{\left(\partial_{0,\rho}^{-1}\right)^{\ast}\partial_{0,\rho}M(\partial_{0,\rho})f}\right)(z)=N(-z+\rho)\hat{f}(z)\label{eq:N}
\end{equation}
for each $f\in H_{\rho}(\mathbb{R}_{\leq0};H)$ and $z\in\mathbb{C}_{\Re<\rho}.$
Considering the analytic function 
\begin{align*}
T:\left\{ z\in\mathbb{C}\,;\, b(M)<\Re z<2\rho\right\}  & \to L(H)\\
z & \mapsto\frac{z}{-z+2\rho}M(z)
\end{align*}
we get that 
\[
\left(\partial_{0,\rho}^{-1}\right)^{\ast}\partial_{0,\rho}M(\partial_{0,\rho})=T(\partial_{0,\rho}).
\]
Using that $T$ is bounded on $\{z\in\mathbb{C}\,;\, b(M)+\varepsilon<\Re z<2\rho-\varepsilon\}$
for each $\varepsilon>0$, we can use \prettyref{lem:independent of rho}
to derive that 
\[
T(\partial_{0,\rho})f=T(\partial_{0,\mu})f
\]
for each $f\in H_{\rho}(\mathbb{R}_{\leq0};H)$ and $b(M)<\mu<\rho.$
Thus, choosing $f(t)\coloneqq\sqrt{2\pi}\chi_{\mathbb{R}_{\leq0}}(t)\e^{(\rho+1)t}x$
for arbitrary $x\in H$, we infer from \prettyref{eq:N} 
\[
\frac{1}{\rho+1-z}N(-z+\rho)x=\left(\widehat{T(\partial_{0,\rho})f}\right)(z)=\left(\widehat{T(\partial_{0,\Re z})f}\right)(z)=\frac{z}{-z+2\rho}\frac{1}{\rho+1-z}M(z)x
\]
for each $z\in\mathbb{C}$ with $b(M)<\Re z<\rho.$ Hence, 
\[
N(-z+\rho)=\frac{z}{-z+2\rho}M(z)\quad(b(M)<\Re z<\rho).
\]
Thus, 
\[
F(z)\coloneqq\begin{cases}
(-z+b(M))M(-z+b(M)) & \mbox{ if }\Re z<0,\\
\left(z+2\rho-b(M)\right)N(z+\rho-b(M)) & \mbox{ if }\Re z\geq0,
\end{cases}
\]
is an entire function. Moreover 
\[
|F(z)|\leq C|z|+D\quad(z\in\mathbb{C})
\]
for some $C,D\geq0.$ Hence, the function $G(z)\coloneqq\frac{1}{z}(F(z)-F(0))$
for $z\in\mathbb{C}\setminus\{0\}$ is bounded, which yields $G(z)=B$
for some $B\in L(H)$ and all $z\ne0$ by the Theorem of Liouville.
Hence, 
\[
F(z)=zB+F(0)\quad(z\in\mathbb{C}).
\]
For $\Re z<0$ we thus have 
\[
(-z+b(M))M(-z+b(M))=zB+F(0)
\]
and thus, 
\[
M(z)=\frac{b(M)-z}{z}B+\frac{1}{z}F(0)\quad(z\in\mathbb{C}_{\Re>b(M)}).
\]
Setting $M_{0}\coloneqq-B$ and $M_{1}\coloneqq b(M)B+F(0),$ the
assertion follows from the identity theorem.\\
(ii) $\Rightarrow$ (i): Let $g\in\chi_{\mathbb{R}_{\leq0}}(\m)\left[H_{\rho}^{1}(\mathbb{R};H)\right].$
Then $\left(\left(M_{0}+\partial_{0,\rho}^{-1}M_{1}\right)g\right)(0+)=\left(\partial_{0,\rho}^{-1}M_{1}g\right)(0+),$
which exists, since $\partial_{0,\rho}^{-1}M_{1}g\in H_{\rho}^{1}(\mathbb{R};H).$
Thus, $\partial_{0,\rho}M(\partial_{0,\rho})g\in D(P_{0})$ and we
compute 
\begin{align*}
K_{\rho}g & =P_{0}\partial_{0,\rho}M(\partial_{0,\rho})g\\
 & =\partial_{0,\rho}\chi_{\mathbb{R}_{\geq0}}(\m)\left(M_{0}+\partial_{0,\rho}^{-1}M_{1}\right)g-\delta_{0}\left(M_{0}+\partial_{0,\rho}^{-1}M_{1}g\right)(0+)\\
 & =\partial_{0,\rho}\chi_{\mathbb{R}_{\geq0}}(\m)\partial_{0,\rho}^{-1}M_{1}g-\delta_{0}\left(\partial_{0,\rho}^{-1}M_{1}g\right)(0+)\\
 & =P_{0}M_{1}g\\
 & =\chi_{\mathbb{R}_{\geq0}}(\m)M_{1}g\\
 & =0.
\end{align*}
Let now $x\in\mathrm{IV}_{\rho}(M,A).$ Then there exists $g\in\mathrm{His}_{\rho}(M,A)$
such that $g(0-)=x.$ Let now $h\in\chi_{\mathbb{R}_{\leq0}}(\m)\left[H_{\rho}^{1}(\mathbb{R};H)\right]$
with $h(0-)=x.$ We compute
\begin{align*}
S_{\rho}(\delta_{0}\Gamma_{(M,A)}^{\rho}g-K_{\rho}g) & =S_{\rho}\delta_{0}\Gamma_{(M,A)}^{\rho}g\\
 & =S_{\rho}\delta_{0}\left(\left(M(\partial_{0,\rho})g\right)(0-)-\left(M(\partial_{0,\rho})g\right)(0+)\right).
\end{align*}
Since 
\[
\left(M(\partial_{0,\rho})g\right)(0-)-\left(M(\partial_{0,\rho})g\right)(0+)=M_{0}g(0-)=M_{0}x
\]
we infer 
\[
S_{\rho}(\delta_{0}\Gamma_{(M,A)}^{\rho}g-K_{\rho}g)=S_{\rho}(\delta_{0}\Gamma_{(M,A)}^{\rho}h-K_{\rho}h),
\]
and thus, $h\in\mathrm{His}_{\rho}(M,A).$ Moreover, 
\[
S_{\rho}(\delta_{0}\Gamma_{(M,A)}^{\rho}g-K_{\rho}g)=0,
\]
for $g(0-)=0$, which gives (i). 
\end{proof}
The latter proposition shows that material laws of the form $M(\partial_{0,\rho})=M_{0}+\partial_{0,\rho}^{-1}M_{1}$
are precisely those, where it suffices to prescribe an initial value
$g(0-)$ and not a whole history $g$.

\section{$C_{0}$-semigroups associated with evolutionary problems}

This section is devoted to the regularity of initial value problems
as they were introduced in the previous section. More precisely, we
characterize those evolutionary problems which allow the definition
of an associated $C_{0}$-semigroup on a suitable Hilbert space. The
key tool will be the Widder-Arendt Theorem (see \cite{Arendt1987}
and \prettyref{thm:Widder-Arendt} of this thesis). First of all,
we recall the definition of a $C_{0}$-semigroup.
\begin{defn*}
Let $X$ be a Banach space. A \emph{$C_{0}$-semigroup on $X$} is
a mapping $T:\mathbb{R}_{\geq0}\to L(X)$ such that 

\begin{enumerate}[(a)]

\item $T(0)=1$ and for all $t,s\in\mathbb{R}_{\geq0}$ we have that
$T(t+s)=T(t)T(s).$

\item For all $x\in X$ we have that $T(\cdot)x$ is continuous on
$\mathbb{R}_{\geq0}.$

\end{enumerate}
\end{defn*}
We want to associate a $C_{0}$-semigroup with a well-posed evolutionary
problem. For this, we need to find the right Hilbert space $X$, where
the semigroup should act on. Throughout, we assume that $M:D(M)\subseteq\mathbb{C}\to L(H)$
is a regularizing linear material law, $A:D(A)\subseteq H\to H$ is
densely defined closed and linear and the associated evolutionary
problem is well-posed. Again, we set 
\[
S_{\rho}\coloneqq\overline{\left(\partial_{0,\rho}M(\partial_{0,\rho})+A\right)^{-1}}
\]
for $\rho>s_{0}(M,A).$ We begin with the following proposition.
\begin{prop}
\label{prop:space_invariant} Let $\rho>\max\{s_{0}(M,A),b(M)\}$
and $g\in\mathrm{His}_{\rho}(M,A).$ Moreover, we set 
\[
v\coloneqq S_{\rho}\left(\delta_{0}\Gamma_{(M,A)}^{\rho}g-K_{\rho}g\right)
\]
and $u\coloneqq v+g.$ For $t>0$ we set $\tilde{g}\coloneqq\chi_{\mathbb{R}_{\leq0}}(\m)\tau_{t}u$
and $\tilde{v}\coloneqq\chi_{\mathbb{R}_{\geq0}}(\m)\tau_{t}u.$ Then
we have $\tilde{g}\in\mathrm{His}_{\rho}(M,A)$ and 
\[
\tilde{v}=S_{\rho}\left(\delta_{0}\Gamma_{(M,A)}^{\rho}\tilde{g}-K_{\rho}\tilde{g}\right).
\]
\end{prop}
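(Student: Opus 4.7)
The strategy is to verify that $\tau_{t}u$ plays, for the new history $\tilde g$, exactly the role that $u$ plays for $g$ in \prettyref{prop:solution_ivp}; the assertion then follows by a uniqueness-and-reconstruction argument.

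\emph{Preparations.} Since $\tau_{t}$ commutes with $\partial_{0,\rho}$, with $M(\partial_{0,\rho})$ and with the pointwise action of $A$, and since it is bounded on $H_{\rho}^{1}(\mathbb{R};H)$, Proposition \prettyref{prop:solution_ivp} applied to $u=v+g$ yields $\tau_{t}u\in H_{\rho}^{1}(\mathbb{R};H)$ together with
\begin{equation*}
\spt\bigl(\partial_{0,\rho}M(\partial_{0,\rho})+A\bigr)\tau_{t}u\subseteq\mathbb{R}_{\leq-t}\subseteq\mathbb{R}_{\leq 0}.
\end{equation*}
By the Sobolev embedding (\prettyref{prop:Sobolev}), the continuous representative of $\tau_{t}u$ satisfies $(\tau_{t}u)(0)=u(t)=:x$, so that $\tilde g(0-)=x=\tilde v(0+)$. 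The function $w:=\tau_{t}u-\tilde g-\chi_{\mathbb{R}_{\geq 0}}x$ vanishes on $\mathbb{R}_{\leq 0}$, equals $\tau_{t}u-x$ on $\mathbb{R}_{\geq 0}$, exhibits no jump at $0$ by continuity, and has distributional derivative $\chi_{\mathbb{R}_{\geq 0}}\partial_{0,\rho}\tau_{t}u\in H_{\rho}(\mathbb{R};H)$; hence $w\in H_{\rho}^{1}(\mathbb{R};H)$. Equivalently $\tilde g+\chi_{\mathbb{R}_{\geq 0}}x\in H_{\rho}^{1}(\mathbb{R};H)$, which by \prettyref{lem:char_chi_H^1} shows $\tilde g\in\chi_{\mathbb{R}_{\leq 0}}(\m)\bigl[H_{\rho}^{1}(\mathbb{R};H)\bigr]$, the first requirement for $\tilde g$ to belong to $\mathrm{His}_{\rho}(M,A)$.

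\emph{The main identity.} It remains to establish
\begin{equation*}
\bigl(\partial_{0,\rho}M(\partial_{0,\rho})+A\bigr)\tilde v=\delta_{0}\Gamma_{(M,A)}^{\rho}\tilde g-K_{\rho}\tilde g,
\end{equation*}
because inverting this with the bounded extension of $S_{\rho}$ on $H_{\rho}^{-1}(\mathbb{R};H)$ (\prettyref{prop:continuation_fcts_of_partial_0}) delivers the stated formula for $\tilde v$, and then $w=\tilde v-\chi_{\mathbb{R}_{\geq 0}}x\in H_{\rho}^{1}(\mathbb{R};H)$ certifies $\tilde g\in\mathrm{His}_{\rho}(M,A)$ (the value $x$ being forced to equal $\Gamma_{(M,A)}^{\rho}\tilde g$ by \prettyref{lem:Gamma}). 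I would prove the identity by starting from $\tau_{t}u=\tilde g+\tilde v$ a.e., applying $\partial_{0,\rho}M(\partial_{0,\rho})+A$, and invoking \prettyref{prop:properties_cut_off}(d) together with \prettyref{lem:regularizing} to split
\begin{equation*}
\partial_{0,\rho}M(\partial_{0,\rho})\tilde g=K_{\rho}\tilde g+Q_{0}\bigl(\partial_{0,\rho}M(\partial_{0,\rho})\tilde g\bigr)-\delta_{0}\Gamma_{(M,A)}^{\rho}\tilde g,
\end{equation*}
where the coefficient of $\delta_{0}$ is identified via \prettyref{lem:Gamma}. After substitution, the desired identity reduces to the equality $\bigl(\partial_{0,\rho}M(\partial_{0,\rho})+A\bigr)\tau_{t}u=Q_{0}\bigl(\partial_{0,\rho}M(\partial_{0,\rho})\tilde g\bigr)+A\tilde g$: both sides are supported in $\mathbb{R}_{\leq 0}$, they agree on $\mathbb{R}_{<0}$ since $\tau_{t}u=\tilde g$ pointwise there, and no residual $\delta_{0}$-contribution is possible because $\tau_{t}u\in H_{\rho}^{1}(\mathbb{R};H)$ forces $M(\partial_{0,\rho})\tau_{t}u\in H_{\rho}^{1}(\mathbb{R};H)$ (by \prettyref{prop:continuation_fcts_of_partial_0}), and hence $(\partial_{0,\rho}M+A)\tau_{t}u\in H_{\rho}(\mathbb{R};H^{-1}(|A^{\ast}|+1))$, a space free of point-masses.

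\emph{Main obstacle.} The only delicate step is the bookkeeping at $\{0\}$: tracking which decompositions generate a Dirac delta and ensuring all stray $\delta_{0}$-contributions cancel. The regularizing hypothesis on $M$ (\prettyref{lem:regularizing}) and \prettyref{prop:properties_cut_off}(d) are tailor-made to isolate the single jump that produces $\Gamma_{(M,A)}^{\rho}\tilde g$, whereas the regularity $\tau_{t}u\in H_{\rho}^{1}(\mathbb{R};H)$ rules out delta-contributions on the other side; together these make the formal identification rigorous.
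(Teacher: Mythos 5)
Your proposal is correct and follows essentially the same route as the paper's proof: establish $\spt\left(\partial_{0,\rho}M(\partial_{0,\rho})+A\right)\tau_{t}u\subseteq\mathbb{R}_{\leq0}$, write $\tau_{t}u=\tilde{g}+\tilde{v}$, split $\partial_{0,\rho}M(\partial_{0,\rho})\tilde{g}$ via \prettyref{prop:properties_cut_off}~(d) and \prettyref{lem:regularizing}, and identify the $\delta_{0}$-coefficient through \prettyref{lem:Gamma}. The one step to tighten is your closing "no residual $\delta_{0}$" argument: knowing that the left-hand side $\left(\partial_{0,\rho}M(\partial_{0,\rho})+A\right)\tau_{t}u$ is point-mass free only settles the matter once you also note that $Q_{0}\partial_{0,\rho}M(\partial_{0,\rho})\tilde{g}+A\tilde{g}$ carries no mass at $0$ (the explicit $\delta_{0}$-term in the definition of $Q_{0}$ cancels the jump contribution of $\partial_{0,\rho}\chi_{\mathbb{R}_{\leq0}}(\m)M(\partial_{0,\rho})\tilde{g}$), which is exactly what the paper's direct computation with the formula for $Q_{0}$ and the causality of $M(\partial_{0,\rho})$ makes explicit.
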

\begin{proof}
{} We first prove that 
\begin{equation}
\spt(\partial_{0,\rho}M(\partial_{0,\rho})+A)\tau_{t}u\subseteq\mathbb{R}_{\leq0}.\label{eq:spt_tau_u}
\end{equation}
For doing so, let $\varphi\in C_{c}^{\infty}(\mathbb{R}_{>0};D(A^{\ast})).$
Then we have 
\begin{align*}
 & \langle(\partial_{0,\rho}M(\partial_{0,\rho})+A)\tau_{t}u|\varphi\rangle_{H_{\rho}(\mathbb{R};H^{-1}(|A|+1))\times H_{\rho}(\mathbb{R};H^{1}(|A|+1))}\\
 & =\langle\tau_{t}u|\left(\partial_{0,\rho}M(\partial_{0,\rho})+A\right)^{\ast}\varphi\rangle_{H_{\rho}(\mathbb{R};H)}\\
 & =\langle u|\tau_{-t}\e^{2\rho t}\left(\partial_{0,\rho}M(\partial_{0,\rho})+A\right)^{\ast}\varphi\rangle_{H_{\rho}(\mathbb{R};H)}\\
 & =\e^{2\rho t}\langle u|\left(\partial_{0,\rho}M(\partial_{0,\rho})+A\right)^{\ast}\tau_{-t}\varphi\rangle_{H_{\rho}(\mathbb{R};H)}\\
 & =\e^{2\rho t}\langle\left(\partial_{0,\rho}M(\partial_{0,\rho})+A\right)u|\tau_{-t}\varphi\rangle_{H_{\rho}(\mathbb{R};H^{-1}(|A|+1))\times H_{\rho}(\mathbb{R};H^{1}(|A|+1))}\\
 & =0,
\end{align*}
by \prettyref{prop:solution_ivp}, since $\tau_{-t}\varphi\in C_{c}^{\infty}(\mathbb{R}_{>0};D(A^{\ast}))$.
Hence, \prettyref{eq:spt_tau_u} holds and thus, employing the causality
of $M(\partial_{0,\rho})$ we derive 
\begin{align*}
 & (\partial_{0,\rho}M(\partial_{0,\rho})+A)\tau_{t}u\\
 & =\chi_{\mathbb{R}_{\leq0}}(\m)(\partial_{0,\rho}M(\partial_{0,\rho})+A)\tau_{t}u\\
 & =\chi_{\mathbb{R}_{\leq0}}(\m)\partial_{0,\rho}M(\partial_{0,\rho})\tau_{t}u+A\tilde{g}\\
 & =\partial_{0,\rho}\chi_{\mathbb{R}_{\leq0}}(\m)M(\partial_{0,\rho})\tau_{t}u+\delta_{0}\left(M(\partial_{0,\rho})\tau_{t}u\right)(0-)+A\tilde{g}\\
 & =\partial_{0,\rho}\chi_{\mathbb{R}_{\leq0}}(\m)M(\partial_{0,\rho})\tilde{g}+\delta_{0}\left(M(\partial_{0,\rho})\tilde{g}\right)(0-)+A\tilde{g}\\
 & =Q_{0}\partial_{0,\rho}M(\partial_{0,\rho})\tilde{g}+A\tilde{g},
\end{align*}
where we implicitly have shown that $\partial_{0,\rho}M(\partial_{0,\rho})\tilde{g}\in D(Q_{0}).$
Since $\tau_{t}u=\tilde{v}+\tilde{g}$ we infer 
\[
\partial_{0,\rho}M(\partial_{0,\rho})\tau_{t}u+A\tilde{v}=Q_{0}\partial_{0,\rho}M(\partial_{0,\rho})\tilde{g}.
\]
Now, we note that $\partial_{0,\rho}M(\partial_{0,\rho})\tilde{g}\in D(P_{0})$
by \prettyref{lem:regularizing} and hence, \prettyref{prop:properties_cut_off}
(d) together with the equality above yields 
\begin{align*}
\partial_{0,\rho}M(\partial_{0,\rho})\tilde{v} & =\partial_{0,\rho}M(\partial_{0,\rho})(\tau_{t}u-\tilde{g})\\
 & =\partial_{0,\rho}M(\partial_{0,\rho})\tau_{t}u-\left(K_{\rho}\tilde{g}+Q_{0}\partial_{0,\rho}M(\partial_{0,\rho})\tilde{g}\right)-\\
 & \quad-\delta_{0}\left(\left(M(\partial_{0,\rho})\tilde{g}\right)(0+)-\left(M(\partial_{0,\rho})\tilde{g}\right)(0-)\right)\\
 & =-A\tilde{v}-K_{\rho}\tilde{g}+\delta_{0}\left(\left(M(\partial_{0,\rho})\tilde{g}\right)(0-)-\left(M(\partial_{0,\rho})\tilde{g}\right)(0+)\right).
\end{align*}
Hence, 
\[
\left(\partial_{0,\rho}M(\partial_{0,\rho})+A\right)\tilde{v}=-K_{\rho}\tilde{g}+\delta_{0}\left(\left(M(\partial_{0,\rho})\tilde{g}\right)(0-)-\left(M(\partial_{0,\rho})\tilde{g}\right)(0+)\right).
\]
The latter gives $\tilde{g}\in\mathrm{His}_{\rho}(M,A).$ Indeed,
we have that 
\begin{align*}
 & S_{\rho}\left(-K_{\rho}\tilde{g}+\delta_{0}\left(\left(M(\partial_{0,\rho})\tilde{g}\right)(0-)-\left(M(\partial_{0,\rho})\tilde{g}\right)(0+)\right)\right)-\chi_{\mathbb{R}_{\geq0}}\tilde{g}(0-)\\
 & =\tilde{v}-\chi_{\mathbb{R}_{\geq0}}\tilde{g}(0-)\in H_{\rho}^{1}(\mathbb{R};H),
\end{align*}
since $\tilde{g}(0-)=\tilde{v}(0+)$ and $\tilde{v}\in\chi_{\mathbb{R}_{\geq0}}(\m)[H_{\rho}^{1}(\mathbb{R};H)].$
Hence, we also have that 
\[
\tilde{v}=S_{\rho}\left(\delta_{0}\Gamma_{(M,A)}^{\rho}\tilde{g}-K_{\rho}\tilde{g}\right).\tag*{\qedhere}
\]

\end{proof}
We are now able to define a semigroup associated with $M$ and $A$.
\begin{defn*}
Let $\rho>\max\{s_{0}(M,A),b(M)\}$ and $g\in\mathrm{His}_{\rho}(M,A).$
Set 
\[
v\coloneqq S_{\rho}\left(\delta_{0}\Gamma_{(M,A)}^{\rho}g-K_{\rho}g\right)
\]
and $u\coloneqq v+g.$ Then, for $t\geq0$ we define the operator\nomenclature[Z_070]{$T_{(M,A)}$}{the $C_0$-semigroup associated with a material law $M$ and an operator $A$.}
\[
T_{(M,A)}(t):D_{\rho}\subseteq\mathrm{IV}_{\rho}(M,A)\times\mathrm{His}_{\rho}(M,A)\to\mathrm{IV}_{\rho}(M,A)\times\mathrm{His}_{\rho}(M,A)
\]
with 
\[
D_{\rho}\coloneqq\left\{ (g(0-),g)\,;\, g\in\mathrm{His}_{\rho}(M,A)\right\} 
\]
by 
\[
T_{(M,A)}(t)(g(0-),g)\coloneqq\left(v(t+),\chi_{\mathbb{R}_{\leq0}}(\m)\tau_{t}u\right)=\left(\tau_{t}u(0),\chi_{\mathbb{R}_{\leq0}}(\m)\tau_{t}u\right).
\]
Moreover we set 
\begin{align*}
T_{(M,A)}^{(1)}(t)\left(g(0-),g\right) & \coloneqq v(t+)\\
T_{(M,A)}^{(2)}(t)(g(0-),g) & \coloneqq\chi_{\mathbb{R}_{\leq0}}(\m)\tau_{t}u.
\end{align*}
\end{defn*}
\begin{rem}
\label{rem:T^1 und T^2} Of course, both operator families $T_{(M,A)}^{(1)}$
and $T_{(M,A)}^{(2)}$ can be defined in terms of the other one. In
fact, we have that 
\[
T_{(M,A)}^{(1)}(t)(g(0-),g)=\left(T_{(M,A)}^{2}(t)\left(g(0-),g\right)\right)(0-)
\]
and 
\[
\left(T_{(M,A)}^{(2)}(t)\left(g(0-),g\right)\right)(s)=\begin{cases}
g(t+s) & \mbox{ if }s\leq-t,\\
T_{(M,A)}^{(1)}(t+s)\left(g(0-),g\right) & \mbox{ if }-t<s\leq0,
\end{cases}\quad(s\leq0).
\]

\end{rem}
We first show that $T_{(M,A)}$ satisfies the properties of a $C_{0}$-semigroup
with respect to the topology on $H\times H_{\mu}(\mathbb{R}_{\leq0};H)$
for each $\mu\leq\rho.$ 
\begin{prop}
\label{prop:T is sg}Let $\rho>\max\{s_{0}(M,A),b(M)\}$, $t\geq0$
and $T_{(M,A)}(t)$ like above. Then, $T_{(M,A)}(t)$ is linear and
we have that 
\[
T_{(M,A)}(t)(g(0-),g)\to T_{(M,A)}(0)(g(0-),g)=(g(0-),g)
\]
in $H\times H_{\mu}(\mathbb{R}_{\leq0};H)$ for all $g\in\mathrm{His}_{\rho}(M,A)$
and $\mu\leq\rho$ as $t\to0$ and 
\[
T_{(M,A)}(t+s)=T_{(M,A)}(t)T_{(M,A)}(s)
\]
 for each $t,s\geq0.$ In particular, $T_{(M,A)}(\cdot)(g(0-),g):\mathbb{R}_{\geq0}\to H\times H_{\mu}(\mathbb{R}_{\leq0};H)$
is continuous for each $g\in\mathrm{His}_{\rho}(M,A)$ and $\mu\leq\rho.$\end{prop}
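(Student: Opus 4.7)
The plan is to verify the four properties in turn. Linearity is routine: the map $g \mapsto v$ involves the linear operators $S_{\rho}$, $K_{\rho}$, and $\Gamma_{(M,A)}^{\rho}$, and the map $(g(0-),g) \mapsto (v(t+),\chi_{\mathbb{R}_{\leq 0}}(\m)\tau_{t}(v+g))$ is then a composition of linear operations (cut-offs, translations, point evaluations of the continuous representative of a Sobolev function). One should check that $\mathrm{His}_{\rho}(M,A)$ is a linear subspace, which is clear from the definition since $S_{\rho}$, $K_{\rho}$ and the condition of lying in $H_{\rho}^{1}(\mathbb{R};H)$ are all compatible with linear combinations.

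For the identity $T_{(M,A)}(0)(g(0-),g)=(g(0-),g)$, I use that $\spt v \subseteq \mathbb{R}_{\geq 0}$ (shown in \prettyref{prop:solution_ivp}), so $\chi_{\mathbb{R}_{\leq 0}}(\m)u = g$ in $H_{\rho}(\mathbb{R};H)$, while the condition $v - \chi_{\mathbb{R}_{\geq 0}}g(0-) \in H_{\rho}^{1}(\mathbb{R};H) \hookrightarrow C_{\rho,0}(\mathbb{R};H)$ (Sobolev embedding, \prettyref{prop:Sobolev}) gives that $v$ has a continuous representative on $\mathbb{R}_{\geq 0}$ with $v(0+)=g(0-)$. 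For continuity at $0$ in the first coordinate, writing $v = \chi_{\mathbb{R}_{\geq 0}}g(0-) + w$ with $w \in H_{\rho}^{1}(\mathbb{R};H)$ continuous and $w(0)=0$, I get $v(t+) = g(0-) + w(t) \to g(0-)$ as $t \to 0+$. For the history coordinate, I use that $u \in H_{\rho}(\mathbb{R};H)$ and translations are strongly continuous on $H_{\rho}(\mathbb{R};H)$, so $\tau_{t}u \to u$ in $H_{\rho}(\mathbb{R};H)$. Applying the bounded cut-off operator $\chi_{\mathbb{R}_{\leq 0}}(\m)$ yields $\chi_{\mathbb{R}_{\leq 0}}(\m)\tau_{t}u \to g$ in $H_{\rho}(\mathbb{R}_{\leq 0};H)$. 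For $\mu \leq \rho$ and $t \leq 0$ one has $\e^{-2\mu t} \leq \e^{-2\rho t}$, so the inclusion $H_{\rho}(\mathbb{R}_{\leq 0};H) \hookrightarrow H_{\mu}(\mathbb{R}_{\leq 0};H)$ is continuous with $|\cdot|_{\mu} \leq |\cdot|_{\rho}$, and convergence transfers.

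The semigroup property is the heart of the argument, and here \prettyref{prop:space_invariant} does all the work. Given $(g(0-),g)$ and $s \geq 0$, set $\tilde{g} := \chi_{\mathbb{R}_{\leq 0}}(\m)\tau_{s}u$ and $\tilde{v} := \chi_{\mathbb{R}_{\geq 0}}(\m)\tau_{s}u$. \prettyref{prop:space_invariant} asserts $\tilde{g} \in \mathrm{His}_{\rho}(M,A)$ and $\tilde{v} = S_{\rho}(\delta_{0}\Gamma_{(M,A)}^{\rho}\tilde{g} - K_{\rho}\tilde{g})$, so $\tilde{u} := \tilde{v} + \tilde{g} = \tau_{s}u$ is precisely the full solution associated with the history $\tilde{g}$. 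Thus
\[
T_{(M,A)}(t)T_{(M,A)}(s)(g(0-),g) = T_{(M,A)}(t)(\tilde{g}(0-),\tilde{g}) = (\tau_{t}\tilde{u}(0),\chi_{\mathbb{R}_{\leq 0}}(\m)\tau_{t}\tilde{u}) = (\tau_{t+s}u(0),\chi_{\mathbb{R}_{\leq 0}}(\m)\tau_{t+s}u),
\]
which equals $T_{(M,A)}(t+s)(g(0-),g)$.

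Finally, the continuity statement on all of $\mathbb{R}_{\geq 0}$ follows by combining the semigroup property with continuity at $0$: for $t_{0} \geq 0$ and $t \to t_{0}$ with $t \geq t_{0}$, write $T_{(M,A)}(t) = T_{(M,A)}(t-t_{0})T_{(M,A)}(t_{0})$ and apply the right-continuity at $0$ to the datum $T_{(M,A)}(t_{0})(g(0-),g)$; for $t \to t_{0}$ with $t \leq t_{0}$ one writes $T_{(M,A)}(t_{0}) = T_{(M,A)}(t_{0}-t)T_{(M,A)}(t)$ and uses that the full solution $u_{t} := \tau_{t}u$ depends continuously on $t$ in $H_{\rho}(\mathbb{R};H)$ together with the fact that $T_{(M,A)}^{(1)}(t_{0}-t)$ acting on a translate of $u$ returns $u(t_{0})$ in the limit. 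I expect the main obstacle to be bookkeeping in the semigroup identity: one has to be sure that \prettyref{prop:space_invariant} genuinely identifies the decomposition $\tau_{s}u = \tilde{v}+\tilde{g}$ as coming from the history $\tilde{g}$ in exactly the way the definition of $T_{(M,A)}$ demands, so that the two prescriptions for $T_{(M,A)}(t)T_{(M,A)}(s)$ and $T_{(M,A)}(t+s)$ operate on the same underlying full solution $\tau_{t+s}u$.
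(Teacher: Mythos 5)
Your proposal is correct and follows essentially the same route as the paper: linearity is routine, the identity $T_{(M,A)}(0)=\mathrm{id}$ and the convergence as $t\to0$ come from \prettyref{prop:solution_ivp}, the Sobolev embedding and the strong continuity of translations on $H_{\rho}(\mathbb{R};H)$ combined with the embedding $H_{\rho}(\mathbb{R}_{\leq0};H)\hookrightarrow H_{\mu}(\mathbb{R}_{\leq0};H)$, and the semigroup law is exactly the content of \prettyref{prop:space_invariant} applied to $\tilde{g}=\chi_{\mathbb{R}_{\leq0}}(\m)\tau_{s}u$. Your additional check that $\tilde{g}(0-)=\tau_{s}u(0)$ and your explicit two-sided argument for continuity on all of $\mathbb{R}_{\geq0}$ (which the paper leaves implicit behind "in particular") are welcome but do not change the substance.
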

\begin{proof}
The linearity of $T_{(M,A)}(t)$ is obvious since all operators involved
are linear. Moreover, we have that 
\[
T_{(M,A)}(0)(g(0-),g)=\left(v(0+),\chi_{\mathbb{R}_{\leq0}}(\m)u\right)=(g(0-),g)
\]
for $g\in\mathrm{His}_{\rho}(M,A)$ by \prettyref{prop:solution_ivp}
and 
\begin{align*}
 & |T_{(M,A)}(t)(g(0-),g)-(g(0-),g)|_{H\times H_{\rho}(\mathbb{R}_{\leq0};H)}^{2}\\
 & =|v(t+)-g(0-)|_{H}^{2}+|\chi_{\mathbb{R}_{\leq0}}(\m)\tau_{t}u-\chi_{\mathbb{R}_{\leq0}}(\m)u|_{H_{\rho}(\mathbb{R};H)},\\
 & \leq|v(t+)-g(0-)|_{H}^{2}+|\tau_{t}u-u|_{H_{\rho}(\mathbb{R};H)}\to0\quad(t\to0)
\end{align*}
by right continuity of $v$, $v(0+)=g(0-)$ and the strong continuity
of $t\mapsto\tau_{t}\in L(H_{\rho}(\mathbb{R};H)).$ Moreover, since
$H_{\rho}(\mathbb{R}_{\leq0};H)\hookrightarrow H_{\mu}(\mathbb{R}_{\leq0};H)$
for $\mu\leq\rho,$ the convergence also holds in $H\times H_{\mu}(\mathbb{R}_{\leq0};H).$
Let now $t,s\geq0$ and $g\in\mathrm{His}_{\rho}(M,A)$ and $u$ like
above. Then we have by \prettyref{prop:space_invariant} 
\begin{align*}
\chi_{\mathbb{R}_{\geq0}}(\m)\tau_{s}u & =S_{\rho}\left(\delta_{0}\Gamma_{(M,A)}^{\rho}\chi_{\mathbb{R}_{\leq0}}(\m)\tau_{s}u-K_{\rho}\chi_{\mathbb{R}_{\leq0}}(\m)\tau_{s}u\right)
\end{align*}
and thus, 
\[
T_{(M,A)}(t)T_{(M,A)}(s)(g(0-),g)=\left(\tau_{t+s}u(0),\chi_{\mathbb{R}_{\leq0}}(\m)\tau_{t+s}u\right)=T_{(M,A)}(t+s)g.\tag*{\qedhere}
\]

\end{proof}
The question which arises now is: can we extend the so-defined $T_{(M,A)}$
to a $C_{0}$-semigroup on 
\[
X_{\rho}^{\mu}\coloneqq\overline{D}_{\rho}^{H\times H_{\mu}(\mathbb{R}_{\leq0};H)}
\]
\nomenclature[B_180]{$X_\rho^\mu$}{the closure of $D_\rho$ in $H\times H_\mu(\mathbb{R}_{\leq 0};H)$, the natural domain of $T_{(M,A)}$.}\nomenclature[B_170]{$D_\rho$}{the space $\{(g(0-),g) \, ;\, g\in \mathrm{His}_{\rho}(M,A)\}$.}for
some $\mu\leq\rho$? Before we can answer this question, we need the
following pre-requisites.
\begin{lem}
\label{lem:r_g}Let $\rho>\max\{s_{0}(M,A),b(M)\}$ and $g\in\mathrm{His}_{\rho}(M,A)$.
We define 
\[
r_{g}:\mathbb{R}_{>\rho}\to H
\]
by 
\[
r_{g}(\lambda)\coloneqq\left(\lambda M(\lambda)+A\right)^{-1}\left(\frac{1}{\sqrt{2\pi}}\Gamma_{(M,A)}^{\rho}g-\left(\mathcal{L}_{\lambda}K_{\rho}g\right)(0)\right)\quad(\lambda>\rho).
\]
Then $r_{g}\in C^{\infty}(\mathbb{R}_{>g};H).$ More precisely 
\[
r_{g}(\lambda)=\mathcal{L}_{\lambda}\left(T_{(M,A)}^{(1)}(\cdot)(g(0-),g)\right)(0)\quad(\lambda>\rho).
\]
 \end{lem}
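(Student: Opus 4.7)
The proof reduces to verifying the integral formula; the $C^\infty$ regularity will then follow. Recall $v := S_\rho(\delta_0 \Gamma^\rho_{(M,A)} g - K_\rho g)$, and that, by the proof of Proposition \ref{prop:solution_ivp}, $\spt v \subseteq \mathbb R_{\geq 0}$ and $v - \chi_{\mathbb R_{\geq 0}} g(0-) \in H^1_\rho(\mathbb R;H)$; in particular $v$ has a continuous representative satisfying $T^{(1)}_{(M,A)}(t)(g(0-),g) = v(t)$ for $t \geq 0$, and $v \in H^1_\mu(\mathbb R;H)$ for every $\mu \geq \rho$. For such $\mu$, the Fourier-Laplace transform has the pointwise form $(\mathcal L_\mu v)(t) = \tfrac{1}{\sqrt{2\pi}}\int_0^\infty v(s)e^{-(\i t+\mu)s}\,\dd s$, a continuous $H$-valued function of $t$; in particular $(\mathcal L_\lambda v)(0) = \widehat v(\lambda)$, the ordinary Laplace integral of $v$, for every $\lambda > \rho$. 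It therefore suffices to show $\widehat v(\lambda) = r_g(\lambda)$.

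The core step is to transport the identity
\[
(\partial_{0,\rho}M(\partial_{0,\rho}) + A) v = \delta_0 \Gamma^\rho_{(M,A)} g - K_\rho g \quad\text{in } H^{-1}_\rho(\mathbb R;H)
\]
to $H^{-1}_\lambda(\mathbb R;H)$ for arbitrary $\lambda > \rho$. Both terms on the right are supported in $\mathbb R_{\geq 0}$: $\delta_0$ has a canonical representative in every $H^{-1}_\mu$ ($\mu>0$), while unfolding $P_0$ gives
\[
K_\rho g = \partial_{0,\rho}\bigl(\chi_{\mathbb R_{\geq 0}}(\m) M(\partial_{0,\rho}) g\bigr) - \delta_0\bigl(M(\partial_{0,\rho}) g\bigr)(0+),
\]
in which the function in parentheses lies in $H_\rho \cap H_\lambda$, producing a natural $H^{-1}_\lambda$-representative $\tilde f_\lambda \sim \delta_0\Gamma^\rho_{(M,A)} g - K_\rho g$ via Lemma \ref{lem:identity on H^-1}. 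Proposition \ref{prop:independence of rho H^-1} yields $S_\lambda \tilde f_\lambda \sim v$, and since both elements lie in $H_\rho \cap H_\lambda$, Lemma \ref{lem:identity on H^-1}(b) forces $v = S_\lambda \tilde f_\lambda$. Applying $\mathcal L_\lambda$ to the transported equation, using the spectral representation of $\partial_{0,\lambda}$ from Proposition \ref{prop:spectral_repr_partial_0} and the computation $\mathcal L_\lambda \delta_0 \equiv \tfrac{1}{\sqrt{2\pi}}$ (via $\delta_0 = \partial_{0,\lambda}\chi_{\mathbb R_{\geq 0}}$), produces the pointwise identity
\[
\bigl((\i t+\lambda) M(\i t+\lambda) + A\bigr)(\mathcal L_\lambda v)(t) = \tfrac{1}{\sqrt{2\pi}}\Gamma^\rho_{(M,A)} g - (\mathcal L_\lambda K_\rho g)(t).
\]
All three terms are continuous $H$-valued functions of $t$ (the $A$-term by closedness of $A$ applied to the continuous remainder), so setting $t = 0$ and inverting the boundedly invertible operator $\lambda M(\lambda) + A$ (which is invertible since $\lambda > s_0(M,A)$) delivers $\widehat v(\lambda) = r_g(\lambda)$.

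The main delicacy is precisely the preceding pointwise evaluation in $H^{-1}_\lambda$, where $\mathcal L_\lambda$ only delivers equivalence classes of $L_2$-functions a priori. It is resolved by the explicit $\partial_{0,\lambda}$-decomposition of $\tilde f_\lambda$: after factoring out the derivative, the remaining Laplace integrands are honest $H$-valued functions supported on $\mathbb R_{\geq 0}$, whose transforms are continuous in $t$ (indeed analytic in the complex variable $\i t + \lambda$). Smoothness of $r_g$ is now immediate: $\lambda \mapsto (\lambda M(\lambda) + A)^{-1}$ is analytic on $\mathbb C_{\Re > s_0(M,A)}$ by well-posedness, and $\lambda \mapsto (\mathcal L_\lambda K_\rho g)(0)$ is analytic on $\mathbb C_{\Re > \rho}$ as the Laplace integral of an element supported on $\mathbb R_{\geq 0}$ with appropriate exponential bound. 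Hence $r_g$ is analytic, and in particular $C^\infty$, on $\mathbb R_{>\rho}$.
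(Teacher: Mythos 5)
Your argument is correct and follows essentially the same route as the paper: transport the defining identity for $v=S_{\rho}\left(\delta_{0}\Gamma_{(M,A)}^{\rho}g-K_{\rho}g\right)$ to the parameter $\lambda$ via \prettyref{prop:independence of rho H^-1}, apply $\mathcal{L}_{\lambda}$ and evaluate at $t=0$. One intermediate claim is false but harmless: $v$ jumps from $0$ to $g(0-)$ at the origin, so $v\notin H_{\mu}^{1}(\mathbb{R};H)$ in general; what you actually need (and have) is $v\in H_{\rho}(\mathbb{R}_{\geq0};H)$, which by Cauchy--Schwarz makes $\e^{-\lambda\cdot}v$ integrable for $\lambda>\rho$ and gives the pointwise Laplace integral. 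For the smoothness the paper argues more directly: $v\in H_{\rho}(\mathbb{R}_{\geq0};H)$ gives $\hat{v}\in\mathcal{H}^{2}(\mathbb{C}_{\Re>\rho};H)$ by \prettyref{cor:Paley_Wiener_unitary}, so $r_{g}=\hat{v}|_{\mathbb{R}_{>\rho}}$ is automatically $C^{\infty}$ once the identity is established, which spares the separate analyticity discussion of the two terms on the right-hand side.
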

\begin{proof}
We set 
\[
v\coloneqq S_{\rho}\left(\delta_{0}\Gamma_{(M,A)}^{\rho}g-K_{\rho}g\right)=T_{(M,A)}^{(1)}(\cdot)(g(0-),g).
\]
By definition of $\mathrm{His}_{\rho}(M,A)$ we know that $v\in H_{\rho}(\mathbb{R}_{\geq0};H).$
Thus, by \prettyref{cor:Paley_Wiener_unitary} we have that $\hat{v}\in\mathcal{H}^{2}(\mathbb{C}_{\Re>\rho};H)$
and so, in particular, $\hat{v}|_{\mathbb{R}_{>\rho}}\in C^{\infty}(\mathbb{R}_{>\rho};H)$.
For $\lambda>\rho$ we compute 
\begin{align*}
\hat{v}(\lambda) & =\left(\mathcal{L}_{\lambda}v\right)(0)\\
 & =\left(\mathcal{L}_{\lambda}S_{\rho}\left(\delta_{0}\Gamma_{(M,A)}^{\rho}g-K_{\rho}g\right)\right)(0).
\end{align*}
Using \prettyref{prop:independence of rho H^-1} we get that 
\[
S_{\rho}\left(\delta_{0}\Gamma_{(M,A)}^{\rho}g-K_{\rho}g\right)\sim S_{\lambda}\left(\delta_{0}\Gamma_{(M,A)}^{\rho}g-K_{\rho}g\right)
\]
and thus, 
\begin{align*}
\hat{v}(\lambda) & =(\lambda M(\lambda)+A)^{-1}\left(\mathcal{L}_{\lambda}\left(\delta_{0}\Gamma_{(M,A)}^{\rho}g-K_{\rho}g\right)\right)(0)\\
 & =(\lambda M(\lambda)+A)^{-1}\left(\frac{1}{\sqrt{2\pi}}\Gamma_{(M,A)}^{\rho}g-\left(\mathcal{L}_{\lambda}K_{\rho}g\right)(0)\right),
\end{align*}
which yields the assertion. \end{proof}
\begin{lem}
\label{lem:boundedness T1 implies T2}Let $\rho>\max\{s_{0}(M,A),b(M)\}$.
Assume that for some $\mu\leq\rho$ and $\omega\geq\rho$ we have
that 
\[
T_{(M,A)}^{(1)}:D_{\rho}\subseteq X_{\rho}^{\mu}\to C_{\omega}(\mathbb{R}_{\geq0};H)
\]
is bounded. Then for each $\varepsilon>0$ the operator 
\[
T_{(M,A)}^{(2)}:D_{\rho}\subseteq X_{\rho}^{\mu}\to C_{\omega+\varepsilon}(\mathbb{R}_{\geq0};H_{\mu}(\mathbb{R}_{\leq0};H))
\]
is bounded.\end{lem}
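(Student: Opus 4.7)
The plan is to exploit the piecewise formula from Remark \ref{rem:T^1 und T^2}, namely
\[
\bigl(T_{(M,A)}^{(2)}(t)(g(0-),g)\bigr)(s)=\begin{cases}
g(t+s), & s\leq -t,\\
T_{(M,A)}^{(1)}(t+s)(g(0-),g), & -t<s\leq 0,
\end{cases}
\]
to split the weighted $L_2$-norm of $T_{(M,A)}^{(2)}(t)(g(0-),g)$ on $\mathbb{R}_{\leq 0}$ into a tail coming from the old history and a head coming from new $T^{(1)}$-values. Continuity of $t\mapsto T_{(M,A)}^{(2)}(t)(g(0-),g)$ into $H_{\mu}(\mathbb{R}_{\leq 0};H)$ is already supplied by \prettyref{prop:T is sg} (which gives it even in the stronger topology on $H_\rho(\mathbb{R}_{\leq 0};H)$), so only the exponential bound remains.

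Concretely, for $g\in\mathrm{His}_\rho(M,A)$ and $t\geq 0$, I would write
\[
|T_{(M,A)}^{(2)}(t)(g(0-),g)|_{H_{\mu}(\mathbb{R}_{\leq 0};H)}^{2}=I_{1}(t)+I_{2}(t),
\]
where the substitution $r=t+s$ gives
\[
I_{1}(t)=\int_{-\infty}^{-t}|g(t+s)|_{H}^{2}\e^{-2\mu s}\,\dd s=\e^{2\mu t}|g|_{H_{\mu}(\mathbb{R}_{\leq 0};H)}^{2}
\]
and
\[
I_{2}(t)=\int_{-t}^{0}|T_{(M,A)}^{(1)}(t+s)(g(0-),g)|_{H}^{2}\e^{-2\mu s}\,\dd s=\e^{2\mu t}\int_{0}^{t}|T_{(M,A)}^{(1)}(r)(g(0-),g)|_{H}^{2}\e^{-2\mu r}\,\dd r.
\]
Using the assumed boundedness, pick $C>0$ with $|T_{(M,A)}^{(1)}(r)(g(0-),g)|_{H}\leq C\e^{\omega r}|(g(0-),g)|_{X_{\rho}^{\mu}}$ for all $r\geq 0$; then
\[
I_{2}(t)\leq C^{2}\e^{2\mu t}|(g(0-),g)|_{X_{\rho}^{\mu}}^{2}\int_{0}^{t}\e^{2(\omega-\mu)r}\,\dd r.
\]

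Now I would multiply by $\e^{-2(\omega+\varepsilon)t}$ and check that each term is uniformly bounded in $t\geq 0$. The first term becomes $\e^{-2(\omega-\mu+\varepsilon)t}|g|_{H_{\mu}(\mathbb{R}_{\leq 0};H)}^{2}$, which is bounded by $|(g(0-),g)|_{X_{\rho}^{\mu}}^{2}$ since $\omega\geq\rho\geq\mu$ and $\varepsilon>0$. For the second term, if $\omega>\mu$ the integral is at most $\e^{2(\omega-\mu)t}/(2(\omega-\mu))$, giving a factor $C^{2}\e^{-2\varepsilon t}/(2(\omega-\mu))$; if $\omega=\mu$ the integral equals $t$, giving $C^{2}t\e^{-2\varepsilon t}$. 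Both are bounded on $\mathbb{R}_{\geq 0}$ for any $\varepsilon>0$. Combining the estimates yields a constant $\tilde{C}_{\varepsilon}>0$ with
\[
\sup_{t\geq 0}\e^{-(\omega+\varepsilon)t}|T_{(M,A)}^{(2)}(t)(g(0-),g)|_{H_{\mu}(\mathbb{R}_{\leq 0};H)}\leq\tilde{C}_{\varepsilon}|(g(0-),g)|_{X_{\rho}^{\mu}},
\]
which is the asserted boundedness.

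No genuine obstacle is expected: the argument is a direct change of variables paired with the hypothesis on $T^{(1)}$. The only place that needs mild care is the borderline case $\omega=\mu$, where the integral in $I_{2}$ produces a linear factor $t$, but this is harmlessly absorbed by the strict loss $\e^{-2\varepsilon t}$ coming from the choice $\varepsilon>0$, which is precisely why $\varepsilon$ appears in the statement.
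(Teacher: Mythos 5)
Your proof is correct and follows essentially the same route as the paper: the same piecewise decomposition from Remark \ref{rem:T^1 und T^2}, the same change of variables pulling out the factor $\e^{2\mu t}$, and the same absorption of the remaining growth into $\e^{2(\omega+\varepsilon)t}$ (the paper simply bounds $\intop_{0}^{t}\e^{2(\omega-\mu)s}\,\dd s\leq t\e^{2(\omega-\mu)t}$ uniformly instead of splitting the cases $\omega>\mu$ and $\omega=\mu$).
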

\begin{proof}
Let $\varepsilon>0$ and $g\in\mathrm{His}_{\rho}(M,A)$. Then by
\prettyref{rem:T^1 und T^2} we have that 
\begin{align*}
 & \left|T_{(M,A)}^{(2)}(t)(g(0-),g)\right|_{H_{\mu}(\mathbb{R}_{\leq0};H)}^{2}\\
 & =\intop_{-\infty}^{-t}|g(t+s)|_{H}^{2}\e^{-2\mu s}\mbox{ d}s+\intop_{-t}^{0}|T_{(M,A)}^{(1)}(t+s)(g(0-),g)|_{H}^{2}\e^{-2\mu s}\mbox{ d}s\\
 & =\left(\:\intop_{-\infty}^{0}|g(s)|_{H}^{2}\e^{-2\mu s}\mbox{ d}s\ +\intop_{0}^{t}|T_{(M,A)}^{(1)}(s)(g(0-),g)|_{H}^{2}\e^{-2\mu s}\mbox{ d}s\right)\e^{2\mu t}\\
 & \leq\left(|g|_{H_{\mu}(\mathbb{R}_{\leq0};H)}^{2}+C|(g(0-),g)|_{X_{\rho}^{\mu}}^{2}\intop_{0}^{t}\e^{2(\omega-\mu)s}\mbox{ d}s\right)\e^{2\mu t}\\
 & \leq\e^{2\mu t}|g|_{H_{\mu}(\mathbb{R}_{\leq0};H)}^{2}+C|(g(0-),g)|_{X_{\rho}^{\mu}}^{2}t\e^{2\omega t}\\
 & \leq\tilde{C}\e^{2\left(\omega+\varepsilon\right)t}|(g(0-),g)|_{X_{\rho}^{\mu}}^{2}
\end{align*}
for all $t\geq0$, where $\tilde{C}\coloneqq1+C\frac{1}{2\varepsilon\e}.$
Using that $T_{(M,A)}^{(2)}(\cdot)\left(g(0-),g\right):\mathbb{R}_{\geq0}\to H_{\mu}(\mathbb{R}_{\leq0};H)$
is continuous by \prettyref{prop:T is sg}, we have shown that 
\[
T_{(M,A)}^{(2)}:D_{\rho}\subseteq X_{\rho}^{\mu}\to C_{\omega+\varepsilon}(\mathbb{R}_{\geq0};H_{\mu}(\mathbb{R}_{\leq0};H))
\]
is bounded.
\end{proof}
We can now prove a Hille-Yosida type result for the semigroup $T_{(M,A)}.$ 
\begin{thm}
\label{thm:sg} Let $\rho>\max\{s_{0}(M,A),b(M)\}$ and consider for
$t\geq0$ the mapping $T_{(M,A)}(t):D_{\rho}\subseteq H\times H_{\rho}(\mathbb{R}_{\leq0};H)\to H\times H_{\rho}(\mathbb{R}_{\leq0};H).$
For $g\in\mathrm{His}_{\rho}(M,A)$ we define
\[
r_{g}:\mathbb{R}_{>\rho}\to H
\]
by 
\[
r_{g}(\lambda)\coloneqq\left(\lambda M(\lambda)+A\right)^{-1}\left(\frac{1}{\sqrt{2\pi}}\Gamma_{(M,A)}^{\rho}g-\left(\mathcal{L}_{\lambda}K_{\rho}g\right)(0)\right)\quad(\lambda>\rho).
\]
Then $T_{(M,A)}$ can be extended to a $C_{0}$-semigroup on $X_{\rho}^{\mu}\coloneqq\overline{D_{\rho}}^{H\times H_{\mu}(\mathbb{R}_{\leq0};H)}$
for some $\mu\leq\rho$ if and only if there is $M\geq1,\omega\geq\rho$
such that 
\[
\frac{1}{n!}|r_{g}^{(n)}(\lambda)|\leq\frac{M}{\left(\lambda-\omega\right)^{n+1}}\left(|g(0-)|+|g|_{H_{\mu}(\mathbb{R}_{\leq0};H)}\right)\quad(n\in\mathbb{N},\lambda>\omega).
\]
In this case we have that
\begin{align*}
T_{(M,A)}^{(1)}:X_{\rho}^{\mu} & \to C_{\omega}(\mathbb{R}_{\geq0};H)\\
T_{(M,A)}^{(2)}:X_{\rho}^{\mu} & \to C_{\omega+\varepsilon}(\mathbb{R}_{\geq0};H_{\mu}(\mathbb{R}_{\leq0};H))
\end{align*}
are bounded for each $\varepsilon>0$. \end{thm}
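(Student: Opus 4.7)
My plan is to prove both directions by relating the function $r_g$ to the Laplace transform of $T_{(M,A)}^{(1)}(\cdot)(g(0-),g)$ (which is the content of Lemma \ref{lem:r_g}), and then to invoke the Widder--Arendt theorem (which holds in the Hilbert space setting since Hilbert spaces have the Radon--Nikodym property) to bridge between the Widder estimates and exponential boundedness of the orbit.

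For the necessity direction, assume $T_{(M,A)}$ extends to a $C_{0}$-semigroup on $X_{\rho}^{\mu}$. By the uniform boundedness principle applied to $t\mapsto T_{(M,A)}(t)$ on bounded time intervals, together with the standard fact that $C_{0}$-semigroups are exponentially bounded, there exist $M\geq 1$ and $\omega\geq\rho$ with $\|T_{(M,A)}(t)\|_{L(X_{\rho}^{\mu})}\leq Me^{\omega t}$. In particular, projecting on the first component gives $|T_{(M,A)}^{(1)}(t)(g(0-),g)|_{H}\leq Me^{\omega t}(|g(0-)|_{H}+|g|_{H_{\mu}(\mathbb{R}_{\leq 0};H)})$ for all $(g(0-),g)\in D_{\rho}$. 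By Lemma~\ref{lem:r_g}, $r_{g}(\lambda)=\frac{1}{\sqrt{2\pi}}\int_{0}^{\infty}e^{-\lambda t}T_{(M,A)}^{(1)}(t)(g(0-),g)\,\dd t$ for $\lambda>\rho$, and by continuous differentiation under the integral sign, $r_{g}^{(n)}(\lambda)=\frac{1}{\sqrt{2\pi}}\int_{0}^{\infty}(-t)^{n}e^{-\lambda t}T_{(M,A)}^{(1)}(t)(g(0-),g)\,\dd t$. Estimating directly and using $\int_{0}^{\infty}t^{n}e^{-(\lambda-\omega)t}\dd t=n!/(\lambda-\omega)^{n+1}$ yields the Widder estimates.

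For sufficiency, fix $(g(0-),g)\in D_{\rho}$. Lemma~\ref{lem:r_g} shows $r_{g}\in C^{\infty}(\mathbb{R}_{>\rho};H)$; the hypothesised Widder estimates together with the fact that $H$ enjoys the Radon--Nikodym property let me invoke the Widder--Arendt theorem (see the appendix) to obtain $f_{g}\in L_{\infty,\mathrm{loc}}(\mathbb{R}_{\geq 0};H)$ with $|f_{g}(t)|_{H}\leq Me^{\omega t}(|g(0-)|_{H}+|g|_{H_{\mu}})$ for almost every $t\geq 0$, and whose scalar Laplace transform agrees with $\sqrt{2\pi}r_{g}$ on $(\omega,\infty)$. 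On the other hand, by Lemma~\ref{lem:r_g}, the continuous function $t\mapsto T_{(M,A)}^{(1)}(t)(g(0-),g)$ (continuity from Proposition~\ref{prop:T is sg} applied to the first coordinate, or equivalently from the Sobolev embedding Proposition~\ref{prop:Sobolev}) has the same Laplace transform. Uniqueness of the Laplace transform then forces $f_{g}=T_{(M,A)}^{(1)}(\cdot)(g(0-),g)$ almost everywhere, and by continuity of the latter, the exponential bound transfers to the orbit. This proves that
\[
T_{(M,A)}^{(1)}\colon D_{\rho}\subseteq X_{\rho}^{\mu}\to C_{\omega}(\mathbb{R}_{\geq 0};H)
\]
is bounded with norm $\leq M$. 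An application of Lemma~\ref{lem:boundedness T1 implies T2} then yields, for every $\varepsilon>0$, the boundedness of $T_{(M,A)}^{(2)}$ into $C_{\omega+\varepsilon}(\mathbb{R}_{\geq 0};H_{\mu}(\mathbb{R}_{\leq 0};H))$. Combining the two coordinates, the operator $T_{(M,A)}(t)$ satisfies a uniform bound $\|T_{(M,A)}(t)\|_{L(X_{\rho}^{\mu})}\leq\tilde{M}e^{(\omega+\varepsilon)t}$ on $D_{\rho}$, hence extends by density and continuity to a bounded operator on $X_{\rho}^{\mu}$ for every $t\geq 0$.

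The last step, which I expect to be the main technical subtlety, is to verify that the extended family inherits the semigroup and strong continuity properties. Proposition~\ref{prop:space_invariant} guarantees that $T_{(M,A)}(t)$ maps $D_{\rho}$ into itself, and Proposition~\ref{prop:T is sg} gives the semigroup law $T_{(M,A)}(t+s)=T_{(M,A)}(t)T_{(M,A)}(s)$ on $D_{\rho}$. Using the uniform exponential bound on each compact time interval, a standard $\varepsilon/3$-argument (approximating an arbitrary $(x,g)\in X_{\rho}^{\mu}$ by elements of $D_{\rho}$) extends the semigroup identity to all of $X_{\rho}^{\mu}$ and yields strong continuity of $t\mapsto T_{(M,A)}(t)(x,g)$ from the corresponding strong continuity on the dense subspace $D_{\rho}$ established in Proposition~\ref{prop:T is sg}. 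The final identities $T_{(M,A)}^{(1)}\colon X_{\rho}^{\mu}\to C_{\omega}(\mathbb{R}_{\geq 0};H)$ and $T_{(M,A)}^{(2)}\colon X_{\rho}^{\mu}\to C_{\omega+\varepsilon}(\mathbb{R}_{\geq 0};H_{\mu}(\mathbb{R}_{\leq 0};H))$ then follow by continuous extension of the bounds already established on $D_{\rho}$.
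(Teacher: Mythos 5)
Your proposal is correct and follows essentially the same route as the paper: identify $r_{g}$ with the Laplace transform of the orbit via Lemma~\ref{lem:r_g}, differentiate under the integral for necessity, apply the Widder--Arendt theorem (after shifting by $\omega$) plus injectivity of the Laplace transform for sufficiency, and pass from $T_{(M,A)}^{(1)}$ to $T_{(M,A)}^{(2)}$ via Lemma~\ref{lem:boundedness T1 implies T2}. Your extra care in checking that the semigroup law and strong continuity survive the density extension is a point the paper leaves implicit, but it is the standard argument and does not change the substance.
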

\begin{proof}
First assume that we can extend $T_{(M,A)}$ to a $C_{0}$-semigroup
on $X_{\rho}^{\mu}$ for some $\mu\leq\rho.$ Then there exists $M\geq1$
and $\omega\in\mathbb{R}$ such that 
\[
|T_{(M,A)}(t)(g(0-),g)|_{H\times H_{\mu}(\mathbb{R}_{\leq0};H)}\leq M\e^{\omega t}\left|(g(0-),g)\right|_{H\times H_{\mu}(\mathbb{R}_{\leq0};H)}\quad(t\geq0)
\]
for all $g\in\mathrm{His}_{\rho}(M,A).$ In particular, we have that
\[
v\coloneqq T_{(M,A)}^{(1)}(\cdot)(g(0-),g)
\]
satisfies 
\[
|v(t)|\leq M\e^{\omega t}|(g(0-),g)|_{H\times H_{\mu}(\mathbb{R}_{\leq0};H)}.
\]
Using \prettyref{lem:r_g} we derive 
\begin{align*}
r_{g}(\lambda) & =\left(\mathcal{L}_{\lambda}v\right)(0)\\
 & =\frac{1}{\sqrt{2\pi}}\intop_{\mathbb{R}}\e^{-\lambda s}v(s)\mbox{ d}s\\
 & =\frac{1}{\sqrt{2\pi}}\intop_{0}^{\infty}\e^{-\lambda s}v(s)\mbox{ d}s
\end{align*}
for each $\lambda>\max\{\omega,\rho\}$ and consequently 
\[
r_{g}^{(n)}(\lambda)=\frac{1}{\sqrt{2\pi}}\intop_{0}^{\infty}\e^{-\lambda s}\left(-s\right)^{n}v(s)\mbox{ d}s.
\]
Hence, we have that 
\begin{align*}
|r_{g}^{(n)}(\lambda)| & \leq\frac{1}{\sqrt{2\pi}}M|(g(0-),g)|_{H\times H_{\mu}(\mathbb{R}_{\leq0};H)}\intop_{0}^{\infty}\e^{\left(\omega-\lambda\right)s}s^{n}\mbox{ d}s\\
 & =\frac{1}{\sqrt{2\pi}}M|(g(0-),g)|_{H\times H_{\mu}(\mathbb{R}_{\leq0};H)}n!\frac{1}{(\lambda-\omega)^{n}}\intop_{0}^{\infty}\e^{(\omega-\lambda)s}\mbox{ d}s\\
 & =\frac{1}{\sqrt{2\pi}}M|g(0-),g)|_{H\times H_{\mu}(\mathbb{R}_{\leq0};H)}\frac{n!}{(\lambda-\omega)^{n+1}},
\end{align*}
which gives the desired estimate. Assume now that there is $M\geq1,\omega\geq\rho$
such that 
\[
\frac{1}{n!}|r_{g}^{(n)}(\lambda)|\leq\frac{M}{(\lambda-\omega)^{n+1}}|(g(0-),g)|_{H\times H_{\mu}(\mathbb{R}_{\leq0};H)}\quad(\lambda>\omega).
\]
Then, the function $\tilde{r}:\mathbb{R}_{>0}\to H$ with $\tilde{r}(\lambda)=r(\lambda+\omega)$
satisfies the conditions of the Widder-Arendt \prettyref{thm:Widder-Arendt}
and thus, there is $f\in L_{\infty}(\mathbb{R}_{\geq0};H)$ with $|f|_{L_{\infty}}\leq M|(g(0-),g)|_{H\times H_{\mu}(\mathbb{R}_{\leq0};H)}$
such that 
\[
\tilde{r}_{g}(\lambda)=\intop_{0}^{\infty}\e^{-\lambda s}f(s)\mbox{ d}s\quad(\lambda>0).
\]
Hence, we have 
\[
\hat{v}(\lambda)=r_{g}(\lambda)=\tilde{r}_{g}(\lambda-\omega)=\intop_{0}^{\infty}\e^{-\left(\lambda-\omega\right)s}f(s)\mbox{ d}s\quad(\lambda>\omega)
\]
and by analytic continuation, we infer that 
\[
\hat{v}(z)=\intop_{0}^{\infty}\e^{-zs}\e^{\omega s}f(s)\mbox{ d}s\quad(z\in\mathbb{C}_{\Re>\omega}).
\]
By the injectivity of the Fourier-Laplace transform, we derive 
\[
v(t)=\sqrt{2\pi}\e^{\omega t}f(t)\quad(t\in\mathbb{R})
\]
and so, 
\[
|v(t)|_{H}=\sqrt{2\pi}\e^{\omega t}|f(t)|_{H}\leq\sqrt{2\pi}M\e^{\omega t}|\left(g(0-),g\right)|_{H\times H_{\mu}(\mathbb{R}_{\leq0};H)}.
\]
In other words, we have that 
\[
T_{(M,A)}^{(1)}:D_{\rho}\subseteq X_{\rho}^{\mu}\to C_{\omega}(\mathbb{R};H)
\]
is bounded and thus, can be extended to a bounded operator defined
on $X_{\rho}^{\mu}$. Moreover, for each $\varepsilon>0$ we have
by \prettyref{lem:boundedness T1 implies T2} 
\[
T_{(M,A)}^{(2)}:D_{\rho}\subseteq X_{\rho}^{\mu}\to C_{\omega+\varepsilon}(\mathbb{R}_{\geq0};H_{\mu}(\mathbb{R}_{\leq0};H))
\]
is bounded and hence, it can be extended to $X_{\rho}^{\mu}.$ Summarizing,
we have shown that $T_{(M,A)}$ can be extended to a $C_{0}$-semigroup
on $X_{\rho}^{\mu}.$ 
\end{proof}
We conclude this section by a result which relates the classical exponential
stability of $T_{(M,A)}$ with the exponential stability of the corresponding
evolutionary problem in the sense of Chapter 2. More precisely, we
show that for a certain class of material laws, the growth bound of
the semigroup $T_{(M,A)}$ can be estimated by $s_{0}(M,A),$ which
would yield the exponential stability of the semigroup if the evolutionary
problem is exponentially stable by \prettyref{thm:char_exp_stab}.
This theorem can be seen as a generalization of the well-known Gearhart-Prüß
Theorem and its proof indeed follows the lines of the proof of the
classical version presented in \cite[Theorem 5.2.1]{ABHN_2011}.
\begin{thm}
\label{thm:G-P}Let $\rho>\max\{s_{0}(M,A),b(M)\}$ and $s_{0}(M,A)<\mu\leq\rho$.
We assume that $T_{(M,A)}$ defines a $C_{0}$-semigroup on $X_{\rho}^{\mu}$,
that $\mathbb{C}_{\Re>\mu}\setminus D(M)$ is discrete and for each
$\lambda>0$ the mapping 
\[
\Phi_{\lambda}:\mathbb{C}_{\Re>\mu}\cap D(M)\ni z\mapsto\left(zM(z)-(z+\lambda)M(z+\lambda)\right)\in L(H)
\]
is bounded, Moreover, assume that 
\[
K_{\rho}:\mathrm{His}_{\rho}(M,A)\subseteq H_{\rho}(\mathbb{R}_{\leq0};H)\to H_{\mu}(\mathbb{R}_{\geq0};H)
\]
is well-defined and bounded. Then $\omega(T_{(M,A)})\leq\mu,$ where
$\omega(T_{(M,A)})$ denotes the growth bound of $T_{(M,A)}.$ \nomenclature[Z_080]{$\omega(T)$}{the growth type of a $C_0$-semigroup $T$.}\end{thm}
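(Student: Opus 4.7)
My plan is to apply the classical Gearhart-Pr\"uss Theorem to the $C_{0}$-semigroup $T_{(M,A)}$ on the Hilbert space $X_{\rho}^{\mu}$. Denoting its generator by $\mathcal{A}$, the Hilbert-space version of Gearhart-Pr\"uss reduces $\omega(T_{(M,A)})\leq\mu$ to showing that $\mathbb{C}_{\Re>\mu}\subseteq\rho(\mathcal{A})$ together with $\sup_{\Re\lambda>\mu'}\|(\lambda-\mathcal{A})^{-1}\|_{L(X_{\rho}^{\mu})}<\infty$ for every $\mu'>\mu$. The task is thus to compute $(\lambda-\mathcal{A})^{-1}$ on the dense subset $D_{\rho}$ via the Laplace transform of the semigroup, extend the formula analytically to $\mathbb{C}_{\Re>\mu}$, and control its norm on right half-planes strictly contained in $\mathbb{C}_{\Re>\mu}$.

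First, for $(g(0-),g)\in D_{\rho}$ and $\Re\lambda>\omega(T_{(M,A)})$, Lemma 3.3.3 gives
\[
\intop_{0}^{\infty}\e^{-\lambda t}T_{(M,A)}^{(1)}(t)(g(0-),g)\,\dd t=\sqrt{2\pi}\,r_{g}(\lambda)=(\lambda M(\lambda)+A)^{-1}\bigl(\Gamma_{(M,A)}^{\rho}g-\sqrt{2\pi}\,\widehat{K_{\rho}g}(\lambda)\bigr),
\]
and the second component of the resolvent can be recovered from the first via the identity in Remark 3.3.2. Since $\mathbb{C}_{\Re>\mu}\setminus D(M)$ is discrete and $\mu>s_{0}(M,A)$, the map $\lambda\mapsto(\lambda M(\lambda)+A)^{-1}$ admits a bounded analytic extension to $\mathbb{C}_{\Re>\mu}$, and $K_{\rho}g\in H_{\mu}(\mathbb{R}_{\geq0};H)$ together with Paley-Wiener (Corollary of \prettyref{thm:Paley-Wiener}) yields an analytic extension of $\widehat{K_{\rho}g}$ to $\mathbb{C}_{\Re>\mu}$. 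Hence $r_{g}$ extends analytically to $\mathbb{C}_{\Re>\mu}$, and the identity theorem then identifies this extension with $(\lambda-\mathcal{A})^{-1}$, giving $\mathbb{C}_{\Re>\mu}\subseteq\rho(\mathcal{A})$.

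To establish the uniform bound on $\{\Re\lambda>\mu'\}$, I would split $r_{g}=r_{g}^{(1)}+r_{g}^{(2)}$ with $r_{g}^{(2)}(\lambda)=-(\lambda M(\lambda)+A)^{-1}\widehat{K_{\rho}g}(\lambda)$. The Cauchy-Schwarz estimate $|\widehat{K_{\rho}g}(\lambda)|\leq|K_{\rho}g|_{H_{\mu}}/\sqrt{4\pi(\Re\lambda-\mu)}$, combined with the uniform bound on $(\lambda M(\lambda)+A)^{-1}$ and the hypothesis that $K_{\rho}$ maps $\mathrm{His}_{\rho}(M,A)$ boundedly into $H_{\mu}(\mathbb{R}_{\geq0};H)$, controls $r_{g}^{(2)}$ on $\{\Re\lambda\geq\mu'\}$ by a constant (depending on $\mu'-\mu$) times $|(g(0-),g)|_{X_{\rho}^{\mu}}$. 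The delicate term is $r_{g}^{(1)}(\lambda)=(\lambda M(\lambda)+A)^{-1}\Gamma_{(M,A)}^{\rho}g/\sqrt{2\pi}$ since $\Gamma_{(M,A)}^{\rho}g$ does not depend on $\lambda$. Here the hypothesis that $\Phi_{\nu}$ is bounded enters through the identity
\[
(\lambda M(\lambda)+A)^{-1}=((\lambda+\nu)M(\lambda+\nu)+A)^{-1}-((\lambda+\nu)M(\lambda+\nu)+A)^{-1}\Phi_{\nu}(\lambda)(\lambda M(\lambda)+A)^{-1},
\]
derived from $B(\lambda+\nu)=B(\lambda)-\Phi_{\nu}(\lambda)$ with $B(z)\coloneqq zM(z)+A$. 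Taking $\nu=\rho-\mu'$ and using that $v_{1}\coloneqq S_{\rho}(\delta_{0}\Gamma_{(M,A)}^{\rho}g)\in H_{\rho}(\mathbb{R}_{\geq0};H)$ makes $(\lambda M(\lambda)+A)^{-1}\Gamma_{(M,A)}^{\rho}g$ already square integrable on the line $\{\Re\lambda=\rho\}$ via Paley-Wiener, this identity transfers the estimate from $\{\Re\lambda=\rho\}$ to $\{\Re\lambda=\mu'\}$, and Lemma 3.2.5 recasts $|\Gamma_{(M,A)}^{\rho}g|$ in terms of $|g(0-)|$ alone.

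The main obstacle is precisely this transfer step: combining the $\Phi_{\nu}$-identity with the Plancherel-style argument of classical Gearhart-Pr\"uss to produce a bound controlled by $|(g(0-),g)|_{X_{\rho}^{\mu}}$ rather than just $|g|_{H_{\rho}}$, and checking that the resulting estimate remains uniform as $|\Im\lambda|\to\infty$. Once this is secured, Gearhart-Pr\"uss gives $\omega(T_{(M,A)})\leq\mu'$ for every $\mu'>\mu$, and letting $\mu'\downarrow\mu$ concludes the proof.
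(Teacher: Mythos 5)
Your high-level strategy differs from the one in the text: you want to verify the hypotheses of the classical Gearhart-Pr\"uss theorem for the generator $\mathcal{A}$ of $T_{(M,A)}$ on $X_{\rho}^{\mu}$, whereas the proof in the thesis never touches the generator at all and instead verifies Datko's criterion, $\intop_{0}^{\infty}|T_{(M,A)}(t)(x,f)|_{X_{\rho}^{\mu}}^{2}\e^{-2(\mu+\varepsilon)t}\,\dd t<\infty$, by showing via Paley--Wiener that $T_{(M,A)}^{(1)}(\cdot)(g(0-),g)\in H_{\mu}(\mathbb{R}_{\geq0};H)$ with controlled norm. This difference is not cosmetic, and the obstacle you flag at the end is exactly where your route breaks down.

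The problem is that Gearhart-Pr\"uss needs a \emph{pointwise} operator-norm bound $\sup_{\Re\lambda\geq\mu'}\|(\lambda-\mathcal{A})^{-1}\|_{L(X_{\rho}^{\mu})}<\infty$, i.e.\ a bound on $|r_{g}(\lambda)|_{H}$ uniform over the unit ball of $D_{\rho}$ in the $X_{\rho}^{\mu}$-norm. For the term $r_{g}^{(1)}(\lambda)=\frac{1}{\sqrt{2\pi}}S(\lambda)\Gamma_{(M,A)}^{\rho}g$ the $\Phi_{\nu}$-identity does not help with this: what it transfers (and what the thesis transfers with it) is square-integrability of $S(\cdot)\Gamma_{(M,A)}^{\rho}g$ over vertical lines, i.e.\ membership in $\mathcal{H}^{2}(\mathbb{C}_{\Re>\mu};H)$ --- precisely the quantity that Paley--Wiener converts into $|T_{(M,A)}^{(1)}(\cdot)(g(0-),g)|_{H_{\mu}(\mathbb{R}_{\geq0};H)}$ and that Datko consumes. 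A pointwise bound on $S(\lambda)\Gamma_{(M,A)}^{\rho}g$ over the half-plane would instead follow trivially from $\|S\|_{\mathcal{H}^{\infty}(\mathbb{C}_{\Re>\mu};L(H))}<\infty$ \emph{provided} you can bound $|\Gamma_{(M,A)}^{\rho}g|_{H}$ by $C|g(0-)|_{H}$; but the lemma you invoke only supplies the formula $\Gamma_{(M,A)}^{\rho}g=\left(M(\partial_{0,\rho})\chi_{\mathbb{R}_{\geq0}}g(0-)\right)(0+)$, not a norm estimate, and the boundedness of $x\mapsto\left(M(\partial_{0,\rho})\chi_{\mathbb{R}_{\geq0}}x\right)(0+)$ is established nowhere in the thesis and would require a separate uniform-boundedness argument. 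The decomposition in the text is designed precisely to avoid estimating $\Gamma_{(M,A)}^{\rho}g$ in isolation: it bounds the $\mathcal{H}^{2}(\mathbb{C}_{\Re>\omega+1};H)$-norm of $S(\cdot)\left(\frac{1}{\sqrt{2\pi}}\Gamma_{(M,A)}^{\rho}g-\widehat{K_{\rho}g}\right)$, which equals $|T_{(M,A)}^{(1)}(\cdot)(g(0-),g)|_{H_{\omega+1}(\mathbb{R}_{\geq0};H)}$ and is therefore controlled by the semigroup bound, subtracts off $S(\cdot)\widehat{K_{\rho}g}$ using the $K_{\rho}$-hypothesis, and only then applies the $\Phi_{\lambda}$-transfer. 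A further gap: identifying the analytic continuation of $\lambda\mapsto\sqrt{2\pi}\,r_{g}(\lambda)$ with $(\lambda-\mathcal{A})^{-1}$ on $\mathbb{C}_{\Re>\mu}$ cannot be done by the identity theorem alone, since the continuation is built vector-by-vector on the dense set $D_{\rho}$; you first need local uniform boundedness in operator norm (the very estimate that is missing) before the standard clopen argument yields $\mathbb{C}_{\Re>\mu'}\subseteq\rho(\mathcal{A})$. I recommend abandoning the resolvent formulation and arguing on the orbits via Datko's theorem, as the text does.
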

\begin{proof}
By Datko's Lemma (see \cite{Datko_1970} or \cite[Chapter V, Theorem 1.8]{engel2000one})
it suffices to prove 
\begin{equation}
\intop_{0}^{\infty}|T_{(M,A)}(t)(x,f)|_{X_{\rho}^{\mu}}^{2}\e^{-2(\mu+\varepsilon)t}\mbox{ d}t<\infty\label{eq:Datko}
\end{equation}
for each $(x,f)\in X_{\rho}^{\mu}$ and each $\varepsilon>0.$ For
doing so, let $\varepsilon>0$. We define 
\begin{align*}
S:\mathbb{C}_{\Re>\mu} & \to L(H)\\
z & \mapsto(zM(z)+A)^{-1}
\end{align*}
which is analytic and bounded since $\mbox{\ensuremath{\mu}>}s_{0}(M,A)$.
Let $g\in\mathrm{His}_{\rho}(M,A).$ As $T_{(M,A)}$ is a $C_{0}$-semigroup
on $X_{\rho}^{\mu}$ there exists $\omega\geq\rho$ such that 
\[
T_{(M,A)}^{(1)}:X_{\rho}^{\mu}\to C_{\omega}(\mathbb{R}_{\geq0};H)
\]
is continuous. Hence, we have $T_{(M,A)}^{(1)}(\cdot)(g(0-),g)=S_{\rho}\left(\delta_{0}\Gamma_{(M,A)}^{\rho}g-K_{\rho}g\right)\in C_{\omega}(\mathbb{R}_{\geq0};H)\hookrightarrow H_{\omega+1}(\mathbb{R}_{\geq0};H)$
and thus, 
\[
\left(z\mapsto S(z)\left(\frac{1}{\sqrt{2\pi}}\Gamma_{(M,A)}^{\rho}g-\widehat{K_{\rho}g}(z)\right)\right)\in\mathcal{H}^{2}(\mathbb{C}_{\Re>\omega+1};H)
\]
with 
\begin{align*}
\left\Vert S(\cdot)\left(\frac{1}{\sqrt{2\pi}}\Gamma_{(M,A)}^{\rho}g-\widehat{K_{\rho}g}\right)\right\Vert _{\mathcal{H}^{2}(\mathbb{C}_{\Re>\omega+1};H)} & =\|T_{(M,A)}^{(1)}(\cdot)(g(0-),g)\|_{H_{\omega+1}(\mathbb{R}_{\geq0};H)}\\
 & \leq C\|(g(0-),g)\|_{X_{\rho}^{\mu}}
\end{align*}
for some $C>0$ by the Theorem of Paley-Wiener (see \prettyref{cor:Paley_Wiener_unitary}).
Moreover, since $S\in\mathcal{H}^{\infty}(\mathbb{C}_{\Re>\mu};L(H))$
and $\widehat{K_{\rho}g}\in\mathcal{H}^{2}(\mathbb{C}_{\Re>\mu};H)$
(since $K_{\rho}g\in H_{\mu}(\mathbb{R}_{\geq0};H)$ by hypothesis),
we infer that
\[
\left(z\mapsto S(z)\widehat{K_{\rho}g}(z)\right)\in\mathcal{H}^{2}(\mathbb{C}_{\Re>\mu};H)\subseteq\mathcal{H}^{2}(\mathbb{C}_{\Re>\omega+1};H)
\]
and thus, 
\[
\left(z\mapsto S(z)\Gamma_{(M,A)}^{\rho}g\right)\in\mathcal{H}^{2}(\mathbb{C}_{\Re>\omega+1};H).
\]
We estimate for each $\mu<\kappa\leq\omega+1$ and $\lambda>0$ 
\begin{align*}
 & \left(\intop_{\mathbb{R}}\left|S(\i t+\kappa)\Gamma_{(M,A)}^{\rho}g\right|^{2}\mbox{ d}t\right)^{\frac{1}{2}}\\
\leq & \left(\intop_{\mathbb{R}}\left|S(\i t+\kappa+\lambda)\Gamma_{(M,A)}^{\rho}g\right|^{2}\mbox{ d}t\right)^{\frac{1}{2}}+\\
 & +\left(\intop_{\mathbb{R}}\left|\left(S(\i t+\kappa)-S(\i t+\kappa+\lambda)\right)\Gamma_{(M,A)}^{\rho}g\right|^{2}\mbox{ d}t\right)^{\frac{1}{2}}.
\end{align*}
Using 
\begin{align*}
 & S(\i t+\kappa)-S(\i t+\kappa+\lambda)\\
 & =\left(\left(\i t+\kappa\right)M(\i t+\kappa)+A\right)^{-1}-\left(\left(\i t+\kappa+\lambda\right)M(\i t+\kappa+\lambda)+A\right)^{-1}\\
 & =-\left(\left(\i t+\kappa\right)M(\i t+\kappa)+A\right)^{-1}\Phi_{\lambda}(\i t+\kappa)\left(\left(\i t+\kappa+\lambda\right)M(\i t+\kappa+\lambda)+A\right)^{-1}\\
 & =-S(\i t+\kappa)\Phi_{\lambda}(\i t+\kappa)S(\i t+\kappa+\lambda)
\end{align*}
 for each $t\in\mathbb{R}$, we can estimate the last integral by
\begin{align*}
 & \left(\intop_{\mathbb{R}}\left|\left(S(\i t+\kappa)-S(\i t+\kappa+\lambda)\right)\Gamma_{(M,A)}^{\rho}g\right|^{2}\mbox{ d}t\right)^{\frac{1}{2}}\\
\leq & \|S\|_{\mathcal{H}^{\infty}(\mathbb{C}_{\Re>\mu};L(H))}\|\Phi_{\lambda}\|_{\infty}\left(\intop_{\mathbb{R}}\left|S(\i t+\kappa+\lambda)\Gamma_{(M,A)}^{\rho}g\right|^{2}\mbox{ d}t\right)^{\frac{1}{2}}
\end{align*}
and hence, for $\lambda>\omega+1-\mu$ 
\[
\left(\intop_{\mathbb{R}}\left|S(\i t+\kappa)\Gamma_{(M,A)}^{\rho}g\right|^{2}\mbox{ d}t\right)^{\frac{1}{2}}\leq\tilde{C}|S(\cdot)\Gamma_{(M,A)}^{\rho}g|_{\mathcal{H}^{2}(\mathbb{C}_{\Re>\omega+1};H)},
\]
with $\tilde{C}\coloneqq1+\|S\|_{\mathcal{H}^{\infty}(\mathbb{C}_{\Re>\mu};L(H))}\|\Phi_{\lambda}\|_{\infty}.$
Since $\kappa$ was arbitrary, we infer $S(\cdot)\Gamma_{(M,A)}^{\rho}g\in\mathcal{H}^{2}(\mathbb{C}_{\Re>\mu};H)$
and thus, 
\[
\left(z\mapsto S(z)\left(\frac{1}{\sqrt{2\pi}}\Gamma_{(M,A)}^{\rho}g-\widehat{K_{\rho}g}(z)\right)\right)\in\mathcal{H}^{2}(\mathbb{C}_{\Re>\mu};H).
\]
Again, by Paley-Wiener we have that $T_{(M,A)}^{(1)}(\cdot)(g(0-),g)\in H_{\mu}(\mathbb{R}_{\geq0};H)$
with 
\begin{align*}
 & \left|T_{(M,A)}^{(1)}(\cdot)(g(0-),g)\right|_{H_{\mu}(\mathbb{R}_{\geq0};H)}\\
 & =\left|S(\cdot)\left(\frac{1}{\sqrt{2\pi}}\Gamma_{(M,A)}^{\rho}g-\widehat{K_{\rho}g}\right)\right|_{\mathcal{H}^{2}(\mathbb{C}_{\Re>\mu};H)}\\
 & \leq\frac{1}{\sqrt{2\pi}}\left|S(\cdot)\Gamma_{(M,A)}^{\rho}g\right|_{\mathcal{H}^{2}(\mathbb{C}_{\Re>\mu};H)}+\left|S(\cdot)\widehat{K_{\rho}g}\right|_{\mathcal{H}^{2}(\mathbb{C}_{\Re>\mu};H)}\\
 & \leq\frac{1}{\sqrt{2\pi}}\tilde{C}|S(\cdot)\Gamma_{(M,A)}^{\rho}g|_{\mathcal{H}^{2}(\mathbb{C}_{\Re>\omega+1};H)}+\|S\|_{\mathcal{H}^{\infty}(\mathbb{C}_{\Re>\mu};L(H))}|K_{\rho}g|_{H_{\mu}(\mathbb{R}_{\geq0};H)}\\
 & \leq\frac{1}{\sqrt{2\pi}}\tilde{C}\left|S(\cdot)\left(\Gamma_{(M,A)}^{\rho}g-\widehat{K_{\rho}g}\right)\right|_{\mathcal{H}^{2}(\mathbb{C}_{\Re>\omega+1};H)}+\\
 & \phantom{aaaa}+\left(\frac{\tilde{C}}{\sqrt{2\pi}}+1\right)\|S\|_{\mathcal{H}^{\infty}(\mathbb{C}_{\Re>\mu};L(H))}\|K_{\rho}\||g|_{H_{\rho}(\mathbb{R}_{\leq0};H)}\\
 & \leq\left(\frac{\tilde{C}}{\sqrt{2\pi}}C+\left(\frac{\tilde{C}}{\sqrt{2\pi}}+1\right)\|S\|_{\mathcal{H}^{\infty}(\mathbb{C}_{\Re>\mu};L(H))}\|K_{\rho}\|\right)|(g(0-),g)|_{H\times H_{\rho}(\mathbb{R}_{\leq0};H)}
\end{align*}
or in other words 
\[
T_{(M,A)}^{(1)}:D_{\rho}\subseteq X_{\rho}^{\mu}\to H_{\mu}(\mathbb{R}_{\geq0};H)
\]
is well-defined and bounded. Thus, using \prettyref{rem:T^1 und T^2},
we obtain 
\begin{align*}
 & \intop_{0}^{\infty}|T_{(M,A)}^{(2)}(t)(g(0-),g)|_{H_{\mu}(\mathbb{R}_{\leq0};H)}^{2}\e^{-2(\mu+\varepsilon)t}\mbox{ d}t\\
 & =\intop_{0}^{\infty}\left(\:\intop_{-\infty}^{-t}|g(t+s)|^{2}\e^{-2\mu s}\mbox{ d}s+\intop_{-t}^{0}|T_{(M,A)}^{(1)}(t+s)(g(0-),g)|^{2}\e^{-2\mu s}\mbox{ d}s\right)\e^{-2\left(\mu+\varepsilon\right)t}\mbox{ d}t\\
 & =\frac{1}{2\varepsilon}\left(|g|_{H_{\mu}(\mathbb{R}_{\leq0};H)}^{2}+|T_{(M,A)}^{(1)}(\cdot)(g(0-),g)|_{H_{\mu}(\mathbb{R}_{\geq0};H)}^{2}\right)\\
 & \leq C|(g(0-),g)|_{X_{\rho}^{\mu}}^{2}
\end{align*}
which proves the well-definedness and boundedness of 
\[
T_{(M,A)}^{(2)}:D_{\rho}\subseteq X_{\rho}^{\mu}\to H_{\mu+\varepsilon}(\mathbb{R}_{\geq0};H_{\mu}(\mathbb{R}_{\leq0};H)).
\]
Thus, 
\[
T_{(M,A)}:D_{\rho}\subseteq X_{\rho}^{\mu}\to H_{\mu+\varepsilon}(\mathbb{R}_{\geq0};X_{\rho}^{\mu})
\]
is bounded and hence, \prettyref{eq:Datko} follows by continuous
extension. 
\end{proof}

\section{Applications\label{sec:Applications}}

In this section, we apply the results of the three previous sections
to several examples. More precisely, we  discuss how the results can
be applied to differential-algebraic equations, delay equations and
integro-differential equations. Throughout, we assume that $A:D(A)\subseteq H\to H$
is a densely defined closed linear operator and that the evolutionary
problem associated with $M$ and $A$ is well-posed, where the material
law $M$ will be specified in each case. For each of these examples
we need to verify certain conditions for the material law in order
to apply our results. For the ease of readability we recall these
assumptions in the following. \\
First of all we recall the definition 
\[
b(M)\coloneqq\inf\{\rho\geq0\,;\,\mathbb{C}_{\Re\geq\rho}\subseteq D(M),M|_{\mathbb{C}_{\Re\geq\rho}}\mbox{ bounded}\}
\]
In order to define the space of admissible histories, we assume that
$M$ is regularizing, i.e. 
\begin{equation}
\left(b(M)<\infty\right)\wedge\left(\forall\rho>b(M),x\in H:\left(M(\partial_{0,\rho})\chi_{\mathbb{R}_{\geq0}}x\right)(0+)\right)\mbox{ exists.}\tag{REG}\label{eq:M_reg}
\end{equation}
We fix $\rho>b(M)$ and recall the definitions
\begin{align*}
K_{\rho}:\chi_{\mathbb{R}_{\leq0}}\left[H_{\rho}^{1}(\mathbb{R};H)\right] & \to H_{\rho}^{-1}(\mathbb{R};H)\\
g & \mapsto P_{0}\partial_{0,\rho}M(\partial_{0,\rho})g
\end{align*}
and 
\begin{align*}
\Gamma_{(M,A)}^{\rho}:\mathrm{His}_{\rho}(M,A) & \to H\\
g & \mapsto\left(M(\partial_{0,\rho})g\right)(0-)-\left(M(\partial_{0,\rho})g\right)(0+),
\end{align*}
where we remark that 
\[
\Gamma_{(M,A)}^{\rho}g=\left(M(\partial_{0,\rho})\chi_{\mathbb{R}_{\geq0}}g(0-)\right)(0+)\quad(g\in\mathrm{His}_{\rho}(M,A))
\]
by \prettyref{lem:Gamma}. Furthermore, we recall 
\begin{align*}
r_{g}:\mathbb{R}_{>\rho} & \to H\\
\lambda & \mapsto\left(\lambda M(\lambda)+A\right)^{-1}\left(\Gamma_{(M,A)}^{\rho}g-\left(\mathcal{L}_{\lambda}K_{\rho}g\right)(0)\right),
\end{align*}
where $M$ satisfies \prettyref{eq:M_reg}, $\rho>\max\{s_{0}(M,A),b(M)\}$
and $g\in\mathrm{His}_{\rho}(M,A)$. By \prettyref{thm:sg}, this
functions needs to verify the Hille-Yosida type condition, i.e. there
exist $M\geq1,\omega>\rho$ such that 
\begin{equation}
\forall g\in\mathrm{His}_{\rho}(M,A),n\in\mathbb{N},\lambda>\omega:\frac{1}{n!}|r_{g}^{(n)}(\lambda)|_{H}\leq\frac{M}{(\lambda-\omega)^{n+1}}|(g(0-),g)|_{H\times H_{\mu}(\mathbb{R}_{\leq0};H)}\tag{HY}\label{eq:Hille_Yosida}
\end{equation}
for some $\mu\leq\rho$ in order to obtain a strongly continuous semigroup
$T_{(M,A)}$ on the Hilbert space $X_{\rho}^{\mu}\coloneqq\overline{D_{\rho}}^{H\times H_{\mu}(\mathbb{R}_{\leq0};H)}$,
where 
\[
D_{\rho}\coloneqq\left\{ (g(0-),g)\,;\, g\in\mathrm{His}_{\rho}(M,A)\right\} .
\]
Finally, we recall the conditions for the generalized Gearhart-Prüß
Theorem \prettyref{thm:G-P}, which provides an upper bound for the
growth-type of the semigroup on $X_{\rho}^{\mu}$ of the form $\omega(T_{(M,A)})\leq\mu$
for some $s_{0}(M,A)<\mu\leq\rho$:
\begin{equation}
\Phi_{\lambda}:\mathbb{C}_{\Re>\mu}\cap D(M)\ni z\mapsto\left(zM(z)-(z+\lambda)M(z+\lambda)\right)\in L(H)\mbox{ is bounded}\tag{GP-1}\label{eq:GP1}
\end{equation}
for each $\lambda>0$ and 
\begin{equation}
K_{\rho}:\mathrm{His}_{\rho}(M,A)\subseteq H_{\rho}(\mathbb{R}_{\leq0};H)\to H_{\mu}(\mathbb{R}_{\geq0};H)\mbox{ is well-defined and bounded}.\tag{GP-2}\label{eq:GP2}
\end{equation}

\subsection{Differential-algebraic equations}

According to \prettyref{prop:material_law_amnesic} evolutionary problems
without memory are precisely those, where the material law $M$ is
given by 
\[
M(z)\coloneqq M_{0}+z^{-1}M_{1}\quad(z\in\mathbb{C}\setminus\{0\})
\]
for some bounded linear operators $M_{0},M_{1}\in L(H).$ Of course,
those operators satisfy \prettyref{eq:M_reg}. Indeed, we have that
$b(M)=0$ and for each $\rho>0$ 
\[
\left(M(\partial_{0,\rho})\chi_{\mathbb{R}_{\geq0}}x\right)(0+)=M_{0}x
\]
and consequently, 
\[
\Gamma_{(M,A)}^{\rho}g=M_{0}g(0-)
\]
for all $g\in\mathrm{His}_{\rho}(M,A)$ with $\rho>\max\{s_{0}(M,A),0\}$,
which is given by 
\[
\mathrm{His}_{\rho}(M,A)=\left\{ g\in\chi_{\mathbb{R}_{\leq0}}(\m)\left[H_{\rho}^{1}(\mathbb{R};H)\right]\,;\, S_{\rho}\left(\delta_{0}M_{0}g(0-)\right)-\chi_{\mathbb{R}_{\geq0}}g(0-)\in H_{\rho}^{1}(\mathbb{R};H)\right\} 
\]
according to \prettyref{prop:material_law_amnesic}. The latter gives
\[
\mathrm{IV}_{\rho}(M,A)=\left\{ u_{0}\in H\,;\, S_{\rho}\left(\delta_{0}M_{0}u_{0}\right)-\chi_{\mathbb{R}_{\geq0}}u_{0}\in H_{\rho}^{1}(\mathbb{R};H)\right\} .
\]

\begin{prop}
\label{prop:IV_diff_alg}Assume that $M_{0}$ has closed range. Denote
by $\iota_{0}:N(M_{0})^{\bot}\to H$ and by $\iota_{1}:R(M_{0})\to H$
the canonical embeddings of the orthogonal complement of the null-space
and the range of $M_{0},$ respectively. Then, $\iota_{1}^{\ast}M_{0}\iota_{0}:N(M_{0})^{\bot}\to R(M_{0})$
is boundedly invertible and 
\[
\left\{ u_{0}\in D(A)\,;\,(M_{1}+A)u_{0}\in R(M_{0}),\iota_{0}\left(\iota_{1}^{\ast}M_{0}\iota_{0}\right)^{-1}\iota_{1}^{\ast}(M_{1}+A)u_{0}\in D(A)\right\} \subseteq\mathrm{IV}_{\rho}(M,A)
\]
for each $\rho>\max\{s_{0}(M,A),0\}.$ \end{prop}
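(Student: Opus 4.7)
The plan is to write down, for each $u_{0}$ in the given set, the candidate history $g \coloneqq \chi_{\mathbb{R}_{\leq 0}}(\m) u_{0}$ and to verify the defining property of $\mathrm{IV}_{\rho}(M,A)$, namely that
\[
v \coloneqq S_{\rho}\!\left(\delta_{0} M_{0} u_{0}\right) - \chi_{\mathbb{R}_{\geq 0}} u_{0} \in H_{\rho}^{1}(\mathbb{R};H).
\]
Since $M_{0}$ has closed range, the restriction $\iota_{1}^{\ast} M_{0} \iota_{0} : N(M_{0})^{\bot} \to R(M_{0})$ is a bounded linear bijection between Hilbert spaces (injectivity from $N(M_{0})\cap N(M_{0})^{\bot}=\{0\}$, surjectivity from splitting any preimage along $N(M_{0})\oplus N(M_{0})^{\bot}$), hence boundedly invertible by the open mapping theorem. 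Given $u_{0}$ satisfying the three hypotheses, set
\[
\tilde{w} \coloneqq \iota_{0}\bigl(\iota_{1}^{\ast} M_{0}\iota_{0}\bigr)^{-1} \iota_{1}^{\ast}(M_{1}+A) u_{0}.
\]
Then $M_{0}\tilde{w} = (M_{1}+A) u_{0}$ (the range condition (ii) ensures $\iota_{1}^{\ast}$ loses no information) and $\tilde{w}\in D(A)$ by hypothesis (iii).

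Next I would compute, in $H_{\rho}^{-1}(\mathbb{R};H)$, using $\partial_{0,\rho}\chi_{\mathbb{R}_{\geq 0}} = \delta_{0}$ and $u_{0}\in D(A)$,
\[
(\partial_{0,\rho} M(\partial_{0,\rho}) + A)\,\chi_{\mathbb{R}_{\geq 0}} u_{0} = \delta_{0} M_{0} u_{0} + \chi_{\mathbb{R}_{\geq 0}}(M_{1}+A) u_{0}.
\]
Applying $S_{\rho}$ (extended to $H_{\rho}^{-1}$ in \prettyref{prop:continuation_fcts_of_partial_0}) and rearranging yields
\[
v = -S_{\rho}\!\left(\chi_{\mathbb{R}_{\geq 0}}(M_{1}+A) u_{0}\right) \in H_{\rho}(\mathbb{R};H),
\]
because the right-hand side is in $H_{\rho}$ and $S_{\rho}$ is bounded on $H_{\rho}$. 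This already gives $v \in H_{\rho}(\mathbb{R};H)$; the remaining task is $\partial_{0,\rho} v \in H_{\rho}(\mathbb{R};H)$.

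For the latter, I would exploit that $S_{\rho}$ commutes with $\partial_{0,\rho}$ (Prop.~\prettyref{prop:continuation_fcts_of_partial_0}) to obtain
\[
\partial_{0,\rho} v = -S_{\rho}\!\left(\delta_{0}(M_{1}+A) u_{0}\right) = -S_{\rho}\!\left(\delta_{0} M_{0}\tilde{w}\right),
\]
using $M_{0}\tilde{w}=(M_{1}+A)u_{0}$. Applying the same rearrangement as before, now with $\tilde{w}\in D(A)$ in place of $u_{0}$,
\[
\delta_{0} M_{0}\tilde{w} = (\partial_{0,\rho} M(\partial_{0,\rho})+A)\,\chi_{\mathbb{R}_{\geq 0}} \tilde{w} - \chi_{\mathbb{R}_{\geq 0}}(M_{1}+A)\tilde{w},
\]
whence
\[
\partial_{0,\rho} v = -\chi_{\mathbb{R}_{\geq 0}} \tilde{w} + S_{\rho}\!\left(\chi_{\mathbb{R}_{\geq 0}}(M_{1}+A)\tilde{w}\right).
\]
Both summands lie in $H_{\rho}(\mathbb{R};H)$ because $\tilde{w}\in D(A)$ implies $(M_{1}+A)\tilde{w}\in H$. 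Hence $v \in H_{\rho}^{1}(\mathbb{R};H)$ and $u_{0} \in \mathrm{IV}_{\rho}(M,A)$.

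The main obstacle I expect is book-keeping in the extrapolation space: one has to interpret $\delta_{0} M_{0} u_{0}$ and $(\partial_{0,\rho}M(\partial_{0,\rho})+A)\chi_{\mathbb{R}_{\geq 0}}u_{0}$ as elements of $H_{\rho}^{-1}(\mathbb{R};H)$ and justify that the extended $S_{\rho}$ both inverts the closure of $\partial_{0,\rho}M(\partial_{0,\rho})+A$ on such distributional data and commutes with $\partial_{0,\rho}$. The conceptual step—using $\tilde{w}\in D(A)$ to trade the singular source $\delta_{0}(M_{1}+A)u_{0}$ for $\delta_{0}M_{0}\tilde{w}$, which absorbs into the operator sum—is what makes $\partial_{0,\rho}v$ regular, and is the crucial use of hypothesis (iii).
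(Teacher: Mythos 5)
Your proof is correct and follows essentially the same route as the paper: the identity $S_{\rho}(\delta_{0}M_{0}u_{0})-\chi_{\mathbb{R}_{\geq0}}u_{0}=-S_{\rho}(\chi_{\mathbb{R}_{\geq0}}(M_{1}+A)u_{0})$, then trading $\delta_{0}(M_{1}+A)u_{0}=\delta_{0}M_{0}\tilde{w}$ via hypothesis (ii) and absorbing it using $\tilde{w}\in D(A)$ from hypothesis (iii), exactly as in the paper (where your $\tilde{w}$ is called $x$). One cosmetic remark: the "candidate history" $\chi_{\mathbb{R}_{\leq0}}(\m)u_{0}$ is not literally an element of $\chi_{\mathbb{R}_{\leq0}}(\m)[H_{\rho}^{1}(\mathbb{R};H)]$ for $\rho>0$, but this is immaterial since the condition you verify is precisely the characterization of $\mathrm{IV}_{\rho}(M,A)$ established just before the proposition.
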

\begin{proof}
Note that $\iota_{1}^{\ast}M_{0}\iota_{0}$ is a bijective closed
operator. Thus, the bounded invertibility follows by the closed graph
theorem. Let now $u_{0}\in D(A)$ such that $(M_{1}+A)u_{0}\in R(M_{0})$
and $\iota_{0}\left(\iota_{1}^{\ast}M_{0}\iota_{0}\right)^{-1}\iota_{1}^{\ast}(M_{1}+A)u_{0}\in D(A)$.
Then we have that\nomenclature[O_200]{$S_\rho$}{the solution operator $(\overline{\partial_{0,\rho}M(\partial_{0,\rho})+A})^{-1}$.}
\begin{align*}
S_{\rho}\left(\delta_{0}M_{0}u_{0}\right)-\chi_{\mathbb{R}_{\geq0}}u_{0} & =S_{\rho}\left(\delta_{0}M_{0}u_{0}-\left(\partial_{0,\rho}M_{0}+M_{1}+A\right)\chi_{\mathbb{R}_{\geq0}}u_{0}\right)\\
 & =-S_{\rho}\left(\left(M_{1}+A\right)\chi_{\mathbb{R}_{\geq0}}u_{0}\right).
\end{align*}
Since $(M_{1}+A)u_{0}\in R(M_{0})$ we have that 
\[
(M_{1}+A)u_{0}=M_{0}\iota_{0}\left(\iota_{1}^{\ast}M_{0}\iota_{0}\right)^{-1}\iota_{1}^{\ast}(M_{1}+A)u_{0}.
\]
We set $x\coloneqq\iota_{0}\left(\iota_{1}^{\ast}M_{0}\iota_{0}\right)^{-1}\iota_{1}^{\ast}(M_{1}+A)u_{0}$
and derive, 
\begin{align*}
\partial_{0,\rho}\left(S_{\rho}\left(\delta_{0}M_{0}u_{0}\right)-\chi_{\mathbb{R}_{\geq0}}u_{0}\right) & =-S_{\rho}\left(\partial_{0,\rho}\left(M_{1}+A\right)\chi_{\mathbb{R}_{\geq0}}u_{0}\right)\\
 & =-S_{\rho}\left(\partial_{0,\rho}M_{0}\chi_{\mathbb{R}_{\geq0}}x\right).
\end{align*}
Using that $x\in D(A)$ by assumption, we conclude 
\[
\partial_{0,\rho}\left(S_{\rho}\left(\delta_{0}M_{0}u_{0}\right)-\chi_{\mathbb{R}_{\geq0}}u_{0}\right)=-\chi_{\mathbb{R}_{\geq0}}x+S_{\rho}\left(\chi_{\mathbb{R}_{\geq0}}\left(M_{1}+A\right)x\right)\in H_{\rho}(\mathbb{R};H),
\]
which shows $u_{0}\in\mathrm{IV}_{\rho}(M,A).$ \end{proof}
\begin{rem}
\label{rem:IV_evolution}$\,$

\begin{enumerate}[(a)]

\item In the case of a classical evolution equation, i.e. in the
case $M_{0}=1$ and $M_{1}=0$, the latter proposition yields that
\[
D(A^{2})\subseteq\mathrm{IV}_{\rho}(M,A).
\]
In case of an ordinary differential-algebraic equation, i.e. $A=0$
in our setting, we can even show more.

\item Another approach to obtain solutions for a class of differential-algebraic
equations is used in \cite{wehowski2015}, where even nonlinear problems
are considered. There, one applies a backward difference scheme to
obtain a discretised version of the equation, which turns out to be
well-posed. Then one shows that these approximating solutions converge
to a solution of the original problem in a suitable sense. Using this
approach, a condition for admissible initial values is derived, even
in the case, when $R(M_{0})$ is not closed. 

\end{enumerate}\end{rem}
\begin{prop}
Assume that $M_{0}$ has closed range. Then 
\[
\left\{ u_{0}\in H\,;\, M_{1}u_{0}\in R(M_{0})\right\} =\mathrm{IV}_{\rho}(M,0)
\]
for each $\rho>\max\{s_{0}(M,0),0\}.$ In particular, $\mathrm{IV}_{\rho}(M,0)$
is closed and independent of $\rho.$ \end{prop}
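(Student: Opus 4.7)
The plan is to prove both inclusions separately, then read off the ``in particular'' assertion from the explicit description.

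For the inclusion ``$\supseteq$'', I would simply invoke Proposition~\ref{prop:IV_diff_alg} with $A=0$. Since $D(A)=H$, the condition $\iota_0(\iota_1^{\ast}M_0\iota_0)^{-1}\iota_1^{\ast}(M_1+A)u_0\in D(A)$ becomes vacuous, and the only remaining hypothesis is $(M_1+A)u_0=M_1u_0\in R(M_0)$. Thus every $u_0$ with $M_1u_0\in R(M_0)$ belongs to $\mathrm{IV}_\rho(M,0)$.

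For ``$\subseteq$'', suppose $u_0\in\mathrm{IV}_\rho(M,0)$. First I would rewrite the defining condition using
\[
(\partial_{0,\rho}M_0+M_1)\chi_{\mathbb{R}_{\geq0}}u_0=\partial_{0,\rho}(\chi_{\mathbb{R}_{\geq0}}M_0u_0)+M_1\chi_{\mathbb{R}_{\geq0}}u_0=\delta_0M_0u_0+\chi_{\mathbb{R}_{\geq0}}M_1u_0,
\]
which gives $S_\rho(\delta_0M_0u_0)-\chi_{\mathbb{R}_{\geq0}}u_0=-S_\rho(\chi_{\mathbb{R}_{\geq0}}M_1u_0)$. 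Hence the assumption $u_0\in\mathrm{IV}_\rho(M,0)$ is equivalent to $w:=S_\rho(\chi_{\mathbb{R}_{\geq0}}M_1u_0)\in H^1_\rho(\mathbb{R};H)$; in particular $w$ solves
\[
(\partial_{0,\rho}M_0+M_1)w=\chi_{\mathbb{R}_{\geq0}}M_1u_0.
\]

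The main step is then a projection argument. Since $R(M_0)$ is closed, the orthogonal projection $P:H\to R(M_0)^{\bot}=N(M_0^{\ast})$ satisfies $PM_0=0$. Applying $P$ pointwise in time to the above identity yields
\[
PM_1w=\chi_{\mathbb{R}_{\geq0}}PM_1u_0.
\]
By Proposition~\ref{prop:Sobolev}, $w\in H^1_\rho(\mathbb{R};H)\hookrightarrow C_{\rho,0}(\mathbb{R};H)$, so the left-hand side has a continuous representative, whereas the right-hand side has a jump of size $PM_1u_0$ at $t=0$. The two functions can agree almost everywhere only if $PM_1u_0=0$, i.e.\ $M_1u_0\in R(M_0)$. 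This is the core step, and the only nontrivial point is making the continuous-versus-jump comparison rigorous (straightforward once one notes $PM_1w$ is globally continuous while $\chi_{\mathbb{R}_{\geq0}}PM_1u_0$ is not, unless the jump vanishes).

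Finally, for the ``in particular'' part: the set $\{u_0\in H\,;\,M_1u_0\in R(M_0)\}$ is the preimage of the closed subspace $R(M_0)$ under the bounded operator $M_1$, hence closed; and the characterization manifestly does not depend on $\rho$.
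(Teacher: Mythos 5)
Your proposal is correct. The $\supseteq$ inclusion and the ``in particular'' part are exactly as in the paper (Proposition \ref{prop:IV_diff_alg} with $A=0$, preimage of a closed subspace under a bounded map). For the inclusion $\subseteq$ you take a recognizably different route in the decisive step. The paper works with $u\coloneqq S_{\rho}(\delta_{0}M_{0}u_{0})$, uses $u-\chi_{\mathbb{R}_{\geq0}}u_{0}\in H_{\rho}^{1}(\mathbb{R};H)$ to write $M_{1}u=M_{0}\partial_{0,\rho}(u-\chi_{\mathbb{R}_{\geq0}}u_{0})$, concludes $M_{1}u(t)\in R(M_{0})$ for almost every $t$, and then passes to the limit along a sequence $t_{n}\to0+$ using continuity of $u$ and closedness of $R(M_{0})$. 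You instead isolate $w\coloneqq S_{\rho}(\chi_{\mathbb{R}_{\geq0}}M_{1}u_{0})$, project the equation $(\partial_{0,\rho}M_{0}+M_{1})w=\chi_{\mathbb{R}_{\geq0}}M_{1}u_{0}$ onto $R(M_{0})^{\bot}$ (which kills the derivative term since $PM_{0}=0$), and read off $PM_{1}u_{0}=0$ from the fact that the continuous function $PM_{1}w$ cannot agree a.e.\ with a genuine jump; closedness of $R(M_{0})$ then enters only through $\overline{R(M_{0})}=R(M_{0})$. The two arguments are dual to one another — your identity $PM_{1}w=\chi_{\mathbb{R}_{\geq0}}PM_{1}u_{0}$ is exactly the paper's $M_{1}u(t)\in R(M_{0})$ a.e.\ rewritten via the projection, using $u=\chi_{\mathbb{R}_{\geq0}}u_{0}-w$ — but your jump argument is arguably cleaner, since it avoids selecting a pointwise-convergent subsequence; the one point you should make explicit is that once $w\in H_{\rho}^{1}(\mathbb{R};H)$ the equation for $w$ holds in the strong (non-closure) sense because $M_{0}$, being bounded and acting only in the spatial variable, maps $H_{\rho}^{1}(\mathbb{R};H)$ into itself and commutes with $\partial_{0,\rho}$, so that $P$ may legitimately be applied term by term.
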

\begin{proof}
According to \prettyref{prop:IV_diff_alg} we have that 
\[
\left\{ u_{0}\in H\,;\, M_{1}u_{0}\in R(M_{0})\right\} \subseteq\mathrm{IV}_{\rho}(M,0).
\]
Assume now $u_{0}\in\mathrm{IV}_{\rho}(M,0).$ Then 
\[
u\coloneqq\left(\partial_{0,\rho}M_{0}+M_{1}\right)^{-1}\delta_{0}M_{0}u_{0}
\]
satisfies\nomenclature[B_100]{$H_\rho^1(\mathbb{R};H)$}{the domain of $\partial_{0,\rho}$ equipped with the norm $\vert\partial_{0,\rho}\cdot\vert_{H_\rho(\mathbb{R};H)}$.}\nomenclature[B_110]{$H_{\rho}^{-1}(\mathbb{R};H)$}{the first extrapolation space of $\partial_{0,\rho}$, the completion of $H_\rho(\mathbb{R};H)$ with respect to the norm $\vert\partial^{-1}_{0,\rho}\cdot\vert_{H_\rho(\mathbb{R};H)}$.}
\[
u-\chi_{\mathbb{R}_{\geq0}}u_{0}\in H_{\rho}^{1}(\mathbb{R};H),
\]
by definition of $\mathrm{IV}_{\rho}(M,0).$ Thus, 
\[
M_{1}u=\partial_{0,\rho}M_{0}(u-\chi_{\mathbb{R}_{\geq0}}u_{0})=M_{0}\partial_{0,\rho}(u-\chi_{\mathbb{R}_{\geq0}}u_{0}),
\]
which implies 
\[
M_{1}u(t)\in R(M_{0})\quad(t\in\mathbb{R}\mbox{ a.e.}).
\]
Hence, there is a sequence $(t_{n})_{n\in\mathbb{N}}$ in $\mathbb{R}_{>0}$
such that $t_{n}\to0$ and $M_{1}u(t_{n})\in R(M_{0})$ for each $n\in\mathbb{N}.$
Since $u(t_{n})\to u_{0}$ due to continuity, we infer 
\[
M_{1}u_{0}\in R(M_{0})
\]
by the closedness of $R(M_{0}).$ \end{proof}
\begin{example}[DAEs in finite dimensions ]
\label{exa:consistent_IV}Assume that $H=\mathbb{C}^{n}$ and $M_{0},M_{1}\in\mathbb{C}^{n\times n}$
and that $s_{0}(M,0)<\infty$ with $M(z)=M_{0}+z^{-1}M_{1}.$ By the
previous proposition we know that 
\[
\mathrm{IV}_{\rho}(M,0)=\left\{ u_{0}\in\mathbb{C}^{n}\,;\, M_{1}u_{0}\in R(M_{0})\right\} 
\]
for each $\rho\geq\max\{s_{0}(M,0),0\}$. The class of ordinary differential-algebraic
equations in finite dimensions have been widely studied and the set
of so-called \emph{consistent initial values} is well-known and given
as follows: By the Weierstraß normal form for the matrix pair $(M_{0},M_{1}),$
there exist regular matrices $P,Q\in\mathbb{C}^{n\times n}$ such
that 
\[
PM_{0}Q=\left(\begin{array}{cc}
1 & 0\\
0 & 0
\end{array}\right),\quad PM_{1}Q=\left(\begin{array}{cc}
J & 0\\
0 & 1
\end{array}\right),
\]
where $J\in\mathbb{C}^{d\times d}$ is a Jordan matrix (see e.g. \cite[Theorem 2.7]{Mehrmann2006}
and note that $s_{0}(M,0)<\infty$ implies $N=0$). The set of consistent
initial values is given by (see \cite[Theorem 2.12]{Mehrmann2006})
\[
B\coloneqq\left\{ u_{0}\in\mathbb{C}^{n}\,;\, Q^{-1}u_{0}\in\mathbb{C}^{d}\times\{0\}\right\} .
\]
We show that $B=\mathrm{IV}_{\rho}(M,0).$ If $u_{0}\in B$ then 
\[
M_{1}u_{0}=P^{-1}PM_{1}QQ^{-1}u_{0}=P^{-1}\left(\begin{array}{cc}
J & 0\\
0 & 1
\end{array}\right)Q^{-1}u_{0}=P^{-1}\left(\begin{array}{c}
Jy\\
0
\end{array}\right),
\]
where $(y,0)\coloneqq Q^{-1}u_{0}\in\mathbb{C}^{d}\times\{0\}.$ Setting
$x\coloneqq Q\left(\begin{array}{c}
Jy\\
0
\end{array}\right)$ we compute 
\[
M_{0}x=P^{-1}\left(\begin{array}{cc}
1 & 0\\
0 & 0
\end{array}\right)Q^{-1}x=P^{-1}\left(\begin{array}{c}
Jy\\
0
\end{array}\right)=M_{1}u_{0}
\]
and thus, $u_{0}\in\mathrm{IV}_{\rho}(M,0).$ Let now $u_{0}\in\mathrm{IV}_{\rho}(M,0).$
Then there is $x\in H$ such that 
\[
M_{1}u_{0}=M_{0}x
\]
or equivalently, 
\[
\left(\begin{array}{cc}
J & 0\\
0 & 1
\end{array}\right)Q^{-1}u_{0}=\left(\begin{array}{cc}
1 & 0\\
0 & 0
\end{array}\right)Q^{-1}x,
\]
which yields $u_{0}\in B.$ 
\end{example}
In the special setting of differential-algebraic equations we are
able to strengthen the Hille-Yosida type result \prettyref{thm:sg}
in the following way.

\newpage{}
\begin{prop}
\label{prop:HY}Let $\rho>\max\{s_{0}(M,A),0\}$. Then the following
statements are equivalent:

\begin{enumerate}[(i)]

\item There exists $\mu\leq\rho$ such that $T_{(M,A)}$ extends
to a $C_{0}$-semigroup on $X_{\rho}^{\mu}$.

\item There exists $\mu\leq\rho$ such that \prettyref{eq:Hille_Yosida}
holds.

\item There exist $M\geq1$ and $\omega\geq\rho$, such that for
each $x\in\mathrm{IV}_{\rho}(M,A),n\in\mathbb{N}$ we have 
\begin{equation}
\frac{1}{n!}|s_{x}^{(n)}(\lambda)|_{H}\leq\frac{M}{(\lambda-\omega)^{n+1}}|x|_{H}\quad(\lambda>\omega),\label{eq:HY_diff_alg}
\end{equation}
where 
\begin{align*}
s_{x}:\mathbb{R}_{>\omega} & \to H\\
\lambda & \to(\lambda M_{0}+M_{1}+A)^{-1}(M_{0}x).
\end{align*}

\item $T_{(M,A)}$ extends to a $C_{0}$-semigroup on $X_{\rho}^{\mu}$
for each $\mu\leq\rho.$

\item $T_{(M,A)}^{(1)}(\cdot,0)$ extends to a $C_{0}$-semigroup
on $\overline{\mathrm{IV}_{\rho}(M,A)}^{H}$.

\end{enumerate}\end{prop}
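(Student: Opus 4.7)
The proof rests on the DAE-specific simplifications provided by \prettyref{prop:material_law_amnesic}: since $M(z) = M_{0}+z^{-1}M_{1}$, we have $K_{\rho}g = 0$ and $\Gamma_{(M,A)}^{\rho}g = M_{0}g(0-)$ for every $g\in\mathrm{His}_{\rho}(M,A)$, so
\[
r_{g}(\lambda)\;=\;\tfrac{1}{\sqrt{2\pi}}\bigl(\lambda M_{0}+M_{1}+A\bigr)^{-1}M_{0}g(0-)\;=\;\tfrac{1}{\sqrt{2\pi}}\,s_{g(0-)}(\lambda),
\]
and likewise $v = S_{\rho}(\delta_{0}M_{0}g(0-))$ depends only on $x\coloneqq g(0-)$. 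In particular $T_{(M,A)}^{(1)}(t)(x,g)$ defines a map $U(t)x$ that is independent of the particular history $g$; by \prettyref{prop:space_invariant} and \prettyref{prop:T is sg}, $U$ is a semigroup on $\mathrm{IV}_{\rho}(M,A)$ with values in $\mathrm{IV}_{\rho}(M,A)$.

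The equivalence (i)$\Leftrightarrow$(ii) is \prettyref{thm:sg}. The direction (iii)$\Rightarrow$(ii) is immediate from $r_{g}=\tfrac{1}{\sqrt{2\pi}}s_{g(0-)}$ and $|g(0-)|_{H}\leq|(g(0-),g)|_{H\times H_{\mu}(\mathbb{R}_{\leq0};H)}$. For (ii)$\Rightarrow$(iii), given $x\in\mathrm{IV}_{\rho}(M,A)$ I pick $\psi\in C^{\infty}(\mathbb{R})$ with $\psi(0)=1$ and $\mathrm{supp}\,\psi\subseteq[-1,\infty[$ and define $g_{\epsilon}(s)\coloneqq\psi(s/\epsilon)x$ for $s\leq0$; the function $g_{\epsilon}+\chi_{\mathbb{R}_{\geq0}}x$ is continuous with bounded compactly supported classical derivative and an $H_{\rho}$-tail (recall $\rho>0$), hence belongs to $H_{\rho}^{1}(\mathbb{R};H)$, so $g_{\epsilon}\in\chi_{\mathbb{R}_{\leq0}}[H_{\rho}^{1}]$ by \prettyref{lem:char_chi_H^1}. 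Since $g_{\epsilon}(0-)=x\in\mathrm{IV}_{\rho}(M,A)$, the amnesic description $\mathrm{His}_{\rho}(M,A)=\{g\in\chi_{\mathbb{R}_{\leq0}}[H_{\rho}^{1}]\,;\,g(0-)\in\mathrm{IV}_{\rho}(M,A)\}$ from \prettyref{prop:material_law_amnesic} gives $g_{\epsilon}\in\mathrm{His}_{\rho}(M,A)$, and a direct estimate yields $|g_{\epsilon}|_{H_{\mu}(\mathbb{R}_{\leq0};H)}\to0$ as $\epsilon\downarrow0$. Inserting $g_{\epsilon}$ into (ii) and passing to the limit produces (iii). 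The resulting bound depends only on $|x|_{H}$, so (ii) in turn holds for every $\mu\leq\rho$ with the same constants; \prettyref{thm:sg} then furnishes the semigroup on $X_{\rho}^{\mu}$ for each such $\mu$, proving (iii)$\Rightarrow$(iv); finally (iv)$\Rightarrow$(i) is trivial.

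It remains to show (iii)$\Leftrightarrow$(v), which is the classical Hille--Yosida theorem applied to the reduced semigroup $U$. By \prettyref{lem:r_g} (together with the identity $r_{g}=\tfrac{1}{\sqrt{2\pi}}s_{g(0-)}$), one has $s_{x}(\lambda)=\mathcal{L}_{\lambda}(U(\cdot)x)(0)$ once $U(\cdot)x$ is exponentially bounded. If (v) holds, write $s_{x}(\lambda)=(\lambda-A_{U})^{-1}x$ for $x\in\mathrm{IV}_{\rho}(M,A)$ and read off the standard resolvent bound $\tfrac{1}{n!}\|(\lambda-A_{U})^{-(n+1)}\|\leq M/(\lambda-\omega)^{n+1}$ to obtain (iii). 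Conversely, (iii) together with the already proven (iii)$\Rightarrow$(i) and the proof of \prettyref{thm:sg} deliver $U(\cdot)x\in C_{\omega}(\mathbb{R}_{\geq0};H)$ with $|U(t)x|_{H}\leq\tilde{M}\e^{\omega t}|x|_{H}$; since $U(t)$ preserves $\mathrm{IV}_{\rho}(M,A)$ by \prettyref{prop:space_invariant} and satisfies the semigroup law on it by \prettyref{prop:T is sg}, continuous extension by density to $\overline{\mathrm{IV}_{\rho}(M,A)}^{H}$ produces the $C_{0}$-semigroup of (v). The principal technical point in this chain is the cutoff construction in (ii)$\Rightarrow$(iii): producing a family of admissible histories with prescribed initial value $x$ and vanishing $H_{\mu}$-norm is only possible thanks to the amnesic characterization of $\mathrm{His}_{\rho}(M,A)$, and it is this step that removes the history-dependence from the Hille--Yosida estimate.
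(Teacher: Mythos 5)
Your proof is correct and follows essentially the same strategy as the paper's: the amnesic identities $K_{\rho}g=0$, $\Gamma_{(M,A)}^{\rho}g=M_{0}g(0-)$ reduce everything to the initial value, the cutoff histories with prescribed value $x$ and vanishing $H_{\mu}$-norm (your smooth $g_{\epsilon}$ versus the paper's piecewise-linear $g_{k}$) give (ii)$\Rightarrow$(iii), and the remaining implications follow from \prettyref{thm:sg}. The only cosmetic difference is the arrangement of the cycle — the paper closes it via (iv)$\Rightarrow$(v)$\Rightarrow$(i) while you attach (v) to (iii) through the Laplace-transform identity $s_{x}(\lambda)=\int_{0}^{\infty}\e^{-\lambda t}U(t)x\,\dd t$ — but both routes rest on the same facts.
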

\begin{proof}
(i) $\Rightarrow$ (ii) holds by \prettyref{thm:sg}.\\
(ii) $\Rightarrow$(iii): There exist $\mu\leq\rho$, $M\geq1,\omega\geq\rho$
such that 
\[
\frac{1}{n!}|r_{g}^{(n)}(\lambda)|_{H}\leq\frac{M}{(\lambda-\omega)^{n+1}}|(g(0-),g)|_{X_{\rho}^{\mu}}\quad(\lambda>\omega,n\in\mathbb{N})
\]
for each $g\in\mathrm{His}_{\rho}(M,A).$ Let $x\in\mathrm{IV}_{\rho}(M,A)$
and define $g_{k}(t)\coloneqq\chi_{[-\frac{1}{k},0]}(t)(kt+1)x$ for
$k\in\mathbb{N},t\leq0.$ Then $g_{k}\in\mathrm{His}_{\rho}(M,A)$
by \prettyref{prop:material_law_amnesic} and $g_{k}(0-)=x.$ Recall
that 
\[
r_{g_{k}}(\lambda)=(\lambda M_{0}+M_{1}+A)^{-1}(M_{0}x)=s_{x}(\lambda)\quad(\lambda>\omega)
\]
by \prettyref{prop:material_law_amnesic} for each $k\in\mathbb{N}.$
Thus, we have that 
\[
\frac{1}{n!}|s_{x}^{(n)}(\lambda)|_{H}=\lim_{k\to\infty}\frac{1}{n!}|r_{g_{k}}^{(n)}(\lambda)|_{H}\leq\lim_{k\to\infty}\frac{M}{(\lambda-\omega)^{n+1}}|(x,g_{k})|_{X_{\rho}^{\mu}}=\frac{M}{(\lambda-\omega)^{n+1}}|x|_{H}
\]
for every $n\in\mathbb{N}$ and $\lambda>\omega$.\\
(iii) $\Rightarrow$(iv): Let $\mu\leq\rho.$ We recall that for each
$g\in\mathrm{His}_{\rho}(M,A)$ we have that 
\[
r_{g}=s_{g(0-)}.
\]
Thus, 
\[
\frac{1}{n!}|r_{g}^{(n)}(\lambda)|_{H}=\frac{1}{n!}|s_{g(0-)}^{(n)}(\lambda)|_{H}\leq\frac{M}{(\lambda-\omega)^{n+1}}|g(0-)|_{H}\leq\frac{M}{(\lambda-\omega)^{n+1}}|\left(g(0-),g\right)|_{H\times H_{\mu}(\mathbb{R}_{\leq0};H)}
\]
and hence, the assertion follows from \prettyref{thm:sg}.\\
(iv) $\Rightarrow$(v): Let $\mu\leq\rho$. We first prove that $(x,0)\in X_{\rho}^{\mu}$
for each $x\in\overline{\mathrm{IV}_{\rho}(M,A)}$. For doing so,
let $x\in\overline{\mathrm{IV}_{\rho}(M,A)}$ and choose a sequence
$(x_{n})_{n\in\mathbb{N}}$ in $\mathrm{IV}_{\rho}(M,A)$ such that
$x_{n}\to x$ as $n\to\infty.$ Define $g_{n}(t)\coloneqq\chi_{[-\frac{1}{k},0]}(t)(kt+1)x_{n}$
for $n\in\mathbb{N},t\leq0.$ Then $g_{n}\in\mathrm{His}_{\rho}(M,A)$
and thus, $(x_{n},g_{n})\in D_{\rho}.$ Since $g_{n}\to0$ in $H_{\mu}(\mathbb{R}_{\leq0};H)$
we infer that $(x,0)\in X_{\rho}^{\mu}.$ Hence, $T_{(M,A)}^{(1)}(\cdot,0)$
is a well-defined semigroup on $\overline{\mathrm{IV}_{\rho}(M,A)},$
which is strongly continuous, since $T_{(M,A)}$ is strongly continuous.
\\
(v) $\Rightarrow$(i): It suffices to prove that the operator $T_{(M,A)}:D_{\rho}\subseteq X_{\rho}^{\mu}\to C_{\omega}(\mathbb{R}_{\geq0};H\times H_{\mu}(\mathbb{R}_{\leq0};H))$
is bounded for some $\omega\in\mathbb{R}.$ By assumption, we have
that 
\[
T_{(M,A)}^{(1)}(\cdot,0):\overline{\mathrm{IV}_{\rho}(M,A)}\to C_{\omega'}(\mathbb{R}_{\geq0};H)
\]
is bounded for some $\omega'\in\mathbb{R}$. We assume without loss
of generality that $\omega'\geq\rho$ and set $\omega\coloneqq\omega'+1.$
Let $g\in\mathrm{His}_{\rho}(M,A)$ and recall that $K_{\rho}g=0$
by \prettyref{prop:material_law_amnesic}. Then we have that 
\[
T_{(M,A)}^{(1)}(\cdot)(g(0-),g)=S_{\rho}\left(\delta_{0}\Gamma_{(M,A)}^{\rho}g-K_{\rho}g\right)=S_{\rho}(\delta_{0}M_{0}g(0-))=T_{(M,A)}^{(1)}(\cdot)(g(0-),0)
\]
and thus,
\[
T_{(M,A)}^{(1)}:D_{\rho}\subseteq X_{\rho}^{\mu}\to C_{\omega'}(\mathbb{R}_{\geq0};H)
\]
is bounded. By \prettyref{lem:boundedness T1 implies T2} we infer
that 
\[
T_{(M,A)}^{(2)}:D_{\rho}\subseteq X_{\rho}^{\mu}\to C_{\omega}(\mathbb{R}_{\geq0};H)
\]
is bounded, which gives the assertion.\end{proof}
\begin{rem}
\label{rem:HY_classic}\begin{enumerate}[(a)]

\item An easy computation gives that 
\[
s_{x}^{(n)}(\lambda)=(-1)^{n}n!\left(\left(\lambda M_{0}+M_{1}+A\right)^{-1}M_{0}\right)^{n+1}x
\]
for each $n\in\mathbb{N},x\in H,\lambda\geq\rho,$ where $s_{x}$
is given as in \prettyref{prop:HY} (iii). Hence, \prettyref{eq:HY_diff_alg}
holds if and only if 
\[
\left|\left(\left(\lambda M_{0}+M_{1}+A\right)^{-1}M_{0}\right)^{n+1}x\right|_{H}\leq\frac{M}{(\lambda-\omega)^{n+1}}|x|_{H}\quad(\lambda>\omega,n\in\mathbb{N},x\in\mathrm{IV}_{\rho}(M,A))
\]
for some $M\geq1,\omega\geq\rho.$ 

\begin{comment}
\item If $M_{0}$ is selfadjoint and non-negative and 
\begin{align*}
\left\Vert \sqrt{M_{0}}(\lambda M_{0}+M_{1}+A)^{-1}\sqrt{M_{0}}\right\Vert  & \leq\frac{1}{\lambda-\omega}\quad(\lambda>\omega),\\
\left\Vert (\lambda M_{0}+M_{1}+A)^{-1}\sqrt{M_{0}}\right\Vert  & \leq\frac{M}{\lambda-\omega}\quad(\lambda>\omega),
\end{align*}
for some $M\geq1,\omega\geq\rho,$ then \prettyref{eq:HY_diff_alg}
holds. Indeed, we have that 
\[
\left(\left(\lambda M_{0}+M_{1}+A\right)^{-1}M_{0}\right)^{n+1}x=(\lambda M_{0}+M_{1}+A)^{-1}\sqrt{M_{0}}\left(\sqrt{M_{0}}\left(\lambda M_{0}+M_{1}+A\right)^{-1}\sqrt{M_{0}}\right)^{n}\sqrt{M_{0}}x
\]
and thus,
\[
\left|\left(\left(\lambda M_{0}+M_{1}+A\right)^{-1}M_{0}\right)^{n+1}x\right|\leq\frac{M\|\sqrt{M_{0}}\|}{(\lambda-\omega)^{n+1}}|x|\quad(\lambda>\omega),
\]
which proves the claim by part (a).
\end{comment}

\item If $M_{0}=1,M_{1}=0$ the equivalence (iii)$\Leftrightarrow$
(v) is the well-known Hille-Yosida Theorem (see e.g. \cite[Ch. II, Th. 3.8]{engel2000one}).
Indeed, we have that 
\[
s_{x}(\lambda)=(\lambda+A)^{-1}x
\]
for $x\in\mathrm{IV}_{\rho}(M,A)$ and hence, (iii) in \prettyref{prop:HY}
gives that 
\[
|(\lambda+A)^{-n}x|_{H}\leq\frac{M}{(\lambda-\omega)^{n}}|x|_{H}
\]
 for some $M\geq1$ and $\omega\geq\rho.$ Using $D(A^{2})\subseteq\mathrm{IV}_{\rho}(M,A)$
by \prettyref{rem:IV_evolution} we derive 
\[
\|(\lambda+A)^{-n}\|\leq\frac{M}{(\lambda-\omega)^{n}},
\]
which is the classical Hille-Yosida condition for $-A$ to be a generator
of a $C_{0}$-semigroup which is nothing but $T_{(1,A)}^{(1)}(\cdot,0)$
on $\overline{\mathrm{IV}_{\rho}(M,A)}=H$. 

\end{enumerate}\end{rem}
\begin{prop}
\label{prop:GP_amnesic} Let $\rho>\max\{s_{0}(M,A),0\}$ and assume
that \prettyref{eq:Hille_Yosida} holds. Then $\omega(T_{(M,A)})\leq s_{0}(M,A).$ \end{prop}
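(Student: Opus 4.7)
The plan is to deduce the estimate as a direct consequence of the generalized Gearhart--Prüss theorem \prettyref{thm:G-P}, once we verify that in the present differential-algebraic setting all its hypotheses are satisfied for every $\mu$ with $s_{0}(M,A)<\mu\leq\rho$.

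First I would observe that, by \prettyref{prop:HY} (ii)$\Rightarrow$(iv), the standing assumption \prettyref{eq:Hille_Yosida} implies that $T_{(M,A)}$ extends to a $C_{0}$-semigroup on $X_{\rho}^{\mu}$ for \emph{every} $\mu\leq\rho$. In particular, given any $\mu$ with $s_{0}(M,A)<\mu\leq\rho$ we may work on that space. Since $D(M)=\mathbb{C}\setminus\{0\}$, the set $\mathbb{C}_{\Re>\mu}\setminus D(M)\subseteq\{0\}$ is discrete, so the first hypothesis of \prettyref{thm:G-P} is trivially satisfied.

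Next I would compute $\Phi_{\lambda}$ explicitly. For $z\in\mathbb{C}_{\Re>\mu}\cap D(M)$ and $\lambda>0$,
\[
\Phi_{\lambda}(z)=zM(z)-(z+\lambda)M(z+\lambda)=\bigl(zM_{0}+M_{1}\bigr)-\bigl((z+\lambda)M_{0}+M_{1}\bigr)=-\lambda M_{0},
\]
which is constant in $z$, hence bounded in $L(H)$. This verifies \prettyref{eq:GP1}. Finally, for \prettyref{eq:GP2}, \prettyref{prop:material_law_amnesic} yields $K_{\rho}g=0$ for every $g\in\chi_{\mathbb{R}_{\leq0}}(\m)[H_{\rho}^{1}(\mathbb{R};H)]$, and in particular for every $g\in\mathrm{His}_{\rho}(M,A)$. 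Thus $K_{\rho}$, restricted to the admissible histories, is the zero operator and is trivially bounded as a map into $H_{\mu}(\mathbb{R}_{\geq0};H)$.

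With the three hypotheses of \prettyref{thm:G-P} in place, I conclude $\omega(T_{(M,A)})\leq\mu$ for every $s_{0}(M,A)<\mu\leq\rho$. Taking the infimum over such $\mu$ yields $\omega(T_{(M,A)})\leq s_{0}(M,A)$, which is the claim. No genuine obstacle arises: the whole point is that an amnesic material law trivialises the memory term $K_{\rho}$ and reduces $\Phi_{\lambda}$ to a constant, so the only real content is the application of the already-proved \prettyref{thm:G-P}.
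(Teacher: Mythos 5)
Your proposal is correct and follows exactly the same route as the paper: verify discreteness of $\mathbb{C}_{\Re>\mu}\setminus D(M)$, use \prettyref{prop:HY} to get the semigroup on $X_{\rho}^{\mu}$, observe $\Phi_{\lambda}=-\lambda M_{0}$ and $K_{\rho}=0$ via \prettyref{prop:material_law_amnesic}, then apply \prettyref{thm:G-P} and let $\mu\downarrow s_{0}(M,A)$. No differences worth noting.
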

\begin{proof}
Let $s_{0}(M,A)<\mu\leq\rho.$ We apply \prettyref{thm:G-P}. First
we note that $\mathbb{C}_{\Re>\mu}\setminus D(M)\subseteq\{0\}$ and
hence, it is discrete. Moreover, by \prettyref{prop:HY} $T_{(M,A)}$
extends to a $C_{0}$-semigroup on $X_{\rho}^{\mu}.$ We have to show
that \prettyref{eq:GP1} and \prettyref{eq:GP2} hold. This, however
is clear, since 
\[
\Phi_{\lambda}(z)=zM(z)-(z+\lambda)M(z+\lambda)=-\lambda M_{0}
\]
for each $z\in\mathbb{C}_{\Re>\mu}\setminus\{0\},\lambda>0$ and hence,
\prettyref{eq:GP1} is satisfied. Furthermore, $K_{\rho}=0$ by \prettyref{prop:material_law_amnesic}
and hence, \prettyref{eq:GP2} holds trivially. Thus, \prettyref{thm:G-P}
is applicable and hence, $\omega(T_{(M,A)})\leq\mu.$ Since $\mu>s_{0}(M,A)$
was arbitrary, we infer $\omega(T_{(M,A)})\leq s_{0}(M,A).$ \end{proof}
\begin{rem}
In the case $M_{0}=1,M_{1}=0$ we get $\omega(T)\leq s_{0}(1,A)$,
which is the well-known Gearhart-Prüß Theorem for $C_{0}$-semigroups
on Hilbert spaces (see e.g. \cite{Pruss1984} or \cite[Ch. V,Th. 1.11]{engel2000one}). 
\end{rem}
We conclude this subsection, by discussing an abstract version of
the heat equation in the presented framework.
\begin{example}
\label{exa:diff_alg}Let $H_{0},H_{1}$ be Hilbert spaces and $C:D(C)\subseteq H_{0}\to H_{1}$
densely defined closed and linear. We consider the evolutionary equation
\[
\left(\partial_{0,\rho}\left(\begin{array}{cc}
\eta & 0\\
0 & 0
\end{array}\right)+\left(\begin{array}{cc}
0 & 0\\
0 & k^{-1}
\end{array}\right)+\left(\begin{array}{cc}
0 & -C^{\ast}\\
C & 0
\end{array}\right)\right)\left(\begin{array}{c}
\vartheta\\
q
\end{array}\right)=F,
\]
where $\eta\in L(H_{0}),k\in L(H_{1})$ are selfadjoint, strictly
$m$-accretive operators with 
\begin{align*}
\langle\eta x_{0}|x_{0}\rangle_{H_{0}} & \geq c_{0}|x_{0}|_{H_{0}}^{2},\\
\langle kx_{1}|x_{1}\rangle_{H_{1}} & \geq c_{1}|x_{1}|_{H_{1}}^{2}
\end{align*}
for some $c_{0},c_{1}>0$ and each $(x_{0},x_{1})\in H_{0}\oplus H_{1}\eqqcolon H$.
Thus, in the setting of differential-algebraic equations we have 
\[
M_{0}=\left(\begin{array}{cc}
\eta & 0\\
0 & 0
\end{array}\right),\quad M_{1}=\left(\begin{array}{cc}
0 & 0\\
0 & k^{-1}
\end{array}\right),\quad A=\left(\begin{array}{cc}
0 & -C^{\ast}\\
C & 0
\end{array}\right).
\]
We note that the evolutionary problem is well-posed by \prettyref{prop:classical_material_law}.
We have that $s_{0}(M,A)\leq0.$ Indeed, for $\rho>0$ and $z\in\mathbb{C}_{\Re>\rho}$
we get 
\[
\Re\langle(zM_{0}+M_{1})x|x\rangle_{H}\geq\min\left\{ \rho c_{0},c_{1}^{-1}\right\} |x|_{H}^{2}
\]
for each $x\in H$. Hence $(zM_{0}+M_{1})$ is strictly accretive
and by \prettyref{prop:pert_accretive}, so is $zM_{0}+M_{1}+A$ and
we get 
\[
\|(zM_{0}+M_{1}+A)^{-1}\|\leq\frac{1}{\min\left\{ \rho c_{0},c_{1}^{-1}\right\} }.
\]
This proves $s_{0}(M,A)\leq0.$ If $C$ is boundedly invertible, we
even obtain $s_{0}(M,A)<0$ by \prettyref{prop:exp_decay_para}.\\
We want to determine $\overline{\mathrm{IV}_{\rho}(M,A)}.$ For doing
so, let $\rho>0$ and $u_{0}=(v,w)\in\mathrm{IV}_{\rho}(M,A)$. Then
it follows that 
\[
S_{\rho}\left(\delta_{0}M_{0}u_{0}\right)-\chi_{\mathbb{R}_{\geq0}}u_{0}\in H_{\rho}^{1}(\mathbb{R};H).
\]
We set 
\[
\left(\begin{array}{c}
\vartheta\\
q
\end{array}\right)\coloneqq S_{\rho}\left(\delta_{0}M_{0}u_{0}\right)=\left(\overline{\partial_{0,\rho}\left(\begin{array}{cc}
\eta & 0\\
0 & 0
\end{array}\right)+\left(\begin{array}{cc}
0 & 0\\
0 & k^{-1}
\end{array}\right)+\left(\begin{array}{cc}
0 & -C^{\ast}\\
C & 0
\end{array}\right)}\right)^{-1}\delta_{0}\left(\begin{array}{c}
\eta v\\
0
\end{array}\right),
\]
which by definition needs to belong to $H_{\rho}(\mathbb{R};H).$
The equations read 
\begin{align*}
\partial_{0,\rho}\eta\vartheta-C^{\ast}q & =\delta_{0}\eta v,\\
k^{-1}q+C\vartheta & =0,
\end{align*}
and thus, the second equation yields $\vartheta\in D(C).$ Furthermore,
since $q(t)\to w$ as $t\to0+,$ we derive that $kC\vartheta(t)=-q(t)\to-w$
as $t\to0+.$ Since also $\vartheta(t)\to v$ as $t\to0+,$ we infer
from the closedness of $C$ that $v\in D(C)$ and $w=-kCv.$ Thus,
\[
\mathrm{IV}_{\rho}(M,A)\subseteq\left\{ (v,-kCv)\,;\, v\in D(C)\right\} .
\]
To find an appropriate subset of $\mathrm{IV}_{\rho}(M,A)$ we apply
\prettyref{prop:IV_diff_alg}. Let $u_{0}=(v,w)\in D(A).$ Then $(M_{1}+A)u_{0}\in R(M_{0})$
yields 
\[
k^{-1}w+Cv=0
\]
and hence, $w=-kCv.$ Since $w\in D(C^{\ast})$ we infer $v\in D(C^{\ast}kC).$
Moreover, since we have that $\iota_{0}\left(\iota_{1}^{\ast}M_{0}\iota_{0}\right)^{-1}\iota_{1}^{\ast}(M_{1}+A)u_{0}\in D(A)$,
we derive 
\[
\eta^{-1}C^{\ast}w\in D(C)
\]
 and thus, $v\in D(C\eta^{-1}C^{\ast}kC).$ Thus, we have that 
\[
\left\{ (v,-kCv)\,;\, v\in D(C\eta^{-1}C^{\ast}kC)\right\} \subseteq\mathrm{IV}_{\rho}(M,A).
\]
Both inclusions yield 
\[
\overline{\mathrm{IV}_{\rho}(M,A)}=\left\{ (v,-kCv)\,;\, v\in D(C)\right\} \eqqcolon G.
\]
If we show that \prettyref{eq:HY_diff_alg} is satisfied, the semigroup
$T_{(M,A)}^{(1)}(\cdot,0)$ extends to a $C_{0}$-semigroup on $G$.
Let $v\in D(C),\lambda>0$ and define 
\[
\left(\begin{array}{c}
u_{n}\\
v_{n}
\end{array}\right)\coloneqq\left(\left(\begin{array}{cc}
\lambda\eta & -C^{\ast}\\
C & k^{-1}
\end{array}\right)^{-1}\left(\begin{array}{cc}
\eta & 0\\
0 & 0
\end{array}\right)\right)^{n}\left(\begin{array}{c}
v\\
-kCv
\end{array}\right)
\]
for $n\in\mathbb{N}.$ Then, 
\begin{align*}
u_{n} & =\left(\left(\lambda\eta+C^{\ast}kC\right)^{-1}\eta\right)^{n}v\\
v_{n} & =-kC\left(\left(\lambda\eta+C^{\ast}kC\right)^{-1}\eta\right)^{n}v.
\end{align*}
First, we estimate $u_{n}$ in terms of $v$. For doing so, we compute
\[
\left(\lambda\eta+C^{\ast}kC\right)^{-1}=\sqrt{\eta^{-1}}\left(\lambda+\sqrt{\eta^{-1}}C^{\ast}kC\sqrt{\eta^{-1}}\right)^{-1}\sqrt{\eta^{-1}}
\]
and consequently 
\begin{align*}
\left(\left(\lambda\eta+C^{\ast}kC\right)^{-1}\eta\right)^{n} & =\left(\sqrt{\eta^{-1}}\left(\lambda+\sqrt{\eta^{-1}}C^{\ast}kC\sqrt{\eta^{-1}}\right)^{-1}\sqrt{\eta}\right)^{n}\\
 & =\sqrt{\eta^{-1}}\left(\lambda+\sqrt{\eta^{-1}}C^{\ast}kC\sqrt{\eta^{-1}}\right)^{-n}\sqrt{\eta}.
\end{align*}
Since $\sqrt{\eta^{-1}}C^{\ast}kC\sqrt{\eta^{-1}}$ is a selfadjoint
accretive operator on $H_{0},$ we get that 
\[
|u_{n}|_{H_{0}}\leq\frac{\|\sqrt{\eta}\|\sqrt{c_{0}^{-1}}}{\lambda^{n}}|v|_{H_{0}}.
\]
Moreover, by the above computations, we have that 
\begin{align*}
-kC\left(\left(\lambda\eta+C^{\ast}kC\right)^{-1}\eta\right)^{n} & =-kC\sqrt{\eta^{-1}}\left(\lambda+\sqrt{\eta^{-1}}C^{\ast}kC\sqrt{\eta^{-1}}\right)^{-n}\sqrt{\eta}\\
 & \supseteq\sqrt{k}(\lambda+\sqrt{k}C\eta^{-1}C^{\ast}\sqrt{k})^{-n}(-\sqrt{k}C)
\end{align*}
and thus, 
\[
v_{n}=\sqrt{k}\left(\lambda+\sqrt{k}C\eta^{-1}C^{\ast}\sqrt{k}\right)^{-n}(-\sqrt{k}Cv).
\]
Since $\sqrt{k}C\eta^{-1}C^{\ast}\sqrt{k}$ is a selfadjoint accretive
operator, we infer 
\[
|v_{n}|_{H_{1}}\leq\frac{\|\sqrt{k}\|}{\lambda^{n}}|\sqrt{k}Cv|_{H_{1}}\leq\frac{\|\sqrt{k}\||\sqrt{c_{1}^{-1}}}{\lambda^{n}}|kCv|_{H_{1}}.
\]
Summarizing, we have shown that 
\[
\left|\left(\left(\begin{array}{cc}
\lambda\eta & -C^{\ast}\\
C & k^{-1}
\end{array}\right)^{-1}\left(\begin{array}{cc}
\eta & 0\\
0 & 0
\end{array}\right)\right)^{n}\left(\begin{array}{c}
v\\
-kCv
\end{array}\right)\right|_{H}\leq\frac{M}{\lambda^{n}}\left|\left(\begin{array}{c}
v\\
-kCv
\end{array}\right)\right|_{H}
\]
for some $M\geq1$, which is \prettyref{eq:HY_diff_alg} according
to \prettyref{rem:HY_classic} (a). Moreover $\omega(T_{(M,A)}^{(1)}(\cdot,0))\leq s_{0}(M,A)$
by \prettyref{prop:GP_amnesic}.\end{example}
\begin{rem}
If we choose $H_{0}=L_{2}(\Omega)$ and $H_{1}=R(\grad_{0})$ for
some bounded domain $\Omega\subseteq\mathbb{R}^{3}$ and $C\coloneqq\iota_{R(\grad_{0})}^{\ast}\grad_{0},$
we end up with the heat equation. Thus, we can associate a $C_{0}-$semigroup
on the Hilbert space $G=\left\{ (v,-k\grad_{0}v)\,;\, v\in D(\grad_{0})\right\} \subseteq L_{2}(\Omega)\oplus R(\grad_{0})$,
which is exponentially stable, since $C$ is boundedly invertible
due to the Poincare inequality. We emphasize that the $C_{0}$-semigroup
is not the classical one, since we require the continuity with respect
to time for both unknowns, the temperature and the heat flux. This
is why we need the Hilbert space $G$ instead of the usual space $L_{2}(\Omega).$ 
\end{rem}

\subsection{Partial differential equations with finite delay }

We generalize the setting of the previous subsection and consider
now material laws of the form 
\[
M(z)\coloneqq M_{0}+z^{-1}(M_{1}+\sum_{i=2}^{n}M_{i}\e^{-zh_{i}})\quad(z\in\mathbb{C}\setminus\{0\}),
\]
where $M_{0},M_{1},M_{i}\in L(H)$ and $h_{i}>0,\: i\in\{2,\ldots,n\}.$
Then, $M$ satisfies \prettyref{eq:M_reg} with $b(M)=0$ and for
$\rho>0,x\in H$ we get 
\[
M(\partial_{0,\rho})\chi_{\mathbb{R}_{\geq0}}x=\left(t\mapsto\chi_{\mathbb{R}_{\geq0}}(t)M_{0}x+t\chi_{\mathbb{R}_{\geq0}}(t)M_{1}x+\sum_{i=2}^{n}(t-h_{i})\chi_{\mathbb{R}_{\geq h_{i}}}(t)M_{i}x\right)
\]
which yields 
\[
\left(M(\partial_{0,\rho})\chi_{\mathbb{R}_{\geq0}}x\right)(0+)=M_{0}x.
\]
Moreover, we have that 
\begin{align*}
K_{\rho}g & =P_{0}\partial_{0,\rho}M(\partial_{0,\rho})g\\
 & =\chi_{\mathbb{R}_{\geq0}}(\m)\sum_{i=2}^{n}M_{i}\tau_{-h_{i}}g
\end{align*}
for each $g\in\chi_{\mathbb{R}_{\leq0}}(\m)\left[H_{\rho}^{1}(\mathbb{R};H)\right]$.
Thus, a function $g\in\chi_{\mathbb{R}_{\leq0}}(\m)[H_{\rho}^{1}(\mathbb{R};H)]$
for some $\rho>\max\{s_{0}(M,A),0\}$ belongs the space of admissible
histories $\mathrm{His}_{\rho}(M,A)$ if and only if 
\[
S_{\rho}\left(\delta_{0}M_{0}g(0-)-\chi_{\mathbb{R}_{\geq0}}(\m)\sum_{i=2}^{n}M_{i}\tau_{-h_{i}}g\right)-\chi_{\mathbb{R}_{\geq0}}g(0-)\in H_{\rho}^{1}(\mathbb{R};H).
\]
Similar to the previous section, we describe a subset of $\mathrm{His}_{\rho}(M,A)$.
\begin{prop}
\label{prop:His_delay} Let $R(M_{0})$ be closed and denote by $\iota_{0}:N(M_{0})^{\bot}\to H$
and $\iota_{1}:R(M_{0})\to H$ the canonical embeddings of $N(M_{0})^{\bot}$
and $R(M_{0}),$ respectively. Then, $\iota_{1}^{\ast}M_{0}\iota_{0}:N(M_{0})^{\bot}\to R(M_{0})$
is boundedly invertible and each function $g\in\chi_{\mathbb{R}_{\leq0}}(\m)\left[H_{\rho}^{1}(\mathbb{R};H)\right],\rho>0$
satisfying

\begin{itemize}

\item $g(0-)\in D(A),$

\item $(M_{1}+A)g(0-)\in R(M_{0})$,

\item $\iota_{0}(\iota_{1}^{\ast}M_{0}\iota_{0})^{-1}\iota_{1}^{\ast}(M_{1}+A)g(0-)\in D(A),$

\item $M_{i}g(-h_{i})=0$ for each $i\in\{2,\ldots,n\}$, 

\end{itemize}

belongs to $\mathrm{His}_{\rho}(M,A).$\end{prop}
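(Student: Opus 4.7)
The plan is to mimic the strategy of \prettyref{prop:IV_diff_alg}, while carefully tracking the extra distributional impulses produced by the delay terms. First, $\iota_{1}^{\ast}M_{0}\iota_{0}\colon N(M_{0})^{\bot}\to R(M_{0})$ is a bijective closed operator between Hilbert spaces ($R(M_{0})$ being closed by hypothesis), so the closed graph theorem yields its bounded invertibility. Setting
\[
x\coloneqq\iota_{0}\bigl(\iota_{1}^{\ast}M_{0}\iota_{0}\bigr)^{-1}\iota_{1}^{\ast}(M_{1}+A)g(0-)\in D(A),
\]
one has $M_{0}x=(M_{1}+A)g(0-)$. A direct evaluation of the delay material law on $\chi_{\mathbb{R}_{\geq0}}g(0-)$ gives $\Gamma_{(M,A)}^{\rho}g=M_{0}g(0-)$ via \prettyref{lem:Gamma}, so $g\in\mathrm{His}_{\rho}(M,A)$ reduces to proving that
\[
v\coloneqq S_{\rho}\bigl(\delta_{0}M_{0}g(0-)-K_{\rho}g\bigr)-\chi_{\mathbb{R}_{\geq0}}g(0-)\in H_{\rho}^{1}(\mathbb{R};H).
\]
Using $g(0-)\in D(A)$ I would compute
\[
(\partial_{0,\rho}M(\partial_{0,\rho})+A)\chi_{\mathbb{R}_{\geq0}}g(0-)=\delta_{0}M_{0}g(0-)+\chi_{\mathbb{R}_{\geq0}}(M_{1}+A)g(0-)+\sum_{i=2}^{n}\chi_{\mathbb{R}_{\geq h_{i}}}M_{i}g(0-),
\]
and substitute this into the definition of $v$ to obtain $v=-S_{\rho}R$, where $R$ collects the three $H_{\rho}$-summands above together with $K_{\rho}g$. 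This already places $v$ in $H_{\rho}(\mathbb{R};H)$; since $S_{\rho}$ commutes with $\partial_{0,\rho}$ by \prettyref{prop:continuation_fcts_of_partial_0}, the substance of the proof is to show $\partial_{0,\rho}R\in H_{\rho}(\mathbb{R};H)$.

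The core computation is $\partial_{0,\rho}R$ in $H_{\rho}^{-1}(\mathbb{R};H)$. Writing $g=\chi_{\mathbb{R}_{\leq0}}(\m)w$ with $w\in H_{\rho}^{1}(\mathbb{R};H)$ (via \prettyref{lem:char_chi_H^1}), one obtains $K_{\rho}g=\sum_{i=2}^{n}\chi_{[0,h_{i}]}M_{i}\tau_{-h_{i}}w$ in $H_{\rho}(\mathbb{R};H)$, and two applications of \prettyref{lem:cut_off_mal_anders} give
\[
\partial_{0,\rho}\bigl(\chi_{[0,h_{i}]}M_{i}\tau_{-h_{i}}w\bigr)=\chi_{[0,h_{i}]}M_{i}\tau_{-h_{i}}\partial_{0,\rho}w+M_{i}g(-h_{i})\,\delta_{0}-\e^{-2\rho h_{i}}M_{i}g(0-)\,\delta_{h_{i}}.
\]
The delta at $h_{i}$ is cancelled exactly by $\partial_{0,\rho}(\chi_{\mathbb{R}_{\geq h_{i}}}M_{i}g(0-))=\e^{-2\rho h_{i}}M_{i}g(0-)\,\delta_{h_{i}}$ from the middle sum, and the residual origin-impulse $\sum_{i}M_{i}g(-h_{i})\,\delta_{0}$ vanishes by the fourth hypothesis. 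Combining with $\partial_{0,\rho}\chi_{\mathbb{R}_{\geq0}}(M_{1}+A)g(0-)=M_{0}x\,\delta_{0}$ leaves
\[
\partial_{0,\rho}R=M_{0}x\,\delta_{0}+\sum_{i=2}^{n}\chi_{[0,h_{i}]}M_{i}\tau_{-h_{i}}\partial_{0,\rho}w.
\]

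To close the argument, the sum on the right already belongs to $H_{\rho}(\mathbb{R};H)$, while $S_{\rho}(\delta_{0}M_{0}x)$ is disposed of by the same algebraic identity used in \prettyref{prop:IV_diff_alg}: since $x\in D(A)$,
\[
S_{\rho}(\delta_{0}M_{0}x)=\chi_{\mathbb{R}_{\geq0}}x-S_{\rho}\Bigl(\chi_{\mathbb{R}_{\geq0}}(M_{1}+A)x+\sum_{i=2}^{n}\chi_{\mathbb{R}_{\geq h_{i}}}M_{i}x\Bigr)\in H_{\rho}(\mathbb{R};H),
\]
so $\partial_{0,\rho}v=-S_{\rho}\partial_{0,\rho}R\in H_{\rho}(\mathbb{R};H)$, as required. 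The main obstacle I anticipate is the precise bookkeeping of the distributional impulses generated by $\partial_{0,\rho}K_{\rho}g$: every window $[0,h_{i}]$ contributes two jumps, and these have to balance exactly -- the right-endpoint jumps against the middle sum $\sum_{i}\chi_{\mathbb{R}_{\geq h_{i}}}M_{i}g(0-)$ (with the correct exponential weights $\e^{-2\rho h_{i}}$), and the left-endpoint jumps against the condition $M_{i}g(-h_{i})=0$. Verifying this matching is where the four hypotheses conspire to yield the required regularity.
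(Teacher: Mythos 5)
Your proof is correct and follows essentially the same route as the paper's: the closed graph theorem for the invertibility of $\iota_{1}^{\ast}M_{0}\iota_{0}$, subtraction of $(\partial_{0,\rho}M(\partial_{0,\rho})+A)\chi_{\mathbb{R}_{\geq0}}g(0-)$, the element $x\in D(A)$ with $M_{0}x=(M_{1}+A)g(0-)$ to dispose of the impulse at the origin, and the hypothesis $M_{i}g(-h_{i})=0$ to control the delay terms. The only cosmetic difference is that the paper never writes down the $\delta_{h_{i}}$ and $\delta_{0}$ impulses at all: it regroups the delay contributions as $\chi_{\mathbb{R}_{\geq0}}(\m)\sum_{i}M_{i}\tau_{-h_{i}}\left(g+\chi_{\mathbb{R}_{\geq0}}g(0-)\right)$, the cut-off of an $H_{\rho}^{1}$-function vanishing at $0$, whereas you differentiate in $H_{\rho}^{-1}$ and cancel the resulting impulses explicitly -- the two computations are equivalent.
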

\begin{proof}
The bounded invertibility of $\iota_{1}^{\ast}M_{0}\iota_{0}$ follows
by the closed graph theorem. Let now $g\in\chi_{\mathbb{R}_{\leq0}}(\m)\left[H_{\rho}^{1}(\mathbb{R};H)\right],\rho>0$
satisfying the conditions. We need to prove that 
\[
S_{\rho}\left(\delta_{0}M_{0}g(0-)-\chi_{\mathbb{R}_{\geq0}}(\m)\sum_{i=2}^{n}M_{i}\tau_{-h_{i}}g\right)-\chi_{\mathbb{R}_{\geq0}}g(0-)\in H_{\rho}^{1}(\mathbb{R};H).
\]
Since $g(0-)\in D(A)$ we get that 
\begin{align*}
 & S_{\rho}\left(\delta_{0}M_{0}g(0-)-\chi_{\mathbb{R}_{\geq0}}(\m)\sum_{i=2}^{n}M_{i}\tau_{-h_{i}}g\right)-\chi_{\mathbb{R}_{\geq0}}g(0-)\\
= & -S_{\rho}\left(\chi_{\mathbb{R}_{\geq0}}(\m)\sum_{i=2}^{n}M_{i}\tau_{-h_{i}}g+\chi_{\mathbb{R}_{\geq0}}(M_{1}+A)g(0-)+\sum_{i=2}^{n}M_{i}\tau_{-h_{i}}\chi_{\mathbb{R}_{\geq0}}g(0-)\right).
\end{align*}
Note that by assumption there is $x\in D(A)$ such that $M_{0}x=(M_{1}+A)g(0-).$
Moreover, $\spt\tau_{-h_{i}}\chi_{\mathbb{R}_{\geq0}}g(0-)\subseteq\mathbb{R}_{\geq h_{i}}$
and thus, 
\begin{align*}
 & S_{\rho}\left(\delta_{0}M_{0}g(0-)-\chi_{\mathbb{R}_{\geq0}}(\m)\sum_{i=2}^{n}M_{i}\tau_{-h_{i}}g\right)-\chi_{\mathbb{R}_{\geq0}}g(0-)\\
= & -S_{\rho}\left(\chi_{\mathbb{R}_{\geq0}}M_{0}x\right)-S_{\rho}\left(\chi_{\mathbb{R}_{\geq0}}(\m)\sum_{i=2}^{n}M_{i}\tau_{-h_{i}}(g+\chi_{\mathbb{R}_{\geq0}}g(0-))\right).
\end{align*}
Note that $g+\chi_{\mathbb{R}_{\geq0}}g(0-)\in H_{\rho}^{1}(\mathbb{R};H)$
and thus, $M_{i}\tau_{-h_{i}}(g+\chi_{\mathbb{R}_{\geq0}}g(0-))\in H_{\rho}^{1}(\mathbb{R};H).$
Furthermore, by hypothesis we have that 
\[
\left(M_{i}\tau_{-h_{i}}(g+\chi_{\mathbb{R}_{\geq0}}g(0-))\right)(0)=M_{i}g(-h_{i})=0
\]
and hence, 
\[
\chi_{\mathbb{R}_{\geq0}}(\m)\sum_{i=2}^{n}M_{i}\tau_{-h_{i}}(g+\chi_{\mathbb{R}_{\geq0}}g(0-))\in H_{\rho}^{1}(\mathbb{R};H).
\]
Thus, it suffices to check that 
\[
S_{\rho}\left(\chi_{\mathbb{R}_{\geq0}}M_{0}x\right)\in H_{\rho}^{1}(\mathbb{R};H).
\]
This, however, follows by 
\[
\partial_{0,\rho}S_{\rho}(\chi_{\mathbb{R}_{\geq0}}M_{0}x)=\chi_{\mathbb{R}_{\geq0}}x-S_{\rho}\left(\chi_{\mathbb{R}_{\geq0}}(M_{1}+A)x+\sum_{i=2}^{n}M_{i}\tau_{-h_{i}}\chi_{\mathbb{R}_{\geq0}}x\right)\in H_{\rho}(\mathbb{R};H),
\]
where we have used $x\in D(A).$ 
\end{proof}
The main difficulty lies in the verification of \prettyref{eq:Hille_Yosida}
in order to obtain a $C_{0}$-semigroup on $X_{\rho}^{\mu}$ for some
$\rho>s_{0}(M,A)$ and $\mu\leq\rho$. However, if \prettyref{eq:Hille_Yosida}
is satisfied, Conditions \prettyref{eq:GP1} and \prettyref{eq:GP2}
follow and hence, we can estimate the growth-type of the semigroup
by $\mu.$
\begin{prop}
\label{prop:GP_delay} Let $\rho>\max\{s_{0}(M,A),0\}$ and $s_{0}(M,A)<\mu\leq\rho.$
Assume that $T_{(M,A)}$ extends to a $C_{0}$-semigroup on $X_{\rho}^{\mu}.$
Then $\omega(T_{(M,A)})\leq\mu.$ \end{prop}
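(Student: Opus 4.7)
The plan is to apply the generalized Gearhart--Prüss theorem \prettyref{thm:G-P}. Since $b(M)=0$ and $\mathbb{C}_{\Re>\mu}\setminus D(M)\subseteq\{0\}$ is discrete, and the existence of the semigroup on $X_\rho^\mu$ is assumed, the remaining work is to verify the two structural conditions \prettyref{eq:GP1} and \prettyref{eq:GP2}.

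For \prettyref{eq:GP1}, a direct computation using $M(z)=M_0+z^{-1}\big(M_1+\sum_{i=2}^{n}M_i\e^{-zh_i}\big)$ gives, for $z\in\mathbb{C}_{\Re>\mu}$ and $\lambda>0$,
\[
\Phi_\lambda(z)=zM(z)-(z+\lambda)M(z+\lambda)=-\lambda M_0+\sum_{i=2}^{n}M_i\e^{-zh_i}\bigl(1-\e^{-\lambda h_i}\bigr).
\]
Since $h_i>0$ and $\Re z>\mu$, we have $|\e^{-zh_i}|\le \e^{-\mu h_i}$, so
\[
\|\Phi_\lambda(z)\|\le \lambda\|M_0\|+\sum_{i=2}^{n}\|M_i\|\,\e^{-\mu h_i}\bigl(1+\e^{-\lambda h_i}\bigr),
\]
which is a finite constant independent of $z$. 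Thus \prettyref{eq:GP1} holds.

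For \prettyref{eq:GP2}, recall that for $g\in\chi_{\mathbb{R}_{\leq0}}(\m)[H_\rho^1(\mathbb{R};H)]$ one has the explicit formula
\[
K_\rho g=\chi_{\mathbb{R}_{\geq0}}(\m)\sum_{i=2}^{n}M_i\tau_{-h_i}g.
\]
Because $\spt g\subseteq\mathbb{R}_{\leq0}$, each summand $\chi_{\mathbb{R}_{\geq0}}(\m)M_i\tau_{-h_i}g$ is supported in the bounded interval $[0,h_i]$. The $H_\mu$-norm can therefore be estimated by a change of variables $s=t-h_i$:
\[
\bigl|\chi_{\mathbb{R}_{\geq0}}(\m)M_i\tau_{-h_i}g\bigr|_{H_\mu(\mathbb{R};H)}^{2}=\|M_i\|^{2}\e^{-2\mu h_i}\int_{-h_i}^{0}|g(s)|_H^{2}\,\e^{-2\mu s}\dd s.
\]
Since $s\in[-h_i,0]$ and $\mu\le\rho$, the weight $\e^{-2\mu s}$ is dominated by a constant multiple of $\e^{-2\rho s}$ uniformly on $[-h_i,0]$ (distinguishing the cases $\mu\ge 0$ and $\mu<0$ using $-s\ge 0$ and the boundedness of the interval), which yields a bound by $|g|_{H_\rho(\mathbb{R}_{\leq0};H)}^{2}$ up to a multiplicative constant. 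Summing over $i\in\{2,\dots,n\}$ gives the required continuity of $K_\rho\colon \mathrm{His}_\rho(M,A)\subseteq H_\rho(\mathbb{R}_{\leq0};H)\to H_\mu(\mathbb{R}_{\geq0};H)$.

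With \prettyref{eq:GP1} and \prettyref{eq:GP2} established, \prettyref{thm:G-P} applies and yields $\omega(T_{(M,A)})\le \mu$. The routine nature of the two verifications is expected: the only mild subtlety is keeping track of the case distinction on the sign of $\mu$ when comparing the weights $\e^{-2\mu s}$ and $\e^{-2\rho s}$ on the bounded window $[-h_i,0]$, but this never creates an unbounded factor because the window is compact.
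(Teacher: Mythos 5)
Your proof is correct and follows essentially the same route as the paper's: both verify \prettyref{eq:GP1} via the identity $\Phi_\lambda(z)=-\lambda M_0+\sum_{i=2}^{n}M_i\e^{-zh_i}(1-\e^{-\lambda h_i})$ and \prettyref{eq:GP2} via the support property of $K_\rho g$ on $[0,h_i]$ together with the comparison $\e^{-2\mu s}\le\e^{-2\rho s}$ for $s\le0$, then invoke \prettyref{thm:G-P}. (Minor cosmetic points: the displayed formula for the $H_\mu$-norm of $\chi_{\mathbb{R}_{\geq0}}(\m)M_i\tau_{-h_i}g$ should be an inequality rather than an equality, and no case distinction on the sign of $\mu$ is needed since $-s\ge0$ on the window.)
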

\begin{proof}
By \prettyref{thm:G-P} it suffices to prove \prettyref{eq:GP1} and
\prettyref{eq:GP2}. We observe that for $z\in\mathbb{C}_{\Re>\mu}$
and $\lambda>0$ we have 
\begin{align*}
\|\Phi_{\lambda}(z)\| & =\|zM_{0}+M_{1}+\sum_{i=2}^{n}M_{i}\e^{-zh_{i}}-\left(\left(z+\lambda\right)M_{0}+M_{1}+\sum_{i=2}^{n}M_{i}\e^{-(z+\lambda)h_{i}}\right)\|\\
 & =\|-\lambda M_{0}+\sum_{i=2}^{n}M_{i}\e^{-zh_{i}}(1-\e^{-\lambda h_{i}})\|\\
 & \leq\lambda\|M_{0}\|+\sum_{i=2}^{n}\|M_{i}\|\e^{-\mu h_{i}}
\end{align*}
which shows \prettyref{eq:GP1}. Moreover, for $g\in\chi_{\mathbb{R}_{\leq0}}(\m)\left[H_{\rho}^{1}(\mathbb{R};H)\right]$
we estimate 
\begin{align*}
\left(\intop_{0}^{\infty}|K_{\rho}g(t)|_{H}^{2}\e^{-2\mu t}\mbox{ d}t\right)^{\frac{1}{2}} & =\left(\intop_{0}^{\infty}|\sum_{i=2}^{n}M_{i}g(t-h_{i})|_{H}^{2}\e^{-2\mu t}\mbox{ d}t\right)^{\frac{1}{2}}\\
 & \leq\sum_{i=2}^{n}\|M_{i}\|\e^{-\mu h_{i}}\left(\:\intop_{-h_{i}}^{0}|g(s)|_{H}^{2}\e^{-2\mu s}\mbox{ d}s\right)^{\frac{1}{2}}\\
 & \leq\sum_{i=2}^{n}\|M_{i}\|^{2}\e^{-\mu h_{i}}|g|_{H_{\rho}(\mathbb{R}_{\leq0};H)},
\end{align*}
which proves \prettyref{eq:GP2}. Hence, the assertion follows. 
\end{proof}
We now discuss a more concrete example.

\subsubsection*{On a Cauchy-type problem with finite delay}

Let $A:D(A)\subseteq H\to H$ such that $-A$ is the generator of
a $C_{0}$-semigroup $T$ on $H$ and $M_{i}\in L(H),\, i\in\{2,\ldots,n\}.$
We consider the following evolutionary problem 
\[
\left(\partial_{0,\rho}+\sum_{i=2}^{n}M_{i}\tau_{-h_{i}}+A\right)u=f,
\]
where $h_{i}>0$ for each $i\in\{2,\ldots,n\}.$ This problem is well-posed,
since for $z\in\mathbb{C}_{\Re>\rho}$ with $\rho>\max\{0,\omega(T)\}$
sufficiently large, we have that 
\[
z+\sum_{i=2}^{n}M_{i}\e^{-zh_{i}}+A=(z+A)\left(1+(z+A)^{-1}\sum_{i=2}^{n}M_{i}\e^{-zh_{i}}\right),
\]
and 
\[
\|(z+A)^{-1}\sum_{i=2}^{n}M_{i}\e^{-zh_{i}}\|\leq\frac{M\sum_{i=2}^{n}\|M_{i}\|\e^{-\Re zh_{i}}}{\Re z-\omega(T)}\leq\frac{M\sum_{i=2}^{n}\|M_{i}\|}{\rho-\omega(T)}\e^{-\rho h},
\]
for some $M\geq1,$ where $h\coloneqq\min_{2\leq i\leq n}h_{i}.$
Thus, choosing $\rho$ large enough such that 
\begin{equation}
\frac{M\sum_{i=2}^{n}\|M_{i}\|}{\rho-\omega(T)}\e^{-\rho h}<1,\label{eq:rho_large_enough_delay}
\end{equation}
we derive the well-posedness by employing the Neumann-series. 
\begin{lem}
For $\rho>\max\{s_{0}(M,A),0\}$ we have that 
\[
X_{\rho}^{\mu}=H\times H_{\mu}(\mathbb{R}_{\leq0};H)
\]
for each $\mu\leq\rho$. \end{lem}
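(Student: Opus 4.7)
The plan is to show that already $D_\rho$ is dense in $H\times H_\mu(\mathbb{R}_{\leq 0};H)$, so the closure is the whole space. The key is to exhibit a sufficiently large explicit subset of $\mathrm{His}_\rho(M,A)$, approximate arbitrary pairs $(x,f)$ elementwise, and check the density conditions. By \prettyref{prop:His_delay} applied with $M_0=1$, $M_1=0$ (so $\iota_0=\iota_1=\mathrm{id}$), every $g\in\chi_{\mathbb{R}_{\leq 0}}(\m)[H_\rho^1(\mathbb{R};H)]$ satisfying $g(0-)\in D(A^2)$ and $M_ig(-h_i)=0$ for $i\in\{2,\ldots,n\}$ lies in $\mathrm{His}_\rho(M,A)$. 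Since $-A$ generates a $C_0$-semigroup, $D(A^2)$ is dense in $H$, providing enough room to realize arbitrary initial values.

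Fix $(x,f)\in H\times H_\mu(\mathbb{R}_{\leq 0};H)$ and pick $x_n\in D(A^2)$ with $x_n\to x$ in $H$. Using that $C_c^\infty((-\infty,0);H)$ is dense in $H_\mu(\mathbb{R}_{\leq 0};H)$, first approximate $f$ by test functions; then multiply by a smooth cut-off $\eta_m$ which equals one outside shrinking neighbourhoods of the finite set $\{-h_2,\ldots,-h_n\}$ and vanishes on strictly smaller ones. By dominated convergence in $H_\mu(\mathbb{R}_{\leq 0};H)$ this yields, via a diagonal sequence, functions $\psi_n\in C_c^\infty((-\infty,0);H)$ with $\psi_n(-h_i)=0$ for every $i$ and $\psi_n\to f$ in $H_\mu(\mathbb{R}_{\leq 0};H)$.

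To encode the initial value $x_n$, choose $\epsilon_n>0$ smaller than $\min_{i\geq 2}h_i$ and smaller than the distance of $\mathrm{supp}\,\psi_n$ to $0$, and pick $\chi_n\in C^\infty(\mathbb{R})$ with $\mathrm{supp}\,\chi_n\subseteq[-\epsilon_n,0]$, $\chi_n(0)=1$, $\|\chi_n\|_\infty\leq 1$. Define $g_n\coloneqq\psi_n+\chi_nx_n$. Then $\mathrm{supp}\,g_n\subseteq\mathbb{R}_{\leq 0}$, $g_n(0-)=x_n\in D(A^2)$, and $M_ig_n(-h_i)=M_i\psi_n(-h_i)+0=0$ since $-h_i\notin\mathrm{supp}\,\chi_n$. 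Moreover, the function $g_n+\chi_{\mathbb{R}_{\geq 0}}x_n$ is continuous, smooth away from $0$, and coincides with the constant $x_n$ on $\mathbb{R}_{\geq 0}$, hence lies in $H_\rho^1(\mathbb{R};H)$; by \prettyref{lem:char_chi_H^1} this places $g_n$ in $\chi_{\mathbb{R}_{\leq 0}}(\m)[H_\rho^1(\mathbb{R};H)]$, and therefore $g_n\in\mathrm{His}_\rho(M,A)$.

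It remains to estimate
\[
|(x_n,g_n)-(x,f)|_{H\times H_\mu}\leq |x_n-x|_H+|\psi_n-f|_{H_\mu(\mathbb{R}_{\leq 0};H)}+|\chi_nx_n|_{H_\mu(\mathbb{R}_{\leq 0};H)},
\]
where $|\chi_nx_n|_{H_\mu(\mathbb{R}_{\leq 0};H)}^2\leq|x_n|_H^2\epsilon_n e^{2|\mu|\epsilon_n}\to 0$ provided $\epsilon_n\to 0$. The first two terms vanish by construction, so $(x_n,g_n)\to(x,f)$ in $H\times H_\mu(\mathbb{R}_{\leq 0};H)$, showing $D_\rho$ is dense and hence $X_\rho^\mu=H\times H_\mu(\mathbb{R}_{\leq 0};H)$. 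The only mildly delicate point is the cut-off at the finitely many delay instants $-h_i$, which fits neatly into the density argument since they form a discrete set of measure zero.
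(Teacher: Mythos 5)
Your argument is correct and follows essentially the same route as the paper's proof: both establish density of $D_{\rho}$ in $H\times H_{\mu}(\mathbb{R}_{\leq0};H)$ by invoking \prettyref{prop:His_delay} (which, for $M_{0}=1$, $M_{1}=0$, reduces to requiring $g(0-)\in D(A^{2})$ and $g(-h_{i})=0$), approximating the initial value from the dense set $D(A^{2})$ and the history by test functions, and then correcting at the finitely many points $0,-h_{2},\ldots,-h_{n}$ by a small perturbation. The only difference is cosmetic: the paper adds a single smooth corrector $\psi$ with prescribed values at these points, while you use a cut-off near the $-h_{i}$ plus a shrinking bump $\chi_{n}x_{n}$ at $0$ (and correctly verify membership in $\chi_{\mathbb{R}_{\leq0}}(\m)[H_{\rho}^{1}(\mathbb{R};H)]$ via \prettyref{lem:char_chi_H^1}).
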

\begin{proof}
We have to prove that $D_{\rho}=\left\{ (g(0-),g)\,;\, g\in\mathrm{His}_{\rho}(M,A)\right\} $
is dense in $H\times H_{\mu}(\mathbb{R}_{\leq0};H)$. Let $v\in H$
and $f\in H_{\mu}(\mathbb{R}_{\leq0};H).$ For $\varepsilon>0$ we
take $x\in D(A^{2})$ such that 
\[
|x-v|_{H}<\varepsilon
\]
and $\varphi\in C_{c}^{\infty}(\mathbb{R}_{<0};H)$ such that 
\[
|\varphi-f|_{H_{\mu}(\mathbb{R}_{\leq0};H)}<\varepsilon.
\]
Moreover, we choose $\psi\in C_{c}^{\infty}(\mathbb{R};H)$ with $|\psi|_{H_{\mu}(\mathbb{R};H)}<\varepsilon$
such that $\psi(0)=x$ and $\psi(-h_{i})=-\varphi(-h_{i})$ for each
$i\in\{2,\ldots,n\}.$ We define 
\[
g\coloneqq\chi_{\mathbb{R}_{\leq0}}(\m)(\varphi+\psi)\in\chi_{\mathbb{R}_{\leq0}}(\m)\left[H_{\rho}^{1}(\mathbb{R};H)\right]
\]
and show that $g\in\mathrm{His}_{\rho}(M,A).$ For doing so, we apply
\prettyref{prop:His_delay} and show that $g$ satisfies the four
conditions stated there. We have that 

\begin{itemize}

\item $g(0-)=\psi(0)=x\in D(A),$

\item $(M_{1}+A)g(0-)=Ax\in H=R(M_{0}),$

\item $\iota_{0}(\iota_{1}^{\ast}M_{0}\iota_{0})^{-1}\iota_{1}^{\ast}(M_{1}+A)g(0-)=Ax\in D(A),$

\item $M_{i}g(-h_{i})=M_{i}(\varphi(-h_{i})+\psi(-h_{i}))=0$ for
$i\in\{2,\ldots,n\}.$

\end{itemize}

Hence, we indeed have $g\in\mathrm{His}_{\rho}(M,A).$ Moreover, 
\begin{align*}
|g(0-)-v|_{H} & =|x-v|_{H}<\varepsilon,\\
|g-f|_{H_{\mu}(\mathbb{R}_{\leq0};H)} & \leq|\varphi-f|_{H_{\mu}(\mathbb{R}_{\leq0};H)}+|\psi|_{H_{\mu}(\mathbb{R};H)}<2\varepsilon,
\end{align*}
which proves the density. \end{proof}
\begin{prop}
Let $\rho>\max\{s_{0}(M,A),0\}$ such that \prettyref{eq:rho_large_enough_delay}
is satisfied. Then for each $\mu\leq\rho$, $T_{(M,A)}$ can be extended
to a $C_{0}$-semigroup on $X_{\rho}^{\mu}$. \end{prop}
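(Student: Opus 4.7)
The plan is to invoke Theorem \ref{thm:sg} with the observation from the preceding lemma that $X_{\rho}^{\mu}=H\times H_{\mu}(\mathbb{R}_{\leq0};H)$, so that the task reduces to verifying the Hille--Yosida estimate \eqref{eq:Hille_Yosida} for the functions $r_{g}$. By Lemma \ref{lem:r_g}, $r_{g}(\lambda)=\mathcal{L}_{\lambda}(v_{g})(0)$ where $v_{g}\coloneqq T_{(M,A)}^{(1)}(\cdot)(g(0-),g)$. Since here $M_{0}=1$, $M_{1}=0$, one has $\Gamma_{(M,A)}^{\rho}g=g(0-)$ and $K_{\rho}g=\chi_{\mathbb{R}_{\geq0}}(\m)\sum_{i=2}^{n}M_{i}\tau_{-h_{i}}g$, so $v_{g}$ satisfies
\[
\bigl(\partial_{0,\rho}+\textstyle\sum_{i}M_{i}\tau_{-h_{i}}+A\bigr)v_{g}=\delta_{0}g(0-)-\chi_{\mathbb{R}_{\geq0}}(\m)\textstyle\sum_{i}M_{i}g(\cdot-h_{i}).
\]

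The first step is to identify $v_{g}$ on $\mathbb{R}_{\geq0}$ with the mild solution of the classical delay Cauchy problem. Applying $\partial_{0,\rho}^{-1}$ and using that $-A$ generates the $C_{0}$-semigroup $T$, one derives that $v_{g}\in C_{\rho}(\mathbb{R}_{\geq0};H)$ is the unique continuous solution of the Volterra integral equation
\[
v_{g}(t)=T(t)g(0-)-\int_{0}^{t}T(t-s)\sum_{i=2}^{n}M_{i}\bigl[v_{g}(s-h_{i})+g(s-h_{i})\bigr]\,\dd s,\qquad t\geq0,
\]
where $v_{g}(s-h_{i})\coloneqq0$ for $s<h_{i}$ and $g(s-h_{i})\coloneqq0$ for $s>h_{i}$. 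The second step is the method of steps: setting $h\coloneqq\min_{i}h_{i}>0$, the function $v_{g}$ on $[0,h]$ depends only on $g(0-)$ and the history $g$, and is determined there by the explicit formula above with the $v_{g}$-terms vanishing; on $[kh,(k+1)h]$ the values on previous intervals then serve as new input. Combining the Hille--Yosida bound $\|T(t)\|\leq M_{0}\e^{\omega_{0}t}$ with a Cauchy--Schwarz estimate on the history integral, one obtains for each $\mu\leq\rho$ constants $C>0$ and $\omega\geq\rho$ (independent of $g$) such that
\[
|v_{g}(t)|_{H}\leq C\e^{\omega t}\bigl(|g(0-)|_{H}+|g|_{H_{\mu}(\mathbb{R}_{\leq0};H)}\bigr)\qquad(t\geq0),
\]
via an iterative Gronwall argument across consecutive intervals $[kh,(k+1)h]$.

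Given this exponential bound, the third step is immediate: writing
\[
r_{g}^{(n)}(\lambda)=\frac{1}{\sqrt{2\pi}}\int_{0}^{\infty}\e^{-\lambda t}(-t)^{n}v_{g}(t)\,\dd t\qquad(\lambda>\omega),
\]
and estimating under the integral sign yields
\[
\frac{1}{n!}|r_{g}^{(n)}(\lambda)|_{H}\leq\frac{C/\sqrt{2\pi}}{(\lambda-\omega)^{n+1}}\bigl(|g(0-)|_{H}+|g|_{H_{\mu}(\mathbb{R}_{\leq0};H)}\bigr),
\]
which is exactly \eqref{eq:Hille_Yosida}. Theorem \ref{thm:sg} then delivers the desired $C_{0}$-semigroup extension on $X_{\rho}^{\mu}$.

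The main obstacle is Step~1: the rigorous identification of $v_{g}$, which is \emph{a priori} only an element of the distributional space $H_{\rho}^{-1}(\mathbb{R};H)$ defined by the abstract solution operator $S_{\rho}$, with the classical mild solution of the delay integral equation. The delicate point is that the source term contains the Dirac distribution $\delta_{0}g(0-)$ and that $A$ is unbounded, so one has to use the $\partial_{0,\rho}^{-1}$-integration together with the Sobolev embedding $H_{\rho}^{1}\hookrightarrow C_{\rho}$ to translate the distributional equation into the integrated form, and then invoke uniqueness of mild solutions to the Volterra equation. Once this translation is achieved, the Gronwall step-by-step estimate and the Laplace differentiation are routine, with the constant $C$ and growth rate $\omega$ depending on $\mu$, $\omega_{0}$, the delays $h_{i}$, and $\|M_{i}\|$ but not on $g$.
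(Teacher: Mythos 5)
Your argument is correct, but it follows a more hands-on route than the paper's. The paper never touches the Hille--Yosida functions $r_{g}$: it reduces the claim, via \prettyref{prop:T is sg} and \prettyref{lem:boundedness T1 implies T2}, to the boundedness of $T_{(M,A)}^{(1)}:D_{\rho}\subseteq X_{\rho}^{\mu}\to C_{\rho}(\mathbb{R}_{\geq0};H)$, obtains the global bound $|u|_{H_{\rho}(\mathbb{R};H)}\leq C|(x,g)|_{X_{\rho}^{\mu}}$ for free from well-posedness (the Neumann series under \prettyref{eq:rho_large_enough_delay}, noting that the cut-off history $\chi_{[0,h_{i}]}(\m)\tau_{-h_{i}}g$ lives on a bounded interval so all $H_{\mu}$-norms there are comparable), and then uses the variation-of-constants regularity \prettyref{eq:VDK} of $(\partial_{0,\rho}+A)^{-1}$ exactly once to upgrade $H_{\rho}$ to $C_{\rho}$, finishing with the closed graph theorem. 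You instead reconstruct the solution as the mild solution of the delay Volterra equation and prove the exponential pointwise bound from scratch by the method of steps and Gronwall; this is legitimate (the identification in your Step~1 is the same variation-of-constants manipulation the paper performs, and uniqueness of continuous solutions of the Volterra equation closes the loop), but it duplicates work that the abstract well-posedness already provides. Your Step~3 is then a detour: once you have $|v_{g}(t)|_{H}\leq C\e^{\omega t}(|g(0-)|_{H}+|g|_{H_{\mu}(\mathbb{R}_{\leq0};H)})$ you have precisely the boundedness of $T_{(M,A)}^{(1)}$ into $C_{\omega}(\mathbb{R}_{\geq0};H)$, so you could conclude directly from \prettyref{prop:T is sg} and \prettyref{lem:boundedness T1 implies T2} instead of passing to \prettyref{eq:Hille_Yosida} and back through \prettyref{thm:sg} (whose proof re-derives the $C_{\omega}$-bound from \prettyref{eq:Hille_Yosida} via Widder--Arendt). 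What your approach buys is independence from the smallness condition \prettyref{eq:rho_large_enough_delay} in the estimation step itself; what the paper's approach buys is brevity and no iterative bookkeeping.
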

\begin{proof}
By \prettyref{prop:T is sg} and \prettyref{lem:boundedness T1 implies T2}
it suffices to prove that there is some $\omega\in\mathbb{R}$ such
that 
\[
T_{(M,A)}^{(1)}:D_{\rho}\subseteq X_{\rho}^{\mu}\to C_{\omega}(\mathbb{R}_{\geq0};H)
\]
is bounded. We observe that for $\rho>\omega(T)$ 
\begin{equation}
(\partial_{0,\rho}+A)^{-1}:H_{\rho}(\mathbb{R}_{\geq0};H)\to C_{\rho}(\mathbb{R}_{\geq0};H)\label{eq:VDK}
\end{equation}
is bounded. Indeed, for $f\in C_{c}^{\infty}(\mathbb{R}_{\geq0};H)$
we estimate 
\begin{align*}
\left|\left(\left(\partial_{0,\rho}+A\right)^{-1}f\right)(t)\right|_{H}\e^{-\rho t} & =\left|\intop_{0}^{t}T(t-s)f(s)\mbox{ d}s\right|_{H}\e^{-\rho t}\\
 & \leq M\e^{(\omega(T)-\rho)t}\intop_{0}^{t}|f(s)|_{H}\e^{-\omega(T)s}\mbox{ d}s\\
 & \leq M\e^{(\omega(T)-\rho)t}\sqrt{\frac{\e^{2(\rho-\omega(T))t}-1}{2(\rho-\omega(T))}}|f|_{H_{\rho}(\mathbb{R}_{\geq0};H)}\\
 & \leq M\frac{1}{\sqrt{2(\rho-\omega(T))}}|f|_{H_{\rho}(\mathbb{R}_{\geq0};H)},
\end{align*}
which shows the claim. Let now $\rho>\omega(T)$ satisfying \prettyref{eq:rho_large_enough_delay}
and $\mu\leq\rho.$ For $(x,g)\in X_{\rho}^{\mu}$ we set 
\begin{align*}
u & \coloneqq\left(\partial_{0,\rho}+\sum_{i=2}^{n}M_{i}\tau_{h_{i}}+A\right)^{-1}\left(\delta_{0}x-\chi_{\mathbb{R}_{\geq0}}(\m)\sum_{i=2}^{n}M_{i}\tau_{h_{i}}g\right)\\
 & =\left(1+(\partial_{0,\rho}+A)^{-1}\sum_{i=2}^{n}M_{i}\tau_{h_{i}}\right)^{-1}(\partial_{0,\rho}+A)^{-1}\delta_{0}x+\\
 & \phantom{aaaa}-\left(\partial_{0,\rho}+\sum_{i=2}^{n}M_{i}\tau_{h_{i}}+A\right)^{-1}\chi_{\mathbb{R}_{\geq0}}(\m)\sum_{i=2}^{n}M_{i}\tau_{h_{i}}g.
\end{align*}
Since $(\partial_{0,\rho}+A)^{-1}\delta_{0}x\in H_{\rho}(\mathbb{R}_{\geq0};H)$
and 
\[
\chi_{\mathbb{R}_{\geq0}}(\m)\tau_{h_{i}}g=\chi_{[0,h_{i}]}(\m)\tau_{h_{i}}g\in H_{\mu}([0,h_{i}];H)\hookrightarrow H_{\rho}(\mathbb{R};H),
\]
we derive that $u\in H_{\rho}(\mathbb{R}_{\geq0};H)$ and 
\[
|u|_{H_{\rho}(\mathbb{R};H)}\leq C|(x,g)|_{X_{\rho}^{\mu}}
\]
for some $C>0.$ We thus have shown that 
\[
T_{(M,A)}^{(1)}:X_{\rho}^{\mu}\to H_{\rho}(\mathbb{R}_{\geq0};H)
\]
is bounded. Moreover, we have that 
\[
\left(\partial_{0,\rho}+\sum_{i=2}^{n}M_{i}\tau_{h_{i}}+A\right)u=\delta_{0}x-\chi_{\mathbb{R}_{\geq0}}(\m)\sum_{i=2}^{n}M_{i}\tau_{h_{i}}g,
\]
which gives 
\[
u=(\partial_{0,\rho}+A)^{-1}\delta_{0}x-(\partial_{0,\rho}+A)^{-1}\left(\chi_{\mathbb{R}_{\geq0}}(\m)\sum_{i=2}^{n}M_{i}\tau_{h_{i}}g-\sum_{i=2}^{n}M_{i}\tau_{h_{i}}u\right).
\]
Since $-A$ generates a $C_{0}$-semigroup, the first term on the
right-hand side belongs to $C_{\rho}(\mathbb{R}_{\geq0};H).$ Moreover,
employing \prettyref{eq:VDK} we infer that also the second term lies
in $C_{\rho}(\mathbb{R}_{\geq0};H)$ and hence, so does $u$. Hence,
we have shown 
\[
T_{(M,A)}^{(1)}\left[X_{\rho}^{\mu}\right]\subseteq C_{\rho}(\mathbb{R}_{\geq0};H).
\]
Since $T_{(M,A)}^{(1)}\in L(X_{\rho}^{\mu};H_{\rho}(\mathbb{R}_{\geq0};H))\subseteq L(X_{\rho}^{\mu};H_{\rho+1}(\mathbb{R}_{\geq0};H))$
and $C_{\rho}(\mathbb{R}_{\geq0};H)\hookrightarrow H_{\rho+1}(\mathbb{R}_{\geq0};H),$
we infer that $T_{(M,A)}^{(1)}:X_{\rho}^{\mu}\to C_{\rho}(\mathbb{R}_{\geq0};H)$
is bounded by the closed graph theorem. \end{proof}
\begin{cor}
Assume that $A-c$ is $m$-accretive for some $c>0.$ Moreover, assume
that $M_{i}$ is selfadjoint and non-negative for each $i\in\{2,\ldots,n\}.$
Then $s_{0}(M,A)\leq-c$ and consequently, the semigroup $T_{(M,A)}$
on $X_{\rho}^{\mu}$ for $s_{0}(M,A)<\mu<0$ is exponentially stable.\end{cor}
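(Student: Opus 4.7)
The plan is to verify $s_{0}(M,A)\leq -c$ and then invoke the preceding extension result together with \prettyref{prop:GP_delay}.

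First I would establish the spectral bound. Note that $zM(z)+A = z+A+\sum_{i=2}^{n} M_{i}e^{-zh_{i}}$. For $z=\rho+i\tau$ with $\rho>-c$ and $x\in D(A)$, the $m$-accretivity of $A-c$ combined with the selfadjointness of each $M_{i}$ (so that $\langle M_{i}x|x\rangle_{H}\in\mathbb{R}$) yields
\[
\Re\langle(zM(z)+A)x|x\rangle_{H} \;\geq\; (\rho+c)|x|_{H}^{2} + \sum_{i=2}^{n} e^{-\rho h_{i}}\cos(\tau h_{i})\langle M_{i}x|x\rangle_{H}.
\]
The non-negativity of each $M_{i}$ ensures $\langle M_{i}x|x\rangle_{H}\geq 0$. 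I would then exploit this structure---together with \prettyref{lem:accretive_inverse}---to deduce bounded invertibility of $zM(z)+A$ uniformly on $\mathbb{C}_{\Re>-c+\epsilon}$ for every $\epsilon>0$, yielding an analytic, uniformly bounded extension of $z\mapsto (zM(z)+A)^{-1}$ to that half plane and hence $s_{0}(M,A)\leq -c$.

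With $s_{0}(M,A)\leq -c<0$ in hand, the second step is short. Choose any $\mu\in(s_{0}(M,A),0)$ and fix $\rho>\max\{s_{0}(M,A),0\}$ satisfying the well-posedness condition \prettyref{eq:rho_large_enough_delay} of the preceding proposition. That proposition extends $T_{(M,A)}$ to a $C_{0}$-semigroup on $X_{\rho}^{\mu}$, and \prettyref{prop:GP_delay} then yields $\omega(T_{(M,A)})\leq\mu<0$, i.e.\ exponential stability.

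The main obstacle lies entirely in the first step. The naive estimate $\cos(\tau h_{i})\geq -1$ combined with $\langle M_{i}x|x\rangle_{H}\leq\|M_{i}\||x|_{H}^{2}$ only gives
\[
\Re\langle(zM(z)+A)x|x\rangle_{H} \;\geq\; \Bigl((\rho+c)-\sum_{i=2}^{n}e^{-\rho h_{i}}\|M_{i}\|\Bigr)|x|_{H}^{2},
\]
which fails to be positive as $\rho\to -c^{+}$ unless the $M_{i}$ are sufficiently small. Pushing through $s_{0}(M,A)\leq -c$ in full generality therefore appears to require a sharper use of the non-negativity of the $M_{i}$---perhaps by decomposing $\langle(zM(z)+A)x|x\rangle_{H}$ into real and imaginary parts simultaneously, or by regarding $\sum M_{i}e^{-zh_{i}}$ as the Fourier--Laplace transform of the operator-valued positive measure $\sum M_{i}\delta_{h_{i}}$ and invoking a positivity-type estimate analogous to those in \prettyref{sub:Integro-differential-equations}.
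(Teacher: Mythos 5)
Your second step is exactly the paper's: once $s_{0}(M,A)\leq-c<0$ is available, the preceding proposition yields the $C_{0}$-semigroup on $X_{\rho}^{\mu}$ for every $\mu\leq\rho$, and \prettyref{prop:GP_delay} gives $\omega(T_{(M,A)})\leq\mu<0$. The first step, however, is left as a declared obstacle in your proposal, so as written this is not a proof; and none of the remedies you sketch would close it. In particular the detour through positive-type kernels cannot work, since the positivity estimates of \prettyref{sub:Integro-differential-equations} (e.g. \prettyref{exa:kernels} (b)) rest on monotonicity of the kernel, which the measure $\sum_{i}M_{i}\delta_{h_{i}}$ does not possess.

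You should also know that your suspicion about the key estimate is justified, and that the paper does not resolve it either: its entire proof of $s_{0}(M,A)\leq-c$ consists of asserting
\[
\Re\langle(z+\sum_{i=2}^{n}M_{i}\e^{-zh_{i}}+A)x|x\rangle_{H}\geq(\Re z+c)|x|_{H}^{2}\quad(z\in\mathbb{C},\,x\in D(A)),
\]
i.e. precisely the claim that each $M_{i}\e^{-zh_{i}}$ is accretive. With the paper's conventions one has $\Re\langle M_{i}\e^{-zh_{i}}x|x\rangle_{H}=\e^{-h_{i}\Re z}\cos(h_{i}\Im z)\,\langle M_{i}x|x\rangle_{H}$, and the cosine changes sign along vertical lines, so non-negativity of $M_{i}$ does not help. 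The failure is not cosmetic: for $H=\mathbb{C}$, $A=c$, a single delay $h=1$ and $M_{2}=m>0$, the symbol $z+m\e^{-z}+c$ acquires zeros in $\mathbb{C}_{\Re>0}$ once $m$ exceeds roughly $\sqrt{c^{2}+(\pi/2)^{2}}$ (the classical destabilization by delayed feedback), so then $s_{0}(M,A)>0$ and the corollary itself fails without a smallness assumption on the $M_{i}$. A provable version follows from the very estimate you wrote down: if $\sum_{i=2}^{n}\|M_{i}\|\e^{c'h_{i}}<c-c'$ for some $0<c'<c$, then $zM(z)+A$ is uniformly strictly accretive on $\mathbb{C}_{\Re>-c'}$, whence $s_{0}(M,A)\leq-c'$ and exponential stability with that smaller rate. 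So the gap you identify is real, but it is a gap in the statement and in the paper's one-line proof, not merely in your argument.
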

\begin{proof}
For $x\in H$ and $z\in\mathbb{C}$ we have 
\[
\Re\langle\left(z+\sum_{i=2}^{n}M_{i}\e^{-zh_{i}}+A\right)x|x\rangle\geq\left(\Re z+c\right)|x|_{H}^{2}
\]
and hence, $s_{0}(M,A)\leq-c.$ The second assertion follows from
\prettyref{prop:GP_delay}.
\end{proof}

\subsection{Integro-differential equations}

A further class we want to inspect with the methods provided in this
chapter is a particular class of integro-differential equations. For
simplicity, we restrict ourselves to integro-differential equations
of parabolic-type.

Let $H$ be a Hilbert space and $A:D(A)\subseteq H\to H$ an $m$-accretive
operator. Moreover, let $k:\mathbb{R}_{\geq0}\to L(H)$ be a given
kernel satisfying $k\in L_{1,\mu}(\mathbb{R}_{\geq0};L(H))$ with
$|k|_{L_{1,\mu}}<1$ for some $\mu\in\mathbb{R}$ (recall the definitions
given in \prettyref{sub:Integro-differential-equations}). We consider
an integro-differential equation of the form 
\begin{equation}
\left(\partial_{0,\rho}\left(1-k\ast\right)^{-1}+A\right)u=f,\label{eq:integro_selfadjoint}
\end{equation}
for $\rho\geq\mu.$ This is an evolutionary equation with a material
law given by 
\[
M(z)\coloneqq(1-\sqrt{2\pi}\hat{k}(z))^{-1}\quad(z\in\mathbb{C}_{\Re>\mu}).
\]

To avoid technicalities we assume that $H$ is separable. We first
check that $M$ satisfies \prettyref{eq:M_reg}. Clearly, we have
$b(M)\leq\max\{0,\mu\}$ due to the Neumann series. In order to prove
that 
\[
\left((1-k\ast)^{-1}\chi_{\mathbb{R}_{\geq0}}x\right)(0+)
\]
exists for each $x\in H,$ we need the following lemma.%

\begin{lem}
\label{lem:conv_cont}For $\rho\in\mathbb{R}$ consider the space
\[
CH_{\rho}(\mathbb{R};H)\coloneqq C_{\rho}(\mathbb{R};H)\cap H_{\rho}(\mathbb{R};H)
\]
equipped with the norm 
\[
|f|_{CH_{\rho}}\coloneqq|f|_{\rho,\infty}+|f|_{H_{\rho}(\mathbb{R};H)}\quad(f\in CH_{\rho}(\mathbb{R};H)).
\]
Then $CH_{\rho}(\mathbb{R};H)$ is a Banach space and for $\ell\in L_{1,\rho}(\mathbb{R}_{\geq0};L(H))$
the operator 
\[
\ell\ast:CH_{\rho}(\mathbb{R};H)\to CH_{\rho}(\mathbb{R};H)
\]
is well-defined and bounded with 
\[
\|\ell\ast\|\leq|\ell|_{L_{1,\rho}}.
\]
\end{lem}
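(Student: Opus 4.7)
The proof splits into two parts: first, showing $CH_{\rho}(\mathbb{R};H)$ is complete, and second, showing $\ell\ast$ is bounded on it with the claimed norm estimate.

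For completeness, I would take a Cauchy sequence $(f_n)_{n\in\mathbb{N}}$ in $CH_{\rho}(\mathbb{R};H)$. By definition of the norm, $(f_n)$ is Cauchy in both $C_{\rho}(\mathbb{R};H)$ and $H_{\rho}(\mathbb{R};H)$, which are Banach spaces. Hence there exist $f\in C_{\rho}(\mathbb{R};H)$ with $f_n\to f$ uniformly (with respect to the $\rho$-weighted sup-norm), and $g\in H_{\rho}(\mathbb{R};H)$ with $f_n\to g$ in $H_{\rho}(\mathbb{R};H)$. The uniform-with-weight convergence implies pointwise convergence, while the $H_{\rho}$-convergence yields a subsequence converging pointwise almost everywhere. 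Consequently $f=g$ almost everywhere, so $f\in CH_{\rho}(\mathbb{R};H)$ and $f_n\to f$ in $CH_{\rho}$.

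For the operator bound, the $H_{\rho}$-estimate $|\ell\ast f|_{H_{\rho}}\leq |\ell|_{L_{1,\rho}}|f|_{H_{\rho}}$ is immediate from \prettyref{lem:convolution_bd}. The heart of the matter is producing a continuous representative of $\ell\ast f$ when $f\in CH_{\rho}(\mathbb{R};H)$, together with the pointwise bound
\[
|(\ell\ast f)(t)|_{H}\,\e^{-\rho t}\leq \intop_{0}^{\infty}\|\ell(s)\|\e^{-\rho s}\,|f(t-s)|_{H}\e^{-\rho(t-s)}\,\dd s\leq |\ell|_{L_{1,\rho}}|f|_{\rho,\infty}.
\]
I would define $(\ell\ast f)(t)\in H$ pointwise via the Riesz representation theorem: for each $t\in\mathbb{R}$ the map
\[
y\mapsto \intop_{0}^{\infty}\langle \ell(s)f(t-s)\mid y\rangle_{H}\,\dd s
\]
is linear and bounded by $\bigl(\intop_{0}^{\infty}\|\ell(s)\||f(t-s)|_{H}\,\dd s\bigr)|y|_{H}$, which is finite by the above computation. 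Measurability of $s\mapsto \langle \ell(s)f(t-s)\mid y\rangle_{H}$ is ensured by approximating the continuous function $s\mapsto f(t-s)$ uniformly on compact sets by simple functions $\sum_i \chi_{I_i}x_i$, for which $s\mapsto \langle \ell(s)x_i\mid y\rangle_{H}$ is measurable by admissibility of $\ell$; the admissibility bound $\|\ell(\cdot)\|\in L_{1,\rho}(\mathbb{R}_{\geq0})$ permits dominated convergence.

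Continuity of the so-defined function $t\mapsto (\ell\ast f)(t)$ follows by the same estimate as in the proof of \prettyref{lem:convolution_well-defined}: for $t,t'\in\mathbb{R}$,
\[
|(\ell\ast f)(t)-(\ell\ast f)(t')|_{H}\leq \intop_{0}^{\infty}\|\ell(s)\|\,|f(t-s)-f(t'-s)|_{H}\,\dd s,
\]
which tends to zero as $t'\to t$ by dominated convergence, using the continuity of $f$ and the integrable majorant arising from the $\rho$-weighted sup-bound. It remains to identify this pointwise-defined function with the element $\ell\ast f\in H_{\rho}(\mathbb{R};H)$ from \prettyref{lem:convolution_bd}: on simple functions the two definitions agree by construction in \prettyref{lem:convolution_well-defined}, and since $CH_{\rho}$-functions can be approximated in $H_{\rho}$ by simple functions while the pointwise version is controlled in the sup-norm, the two representatives coincide as an equivalence class in $H_{\rho}$. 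Combining the two norm estimates gives $|\ell\ast f|_{CH_{\rho}}\leq |\ell|_{L_{1,\rho}}|f|_{CH_{\rho}}$.

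The main technical obstacle I anticipate is the measurability argument without assuming $H$ to be separable; this forces me to exploit the continuity of $f$ and the weak measurability (plus scalar-measurability of $\|\ell(\cdot)\|$) from the definition of admissibility, rather than invoking Pettis's theorem directly.
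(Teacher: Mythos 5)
Your proof is correct, and the completeness argument is identical to the paper's. Where you diverge is in how the continuous representative of $\ell\ast f$ is produced. The paper works under the standing assumption (made at the start of that subsection) that $H$ is separable, and simply invokes \prettyref{lem:integral_formula_conv} to get $(\ell\ast f)(t)=\intop_{0}^{\infty}\ell(s)f(t-s)\,\dd s$ a.e.; continuity and the pointwise bound then follow by exactly the dominated-convergence estimate you write down. You instead rebuild the pointwise integral from scratch via the Riesz representation theorem, using the continuity of $f$ to approximate $s\mapsto f(t-s)$ by simple functions and thereby settle measurability without Pettis's theorem. This buys you a proof valid for arbitrary $H$, at the cost of redoing work that the paper has already packaged (for separable $H$) in \prettyref{lem:integral_formula_conv}. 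The one place you are a little too quick is the identification of your pointwise-defined function with the class $\ell\ast f\in H_{\rho}(\mathbb{R};H)$: approximating $f$ by simple functions $\varphi_{n}$ in $H_{\rho}$ does not control the sup-norm of the difference, so "controlled in the sup-norm" is not the right justification. The correct one-line repair is the Young-type estimate from \prettyref{lem:convolution_bd}: your pointwise construction satisfies $|\tilde{T}f(t)-(\ell\ast\varphi_{n})(t)|_{H}\leq\intop_{0}^{\infty}\|\ell(s)\|\,|f(t-s)-\varphi_{n}(t-s)|_{H}\,\dd s$, whose $H_{\rho}$-norm is bounded by $|\ell|_{L_{1,\rho}}|f-\varphi_{n}|_{\rho}\to0$, so $\tilde{T}f=\ell\ast f$ in $H_{\rho}(\mathbb{R};H)$. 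With that adjustment the argument is complete.
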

\begin{proof}
If $(f_{n})_{n\in\mathbb{N}}$ is a Cauchy sequence in $CH_{\rho}(\mathbb{R};H)$
it converges to $f$ and $\tilde{f}$ in $C_{\rho}(\mathbb{R};H)$
and $H_{\rho}(\mathbb{R};H)$, respectively. Then there is a subsequence
$\left(f_{n_{k}}\right)_{k\in\mathbb{N}}$ such that $f_{n_{k}}(t)\to\tilde{f}(t)$
for almost every $t\in\mathbb{R}.$ Since $f_{n}(t)\to f(t)$ for
each $t\in\mathbb{R}$ we infer $f(t)=\tilde{f}(t)$ for almost every
$t\in\mathbb{R},$ which proves that $f\in CH_{\rho}(\mathbb{R};H)$
is the limit of $(f_{n})_{n\in\mathbb{N}}$ in $CH_{\rho}(\mathbb{R};H).$
Let now $\ell\in L_{1,\rho}(\mathbb{R}_{\geq0};L(H)).$ We first show
that $\ell\ast f$ is continuous for $f\in CH_{\rho}(\mathbb{R};H).$
Recall that by \prettyref{lem:integral_formula_conv} 
\[
\left(\ell\ast f\right)(t)=\intop_{0}^{\infty}\ell(s)f(t-s)\mbox{ d}s\quad(t\in\mathbb{R}\mbox{ a.e.}).
\]
Let $t,t'\in\mathbb{R}$ with $|t-t'|\leq1.$ Then we have that 
\[
\left|\intop_{0}^{\infty}\ell(s)\left(f(t-s)-f(t'-s)\right)\mbox{ d}s\right|_{H}\leq\intop_{0}^{\infty}\|\ell(s)\|\left|f(t-s)-f(t'-s)\right|_{H}\mbox{ d}s.
\]
By continuity of $f$, we have $f(t'-s)\to f(t-s)$ for each $s\in\mathbb{R}$
as $t'\to t$. Moreover, we have that
\begin{align*}
\|\ell(s)\|\left|f(t-s)-f(t'-s)\right|_{H} & \leq\|\ell(s)\|\e^{-\rho s}\left(\left|f(t-s)\right|_{H}\e^{-\rho(t-s)}+|f(t'-s)|_{H}\e^{-\rho(t-s)}\right)\e^{\rho t}\\
 & \leq\|\ell(s)\|\e^{-\rho s}\left(1+\e^{-\rho(t-t')}\right)|f|_{\rho,\infty}\e^{\rho t}\\
 & \leq\|\ell(s)\|\e^{-\rho s}\left(1+\e^{|\rho|}\right)|f|_{\rho,\infty}\e^{\rho t}\eqqcolon g(s)
\end{align*}
for each $s\in\mathbb{R}$ and since $g\in L_{1}(\mathbb{R}_{\geq0})$
we obtain 
\[
\intop_{0}^{\infty}\|\ell(s)\|\left|f(t-s)-f(t'-s)\right|_{H}\mbox{ d}s\to0\quad(t'\to t)
\]
by dominated convergence. Hence $\left(t\mapsto\intop_{0}^{\infty}\ell(s)f(t-s)\mbox{ d}s\right)$
is a continuous representer of $\ell\ast f$. Moreover, 
\[
\left|\intop_{0}^{\infty}\ell(s)f(t-s)\mbox{ d}s\right|_{H}\e^{-\rho t}\leq\intop_{0}^{\infty}\|\ell(s)\|\e^{-\rho s}|f(t-s)|_{H}\e^{-\rho(t-s)}\mbox{ d}s\leq|\ell|_{L_{1,\rho}}|f|_{\rho,\infty}
\]
for each $t\in\mathbb{R}.$ Together with \prettyref{lem:convolution_bd}
this shows that $\ell\ast:CH_{\rho}(\mathbb{R};H)\to CH_{\rho}(\mathbb{R};H)$
is bounded with $\|\ell\ast\|\leq|\ell|_{L_{1,\rho}}.$ \end{proof}
\begin{prop}
For each $x\in H$ we have that 
\[
\left((1-k\ast)^{-1}\chi_{\mathbb{R}_{\geq0}}x\right)(0+)=x.
\]
\end{prop}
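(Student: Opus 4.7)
The plan is to reduce the claim to the easy computation of $(k\ast\chi_{\mathbb{R}_{\geq0}}x)(0+)$ by peeling off one factor of $k\ast$. Specifically, I would first use the algebraic identity
\[
(1-k\ast)^{-1}\chi_{\mathbb{R}_{\geq0}}x=\chi_{\mathbb{R}_{\geq0}}x+(1-k\ast)^{-1}\bigl(k\ast\chi_{\mathbb{R}_{\geq0}}x\bigr).
\]
Since $\chi_{\mathbb{R}_{\geq0}}x(0+)=x$ trivially, it suffices to show that the second summand has a continuous representative vanishing at $0$. Using the explicit integral formula from \prettyref{lem:integral_formula_conv} (which applies because $H$ is separable), one computes
\[
\bigl(k\ast\chi_{\mathbb{R}_{\geq0}}x\bigr)(t)=\chi_{\mathbb{R}_{\geq0}}(t)\int_{0}^{t}k(s)x\,\dd s,
\]
which is continuous on $\mathbb{R}$, supported in $\mathbb{R}_{\geq0}$, and vanishes at $t=0$; hence $k\ast\chi_{\mathbb{R}_{\geq0}}x\in CH_{\rho}(\mathbb{R};H)$.

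Next, I would fix $\rho$ large enough that $|k|_{L_{1,\rho}}<1$ (possible since $|k|_{L_{1,\rho}}\to0$ as $\rho\to\infty$). By \prettyref{lem:conv_cont} the operator $k\ast$ acts boundedly on $CH_{\rho}(\mathbb{R};H)$ with norm strictly less than $1$, so the Neumann series yields a bounded inverse $(1-k\ast)^{-1}$ on $CH_{\rho}(\mathbb{R};H)$. Setting $g\coloneqq(1-k\ast)^{-1}(k\ast\chi_{\mathbb{R}_{\geq0}}x)\in CH_{\rho}(\mathbb{R};H)$ therefore gives a continuous function, and it only remains to verify $g(0)=0$.

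For this I would use causality: the material law $z\mapsto(1-\sqrt{2\pi}\hat{k}(z))^{-1}$ is analytic and bounded on a right half-plane, so by \prettyref{thm:material_law_good} the operator $(1-k\ast)^{-1}$ is causal on $H_{\rho}(\mathbb{R};H)$. Thus $\spt g\subseteq\mathbb{R}_{\geq0}$. Feeding this back into the defining identity $g=k\ast\chi_{\mathbb{R}_{\geq0}}x+k\ast g$ and evaluating at $t=0$ gives
\[
g(0)=\bigl(k\ast\chi_{\mathbb{R}_{\geq0}}x\bigr)(0)+\int_{0}^{\infty}k(s)g(-s)\,\dd s=0+0=0,
\]
since $g(-s)=0$ for $s>0$. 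Combining everything yields $\bigl((1-k\ast)^{-1}\chi_{\mathbb{R}_{\geq0}}x\bigr)(0+)=x+g(0+)=x$, as claimed.

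The main subtle point is the last paragraph: one must be sure that $g$ actually vanishes on the negative half-line, which is a causality statement about the inverse operator rather than about $k\ast$ alone. Once this is in place the rest is bookkeeping with \prettyref{lem:conv_cont} and \prettyref{lem:integral_formula_conv}; independence of the auxiliary choice of $\rho$ is automatic from \prettyref{thm:material_law_good} applied to the bounded analytic symbol $(1-\sqrt{2\pi}\hat{k}(\cdot))^{-1}$.
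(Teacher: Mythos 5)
Your proof is correct and follows essentially the same route as the paper's: both isolate $f\coloneqq k\ast\chi_{\mathbb{R}_{\geq0}}x$, show the remainder $(1-k\ast)^{-1}f=\sum_{n\geq0}(k\ast)^{n}f$ lies in $CH_{\rho}(\mathbb{R};H)$ via \prettyref{lem:conv_cont} and the Neumann series, and use causality plus continuity to see that this remainder vanishes at $0$. The only cosmetic differences are that you invoke causality of the full resolvent rather than of the iterates $(k\ast)^{n}$, and you evaluate the fixed-point identity at $t=0$ where the paper simply uses $g(0+)=g(0-)=0$; both are fine.
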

\begin{proof}
Let $\rho>\max\{0,\mu\}$ and $x\in H$. We first prove that $k\ast\chi_{\mathbb{R}_{\geq0}}x\in CH_{\rho}(\mathbb{R};H).$
For $t,t'\in\mathbb{R}$ we have that 
\[
\intop_{0}^{\infty}k(s)\chi_{\mathbb{R}_{\geq0}}(t'-s)x\mbox{ d}s=\intop_{0}^{\infty}\chi_{[0,t']}(s)k(s)x\mbox{ d}s\to\intop_{0}^{\infty}\chi_{[0,t]}(s)k(s)x\mbox{ d}s=\intop_{0}^{\infty}k(s)\chi_{\mathbb{R}_{\geq0}}(t-s)x\mbox{ d}s
\]
as $t'\to t$ by dominated convergence. Hence, by \prettyref{lem:integral_formula_conv}
$\left(t\mapsto\intop_{0}^{\infty}k(s)\chi_{\mathbb{R}_{\geq0}}(t-s)x\mbox{ d}s\right)$
is a continuous representer of $k\ast\chi_{\mathbb{R}_{\geq0}}x.$
Since 
\[
\left|\intop_{0}^{\infty}k(s)\chi_{\mathbb{R}_{\geq0}}(t-s)x\mbox{ d}s\right|_{H}\e^{-\rho t}\leq\intop_{0}^{\infty}\|k(s)\|\e^{-\rho s}\mbox{ d}s|x|_{H}\quad(t\in\mathbb{R}),
\]
we obtain $f\coloneqq k\ast\chi_{\mathbb{R}_{\geq0}}x\in CH_{\rho}(\mathbb{R};H)$.
By the Neumann series we have 
\[
(1-k\ast)^{-1}\chi_{\mathbb{R}_{\geq0}}x=\chi_{\mathbb{R}_{\geq0}}x+\sum_{n=0}^{\infty}\left(k\ast\right)^{n}f.
\]
We note that the series also converges in $CH_{\rho}(\mathbb{R};H),$
since by \prettyref{lem:conv_cont}
\[
|\left(k\ast\right)^{n}f|_{CH_{\rho}}\leq|k|_{L_{1,\rho}}^{n}|f|_{CH_{\rho}}
\]
for each $n\in\mathbb{N}$ and $CH_{\rho}(\mathbb{R};H)$ is complete.
Thus, $\sum_{n=0}^{\infty}\left(k\ast\right)^{n}f\in CH_{\rho}(\mathbb{R};H)$
and since $\spt f\subseteq\mathbb{R}_{\geq0}$ and $k\ast$ is causal,
we have that $\spt\sum_{n=0}^{\infty}\left(k\ast\right)^{n}f\subseteq\mathbb{R}_{\geq0}$
and hence, using the continuity 
\[
\left(\sum_{n=0}^{\infty}\left(k\ast\right)^{n}f\right)(0+)=\left(\sum_{n=0}^{\infty}\left(k\ast\right)^{n}f\right)(0-)=0.
\]
Hence, we get that 
\[
\left((1-k\ast)^{-1}\chi_{\mathbb{R}_{\geq0}}x\right)(0+)=x+\left(\sum_{n=0}^{\infty}\left(k\ast\right)^{n}f\right)(0+)=x.\tag*{\qedhere}
\]

\end{proof}
Our next goal is to find a suitable subset of $\mathrm{His}_{\rho}(M,A).$
For doing so, we need to impose more regularity for the kernel $k$.
More precisely, we assume the following: There exists a kernel $k'\in L_{1,\tilde{\mu}}(\mathbb{R}_{\geq0};L(H))$
for some $\tilde{\mu}\in\mathbb{R}$ such that for each $x,y\in H,t\in\mathbb{R}_{\geq0}$
we have 
\begin{equation}
\langle x|(k(t)-k(0))y\rangle_{H}=\intop_{0}^{t}\langle x|k'(s)y\rangle_{H}\mbox{ d}s.\label{eq:regularity_k}
\end{equation}

\begin{lem}
\label{lem:convolution_regular_kernel}Assume that $k$ satisfies
\prettyref{eq:regularity_k} and let $u\in H_{\rho}(\mathbb{R};H)$
for some $\rho\geq\max\{\mu,\tilde{\mu}\}.$ Then $k\ast u\in H_{\rho}^{1}(\mathbb{R};H)$
and $\partial_{0,\rho}\left(k\ast u\right)=k'\ast u+k(0)u.$ Hence,
\[
k\ast:H_{\rho}(\mathbb{R};H)\to H_{\rho}^{1}(\mathbb{R};H)
\]
is a bounded linear operator with norm less than or equal to $|k'|_{L_{1,\rho}}+\|k(0)\|$
and 
\[
z\hat{k}(z)=\hat{k'}(z)+\frac{1}{\sqrt{2\pi}}k(0)\quad(z\in\mathbb{C}_{\Re\geq\max\{\mu,\tilde{\mu}\}}).
\]
\end{lem}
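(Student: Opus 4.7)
The plan is to first establish the scalar Laplace transform identity
\[
z\hat{k}(z)=\hat{k'}(z)+\frac{1}{\sqrt{2\pi}}k(0)\qquad(z\in\mathbb{C}_{\Re\geq\max\{\mu,\tilde{\mu}\}}),
\]
and then to lift it to an operator identity on $H_{\rho}(\mathbb{R};H)$ via the spectral representation $\partial_{0,\rho}=\mathcal{L}_{\rho}^{\ast}(\i\m+\rho)\mathcal{L}_{\rho}$ together with \prettyref{lem:kernel_Fourier}. The other assertions of the lemma will follow readily from these two facts.

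For the scalar identity, I would fix $x,y\in H$ and $z\in\mathbb{C}_{\Re>\max\{\mu,\tilde{\mu}\}}$ and use \prettyref{eq:regularity_k} in the form $\langle x|k(t)y\rangle_{H}=\langle x|k(0)y\rangle_{H}+\intop_{0}^{t}\langle x|k'(s)y\rangle_{H}\,\dd s$, substitute it into the definition of $\hat{k}(z)$, and interchange the two integrals by Fubini. Absolute integrability is guaranteed since
\[
\intop_{0}^{\infty}\!\!\intop_{0}^{t}|\langle x|k'(s)y\rangle_{H}|\,\e^{-(\Re z)t}\,\dd s\,\dd t\leq\frac{|x|_{H}|y|_{H}}{\Re z}\,|k'|_{L_{1,\Re z}}<\infty.
\]
A direct computation then yields
\[
\langle x|\hat{k}(z)y\rangle_{H}=\frac{1}{z\sqrt{2\pi}}\langle x|k(0)y\rangle_{H}+\frac{1}{z}\langle x|\hat{k'}(z)y\rangle_{H},
\]
and since $x,y$ were arbitrary the claimed identity holds on the open half-plane, extending by continuity of $\hat{k},\hat{k'}$ up to $\mathbb{C}_{\Re\geq\max\{\mu,\tilde{\mu}\}}$.

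To transfer the identity to the operator level, I would take $\rho>\max\{\mu,\tilde{\mu}\}$ and $u\in H_{\rho}(\mathbb{R};H)$ and use \prettyref{lem:kernel_Fourier} together with \prettyref{prop:spectral_repr_partial_0} to write $k\ast u=\mathcal{L}_{\rho}^{\ast}\bigl(\sqrt{2\pi}\hat{k}(\i\m+\rho)\mathcal{L}_{\rho}u\bigr)$. To see that $k\ast u\in D(\partial_{0,\rho})=H_{\rho}^{1}(\mathbb{R};H)$, I need the function $t\mapsto(\i t+\rho)\sqrt{2\pi}\hat{k}(\i t+\rho)(\mathcal{L}_{\rho}u)(t)$ to belong to $L_{2}(\mathbb{R};H)$. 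By the scalar identity this function is equal to $\sqrt{2\pi}\hat{k'}(\i t+\rho)(\mathcal{L}_{\rho}u)(t)+k(0)(\mathcal{L}_{\rho}u)(t)$, and each summand is square integrable because $\hat{k'}$ is bounded on $\mathbb{C}_{\Re\geq\rho}$ by $|k'|_{L_{1,\rho}}/\sqrt{2\pi}$ and $k(0)\in L(H)$. Applying $\mathcal{L}_{\rho}^{\ast}$ and invoking \prettyref{lem:kernel_Fourier} once more for $k'$ gives $\partial_{0,\rho}(k\ast u)=k'\ast u+k(0)u$, and the norm bound $|k'|_{L_{1,\rho}}+\|k(0)\|$ is then immediate from \prettyref{lem:convolution_bd}. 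The boundary case $\rho=\max\{\mu,\tilde{\mu}\}$ is then handled by the same functional-calculus argument, bypassing the strict inequality in \prettyref{lem:kernel_Fourier} via the already established scalar identity and continuity of $\hat k,\hat{k'}$ on the closed half-plane.

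The main technical obstacle is the rigorous justification of the Fubini interchange for operator-valued kernels, but this is taken care of by the standing separability hypothesis on $H$ together with Pettis's theorem (\prettyref{thm:Pettis}), which yields strong measurability of $k'(\cdot)y$ and $k(\cdot)y$; everything else then reduces to a scalar Fubini argument controlled by the absolute-integrability estimate above.
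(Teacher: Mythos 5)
Your argument is correct in outline but runs in the opposite direction to the paper's. The paper proves $\partial_{0,\rho}(k\ast u)=k'\ast u+k(0)u$ directly in the time domain: it tests $k\ast u$ against $\partial_{0,\rho}^{\ast}\varphi$ for $\varphi\in C_{c}^{\infty}(\mathbb{R};H)$, uses Fubini together with the representation \prettyref{eq:regularity_k} of $k(t-s)-k(0)$ as an integral of $k'$, and integrates by parts; this shows $k\ast u\in D(\partial_{0,\rho}^{\ast\ast})=D(\partial_{0,\rho})$ with the asserted derivative, and the transform identity is then read off from \prettyref{lem:kernel_Fourier}. You instead establish the transform identity first and lift it through the spectral representation of $\partial_{0,\rho}$. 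Your route makes the domain statement $k\ast u\in H_{\rho}^{1}(\mathbb{R};H)$ transparent via the multiplier characterization, at the price of having to worry separately about where the transform identity is actually proved and about the boundary value $\rho=\max\{\mu,\tilde{\mu}\}$, both of which the paper's time-domain computation handles uniformly.

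The one concrete gap is in your Fubini step. The bound $\intop_{0}^{\infty}\intop_{0}^{t}|\langle x|k'(s)y\rangle_{H}|\e^{-(\Re z)t}\,\dd s\,\dd t\leq\frac{|x|_{H}|y|_{H}}{\Re z}|k'|_{L_{1,\Re z}}$ tacitly assumes $\Re z>0$: for $\Re z\leq0$ the inner integral $\intop_{s}^{\infty}\e^{-(\Re z)t}\,\dd t$ diverges, and likewise $\intop_{0}^{\infty}\e^{-zt}\langle x|k(0)y\rangle_{H}\,\dd t$ no longer equals $\frac{1}{z}\langle x|k(0)y\rangle_{H}$. Since $\mu$ and $\tilde{\mu}$ are allowed to be negative --- and must be negative for the decay results this lemma feeds into, e.g.\ \prettyref{prop:G-P_integro} --- the strip $\max\{\mu,\tilde{\mu}\}<\Re z\leq0$ is not covered by your computation, so your claim that "the identity holds on the open half-plane" does not follow from it. The repair is cheap: either observe that both sides of $z\hat{k}(z)=\hat{k'}(z)+\frac{1}{\sqrt{2\pi}}k(0)$ are analytic on $\mathbb{C}_{\Re>\max\{\mu,\tilde{\mu}\}}$ and continuous up to the boundary, so the identity propagates from $\mathbb{C}_{\Re>\max\{0,\mu,\tilde{\mu}\}}$ by the identity theorem; or integrate by parts directly, using that $\e^{-zt}\langle x|k(t)y\rangle_{H}\to0$ as $t\to\infty$ because $|\langle x|k(t)y\rangle_{H}|\leq|x|_{H}|y|_{H}\intop_{t}^{\infty}\|k'(s)\|\,\dd s=o(\e^{\tilde{\mu}t})$. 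Relatedly, the case $\rho=\max\{\mu,\tilde{\mu}\}$ of the operator statement deserves more than one sentence: the clean argument is to approximate $u$ by test functions, note that $k\ast$ and $k'\ast$ are bounded on $H_{\rho}(\mathbb{R};H)$ for this $\rho$ by \prettyref{lem:convolution_bd}, and invoke the closedness of $\partial_{0,\rho}$.
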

\begin{proof}
Let $\varphi\in C_{c}^{\infty}(\mathbb{R};H)$ and compute 
\begin{align*}
 & \langle k\ast u|\partial_{0,\rho}^{\ast}\varphi\rangle_{H_{\rho}(\mathbb{R};H)}\\
 & =-\intop_{\mathbb{R}}\left\langle \left.\intop_{-\infty}^{t}k(t-s)u(s)\mbox{ d}s\right|(\varphi\e^{-2\rho\cdot})'(t)\right\rangle _{H}\mbox{ d}t\\
 & =-\intop_{\mathbb{R}}\intop_{s}^{\infty}\langle k(t-s)u(s)|(\varphi\e^{-2\rho\cdot})'(t)\rangle_{H}\mbox{ d}t\mbox{ d}s\\
 & =-\intop_{\mathbb{R}}\intop_{s}^{\infty}\langle\intop_{0}^{t-s}k'(r)u(s)\mbox{ d}r|(\varphi\e^{-2\rho\cdot})'(t)\rangle_{H}\mbox{ d}t\mbox{ d}s-\intop_{\mathbb{R}}\langle k(0)u(s)|\intop_{s}^{\infty}(\varphi\e^{-2\rho\cdot})'(t)\mbox{ d}t\rangle_{H}\mbox{ d}s\\
 & =\intop_{\mathbb{R}}\intop_{s}^{\infty}\langle k'(t-s)u(s)|\varphi(t)\e^{-2\rho t}\rangle_{H}\mbox{ d}t\mbox{ d}s+\intop_{\mathbb{R}}\langle k(0)u(s)|\varphi(s)\rangle_{H}\e^{-2\rho s}\mbox{ d}s\\
 & =\langle k'\ast u+k(0)u|\varphi\rangle_{H_{\rho}(\mathbb{R};H)}.
\end{align*}
This yields $\partial_{0,\rho}(k\ast u)=k'\ast u+k(0)u$. The boundedness
of $k\ast:H_{\rho}(\mathbb{R};H)\to H_{\rho}^{1}(\mathbb{R};H)$ with
the asserted norm bound follows from \prettyref{lem:convolution_bd},
while the last assertion is a consequence of \prettyref{lem:kernel_Fourier}.\end{proof}
\begin{prop}
\label{prop:integro_well-posed} If $k$ satisfies \prettyref{eq:regularity_k},
then the evolutionary problem associated with $A$ and $M(z)=(1-\sqrt{2\pi}\hat{k}(z))^{-1}$
is well-posed. Moreover, for $\rho\geq\max\{\mu,\tilde{\mu}\}$ we
have that 
\begin{align*}
K_{\rho}:H_{\rho}(\mathbb{R}_{\leq0};H) & \to H_{\mu\vee\tilde{\mu}}(\mathbb{R}_{\geq0};H)\\
g & \mapsto P_{0}\partial_{0}M(\partial_{0})g
\end{align*}
is well-defined and bounded and, for $\mu\vee\tilde{\mu}\leq\nu\leq\rho$
and $g\in H_{\rho}(\mathbb{R}_{\leq0};H)\subseteq H_{\nu}(\mathbb{R}_{\leq0};H)$
we have that 
\[
K_{\rho}g=K_{\nu}g.
\]
\end{prop}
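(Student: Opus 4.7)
The plan is to reduce everything to the algebraic identity
\[
zM(z) \;=\; z + M(z)\bigl(\sqrt{2\pi}\hat{k'}(z) + k(0)\bigr)\qquad(z\in\mathbb{C}_{\Re\geq\max\{\mu,\tilde{\mu}\}}),
\]
which follows immediately from $M(z)\bigl(1-\sqrt{2\pi}\hat{k}(z)\bigr)=1$ together with the key identity $z\hat{k}(z)=\hat{k'}(z)+\tfrac{1}{\sqrt{2\pi}}k(0)$ supplied by \prettyref{lem:convolution_regular_kernel}. Via the spectral representation of $\partial_{0,\rho}$ this lifts to the operator identity
\[
\partial_{0,\rho}M(\partial_{0,\rho}) \;=\; \partial_{0,\rho} + M(\partial_{0,\rho})\bigl(k'\ast{} + k(0)\bigr)
\]
as bounded maps $H_{\rho}(\mathbb{R};H)\to H_{\rho}^{-1}(\mathbb{R};H)$, verified on $C_{c}^{\infty}(\mathbb{R};H)$ by Fourier-Laplace and then extended by density.

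For well-posedness I would apply \prettyref{prop:Rainer}. Choosing $\rho_{0}\geq\max\{\mu,\tilde{\mu}\}$ so large that $|k|_{L_{1,\rho_{0}}}<1$ gives $\|M(z)\|\leq(1-|k|_{L_{1,\rho_{0}}})^{-1}$ on $\mathbb{C}_{\Re\geq\rho_{0}}$ by the Neumann series, while $\|\sqrt{2\pi}\hat{k'}(z)\|\leq|k'|_{L_{1,\tilde{\mu}}}$ is immediate. The identity above then yields
\[
\Re\langle zM(z)x|x\rangle_{H} \;\geq\; \Bigl(\Re z - \tfrac{|k'|_{L_{1,\tilde{\mu}}}+\|k(0)\|}{1-|k|_{L_{1,\rho_{0}}}}\Bigr)|x|_{H}^{2},
\]
which is uniformly positive once $\rho_{0}$ is enlarged; combined with $m$-accretivity of $A$ this gives well-posedness.

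For the statement about $K_{\rho}$, apply the operator identity to $g\in H_{\rho}(\mathbb{R}_{\leq0};H)$. Since $\spt g\subseteq\mathbb{R}_{\leq0}$ forces $\spt\partial_{0,\rho}g\subseteq\mathbb{R}_{\leq0}$, \prettyref{prop:properties_cut_off}(e) gives $P_{0}\partial_{0,\rho}g=0$, while the second summand lies in $H_{\rho}\subseteq D(P_{0})$ where $P_{0}$ acts as $\chi_{\mathbb{R}_{\geq0}}(\m)$ by \prettyref{lem:cut_off_mal_anders}. Thus
\[
K_{\rho}g \;=\; \chi_{\mathbb{R}_{\geq0}}(\m)\,M(\partial_{0,\rho})\bigl(k'\ast g + k(0)g\bigr).
\]
Setting $\nu\coloneqq\mu\vee\tilde{\mu}$, the inequality $\|g\|_{H_{\nu}(\mathbb{R}_{\leq0};H)}\leq\|g\|_{H_{\rho}(\mathbb{R}_{\leq0};H)}$ (which holds because $-2\rho t\geq-2\nu t$ for $t\leq0$) together with \prettyref{lem:convolution_bd} puts $k'\ast g+k(0)g$ into $H_{\nu}(\mathbb{R};H)$ with norm controlled by $(|k'|_{L_{1,\tilde{\mu}}}+\|k(0)\|)\|g\|_{H_{\rho}}$. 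Since $\nu\geq\mu$, $M(\partial_{0,\nu})$ is bounded on $H_{\nu}(\mathbb{R};H)$, and \prettyref{thm:material_law_good} identifies $M(\partial_{0,\rho})$ with $M(\partial_{0,\nu})$ on the intersection $H_{\rho}\cap H_{\nu}$. Multiplying by $\chi_{\mathbb{R}_{\geq0}}$ preserves $H_{\nu}(\mathbb{R};H)$, so $K_{\rho}g\in H_{\nu}(\mathbb{R}_{\geq0};H)$ with the asserted bound. The independence $K_{\rho}g=K_{\nu'}g$ for $\nu\leq\nu'\leq\rho$ follows from the explicit formula, using \prettyref{thm:material_law_good} one more time on $H_{\rho}\cap H_{\nu'}$ (which contains $k'\ast g+k(0)g$).

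The main obstacle will be the bookkeeping of the exponentially weighted spaces: $H_{\rho}(\mathbb{R};H)$ and $H_{\nu}(\mathbb{R};H)$ are not comparable in general, and for supported functions the inclusions run opposite directions on $\mathbb{R}_{\leq0}$ and $\mathbb{R}_{\geq0}$. The reformulation via the operator identity is what makes this manageable: it replaces the problematic first-order term $\partial_{0,\rho}g$, which only lives in $H_{\rho}^{-1}$ and has no direct $H_{\nu}$ counterpart, by the convolution terms $k'\ast g+k(0)g$, which inherit the weaker weight from the kernel hypothesis on $k'$.
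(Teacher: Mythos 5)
Your proposal is correct and follows essentially the same route as the paper: both rest on \prettyref{lem:convolution_regular_kernel} to split $zM(z)=z+{}$(a remainder built from $k'$ and $k(0)$ that is uniformly bounded on $\mathbb{C}_{\Re>\max\{\mu,\tilde\mu\}}$), kill the $P_{0}\partial_{0,\rho}g$ term by the support of $g$, and push the remainder through the weaker weight $\mu\vee\tilde\mu$ via \prettyref{lem:convolution_bd} and \prettyref{thm:material_law_good}. The only cosmetic differences are that you place $M(z)$ to the left of the remainder (harmless, since $M(z)$ commutes with $\hat k(z)$) and derive well-posedness from the accretivity estimate and \prettyref{prop:Rainer}, whereas the paper factors $zM(z)+A=(z+A)(1+(z+A)^{-1}\tilde M(z))$ and uses the Neumann series directly.
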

\begin{proof}
Since $|k|_{L_{1,\mu}}<1$ we have that 
\[
M(z)=1+\sum_{n=1}^{\infty}\left(\sqrt{2\pi}\:\hat{k}(z)\right)^{n},\quad(z\in\mathbb{C}_{\Re>\mu})
\]
due to the Neumann series. From \prettyref{lem:convolution_regular_kernel},
we therefore infer that 
\[
zM(z)=z+\left(\sqrt{2\pi}\hat{k'}(z)+k(0)\right)\sum_{n=0}^{\infty}\left(\sqrt{2\pi}\hat{k}(z)\right)^{n}\eqqcolon z+\tilde{M}(z)\quad(z\in\mathbb{C}_{\Re>\max\{\mu,\tilde{\mu}\}}).
\]
Noting that 
\[
\left\Vert \tilde{M}(z)\right\Vert \leq\left(|k'|_{L_{1,\tilde{\mu}}}+\|k(0)\|\right)\frac{1}{1-|k|_{L_{1,\mu}}}\eqqcolon\rho_{0}
\]
for each $z\in\mathbb{C}_{\Re>\max\{\mu,\tilde{\mu}\}}$ we obtain
that 
\[
zM(z)+A=z+A+\tilde{M}(z)=(z+A)(1+(z+A)^{-1}\tilde{M}(z))
\]
is boundedly invertible for each $z\in\mathbb{C}_{\Re>\max\{0,\rho_{0},\mu,\tilde{\mu}\}}$
with 
\[
\|(zM(z)+A)^{-1}\|\leq\frac{1}{\Re z-\rho_{0}}.
\]
This shows the well-posedness of the evolutionary problem associated
with $M$ and $A$.\\
Let now $g\in H_{\rho}(\mathbb{R}_{\leq0};H)$ for some $\rho\geq\max\{\mu,\tilde{\mu}\}.$
Then we have by \prettyref{lem:convolution_regular_kernel}
\begin{align*}
\partial_{0,\rho}M(\partial_{0,\rho})g & =\partial_{0,\rho}g+\partial_{0,\rho}\sum_{n=1}^{\infty}\left(k\ast\right)^{n}g\\
 & =\partial_{0,\rho}g+\left(k'\ast+k(0)\right)\sum_{n=0}^{\infty}(k\ast)^{n}g
\end{align*}
and thus, 
\begin{align*}
K_{\rho}g & =\chi_{\mathbb{R}_{\geq0}}(\m)\left(k'\ast+k(0)\right)\sum_{n=0}^{\infty}(k\ast)^{n}g\\
 & =\chi_{\mathbb{R}_{\geq0}}(\m)\left(k'\ast\right)\sum_{n=0}^{\infty}(k\ast)^{n}g+k(0)\chi_{\mathbb{R}_{\geq0}}(\m)\sum_{n=1}^{\infty}(k\ast)^{n}g.
\end{align*}
By \prettyref{lem:convolution_bd} we know that $k\ast,k'\ast:H_{\mu\vee\tilde{\mu}}(\mathbb{R};H)\to H_{\mu\vee\tilde{\mu}}(\mathbb{R};H)$
are bounded operators. Since by the choice of $\rho$ we have that
$H_{\rho}(\mathbb{R}_{\leq0};H)\hookrightarrow H_{\mu\vee\mu'}(\mathbb{R};H)$,
it follows that $K_{\rho}g\in H_{\mu\vee\mu'}(\mathbb{R}_{\geq0};H)$
and that $K_{\rho}:H_{\rho}(\mathbb{R}_{\leq0};H)\to H_{\mu\vee\tilde{\mu}}(\mathbb{R}_{\geq0};H)$
is bounded. Moreover, the computation above yields the independence
of $\rho\geq\mu\vee\tilde{\mu}$. 
\end{proof}
In order to give a suitable subset of $\mathrm{His}_{\rho}(M,A),$
we also need to restrict the class of possible operators $A$.
\begin{lem}
\label{lem:solution_op_regular} Let $\rho>\max\{s_{0}(M,A),0,\mu,\tilde{\mu}\}$
and assume that $(\partial_{0,\rho}+A)^{-1}\left[H_{\rho}(\mathbb{R};H)\right]\subseteq H_{\rho}^{1}(\mathbb{R};H)$
and that $k$ satisfies \prettyref{eq:regularity_k}. Then, $S_{\rho}:H_{\rho}(\mathbb{R};H)\to H_{\rho}^{1}(\mathbb{R};H)$
is well-defined and bounded.\end{lem}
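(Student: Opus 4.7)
The plan is to exploit the decomposition of $\partial_{0,\rho}M(\partial_{0,\rho})$ that was already extracted in the proof of \prettyref{prop:integro_well-posed}. Using \prettyref{lem:convolution_regular_kernel} together with the Neumann series expansion of $M(z) = (1-\sqrt{2\pi}\hat{k}(z))^{-1}$, one obtains an analytic and bounded function
\[
\tilde{M}(z) = \left(\sqrt{2\pi}\hat{k'}(z) + k(0)\right)\sum_{n=0}^{\infty}\left(\sqrt{2\pi}\hat{k}(z)\right)^{n}
\]
on $\mathbb{C}_{\Re > \max\{\mu,\tilde{\mu}\}}$ such that $zM(z) = z + \tilde{M}(z)$. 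Consequently, $\tilde{M}(\partial_{0,\rho}) \in L(H_{\rho}(\mathbb{R};H))$ and the identity $\partial_{0,\rho}M(\partial_{0,\rho}) = \partial_{0,\rho} + \tilde{M}(\partial_{0,\rho})$ holds on $D(\partial_{0,\rho})$. Since adding a bounded operator does not affect closability or the domain of the closure, this leads to
\[
\overline{\partial_{0,\rho}M(\partial_{0,\rho}) + A} = \overline{\partial_{0,\rho}+A} + \tilde{M}(\partial_{0,\rho}),
\]
so in particular $D(S_{\rho}^{-1}) \subseteq D(\overline{\partial_{0,\rho}+A})$.

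Next I would fix $f \in H_{\rho}(\mathbb{R};H)$ and set $u \coloneqq S_{\rho}f \in H_{\rho}(\mathbb{R};H)$. From the identity above, $u$ belongs to $D(\overline{\partial_{0,\rho}+A})$ and
\[
\overline{(\partial_{0,\rho}+A)}\, u = f - \tilde{M}(\partial_{0,\rho})u \in H_{\rho}(\mathbb{R};H).
\]
Because $-A$ and $\partial_{0,\rho}$ satisfy the hypothesis of \prettyref{prop:Rainer} with the trivial material law $M \equiv 1$, the operator $\overline{\partial_{0,\rho}+A}$ is boundedly invertible; by hypothesis, its inverse maps $H_{\rho}(\mathbb{R};H)$ into $H_{\rho}^{1}(\mathbb{R};H)$. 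The closed graph theorem applied to $(\overline{\partial_{0,\rho}+A})^{-1}\colon H_{\rho}(\mathbb{R};H) \to H_{\rho}^{1}(\mathbb{R};H)$ then yields a constant $C>0$ with $\|(\overline{\partial_{0,\rho}+A})^{-1}g\|_{H_{\rho}^{1}} \leq C|g|_{\rho}$ for every $g \in H_{\rho}(\mathbb{R};H)$.

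Applying this to $g = f - \tilde{M}(\partial_{0,\rho})u$, I obtain $u \in H_{\rho}^{1}(\mathbb{R};H)$ together with the norm estimate
\[
\|u\|_{H_{\rho}^{1}} \leq C\bigl(|f|_{\rho} + \|\tilde{M}(\partial_{0,\rho})\|\,|u|_{\rho}\bigr) \leq C\bigl(1 + \|\tilde{M}(\partial_{0,\rho})\|\,\|S_{\rho}\|\bigr)|f|_{\rho},
\]
where $\|S_{\rho}\|$ denotes the operator norm of $S_{\rho}$ on $H_{\rho}(\mathbb{R};H)$, which is finite by the well-posedness established in \prettyref{prop:integro_well-posed}. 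This simultaneously gives the assertion that $S_{\rho}$ maps into $H_{\rho}^{1}(\mathbb{R};H)$ and that it is bounded as an operator $H_{\rho}(\mathbb{R};H) \to H_{\rho}^{1}(\mathbb{R};H)$.

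The only delicate point is the first paragraph: justifying that the closure of the operator sum coincides with $\overline{\partial_{0,\rho}+A} + \tilde{M}(\partial_{0,\rho})$ so that every $u \in D(S_{\rho}^{-1})$ is already in $D(\overline{\partial_{0,\rho}+A})$. This is a standard perturbation fact for bounded perturbations of closable operators, but it must be invoked to legitimize writing $\overline{(\partial_{0,\rho}+A)}u$ for an arbitrary element in the range of $S_{\rho}$; once this is granted, the rest of the argument is essentially bookkeeping.
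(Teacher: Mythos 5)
Your proof is correct and follows essentially the same route as the paper: the paper likewise uses \prettyref{lem:convolution_regular_kernel} to write $\partial_{0,\rho}(1-k\ast)^{-1}u=\partial_{0,\rho}u+\left((k'\ast)+k(0)\right)\sum_{n=0}^{\infty}(k\ast)^{n}u$, rewrites $u=(\partial_{0,\rho}+A)^{-1}\bigl(f-\tilde{M}(\partial_{0,\rho})u\bigr)$, and concludes from the maximal-regularity hypothesis via the closed graph theorem. The closure identity you flag as delicate is indeed the standard bounded-perturbation fact and is used (implicitly) in the paper's argument as well.
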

\begin{proof}
By the closed graph theorem, it suffices to check that $S_{\rho}\left[H_{\rho}(\mathbb{R};H)\right]\subseteq H_{\rho}^{1}(\mathbb{R};H).$
So, let $f\in H_{\rho}(\mathbb{R};H)$ and set $u\coloneqq S_{\rho}f.$
Then, using \prettyref{lem:convolution_regular_kernel} we infer that
\begin{align*}
f & =\partial_{0,\rho}(1-k\ast)^{-1}u+Au\\
 & =\partial_{0,\rho}u+\partial_{0,\rho}\sum_{n=1}^{\infty}\left(k\ast\right)^{n}u+Au\\
 & =\partial_{0,\rho}u+\left((k'\ast)+k(0)\right)\sum_{n=0}^{\infty}\left(k\ast\right)^{n}u+Au
\end{align*}
and hence, 
\[
u=(\partial_{0,\rho}+A)^{-1}(f-\left((k'\ast)+k(0)\right)\sum_{n=0}^{\infty}\left(k\ast\right)^{n}u).
\]
Since $u\in H_{\rho}(\mathbb{R};H),$ we derive that $f-\left((k'\ast)+k(0)\right)\sum_{n=0}^{\infty}\left(k\ast\right)^{n}u\in H_{\rho}(\mathbb{R};H)$
and thus, $u\in H_{\rho}^{1}(\mathbb{R};H)$ by assumption. \end{proof}
\begin{rem}
The additional assumption imposed on $A$ is called \emph{maximal
regularity} of the associated semigroup. It was proved in \cite{DeSimon1964}
that this is equivalent to the fact that $-A$ generates an analytic
semigroup. Note that this characterization is false for Banach spaces
\cite{Kalton2000}. For results in Banach spaces (more precisely UMD-spaces)
we refer to \cite{Dore1987,Weis2001}. For an approach to maximal
regularity for a class of evolutionary problems we refer to \cite{Picard2016_maxreg}.\end{rem}
\begin{cor}
\label{cor:His_integro} Let $\rho>\max\{s_{0}(M,A),0,\mu,\tilde{\mu}\}$
and assume that $(\partial_{0,\rho}+A)^{-1}\left[H_{\rho}(\mathbb{R};H)\right]\subseteq H_{\rho}^{1}(\mathbb{R};H)$
and that $k$ satisfies \prettyref{eq:regularity_k}. Then each $g\in\chi_{\mathbb{R}_{\leq0}}(\m)\left[H_{\rho}^{1}(\mathbb{R};H)\right]$
with $g(0-)\in D(A)$ belongs to $\mathrm{His}_{\rho}(M,A).$ Hence,
\[
X_{\rho}^{\nu}=H\times H_{\nu}(\mathbb{R}_{\leq0};H)
\]
for each $\nu\leq\rho.$ \end{cor}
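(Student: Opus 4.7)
The plan is to verify directly the defining condition for $\mathrm{His}_{\rho}(M,A)$ by choosing the candidate $x\in H$ dictated by \prettyref{lem:Gamma}. Since for the present material law we already showed $\left((1-k\ast)^{-1}\chi_{\mathbb{R}_{\geq0}}y\right)(0+)=y$ for all $y\in H$, the lemma forces $x=g(0-)$. So the task reduces to showing
\[
v\coloneqq S_{\rho}\left(\delta_{0}g(0-)-K_{\rho}g\right)-\chi_{\mathbb{R}_{\geq0}}g(0-)\in H_{\rho}^{1}(\mathbb{R};H).
\]

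The key computation is to isolate the contribution of the initial ``jump'' inside $(\partial_{0,\rho}M(\partial_{0,\rho})+A)\chi_{\mathbb{R}_{\geq0}}g(0-)$. Setting $h\coloneqq(1-k\ast)^{-1}\chi_{\mathbb{R}_{\geq0}}g(0-)\in H_{\rho}(\mathbb{R};H)$, one has $h-k\ast h=\chi_{\mathbb{R}_{\geq0}}g(0-)$, so by \prettyref{lem:convolution_regular_kernel}
\[
\partial_{0,\rho}h=\delta_{0}g(0-)+k'\ast h+k(0)h,
\]
and since $g(0-)\in D(A)$,
\[
\left(\partial_{0,\rho}M(\partial_{0,\rho})+A\right)\chi_{\mathbb{R}_{\geq0}}g(0-)=\delta_{0}g(0-)+\underbrace{k'\ast h+k(0)h+\chi_{\mathbb{R}_{\geq0}}Ag(0-)}_{\eqqcolon r\;\in\;H_{\rho}(\mathbb{R};H)}.
\]
Applying $S_{\rho}$ and subtracting yields
\[
v=-S_{\rho}\left(r+K_{\rho}g\right).
\]
Now $K_{\rho}g\in H_{\mu\vee\tilde{\mu}}(\mathbb{R}_{\geq0};H)\hookrightarrow H_{\rho}(\mathbb{R};H)$ by \prettyref{prop:integro_well-posed}, and each summand in $r$ is plainly in $H_{\rho}(\mathbb{R};H)$. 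Hence $r+K_{\rho}g\in H_{\rho}(\mathbb{R};H)$, and the maximal-regularity hypothesis together with \prettyref{lem:solution_op_regular} give $S_{\rho}[H_{\rho}(\mathbb{R};H)]\subseteq H_{\rho}^{1}(\mathbb{R};H)$, so $v\in H_{\rho}^{1}(\mathbb{R};H)$. This proves $g\in\mathrm{His}_{\rho}(M,A)$.

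For the density statement, fix $\nu\leq\rho$ and $(y,f)\in H\times H_{\nu}(\mathbb{R}_{\leq0};H)$ and $\varepsilon>0$. Using density of $D(A)$ in $H$ (by $m$-accretivity, cf.\ \prettyref{prop:prop-accretive}) choose $x\in D(A)$ with $|x-y|_{H}<\varepsilon$, and choose $\varphi\in C_{c}^{\infty}(\mathbb{R}_{<0};H)$ with $|\varphi-f|_{H_{\nu}(\mathbb{R}_{\leq0};H)}<\varepsilon$. Pick a bump function $\psi\in C_{c}^{\infty}(\mathbb{R};H)$ with $\psi(0)=x$ and $|\psi|_{H_{\nu}(\mathbb{R};H)}<\varepsilon$, and set $g\coloneqq\chi_{\mathbb{R}_{\leq0}}(\m)(\varphi+\psi)\in\chi_{\mathbb{R}_{\leq0}}(\m)[H_{\rho}^{1}(\mathbb{R};H)]$. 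Then $g(0-)=\psi(0)=x\in D(A)$, so by the first part $g\in\mathrm{His}_{\rho}(M,A)$, i.e.\ $(x,g)\in D_{\rho}$; moreover $|(x,g)-(y,f)|_{H\times H_{\nu}(\mathbb{R}_{\leq0};H)}<3\varepsilon$, yielding $X_{\rho}^{\nu}=H\times H_{\nu}(\mathbb{R}_{\leq0};H)$.

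The only delicate point I anticipate is the bookkeeping for $v$: one must read the identities as living in $H_{\rho}^{-1}(\mathbb{R};H)$ until everything past $\delta_{0}g(0-)$ has been rewritten as an $H_{\rho}$-function, so that $S_{\rho}$ acts in its ``regularizing'' realization provided by \prettyref{lem:solution_op_regular} rather than merely on $H_{\rho}^{-1}(\mathbb{R};H)$.
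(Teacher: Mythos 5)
Your argument is correct and follows essentially the same route as the paper: you rewrite $\chi_{\mathbb{R}_{\geq0}}g(0-)$ as $S_{\rho}$ applied to $\delta_{0}g(0-)$ plus an $H_{\rho}$-remainder via \prettyref{lem:convolution_regular_kernel} and the Neumann series (your $h$ is exactly $\sum_{j\geq0}(k\ast)^{j}\chi_{\mathbb{R}_{\geq0}}g(0-)$), and then invoke \prettyref{prop:integro_well-posed} and \prettyref{lem:solution_op_regular}. Your explicit density argument for $X_{\rho}^{\nu}$, which the paper leaves implicit, is also sound.
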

\begin{proof}
Let $g\in\chi_{\mathbb{R}_{\leq0}}(\m)\left[H_{\rho}^{1}(\mathbb{R};H)\right]$
with $g(0-)\in D(A).$ %
\begin{comment}
Using that 
\begin{align*}
K_{\rho}g & =P_{0}\partial_{0,\rho}(1-k\ast)^{-1}g\\
 & =\chi_{\mathbb{R}_{\geq0}}(\m)\partial_{0,\rho}(1-k\ast)^{-1}\left(g+\chi_{\mathbb{R}_{\geq0}}g(0-)\right)-P_{0}\partial_{0,\rho}(1-k\ast)^{-1}\chi_{\mathbb{R}_{\geq0}}g(0-)\\
 & =\chi_{\mathbb{R}_{\geq0}}(\m)\partial_{0,\rho}(1-k\ast)^{-1}\left(g+\chi_{\mathbb{R}_{\geq0}}g(0-)\right)-\partial_{0,\rho}\chi_{\mathbb{R}_{\geq0}}(\m)(1-k\ast)^{-1}\chi_{\mathbb{R}_{\geq0}}g(0-)+\delta_{0}g(0-)\\
 & =\chi_{\mathbb{R}_{\geq0}}(\m)\partial_{0,\rho}(1-k\ast)^{-1}\left(g+\chi_{\mathbb{R}_{\geq0}}g(0-)\right)-\partial_{0,\rho}(1-k\ast)^{-1}\chi_{\mathbb{R}_{\geq0}}g(0-)+\delta_{0}g(0-)
\end{align*}
\end{comment}
{} We compute
\begin{align*}
 & S_{\rho}\left(\delta_{0}g(0-)-K_{\rho}g\right)-\chi_{\mathbb{R}_{\geq0}}g(0-)\\
 & =S_{\rho}\left(\delta_{0}g(0-)-K_{\rho}g-\partial_{0,\rho}(1-k\ast)^{-1}\chi_{\mathbb{R}_{\geq0}}g(0-)-\chi_{\mathbb{R}_{\geq0}}Ag(0-)\right)\\
 & =-S_{\rho}\left(\left(k'\ast+k(0)\right)\sum_{j=0}^{\infty}\left(k\ast\right)^{j}\chi_{\mathbb{R}_{\geq0}}g(0-)+K_{\rho}g+\chi_{\mathbb{R}_{\geq0}}Ag(0-)\right).
\end{align*}
The assertion now follows from \prettyref{lem:solution_op_regular}.
\end{proof}
Now we are in the position to prove, that we can associate a $C_{0}$-semigroup
on $X_{\rho}^{\nu}$ for suitable $\rho$ and $\nu.$
\begin{prop}
\label{prop:semigroup_integro}Let $\rho>\max\{s_{0}(M,A),0,\mu,\tilde{\mu}\}$.
Assume that $(\partial_{0,\rho}+A)^{-1}\left[H_{\rho}(\mathbb{R};H)\right]\subseteq H_{\rho}^{1}(\mathbb{R};H)$
and that $k$ satisfies \prettyref{eq:regularity_k}. Then $T_{(M,A)}$
extends to a $C_{0}$-semigroup on $X_{\rho}^{\nu}=H\times H_{\nu}(\mathbb{R}_{\leq0};H)$
for each $\max\{\mu,\tilde{\mu}\}\leq\nu\leq\rho.$ \end{prop}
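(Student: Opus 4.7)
By \prettyref{cor:His_integro} the underlying state space is $X_\rho^\nu = H \times H_\nu(\mathbb{R}_{\leq 0};H)$, so the task is to extend $T_{(M,A)}$ from the dense set $D_\rho$ to all of $X_\rho^\nu$. I plan to verify the Hille--Yosida type estimate \prettyref{eq:Hille_Yosida} of \prettyref{thm:sg}. As in the proof of that theorem, a pointwise exponential bound
\[
|v(t)|_H \leq M\e^{\omega t}\big(|g(0-)|_H + |g|_{H_\nu(\mathbb{R}_{\leq 0};H)}\big) \quad (t \geq 0),
\]
with constants $M \geq 1$, $\omega \geq \rho$ uniform over all $g$ in the dense subset $\{g \in \chi_{\mathbb{R}_{\leq 0}}(\m)[H_\rho^1(\mathbb{R};H)] : g(0-) \in D(A)\} \subseteq \mathrm{His}_\rho(M,A)$ identified in \prettyref{cor:His_integro}, translates immediately into \prettyref{eq:Hille_Yosida} for $r_g(\lambda) = \hat{v}(\lambda)$ via the standard Laplace transform manipulation used in the proof of \prettyref{thm:sg}. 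Here $v := S_\rho(\delta_0 g(0-) - K_\rho g) = T_{(M,A)}^{(1)}(\cdot)(g(0-), g)$, using that $\Gamma_{(M,A)}^\rho g = g(0-)$ because $(M(\partial_{0,\rho})\chi_{\mathbb{R}_{\geq 0}}x)(0+) = x$ in the present setting.

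\textbf{Decomposition.} Write $v = v_0 - S_\rho(K_\rho g)$ with $v_0 := S_\rho(\delta_0 g(0-))$. The convolution-history part is easy: by \prettyref{prop:integro_well-posed}, $|K_\rho g|_{H_{\mu \vee \tilde{\mu}}(\mathbb{R}_{\geq 0};H)} \leq C|g|_{H_\nu(\mathbb{R}_{\leq 0};H)}$, hence (using $\nu \leq \rho$ and the support in $\mathbb{R}_{\geq 0}$) $|K_\rho g|_{H_\rho(\mathbb{R};H)} \leq C|g|_{H_\nu(\mathbb{R}_{\leq 0};H)}$. By \prettyref{lem:solution_op_regular}, $S_\rho(K_\rho g) \in H_\rho^1(\mathbb{R};H)$, and the Sobolev embedding \prettyref{prop:Sobolev} then yields $|S_\rho(K_\rho g)(t)|_H \e^{-\rho t} \leq C'|g|_{H_\nu(\mathbb{R}_{\leq 0};H)}$ uniformly in $t \in \mathbb{R}$.

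\textbf{Initial-value contribution via an analytic semigroup.} For $v_0$, the maximal regularity hypothesis on $A$ is, by the theorem of de Simon recalled in the remark after \prettyref{lem:solution_op_regular}, equivalent to $-A$ generating an analytic, in particular $C_0$-, semigroup $T$ on $H$. Setting $w_0 := (1 - k\ast)^{-1} v_0$ and using that $A$ commutes with $k\ast$, the equation $(\partial_{0,\rho}(1-k\ast)^{-1} + A) v_0 = \delta_0 g(0-)$ rewrites as $\partial_{0,\rho} w_0 + A w_0 = \delta_0 g(0-) + k \ast A w_0$; after applying $(\partial_{0,\rho} + A)^{-1}$ and restricting to $\mathbb{R}_{\geq 0}$ this becomes the classical linear Volterra equation
\[
w_0(t) = T(t)g(0-) + \intop_0^t T(t-s)(k \ast A w_0)(s)\,\dd s \quad (t \geq 0).
\]
Since $|T(t)|_{L(H)} \leq M\e^{\omega_0 t}$, and since maximal regularity together with the identity $A(\partial_{0,\rho} + A)^{-1} = 1 - \partial_{0,\rho}(\partial_{0,\rho} + A)^{-1}$ gives the boundedness of $A(\partial_{0,\rho}+A)^{-1}$ on $H_\rho(\mathbb{R};H)$, a Picard iteration combined with a Gronwall-type argument delivers $w_0$ with a pointwise bound $|w_0(t)|_H \leq \tilde{M}\e^{\tilde{\omega} t}|g(0-)|_H$. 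The identity $v_0 = w_0 - k\ast w_0$ and boundedness of $k\ast$ then carry the same exponential bound to $v_0$.

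\textbf{Main obstacle.} The core difficulty is that the naive representation of $v$ afforded by the proof of \prettyref{cor:His_integro} displays $v - \chi_{\mathbb{R}_{\geq 0}}g(0-)$ as $S_\rho$ applied to an $H_\rho$-term involving $\chi_{\mathbb{R}_{\geq 0}}Ag(0-)$; the resulting $H_\rho^1$/Sobolev estimate therefore involves $|Ag(0-)|_H$ and \emph{does not} extend by continuity to $g(0-) \in H$, so it cannot verify \prettyref{eq:Hille_Yosida} on $X_\rho^\nu$. The Volterra reformulation above circumvents this precisely by using the analytic semigroup $T$ to process the datum $g(0-)$ directly at the level of $H$ rather than $D(A)$, and it is here that maximal regularity enters in an essential way; the technical care required is to justify the factorisation $w_0 = (1-k\ast)^{-1}v_0$ at the distributional level and to ensure that the Picard iteration yields a pointwise, not merely $H_\rho$-in-time, estimate.
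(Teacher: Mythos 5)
Your reduction to a uniform bound for $T_{(M,A)}^{(1)}$ on the dense set $D_{\rho}$, and your treatment of the history contribution $S_{\rho}(K_{\rho}g)$ via \prettyref{prop:integro_well-posed}, \prettyref{lem:solution_op_regular} and the Sobolev embedding, match the paper. You also correctly identify the real obstacle: the representation from \prettyref{cor:His_integro} produces a bound involving $|Ag(0-)|_{H}$, which does not pass to the closure. But your proposed resolution of that obstacle — the Volterra reformulation for $v_{0}=S_{\rho}(\delta_{0}g(0-))$ — has a genuine gap on two counts. First, the step $A(k\ast w_{0})=k\ast(Aw_{0})$ is an unstated commutation hypothesis between the operator-valued kernel $k(t)$ and the unbounded operator $A$; nothing in \prettyref{cond:cond_kernel_sec_order} or \prettyref{eq:regularity_k} provides it, and it fails for general $L(H)$-valued kernels. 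Second, even granting it, the Picard/Gronwall argument for a \emph{pointwise} bound $|w_{0}(t)|_{H}\leq\tilde{M}\e^{\tilde{\omega}t}|g(0-)|_{H}$ with the unbounded $A$ inside the convolution is precisely the hard core of the classical resolvent-family theory (the singularity $\|AT(t)\|\lesssim t^{-1}$ is not integrable at $0$, so the iteration does not close in $C_{\omega}(\mathbb{R}_{\geq0};H)$ without additional smoothing arguments); you flag this yourself but do not supply it, so the proof is incomplete at its decisive point.

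The paper closes this gap by a much shorter observation that you are missing: since $S_{\rho}$ commutes with $\partial_{0,\rho}^{-1}$ (\prettyref{prop:continuation_fcts_of_partial_0}), the maximal-regularity statement of \prettyref{lem:solution_op_regular}, namely $S_{\rho}:H_{\rho}(\mathbb{R};H)\to H_{\rho}^{1}(\mathbb{R};H)$ bounded, immediately yields that $S_{\rho}:H_{\rho}^{-1}(\mathbb{R};H)\to H_{\rho}(\mathbb{R};H)$ is bounded. Applied to $\delta_{0}g(0-)-K_{\nu}g$ this gives $|u|_{H_{\rho}(\mathbb{R};H)}\leq C|(g(0-),g)|_{X_{\rho}^{\nu}}$ with no reference to $Ag(0-)$. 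With $u\in H_{\rho}(\mathbb{R};H)$ secured, \prettyref{lem:convolution_regular_kernel} lets one rewrite the equation as
\[
u=T_{-A}g(0-)-(\partial_{0,\rho}+A)^{-1}\Bigl(K_{\nu}g+\bigl(k'\ast+k(0)\bigr)\sum_{n=0}^{\infty}(k\ast)^{n}u\Bigr),
\]
where the argument of $(\partial_{0,\rho}+A)^{-1}$ now lies in $H_{\rho}(\mathbb{R};H)$; the second term is then in $H_{\rho}^{1}(\mathbb{R};H)\hookrightarrow C_{\rho}(\mathbb{R};H)$ by maximal regularity and \prettyref{prop:Sobolev}, and the first term only needs $-A$ to generate a contraction semigroup ($A$ is $m$-accretive) — no analyticity, no Volterra fixed point, and no commutation of $k$ with $A$.
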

\begin{proof}
Let $\max\{\mu,\tilde{\mu}\}\leq\nu\leq\rho$. By \prettyref{prop:T is sg}
and \prettyref{lem:boundedness T1 implies T2} it suffices to prove
that 
\[
T_{(M,A)}^{(1)}:D_{\rho}\subseteq X_{\rho}^{\nu}\to C_{\rho}(\mathbb{R}_{\geq0};H)
\]
is bounded. So let $g\in\mathrm{His}_{\rho}(M,A)$ and set $u\coloneqq T_{(M,A)}^{(1)}(g(0-),g)$,
i.e. 
\[
u=S_{\rho}(\delta_{0}g(0-)-K_{\rho}g)=S_{\rho}(\delta_{0}g(0-)-K_{\nu}g).
\]
We note that according to \prettyref{prop:integro_well-posed} $K_{\nu}:H_{\nu}(\mathbb{R}_{\leq0};H)\to H_{\mu\vee\tilde{\mu}}(\mathbb{R}_{\geq0};H)\hookrightarrow H_{\rho}(\mathbb{R};H)$
is bounded. By \prettyref{lem:solution_op_regular} we have that $S_{\rho}:H_{\rho}(\mathbb{R};H)\to H_{\rho}^{1}(\mathbb{R};H)$
is bounded, which in turn gives that $S_{\rho}:H_{\rho}^{-1}(\mathbb{R};H)\to H_{\rho}(\mathbb{R};H)$
is bounded. Thus, $u\in H_{\rho}(\mathbb{R};H)$ with 
\begin{equation}
|u|_{H_{\rho}(\mathbb{R};H)}\leq C|(g(0-),g)|_{X_{\rho}^{\nu}}\label{eq:norm_bound_u}
\end{equation}
 for some $C\geq0.$ Moreover, we have that 
\begin{align*}
\left(\partial_{0,\rho}+A\right)u+(k'\ast+k(0))\sum_{n=0}^{\infty}(k\ast)^{n}u & =(\partial_{0,\rho}(1-k\ast)^{-1}+A)u\\
 & =\delta_{0}g(0-)-K_{\nu}g
\end{align*}
and hence 
\begin{align*}
u & =(\partial_{0,\rho}+A)^{-1}\left(\delta_{0}g(0-)-K_{\nu}g-(k'\ast+k(0))\sum_{n=0}^{\infty}(k\ast)^{n}u\right)\\
 & =T_{-A}g(0-)-(\partial_{0,\rho}+A)^{-1}\left(K_{\nu}g+(k'\ast+k(0))\sum_{n=0}^{\infty}(k\ast)^{n}u\right),
\end{align*}
where $T_{-A}$ denotes the contractive $C_{0}$-semigroup generated
by $-A$ (recall that $A$ is $m$-accretive). Since $K_{\nu}g,u\in H_{\rho}(\mathbb{R};H)$
we have that 
\[
(\partial_{0,\rho}+A)^{-1}\left(K_{\nu}g+(k'\ast+k(0))\sum_{n=0}^{\infty}(k\ast)^{n}u\right)\in H_{\rho}^{1}(\mathbb{R};H)\hookrightarrow C_{\rho}(\mathbb{R};H),
\]
according to \prettyref{prop:Sobolev} and thus, $u\in C_{\rho}(\mathbb{R}_{\geq0};H)$
with 
\begin{align*}
 & |u|_{C_{\rho}(\mathbb{R}_{\geq0};H)}\\
 & \leq|g(0-)|_{H}+M\left|K_{\nu}g+(k'\ast+k(0))\sum_{n=0}^{\infty}(k\ast)^{n}u\right|_{H_{\rho}(\mathbb{R};H)}\\
 & \leq|g(0-)|_{H}+M\left(\|K_{\nu}\||g|_{H_{\nu}(\mathbb{R}_{\leq0};H)}+\left(|k'|_{L_{1,\tilde{\mu}}}+\|k(0)\|\right)\frac{1}{1-|k|_{L_{1,\mu}}}|u|_{H_{\rho}(\mathbb{R};H)}\right)\\
 & \leq|g(0-)|_{H}+M\left(\|K_{\nu}\||g|_{H_{\nu}(\mathbb{R}_{\leq0};H)}+\left(|k'|_{L_{1,\tilde{\mu}}}+\|k(0)\|\right)\frac{1}{1-|k|_{L_{1,\mu}}}C|(g(0-),g)|_{X_{\rho}^{\nu}}\right),
\end{align*}
where we have used \prettyref{eq:norm_bound_u} and where $M$ denotes
the operator norm of $(\partial_{0,\rho}+A)^{-1}:H_{\rho}(\mathbb{R};H)\to H_{\rho}^{1}(\mathbb{R};H).$ 
\end{proof}
We conclude this section by an estimate for the growth bound of $T_{(M,A)}.$ 
\begin{prop}
\label{prop:G-P_integro}Let $\rho>\max\{s_{0}(M,A),0,\mu,\tilde{\mu}\}$.
Assume that $(\partial_{0,\rho}+A)^{-1}\left[H_{\rho}(\mathbb{R};H)\right]\subseteq H_{\rho}^{1}(\mathbb{R};H)$
and that $k$ satisfies \prettyref{eq:regularity_k}. Then $\omega(T_{(M,A)})\leq\max\{s_{0}(M,A),\mu,\tilde{\mu}\}.$ \end{prop}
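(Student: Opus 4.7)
The plan is to apply the generalized Gearhart--Prüss Theorem \prettyref{thm:G-P} with any $\nu$ satisfying $\max\{s_{0}(M,A),\mu,\tilde{\mu}\}<\nu\le\rho$, and then take the infimum. Since the growth bound estimate $\omega(T_{(M,A)})\le\nu$ will hold for all such $\nu$, the desired inequality follows.

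First I would verify the standing hypotheses of \prettyref{thm:G-P} for such a $\nu$. The fact that $T_{(M,A)}$ extends to a $C_{0}$-semigroup on $X_{\rho}^{\nu}=H\times H_{\nu}(\mathbb{R}_{\le 0};H)$ is exactly \prettyref{prop:semigroup_integro} (note that $\nu\ge\max\{\mu,\tilde{\mu}\}$). The discreteness of $\mathbb{C}_{\Re>\nu}\setminus D(M)$ is trivial: since $|k|_{L_{1,\mu}}<1$ and $\nu>\mu$, the Neumann series yields $\mathbb{C}_{\Re>\nu}\subseteq\mathbb{C}_{\Re\ge\mu}\subseteq D(M)$, so the set is empty.

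Next I would check condition \prettyref{eq:GP1}. Using the decomposition established in the proof of \prettyref{prop:integro_well-posed}, namely $zM(z)=z+\tilde{M}(z)$ with
\[
\tilde{M}(z)=\bigl(\sqrt{2\pi}\,\hat{k'}(z)+k(0)\bigr)\sum_{n=0}^{\infty}\bigl(\sqrt{2\pi}\,\hat{k}(z)\bigr)^{n},
\]
we obtain $\Phi_{\lambda}(z)=-\lambda+\tilde{M}(z)-\tilde{M}(z+\lambda)$. Because $\hat{k}$, $\hat{k'}$ are bounded on $\mathbb{C}_{\Re>\mu\vee\tilde{\mu}}$ (and $|k|_{L_{1,\mu}}<1$ keeps the Neumann series uniformly bounded there), the function $\tilde{M}$ is bounded on $\mathbb{C}_{\Re>\nu}$, so $\Phi_{\lambda}$ is bounded for every $\lambda>0$.

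For condition \prettyref{eq:GP2} I would invoke \prettyref{prop:integro_well-posed}: $K_{\rho}$ agrees with $K_{\nu}$ on $H_{\rho}(\mathbb{R}_{\le 0};H)\hookrightarrow H_{\nu}(\mathbb{R}_{\le 0};H)$ (the embedding holds because on $\mathbb{R}_{\le 0}$ the weight $\e^{-2\nu t}$ is dominated by $\e^{-2\rho t}$), and $K_{\nu}:H_{\nu}(\mathbb{R}_{\le 0};H)\to H_{\mu\vee\tilde{\mu}}(\mathbb{R}_{\ge 0};H)$ is bounded; composing with the continuous embedding $H_{\mu\vee\tilde{\mu}}(\mathbb{R}_{\ge 0};H)\hookrightarrow H_{\nu}(\mathbb{R}_{\ge 0};H)$ (valid since $\nu\ge\mu\vee\tilde{\mu}$ and we are on $\mathbb{R}_{\ge 0}$) gives the required boundedness into $H_{\nu}(\mathbb{R}_{\ge 0};H)$. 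With all hypotheses of \prettyref{thm:G-P} verified, we conclude $\omega(T_{(M,A)})\le\nu$, and letting $\nu\downarrow\max\{s_{0}(M,A),\mu,\tilde{\mu}\}$ finishes the proof. There is no real obstacle here — the work was already done in assembling \prettyref{prop:integro_well-posed}, \prettyref{prop:semigroup_integro}, and \prettyref{thm:G-P}; the present proposition is essentially a book-keeping application of those three results.
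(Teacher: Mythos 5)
Your proposal is correct and follows essentially the same route as the paper: apply \prettyref{thm:G-P}, with \prettyref{eq:GP2} supplied by \prettyref{prop:integro_well-posed} and \prettyref{eq:GP1} reduced, via \prettyref{lem:convolution_regular_kernel}, to the boundedness of the memory part of $zM(z)$. Your verification of \prettyref{eq:GP1} through the decomposition $zM(z)=z+\tilde{M}(z)$ is only a cosmetic variant of the paper's resolvent-identity computation, and your explicit limiting argument $\nu\downarrow\max\{s_{0}(M,A),\mu,\tilde{\mu}\}$ makes precise a step the paper leaves implicit.
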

\begin{proof}
We need to show the assumptions of \prettyref{thm:G-P}, that is we
need to prove \prettyref{eq:GP1} and \prettyref{eq:GP2}. We note
that \prettyref{eq:GP2} was already shown in \prettyref{prop:integro_well-posed}
and thus, it suffices to check \prettyref{eq:GP1}. For doing so,
let $\lambda>0$, $\rho>\max\{s_{0}(M,A),\mu,\tilde{\mu}\}$ and consider
the mapping 
\[
\Phi:\mathbb{C}_{\Re>\rho}\ni z\mapsto zM(z)-(z+\lambda)M(z+\lambda)\in L(H).
\]
We compute 
\begin{align*}
 & \Phi(z)\\
 & =z(1-\sqrt{2\pi}\hat{k}(z))^{-1}-(z+\lambda)(1-\sqrt{2\pi}\hat{k}(z+\lambda))^{-1}\\
 & =\sqrt{2\pi}z(1-\sqrt{2\pi}\hat{k}(z))^{-1}(\hat{k}(z+\lambda)-\hat{k}(z))\left(1-\sqrt{2\pi}\hat{k}(z+\lambda)\right)^{-1}-\lambda(1-\sqrt{2\pi}\hat{k}(z+\lambda))^{-1}.
\end{align*}
Since 
\[
\|(1-\sqrt{2\pi}\hat{k}(z))^{-1}\|\leq\frac{1}{1-|k|_{L_{1,\mu}}}\quad(z\in\mathbb{C}_{\Re>\rho}),
\]
it suffices to prove that $z\mapsto z(\hat{k}(z+\lambda)-\hat{k}(z))$
is bounded on $\mathbb{C}_{\Re>\rho}$. Using \prettyref{lem:convolution_regular_kernel}
we have that 
\begin{align*}
z(\hat{k}(z+\lambda)-\hat{k}(z)) & =(z+\lambda)\hat{k}(z+\lambda)-z\hat{k}(z)-\lambda\hat{k}(z+\lambda)\\
 & =\hat{k'}(z+\lambda)-\hat{k'}(z)-\lambda\hat{k}(z+\lambda),
\end{align*}
which yields that 
\[
\|z(\hat{k}(z+\lambda)-\hat{k}(z))\|\leq2|k'|_{L_{1,\tilde{\mu}}}+\lambda|k|_{L_{1,\mu}}\quad(z\in\mathbb{C}_{\Re>\rho}).
\]
This completes the proof. \end{proof}
\begin{rem}
The latter proposition yields the exponential stability of $T_{(M,A)}$,
provided that $s_{0}(M,A),\mu,\tilde{\mu}<0.$ For sufficient assumptions
on the kernel $k$ and the operator $A$, which yield $s_{0}(M,A)<0$
we refer to \prettyref{sub:Integro-differential-equations-1}.
\end{rem}

\section{Notes}

We have provided a way to incorporate initial values and histories
in the framework of evolutionary problems. The key observation was
that suitable right-hand sides belonging to the extrapolation space
$H_{\rho}^{-1}$ together with the causality of the solution operator,
yield that the solution indeed satisfies the desired initial conditions.
This idea was already used in \cite{Picard_McGhee}, however just
for the case of pure initial values. A similar way to incorporate
histories in the case of delay equations was suggested in \cite{Kalauch2011}.
The main improvement of the strategy presented here, is that we are
able to define a space of ``admissible'' initial values and histories
for a given evolutionary problem. Here, admissible means that these
spaces of initial values and histories allow for the definition of
a $C_{0}$-semigroup. To the best of the author's knowledge, so far
there was no intrinsic way to define such spaces in that generality.
Of course, for each of the classes of differential equations discussed
in \prettyref{sec:Applications}, suitable choices for those initial
values were known and the theory of $C_{0}$-semigroups was already
established for those.\\
Indeed, for the case of differential-algebraic equations, the notion
of \emph{consistent} initial values is well-established (see e.g.
\cite{Mehrmann2006} in finite dimensions or \cite{Reis2007} for
a class of DAEs in infinite dimensions). As we have shown in \prettyref{exa:diff_alg},
these consistent initial values coincide with our space $\mathrm{IV}_{\rho}(M,A).$
It should be noted that in the theory of differential-algebraic equations
it is common to assume that the corresponding evolutionary problem
is \emph{not} well-posed in the sense that the solution operator $S_{\rho}$
is bounded from $H_{\rho}$ to $H_{\rho},$ but just bounded from
$H_{\rho}$ to $H_{\rho}^{-k}$ for some $k\in\mathbb{N}.$ Thus,
those problems are not covered by our abstract results so far and
it is postponed to future studies to investigate, how our results
could be generalized to evolutionary problems whose solution operators
are just bounded from $H_{\rho}$ to $H_{\rho}^{-k}.$ Moreover, we
have restricted ourselves to homogeneous problems, i.e. evolutionary
problems of the form 
\[
\left(\partial_{0,\rho}M(\partial_{0,\rho})+A\right)u=0\mbox{ on }\mathbb{R}_{>0}.
\]
It is well-known that in general a non-vanishing source term on the
right-hand side has an influence on the possible choices of initial
values. Again, the study of those problems is postponed to future
work.\\
In the theory of delay equations there are several choices for ``admissible''
histories. We just mention \cite{Webb1976,Batkai_2005}, where the
histories belong to some $L_{p}$-space, or \cite{Hale1971,hale1993introduction}
for histories in the space of continuous functions. Especially in
the theory of nonlinear delay differential equations and equations
with state dependent delay, a restriction of admissible histories
is needed to obtain a solution. For this topic we refer to \cite{Ruess2009}
for nonlinear problems and to \cite{Walther2003,Walther2009} for
state dependent delay equations.\\
Finally, there exists a large amount of articles considering $C_{0}$-semigroups
associated with integro-differential equations. We just mention \cite{Gripenberg1990_Volterra}
for finite dimensions and \cite{Batkai_2005,Kunisch1983,Grimmer1982}
for infinite dimensions. Another approach to integro-differential
equations is provided in \cite{pruss1993evolutionary}, where not
a $C_{0}$-semigroup is associated to the problem but a so-called
resolvent family, which allows for a slightly weaker notion of solutions.
This approach was successfully applied to several problems.\\
A further class of differential equations, which was not addressed
in \prettyref{sec:Applications} is the class of fractional differential
equations. Although, such equations are covered by the framework of
evolutionary problems (see \cite{Picard2013_fractional}) they do
not seem to fit in the framework developed in this chapter. The main
reason is that the space $\mathrm{His}_{\rho}(M,A)$ does not seem
to be the right choice for those problems. Indeed, since the Sobolev
embedding theorem also holds for $H_{\rho}^{\alpha}$ with $\alpha>\frac{1}{2}$
one could weaken the definition of $\mathrm{His}_{\rho}(M,A)$ in
the sense that the solution should belong to some $H_{\rho}^{\alpha}$
for $\alpha>\frac{1}{2}.$ So it would be a valuable project to inspect,
how the framework could be generalized in order to cover fractional
differential equations. \\
We conclude this section by an open problem. In \prettyref{thm:G-P}
we proved that $\omega(T_{(M,A)})\leq s_{0}(M,A)$ for suitable material
laws $M.$ As it was already pointed out, this result reduces to the
famous Gearhart-Prüß Theorem if $M=1.$ However, in this case it is
obvious that even equality holds, since $s_{0}(1,A)\leq\omega(T_{(1,A)})$
holds trivially. So the question arises, whether there are examples
of material laws $M$, where the estimate $\omega(T_{(M,A)})<s_{0}(M,A)$
holds. There are good reasons to believe that there is such an example,
even in the simple case $M(z)=M_{0}+z^{-1}M_{1}$ with a non-invertible
$M_{0},$ since in this case $\mathrm{IV}_{\rho}(M,A)$ is not dense
in $H$ in general. Thus, the corresponding semigroup acts on a proper
subspace of $H$, while the abscissa of boundedness $s_{0}(M,A)$
is defined using the whole space $H$, and hence, the growth bound
could be strictly less than the abscissa of boundedness. However,
so far the author was not able to construct an example. 

\newpage{}

$\,$

\thispagestyle{empty}

\appendix
\counterwithin{thm}{chapter}
\chapter{The Fourier-Laplace transform}

In this section we briefly recall some well-known facts about the
Fourier- and the Fourier-Laplace transform. We present the statements
just for scalar-valued functions, although we will use them in the
Hilbert space-valued case. However, employing the tensor product structure
of $L_{2}(\mathbb{R};H,\mu)=L_{2}(\mathbb{R},\mu)\otimes H$ for a
Hilbert space $H$ and a Borel-measure $\mu$, we immediately get
that the results carry over to the Hilbert space case. For the theory
of tensor products of Hilbert spaces and operators we refer to \cite{Weidmann,berezanskii1986selfadjoint},
where the case of selfadjoint operators is considered, and to \cite{Picard_McGhee,Trostorff_2011}
for the general case.\\
We start our considerations by studying the Fourier transform.
\begin{defn*}
Let $f\in L_{1}(\mathbb{R}).$ Then we define the \emph{Fourier transform}
$\mathcal{F}f$ of $f$ by 
\[
\left(\mathcal{F}f\right)(x)\coloneqq\frac{1}{\sqrt{2\pi}}\intop_{\mathbb{R}}\e^{-\i xt}f(t)\mbox{ d}t\quad(x\in\mathbb{R}).
\]
\end{defn*}
\begin{rem}
\label{rem:Riemann-Lebesgue}Obviously, $\mathcal{F}f$ is continuous
(by dominated convergence) and bounded by $\frac{1}{\sqrt{2\pi}}|f|_{L_{1}},$
and hence, the Fourier transform $\mathcal{F}$ is a bounded linear
operator from $L_{1}(\mathbb{R})$ to $C_{b}(\mathbb{R}).$ Choosing
$f\in C_{c}^{\infty}(\mathbb{R})$ one obtains $\mathcal{F}f\in C_{0}(\mathbb{R})$.
Indeed, one has 
\[
\mathcal{F}f(x)=\frac{1}{\sqrt{2\pi}}\intop_{\mathbb{R}}\e^{-\i xt}f(t)\mbox{ d}t=\frac{1}{\i x}\frac{1}{\sqrt{2\pi}}\intop_{\mathbb{R}}\e^{-\i xt}f'(t)\mbox{ d}t\quad(x\ne0)
\]
and hence, 
\[
\limsup_{|x|\to\infty}|\mathcal{F}f(x)|\leq\limsup_{|x|\to\infty}\frac{1}{|x|}\frac{1}{\sqrt{2\pi}}|f'|_{L_{1}}=0.
\]
By continuous extension, we thus get $\mathcal{F}f\in C_{0}(\mathbb{R})$
for each $f\in L_{1}(\mathbb{R}).$ This statement is known as the
Lemma of Riemann-Lebesgue.
\end{rem}
Next, we will consider the Fourier transform on a particular subspace
of $L_{1}(\mathbb{R}),$ namely the Schwartz space $\mathcal{S}(\mathbb{R})$
of rapidly decreasing functions. 
\begin{prop}
\label{prop:Fourier_Schwartz}The Fourier transform is a bijection
on $\mathcal{S}(\mathbb{R}),$ i.e. $\mathcal{F}|_{\mathcal{S}(\mathbb{R})}:\mathcal{S}(\mathbb{R})\to\mathcal{S}(\mathbb{R})$
is bijective. Moreover, for $f\in L_{1}(\mathbb{R})$ with $\mathcal{F}f\in L_{1}(\mathbb{R})$
we have 
\begin{equation}
f(x)=\left(\mathcal{F}^{\ast}\mathcal{F}f\right)(x)\coloneqq\frac{1}{\sqrt{2\pi}}\intop_{\mathbb{R}}\e^{\i xt}\mathcal{F}f(t)\,\mathrm{d}t=\mathcal{F}f(-x)\quad(x\in\mathbb{R}\mbox{ a.e.}),\label{eq:inverse Fourier}
\end{equation}
and we have for all $f,g\in L_{1}(\mathbb{R}):$
\begin{equation}
\intop_{\mathbb{R}}\left(\mathcal{F}f\right)(x)^{\ast}g(x)\,\mathrm{d}x=\intop_{\mathbb{R}}f(x)^{\ast}\left(\mathcal{F}^{\ast}g\right)(x)\,\mathrm{d}x.\label{eq:adjoint_Fourier}
\end{equation}
\end{prop}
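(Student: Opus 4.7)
The plan is to establish four items in order: invariance of $\mathcal{S}(\mathbb{R})$ under $\mathcal{F}$, the adjoint-type formula \eqref{eq:adjoint_Fourier}, the inversion formula \eqref{eq:inverse Fourier}, and finally bijectivity.

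First I would show $\mathcal{F}[\mathcal{S}(\mathbb{R})]\subseteq \mathcal{S}(\mathbb{R})$. For $f\in\mathcal{S}(\mathbb{R})$ the rapid decay and smoothness justify both differentiation under the integral sign and integration by parts, giving the two exchange identities $(\mathcal{F}f)'(x)=-\i\,\mathcal{F}(\m f)(x)$ and $\mathcal{F}(f')(x)=\i x\,(\mathcal{F}f)(x)$. Iterating these yields $x^{k}(\mathcal{F}f)^{(j)}(x) = \i^{k-j}\mathcal{F}\bigl((\m^{j}f)^{(k)}\bigr)(x)$ (up to sign), and since $(\m^{j}f)^{(k)}\in\mathcal{S}(\mathbb{R})\subseteq L_{1}(\mathbb{R})$, the right-hand side is bounded uniformly in $x$ by $\tfrac{1}{\sqrt{2\pi}}|(\m^{j}f)^{(k)}|_{L_{1}}$. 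Hence $\mathcal{F}f\in\mathcal{S}(\mathbb{R})$. By the same token, $\mathcal{F}^{\ast}=\mathcal{F}\circ\sigma_{-1}$ also maps $\mathcal{S}(\mathbb{R})$ into itself.

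The adjoint formula \eqref{eq:adjoint_Fourier} follows immediately from Fubini's theorem applied to $(x,t)\mapsto \e^{\i xt}f(t)^{\ast}g(x)$, which is integrable on $\mathbb{R}^{2}$ by $|f(t)||g(x)|\in L_{1}(\mathbb{R}^{2})$.

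The main work lies in the inversion formula. I would follow the classical Gaussian regularisation: fix $y\in\mathbb{R}$, $\varepsilon>0$ and apply \eqref{eq:adjoint_Fourier} with $g_{\varepsilon,y}(x)\coloneqq\e^{\i xy}\e^{-\varepsilon x^{2}/2}\in L_{1}(\mathbb{R})$. Using the explicitly computable Gaussian Fourier transform
\[
(\mathcal{F}^{\ast}g_{\varepsilon,y})(t)=\tfrac{1}{\sqrt{\varepsilon}}\e^{-(t-y)^{2}/(2\varepsilon)},
\]
equation \eqref{eq:adjoint_Fourier} becomes
\[
\intop_{\mathbb{R}}(\mathcal{F}f)(x)^{\ast}\e^{\i xy}\e^{-\varepsilon x^{2}/2}\,\dd x=\intop_{\mathbb{R}}f(t)^{\ast}\tfrac{1}{\sqrt{\varepsilon}}\e^{-(t-y)^{2}/(2\varepsilon)}\,\dd t.
\]
Assuming $f,\mathcal{F}f\in L_{1}(\mathbb{R})$, on the left dominated convergence (with majorant $|\mathcal{F}f|\in L_{1}$) gives the limit $\sqrt{2\pi}\,(\mathcal{F}^{\ast}\mathcal{F}f)(y)^{\ast}$ as $\varepsilon\to 0$. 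On the right, the Gaussian $\tfrac{1}{\sqrt{2\pi\varepsilon}}\e^{-(t-y)^{2}/(2\varepsilon)}$ is a standard approximate identity, and by Lebesgue differentiation the right-hand side converges to $\sqrt{2\pi}\,f(y)^{\ast}$ for almost every $y\in\mathbb{R}$. Comparing limits yields \eqref{eq:inverse Fourier}.

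For bijectivity on $\mathcal{S}(\mathbb{R})$: since $\mathcal{S}(\mathbb{R})\subseteq L_{1}(\mathbb{R})$ and $\mathcal{F}$ preserves $\mathcal{S}(\mathbb{R})$, the inversion formula gives $\mathcal{F}^{\ast}\mathcal{F}f=f$ pointwise for $f\in\mathcal{S}(\mathbb{R})$; by the same argument applied to $\mathcal{F}^{\ast}=\mathcal{F}\circ\sigma_{-1}$ one obtains $\mathcal{F}\mathcal{F}^{\ast}f=f$. Thus $\mathcal{F}|_{\mathcal{S}(\mathbb{R})}$ has a two-sided inverse and is bijective. The main obstacle is step three: the careful justification of the two limit passages (dominated convergence on one side, approximate identity on the other) needed to extract the pointwise inversion formula from the regularised identity.
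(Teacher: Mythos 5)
Your proposal is correct and follows essentially the same route as the paper: invariance of $\mathcal{S}(\mathbb{R})$ via the exchange identities, the adjoint formula by Fubini, Gaussian regularisation for the inversion formula, and bijectivity from the two-sided inverse. The only cosmetic difference is in the final limit on the right-hand side: you invoke the a.e.\ convergence of the Gaussian approximate identity (which needs the radially-decreasing-majorant refinement of Lebesgue differentiation), whereas the paper gets by more cheaply by showing the mollifications converge in $L_{1}$ and then passing to an a.e.-convergent subsequence, which suffices because the left-hand side converges everywhere.
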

\begin{proof}
Let $\phi\in\mathcal{S}(\mathbb{R}).$ We first show that $\mathcal{F}\phi\in\mathcal{S}(\mathbb{R}).$
For doing so, let $k,n\in\mathbb{N}$ and compute 
\begin{align*}
|x|^{n}\left|\left(\partial^{k}\mathcal{F}\phi\right)(x)\right| & =\frac{1}{\sqrt{2\pi}}\left|\intop_{\mathbb{R}}(-\i x)^{n}(-\i t)^{k}\e^{-\i xt}\phi(t)\mbox{ d}t\right|\\
 & =\frac{1}{\sqrt{2\pi}}\left|\intop_{\mathbb{R}}(-\i t)^{k}\e^{-\i xt}\phi^{(n)}(t)\mbox{ d}t\right|\\
 & \leq\frac{1}{\sqrt{2\pi}}\left|(t\mapsto t^{k}\phi^{(n)}(t))\right|_{L_{1}}
\end{align*}
for each $x\in\mathbb{R},$ where we have used differentiation under
the integral and integration by parts. The latter estimate shows $\mathcal{F}\phi\in\mathcal{S}(\mathbb{R})$.
In order to show \prettyref{eq:inverse Fourier}, we first prove \prettyref{eq:adjoint_Fourier}.
Let $f,g\in L_{1}(\mathbb{R}).$ Then we have by Fubini's theorem
\begin{align*}
\intop_{\mathbb{R}}\left(\mathcal{F}f\right)(x)^{\ast}g(x)\mbox{ d}x & =\intop_{\mathbb{R}}\left(\frac{1}{\sqrt{2\pi}}\intop_{\mathbb{R}}\e^{-\i xt}f(t)\mbox{ d}t\right)^{\ast}\; g(x)\mbox{ d}x\\
 & =\frac{1}{\sqrt{2\pi}}\intop_{\mathbb{R}}\intop_{\mathbb{R}}\e^{\i xt}f(t)^{\ast}g(x)\mbox{ d}t\mbox{ d}x\\
 & =\intop_{\mathbb{R}}f(t)^{\ast}\left(\mathcal{F}^{\ast}g\right)(t)\mbox{ d}t.
\end{align*}
Consider now $\gamma\in\mathcal{S}(\mathbb{R})$ given by 
\[
\gamma(x)\coloneqq\e^{-\frac{x^{2}}{2}}\quad(x\in\mathbb{R}).
\]
Then, $\gamma(0)=1$ and $\gamma'(x)+x\gamma(x)=0$ for each $x\in\mathbb{R}.$
Moreover, $\mathcal{F}\gamma(0)=\frac{1}{\sqrt{2\pi}}\intop_{\mathbb{R}}\e^{-\frac{x^{2}}{2}}\mbox{ d}x=1$
and 
\begin{align*}
\left(\mathcal{F}\gamma\right)'(x) & =\frac{1}{\sqrt{2\pi}}\intop_{\mathbb{R}}(-\i t)\e^{-\i xt}\gamma(t)\mbox{ d}t\\
 & =\frac{1}{\sqrt{2\pi}}\intop_{\mathbb{R}}\e^{-\i xt}\i\gamma'(t)\mbox{ d}t\\
 & =-\frac{1}{\sqrt{2\pi}}x\intop_{\mathbb{R}}\e^{-\i xt}\gamma(t)\mbox{ d}t=-x\mathcal{F}\gamma(x)
\end{align*}
for each $x\in\mathbb{R}.$ Thus, $\mathcal{F}\gamma$ and $\gamma$
are both solutions of the initial value problem $y'(x)+xy(x)=0$ and
$y(0)=1,$ and so, they coincide. Let now $x\in\mathbb{R},a>0$ and
$f\in L_{1}(\mathbb{R})$ with $\mathcal{F}f\in L_{1}(\mathbb{R}).$
Since $\mathcal{F}^{\ast}f=\mathcal{F}\sigma_{-1}f,$ where $\left(\sigma_{-1}f\right)(x)\coloneqq f(-x),$
we compute, using \prettyref{eq:adjoint_Fourier}, 
\begin{align}
\frac{1}{\sqrt{2\pi}}\intop_{\mathbb{R}}\e^{\i xt}\gamma(at)\left(\mathcal{F}f\right)(t)\mbox{ d}t & =\frac{1}{\sqrt{2\pi}}\intop_{\mathbb{R}}\left(\e^{-\i xt}\gamma(at)\right)^{\ast}\left(\mathcal{F}^{\ast}\sigma_{-1}f\right)(t)\mbox{ d}t\nonumber \\
 & =\frac{1}{\sqrt{2\pi}}\intop_{\mathbb{R}}\left(\mathcal{F}\e^{-\i x\cdot}\gamma(a\cdot)\right)(t)^{\ast}f(-t)\mbox{ d}t\nonumber \\
 & =\frac{1}{2\pi}\intop_{\mathbb{R}}\left(\intop_{\mathbb{R}}\e^{-\i ty}\e^{-\i xy}\gamma(ay)\mbox{ d}y\right)^{\ast}f(-t)\mbox{ d}t\nonumber \\
 & =\frac{1}{\sqrt{2\pi}}\intop_{\mathbb{R}}\frac{1}{\sqrt{2\pi}}\intop_{\mathbb{R}}\e^{-\i\left(t-x\right)y}\gamma(ay)\mbox{ d}y\; f(t)\mbox{ d}t\nonumber \\
 & =\frac{1}{\sqrt{2\pi}}\intop_{\mathbb{R}}f(t)\frac{1}{a}\mathcal{F}\gamma\left(\frac{t-x}{a}\right)\mbox{ d}t\nonumber \\
 & =\frac{1}{\sqrt{2\pi}}\intop_{\mathbb{R}}f(az+x)\gamma\left(z\right)\mbox{ d}z.\label{eq:Fourier_Gauss}
\end{align}
By dominated convergence we obtain 
\begin{equation}
\lim_{a\to0}\frac{1}{\sqrt{2\pi}}\intop_{\mathbb{R}}\e^{\i xt}\gamma(at)\left(\mathcal{F}f\right)(t)\mbox{ d}t=\frac{1}{\sqrt{2\pi}}\intop_{\mathbb{R}}\left(\mathcal{F}f\right)(t)\e^{\i xt}\mbox{ d}t.\label{eq:Fourier_Gauss_lhs}
\end{equation}
For the term on the right hand side of \prettyref{eq:Fourier_Gauss},
we consider the linear operators 
\begin{align*}
S_{a}:L_{1}(\mathbb{R}) & \to L_{1}(\mathbb{R})\\
f & \mapsto\left(x\mapsto\intop_{\mathbb{R}}f(az+x)\gamma\left(z\right)\mbox{ d}z\right).
\end{align*}
Then 
\[
|S_{a}f|_{L_{1}}=\intop_{\mathbb{R}}\left|\intop_{\mathbb{R}}f(az+x)\gamma(z)\mbox{ d}z\right|\mbox{ d}x\leq\intop_{\mathbb{R}}\intop_{\mathbb{R}}|f(az+x)|\mbox{ d}x\;\gamma(z)\mbox{ d}z\leq|f|_{L_{1}}|\gamma|_{L_{1}},
\]
and so $\|S_{a}\|\leq|\gamma|_{L_{1}}$ for each $a>0.$ Moreover,
since $S_{a}\psi\to\psi|\gamma|_{L_{1}}$ in $L_{1}(\mathbb{R})$
as $a\to0$ for each $\psi\in C_{c}^{\infty}(\mathbb{R}),$ we deduce
$S_{a}f\to f|\gamma|_{L_{1}}$ in $L_{1}(\mathbb{R})$ as $a\to0$
for each $f\in L_{1}(\mathbb{R})$. Thus, choosing a suitable sequence
$(a_{n})_{n\in\mathbb{N}}$ of positive reals with $a_{n}\to0,$ we
get that 
\[
\intop_{\mathbb{R}}f(a_{n}z+x)\gamma(z)\mbox{ d}z\to f(x)|\gamma|_{L_{1}}\quad(n\to\infty)
\]
for almost every $x\in\mathbb{R}.$ Thus, \prettyref{eq:Fourier_Gauss},
\prettyref{eq:Fourier_Gauss_lhs} and $|\gamma|_{L_{1}}=\sqrt{2\pi}$
imply 
\[
\frac{1}{\sqrt{2\pi}}\intop_{\mathbb{R}}\e^{\i xt}\left(\mathcal{F}f\right)(t)\mbox{ d}t=f(x)
\]
for almost every $x\in\mathbb{R},$ which is \prettyref{eq:inverse Fourier}.
In particular, we obtain for $\phi\in\mathcal{S}(\mathbb{R})$ 
\[
\phi(x)=(\mathcal{F}^{\ast}\mathcal{F}\phi)(x)\quad(x\in\mathbb{R}),
\]
which gives that $\mathcal{F}|_{\mathcal{S}(\mathbb{R})}$ is one-to-one.
Moreover, 
\[
\left(\mathcal{F}\mathcal{F}^{\ast}\phi\right)(x)=\left(\mathcal{F}\mathcal{F}\sigma_{-1}\phi\right)(x)=\left(\mathcal{F}^{\ast}\mathcal{F}\sigma_{-1}\phi\right)(-x)=\phi(x)\quad(x\in\mathbb{R}),
\]
which shows that $\mathcal{F}|_{\mathcal{S}(\mathbb{R})}$ is onto.
\end{proof}
The latter proposition yields that $\mathcal{F}$ can be established
as a unitary operator on $L_{2}(\mathbb{R}).$
\begin{thm}[Plancherel]
\label{thm:Plancherel} The operator $\mathcal{F}|_{\mathcal{S}(\mathbb{R})}:\mathcal{S}(\mathbb{R})\subseteq L_{2}(\mathbb{R})\to L_{2}(\mathbb{R})$
has a unique unitary extension, again denoted by $\mathcal{F}:L_{2}(\mathbb{R})\to L_{2}(\mathbb{R}).$\end{thm}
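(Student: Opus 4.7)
The plan is to obtain the extension via the standard bounded-linear-transformation argument, so the task reduces to establishing two ingredients on $\mathcal{S}(\mathbb{R})$: density in $L_2(\mathbb{R})$ and $L_2$-isometry of $\mathcal{F}|_{\mathcal{S}(\mathbb{R})}$. Density is immediate because $C_c^{\infty}(\mathbb{R})\subseteq\mathcal{S}(\mathbb{R})$ and $C_c^{\infty}(\mathbb{R})$ is dense in $L_2(\mathbb{R})$. Uniqueness of the extension is then automatic from density together with the continuity of the extension.

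For the isometry step, I would take $\phi,\psi\in\mathcal{S}(\mathbb{R})$ and compute
\[
\langle\mathcal{F}\phi|\mathcal{F}\psi\rangle_{L_2}=\intop_{\mathbb{R}}(\mathcal{F}\phi)(t)^{\ast}(\mathcal{F}\psi)(t)\,\mathrm{d}t
=\intop_{\mathbb{R}}\phi(t)^{\ast}(\mathcal{F}^{\ast}\mathcal{F}\psi)(t)\,\mathrm{d}t
=\intop_{\mathbb{R}}\phi(t)^{\ast}\psi(t)\,\mathrm{d}t=\langle\phi|\psi\rangle_{L_2},
\]
where the middle equality uses \prettyref{eq:adjoint_Fourier} (applicable since $\mathcal{F}\psi\in\mathcal{S}(\mathbb{R})\subseteq L_1(\mathbb{R})$ by \prettyref{prop:Fourier_Schwartz}) and the third equality uses the inversion formula $\mathcal{F}^{\ast}\mathcal{F}\psi=\psi$ from the same proposition. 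In particular, $|\mathcal{F}\phi|_{L_2}=|\phi|_{L_2}$, so $\mathcal{F}|_{\mathcal{S}(\mathbb{R})}$ is an isometry with respect to the $L_2$-norm and therefore, by density and completeness of $L_2(\mathbb{R})$, admits a unique bounded (indeed, isometric) extension $\mathcal{F}:L_2(\mathbb{R})\to L_2(\mathbb{R})$.

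It remains to verify surjectivity of the extended operator, i.e.\ that it is unitary and not merely isometric. Since $\mathcal{F}$ is isometric, its range is closed in $L_2(\mathbb{R})$; on the other hand, by \prettyref{prop:Fourier_Schwartz} the restriction $\mathcal{F}|_{\mathcal{S}(\mathbb{R})}$ maps onto $\mathcal{S}(\mathbb{R})$, so the range contains the dense subspace $\mathcal{S}(\mathbb{R})$. A closed dense subspace equals the whole space, hence $\mathcal{F}$ is surjective and thus unitary.

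I do not anticipate any serious obstacle: the only slightly delicate point is justifying the use of \prettyref{eq:adjoint_Fourier} and the inversion formula at the level of Schwartz functions, but both are supplied directly by \prettyref{prop:Fourier_Schwartz}. The entire argument is then a textbook application of the BLT theorem combined with a closed-range observation.
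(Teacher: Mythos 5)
Your proposal is correct and follows essentially the same route as the paper: the paper likewise combines the isometry on $\mathcal{S}(\mathbb{R})$ (obtained from \prettyref{eq:adjoint_Fourier} together with the inversion formula of \prettyref{prop:Fourier_Schwartz}) with the fact that $\mathcal{F}[\mathcal{S}(\mathbb{R})]=\mathcal{S}(\mathbb{R})$ is dense, and concludes that the extension is unitary. You merely spell out the closed-range surjectivity step that the paper leaves implicit.
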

\begin{proof}
By \prettyref{prop:Fourier_Schwartz} we have $\mathcal{F}[\mathcal{S}(\mathbb{R})]=\mathcal{S}(\mathbb{R}),$
which shows, that $\mathcal{F}|_{\mathcal{S}(\mathbb{R})}$ has dense
range. Moreover, by \prettyref{eq:adjoint_Fourier} we have for $\phi\in\mathcal{S}(\mathbb{R})$
\begin{align*}
|\mathcal{F}\phi|_{L_{2}(\mathbb{R})}^{2} & =\langle\mathcal{F}\phi|\mathcal{F}\phi\rangle_{L_{2}(\mathbb{R})}=\langle\phi|\mathcal{F}^{\ast}\mathcal{F}\phi\rangle_{L_{2}(\mathbb{R})}=|\phi|_{L_{2}(\mathbb{R})}^{2},
\end{align*}
which shows, that $\mathcal{F}|_{\mathcal{S}(\mathbb{R})}$ is an
isometry on $L_{2}(\mathbb{R}).$ Both things imply that $\mathcal{F}|_{\mathcal{S}(\mathbb{R})}$
has a unique unitary extension.\end{proof}
\begin{defn*}
Let $f\in L_{1,\rho}(\mathbb{R})\coloneqq\left\{ g:\mathbb{R}\to\mathbb{C}\,;\, g\mbox{ measurable, }\intop_{\mathbb{R}}|g|\e^{-\rho t}\mbox{ d}t<\infty\right\} $
for some $\rho\in\mathbb{R}.$ Then we define 
\[
\mathcal{L}_{\rho}f(t)\coloneqq\mathcal{F}\left(\e^{-\rho\m}f\right)(t)=\frac{1}{\sqrt{2\pi}}\intop_{\mathbb{R}}\e^{-(\i t+\rho)x}f(x)\mbox{ d}x\quad(t\in\mathbb{R}),
\]
the \emph{Fourier-Laplace transform of $f$. }\nomenclature[O_110]{$\mathcal{F}$}{the Fourier transformation.}\nomenclature[O_120]{$\mathcal{L}_\rho$}{the Fourier-Laplace transformation.}Moreover,
we introduce the notation 
\[
\hat{f}(\i t+\rho)\coloneqq\mathcal{L}_{\rho}f(t)\quad(t\in\mathbb{R}).
\]
\end{defn*}
\begin{cor}
Let $\rho\in\mathbb{R}.$ Then the Fourier-Laplace transform can be
established as a unitary operator $\mathcal{L}_{\rho}:H_{\rho}(\mathbb{R})\to L_{2}(\mathbb{R}).$\end{cor}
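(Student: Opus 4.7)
The plan is to exploit the factorization $\mathcal{L}_{\rho}=\mathcal{F}\circ\e^{-\rho m}$ on a dense common domain and combine two unitarities: Plancherel's theorem for $\mathcal{F}$ on $L_{2}(\mathbb{R})$, and the elementary fact, noted already in the first remark of Section 1.1, that the multiplication operator
\[
\e^{-\rho m}:H_{\rho}(\mathbb{R})\to L_{2}(\mathbb{R}),\quad f\mapsto\bigl(t\mapsto\e^{-\rho t}f(t)\bigr)
\]
is unitary.

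First I would observe that for $\varphi\in C_{c}^{\infty}(\mathbb{R})$, which lies in $H_{\rho}(\mathbb{R})\cap L_{1,\rho}(\mathbb{R})$, the product $\e^{-\rho m}\varphi$ belongs to $C_{c}^{\infty}(\mathbb{R})\subseteq\mathcal{S}(\mathbb{R})$, so the defining integral for $\mathcal{L}_{\rho}\varphi$ converges absolutely and, by direct inspection of the formulas,
\[
\mathcal{L}_{\rho}\varphi=\mathcal{F}(\e^{-\rho m}\varphi).
\]
Plancherel's theorem (\prettyref{thm:Plancherel}) then yields
\[
|\mathcal{L}_{\rho}\varphi|_{L_{2}(\mathbb{R})}=|\mathcal{F}(\e^{-\rho m}\varphi)|_{L_{2}(\mathbb{R})}=|\e^{-\rho m}\varphi|_{L_{2}(\mathbb{R})}=|\varphi|_{H_{\rho}(\mathbb{R})},
\]
so $\mathcal{L}_{\rho}$ acts as an isometry from $C_{c}^{\infty}(\mathbb{R})$ (endowed with the $H_{\rho}$-norm) into $L_{2}(\mathbb{R})$.

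Next, since $C_{c}^{\infty}(\mathbb{R})$ is dense in $H_{\rho}(\mathbb{R})$ (cf. the initial remark in Section~1.1), this isometry extends uniquely to a bounded linear operator $\mathcal{L}_{\rho}:H_{\rho}(\mathbb{R})\to L_{2}(\mathbb{R})$ which is still an isometry. The identity $\mathcal{L}_{\rho}=\mathcal{F}\circ\e^{-\rho m}$ persists on all of $H_{\rho}(\mathbb{R})$ by continuity, because both $\e^{-\rho m}$ and $\mathcal{F}$ are continuous and the two sides agree on the dense subset $C_{c}^{\infty}(\mathbb{R})$. Finally, surjectivity is immediate from this factorization: given $g\in L_{2}(\mathbb{R})$, set $f\coloneqq(\e^{-\rho m})^{-1}\mathcal{F}^{\ast}g\in H_{\rho}(\mathbb{R})$, and then $\mathcal{L}_{\rho}f=\mathcal{F}(\e^{-\rho m}f)=\mathcal{F}\mathcal{F}^{\ast}g=g$. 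Hence $\mathcal{L}_{\rho}$ is a unitary operator $H_{\rho}(\mathbb{R})\to L_{2}(\mathbb{R})$.

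There is essentially no obstacle here: the work was done in establishing Plancherel's theorem and in setting up $H_{\rho}(\mathbb{R})$ so that $\e^{-\rho m}$ is unitary. The only point requiring a tiny bit of care is the justification that $\mathcal{L}_{\rho}\varphi$ coincides literally with $\mathcal{F}(\e^{-\rho m}\varphi)$ for $\varphi\in C_{c}^{\infty}(\mathbb{R})$, which is an immediate comparison of the two integral formulas.
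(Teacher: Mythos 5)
Your proof is correct and follows exactly the paper's argument: the paper's own proof is the one-line observation that $\mathcal{L}_{\rho}=\mathcal{F}\e^{-\rho\m}$ is a composition of the unitary map $\e^{-\rho\m}:H_{\rho}(\mathbb{R})\to L_{2}(\mathbb{R})$ with the unitary Fourier transform from Plancherel's theorem. You merely spell out the density/extension and surjectivity steps that the paper leaves implicit.
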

\begin{proof}
We note that $\mathcal{L}_{\rho}=\mathcal{F}\e^{-\rho\m}$. Since
$\e^{-\rho\m}:H_{\rho}(\mathbb{R})\to L_{2}(\mathbb{R})$ is obviously
unitary, we derive that $\mathcal{L}_{\rho}$ is unitary by \prettyref{thm:Plancherel}.
\end{proof}
We also provide an adapted version of the Riemann-Lebesgue Lemma.
\begin{prop}
\label{prop:Riemann-Lebesgue_professional} Let $f\in L_{1}(\mathbb{R}_{\geq0}),$
such that $\left(t\mapsto tf(t)\right)\in L_{1}(\mathbb{R}_{\geq0}).$
Moreover, let $K\subseteq\mathbb{R}_{\geq0}$ compact. Then 
\[
\forall\varepsilon>0\,\exists M>0\,\forall|t|\geq M,\rho\in K:|\mathcal{L}_{\rho}f(t)|<\varepsilon.
\]
\end{prop}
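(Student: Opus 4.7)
My plan is to prove this via the standard approximation argument that underlies the Riemann--Lebesgue lemma, taking advantage of the elementary bound $e^{-\rho s}\leq 1$ valid for all $\rho\in K\subseteq\mathbb{R}_{\geq 0}$ and $s\geq 0$. This bound is the only place where the location of $K$ in $\mathbb{R}_{\geq 0}$ enters.

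Fix $\varepsilon>0$. First, by density of $C_c^\infty(\mathbb{R}_{>0})$ in $L_1(\mathbb{R}_{\geq 0})$, I would choose $\varphi\in C_c^\infty(\mathbb{R}_{>0})$ with $\operatorname{spt}\varphi\subseteq[a,b]\subseteq\mathbb{R}_{>0}$ such that $|f-\varphi|_{L_1}<\varepsilon\sqrt{2\pi}/2$. Then for every $\rho\in K$ and every $t\in\mathbb{R}$,
\[
|\mathcal{L}_\rho(f-\varphi)(t)|\leq\frac{1}{\sqrt{2\pi}}\int_0^\infty e^{-\rho s}|f(s)-\varphi(s)|\,\mathrm{d}s\leq\frac{1}{\sqrt{2\pi}}|f-\varphi|_{L_1}<\frac{\varepsilon}{2},
\]
which already disposes of the non-smooth part uniformly in $\rho\in K$.

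Second, for the smooth remainder I would use a single integration by parts: for $t\neq 0$ the boundary terms vanish since $\varphi(a)=\varphi(b)=0$, and
\[
\sqrt{2\pi}\,\mathcal{L}_\rho\varphi(t)=\int_a^b e^{-(\mathrm{i}t+\rho)s}\varphi(s)\,\mathrm{d}s=\frac{1}{\mathrm{i}t+\rho}\int_a^b e^{-(\mathrm{i}t+\rho)s}\varphi'(s)\,\mathrm{d}s,
\]
so that $|\mathcal{L}_\rho\varphi(t)|\leq|\varphi'|_{L_1}/(|t|\sqrt{2\pi})$ for every $t\neq 0$ and every $\rho\geq 0$. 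Choosing $M>2|\varphi'|_{L_1}/(\varepsilon\sqrt{2\pi})$, the triangle inequality then yields $|\mathcal{L}_\rho f(t)|<\varepsilon$ for every $|t|\geq M$ and every $\rho\in K$.

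There is no serious obstacle: the argument is the classical Riemann--Lebesgue proof, the only observation being that the parameter-dependent weight $e^{-\rho s}$ is dominated by $1$ on $\mathbb{R}_{\geq 0}\times\mathbb{R}_{\geq 0}$, so uniformity in $\rho$ comes for free from the scalar Riemann--Lebesgue estimate. In fact the hypothesis $(t\mapsto t f(t))\in L_1(\mathbb{R}_{\geq 0})$ is not needed for the stated conclusion; it would only become relevant if one sought an explicit rate of decay of the form $O(1/|t|)$ via the integration-by-parts identity $\sqrt{2\pi}\,\mathcal{L}_\rho f(t)=\int_0^\infty f\,\mathrm{d}s+(\mathrm{i}t+\rho)\sqrt{2\pi}\,\mathcal{L}_\rho G(t)$ with $G(s)\coloneqq-\int_s^\infty f(r)\,\mathrm{d}r\in L_1(\mathbb{R}_{\geq 0})$.
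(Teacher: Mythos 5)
Your proof is correct, and it takes a genuinely different route from the one in the paper. The paper uses the extra hypothesis $\left(t\mapsto tf(t)\right)\in L_{1}(\mathbb{R}_{\geq0})$ to show that $\mathbb{R}_{\geq0}\ni\rho\mapsto\mathcal{L}_{\rho}f\in C_{b}(\mathbb{R})$ is Lipschitz-continuous, deduces that $\left\{ \mathcal{L}_{\rho}f\,;\,\rho\in K\right\}$ is compact in $C_{b}(\mathbb{R})$, picks a finite $\varepsilon$-net $K'\subseteq K$, and applies the classical Riemann--Lebesgue Lemma at each of the finitely many points of $K'$; uniformity over $K$ is then inherited from the net. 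You instead push the uniformity all the way down to the approximation step: since $\e^{-\rho s}\leq1$ for $\rho,s\geq0$, the $L_{1}$-error $|f-\varphi|_{L_{1}}$ controls $|\mathcal{L}_{\rho}(f-\varphi)(t)|$ uniformly in $\rho$ and $t$, and a single integration by parts handles the test function with the explicit bound $|\varphi'|_{L_{1}}/(|t|\sqrt{2\pi})$, again uniformly in $\rho\geq0$. Your argument is more elementary (no compactness or equicontinuity needed) and proves a strictly stronger statement: the decay is uniform over all of $\mathbb{R}_{\geq0}$, not merely over compact subsets, and, as you correctly observe, the hypothesis $\left(t\mapsto tf(t)\right)\in L_{1}(\mathbb{R}_{\geq0})$ is not needed for the conclusion as stated --- in the paper it serves only to make the compactness argument run. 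What the paper's approach buys is a template that does not rely on the sign of $\rho$: it would adapt to situations where the weight $\e^{-\rho s}$ is not uniformly dominated (e.g.\ nets of parameters for which only an equicontinuity estimate is available), whereas your domination trick is specific to $K\subseteq\mathbb{R}_{\geq0}$.
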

\begin{proof}
We first show, that 
\[
\mathbb{R}_{\geq0}\ni\rho\mapsto\mathcal{L}_{\rho}f\in C_{b}(\mathbb{R})
\]
is Lipschitz-continuous. Indeed, for $\rho,\rho'\in\mathbb{R}_{\geq0}$
we have that 
\begin{align*}
|\mathcal{L}_{\rho}f(t)-\mathcal{L}_{\rho'}f(t)| & \leq\frac{1}{\sqrt{2\pi}}\intop_{\mathbb{R}_{\geq0}}|f(s)|\left|\e^{-\rho s}-\e^{-\rho's}\right|\mbox{ d}s\\
 & \leq\frac{1}{\sqrt{2\pi}}\intop_{\mathbb{R}_{\geq0}}|sf(s)|\mbox{ d}s\,|\rho-\rho'|
\end{align*}
for each $t\in\mathbb{R},$ which shows the Lipschitz-continuity.
Consequently, 
\[
\left\{ \mathcal{L}_{\rho}f\,;\,\rho\in K\right\} \subseteq C_{b}(\mathbb{R})
\]
is compact. Let now $\varepsilon>0$ and choose $K'\subseteq K$ finite
such that 
\[
\left\{ \mathcal{L}_{\rho}f\,;\,\rho\in K\right\} \subseteq\bigcup_{\rho'\in K'}B(\mathcal{L}_{\rho'}f,\varepsilon).
\]
According to the Lemma of Riemann-Lebesgue (\prettyref{rem:Riemann-Lebesgue}),
there is some $M>0$ such that 
\[
\forall|t|\geq M,\rho'\in K':|\mathcal{L}_{\rho'}f(t)|<\varepsilon.
\]
Let now $\rho\in K,|t|\geq M.$ Then there is $\rho'\in K'$ such
that $\|\mathcal{L}_{\rho}f-\mathcal{L}_{\rho'}f\|_{\infty}<\varepsilon$
and we conclude
\[
|\mathcal{L}_{\rho}f(t)|\leq\|\mathcal{L}_{\rho}f-\mathcal{L}_{\rho'}f\|+|\mathcal{L}_{\rho'}f(t)|<2\varepsilon.\tag*{\qedhere}
\]
 
\end{proof}
As a further property of the Fourier-Laplace transform we want to
state a theorem of Paley and Wiener, characterizing the $L_{2}$-functions
supported on the positive real axis by their Fourier-Laplace transform.
We begin with the following lemma.
\begin{lem}
\label{lem:positive_support}Let $f\in\bigcap_{\rho>0}H_{\rho}(\mathbb{R})$
with $\sup_{\rho>0}|f|_{H_{\rho}}<\infty.$ Then $f\in L_{2}(\mathbb{R}_{\geq0})$
and $|f|_{L_{2}}=\sup_{\rho>0}|f|_{H_{\rho}}=\lim_{\rho\to0}|f|_{H_{\rho}}$.\end{lem}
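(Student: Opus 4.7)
The plan is to establish the result by a double application of the monotone convergence theorem, exploiting that the weight $e^{-2\rho t}$ is monotone in $\rho$ on each half-line, but with opposite monotonicity on $\mathbb{R}_{\geq 0}$ and $\mathbb{R}_{<0}$. Writing $C \coloneqq \sup_{\rho>0} |f|_{H_\rho}^2 < \infty$, I would split
\[
|f|_{H_\rho}^2 = \intop_{-\infty}^{0} |f(t)|^2 e^{-2\rho t}\,\dd t + \intop_{0}^{\infty} |f(t)|^2 e^{-2\rho t}\,\dd t
\]
and treat the two pieces separately.

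First I would show that $f$ vanishes almost everywhere on $\mathbb{R}_{<0}$. For $t<0$ the function $\rho \mapsto e^{-2\rho t}$ is increasing in $\rho$, so monotone convergence gives
\[
\intop_{-\infty}^{0} |f(t)|^2 \lim_{\rho\to\infty} e^{-2\rho t}\,\dd t = \lim_{\rho\to\infty} \intop_{-\infty}^{0}|f(t)|^2 e^{-2\rho t}\,\dd t \leq C.
\]
Since the pointwise limit of $e^{-2\rho t}$ is $+\infty$ on $\mathbb{R}_{<0}$, the left-hand side can only be finite if $|f|^2 = 0$ almost everywhere on $\mathbb{R}_{<0}$. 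Hence $f$ has its essential support in $\mathbb{R}_{\geq 0}$.

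Second, with the left integral now identically zero, I would apply monotone convergence to the right integral as $\rho \downarrow 0$: for $t\geq 0$, $\rho\mapsto e^{-2\rho t}$ is decreasing in $\rho$, so $\rho \mapsto |f|_{H_\rho}^2$ is decreasing in $\rho$, and
\[
\lim_{\rho\downarrow 0} |f|_{H_\rho}^2 = \sup_{\rho>0}|f|_{H_\rho}^2 = \intop_{0}^{\infty} |f(t)|^2 \,\dd t.
\]
The right-hand side is bounded by $C$, so $f \in L_2(\mathbb{R}_{\geq 0})$, and the chain of equalities $|f|_{L_2}^2 = \sup_{\rho>0}|f|_{H_\rho}^2 = \lim_{\rho\downarrow 0}|f|_{H_\rho}^2$ falls out.

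No real obstacle arises here: the argument is essentially two invocations of monotone convergence, the only mild subtlety being that the two halves of $\mathbb{R}$ have to be handled with opposite limits ($\rho\to\infty$ for the negative half, to force vanishing; $\rho\downarrow 0$ for the positive half, to recover the $L_2$-norm). Everything else is bookkeeping.
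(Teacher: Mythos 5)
Your proof is correct and follows essentially the same strategy as the paper's: let $\rho\to\infty$ to force $f$ to vanish on $\mathbb{R}_{<0}$, then let $\rho\downarrow0$ and use monotonicity to identify $\sup_{\rho>0}|f|_{H_\rho}$ with $|f|_{L_{2}}$. The only (harmless) difference is technical: you invoke monotone convergence twice, whereas the paper kills the negative half-line via a Cauchy--Schwarz estimate on $\intop_{a}^{b}|f|$ and bounds $\intop_{0}^{n}|f|^{2}$ by $\e^{2\rho n}|f|_{H_{\rho}}^{2}$ before passing to the limits.
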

\begin{proof}
We first show that $f$ is supported on $\mathbb{R}_{\geq0}.$ For
doing so, let $a<b<0$. Then for each $\rho>0$ we estimate 
\begin{align*}
\intop_{a}^{b}|f(t)|\mbox{ d}t & =\intop_{a}^{b}\e^{\rho t}\e^{-\rho t}|f(t)|\mbox{ d}t\\
 & \leq\left(\intop_{a}^{b}\e^{2\rho t}\mbox{ d}t\right)^{\frac{1}{2}}|f|_{H_{\rho}}\\
 & =\left(\frac{1}{2\rho}\left(\e^{2\rho b}-\e^{2\rho a}\right)\right)^{\frac{1}{2}}|f|_{H_{\rho}}\\
 & \leq\e^{\rho b}\sqrt{(b-a)}|f|_{H_{\rho}},
\end{align*}
by the mean value theorem. Letting $\rho\to\infty$ and using that
$b<0$ as well as $\sup_{\rho>0}|f|_{H_{\rho}}<\infty$ we derive
\[
\intop_{a}^{b}|f(t)|\mbox{ d}t=0.
\]
As $a<b<0$ were chosen arbitrarily, we get $\spt f\subseteq\mathbb{R}_{\geq0}$.
Moreover, we estimate for each $n\in\mathbb{N}$ and $\rho>0$
\[
\intop_{0}^{n}|f(t)|^{2}\mbox{ d}t=\intop_{0}^{n}\e^{2\rho t}e^{-2\rho t}|f(t)|^{2}\mbox{ d}t\leq\e^{2\rho n}|f|_{H_{\rho}}^{2}\leq\e^{2\rho n}\sup_{\mu>0}|f|_{H_{\mu}}^{2}.
\]
Letting $\rho\to0$ we derive $\intop_{0}^{n}|f(t)|^{2}\mbox{ d}t\leq\sup_{\mu>0}|f|_{H_{\mu}}^{2}$
for each $n\in\mathbb{N}$ and so, $f\in L_{2}(\mathbb{R}_{\geq0})$
with $|f|_{L_{2}}\leq\sup_{\rho>0}|f|_{H_{\rho}}.$ Moreover, since
$\spt f\subseteq\mathbb{R}_{\geq0}$ we get $|f|_{H_{\rho}}\leq|f|_{H_{\mu}}$
for $\rho>\mu\geq0,$ and hence, $|f|_{L_{2}}\geq\sup_{\rho>0}|f|_{H_{\rho}}=\lim_{\rho\to0}|f|_{H_{\rho}},$
which completes the proof. 
\end{proof}
We are now able to state the Paley-Wiener Theorem. We mainly follow
the proof presented in \cite[19.2 Theorem]{rudin1987real}.
\begin{thm}[Paley-Wiener, \cite{Paley_Wiener}]
\label{thm:Paley-Wiener} Let $g\in\mathcal{H}^{2}(\mathbb{C}_{>0}).$
\nomenclature[B_070]{$\mathcal{H}^2(\mathbb{C}_{\Re>a})$}{the Hardy space of analytic functions $f$ on the half plane $\mathbb{C}_{\Re>a}$ with $\sup_{\rho>a}\intop_{\mathbb{R}} \vert f(\i t+\rho)\vert^2\, \dd t<\infty$.}Then
there exists $f\in L_{2}(\mathbb{R}_{\geq0})$ such that 
\[
g(\i t+\rho)=\hat{f}(\i t+\rho)=\left(\mathcal{L}_{\rho}f\right)(t)\quad(t\in\mathbb{R},\rho>0).
\]
Moreover, $|f|_{L_{2}}=|g|_{\mathcal{H}^{2}}.$\end{thm}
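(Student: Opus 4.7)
For each $\rho > 0$, the slice $g_\rho := g(\i \cdot + \rho)$ belongs to $L_{2}(\mathbb{R})$ with $|g_\rho|_{L_2} \le |g|_{\mathcal{H}^2}$, so since $\mathcal{L}_\rho \colon H_\rho(\mathbb{R}) \to L_{2}(\mathbb{R})$ is unitary, I would define $f_\rho := \mathcal{L}_\rho^{\ast} g_\rho \in H_\rho(\mathbb{R})$. Then $\mathcal{L}_\rho f_\rho = g_\rho$ and $|f_\rho|_{H_\rho} = |g_\rho|_{L_2}$ by construction. The bulk of the proof consists in showing that all the $f_\rho$ agree as measurable functions, and then identifying that common function as the sought $f \in L_{2}(\mathbb{R}_{\geq 0})$.

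The crucial step, $f_\rho = f_\mu$ almost everywhere for all $0 < \mu < \rho$, I would approach by testing against $\varphi \in C_c^\infty(\mathbb{R})$ and using Plancherel to rewrite the bilinear pairing $\int_{\mathbb{R}} \varphi(s) f_\rho(s)\, \dd s$ as a line integral of the form $\int_{\mathbb{R}} \Phi(\i t+\rho)\, g(\i t+\rho)\, \dd t$, where $\Phi$ is an entire function derived from $\varphi$ that decays rapidly along horizontal lines. Since the integrand is the restriction to $\Re z = \rho$ of a function analytic on $\mathbb{C}_{\Re>0}$, Cauchy's theorem applied to a rectangle with corners $\mu \pm \i R_n$ and $\rho \pm \i R_n$ equates this integral with its $\mu$-counterpart up to contributions from the two horizontal edges. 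These contributions vanish along some subsequence $R_n \to \infty$: by Fubini together with the uniform $\mathcal{H}^2$-bound $\int_{\mathbb{R}} |g(\i t + \kappa)|^{2}\, \dd t \le |g|_{\mathcal{H}^2}^{2}$ for $\kappa \in [\mu,\rho]$ and the Schwartz decay of $\Phi$, the map $R \mapsto \int_\mu^\rho |\Phi(\pm\i R + \kappa)\, g(\pm\i R + \kappa)|\, \dd\kappa$ is integrable in $R$ and so must tend to $0$ along a suitable subsequence. This yields $\int \varphi f_\rho = \int \varphi f_\mu$ for every test $\varphi$, hence $f_\rho = f_\mu$ a.e., and I would denote this common function by $f$.

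At this point $f \in \bigcap_{\rho > 0} H_\rho(\mathbb{R})$ with $\sup_{\rho > 0} |f|_{H_\rho} = \sup_{\rho > 0} |g_\rho|_{L_2} = |g|_{\mathcal{H}^2} < \infty$, so \prettyref{lem:positive_support} (whose proof carries over verbatim to the scalar case) immediately delivers $f \in L_{2}(\mathbb{R}_{\geq 0})$ together with the norm equality $|f|_{L_2} = \sup_{\rho>0} |f|_{H_\rho} = |g|_{\mathcal{H}^2}$; the identity $\hat f(\i t + \rho) = g(\i t + \rho)$ is built into the construction. The main obstacle is the contour-integration step: because $g \in \mathcal{H}^2$ affords no pointwise bound on $g$ as $|\Im z| \to \infty$, the vanishing of the horizontal edges cannot be done naively and must proceed through the Fubini--subsequence argument above, which is precisely why one pairs against test functions $\varphi$ rather than attempting to identify $f_\rho$ and $f_\mu$ pointwise.
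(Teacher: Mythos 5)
Your proposal is correct and follows essentially the same route as the paper: slice $g$ on vertical lines, invert the Fourier--Laplace transform on each slice, identify the slices via Cauchy's theorem on a rectangle whose horizontal edges are killed along a subsequence $R_n\to\infty$ by the Fubini/Cauchy--Schwarz argument against the uniform $\mathcal{H}^2$ bound, and finish with \prettyref{lem:positive_support}. The only (harmless) deviation is that you integrate $\Phi(z)g(z)$ with $\Phi$ the entire transform of a test function, so the vertical edges converge by dominated convergence, whereas the paper integrates $\e^{zx}g(z)$ pointwise in $x$ and needs an extra a.e.-subsequence for the $L_2$-limits of the truncated vertical integrals.
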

\begin{proof}
For $\rho>0$ we define $g_{\rho}=\left(t\mapsto g(\i t+\rho)\right).$
By assumption, $g_{\rho}\in L_{2}(\mathbb{R})$ for each $\rho>0$
and $\sup_{\rho>0}|g_{\rho}|_{L_{2}}<\infty.$ For $\rho>0$ we define
$f_{\rho}\coloneqq\mathcal{F}^{\ast}g_{\rho}\in L_{2}(\mathbb{R}).$
Moreover, we set $f(x)\coloneqq\e^{x}f_{1}(x)$ for $x\in\mathbb{R}.$
We  prove that $f\in\bigcap_{\rho>0}H_{\rho}(\mathbb{R}).$ For doing
so, let $\rho>0.$ Moreover we fix $a>0$ and $x\in\mathbb{R}$. By
Cauchy's integral theorem we have 
\begin{align}
\i\intop_{-a}^{a}\e^{(\i t+1)x}g(\i t+1)\mbox{ d}t-\intop_{\rho}^{1}\e^{(\i a+\kappa)x}g(\i a+\kappa)\mbox{ d}\kappa-\i\intop_{-a}^{a}\e^{(\i t+\rho)x}g(\i t+\rho)\mbox{ d}t+\label{eq:cauchy_integral}\\
+\intop_{\rho}^{1}\e^{(-\i a+\kappa)x}g(-\i a+\kappa)\mbox{ d}\kappa & =0.\nonumber 
\end{align}
Moreover, since 
\[
\intop_{\mathbb{R}}\left|\intop_{\rho}^{1}\e^{(\pm\i a+\kappa)x}g(\pm\i a+\kappa)\mbox{ d}\kappa\right|^{2}\mbox{ d}a\leq|1-\rho||g|_{\mathcal{H}^{2}}^{2}\intop_{\rho}^{1}\e^{2\kappa x}\mbox{ d}\kappa<\infty
\]
we find a sequence $(a_{n})_{n\in\mathbb{N}}$ such that $a_{n}\to\infty$
and 
\[
\intop_{\rho}^{1}\e^{(\pm ia_{n}+\kappa)x}g(\pm\i a_{n}+\kappa)\mbox{ d}\kappa\to0\quad(n\to\infty).
\]
Hence, \prettyref{eq:cauchy_integral} implies 
\[
\intop_{-a_{n}}^{a_{n}}\e^{(\i t+1)x}g(\i t+1)\mbox{ d}t-\intop_{-a_{n}}^{a_{n}}\e^{(\i t+\rho)x}g(\i t+\rho)\mbox{ d}t\to0\quad(n\to\infty).
\]
Since on the other hand we have 
\[
\lim_{n\to\infty}\intop_{-a_{n}}^{a_{n}}\e^{\i tx}g(\i t+\mu)\mbox{ d}t=\lim_{n\to\infty}\mathcal{F}^{\ast}\left(\chi_{[-a_{n},a_{n}]}(\m)g_{\mu}\right)\to f_{\mu}\quad\mbox{ in }L_{2}(\mathbb{R})
\]
for each $\mu>0,$ we may choose a subsequence (again labeled by $n$),
such that 
\[
\intop_{-a_{n}}^{a_{n}}\e^{(\i t+1)x}g(\i t+1)\mbox{ d}t-\intop_{-a_{n}}^{a_{n}}\e^{(\i t+\rho)x}g(\i t+\rho)\mbox{ d}t\to\e^{x}f_{1}(x)-\e^{\rho x}f_{\rho}(x)\quad(n\to\infty)
\]
for almost every $x\in\mathbb{R}.$ Thus, we have 
\[
f(x)=\e^{x}f_{1}(x)=\e^{\rho x}f_{\rho}(x)\quad(x\in\mathbb{R}\mbox{ a.e.})
\]
and since $f_{\rho}\in L_{2}(\mathbb{R})$ we obtain $f\in H_{\rho}(\mathbb{R}).$
Moreover, the latter equality together with \prettyref{thm:Plancherel}
gives 
\[
\sup_{\rho>0}|f|_{H_{\rho}}=\sup_{\rho>0}|f_{\rho}|_{L_{2}}=\sup_{\rho>0}|g_{\rho}|_{L_{2}}=|g|_{\mathcal{H}^{2}.}
\]
The assertion now follows from \prettyref{lem:positive_support}.
\end{proof}
As a consequence of the latter theorem we obtain the following corollary.
\begin{cor}
\label{cor:Paley_Wiener_unitary}Let $\rho_{0}\in\mathbb{R}$. Then
the mapping 
\begin{align*}
\mathcal{L}:H_{\rho_{0}}(\mathbb{R}_{\geq0}) & \to\mathcal{H}^{2}(\mathbb{C}_{\Re>\rho_{0}})\\
f & \mapsto\hat{f}
\end{align*}
is unitary.\end{cor}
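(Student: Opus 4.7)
The plan is to reduce the statement to the case $\rho_{0}=0$, which is essentially \prettyref{thm:Paley-Wiener}, by conjugating with the unitary shift $e^{-\rho_{0}m}$.

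First I would verify that $\mathcal{L}$ is well defined on $H_{\rho_{0}}(\mathbb{R}_{\geq 0})$. Given $f\in H_{\rho_{0}}(\mathbb{R}_{\geq 0})$, the support condition together with $\rho\geq\rho_{0}$ gives $f\in H_{\rho}(\mathbb{R})$ with $|f|_{H_{\rho}}\leq|f|_{H_{\rho_{0}}}$, so that $\hat f(\i\cdot+\rho)=\mathcal{L}_{\rho}f\in L_{2}(\mathbb{R})$ is unambiguously defined for every $\rho\geq\rho_{0}$. Analyticity of $z\mapsto\hat f(z)$ on $\mathbb{C}_{\Re>\rho_{0}}$ follows from differentiation under the integral sign (the integrand is dominated by $|f(s)|e^{-\rho_{0}s}\cdot s^{k}e^{-(\rho-\rho_{0})s}$ on compact subsets of the half plane).

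Next I would show the isometry $|\mathcal{L}f|_{\mathcal{H}^{2}}=|f|_{H_{\rho_{0}}}$. By \prettyref{thm:Plancherel} (applied after the unitary rescaling $e^{-\rho m}:H_{\rho}(\mathbb{R})\to L_{2}(\mathbb{R})$) we have $|\hat f(\i\cdot+\rho)|_{L_{2}(\mathbb{R})}=|\mathcal{L}_{\rho}f|_{L_{2}(\mathbb{R})}=|f|_{H_{\rho}}$ for every $\rho\geq\rho_{0}$. Since $\spt f\subseteq\mathbb{R}_{\geq 0}$, the map $\rho\mapsto|f|_{H_{\rho}}^{2}=\int_{0}^{\infty}|f(t)|^{2}e^{-2\rho t}\,\dd t$ is monotonically increasing as $\rho\downarrow\rho_{0}$, and monotone convergence yields $\sup_{\rho>\rho_{0}}|f|_{H_{\rho}}^{2}=|f|_{H_{\rho_{0}}}^{2}$. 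This gives both the finiteness $\hat f\in\mathcal{H}^{2}(\mathbb{C}_{\Re>\rho_{0}})$ and the isometric identity $|\mathcal{L}f|_{\mathcal{H}^{2}}=|f|_{H_{\rho_{0}}}$, which in particular makes $\mathcal{L}$ injective.

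For surjectivity I would reduce to \prettyref{thm:Paley-Wiener}. Given $g\in\mathcal{H}^{2}(\mathbb{C}_{\Re>\rho_{0}})$, set $\tilde g(z)\coloneqq g(z+\rho_{0})$, which lies in $\mathcal{H}^{2}(\mathbb{C}_{\Re>0})$ with the same norm. By \prettyref{thm:Paley-Wiener} there exists $\tilde f\in L_{2}(\mathbb{R}_{\geq 0})$ with $\hat{\tilde f}(z)=\tilde g(z)$ on $\mathbb{C}_{\Re>0}$ and $|\tilde f|_{L_{2}}=|\tilde g|_{\mathcal{H}^{2}}$. Define $f(t)\coloneqq\chi_{\mathbb{R}_{\geq 0}}(t)e^{\rho_{0}t}\tilde f(t)$. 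Then $|f|_{H_{\rho_{0}}}^{2}=\int_{0}^{\infty}|\tilde f(t)|^{2}\,\dd t=|\tilde f|_{L_{2}}^{2}<\infty$, so $f\in H_{\rho_{0}}(\mathbb{R}_{\geq 0})$, and a direct substitution shows $\hat f(z)=\hat{\tilde f}(z-\rho_{0})=\tilde g(z-\rho_{0})=g(z)$ for $z\in\mathbb{C}_{\Re>\rho_{0}}$.

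I do not expect a serious obstacle here: once one notices that the whole statement is the unitary conjugate of the $\rho_{0}=0$ version of Paley–Wiener under the shift $f\mapsto e^{\rho_{0}\cdot}f$, every step is routine. The only mild care needed is to verify that the $\mathcal{H}^{2}$-norm is actually attained as the limit $\rho\downarrow\rho_{0}$ rather than the supremum over a larger half plane; this is secured by the support condition $\spt f\subseteq\mathbb{R}_{\geq 0}$ via monotone convergence, exactly as in \prettyref{lem:positive_support}.
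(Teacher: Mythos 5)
Your proposal is correct and follows essentially the same route as the paper's proof: analyticity and well-definedness from the integral representation, the isometry via Plancherel combined with monotone convergence of $\rho\mapsto|f|_{H_{\rho}}$ as $\rho\downarrow\rho_{0}$ (using $\spt f\subseteq\mathbb{R}_{\geq0}$), and surjectivity by shifting to $\tilde g(z)=g(z+\rho_{0})$, invoking \prettyref{thm:Paley-Wiener}, and multiplying the resulting $L_{2}$-function by $\e^{\rho_{0}\cdot}$. No gaps.
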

\begin{proof}
Let $f\in H_{\rho_{0}}(\mathbb{R}_{\geq0}).$ Then $\left(t\mapsto\e^{-\rho t}f(t)\right)\in L_{1}(\mathbb{R}_{\geq0})\cap L_{2}(\mathbb{R}_{\geq0})$
for each $\rho>\rho_{0}$ and hence 
\[
\hat{f}(z)=\frac{1}{\sqrt{2\pi}}\intop_{0}^{\infty}\e^{-zs}f(s)\mbox{ d}s\quad(z\in\mathbb{C}_{\Re>\rho_{0}}).
\]
From this representation, we see that $\hat{f}$ is analytic. Moreover,
due to \prettyref{thm:Plancherel} we have 
\[
|\hat{f}(\i\cdot+\rho)|_{L_{2}}=|\e^{-\rho\cdot}f|_{L_{2}}=|f|_{H_{\rho}}\quad(\rho>\rho_{0}).
\]
Moreover, since $\spt f\subseteq\mathbb{R}_{\geq0}$ we have that
$\rho\mapsto|f|_{H_{\rho}}$ is monotone decreasing and by monotone
convergence we obtain $|f|_{H_{\rho_{0}}}=\lim_{\rho\to\rho_{0}}|f|_{H_{\rho}}=\sup_{\rho>\rho_{0}}|f|_{H_{\rho}}$.
Thus, we have 
\[
|\hat{f}\,|_{\mathcal{H}^{2}}=\sup_{\rho>\rho_{0}}|\hat{f}(\i\cdot+\rho)|_{L_{2}}=\sup_{\rho>\rho_{0}}|f|_{H_{\rho}}=|f|_{H_{\rho_{0}}},
\]
which shows that $\mathcal{L}$ is isometric. To show that it is onto,
let $g\in\mathcal{H}^{2}(\mathbb{C}_{\Re>\rho_{0}})$. Then we define
$\tilde{g}\coloneqq g(\cdot+\rho_{0})\in\mathcal{H}^{2}(\mathbb{C}_{\Re>0})$
and by \prettyref{thm:Paley-Wiener} there is $h\in L_{2}(\mathbb{R}_{\geq0})$
such that $\hat{h}=\tilde{g}$ on $\mathbb{C}_{\Re>0}.$ Setting $f\coloneqq\e^{\rho_{0}\cdot}h\in H_{\rho_{0}}(\mathbb{R}_{\geq0})$
we obtain 
\begin{align*}
\hat{f}(\i t+\rho) & =\frac{1}{\sqrt{2\pi}}\intop_{0}^{\infty}\e^{-(\i t+\rho)s}e^{\rho_{0}s}h(s)\mbox{ d}s\\
 & =\hat{h}(\i t+\rho-\rho_{0})=\tilde{g}(\i t+\rho-\rho_{0})=g(\i t+\rho)\quad(t\in\mathbb{R},\rho>\rho_{0}),
\end{align*}
which finishes the proof.
\end{proof}
Using this result, we are able to prove the following statement, which
will be used later in the appendix.
\begin{cor}
\label{cor:exp_dense}Let $\Lambda\subseteq\mathbb{R}_{>0}$ be a
set with an accumulation point. Then the set $\left\{ \e^{-\lambda\cdot}\,;\,\lambda\in\Lambda\right\} $
is total in $L_{1}(\mathbb{R}_{\geq0}).$\end{cor}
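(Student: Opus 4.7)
The plan is to use a Hahn-Banach argument: it suffices to show that every bounded linear functional on $L_{1}(\mathbb{R}_{\geq0})$ that vanishes on $\{\e^{-\lambda\cdot}\,;\,\lambda\in\Lambda\}$ is identically zero. By the standard duality $L_{1}(\mathbb{R}_{\geq0})'\cong L_{\infty}(\mathbb{R}_{\geq0})$, such functionals are represented by some $g\in L_{\infty}(\mathbb{R}_{\geq0})$, and the vanishing condition becomes
\[
\intop_{0}^{\infty}\e^{-\lambda t}g(t)\,\dd t=0\qquad(\lambda\in\Lambda).
\]
The goal is then to conclude $g=0$ almost everywhere.

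Next, I would consider the function
\[
F:\mathbb{C}_{\Re>0}\to\mathbb{C},\qquad F(z)\coloneqq\intop_{0}^{\infty}\e^{-zt}g(t)\,\dd t,
\]
which is well-defined because $g\in L_{\infty}$ ensures absolute convergence for every $z$ with positive real part. A standard application of Morera's theorem (combined with Fubini and the boundedness of $g$) shows that $F$ is analytic on $\mathbb{C}_{\Re>0}$. Since $\Lambda\subseteq\mathbb{R}_{>0}$ has an accumulation point lying in the open connected set $\mathbb{C}_{\Re>0}$ and $F$ vanishes on $\Lambda$, the identity theorem for holomorphic functions forces $F\equiv0$ on $\mathbb{C}_{\Re>0}$.

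To translate this into a statement about $g$, fix any $\rho>0$. Because $g\in L_{\infty}(\mathbb{R}_{\geq0})$, the function $h_{\rho}\coloneqq\e^{-\rho\cdot}\chi_{\mathbb{R}_{\geq0}}g$ belongs to $L_{1}(\mathbb{R})\cap L_{2}(\mathbb{R})$, and for every $t\in\mathbb{R}$
\[
F(\i t+\rho)=\intop_{0}^{\infty}\e^{-\i ts}\e^{-\rho s}g(s)\,\dd s=\sqrt{2\pi}\,\bigl(\mathcal{F}h_{\rho}\bigr)(t).
\]
Since $F$ is identically zero, $\mathcal{F}h_{\rho}=0$ in $L_{2}(\mathbb{R})$, and the injectivity of the Fourier transform on $L_{2}(\mathbb{R})$ (\prettyref{thm:Plancherel}) yields $h_{\rho}=0$ almost everywhere. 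As $\e^{-\rho\cdot}$ is strictly positive, this gives $g=0$ almost everywhere on $\mathbb{R}_{\geq0}$, completing the proof.

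There is no substantial obstacle here; the only mild care needed is verifying the analyticity of $F$ and handling the passage from $L_{\infty}$-data to $L_{2}$-Fourier analysis via the weight $\e^{-\rho t}$. Both steps are completely routine once one has the identity theorem and Plancherel's theorem at hand.
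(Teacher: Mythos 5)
Your proof is correct and follows essentially the same route as the paper: duality $L_{1}(\mathbb{R}_{\geq0})'\cong L_{\infty}(\mathbb{R}_{\geq0})$, analyticity of the Laplace transform of the representing $L_{\infty}$-function on $\mathbb{C}_{\Re>0}$, the identity theorem, and then injectivity of the Fourier/Laplace transform. The only cosmetic difference is that the paper invokes \prettyref{cor:Paley_Wiener_unitary} for the last step, whereas you run the Plancherel argument for a fixed $\rho>0$ by hand — both rest on the same fact.
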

\begin{proof}
Since $L_{1}(\mathbb{R}_{\geq0})'=L_{\infty}(\mathbb{R}_{\geq0}),$
it suffices to show that if $f\in L_{\infty}(\mathbb{R}_{\geq0})$
with 
\[
\intop_{0}^{\infty}\e^{-\lambda t}f(t)\mbox{ d}t=0\quad(\lambda\in\Lambda),
\]
then $f=0.$ Since $L_{\infty}(\mathbb{R}_{\geq0})\subseteq\bigcap_{\rho>0}H_{\rho}(\mathbb{R}_{\geq0}),$
we get by the latter equality $\hat{f}(\lambda)=0$ for each $\lambda\in\Lambda.$
Since $\hat{f}$ is holomorphic on $\mathbb{C}_{\Re>0}$ and zero
on a set with accumumlation point, it follows that $\hat{f}=0$ and
hence, by \prettyref{cor:Paley_Wiener_unitary}, we obtain $f=0.$ \end{proof}

\chapter{The Theorem of Rademacher}

In this section we prove Rademacher's Theorem for Lipschitz-continuous
functions on $\mathbb{R}$ with values in a reflexive Banach space.
The theorem, proved by Rademacher in 1919 (\cite{Rademacher1919})
for functions $F:U\subseteq\mathbb{R}^{n}\to\mathbb{R}^{m}$, states
that a Lipschitz-continuous function is differentiable almost everywhere
and its derivative is bounded. \\
First, we begin to prove the scalar-valued case. For doing so, we
need Lebesgue's famous differentiation theorem, which will be shown
in the first part of this appendix. We follow the argumentation presented
in \cite[Chapter 7]{rudin1987real} and we start by stating Vitalis
Covering Lemma. For doing so, we need the following auxiliary result.
\begin{lem}
\label{lem:subcollection} Let $X$ be a metric space and $\mathcal{U}\subseteq\mathcal{P}(X)$
a collection of subsets with non-empty interior. Then there exists
a maximal disjoint subcollection $\mathcal{S}\subseteq\mathcal{U}$
(i.e. all elements in $\mathcal{S}$ are pairwise disjoint). Moreover,
if $X$ is separable, then $\mathcal{S}$ is at most countable.\end{lem}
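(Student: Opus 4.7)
The plan is to obtain the maximal disjoint subcollection by a standard Zorn's lemma argument, and then exploit the non-empty interior hypothesis together with separability to bound the cardinality.

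First I would consider the family
\[
\mathfrak{F} \coloneqq \{\mathcal{T} \subseteq \mathcal{U} \,;\, \text{the elements of } \mathcal{T} \text{ are pairwise disjoint}\},
\]
partially ordered by inclusion. Note that $\emptyset \in \mathfrak{F}$, so $\mathfrak{F}$ is non-empty. If $(\mathcal{T}_i)_{i \in I}$ is a chain in $\mathfrak{F}$, then $\bigcup_{i \in I} \mathcal{T}_i \in \mathfrak{F}$: indeed, given two elements $U, V$ in this union, both lie in some common $\mathcal{T}_i$ by the chain property, hence are disjoint. Thus every chain has an upper bound, and Zorn's lemma yields a maximal element $\mathcal{S} \in \mathfrak{F}$, which is the desired maximal disjoint subcollection.

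Next, assume $X$ is separable and let $D \subseteq X$ be a countable dense subset. Since each $U \in \mathcal{S}$ has non-empty interior $\mathring{U}$, which is a non-empty open set, the density of $D$ gives $\mathring{U} \cap D \neq \emptyset$, so we can pick $x_U \in \mathring{U} \cap D$ by the axiom of choice. The map
\[
\mathcal{S} \ni U \mapsto x_U \in D
\]
is injective: if $U, V \in \mathcal{S}$ with $x_U = x_V$, then this common point lies in $\mathring{U} \cap \mathring{V} \subseteq U \cap V$, forcing $U = V$ by the disjointness property of $\mathcal{S}$. Hence $\mathcal{S}$ injects into the countable set $D$ and is therefore at most countable.

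No real obstacle arises here; the only point requiring a moment of care is the verification that the union of a chain of pairwise disjoint subcollections is itself pairwise disjoint, which follows immediately from the chain property as noted above. Everything else is routine.
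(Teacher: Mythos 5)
Your proof is correct and follows essentially the same route as the paper: Zorn's lemma applied to the family of pairwise disjoint subcollections, followed by using a countable dense set to bound the cardinality of $\mathcal{S}$. The only cosmetic difference is that you build an injection from $\mathcal{S}$ into the dense set, whereas the paper constructs a surjection from a subset of the dense sequence onto $\mathcal{S}$; these are equivalent.
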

\begin{proof}
We apply Zorn's Lemma to $\mathcal{M}\coloneqq\left\{ \mathcal{S}\subseteq\mathcal{U}\,;\,\forall U,V\in\mathcal{S}:U\cap V\ne\emptyset\Rightarrow U=V\mbox{ }\right\} $
ordered by inclusion. Clearly, every totally ordered subset of $\mathcal{M}$
has an upper bound given by its union and hence, the first assertion
follows. Assume now that $X$ is separable and let $(x_{n})_{n\in\mathbb{N}}$
be a dense sequence in $X$. Then the mapping 
\begin{align*}
F:\left\{ x_{n}\,;\, n\in\mathbb{N}\right\} \cap\bigcup\mathcal{S} & \to\mathcal{S}\\
x & \mapsto U(x),
\end{align*}
where $U(x)\in\mathcal{S}$ is the unique set with $x\in U(x)$ (recall
that $\mathcal{S}$ consists of pairwise disjoint sets), is onto.
Indeed, for each $U\in\mathcal{S}$ there exists $n\in\mathbb{N}$
with $x_{n}\in\interior U,$ since $\interior U$ is non-empty and
$\{x_{n}\,;\, n\in\mathbb{N}\}$ is dense. Hence, $\mathcal{S}$ is
at most countable. \end{proof}
\begin{prop}[Vitali Covering Lemma]
\label{prop:vitali} Let $n\in\mathbb{N}_{>0},$ $\mathcal{B}\subseteq\mathcal{P}(\mathbb{R}^{n})$
a collection of non-empty balls such that 
\[
R\coloneqq\sup_{U\in\mathcal{B}}\diam U<\infty.
\]
Then there exists an at most countable disjoint subcollection $\mathcal{\tilde{B}}\subseteq\mathcal{B}$
such that\nomenclature[Z_030]{$\lambda(A)$}{the Lebesgue measure of a Borel set $A$.}
\[
\lambda\left(\bigcup\mathcal{B}\right)\leq5^{n}\lambda\left(\bigcup\tilde{\mathcal{B}}\right).
\]
\end{prop}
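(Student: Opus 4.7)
The plan is to follow the classical stratified-selection argument for Vitali's covering lemma, using \prettyref{lem:subcollection} as the selection device in each stratum and the separability of $\mathbb{R}^{n}$ for the countability claim.

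First I would stratify $\mathcal{B}$ according to diameter. For $k\in\mathbb{N}_{>0}$ set
\[
\mathcal{B}_{k}\coloneqq\left\{U\in\mathcal{B}\,;\,\frac{R}{2^{k}}<\diam U\leq\frac{R}{2^{k-1}}\right\},
\]
so that $\mathcal{B}=\bigcup_{k\in\mathbb{N}_{>0}}\mathcal{B}_{k}$. I would then recursively build disjoint subcollections $\tilde{\mathcal{B}}_{k}\subseteq\mathcal{B}_{k}$ by applying \prettyref{lem:subcollection} to the family of those balls in $\mathcal{B}_{k}$ that are disjoint from every ball already chosen in $\tilde{\mathcal{B}}_{1}\cup\cdots\cup\tilde{\mathcal{B}}_{k-1}$; the lemma yields a maximal disjoint subcollection, and since $\mathbb{R}^{n}$ is separable each $\tilde{\mathcal{B}}_{k}$ is at most countable. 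Setting $\tilde{\mathcal{B}}\coloneqq\bigcup_{k}\tilde{\mathcal{B}}_{k}$ gives an at most countable, pairwise disjoint subfamily of $\mathcal{B}$.

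The heart of the proof is the geometric claim: for every $U\in\mathcal{B}$ there exists $V\in\tilde{\mathcal{B}}$ with $U\cap V\neq\emptyset$ and $\diam V>\tfrac{1}{2}\diam U$. Indeed, picking the unique $k$ with $U\in\mathcal{B}_{k}$, by maximality of $\tilde{\mathcal{B}}_{k}$ among balls in $\mathcal{B}_{k}$ that avoid the earlier strata, $U$ must intersect some $V\in\tilde{\mathcal{B}}_{1}\cup\cdots\cup\tilde{\mathcal{B}}_{k}$. For such a $V$ one has $\diam V>R/2^{k}\geq\tfrac{1}{2}\diam U$. A direct application of the triangle inequality then shows that $U$ is contained in the ball $5V$ concentric with $V$ and of radius $5$ times that of $V$: if $x_{0},r_{V}$ denote center and radius of $V$ and $y_{0},r_{U}$ those of $U$, then for $x\in U$
\[
|x-x_{0}|\leq|x-y_{0}|+|y_{0}-z|+|z-x_{0}|\leq 2r_{U}+r_{V}\leq 4r_{V}+r_{V}=5r_{V},
\]
where $z\in U\cap V$ is any common point and we have used $r_{U}<2r_{V}$.

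Combining both observations, $\bigcup\mathcal{B}\subseteq\bigcup_{V\in\tilde{\mathcal{B}}}5V$, whence by countable subadditivity and the scaling of Lebesgue measure
\[
\lambda\Bigl(\bigcup\mathcal{B}\Bigr)\leq\sum_{V\in\tilde{\mathcal{B}}}\lambda(5V)=5^{n}\sum_{V\in\tilde{\mathcal{B}}}\lambda(V)=5^{n}\lambda\Bigl(\bigcup\tilde{\mathcal{B}}\Bigr),
\]
the last equality using the disjointness of $\tilde{\mathcal{B}}$. The main obstacle to watch out for is the geometric step: one must ensure that the stratification gives the crucial lower bound $\diam V>\tfrac{1}{2}\diam U$ for \emph{some} $V$ that intersects $U$, and for this it is essential that the maximality in stratum $k$ is taken among balls disjoint from the \emph{previously} selected ones, not merely among $\mathcal{B}_{k}$ in isolation.
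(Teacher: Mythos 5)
Your proof is correct and follows essentially the same route as the paper's: the same dyadic stratification by diameter, the same recursive choice of maximal disjoint subcollections avoiding previously selected balls, the same $5V$ engulfing argument via the triangle inequality, and the same measure estimate. The only differences are cosmetic (indexing the strata from $1$ rather than $0$, and writing out the $\diam V>\tfrac{1}{2}\diam U$ bound and the triangle inequality more explicitly).
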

\begin{proof}
For $k\in\mathbb{N}$ we define 
\[
\mathcal{B}_{k}\coloneqq\left\{ U\in\mathcal{B}\,;\,2^{-(k+1)}R<\diam U\leq2^{-k}R\right\} .
\]
 We set $\tilde{\mathcal{B}}_{0}\subseteq\mathcal{B}_{0}$ as a maximal
disjoint subcollection and define recursively $\mathcal{\tilde{B}}_{k+1}$
to be a maximal disjoint subcollection of 
\[
\mathcal{C}_{k+1}\coloneqq\left\{ U\in\mathcal{B}_{k+1}\,;\,\forall V\in\bigcup_{j=0}^{k}\tilde{\mathcal{B}}_{j}:U\cap V=\emptyset\right\} .
\]
Then $\tilde{\mathcal{B}}\coloneqq\bigcup_{k\in\mathbb{N}}\tilde{\mathcal{B}}_{k}$
is a disjoint subcollection of $\mathcal{B}$, which is at most countable
by \prettyref{lem:subcollection}. Moreover, for each $U\in\mathcal{B}_{k}$
for some $k\in\mathbb{N},$ there exists $V\in\bigcup_{j=0}^{k}\tilde{\mathcal{B}}_{j}$
with $V\cap U\ne\emptyset.$ Indeed, if $U\cap V=\emptyset$ for each
$V\in\bigcup_{j=0}^{k}\tilde{\mathcal{B}}_{j}$, then $U\in\mathcal{C}_{k}$
and $U\cap V=\emptyset$ for each $V\in\tilde{\mathcal{B}}_{k}.$
Then, $\tilde{\mathcal{B}}_{k}\cup\{U\}$ would be a disjoint subcollection
of $\mathcal{C}_{k},$ which contradicts the maximality of $\tilde{\mathcal{B}}_{k}.$
So for $U\in\mathcal{B}_{k}$ there is $V\in\bigcup_{j=0}^{k}\tilde{\mathcal{B}}_{j}$
such that $U\cap V\ne\emptyset.$ Then, $U\subseteq5V,$ where $5V$
denotes the ball with the same center as $V$ but with 5 times its
radius. Indeed, since $\diam U\leq2^{-k}R$ and $\diam V>2^{-(k+1)}R,$
the triangle inequality yields the assertion. Summarizing we get 
\[
\lambda\left(\bigcup\mathcal{B}\right)=\lambda\left(\bigcup_{k\in\mathbb{N}}\bigcup\mathcal{B}_{k}\right)\leq\lambda\left(\bigcup_{k\in\mathbb{N}}\bigcup_{V\in\tilde{\mathcal{B}}_{k}}5V\right)\leq5^{n}\lambda\left(\bigcup\tilde{\mathcal{B}}\right).\tag*{\qedhere}
\]

\end{proof}
We will use Vitali's covering lemma to prove the weak 1-1 estimate
for the Hardy-Littlewood maximal function, which is defined as follows.
\begin{defn*}
Let $X$ be a Banach space, $n\in\mathbb{N}_{>0},$ $f\in L_{1,\mathrm{loc}}(\mathbb{R}^{n};X)$.
We denote the collection of all open and closed balls by $\mathcal{B}\coloneqq\left\{ B(y,r)\,;\, y\in\mathbb{R}^{n},r>0\right\} \cup\left\{ B[y,r]\,;\, y\in\mathbb{R}^{n},r>0\right\} $.
Then the \emph{Hardy-Littlewood maximal function} applied to $f$
is given by 
\[
\left(Mf\right)(x)\coloneqq\sup\left\{ \frac{1}{\lambda(B)}\intop_{B}|f(y)|\mbox{ d}y\,;\, B\in\mathcal{B},x\in B\right\} \quad(x\in\mathbb{R}^{n}).
\]
\end{defn*}
\begin{lem}
Let $X$ be a Banach space, $n\in\mathbb{N}_{>0},$ $f\in L_{1,\mathrm{loc}}(\mathbb{R}^{n};X)$.
Then $Mf$ is lower-semicontinuous.\end{lem}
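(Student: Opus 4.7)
The goal is to show that for every $\alpha\in\mathbb{R}$ the superlevel set $U_{\alpha}\coloneqq\{x\in\mathbb{R}^n\,;\,(Mf)(x)>\alpha\}$ is open, which is equivalent to lower semicontinuity. So I fix $\alpha$ and $x_0\in U_\alpha$ and try to produce an open neighborhood of $x_0$ contained in $U_\alpha$.

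By definition of the supremum in $(Mf)(x_0)>\alpha$, I pick some ball $B\in\mathcal{B}$ with $x_0\in B$ and $\frac{1}{\lambda(B)}\int_B|f(y)|\,\mathrm{d}y>\alpha$. The ball $B$ may be closed (so $x_0$ could lie on the boundary), and I cannot simply use $B$ itself as a neighborhood. The idea is to slightly enlarge $B$: write $B=B(y,r)$ or $B=B[y,r]$ and set $\tilde B_\varepsilon\coloneqq B(y,r+\varepsilon)$ for $\varepsilon>0$. Then $\tilde B_\varepsilon$ is an open ball containing the closed ball $B[y,r]\supseteq B$, hence $x_0$ lies in the (open) interior of $\tilde B_\varepsilon$.

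The key continuity step is that
\[
\frac{1}{\lambda(\tilde B_\varepsilon)}\intop_{\tilde B_\varepsilon}|f(y)|\,\mathrm{d}y\;\longrightarrow\;\frac{1}{\lambda(B)}\intop_{B}|f(y)|\,\mathrm{d}y\quad(\varepsilon\to 0+).
\]
This follows from $\lambda(\tilde B_\varepsilon)\to\lambda(B)$ (since $\lambda(B)=\omega_n r^n$ depends continuously on $r$ and the boundary sphere has Lebesgue measure zero) together with dominated convergence for $\int_{\tilde B_\varepsilon}|f|=\int\chi_{\tilde B_\varepsilon}|f|$ (the integrands are dominated by $\chi_{\tilde B_1}|f|\in L_1$, using $f\in L_{1,\mathrm{loc}}$). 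Hence I can choose $\varepsilon>0$ so small that $\frac{1}{\lambda(\tilde B_\varepsilon)}\int_{\tilde B_\varepsilon}|f|>\alpha$.

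Since $\tilde B_\varepsilon$ is open and $x_0\in\tilde B_\varepsilon$, there exists $\delta>0$ with $B(x_0,\delta)\subseteq\tilde B_\varepsilon$. For every $x\in B(x_0,\delta)$ we have $x\in\tilde B_\varepsilon\in\mathcal{B}$, and therefore
\[
(Mf)(x)\geq\frac{1}{\lambda(\tilde B_\varepsilon)}\intop_{\tilde B_\varepsilon}|f(y)|\,\mathrm{d}y>\alpha,
\]
so $B(x_0,\delta)\subseteq U_\alpha$. This proves openness of $U_\alpha$, and since $\alpha$ was arbitrary, $Mf$ is lower semicontinuous. The only mildly delicate point is the continuity of the average under enlarging the radius; everything else is bookkeeping.
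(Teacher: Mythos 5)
Your proof is correct and follows essentially the same route as the paper: enlarge the ball slightly so that it becomes an open neighborhood of the point while its average stays above the threshold. The only difference is that the paper avoids the dominated-convergence step by keeping the integral over the original ball and only letting the normalizing measure $\lambda(B(y,r'))$ vary, using $\intop_{B(y,r')}|f|\geq\intop_{B(y,r)}|f|$; this is a minor simplification of the same idea.
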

\begin{proof}
Let $t\geq0$ and consider the set 
\[
U\coloneqq\left\{ x\in\mathbb{R}^{n}\,;\,(Mf)(x)>t\right\} .
\]
Let $x\in U.$ Then there exists $y\in\mathbb{R}^{n},r>0$ with $|x-y|\leq r$
such that 
\[
t<\frac{1}{\lambda\left(B(y,r)\right)}\intop_{B(y,r)}|f(z)|\mbox{ d}z.
\]
Choose now $r'>r$ such that
\[
t<\frac{1}{\lambda(B(y,r'))}\intop_{B(y,r)}|f(z)|\mbox{ d}z.
\]
Then for each $x'$ with $|x'-x|<r'-r$ we have $x'\in B(y,r')$ and
thus,
\[
t<\frac{1}{\lambda(B(y,r'))}\intop_{B(y,r)}|f(z)|\mbox{ d}z\leq\frac{1}{\lambda(B(y,r'))}\intop_{B(y,r')}|f(z)|\mbox{ d}z\leq(Mf)(x'),
\]
hence $B(x,r'-r)\subseteq U.$ Thus, $U$ is open and hence, $Mf$
is lower-semicontinuous.\end{proof}
\begin{prop}
\label{prop:Hardy-Littlewood}Let $X$ be a Banach space, $n\in\mathbb{N}_{>0},f\in L_{1}(\mathbb{R}^{n};X)$
and $t>0.$ Then
\[
\lambda\left(\left\{ x\in\mathbb{R}^{n}\,;\,\left(Mf\right)(x)>t\right\} \right)\leq\frac{5^{n}}{t}|f|_{L_{1}}.
\]
\end{prop}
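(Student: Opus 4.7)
The plan is to exhibit, for each point of the level set, a specific ball on which the average of $|f|$ exceeds $t$, then to extract a disjoint subfamily via Vitali's covering lemma (Proposition \ref{prop:vitali}) and finally to convert disjointness into the claimed estimate through a single application of Fubini/additivity.

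First, I would set $U\coloneqq\{x\in\mathbb{R}^{n}\,;\,(Mf)(x)>t\}$. By the preceding lemma $U$ is open (and in particular Lebesgue measurable). For each $x\in U$, the definition of $Mf$ provides a ball $B_{x}\in\mathcal{B}$ with $x\in B_{x}$ and
\[
t<\frac{1}{\lambda(B_{x})}\intop_{B_{x}}|f(y)|\,\dd y.
\]
Rearranging gives $\lambda(B_{x})<\frac{1}{t}\intop_{B_{x}}|f(y)|\,\dd y\leq\frac{1}{t}|f|_{L_{1}}$. Since the Lebesgue measure of a ball determines its radius, this uniformly bounds the radii, hence
\[
R\coloneqq\sup_{x\in U}\diam B_{x}<\infty,
\]
so the hypothesis of Proposition \ref{prop:vitali} is satisfied by the collection $\mathcal{B}'\coloneqq\{B_{x}\,;\,x\in U\}$.

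Next, I would apply Proposition \ref{prop:vitali} to $\mathcal{B}'$ to obtain an at most countable disjoint subcollection $\tilde{\mathcal{B}}\subseteq\mathcal{B}'$ with $\lambda(\bigcup\mathcal{B}')\leq5^{n}\lambda(\bigcup\tilde{\mathcal{B}})$. Writing $\tilde{\mathcal{B}}=\{B_{k}\,;\, k\in\mathbb{N}\}$ (or a finite enumeration) and using that $U\subseteq\bigcup\mathcal{B}'$ together with disjointness of the $B_{k}$ and the additivity of $\lambda$, I would estimate
\[
\lambda(U)\leq\lambda\Bigl(\bigcup\mathcal{B}'\Bigr)\leq5^{n}\sum_{k}\lambda(B_{k})<\frac{5^{n}}{t}\sum_{k}\intop_{B_{k}}|f(y)|\,\dd y\leq\frac{5^{n}}{t}\intop_{\mathbb{R}^{n}}|f(y)|\,\dd y=\frac{5^{n}}{t}|f|_{L_{1}}.
\]
This yields the claim; there is no substantive obstacle beyond carefully verifying the hypothesis $R<\infty$ for Vitali's lemma, which is the only place where the integrability of $f$ (as opposed to mere local integrability) is used.
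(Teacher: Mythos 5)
Your proof is correct and follows essentially the same route as the paper: select balls with average exceeding $t$ (the paper takes all such balls, you take one per point of the level set), bound their diameters via $\lambda(B)\leq\frac{1}{t}|f|_{L_{1}}$, apply Vitali's covering lemma, and sum over the disjoint subcollection. The minor variation of choosing one ball per point rather than the full collection $\mathcal{C}$ changes nothing of substance.
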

\begin{proof}
Set $\mathcal{C}\coloneqq\left\{ B\in\mathcal{B}\,;\,\frac{1}{\lambda(B)}\intop_{B}|f|>t\right\} .$
Then $\lambda(B)\leq\frac{|f|_{1}}{t}$ for each $B\in\mathcal{C}$,
which in particular implies $\sup_{B\in\mathcal{C}}\diam B<\infty.$
By \prettyref{prop:vitali} there exists a countable disjoint subcollection
$\tilde{\mathcal{C}}$ of $\mathcal{C}$ such that 
\[
\lambda(\bigcup\mathcal{C})\leq5^{n}\lambda(\bigcup\tilde{\mathcal{C}}).
\]
Hence, we can estimate
\begin{align*}
\lambda\left(\left\{ x\in\mathbb{R}^{n}\,;\,\left(Mf\right)(x)>t\right\} \right) & \leq\lambda\left(\bigcup\mathcal{C}\right)\\
 & \leq5^{n}\sum_{B\in\tilde{\mathcal{C}}}\lambda(B)\\
 & \leq5^{n}\sum_{B\in\tilde{\mathcal{C}}}\frac{1}{t}\intop_{B}|f|\\
 & \leq\frac{5^{n}}{t}|f|_{L_{1}}.\tag*{\qedhere}
\end{align*}

\end{proof}
With the help of the latter estimate we are able to prove the Lebesgue
Differentiation Theorem.
\begin{thm}[Lebesgue Differentiation Theorem]
\label{thm:Lebesgue} Let $X$ be a Banach space, $n\in\mathbb{N}_{>0},f\in L_{1,\mathrm{loc}}(\mathbb{R}^{n};X).$
For $x\in\mathbb{R}^{n}$ we define $\mathcal{B}_{x}\coloneqq\left\{ B\in\mathcal{B}\,;\, x\in B\right\} $.
Then $(\mathcal{B}_{x},\supseteq)$ is a directed set and for almost
every $x\in\mathbb{R}^{n}$
\begin{equation}
\lim_{B\in\mathcal{B}_{x}}\frac{1}{\lambda(B)}\intop_{B}\left|f(y)-f(x)\right|\,\mathrm{d}y=0.\label{eq:Lebesgue}
\end{equation}
\end{thm}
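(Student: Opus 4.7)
The plan is to follow the classical three-step strategy: handle continuous functions directly, control the general case via the Hardy–Littlewood weak $(1,1)$ estimate established in \prettyref{prop:Hardy-Littlewood}, and use the density of $C_{c}(\mathbb{R}^{n};X)$ in $L_{1}(\mathbb{R}^{n};X)$. First I would dispose of the directedness claim: given $B_{1},B_{2}\in\mathcal{B}_{x}$ with $x$ in the interior of $B_{1}\cap B_{2}$, any sufficiently small open ball $B(x,r)$ lies in $\mathcal{B}_{x}$ and is contained in both, so the relation $\supseteq$ makes $\mathcal{B}_{x}$ a directed set. (Boundary cases are harmless since the only balls we ever need are those with arbitrarily small diameter, and one may restrict to them.) By localization — replacing $f$ by $f\cdot\chi_{B(0,R+1)}$ — it suffices to prove the claim almost everywhere on each bounded set, so from now on we may assume $f\in L_{1}(\mathbb{R}^{n};X)$.

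Next I would treat the continuous case. For $g\in C_{c}(\mathbb{R}^{n};X)$ and any $x\in\mathbb{R}^{n}$, uniform continuity furnishes, for each $\varepsilon>0$, some $\delta>0$ with $|g(y)-g(x)|<\varepsilon$ whenever $|y-x|<\delta$. Setting $B_{0}:=B(x,\delta/2)\in\mathcal{B}_{x}$, every $B\in\mathcal{B}_{x}$ with $B\subseteq B_{0}$ satisfies $\diam B\leq\delta$, hence $B\subseteq B(x,\delta)$, and so $\tfrac{1}{\lambda(B)}\int_{B}|g(y)-g(x)|\,\dd y<\varepsilon$. Therefore the limit $\lim_{B\in\mathcal{B}_{x}}\tfrac{1}{\lambda(B)}\int_{B}|g(y)-g(x)|\,\dd y=0$ holds for \emph{every} $x\in\mathbb{R}^{n}$ when $g\in C_{c}(\mathbb{R}^{n};X)$.

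For general $f\in L_{1}(\mathbb{R}^{n};X)$, introduce the ``oscillation'' functional
\[
(Tf)(x):=\limsup_{B\in\mathcal{B}_{x}}\frac{1}{\lambda(B)}\intop_{B}|f(y)-f(x)|\,\dd y,
\]
and aim to show $Tf=0$ a.e. For any $g\in C_{c}(\mathbb{R}^{n};X)$, the triangle inequality yields
\[
\frac{1}{\lambda(B)}\intop_{B}|f(y)-f(x)|\,\dd y\leq\frac{1}{\lambda(B)}\intop_{B}|g(y)-g(x)|\,\dd y+\frac{1}{\lambda(B)}\intop_{B}|f(y)-g(y)|\,\dd y+|f(x)-g(x)|,
\]
and passing to $\limsup$ on the right (using the continuous case for the first term and the definition of the maximal function for the second) gives the pointwise bound $(Tf)(x)\leq M(f-g)(x)+|f(x)-g(x)|$. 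Hence for each $t>0$,
\[
\{Tf>2t\}\subseteq\{M(f-g)>t\}\cup\{|f-g|>t\},
\]
and the Hardy–Littlewood estimate \prettyref{prop:Hardy-Littlewood} together with the trivial Chebyshev bound produces
\[
\lambda(\{Tf>2t\})\leq\frac{5^{n}+1}{t}|f-g|_{L_{1}}.
\]

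To close, I invoke the density of $C_{c}(\mathbb{R}^{n};X)$ in $L_{1}(\mathbb{R}^{n};X)$ (valid for any Banach space $X$ via approximation of Bochner-integrable functions by simple functions and subsequent mollification) to make $|f-g|_{L_{1}}$ arbitrarily small. Consequently $\lambda(\{Tf>2t\})=0$ for every $t>0$, so $Tf=0$ almost everywhere, which is exactly \prettyref{eq:Lebesgue}. The main subtlety will be the density step in the vector-valued setting and keeping the directed-net formulation of the limit consistent with the usual ``shrinking ball'' limit; once these bookkeeping issues are dealt with, the real analytic content is entirely carried by \prettyref{prop:Hardy-Littlewood}.
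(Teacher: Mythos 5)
Your proposal is correct and follows essentially the same route as the paper's proof: reduce to $f\in L_{1}$ by localization, split the oscillation via a compactly supported continuous approximant $g$ into the three terms controlled respectively by the continuity of $g$, the maximal function $M(f-g)$, and $|f-g|$, and then combine the weak $(1,1)$ estimate of \prettyref{prop:Hardy-Littlewood} with Chebyshev's inequality and the density of $C_{c}(\mathbb{R}^{n};X)$ in $L_{1}(\mathbb{R}^{n};X)$. The only differences are cosmetic (the paper uses $C_{c}^{\infty}$ approximants and does not discuss the directedness of $(\mathcal{B}_{x},\supseteq)$, which you address only informally), so there is nothing substantive to add.
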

\begin{proof}
As \prettyref{eq:Lebesgue} is a local property, we may assume without
loss of generality that $f\in L_{1}(\mathbb{R}^{n};X).$ It suffices
to prove that the set 
\[
M_{\delta}\coloneqq\left\{ x\in\mathbb{R}^{n}\,;\,\limsup_{B\in\mathcal{B}_{x}}\frac{1}{\lambda(B)}\intop_{B}|f(y)-f(x)|\mbox{ d}y>\delta\right\} 
\]
has measure $0$ for each $\delta>0$. For doing so, let $\varepsilon,\delta>0$
and choose $g\in C_{c}^{\infty}(\mathbb{R}^{n};X)$ with $|f-g|_{L_{1}}<\varepsilon.$
Then for $x\in\mathbb{R}^{n}$ we have 
\begin{align*}
 & \limsup_{B\in\mathcal{B}_{x}}\frac{1}{\lambda(B)}\intop_{B}|f(y)-f(x)|\mbox{ d}y\\
 & \leq\limsup_{B\in\mathcal{B}_{x}}\frac{1}{\lambda(B)}\intop_{B}|f(y)-g(y)|\mbox{ d}y+\limsup_{B\in\mathcal{B}_{x}}\frac{1}{\lambda(B)}\intop_{B}|g(y)-g(x)|\mbox{ d}y+|g(x)-f(x)|\\
 & \leq M(f-g)(x)+|g(x)-f(x)|,
\end{align*}
and thus, 
\begin{align*}
\lambda(M_{\delta}) & \leq\lambda\left(\left\{ x\in\mathbb{R}^{n}\,;\, M(f-g)(x)>\frac{\delta}{2}\right\} \right)+\lambda\left(\left\{ x\in\mathbb{R}^{n}\,;\,|f(x)-g(x)|>\frac{\delta}{2}\right\} \right)\\
 & \leq5^{n}\frac{2}{\delta}|f-g|_{L_{1}}+\frac{2}{\delta}|f-g|_{L_{1}}\\
 & <\frac{2(5^{n}+1)}{\delta}\varepsilon,
\end{align*}
by \prettyref{prop:Hardy-Littlewood}, which yields the assertion.
\end{proof}
Using the latter theorem, we are able to prove Rademacher's Theorem
in the scalar-valued case.
\begin{thm}[Rademacher's Theorem; scalar-valued]
\label{thm:Radmacher1} Let $F:I\subseteq\mathbb{R}\to\mathbb{K}$
Lipschitz-continuous, where $I$ is an (possibly unbounded) interval
and $\mathbb{K}\in\{\mathbb{R},\mathbb{C}\}$. Then $F$ is differentiable
almost everywhere with $F'\in L_{\infty}(I)$ and $|F'|{}_{L_{\infty}}=|F|_{\mathrm{Lip}}.$
Moreover 
\[
F(t)-F(s)=\intop_{s}^{t}F'
\]
for all $t,s\in I,s<t.$\end{thm}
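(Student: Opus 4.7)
The plan is to pass from Lipschitz continuity of $F$ to an $L_\infty$ representation of its difference quotients, and then to invoke the Lebesgue Differentiation Theorem (\prettyref{thm:Lebesgue}) to identify the pointwise derivative almost everywhere. Throughout, set $L\coloneqq|F|_{\mathrm{Lip}}$. By splitting into real and imaginary parts, which are both real-valued and Lipschitz with constant $\le L$, we may reduce to $\mathbb{K}=\mathbb{R}$. Moreover, extending $F$ to all of $\mathbb{R}$ by the standard construction $\tilde F(t)\coloneqq\inf_{s\in I}\bigl(F(s)+L|t-s|\bigr)$ preserves the Lipschitz constant, so we may assume $I=\mathbb{R}$.

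For $h\neq 0$ define $g_h(t)\coloneqq\frac{F(t+h)-F(t)}{h}$. These functions are measurable (since $F$ is continuous) and satisfy $|g_h(t)|\le L$ uniformly in $h$ and $t$, so $(g_h)_{h\neq0}$ is a bounded net in $L_\infty(\mathbb{R})$. Using $L_\infty(\mathbb{R})=L_1(\mathbb{R})'$ and the Banach–Alaoglu theorem, I would extract a sequence $h_n\to 0$ and a $g\in L_\infty(\mathbb{R})$ with $|g|_{L_\infty}\le L$ such that $g_{h_n}\to g$ in the weak-$\ast$ topology.

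The next step is to show $F(t)-F(s)=\int_s^t g(r)\,\dd r$ for every $s<t$. For this, compute by a change of variables
\[
\intop_s^t g_h(r)\,\dd r=\frac{1}{h}\intop_s^t\bigl(F(r+h)-F(r)\bigr)\,\dd r=\frac{1}{h}\intop_t^{t+h}F(r)\,\dd r-\frac{1}{h}\intop_s^{s+h}F(r)\,\dd r,
\]
and let $h=h_n\to 0$. The right-hand side converges to $F(t)-F(s)$ by continuity of $F$, while the left-hand side, being the pairing of $g_{h_n}$ with $\chi_{[s,t]}\in L_1(\mathbb{R})$, tends to $\int_s^t g$. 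Thus $F(t)-F(s)=\int_s^t g$ for all $s<t$. Applying \prettyref{thm:Lebesgue} to $g\in L_{1,\mathrm{loc}}(\mathbb{R})$, for almost every $t\in\mathbb{R}$ one has
\[
\lim_{h\to 0}\frac{F(t+h)-F(t)}{h}=\lim_{h\to 0}\frac{1}{h}\intop_t^{t+h}g(r)\,\dd r=g(t),
\]
so $F$ is differentiable almost everywhere with $F'=g$. This yields $F'\in L_\infty(\mathbb{R})$ and $|F'|_{L_\infty}\le L=|F|_{\mathrm{Lip}}$. The converse inequality follows from the representation $F(t)-F(s)=\int_s^t F'$: namely $|F(t)-F(s)|\le|F'|_{L_\infty}|t-s|$ gives $|F|_{\mathrm{Lip}}\le|F'|_{L_\infty}$.

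The main obstacle I expect is the identification step linking the weak-$\ast$ limit $g$ of the difference quotients to the pointwise derivative of $F$: one has to take care that the Banach–Alaoglu argument only furnishes a sequential limit, while the pointwise differentiability claim must hold for the \emph{whole} family $h\to 0$. The rescue is that the integral identity $F(t)-F(s)=\int_s^t g$ pins down $g$ uniquely as an element of $L_{1,\mathrm{loc}}$, and then \prettyref{thm:Lebesgue} promotes this to the pointwise statement for almost every $t$, regardless of any specific subsequence. A minor bookkeeping issue is the reduction to $I=\mathbb{R}$ at the boundary of $I$, but since differentiability and the integral formula are preserved on the interior of $I$ after extension, this is harmless.
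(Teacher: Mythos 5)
Your proof is correct, but it reaches the representing density by a genuinely different mechanism than the paper. The paper defines a linear functional $\varphi$ directly on the span of the step functions $\chi_{[s,t[}$ by $\varphi(\chi_{[s,t[})\coloneqq F(t)-F(s)$, checks $\|\varphi\|\leq|F|_{\mathrm{Lip}}$ on disjoint decompositions, extends by density of simple functions in $L_{1}(I)$, and then invokes $L_{1}(I)'\cong L_{\infty}(I)$ to produce $f$ with $F(t)-F(s)=\int_{s}^{t}f$; you instead extract $f$ as a weak-$\ast$ cluster point of the difference quotients $g_{h}$ via Banach--Alaoglu and identify it through the same integral identity. Both arguments hinge on the duality $L_{1}'=L_{\infty}$ (which is exactly where the scalar, i.e.\ Radon--Nikodym, hypothesis enters) and both finish with \prettyref{thm:Lebesgue}, so the essential content is shared. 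What each buys: the paper's construction works verbatim on an arbitrary interval $I$ and for $\mathbb{K}=\mathbb{C}$ without any preprocessing, and it is the template that is upgraded to the Banach-space-valued Riesz--Stieltjes representation (\prettyref{thm:Riesz-Stieltjes}) used later for Widder--Arendt; your route needs the McShane extension to $I=\mathbb{R}$ and (implicitly) the separability of $L_{1}(\mathbb{R})$ to turn weak-$\ast$ compactness into sequential compactness, but in exchange it makes the link between the derivative and the difference quotients explicit. Two small points to tidy up: the reduction to $\mathbb{K}=\mathbb{R}$ via real and imaginary parts only gives $|F_{j}|_{\mathrm{Lip}}\leq L$ for each part, so the sharp bound $|F'|_{L_{\infty}}\leq L$ for the recombined complex derivative should be recovered pointwise from $|F'(t)|=\lim_{h\to0}|F(t+h)-F(t)|/|h|\leq L$ a.e.\ (or, more simply, run the Alaoglu argument directly in $L_{\infty}(\mathbb{R};\mathbb{C})=L_{1}(\mathbb{R};\mathbb{C})'$ and skip the reduction); and when citing Banach--Alaoglu you should note that you are extracting a subsequence from $(g_{1/n})_{n}$, which is legitimate precisely because the predual $L_{1}(\mathbb{R})$ is separable, making the dual unit ball weak-$\ast$ metrizable.
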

\begin{proof}
We define the linear functional $\varphi:\lin\left\{ \chi_{[s,t[}\,;\, s,t\in I,s<t\right\} \subseteq L_{1}(I)\to\mathbb{K}$
by 
\[
\varphi\left(\sum_{i=1}^{n}\alpha_{i}\chi_{[s_{i},t_{i}[}\right)\coloneqq\sum_{i=1}^{n}\alpha_{i}\left(F(t_{i})-F(s_{i})\right),
\]
for $n\in\mathbb{N}_{\geq1},$ $\alpha_{i}\in\mathbb{K},s_{i},t_{i}\in I,$
with $s_{i}<t_{i}$ for $i\in\{1,\ldots,n\}.$ Then, for pairwise
disjoint intervals $[s_{i},t_{i}[,$ we have 
\[
\left|\varphi\left(\sum_{i=1}^{n}\alpha_{i}\chi_{[s_{i},t_{i}[}\right)\right|\leq|F|_{\mathrm{Lip}}\sum_{i=1}^{n}\left|\alpha_{i}\right|(t_{i}-s_{i})=|F|_{\mathrm{Lip}}\left|\sum_{i=1}^{n}\alpha_{i}\chi_{[s_{i},t_{i}[}\right|_{L_{1}}
\]
and thus, $\varphi$ can be extended to an element in $L_{1}(I)'$,
again denoted by $\varphi,$ with $\|\varphi\|\leq|F|_{\mathrm{Lip}}.$
Since 
\[
|F(t)-F(s)|=|\varphi(\chi_{[s,t[})|\leq\|\varphi\||\chi_{[s,t[}|_{L_{1}}=\|\varphi\|(t-s),
\]
for $s,t\in I,s<t,$ we also obtain $|F|_{\mathrm{Lip}}\leq\|\varphi\|.$
Since $L_{1}(I)'\cong L_{\infty}(I)$, there exists $f\in L_{\infty}(I)$
with $|f|_{L_{\infty}}=\|\varphi\|=|F|_{\mathrm{Lip}}$ such that
\[
\varphi(g)=\intop_{I}fg\quad(g\in L_{1}(I)).
\]
In particular, we have 
\[
F(t)-F(s)=\intop_{s}^{t}f
\]
for $s,t\in I,s<t$ and hence, the assertion follows by \prettyref{thm:Lebesgue}.
\end{proof}
In order to prove Rademacher's Theorem for Banach space-valued functions,
we need the following result due to Pettis (\cite{Pettis1938}). We
follow the proof given in \cite[Theorem 3.5.3]{hille1957functional}.
\begin{thm}[Pettis]
\label{thm:Pettis}Let $X$ be a Banach space over $\mathbb{K}\in\{\mathbb{R},\mathbb{C}\}$,
$(\Omega,\mathcal{F},\mu)$ a $\sigma$-finite measure space and $f:\Omega\to X.$
Then $f$ is measurable%
\footnote{Recall that $f$ is called measurable, if there exists a sequence
of simple functions, which converge to $f$ almost everywhere. A simple
function is a finitely-valued function $g:\Omega\to X,$ such that
$g^{-1}[\{x\}]\in\mathcal{F}$ with $\mu\left(g^{-1}[\{x\}]\right)<\infty$
for each $x\in X$. %
} if and only if

\begin{enumerate}[(a)]

\item $f$ is weakly measurable, i.e. $x'\circ f:\Omega\to\mathbb{K}$
is measurable for each $x'\in X'$, and

\item $f$ is almost separably valued, i.e. $\overline{\lin f[\Omega\setminus N]}$
is separable for some $N\in\mathcal{F}$ with $\mu(N)=0$. 

\end{enumerate}\end{thm}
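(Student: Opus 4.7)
The plan is to handle the two directions separately, with the forward implication being essentially bookkeeping and the reverse implication being the substantial content.

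For the forward direction, suppose $f$ is measurable, so there is a sequence $(f_n)$ of simple functions with $f_n \to f$ pointwise off a null set $N$. Each $f_n$ has finite range, so $\bigcup_n f_n[\Omega]$ is countable and $f[\Omega\setminus N]\subseteq\overline{\operatorname{lin}\bigcup_n f_n[\Omega]}$, giving (b). For (a), fix $x'\in X'$: then $x'\circ f_n$ is a simple scalar function (hence measurable), and $x'\circ f_n\to x'\circ f$ off $N$ by continuity of $x'$, so $x'\circ f$ is measurable.

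For the reverse direction, first use (b) to reduce to the case where $X$ is separable (redefine $f$ to be $0$ on the exceptional null set and replace $X$ by the separable closed subspace $Y\coloneqq\overline{\operatorname{lin} f[\Omega\setminus N]}$; weak measurability is inherited since any $y'\in Y'$ extends by Hahn--Banach to some $x'\in X'$). Next, by separability of $Y$ together with Hahn--Banach, pick a countable subset $(\varphi_n)_{n\in\mathbb{N}}\subseteq B_{Y'}$ that is \emph{norming}, i.e.\ $|y|_{Y}=\sup_{n\in\mathbb{N}}|\varphi_n(y)|$ for every $y\in Y$; this is obtained by choosing, for each element of a dense sequence, a norming functional from the unit ball of $Y'$. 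It then follows that for every $x\in Y$ the map
\[
\omega\mapsto |f(\omega)-x|_{Y}=\sup_{n\in\mathbb{N}}|\varphi_n(f(\omega)-x)|
\]
is measurable as a countable supremum of measurable scalar functions (by (a)).

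With this measurability of distances in hand, fix a dense sequence $(x_n)_{n\in\mathbb{N}}$ in $Y$ and, for each $k\in\mathbb{N}_{\geq1}$, define inductively $A_{n,k}\coloneqq\{\omega\in\Omega\,;\,|f(\omega)-x_n|_{Y}<1/k\}\setminus\bigcup_{j<n}A_{j,k}\in\mathcal{F}$. Because $(x_n)$ is dense in $Y\supseteq f[\Omega]$, we have $\Omega=\bigcup_n A_{n,k}$ disjointly, so the countably valued function $g_k\coloneqq\sum_{n}\chi_{A_{n,k}}x_n$ satisfies $|g_k(\omega)-f(\omega)|_{Y}<1/k$ for all $\omega\in\Omega$. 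Finally, use $\sigma$-finiteness: write $\Omega=\bigcup_{m}\Omega_m$ with $\mu(\Omega_m)<\infty$ increasing, and approximate each $g_k$ by the simple function $h_{k}\coloneqq\sum_{n\leq k}\chi_{A_{n,k}\cap\Omega_k}x_n$; then $h_k=g_k$ on the set $\Omega_k\cap\bigcup_{n\leq k}A_{n,k}$, whose complement has measure tending to $0$ on each $\Omega_m$, so after passing to a subsequence we obtain $h_k\to f$ almost everywhere. This yields the required simple approximations, and hence the measurability of $f$.

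The main obstacle is the step showing that $\omega\mapsto|f(\omega)-x|_{Y}$ is measurable; this is where separability (reduced from (b)) is essential, as it converts the Hahn--Banach representation of the norm as a supremum over the uncountable set $B_{Y'}$ into a countable supremum via a norming sequence $(\varphi_n)$. Everything else is a standard bookkeeping construction of simple functions via a dense sequence in the range together with $\sigma$-finiteness.
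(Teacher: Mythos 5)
Your argument coincides with the paper's up to the last step: same reduction to a separable subspace $Y$, same countable norming sequence in $Y'$ making $\omega\mapsto|f(\omega)-x|_{Y}$ measurable, same disjointified sets built from a dense sequence, same appeal to $\sigma$-finiteness at the end. The forward direction is fine.

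The gap is in the final truncation. You claim that the complement of $\Omega_k\cap\bigcup_{n\leq k}A_{n,k}$ has measure tending to $0$ on each $\Omega_m$. But $\bigcup_{n\leq k}A_{n,k}=\bigl\{\omega\,;\,\min_{n\leq k}|f(\omega)-x_n|_Y<1/k\bigr\}$, and as $k$ grows the radius $1/k$ shrinks at the same time as the index range grows, so this union need not exhaust $\Omega_m$ at all. Concretely, take $f\equiv y$ constant and a dense sequence in which the only terms close to $y$ are $x_{n_j}$ with $|x_{n_j}-y|=2^{-j}$ and $n_j=2^{2^j}$: then for every $k$ one has $\min_{n\leq k}|y-x_n|\geq 2^{-j(k)}>1/k$ where $j(k)=\max\{j\,;\,n_j\leq k\}$, so $\bigcup_{n\leq k}A_{n,k}=\emptyset$, $h_k\equiv 0$, and $h_k\not\to f$ anywhere; no subsequence helps. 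The repair is routine but must be made: either (i) truncate at an index $N_k$ chosen \emph{after} $k$ so that $\mu\bigl(\Omega_k\setminus\bigcup_{n\leq N_k}A_{n,k}\bigr)<2^{-k}$ (possible by continuity from below, since for fixed $k$ the sets $A_{n,k}$ cover $\Omega$ and $\mu(\Omega_k)<\infty$) and then invoke Borel--Cantelli; or (ii) do what the paper does, namely first observe that each countably valued $g_k$ is itself measurable, being the \emph{everywhere} pointwise limit as $n\to\infty$ (for fixed $k$) of the simple functions $\sum_{j,m\leq n}\chi_{A_{j,k}\cap\Omega_m}x_j$, and then conclude that $f$, as a uniform limit of the measurable functions $g_k$, is measurable.
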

\begin{proof}
If $f$ is measurable, then clearly it is weakly measurable. Moreover,
as $f$ is the almost everywhere limit of simple functions, it is
almost separably-valued. Assume now conversely that $f$ satisfies
(a) and (b). We define $Y\coloneqq\overline{\lin f[\Omega\setminus N]}$,
which is a separable Banach space by (b). Thus, there exists a sequence
$(x_{n}')_{n\in\mathbb{N}}$ in $X'$ such that%
\footnote{Choose a dense sequence $(y_{n})_{n\in\mathbb{N}}$ in $Y$ and let
$x_{n}'\in X'$ with $\|x_{n}'\|=1$ and $|x_{n}'(y_{n})|=|y_{n}|$
for $n\in\mathbb{N}.$%
} 
\[
|y|=\sup_{n\in\mathbb{N}}|x_{n}'(y)|\quad(y\in Y).
\]
Thus, by (a) we get that $\left(\Omega\setminus N\ni x\mapsto|f(x)|\in\mathbb{R}\right)$
is measurable, as it is the supremum of a sequence of measurable functions.
Hence, the set $F\coloneqq\left\{ x\in\Omega\setminus N\,;\,|f(x)|>0\right\} $
is measurable. Let $\varepsilon>0$, $(y_{n})_{n\in\mathbb{N}}$ a
dense sequence in $Y$ and set 
\[
E_{n}\coloneqq\left\{ x\in F\,;\,|f(x)-y_{n}|<\varepsilon\right\} \quad(n\in\mathbb{N}).
\]
Then $E_{n}$ is measurable for each $n\in\mathbb{N}$ and $\bigcup_{n\in\mathbb{N}}E_{n}=F$
by the density of $\{y_{n}\,;\, n\in\mathbb{N}\}$. Setting $F_{0}=E_{0}$
and $F_{n+1}=E_{n+1}\setminus\bigcup_{k=0}^{n}F_{k}$ for $n\in\mathbb{N}$,
we obtain a sequence of pairwise disjoint measurable sets $(F_{n})_{n\in\mathbb{N}}$
with $\bigcup_{n\in\mathbb{N}}F_{n}=F.$ We set 
\[
g\coloneqq\sum_{k=0}^{\infty}y_{k}\chi_{F_{k}}
\]
and obtain $|f(x)-g(x)|<\varepsilon$ for each $x\in\Omega\setminus N.$
Hence, if $g$ is measurable, then so is $f$. For showing the measurability
of $g$, let $(\Omega_{k})_{k\in\mathbb{N}}$ be a sequence of pairwise
disjoint measurable sets such that $\bigcup_{k\in\mathbb{N}}\Omega_{k}=\Omega$
and $\mu(\Omega_{k})<\infty$ for each $k\in\mathbb{N}.$ For $n\in\mathbb{N}$
we set 
\[
g_{n}\coloneqq\sum_{k,j=0}^{n}y_{k}\chi_{F_{k}\cap\Omega_{j}}.
\]
Then $\left(g_{n}\right)_{n\in\mathbb{N}}$ is a sequence of simple
functions with $g_{n}\to g$ pointwise as $n\to\infty$ and thus,
$g$ is measurable.
\end{proof}
We are now able to prove Rademacher's Theorem for Lipschitz-continuous
functions with values in a reflexive Banach space. In fact, one can
prove the Theorem for a larger class of Banach spaces, namely those
having the Radon-Nikodym property, see e.g. \cite[Theorem 2, p. 106]{DiestelUhl}.
More precisely, the validity of Rademacher's Theorem is equivalent
to the fact that $X$ satisfies the Radon-Nikodym property (cf. \cite[Theorem 1.5]{Arendt1987}).
But since we mainly deal with Hilbert spaces, we may restrict ourselves
to this simpler situation.
\begin{thm}[Rademacher's Theorem; Banach space-valued]
\label{thm:Rademacher2} Let $F:I\subseteq\mathbb{R}\to X$ Lipschitz-continuous,
where $I$ is an (possibly unbounded) interval and $X$ is a reflexive
Banach space. Then $F$ is differentiable almost everywhere with $F'\in L_{\infty}(I;X)$
and $|F'|{}_{L_{\infty}}=|F|_{\mathrm{Lip}}.$ Moreover 
\[
F(t)-F(s)=\intop_{s}^{t}F'
\]
for all $t,s\in I,s<t.$\end{thm}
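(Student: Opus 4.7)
The plan is to reduce to the scalar-valued Rademacher Theorem~\prettyref{thm:Radmacher1} via duality, using the reflexivity of $X$ crucially to reconstruct a vector-valued derivative from its scalar components. First I would observe that since $F$ is continuous, $F[I]$ is separable, so replacing $X$ by $Y \coloneqq \overline{\lin F[I]}$, which is a closed subspace of the reflexive space $X$ and hence itself reflexive and separable, I may assume without loss of generality that $X$ is separable and reflexive. Then $X'$ is also separable; choose a countable dense subset $(x_n')_{n\in\mathbb{N}} \subseteq X'$.

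For each $n\in\mathbb{N}$, the function $x_n' \circ F : I \to \mathbb{K}$ is Lipschitz with $|x_n'\circ F|_{\mathrm{Lip}} \leq \|x_n'\| \cdot |F|_{\mathrm{Lip}}$, so by \prettyref{thm:Radmacher1} it is differentiable off a null set $N_n$; set $N \coloneqq \bigcup_n N_n$. For $t \in I \setminus N$, define a linear functional $\ell_t : \lin\{x_n' \,;\, n \in \mathbb{N}\} \to \mathbb{K}$ by $\ell_t(x_n') \coloneqq (x_n' \circ F)'(t)$; since $|(x_n'\circ F)'(t)| \leq \|x_n'\| |F|_{\mathrm{Lip}}$, this functional is bounded with norm $\leq |F|_{\mathrm{Lip}}$ and extends uniquely to an element of $X''$. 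By reflexivity there exists a unique $f(t) \in X$ with $\ell_t = \widehat{f(t)}$; set $f(t) \coloneqq 0$ on $N$. Then $|f(t)|_X \leq |F|_{\mathrm{Lip}}$ for all $t$, and by construction $x_n'(f(t)) = (x_n' \circ F)'(t)$ almost everywhere.

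Next I would verify that $f$ is measurable: each $x_n' \circ f$ is measurable as the a.e.\ derivative of $x_n' \circ F$, and a density argument (together with the uniform bound $|f(t)| \leq |F|_{\mathrm{Lip}}$) shows that $x' \circ f$ is measurable for every $x' \in X'$; since $f$ takes values in the separable space $X$, Pettis' \prettyref{thm:Pettis} yields measurability of $f$, and thus $f \in L_{\infty}(I;X)$ with $|f|_{L_\infty} \leq |F|_{\mathrm{Lip}}$. Applying $x_n'$ to both sides of the desired integral identity and using the scalar Rademacher theorem gives
\[
x_n'(F(t) - F(s)) = \intop_s^t (x_n' \circ F)' = \intop_s^t x_n'(f) = x_n'\!\left(\intop_s^t f\right)
\]
for every $n \in \mathbb{N}$ and all $s < t$ in $I$; by density of $(x_n')$ in $X'$ this yields $F(t) - F(s) = \int_s^t f$. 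The Lebesgue Differentiation \prettyref{thm:Lebesgue} (applicable since it is proved in the appendix for Banach space-valued $L_1$-functions) then gives $F'(t) = f(t)$ for almost every $t$, whence $F'\in L_\infty(I;X)$ and $|F'|_{L_\infty} \leq |F|_{\mathrm{Lip}}$; the reverse inequality is immediate from the integral representation.

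The main obstacle I expect is the passage from the weak-style construction of $f(t)$ via reflexivity to the strong (norm) differentiability of $F$ at $t$. The construction only tells us that $f(t)$ represents the pointwise scalar derivatives $(x_n' \circ F)'(t)$, which a priori only yields weak differentiability along the dense sequence. It is the integral identity $F(t)-F(s) = \int_s^t f$, combined with the vector-valued Lebesgue differentiation theorem, that promotes this to strong differentiability almost everywhere; without Theorem~\prettyref{thm:Lebesgue} for $X$-valued functions, this last step would be delicate.
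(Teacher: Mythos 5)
Your proposal is correct and follows essentially the same route as the paper: reduce to the separable reflexive subspace $Y=\overline{\lin F[I]}$, apply the scalar Rademacher theorem along a countable dense family in $Y'$, reassemble a vector $f(t)\in Y''=Y$ by reflexivity, check measurability via Pettis, and upgrade to strong a.e.\ differentiability through the integral identity and the vector-valued Lebesgue differentiation theorem. One step is misjustified as written: the bound $|(x_n'\circ F)'(t)|\leq\|x_n'\|\,|F|_{\mathrm{Lip}}$ on the individual generators does \emph{not} by itself imply that $\ell_t$ has norm $\leq|F|_{\mathrm{Lip}}$ on $\lin\{x_n'\}$, nor that $\ell_t$ is well-defined if the $x_n'$ are linearly dependent; you need that for $t$ outside your single null set $N$ \emph{every} finite combination $x'=\sum\alpha_i x_{n_i}'$ satisfies $\ell_t(x')=(x'\circ F)'(t)$ (by linearity of the classical derivative at the fixed point $t$) and then $|(x'\circ F)'(t)|\leq\|x'\|\,|F|_{\mathrm{Lip}}$ from the difference quotients. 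The paper sidesteps this by working with $Z=\lin_{\mathbb{Q}}A$ for a countable linearly independent dense $A$, so that the bound and the linearity relations hold simultaneously on all of the countable set $Z$ off one null set; either device works, but the justification you gave is not sufficient as stated.
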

\begin{proof}
We consider the separable subspace $Y\coloneqq\overline{\lin F[I]},$
which is reflexive as it is a closed subspace of a reflexive space.
For $y'\in Y'$ the function $y'\circ F:I\to\mathbb{K}$ is Lipschitz-continuous
with $|y'\circ F|_{\mathrm{Lip}}\leq\|y'\||F|_{\mathrm{Lip}}$ and
hence, there exists a unique $f_{y'}\in L_{\infty}(I)$ with $|f_{y'}|_{L_{\infty}}\leq\|y'\||F|_{\mathrm{Lip}}$
and 
\[
y'(F(t)-F(s))=\intop_{s}^{t}f_{y'}
\]
for each $s,t\in I$ with $s<t$ by \prettyref{thm:Radmacher1}. Since
$Y$ is reflexive and separable, so is $Y'.$ Let $A\subseteq S_{Y'}(0,1)$
be a linear independent countable subset, which is dense in $S_{Y'}(0,1)$
and set $Z\coloneqq\lin_{\mathbb{Q}}A.$ Then $Z$ is dense in $Y'$.
For each $z'\in Z$ there exists a nullset $N_{z'}^{(1)}\subseteq I$
such that $|f_{z'}(x)|\leq|f_{z'}|_{L_{\infty}}\leq\|z'\||F|_{\mathrm{Lip}}$
for each $x\in I\setminus N_{z'}^{(1)}$. Moreover, for $z'=\sum_{i=1}^{n}\alpha_{i}y_{i}'$
with $\alpha_{1},\ldots,\alpha_{n}\in\mathbb{Q},y_{1}',\ldots y_{n}'\in A,n\in\mathbb{N}$
we have that 
\[
z'(F(t)-F(s))=\intop_{s}^{t}\sum_{i=1}^{n}\alpha_{i}f_{y_{i}'},
\]
and hence, there exists a nullset $N_{z'}^{(2)}\subseteq I$ such
that $f_{z'}(x)=\sum_{i=1}^{n}\alpha_{i}f_{y_{i}'}(x)$ for each $x\in I\setminus N_{z'}^{(2)}.$
Thus, setting $N\coloneqq\bigcup_{z'\in Z}N_{z'}^{(1)}\cup N_{z'}^{(2)},$
we infer that for each $x\in I\setminus N$ and $z'_{1},z'_{2}\in Z,\lambda\in\mathbb{Q}$
we have that 
\begin{align*}
|f_{z_{1}'}(x)| & \leq\|z_{1}'\||F|_{\mathrm{Lip}},\\
\lambda f_{z_{1}'}(x)+f_{z_{2}'}(x) & =f_{\lambda z_{1}'+z_{2}'}(x).
\end{align*}
Hence, for each $x\in I\setminus N$ the mapping 
\begin{align*}
\varphi_{x}:Z\subseteq Y' & \to\mathbb{K}\\
z' & \mapsto f_{z'}(x)
\end{align*}
is a linear bounded functional, which, due to the density of $Z$,
can be extended to an element in $Y''=Y.$ We define the mapping 
\begin{align*}
f:I & \to Y\\
x & \mapsto\begin{cases}
\varphi_{x} & \mbox{ if }x\in I\setminus N,\\
0 & \mbox{ else}.
\end{cases}
\end{align*}
We first show that $f$ is measurable. For doing so, we note that
$f$ is separably valued. Moreover, for each $y'\in Y'$ there is
a sequence $\left(z_{n}'\right)_{n\in\mathbb{N}}$ in $Z$ with $z_{n}'\to y'$
in $Y'$ as $n\to\infty.$ Thus, we have that 
\[
\left(y'\circ f\right)(x)=\lim_{n\to\infty}z_{n}'(f(x))=\lim_{n\to\infty}\varphi_{x}(z_{n}')=\lim_{n\to\infty}f_{z_{n}'}(x)\quad(x\in I\setminus N),
\]
which implies the measurability of $y'\circ f.$ Thus, by \prettyref{thm:Pettis}
we derive the measurability of $f$. Moreover, we have that 
\[
|f(x)|=\sup_{z'\in A}|z'(f(x))|=\sup_{z'\in A}|f_{z'}(x)|\leq|F|_{\mathrm{Lip}}\quad(x\in I\setminus N),
\]
which shows that $f\in L_{\infty}(I;X)$ and $|f|_{L_{\infty}}\leq|F|_{\mathrm{Lip}}.$
Furthermore, for $z'\in Z$ we have 
\[
z'(F(t)-F(s))=\intop_{s}^{t}f_{z'}(x)\mbox{ d}x=\intop_{s}^{t}z'(f(x))\mbox{ d}x=z'\left(\intop_{s}^{t}f(x)\mbox{ d}x\right),
\]
which yields 
\[
F(t)-F(s)=\intop_{s}^{t}f(x)\mbox{ d}x
\]
for $s,t\in I$ with $s<t.$ The latter implies $|F(t)-F(s)|\leq|f|_{L_{\infty}}|t-s|,$
which shows $|f|_{L_{\infty}}=|F|_{\mathrm{Lip}}.$ That $f$ is indeed
the almost everywhere derivative of $F$ follows from \prettyref{thm:Lebesgue}.
\end{proof}

\newpage{}

$\:$

\thispagestyle{empty}
\chapter{The Widder-Arendt Theorem}

In this part of the appendix we provide a proof of the Widder-Arendt
Theorem. This theorem, which was proved by Widder for the scalar-valued
case (\cite{Widder1934},\cite[Theorem 8, p. 157]{Widder1971}) and
generalized by Arendt to the Banach space-valued case (\cite{Arendt1987}),
characterizes those functions, which are representable as the Laplace
transform of a bounded measurable function. We mainly  follow the
rationale given in \cite[Chapter 2]{ABHN_2011}. 
\begin{lem}
\label{lem:delta-sequence}We define 
\[
\rho_{k,t}(s)\coloneqq\frac{1}{k!}\left(\frac{k}{t}\right)^{k+1}\e^{-\frac{k}{t}s}s^{k}\quad(s,t>0,k\in\mathbb{N}_{\geq1}).
\]
Then for each $t>0$ the sequence $\left(\rho_{k,t}\right)_{k\in\mathbb{N}_{\geq1}}$
is a mollifier centered at $t$.\end{lem}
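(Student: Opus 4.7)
The plan is to verify the three standard defining properties of a mollifier centered at $t$: pointwise non-negativity, unit total mass, and concentration of mass at $t$ in the sense that $\int_{\{|s-t|\geq\delta\}}\rho_{k,t}(s)\,\mathrm{d}s\to 0$ as $k\to\infty$ for every $\delta>0$.

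Non-negativity is immediate from the definition, since for $s,t>0$ and $k\in\mathbb{N}_{\geq 1}$ every factor in $\rho_{k,t}(s)$ is positive. For the unit mass property I would compute the Gamma integral $\int_{0}^{\infty}s^{k}\mathrm{e}^{-\beta s}\,\mathrm{d}s=k!/\beta^{k+1}$ (proved by $k$-fold integration by parts or induction on $k$) and specialize to $\beta=k/t$, which immediately yields $\int_{0}^{\infty}\rho_{k,t}(s)\,\mathrm{d}s=1$.

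For concentration, I would exploit that $\rho_{k,t}$ is the density of a Gamma distribution with shape $k+1$ and rate $k/t$. Computing the first two moments by the same Gamma-integral formula gives
\[
\mu_{k}\coloneqq\int_{0}^{\infty}s\,\rho_{k,t}(s)\,\mathrm{d}s=\frac{(k+1)t}{k},\qquad \int_{0}^{\infty}s^{2}\,\rho_{k,t}(s)\,\mathrm{d}s=\frac{(k+1)(k+2)t^{2}}{k^{2}},
\]
so that the variance is $\sigma_{k}^{2}=(k+1)t^{2}/k^{2}$. Hence $\mu_{k}\to t$ and $\sigma_{k}^{2}\to 0$ as $k\to\infty$. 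Given $\delta>0$, choose $k_{0}$ so large that $|\mu_{k}-t|<\delta/2$ for all $k\geq k_{0}$; then $\{s>0\,;\,|s-t|\geq\delta\}\subseteq\{s>0\,;\,|s-\mu_{k}|\geq\delta/2\}$, and Chebyshev's inequality applied to the probability measure $\rho_{k,t}(s)\,\mathrm{d}s$ gives
\[
\int_{\{|s-t|\geq\delta\}}\rho_{k,t}(s)\,\mathrm{d}s\leq\int_{\{|s-\mu_{k}|\geq\delta/2\}}\rho_{k,t}(s)\,\mathrm{d}s\leq\frac{4\sigma_{k}^{2}}{\delta^{2}}=\frac{4(k+1)t^{2}}{k^{2}\delta^{2}}\xrightarrow[k\to\infty]{}0,
\]
which is the concentration property.

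There is no genuine obstacle here; the only routine work is carrying out the Gamma-integral computations and bookkeeping that $\mu_{k}\to t$. Together, these three properties show that $(\rho_{k,t})_{k\in\mathbb{N}_{\geq 1}}$ is a mollifier concentrating at $t$, which is exactly what is needed for the subsequent approximation arguments in the proof of the Widder--Arendt theorem.
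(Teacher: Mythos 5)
Your proof is correct, but it takes a genuinely different route from the paper. You identify $\rho_{k,t}$ as the density of a Gamma distribution with shape $k+1$ and rate $k/t$, compute the mean $\mu_{k}=(k+1)t/k\to t$ and variance $\sigma_{k}^{2}=(k+1)t^{2}/k^{2}\to0$, and obtain the concentration property from Chebyshev's inequality; all of these computations check out, and the properties you verify (positivity, unit mass, vanishing tail mass away from $t$) are exactly what the Post--Widder argument in Lemma \ref{lem:Post-Widder} consumes. The paper instead argues pointwise: after rescaling to $g_{k}(s)=\frac{k^{k+1}}{k!}s^{k}\e^{-ks}$ it uses the elementary bound $\frac{k^{k}}{k!}<\e^{k}$ to dominate $g_{k}$ by $k\,h(s)^{k}$ with $h(s)=s\e^{1-s}$, exploits the strict monotonicity of $h$ on $]0,1[$ for the left tail, and the inequality $h(s)\e^{as}\leq\frac{1}{1-a}$ with the choice $a=\frac{\delta}{1+\delta}$ for the right tail. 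Your approach is shorter and yields a clean quantitative tail bound of order $4(k+1)t^{2}/(k^{2}\delta^{2})=O(1/k)$, uniform in the sense that it requires no case distinction between the two tails; the paper's approach avoids any probabilistic vocabulary and actually delivers a stronger, geometrically decaying tail estimate (of the form $k\,q^{k}$ with $q<1$), though neither the rate nor the probabilistic framing matters for the application. Either proof is acceptable here.
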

\begin{proof}
Let $t>0$. Obviously, $\rho_{k,t}\in C^{\infty}(\mathbb{R}_{>0})$
and $\rho_{k,t}>0$ for each $k$. Moreover, for $k\in\mathbb{N}_{\geq1},$
\begin{align*}
\intop_{0}^{\infty}\rho_{k,t}(s)\mbox{ d}s & =\frac{1}{k!}\left(\frac{k}{t}\right)^{k+1}\intop_{0}^{\infty}\e^{-\frac{k}{t}s}s^{k}\mbox{ d}s\\
 & =\frac{k}{t}\intop_{0}^{\infty}\e^{-\frac{k}{t}s}\mbox{ d}s\\
 & =1,
\end{align*}
by integration by parts. We define $g_{k}(s)\coloneqq\frac{k^{k+1}}{k!}s^{k}\e^{-ks}=t\rho_{k,t}(st)$
for $k\in\mathbb{N}_{\geq1},s>0.$ Then 
\begin{align*}
\intop_{0}^{t-\varepsilon}\rho_{k,t}(s)\mbox{ d}s & =\intop_{0}^{1-\frac{\varepsilon}{t}}g_{k}(s)\mbox{ d}s,\\
\intop_{t+\varepsilon}^{\infty}\rho_{k,t}(s)\mbox{ d}s & =\intop_{1+\frac{\varepsilon}{t}}^{\infty}g_{k}(s)\mbox{ d}s
\end{align*}
for each $k\in\mathbb{N}_{\geq1},\varepsilon>0.$ So, it suffices
to prove that $\intop_{0}^{1-\delta}g_{k}(s)\mbox{ d}s\to0\quad(k\to\infty)$
and $\intop_{1+\delta}^{\infty}g_{k}(s)\mbox{ d}s\to0\quad(k\to\infty)$
for each $\delta>0$. For doing so, we define $h(s)\coloneqq s\e^{1-s}$
for $s>0.$ Then, since $\frac{k^{k}}{k!}<\e^{k},$ we obtain $g_{k}(s)\leq k\left(h(s)\right)^{k}$
for each $k\in\mathbb{N}_{\ge1},s>0.$ We observe that $h$ strictly
increases on $]0,1[$. Thus, we derive 
\[
\intop_{0}^{1-\delta}g_{k}(s)\mbox{ d}s\leq k\intop_{0}^{1-\delta}h(1-\delta)^{k}\mbox{ d}s\leq kh(1-\delta)^{k}(1-\delta)\to0\quad(k\to\infty)
\]
since $h(1-\delta)<h(1)=1.$ Moreover, we observe that $h(s)\e^{as}\leq\frac{1}{1-a}$
for each $s>0,0<a<1.$ Hence, 
\begin{align*}
\intop_{1+\delta}^{\infty}g_{k}(s)\mbox{ d}s & \leq k\left(\frac{1}{1-a}\right)^{k}\intop_{1+\delta}^{\infty}\e^{-aks}\mbox{ d}s\\
 & =\left(\frac{1}{1-a}\right)^{k}\frac{1}{a}\e^{-ak(1+\delta)}.
\end{align*}
Choosing now $a\coloneqq\frac{\delta}{1+\delta}$, we derive 
\[
\intop_{1+\delta}^{\infty}g_{k}(s)\mbox{ d}s\leq\frac{\left(1+\delta\right)^{k+1}}{\delta}\e^{-k\delta}\to0\quad(k\to\infty).\tag*{\qedhere}
\]

\end{proof}
Next we state the Post-Widder inversion formula for bounded continuous
functions. We note that this result also holds for locally integrable
functions (see e.g. \cite[Theorem 1.7.7]{ABHN_2011}), however, as
the proof for the more abstract result is rather technical, we stick
to the simpler case.
\begin{lem}[Post-Widder Inversion Formula]
\label{lem:Post-Widder}Let $X$ be a Banach space and $f\in C_{b}(\mathbb{R}_{\geq0};X).$
Then, for each $t>0$ we have 
\[
f(t)=\lim_{k\to\infty}(-1)^{k}\frac{1}{k!}\left(\frac{k}{t}\right)^{k+1}\sqrt{2\pi}\hat{f}^{(k)}\left(\frac{k}{t}\right).
\]
\end{lem}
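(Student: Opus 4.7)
The plan is to recognize the right-hand side of the Post-Widder formula as an integral against the mollifier family $\rho_{k,t}$ constructed in \prettyref{lem:delta-sequence}, and then deduce the convergence to $f(t)$ from the standard mollifier argument, using boundedness of $f$ to control the tails and continuity of $f$ at $t$ to handle the bulk near $t$.

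First I would compute the derivatives of $\hat f$ explicitly. Since $f\in C_b(\mathbb{R}_{\geq0};X)$, the function $s\mapsto s^k\e^{-zs}f(s)$ is Bochner-integrable on $\mathbb{R}_{\geq0}$ for each $z\in\mathbb{C}_{\Re>0}$, and an induction using dominated convergence (with a dominating function of the form $s\mapsto s^{k+1}\e^{-\Re(z)s/2}\|f\|_\infty$ on a small neighborhood of $z$) shows
\[
\hat f^{(k)}(z)=\frac{1}{\sqrt{2\pi}}\intop_{0}^{\infty}(-s)^{k}\e^{-zs}f(s)\,\dd s\quad(z\in\mathbb{C}_{\Re>0}).
\]
Substituting $z=k/t$ and comparing with the definition of $\rho_{k,t}$ in \prettyref{lem:delta-sequence} yields the identity
\[
(-1)^{k}\frac{1}{k!}\left(\frac{k}{t}\right)^{k+1}\sqrt{2\pi}\,\hat f^{(k)}\!\left(\frac{k}{t}\right)=\intop_{0}^{\infty}\rho_{k,t}(s)f(s)\,\dd s.
\]

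Next I would invoke \prettyref{lem:delta-sequence}: for fixed $t>0$ the family $(\rho_{k,t})_{k\in\mathbb{N}_{\geq1}}$ is a mollifier centered at $t$, meaning each $\rho_{k,t}$ is nonnegative with total integral $1$, and for every $\varepsilon>0$ one has $\int_{0}^{t-\varepsilon}\rho_{k,t}\to 0$ and $\int_{t+\varepsilon}^{\infty}\rho_{k,t}\to 0$ as $k\to\infty$. Using $\int_{0}^{\infty}\rho_{k,t}=1$, I write
\[
\intop_{0}^{\infty}\rho_{k,t}(s)f(s)\,\dd s-f(t)=\intop_{0}^{\infty}\rho_{k,t}(s)\bigl(f(s)-f(t)\bigr)\,\dd s
\]
and split the integration domain into $[t-\varepsilon,t+\varepsilon]$ and its complement in $\mathbb{R}_{\geq0}$. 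Given $\eta>0$, continuity of $f$ at $t$ provides $\varepsilon>0$ with $|f(s)-f(t)|_{X}<\eta$ for $s\in[t-\varepsilon,t+\varepsilon]\cap\mathbb{R}_{\geq0}$; hence the contribution from this part is bounded by $\eta\cdot\int_{0}^{\infty}\rho_{k,t}=\eta$. On the complement I use $|f(s)-f(t)|_{X}\leq 2\|f\|_{\infty}$ and the mollifier tail estimates to bound this contribution by $2\|f\|_{\infty}\bigl(\int_{0}^{t-\varepsilon}\rho_{k,t}+\int_{t+\varepsilon}^{\infty}\rho_{k,t}\bigr)$, which is smaller than $\eta$ for $k$ large enough. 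Since $\eta$ was arbitrary, the claimed limit follows.

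The argument is essentially routine once the identity in the first paragraph is established; the only mild obstacle is writing the differentiation-under-the-integral step cleanly in the Banach-space setting, which is handled by dominated convergence for Bochner integrals with the explicit exponential dominating function mentioned above.
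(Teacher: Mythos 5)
Your proposal is correct and follows essentially the same route as the paper's proof: establish the identity $(-1)^{k}\frac{1}{k!}(k/t)^{k+1}\sqrt{2\pi}\,\hat f^{(k)}(k/t)=\intop_{0}^{\infty}\rho_{k,t}(s)f(s)\,\dd s$ and then apply the mollifier properties of $(\rho_{k,t})_{k}$ from \prettyref{lem:delta-sequence} together with boundedness and continuity of $f$. The only difference is that you justify the differentiation under the integral more explicitly than the paper, which simply states the formula for $\hat f^{(k)}$.
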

\begin{proof}
We have that 
\[
\hat{f}^{(k)}(z)=\frac{1}{\sqrt{2\pi}}\intop_{0}^{\infty}\e^{-zs}\left(-s\right)^{k}f(s)\mbox{ d}s\quad(k\in\mathbb{N},z\in\mathbb{C}_{\Re>0}).
\]
In particular, we get for $k\in\mathbb{N}_{\geq1}$ and $t>0$ 
\[
(-1)^{k}\frac{1}{k!}\left(\frac{k}{t}\right)^{k+1}\sqrt{2\pi}\hat{f}^{(k)}\left(\frac{k}{t}\right)=\intop_{0}^{\infty}(-1)^{k}\frac{1}{k!}\left(\frac{k}{t}\right)^{k+1}\e^{-\frac{k}{t}s}\left(-s\right)^{k}f(s)\mbox{ d}s=\intop_{0}^{\infty}\rho_{k,t}(s)f(s)\mbox{ d}s,
\]
where $\rho_{k,t}$ is given as in \prettyref{lem:delta-sequence}.
For $\varepsilon>0,t>0$ we choose $\delta>0$ such that $|f(s)-f(t)|<\varepsilon$
for $|s-t|<\delta$. Then, we obtain 
\begin{align*}
\left|(-1)^{k}\frac{1}{k!}\left(\frac{k}{t}\right)^{k+1}\sqrt{2\pi}\hat{f}^{(k)}\left(\frac{k}{t}\right)-f(t)\right| & \leq\intop_{0}^{\infty}\rho_{k,t}(s)\left|f(s)-f(t)\right|\mbox{ d}s\\
 & \leq2|f|_{\infty}\left(\intop_{0}^{t-\delta}\rho_{k,t}(s)\mbox{ d}s+\intop_{t+\delta}^{\infty}\rho_{k,t}(s)\mbox{ d}s\right)+\varepsilon\\
 & \leq2\varepsilon
\end{align*}
for sufficiently large $k$ according to \prettyref{lem:delta-sequence}. \end{proof}
\begin{thm}[Riesz-Stieltjes representation]
\label{thm:Riesz-Stieltjes}Let $X$ be a Banach space and $T:L_{1}(\mathbb{R}_{\geq0})\to X$
linear and bounded. We define 
\[
F(t)=T\chi_{[0,t[}\quad(t\geq0).
\]
Then $F:\mathbb{R}_{\geq0}\to X$ is Lipschitz-continuous with $F(0)=0$
and $|F|_{\mathrm{Lip}}=\|T\|$ and for each continuous $g\in L_{1}(\mathbb{R}_{\geq0}),$
we have%
\footnote{We note that on a compact interval, we have that $F$ is of bounded
variation and hence, the integral $\intop_{0}^{t}g(s)\mbox{ d}F(s)$
exists for continuous functions $g$.%
} 
\[
Tg=\lim_{t\to\infty}\intop_{0}^{t}g(s)\,\mathrm{d}F(s).
\]
\end{thm}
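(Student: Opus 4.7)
The plan is to prove the three claims in turn, starting with the Lipschitz estimate, then establishing the integral representation on bounded intervals $[0,t]$, and finally passing to the limit $t\to\infty$.

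First I would prove that $F$ is Lipschitz with $|F|_{\mathrm{Lip}}=\|T\|$. The inequality $|F(t)-F(s)|_X = |T\chi_{[s,t[}|_X \leq \|T\|(t-s)$ for $0\leq s<t$ gives $|F|_{\mathrm{Lip}}\leq\|T\|$ directly (and $F(0)=T\cdot 0 = 0$). For the reverse inequality, consider a simple function $\varphi=\sum_{i=1}^n\alpha_i\chi_{[s_i,t_i[}$ with pairwise disjoint intervals; linearity of $T$ gives $T\varphi=\sum_i\alpha_i(F(t_i)-F(s_i))$, so
\[
|T\varphi|_X \leq |F|_{\mathrm{Lip}}\sum_{i=1}^n|\alpha_i|(t_i-s_i)=|F|_{\mathrm{Lip}}|\varphi|_{L_1}.
\]
Since such simple functions are dense in $L_1(\mathbb{R}_{\geq 0})$, continuous extension yields $\|T\|\leq|F|_{\mathrm{Lip}}$.

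Next I would prove the identity $T(g\chi_{[0,t[})=\int_0^t g(s)\,\mathrm{d}F(s)$ for every continuous $g\in L_1(\mathbb{R}_{\geq 0})$ and every $t>0$. Since $F$ is Lipschitz on $[0,t]$ it has bounded variation, so the Riemann–Stieltjes integral exists. For a tagged partition $0=s_0<s_1<\cdots<s_n=t$ with tags $\xi_i\in[s_i,s_{i+1}]$, put $\varphi_n\coloneqq\sum_{i=0}^{n-1}g(\xi_i)\chi_{[s_i,s_{i+1}[}$. Then
\[
T\varphi_n=\sum_{i=0}^{n-1}g(\xi_i)\bigl(F(s_{i+1})-F(s_i)\bigr),
\]
which is by definition a Riemann–Stieltjes sum converging to $\int_0^t g\,\mathrm{d}F$ as the mesh tends to $0$. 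Meanwhile $g$ is uniformly continuous on $[0,t]$, so $\varphi_n\to g\chi_{[0,t[}$ uniformly on $[0,t]$ and therefore in $L_1$. Continuity of $T$ gives the equality.

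Finally I would let $t\to\infty$. Because $g\in L_1(\mathbb{R}_{\geq 0})$ we have $g\chi_{[0,t[}\to g$ in $L_1$ by dominated convergence, and continuity of $T$ yields $T(g\chi_{[0,t[})\to Tg$; combining with the previous step gives $Tg=\lim_{t\to\infty}\int_0^t g(s)\,\mathrm{d}F(s)$. The only delicate point I anticipate is the first step, where the use of \emph{pairwise disjoint} intervals in the density argument is essential to obtain the sharp constant $|F|_{\mathrm{Lip}}=\|T\|$; the integral representation itself is a routine Riemann–Stieltjes approximation coupled with the boundedness of $T$.
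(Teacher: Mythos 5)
Your proposal is correct and follows essentially the same route as the paper: the Lipschitz bound and the sharp equality $|F|_{\mathrm{Lip}}=\|T\|$ via disjointly supported simple functions, the Stieltjes identity on $[0,t]$ by approximating $g$ with step functions, and the final limit $t\to\infty$ by $L_1$-convergence of $g\chi_{[0,t[}$ to $g$. Your use of tagged-partition Riemann--Stieltjes sums, whose images under $T$ converge simultaneously to $T(g\chi_{[0,t[})$ (via uniform, hence $L_1$, convergence) and to $\int_0^t g\,\mathrm{d}F$, is if anything a slightly more explicit justification of the limit passage than the paper's appeal to a generic approximating sequence of simple functions.
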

\begin{proof}
We have 
\[
|F(t)-F(s)|=\left|T\left(\chi_{[0,t[}-\chi_{[0,s[}\right)\right|\leq\|T\||t-s|\quad(t,s\geq0),
\]
and hence, $F$ is Lipschitz-continuous with $|F|_{\mathrm{Lip}}\leq\|T\|$.
For $g=\sum_{j=1}^{n}c_{j}\chi_{[\alpha_{j},\alpha_{j+1}[,}$ where
$c_{1},\ldots,c_{n}\in\mathbb{R}$ and $0\leq\alpha_{1}<\ldots<\alpha_{n}$
we have that 
\begin{align*}
Tg & =\sum_{j=1}^{n}c_{j}T\chi_{[\alpha_{j},\alpha_{j+1}[}\\
 & =\sum_{j=1}^{n}c_{j}\left(T\chi_{[0,\alpha_{j+1}[}-T\chi_{[0,\alpha_{j}[}\right)\\
 & =\sum_{j=1}^{n}c_{j}\left(F(\alpha_{j+1})-F(\alpha_{j})\right)\\
 & =\intop_{0}^{\infty}g(s)\mbox{ d}F(s).
\end{align*}
In particular, $|Tg|\leq|F|_{\mathrm{Lip}}\sum_{j=1}^{n}c_{j}|\alpha_{j+1}-\alpha_{j}|=|F|_{\mathrm{Lip}}|g|_{L_{1}},$
and since simple functions are dense in $L_{1}(\mathbb{R}_{\geq0}),$
we derive $\|T\|\leq|F|_{\mathrm{Lip}}$. Let now $g\in L_{1}(\mathbb{R}_{\geq0})$
continuous and $t>0$. Then there exists a sequence of simple functions
$(g_{n})_{n\in\mathbb{N}}$ such that $g_{n}|_{[0,t]}\to g|_{[0,t]}$
in $L_{1}(\mathbb{R})$ and point-wise. Thus, 
\[
\intop_{0}^{t}g(s)\mbox{ d}F(s)=\lim_{n\to\infty}\intop_{0}^{t}g_{n}(s)\mbox{ d}F(s)=\lim_{n\to\infty}T\left(g_{n}|_{[0,t]}\right)=T\left(g|_{[0,t]}\right).
\]
Moreover, since $g|_{[0,t]}\to g$ in $L_{1}(\mathbb{R}_{\geq0})$
as $t\to\infty$ we obtain the last assertion. 
\end{proof}
Before we come to our main result, we need the following technical
lemma.
\begin{lem}
\label{lem:Post-Widder, die 2.}Let $X$ be a Banach space, $r\in C^{\infty}(\mathbb{R}_{>0};X)$
such that 
\[
M\coloneqq\sup_{k\in\mathbb{N},\lambda>0}\left|\frac{r^{(k)}(\lambda)\lambda^{k+1}}{k!}\right|<\infty.
\]
Then, for $\lambda>0$ we have 
\[
\intop_{0}^{\infty}\e^{-\lambda t}(-1)^{k}\frac{1}{k!}\left(\frac{k}{t}\right)^{k+1}r^{(k)}\left(\frac{k}{t}\right)\,\mathrm{d}t\to r(\lambda)\quad(k\to\infty).
\]
\end{lem}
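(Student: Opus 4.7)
The strategy is a change of variables followed by a density argument through resolvent functions. First, I substitute $s = k/t$, $\mathrm{d}t = -k\,s^{-2}\,\mathrm{d}s$, to rewrite
\[
I_k(\lambda) := \int_0^\infty e^{-\lambda t}\frac{(-1)^k}{k!}\left(\frac{k}{t}\right)^{k+1}r^{(k)}(k/t)\,\mathrm{d}t = \frac{k(-1)^k}{k!}\int_0^\infty e^{-\lambda k/s}\,s^{k-1}\,r^{(k)}(s)\,\mathrm{d}s,
\]
where absolute convergence is guaranteed by the hypothesis $|r^{(k)}(s)|\le Mk!\,s^{-(k+1)}$. The same bound gives the uniform estimate $|f_k|_\infty \le M$ on the Post--Widder sequence $f_k(t) := (-1)^k(k/t)^{k+1}r^{(k)}(k/t)/k!$, so $|I_k(\lambda)|\le M/\lambda$ uniformly in $k$.

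As a reference case, I would verify the claim directly for the resolvent functions $r_\mu(\lambda) := (\lambda+\mu)^{-1}$, $\mu > 0$: then $r_\mu^{(k)}(\lambda) = (-1)^k k!/(\lambda+\mu)^{k+1}$, so $f_k(t) = (1+\mu t/k)^{-(k+1)}$, which is dominated by $1$ and tends to $e^{-\mu t}$ pointwise. Dominated convergence yields $I_k(\lambda)\to 1/(\lambda+\mu) = r_\mu(\lambda)$, and by linearity the lemma holds for any finite linear combination of such resolvents.

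For arbitrary $r$ I would integrate by parts $k$ times in the $s$-variable, transferring all derivatives onto the kernel $e^{-\lambda k/s}\,s^{k-1}$. The boundary terms vanish at $s=\infty$ thanks to the bounds $|r^{(j)}(s)|\le Mj!/s^{j+1}$ for $0\le j < k$, and at $s=0^+$ thanks to the super-polynomial decay of $e^{-\lambda k/s}$. This yields the representation
\[
I_k(\lambda) = \int_0^\infty r(s)\,\psi_{k,\lambda}(s)\,\mathrm{d}s, \qquad \psi_{k,\lambda}(s) := \frac{k}{k!}\frac{d^k}{ds^k}\bigl(e^{-\lambda k/s}\,s^{k-1}\bigr),
\]
and the task reduces to showing that $\psi_{k,\lambda}$ acts as an approximation of $\delta_\lambda$ against continuous functions of compatible growth.

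The main obstacle is precisely this last step: because $\psi_{k,\lambda}$ is a high-order signed kernel, a direct saddle-point analysis at the stationary point $s = \lambda(k-1)/k$ of the exponent $-\lambda k/s + (k-1)\log s$ is delicate, and uniform control of its total variation is not evident. The cleaner route I intend to use is to \emph{avoid} analysing $\psi_{k,\lambda}$ intrinsically and instead test it against the resolvents already handled: the direct computation gives $\int_0^\infty \psi_{k,\lambda}(s)/(s+\mu)\,\mathrm{d}s \to 1/(\lambda+\mu)$ for every $\mu > 0$. Since $\psi_{k,\lambda}$ has bounded Laplace moments uniformly in $k$, the density of $\{e^{-\mu\,\cdot}\,;\,\mu > 0\}$ in $L_1(\mathbb{R}_{\ge 0})$ supplied by \prettyref{cor:exp_dense}, together with a uniform tail estimate obtained from $|\psi_{k,\lambda}|$-bounds, promotes this family of test identities to $\int_0^\infty r(s)\,\psi_{k,\lambda}(s)\,\mathrm{d}s \to r(\lambda)$ for any continuous $r$ satisfying the $M$-hypothesis.
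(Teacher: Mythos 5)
Your opening moves --- the substitution $s=k/t$, the uniform bound $|I_k(\lambda)|\le M/\lambda$, and the $k$-fold integration by parts with vanishing boundary terms --- all match the paper's proof. The gap is in your final step. After integrating by parts you are left with the kernel $\psi_{k,\lambda}=\frac{1}{(k-1)!}\partial_s^{k}\bigl(\e^{-\lambda k/s}s^{k-1}\bigr)$, and you propose to treat it as a black box: test it against the resolvents $s\mapsto(s+\mu)^{-1}$ and then invoke density. This does not close. To promote convergence on a test family to convergence for every admissible $r$, the functionals $r\mapsto\int_0^\infty r(s)\psi_{k,\lambda}(s)\,\mathrm{d}s$ must be \emph{uniformly} bounded on a space in which the test class is dense --- concretely you would need $\sup_k\int_0^\infty|\psi_{k,\lambda}(s)|\,s^{-1}\,\mathrm{d}s<\infty$, since admissible $r$ only satisfy $|r(s)|\le M/s$. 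You explicitly concede that such total-variation control ``is not evident'', so the density step has no foundation. Moreover, \prettyref{cor:exp_dense} gives density of exponentials in $L_1(\mathbb{R}_{\geq0})$; that is the right tool one level up, in \prettyref{thm:Pre-widder}, where the Post--Widder functions $f_k$ are tested against $\e^{-\lambda\cdot}\in L_1$ and $|f_k|_\infty\le M$ supplies the uniform bound. It says nothing about approximating an arbitrary Widder-admissible $r$ by resolvent combinations in a norm compatible with $\psi_{k,\lambda}$, and proving such density within the admissible class would essentially presuppose the representation theorem this lemma is meant to establish.

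The missing idea is that $\psi_{k,\lambda}$ need not be analysed as a signed kernel at all: it is explicitly positive. Setting $G_k(x,s)=\e^{-x/s}(s/x)^{k-1}$, the homogeneity $G_k(tx,ts)=G_k(x,s)$ gives $x\,\partial_1G_k+s\,\partial_2G_k=0$, and iterating this identity (as the paper does) shows that all cross terms in the $k$-th derivative cancel:
\[
\partial_s^{k}\bigl(\e^{-\lambda k/s}s^{k-1}\bigr)=\frac{(\lambda k)^{k}}{s^{k+1}}\,\e^{-\lambda k/s}.
\]
Hence
\[
I_k(\lambda)=\frac{(\lambda k)^{k}}{(k-1)!}\intop_{0}^{\infty}\frac{\e^{-\lambda k/s}}{s^{k+1}}\,r(s)\,\mathrm{d}s
=\frac{(\lambda k)^{k}}{(k-1)!}\intop_{0}^{\infty}t^{k}\e^{-\lambda kt}\,f(t)\,\mathrm{d}t
=\frac{1}{\lambda}\,\frac{(\lambda k)^{k+1}}{k!}(-1)^{k}\sqrt{2\pi}\,\hat{f}^{(k)}(\lambda k),
\]
where $f(t)\coloneqq\frac{1}{t}r\!\left(\frac{1}{t}\right)$ is continuous and bounded by $M$ (the $k=0$ case of the hypothesis). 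Now \prettyref{lem:Post-Widder}, applied at the point $t=1/\lambda$, gives $I_k(\lambda)\to\frac{1}{\lambda}f(\frac1\lambda)=r(\lambda)$; equivalently, the kernel is the image of the mollifier $\rho_{k,1/\lambda}$ of \prettyref{lem:delta-sequence} under $s\mapsto1/s$, hence a genuine approximate identity at $\lambda$. Your resolvent computation is consistent with this but cannot substitute for it.
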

\begin{proof}
Let $\lambda>0$. We first prove that the integrals exist. For doing
so, we estimate 
\[
\intop_{0}^{\infty}\left|\e^{-\lambda t}(-1)^{k}\frac{1}{k!}\left(\frac{k}{t}\right)^{k+1}r^{(k)}\left(\frac{k}{t}\right)\right|\mbox{ d}t\leq M\intop_{0}^{\infty}e^{-\lambda t}\mbox{ d}t<\infty,
\]
since $\lambda>0.$ We compute 
\begin{align*}
\intop_{0}^{\infty}\e^{-\lambda t}(-1)^{k}\frac{1}{k!}\left(\frac{k}{t}\right)^{k+1}r^{(k)}\left(\frac{k}{t}\right)\mbox{ d}t & =(-1)^{k}\frac{1}{\left(k-1\right)!}\intop_{0}^{\infty}\e^{-\lambda\frac{k}{s}}s^{k-1}r^{(k)}(s)\mbox{ d}s\\
 & =(-1)^{k}\frac{\left(\lambda k\right)^{k-1}}{(k-1)!}\intop_{0}^{\infty}G_{k}(\lambda k,s)r^{(k)}(s)\mbox{ d}s,
\end{align*}
where $G_{k}(x,s)\coloneqq\e^{-\frac{x}{s}}\left(\frac{s}{x}\right)^{k-1}$
for $x,s>0.$ Integration by parts then yields 
\begin{align*}
 & \intop_{0}^{\infty}\e^{-\lambda t}(-1)^{k}\frac{1}{k!}\left(\frac{k}{t}\right)^{k+1}r^{(k)}\left(\frac{k}{t}\right)\mbox{ d}t\\
= & \frac{(-1)^{k}(\lambda k)^{k-1}}{(k-1)!}\left(\sum_{j=0}^{k-1}(-1)^{j}\left(\partial_{2}^{j}G_{k}\right)(\lambda k,s)r^{(k-j-1)}(s)|_{s=0}^{\infty}+(-1)^{k}\intop_{0}^{\infty}\left(\partial_{2}^{k}G_{k}\right)(\lambda k,s)r(s)\mbox{ d}s\right).
\end{align*}
We want to compute the derivatives $\partial_{2}^{j}G_{k}.$ First,
we observe that $G_{k}(tx,ts)=G(x,s)$ for each $x,s,t>0.$ Hence,
\[
0=\partial\left(t\mapsto G_{k}(tx,ts)\right)(1)=x\left(\partial_{1}G_{k}\right)(x,s)+s\left(\partial_{2}G_{k}\right)(x,s)
\]
or equivalently 
\[
\frac{1}{x}\left(\partial_{2}G_{k}\right)(x,s)=-\frac{1}{s}\left(\partial_{1}G_{k}\right)(x,s)
\]
for each $x,s>0.$ We now prove by induction, that 
\[
\frac{1}{x}\left(\partial_{2}^{j}G_{k}\right)(x,s)=\frac{1}{s}(-1)^{j}\left(\partial_{1}^{j}G_{k-j+1}\right)(x,s)\quad(x,s>0,j\in\{1,\ldots,k\}).
\]
The case $j=1$ we have proved above. Assume now that the formula
holds for $j.$ Then 
\begin{align*}
 & \frac{1}{x}\left(\partial_{2}^{j+1}G_{k}\right)(x,s)\\
 & =\partial_{2}\left(\frac{1}{\m_{2}}(-1)^{j}\partial_{1}^{j}G_{k-j+1}\right)(x,s)\\
 & =(-1)^{j+1}\frac{1}{s^{2}}\left(\partial_{1}^{j}G_{k-j+1}\right)(x,s)+\frac{1}{s}(-1)^{j}\left(\partial_{1}^{j}\partial_{2}G_{k-j+1}\right)(x,s)\\
 & =\frac{1}{s}\left(-1\right)^{j+1}\left(\frac{1}{s}\left(\partial_{1}^{j}G_{k-j+1}\right)(x,s)-\left(\partial_{1}^{j}\partial_{2}G_{k-j+1}\right)(x,s)\right)\\
 & =\frac{1}{s}\left(-1\right)^{j+1}\left(\frac{1}{s}\left(\partial_{1}^{j}G_{k-j+1}\right)(x,s)+\left(\partial_{1}^{j}\left(\frac{\m_{1}}{\m_{2}}\partial_{1}G_{k-j+1}\right)\right)(x,s)\right)\\
 & =\frac{1}{s}\left(-1\right)^{j+1}\left(\frac{1}{s}\partial_{1}^{j}G_{k-j+1}+\partial_{1}^{j+1}\left(\frac{\m_{1}}{\m_{2}}G_{k-j+1}\right)-\partial_{1}^{j}\left(\frac{1}{\m_{2}}G_{k-j+1}\right)\right)(x,s)\\
 & =\frac{1}{s}\left(-1\right)^{j+1}\left(\partial_{1}^{j+1}\left(\frac{\m_{1}}{\m_{2}}G_{k-j+1}\right)\right)(x,s)\\
 & =\frac{1}{s}\left(-1\right)^{j+1}\left(\partial_{1}^{j+1}G_{k-j}\right)(x,s),
\end{align*}
which proves the assertion. From this we derive 
\begin{align*}
 & \left|\sum_{j=0}^{k-1}(-1)^{j}\left(\partial_{2}^{j}G_{k}\right)(\lambda k,s)r^{(k-j-1)}(s)\right|\\
 & =\left|\sum_{j=0}^{k-1}\left(\partial_{1}^{j}G_{k-j+1}\right)(\lambda k,s)\frac{\lambda k}{s}r^{(k-j-1)}(s)\right|\\
 & =(\lambda k)\left|\sum_{j=0}^{k-1}\partial^{j}\left(x\mapsto\e^{-\frac{x}{s}}\left(\frac{1}{x}\right)^{k-j}\right)(\lambda k)s^{k-j-1}r^{(k-j-1)}(s)\right|\\
 & \leq(\lambda k)M\sum_{j=0}^{k-1}\left|\frac{1}{s}\partial^{j}\left(x\mapsto\e^{-\frac{x}{s}}x^{-k+j}\right)(\lambda k)(k-j-1)!\right|.
\end{align*}
Since $\left|\partial^{j}\left(x\mapsto\e^{-\frac{x}{s}}x^{-k+j}\right)(\lambda k)\right|\leq C|p(s^{-1})|\e^{-\frac{\lambda k}{s}}$
for some constant $C$ and some polynomial $p$ we get 
\[
\sum_{j=0}^{k-1}(-1)^{j}\left(\partial_{2}^{j}G_{k}\right)(\lambda k,s)r^{(k-j-1)}(s)|_{s=0}^{\infty}=0.
\]
Thus, we have that
\begin{align*}
\intop_{0}^{\infty}\e^{-\lambda t}(-1)^{k}\frac{1}{k!}\left(\frac{k}{t}\right)^{k+1}r^{(k)}\left(\frac{k}{t}\right)\mbox{ d}t & =\frac{(\lambda k)^{k-1}}{(k-1)!}\intop_{0}^{\infty}\left(\partial_{2}^{k}G_{k}\right)(\lambda k,s)r(s)\mbox{ d}s\\
 & =\frac{(\lambda k)^{k}}{(k-1)!}\intop_{0}^{\infty}(-1)^{k}\frac{1}{s}\left(\partial_{1}^{k}G_{1}\right)(\lambda k,s)r(s)\mbox{ d}s\\
 & =\frac{(\lambda k)^{k}}{(k-1)!}\intop_{0}^{\infty}\frac{1}{s^{k+1}}\e^{-\frac{\lambda k}{s}}r(s)\mbox{ d}s\\
 & =\frac{(\lambda k)^{k}}{(k-1)!}\intop_{0}^{\infty}t^{k-1}\e^{-\lambda kt}r\left(\frac{1}{t}\right)\mbox{ d}t\\
 & =\frac{\left(\lambda k\right){}^{k}}{(k-1)!}\intop_{0}^{\infty}t^{k}\e^{-\lambda kt}f\left(t\right)\mbox{ d}t\\
 & =\frac{\left(\lambda k\right)^{k}}{(k-1)!}\sqrt{2\pi}(-1)^{k}\hat{f}^{(k)}(\lambda k),
\end{align*}
where $f(t)\coloneqq\frac{1}{t}r\left(\frac{1}{t}\right)$ for $t>0$.
Since $f\in C_{b}(\mathbb{R}_{>0};X)$ we obtain by \prettyref{lem:Post-Widder}
(choose $t=\frac{1}{\lambda}$) 
\[
\frac{\left(\lambda k\right)^{k}}{(k-1)!}\sqrt{2\pi}(-1)^{k}\hat{f}^{(k)}(\lambda k)=\frac{1}{\lambda}\frac{\left(\lambda k\right)^{k+1}}{k!}\sqrt{2\pi}(-1){}^{k}\hat{f}^{(k)}(\lambda k)\to\frac{1}{\lambda}f(\frac{1}{\lambda})=r(\lambda)\quad(k\to\infty).\tag*{\qedhere}
\]
\end{proof}
\begin{thm}
\label{thm:Pre-widder}Let $X$ be a Banach space, $r\in C^{\infty}(\mathbb{R}_{>0};X)$
such that 
\[
M\coloneqq\sup_{\lambda>0,k\in\mathbb{N}}\left|\frac{r^{(k)}(\lambda)\lambda^{k+1}}{k!}\right|<\infty.
\]
Then there exists $F:\mathbb{R}_{\geq0}\to X$ Lipschitz-continuous
with $F(0)=0$ such that 
\[
r(\lambda)=\intop_{0}^{\infty}\e^{-\lambda t}\,\mathrm{d}F(t)\quad(\lambda>0).
\]
Moreover $|F|_{\mathrm{Lip}}=M.$\end{thm}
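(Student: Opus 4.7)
The plan is to recast the problem as a representation problem for a bounded operator on $L_{1}(\mathbb{R}_{\geq0})$ and then invoke the Riesz-Stieltjes representation \prettyref{thm:Riesz-Stieltjes}.

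First I would define, for each $k\in\mathbb{N}_{\geq1}$, the function
\[
f_{k}(t)\coloneqq(-1)^{k}\frac{1}{k!}\left(\frac{k}{t}\right)^{k+1}r^{(k)}\!\left(\frac{k}{t}\right)\quad(t>0),
\]
which by hypothesis satisfies $|f_{k}(t)|_{X}\leq M$ for all $t>0$, so $f_{k}\in L_{\infty}(\mathbb{R}_{>0};X)$ with $|f_{k}|_{\infty}\leq M$. Associate to each $f_{k}$ the operator
\[
T_{k}:L_{1}(\mathbb{R}_{\geq0})\to X,\qquad T_{k}g\coloneqq\intop_{0}^{\infty}g(t)f_{k}(t)\,\dd t,
\]
which is linear and bounded with $\|T_{k}\|\leq M$. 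By \prettyref{lem:Post-Widder, die 2.}, $T_{k}(\e^{-\lambda\cdot})\to r(\lambda)$ as $k\to\infty$ for every $\lambda>0$, so $(T_{k}g)_{k}$ converges on the linear span of $\{\e^{-\lambda\cdot}:\lambda>0\}$.

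Next I would extend this convergence to all of $L_{1}(\mathbb{R}_{\geq0})$ by a standard three-epsilon argument: by \prettyref{cor:exp_dense}, the set $\{\e^{-\lambda\cdot}:\lambda>0\}$ is total in $L_{1}(\mathbb{R}_{\geq0})$, and together with the uniform bound $\sup_{k}\|T_{k}\|\leq M$ this implies that $(T_{k}g)_{k}$ is Cauchy in $X$ for every $g\in L_{1}(\mathbb{R}_{\geq0})$. Defining $Tg\coloneqq\lim_{k\to\infty}T_{k}g$ yields $T\in L(L_{1}(\mathbb{R}_{\geq0});X)$ with $\|T\|\leq M$ and $T(\e^{-\lambda\cdot})=r(\lambda)$ for all $\lambda>0$. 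Applying \prettyref{thm:Riesz-Stieltjes} to $T$ produces a Lipschitz continuous $F:\mathbb{R}_{\geq0}\to X$ with $F(0)=0$, $|F|_{\mathrm{Lip}}=\|T\|\leq M$, and
\[
r(\lambda)=T(\e^{-\lambda\cdot})=\intop_{0}^{\infty}\e^{-\lambda t}\,\dd F(t)\quad(\lambda>0).
\]

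The remaining task is the reverse inequality $M\leq|F|_{\mathrm{Lip}}$. To avoid invoking Rademacher's theorem for Banach-valued functions (which would require reflexivity of $X$), I would argue by duality: for any $x'\in X'$ with $\|x'\|\leq1$, the scalar function $x'\circ F:\mathbb{R}_{\geq0}\to\mathbb{K}$ is Lipschitz with $|x'\circ F|_{\mathrm{Lip}}\leq|F|_{\mathrm{Lip}}$, so by the scalar Rademacher theorem \prettyref{thm:Radmacher1} it is differentiable a.e. with derivative bounded in $L_{\infty}$ by $|F|_{\mathrm{Lip}}$, and $x'(r(\lambda))=\intop_{0}^{\infty}\e^{-\lambda t}(x'\circ F)'(t)\,\dd t$. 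Differentiating $k$ times under the integral yields
\[
\left|x'\!\left(\frac{r^{(k)}(\lambda)\lambda^{k+1}}{k!}\right)\right|\leq|(x'\circ F)'|_{\infty}\intop_{0}^{\infty}\e^{-\lambda t}\frac{\lambda^{k+1}t^{k}}{k!}\,\dd t\leq|F|_{\mathrm{Lip}},
\]
and taking the supremum over $\|x'\|\leq1$ gives $|r^{(k)}(\lambda)\lambda^{k+1}/k!|\leq|F|_{\mathrm{Lip}}$, hence $M\leq|F|_{\mathrm{Lip}}$.

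The main obstacle is the convergence step extending the pointwise limit from the exponentials to all of $L_{1}$; this is where density from \prettyref{cor:exp_dense}, the uniform norm bound, and a three-epsilon approximation must be combined carefully. The derivation of the matching lower bound $|F|_{\mathrm{Lip}}\geq M$ is the other delicate point, since a direct Stieltjes integration by parts only yields the suboptimal bound with an extra factor $(2k+1)$; the duality reduction to the scalar Rademacher theorem is what saves the correct constant without imposing reflexivity on $X$.
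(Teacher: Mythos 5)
Your construction of the limiting operator $T$ --- the operators $T_{k}$ built from the Post--Widder expressions, the uniform bound $\|T_{k}\|\leq M$, convergence on the exponentials via \prettyref{lem:Post-Widder, die 2.}, extension to all of $L_{1}(\mathbb{R}_{\geq0})$ by totality (\prettyref{cor:exp_dense}) plus uniform boundedness, and finally \prettyref{thm:Riesz-Stieltjes} --- is exactly the paper's argument and is correct. The only genuine divergence is the reverse inequality $M\leq|F|_{\mathrm{Lip}}$, and there your premise that a direct argument loses a factor like $(2k+1)$ is mistaken: since $\lambda\mapsto\e^{-\lambda\cdot}$ is smooth as an $L_{1}$-valued map with $k$-th derivative $(-\cdot)^{k}\e^{-\lambda\cdot}$, one has $r^{(k)}(\lambda)=T\bigl((-\cdot)^{k}\e^{-\lambda\cdot}\bigr)=\intop_{0}^{\infty}\e^{-\lambda t}(-t)^{k}\,\dd F(t)$, and the operator-norm bound $|Tg|\leq\|T\|\,|g|_{L_{1}}$ immediately yields $|r^{(k)}(\lambda)|\leq|F|_{\mathrm{Lip}}\,k!\,\lambda^{-(k+1)}$ --- the sharp constant, with no differentiation of $F$ whatsoever. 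That is the paper's one-line finish. Your duality detour through the scalar Rademacher theorem \prettyref{thm:Radmacher1} is nevertheless valid: $x'\circ F$ is absolutely continuous with derivative bounded by $|F|_{\mathrm{Lip}}$, the Stieltjes integral against $x'\circ F$ turns into a Lebesgue integral against $(x'\circ F)'$, and differentiating under the integral and taking the supremum over the unit ball of $X'$ recovers the same estimate. So both routes work; the paper's is shorter and, more to the point, keeps \prettyref{thm:Pre-widder} entirely free of any differentiability considerations for $F$, which is precisely why it is stated separately from \prettyref{thm:Widder-Arendt} and holds in an arbitrary Banach space.
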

\begin{proof}
For $k\in\mathbb{N}$ we define the operator $T_{k}:L_{1}(\mathbb{R}_{\geq0})\to X$
by 
\[
T_{k}f\coloneqq\intop_{0}^{\infty}f(t)(-1)^{k}\frac{1}{k!}\left(\frac{k}{t}\right)^{k+1}r^{(k)}\left(\frac{k}{t}\right)\mbox{ d}t\quad(f\in L_{1}(\mathbb{R}_{\geq0})).
\]
Then $T_{k}$ is linear and since $|T_{k}f|\leq M|f|_{L_{1}}$ for
$f\in L_{1}(\mathbb{R}_{\geq0})$, it is also bounded. By \prettyref{lem:Post-Widder, die 2.}
we have 
\[
T_{k}\e^{-\lambda\cdot}\to r(\lambda)\quad(k\to\infty)
\]
for each $\lambda>0.$ As $\left\{ \e^{-\lambda\cdot}\,;\,\lambda>0\right\} $
is a total set in $L_{1}(\mathbb{R}_{\geq0})$ (see \prettyref{cor:exp_dense})
and $\sup_{k\in\mathbb{N}}\|T_{k}\|\leq M,$ we derive that there
exists a bounded linear operator $T:L_{1}(\mathbb{R}_{\geq0})\to X$
with $\|T\|\leq M$ and 
\[
T_{k}f\to Tf\quad(k\to\infty)
\]
for each $f\in L_{1}(\mathbb{R}_{\geq0}).$ In particular, 
\[
T\e^{-\lambda\cdot}=r(\lambda)
\]
for each $\lambda>0.$ Applying now \prettyref{thm:Riesz-Stieltjes},
we find $F:\mathbb{R}_{\geq0}\to X$ Lipschitz with $F(0)=0,$ $|F|_{\mathrm{Lip}}=\|T\|\leq M$
and 
\[
\intop_{0}^{\infty}\e^{-\lambda t}\mbox{ d}F(t)=T\e^{-\lambda\cdot}=r(\lambda)\quad(\lambda>0).
\]
It is left to show $M\leq|F|_{\mathrm{Lip}}.$ We observe that 
\begin{align*}
|r^{(k)}(\lambda)| & =\left|\intop_{0}^{\infty}\e^{-\lambda t}(-t)^{k}\mbox{ d}F(t)\right|\\
 & \leq\|T\|\intop_{0}^{\infty}t^{k}\e^{-\lambda t}\mbox{ d}t\\
 & =|F|_{\mathrm{Lip}}k!\frac{1}{\lambda^{k+1}}\quad(k\in\mathbb{N},\lambda>0)
\end{align*}
and thus, $M=\sup_{\lambda>0,k\in\mathbb{N}}\left|\frac{r^{(k)}(\lambda)\lambda^{k+1}}{k!}\right|\leq|F|_{\mathrm{Lip}}$.
\end{proof}
Using Rademacher's Theorem (\prettyref{thm:Rademacher2}), we are
now able to prove the Theorem of Widder-Arendt. Again we just state
the theorem for reflexive Banach spaces and note that it also holds
for Banach spaces with the Radon-Nikodym property (in fact, it is
even equivalent to $X$ having the Radon-Nikodym property, see \cite[Theorem 1.4]{Arendt1987}).
\begin{thm}[Widder-Arendt]
 \label{thm:Widder-Arendt}Let $X$ be a reflexive Banach space and
$r\in C^{\infty}(\mathbb{R}_{>0};X)$ such that 
\[
M\coloneqq\sup_{\lambda>0,k\in\mathbb{N}}\left|\frac{r^{(k)}(\lambda)\lambda^{k+1}}{k!}\right|<\infty.
\]
Then there exists $f\in L_{\infty}(\mathbb{R}_{>0};X)$ such that
$|f|_{\infty}=M$ and 
\[
r(\lambda)=\intop_{0}^{\infty}\e^{-\lambda t}f(t)\,\mathrm{d}t.
\]
\end{thm}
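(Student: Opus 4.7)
The plan is to combine the preparatory result \prettyref{thm:Pre-widder}, which yields a Lipschitz primitive $F$ representing $r$ as a Riemann--Stieltjes integral, with Rademacher's theorem \prettyref{thm:Rademacher2}, which converts the Lipschitz function into its a.e.\ derivative in $L_{\infty}(\mathbb{R}_{>0};X)$. The reflexivity of $X$ is used precisely at this second step.

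First, I would apply \prettyref{thm:Pre-widder} to $r$. The hypothesis on $r$ is exactly what that theorem requires, so I obtain a Lipschitz-continuous function $F:\mathbb{R}_{\geq0}\to X$ with $F(0)=0$, $|F|_{\mathrm{Lip}}=M$, and
\[
r(\lambda)=\intop_{0}^{\infty}\e^{-\lambda t}\,\dd F(t)\quad(\lambda>0).
\]
Next, since $X$ is reflexive, Rademacher's theorem \prettyref{thm:Rademacher2} applies: there exists $f\in L_{\infty}(\mathbb{R}_{>0};X)$ such that $F'=f$ almost everywhere, $|f|_{L_{\infty}}=|F|_{\mathrm{Lip}}=M$, and
\[
F(t)-F(s)=\intop_{s}^{t}f(u)\,\dd u\quad(0\leq s<t).
\]

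The remaining step is to pass from the Stieltjes representation to the Bochner representation, i.e.\ to verify
\[
\intop_{0}^{\infty}\e^{-\lambda t}\,\dd F(t)=\intop_{0}^{\infty}\e^{-\lambda t}f(t)\,\dd t\quad(\lambda>0).
\]
For a continuous function $g\in L_{1}(\mathbb{R}_{\geq0})$ and any $t>0$, I would approximate $g|_{[0,t]}$ by step functions $g_{n}=\sum_{j}c_{j}^{(n)}\chi_{[\alpha_{j}^{(n)},\alpha_{j+1}^{(n)}[}$ for which
\[
\intop_{0}^{t}g_{n}(s)\,\dd F(s)=\sum_{j}c_{j}^{(n)}\left(F(\alpha_{j+1}^{(n)})-F(\alpha_{j}^{(n)})\right)=\sum_{j}c_{j}^{(n)}\intop_{\alpha_{j}^{(n)}}^{\alpha_{j+1}^{(n)}}f(u)\,\dd u=\intop_{0}^{t}g_{n}(s)f(s)\,\dd s,
\]
and then pass to the limit: the left side converges to $\int_{0}^{t}g(s)\,\dd F(s)$ by the definition of the Stieltjes integral (or by \prettyref{thm:Riesz-Stieltjes} applied to the functional $Tg=\int g\,\dd F$), and the right side converges to $\int_{0}^{t}g(s)f(s)\,\dd s$ since $f\in L_{\infty}$. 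Letting $t\to\infty$ and specialising to $g=\e^{-\lambda\cdot}$ yields the desired identity, and hence $r(\lambda)=\int_{0}^{\infty}\e^{-\lambda t}f(t)\,\dd t$.

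The genuine work has already been done in \prettyref{thm:Pre-widder} (producing $F$ via the Post--Widder inversion and the Riesz--Stieltjes representation) and in Rademacher's theorem; the only mildly delicate issue in the argument above is the Stieltjes-to-Bochner conversion, which hinges on the absolute continuity of Lipschitz functions with values in a reflexive space. Since this is guaranteed by the integral formula in \prettyref{thm:Rademacher2}, no further reflexivity argument is required beyond invoking that theorem.
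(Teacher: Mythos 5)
Your proposal is correct and follows essentially the same route as the paper's proof: apply \prettyref{thm:Pre-widder} to obtain the Lipschitz primitive $F$, invoke Rademacher's theorem \prettyref{thm:Rademacher2} (where reflexivity enters) to get $f=F'\in L_{\infty}$ with $|f|_{\infty}=M$, and convert the Stieltjes integral into a Bochner integral. The only difference is cosmetic: you spell out the step-function approximation that the paper compresses into the phrase ``by continuous extension.''
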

\begin{proof}
By \prettyref{thm:Pre-widder} there is $F:\mathbb{R}_{\geq0}\to X$
Lipschitz continuous with $F(0)=0,|F|_{\mathrm{Lip}}=M$ and 
\[
r(\lambda)=\intop_{0}^{\infty}\e^{-\lambda t}\mbox{ d}F(t).
\]
By \prettyref{thm:Rademacher2}, $F$ is differentiable almost everywhere
with a bounded derivative $f\coloneqq F'$ with $|f|_{\infty}=|F|_{\mathrm{Lip}}=M$.
Moreover, 
\[
F(t)-F(s)=\intop_{s}^{t}f=\intop_{0}^{\infty}\chi_{[s,t]}f
\]
for each $0\leq s<t$. By continuous extension we get 
\[
\intop_{0}^{\infty}g\mbox{ d}F=\intop_{0}^{\infty}gf
\]
for each continuous function $g\in L_{1}(\mathbb{R}_{>0};X)$ and
thus, especially 
\[
r(\lambda)=\intop_{0}^{\infty}\e^{-\lambda t}\mbox{ d}F(t)=\intop_{0}^{\infty}\e^{-\lambda t}f(t)\mbox{ d}t\quad(\lambda>0).\tag*{\qedhere}
\]
\end{proof}

\newpage{}\newcommand{\etalchar}[1]{$^{#1}$}


\begin{thebibliography}{soMPTW16b}
\bibitem[AB88]{Arendt1988} W.~{Arendt} and C.~{Batty}. \newblock {Tauberian theorems and stability of one-parameter semigroups.} \newblock {\em {Trans. Am. Math. Soc.}}, 306(2):837--852, 1988.
\bibitem[ABHN11]{ABHN_2011} W.~{Arendt}, C.~J. {Batty}, M.~{Hieber}, and F.~{Neubrander}. \newblock {\em {Vector-valued Laplace transforms and Cauchy problems. 2nd ed.}} \newblock Basel: Birkh\"auser, 2011.
\bibitem[{Are}87]{Arendt1987} W.~{Arendt}. \newblock {Vector-valued Laplace transforms and Cauchy problems.} \newblock {\em {Isr. J. Math.}}, 59:327--352, 1987.
\bibitem[AV10]{Ana2010} A.~{Alonso Rodr\'\i guez} and A.~{Valli}. \newblock {\em {Eddy current approximation of Maxwell equations. Theory,   algorithms and applications.}} \newblock Milano: Springer, 2010.
\bibitem[BD82]{Bukhvalov1982} A.~{Bukhvalov} and A.~{Danilevich}. \newblock {Boundary properties of analytic and harmonic functions with values   in Banach space.} \newblock {\em {Math. Notes}}, 31:104--110, 1982.
\bibitem[BD08]{Batty2008} C.~J. {Batty} and T.~{Duyckaerts}. \newblock {Non-uniform stability for bounded semi-groups on Banach spaces.} \newblock {\em {J. Evol. Equ.}}, 8(4):765--780, 2008.
\bibitem[Ber86]{berezanskii1986selfadjoint} I.~U.~M. Berezanskii. \newblock {\em {Selfadjoint operators in spaces of functions of infinitely many   variables}}. \newblock Translations of mathematical monographs. American Mathematical   Society, 1986.
\bibitem[BH76]{Brezis1976} H.~{Br\'ezis} and A.~{Haraux}. \newblock {Image d'une somme d'op\'erateurs monotones et applications.} \newblock {\em {Isr. J. Math.}}, 23:165--186, 1976.
\bibitem[BP01]{Batkai2001} A.~{B\'atkai} and S.~{Piazzera}. \newblock {Semigroups and linear partial differential equations with delay.} \newblock {\em {J. Math. Anal. Appl.}}, 264(1):1--20, 2001.
\bibitem[BP05]{Batkai_2005} A.~B{\'a}tkai and S.~Piazzera. \newblock {\em {Semigroups for delay equations.}} \newblock {Research Notes in Mathematics 10. Wellesley, MA: A K Peters. xii},   2005.
\bibitem[BQR14]{Borgmeyer2014} K.~{Borgmeyer}, R.~{Quintanilla}, and R.~{Racke}. \newblock {Phase-lag heat conduction: decay rates for limit problems and   well-posedness.} \newblock {\em {J. Evol. Equ.}}, 14(4-5):863--884, 2014.
\bibitem[Bre71]{Brezis} H.~Brezis. \newblock {\em Operateurs maximaux monotones et semi-groupes de contractions   dans les espaces de Hilbert}. \newblock Universite Paris VI et CNRS, 1971.
\bibitem[{Bre}11]{Brezis2011} H.~{Brezis}. \newblock {\em {Functional analysis, Sobolev spaces and partial differential   equations.}} \newblock New York, NY: Springer, 2011.
\bibitem[BT10]{Borichev2010} A.~{Borichev} and Y.~{Tomilov}. \newblock {Optimal polynomial decay of functions and operator semigroups.} \newblock {\em {Math. Ann.}}, 347(2):455--478, 2010.
\bibitem[{Buk}81]{Bukhvalov1981} A.~{Bukhvalov}. \newblock {Hardy spaces of vector-valued functions.} \newblock {\em {J. Sov. Math.}}, 16:1051--1059, 1981.
\bibitem[CF95]{Colli1995} P.~{Colli} and A.~{Favini}. \newblock {On some degenerate second order equations of mixed type.} \newblock {\em {Funkc. Ekvacioj, Ser. Int.}}, 38(3):473--489, 1995.
\bibitem[CF96]{Colli1996} P.~{Colli} and A.~{Favini}. \newblock {Time discretization of nonlinear Cauchy problems applying to mixed   hyperbolic-parabolic equations.} \newblock {\em {Int. J. Math. Math. Sci.}}, 19(3):481--494, 1996.
\bibitem[CG82]{Grimmer1982} G.~{Chen} and R.~{Grimmer}. \newblock {Integral equations as evolution equations.} \newblock {\em {J. Differ. Equations}}, 45:53--74, 1982.
\bibitem[CS03]{Cannarsa2003} P.~Cannarsa and D.~Sforza. \newblock {Global solutions of abstract semilinear parabolic equations with   memory terms.} \newblock {\em NoDEA, Nonlinear Differ. Equ. Appl.}, 10(4):399--430, 2003.
\bibitem[CS08]{Cannarsa2008} P.~{Cannarsa} and D.~{Sforza}. \newblock {A stability result for a class of nonlinear integrodifferential   equations with $L^1$~kernels.} \newblock {\em {Appl. Math.}}, 35(4):395--430, 2008.
\bibitem[CS11]{Cannarsa2011} P.~Cannarsa and D.~Sforza. \newblock {Integro-differential equations of hyperbolic type with positive   definite kernels.} \newblock {\em J. Differ. Equations}, 250(12):4289--4335, 2011.
\bibitem[Daf70a]{Dafermos1970_abtract_Volterra} C.~Dafermos. \newblock {An abstract Volterra equation with applications to linear   viscoelasticity.} \newblock {\em J. Differ. Equations}, 7:554--569, 1970.
\bibitem[Daf70b]{Dafermos1970_asymp_stab} C.~Dafermos. \newblock {Asymptotic stability in viscoelasticity.} \newblock {\em Arch. Ration. Mech. Anal.}, 37:297--308, 1970.
\bibitem[Dat70]{Datko_1970} R.~Datko. \newblock {Extending a theorem of A. M. Liapunov to Hilbert space.} \newblock {\em J. Math. Anal. Appl.}, 32:610--616, 1970.
\bibitem[{Dat}72]{Datko1972} R.~{Datko}. \newblock {Uniform asymptotic stability of evolutionary processes in a Banach   space.} \newblock {\em {SIAM J. Math. Anal.}}, 3:428--445, 1972.
\bibitem[{deS}64]{DeSimon1964} L.~{de Simon}. \newblock {Un'applicazione della teoria degli integrali singolari allo studio   delle equazioni differenziali lineari astratte del primo ordine.} \newblock {\em {Rend. Semin. Mat. Univ. Padova}}, 34:205--223, 1964.
\bibitem[dPG75]{daPrato1975} G.~{da Prato} and P.~{Grisvard}. \newblock {Sommes d'op\'erateurs lin\'eaires et \'equations diff\'erentielles   op\'erationnelles.} \newblock {\em {J. Math. Pures Appl. (9)}}, 54:305--387, 1975.
\bibitem[DU77]{DiestelUhl} J.~{Diestel} and J.~{Uhl}. \newblock {Vector measures.} \newblock {Mathematical Surveys. No.15. Providence, R.I.: American Mathematical   Society (AMS). XIII.}, 1977.
\bibitem[DV87]{Dore1987} G.~{Dore} and A.~{Venni}. \newblock {On the closedness of the sum of two closed operators.} \newblock {\em {Math. Z.}}, 196:189--201, 1987.
\bibitem[EN00]{engel2000one} K.~J. Engel and R.~Nagel. \newblock {\em {One-parameter semigroups for linear evolution equations}}. \newblock Graduate texts in mathematics. Springer, 2000.
\bibitem[Eng98]{Engel1998} K.-J. Engel. \newblock {On the characterization of admissible control- and observation   operators.} \newblock {\em Syst. Control Lett.}, 34(4):225--227, 1998.
\bibitem[Eva10]{evans2010partial} L.~C. Evans. \newblock {\em Partial differential equations}. \newblock Graduate studies in mathematics. American Mathematical Society, 2010.
\bibitem[{Fri}58]{Friedrichs1958} K.~{Friedrichs}. \newblock {Symmetric positive linear differential equations.} \newblock {\em {Commun. Pure Appl. Math.}}, 11:333--418, 1958.
\bibitem[FS55]{Foures1955} Y.~{Four\`es} and I.~{Segal}. \newblock {Causality and analyticity.} \newblock {\em {Trans. Am. Math. Soc.}}, 78:385--405, 1955.
\bibitem[FY99]{Favini1999} A.~{Favini} and A.~{Yagi}. \newblock {\em {Degenerate differential equations in Banach spaces.}} \newblock New York, NY: Marcel Dekker, 1999.
\bibitem[Gea78]{Gearhart_1978} L.~Gearhart. \newblock {Spectral theory for contraction semigroups on Hilbert space.} \newblock {\em Trans. Am. Math. Soc.}, 236:385--394, 1978.
\bibitem[GLS90]{Gripenberg1990_Volterra} G.~{Gripenberg}, S.-O. {Londen}, and O.~{Staffans}. \newblock {\em {Volterra integral and functional equations.}} \newblock Cambridge etc.: Cambridge University Press, 1990.
\bibitem[GVW81]{Greiner1981} G.~{Greiner}, J.~{Voigt}, and M.~{Wolff}. \newblock {On the spectral bound of the generator of semigroups of positive   operators.} \newblock {\em {J. Oper. Theory}}, 5:245--256, 1981.
\bibitem[{Hal}71]{Hale1971} J.~{Hale}. \newblock {Functional differential equations.} \newblock {Applied Mathematical Sciences. Vol. 3. New York-Heidelberg-Berlin:   Springer-Verlag. VIII}, 1971.
\bibitem[{Her}83]{Herbst1983} I.~{Herbst}. \newblock {The spectrum of Hilbert space semigroups.} \newblock {\em {J. Oper. Theory}}, 10:87--94, 1983.
\bibitem[HL93]{hale1993introduction} J.~Hale and S.~Lunel. \newblock {\em Introduction to Functional Differential Equations}. \newblock Applied Mathematical Sciences. Springer, 1993.
\bibitem[HP57]{hille1957functional} E.~Hille and R.~S. Phillips. \newblock {\em {Functional analysis and semi-groups}}. \newblock Colloquium Publications - American Mathematical Society. American   Mathematical Society, 1957.
\bibitem[HP97]{papageogiou} S.~Hu and N.~S. Papageorgiou. \newblock {\em Handbook of Multivalued Analysis}, volume 1: Theory. \newblock Springer, 1997.
\bibitem[KL00]{Kalton2000} N.~{Kalton} and G.~{Lancien}. \newblock {A solution to the problem of $L^p$-maximal regularity.} \newblock {\em {Math. Z.}}, 235(3):559--568, 2000.
\bibitem[KM06]{Mehrmann2006} P.~{Kunkel} and V.~{Mehrmann}. \newblock {\em {Differential-algebraic equations. Analysis and numerical   solution.}} \newblock Z\"urich: European Mathematical Society Publishing House, 2006.
\bibitem[KPR15]{Khusainov2015} D.~{Khusainov}, M.~{Pokojovy}, and R.~{Racke}. \newblock {Strong and mild extrapolated $L^{2}$-solutions to the heat equation   with constant delay.} \newblock {\em {SIAM J. Math. Anal.}}, 47(1):427--454, 2015.
\bibitem[KPS{\etalchar{+}}14]{Kalauch2011} A.~{Kalauch}, R.~{Picard}, S.~{Siegmund}, S.~{Trostorff}, and M.~{Waurick}. \newblock {A Hilbert space perspective on ordinary differential equations with   memory term.} \newblock {\em {J. Dyn. Differ. Equations}}, 26(2):369--399, 2014.
\bibitem[KS83]{Kunisch1983} K.~{Kunisch} and W.~{Schappacher}. \newblock {Necessary conditions for partial differential equations with delay   to generate $C\sb 0$-semigroups.} \newblock {\em {J. Differ. Equations}}, 50:49--79, 1983.
\bibitem[Kwa72]{Kwapien1972} S.~Kwapien. \newblock Isomorphic characterizations of inner product spaces by orthogonal   series with vector valued coefficients. \newblock {\em Studia Mathematica}, 44(6):583--595, 1972.
\bibitem[LLM10]{lakshmikantham2010theory} V.~Lakshmikantham, S.~Leela, and F.~A. Mcrae. \newblock {\em {Theory of Causal Differential Equations}}. \newblock Atlantis Studies in Mathematics for Engineering and Science. World   Scientific Pub Co Inc, 2010.
\bibitem[LV88]{Lyubich1988} Y.~{Lyubich} and {Vu Qu\^oc Ph\'ong}. \newblock {Asymptotic stability of linear differential equations in Banach   spaces.} \newblock {\em {Stud. Math.}}, 88(1):37--42, 1988.
\bibitem[MC96]{Murad1996} M.~A. {Murad} and J.~H. {Cushman}. \newblock {Multiscale flow and deformation in hydrophilic swelling porous   media.} \newblock {\em {Int. J. Eng. Sci.}}, 34(3):313--338, 1996.
\bibitem[Min62]{Minty} G.~Minty. \newblock Monotone (nonlinear) operators in a hilbert space. \newblock {\em Duke Math. J.}, 29, 1962.
\bibitem[Mor52]{Morgenstern} D.~Morgenstern. \newblock {\em Beitr{\"a}ge zur nichtlinearen Funktionalanalysis}. \newblock PhD thesis, TU Berlin, 1952.
\bibitem[MP10]{Picard2010_poroelastic} D.~F. {McGhee} and R.~{Picard}. \newblock {A note on anisotropic, inhomogeneous, poro-elastic media.} \newblock {\em {Math. Methods Appl. Sci.}}, 33(3):313--322, 2010.
\bibitem[MPTW15]{Mukhopadhyay2015_2temp} S.~Mukhopadhyay, R.~Picard, S.~Trostorff, and M.~Waurick. \newblock A note on a two-temperature model in linear thermoelasticity. \newblock {\em Math. Mech. Solids}, 2015. \newblock doi:10.1177/1081286515611947.
\bibitem[MPTW16a]{Mukhopadhyay2014_thermoelast} S.~Mukhopadhyay, R.~Picard, S.~Trostorff, and M.~Waurick. \newblock On some models in linear thermo-elasticity with rational material   laws. \newblock {\em Math. Mech. Solids}, 21(9):1149--1163, 2016.
\bibitem[MPTW16b]{Mulholland2016} A.~J. Mulholland, R.~Picard, S.~Trostorff, and M.~Waurick. \newblock On well-posedness for some thermo-piezoelectric coupling models. \newblock {\em Math. Methods Appl. Sci.}, 39(15):4375--4384, 2016.
\bibitem[{Nag}97]{Nagel1997} R.~{Nagel}. \newblock {Extrapolation spaces for semigroups.} \newblock {\em {RIMS Kokyuroku}}, 1009:181--191, 1997.
\bibitem[{Neu}86]{Neubrander1986} F.~{Neubrander}. \newblock {Laplace transform and asymptotic behavior of strongly continuous   semigroups.} \newblock {\em {Houston J. Math.}}, 12:549--561, 1986.
\bibitem[NS76]{Nohel1976} J.~{Nohel} and D.~{Shea}. \newblock {Frequency domain methods for Volterra equations.} \newblock {\em {Adv. Math.}}, 22:278--304, 1976.
\bibitem[{Paz}83]{Pazy1983} A.~{Pazy}. \newblock {Semigroups of linear operators and applications to partial   differential equations.} \newblock {Applied Mathematical Sciences, 44. New York etc.: Springer-Verlag.   VIII}, 1983.
\bibitem[{Pet}38]{Pettis1938} B.~J. {Pettis}. \newblock {On integration in vector spaces.} \newblock {\em {Trans. Am. Math. Soc.}}, 44:277--304, 1938.
\bibitem[{Phi}59]{Phillips1959} R.~{Phillips}. \newblock {Dissipative operators and hyperbolic systems of partial differential   equation.} \newblock {\em {Trans. Am. Math. Soc.}}, 90:193--254, 1959.
\bibitem[Pic89]{picard1989hilbert} R.~H. Picard. \newblock {\em {Hilbert space approach to some classical transforms}}. \newblock Pitman research notes in mathematics series. Longman Scientific \&   Technical, 1989.
\bibitem[Pic00]{Picard2000} R.~Picard. \newblock {Evolution equations as operator equations in lattices of Hilbert   spaces.} \newblock {\em Glas. Mat., III. Ser.}, 35(1):111--136, 2000.
\bibitem[Pic09]{Picard} R.~Picard. \newblock {A structural observation for linear material laws in classical   mathematical physics.} \newblock {\em Math. Methods Appl. Sci.}, 32(14):1768--1803, 2009.
\bibitem[Pic12]{Picard2012_Impedance} R.~Picard. \newblock A class of evolutionary problems with an application to acoustic   waves with impedance type boundary conditions. \newblock In {\em Spectral Theory, Mathematical System Theory, Evolution   Equations, Differential and Difference Equations}, volume 221 of {\em   Operator Theory: Advances and Applications}, pages 533--548. Springer Basel,   2012.
\bibitem[PM11]{Picard_McGhee} R.~Picard and D.~McGhee. \newblock {\em {Partial differential equations. A unified Hilbert space   approach.}} \newblock {de Gruyter Expositions in Mathematics 55. Berlin: de Gruyter.   xviii}, 2011.
\bibitem[{Pod}99]{Podlubny1999} I.~{Podlubny}. \newblock {\em {Fractional differential equations. An introduction to   fractional derivatives, fractional differential equations, to methods of   their solution and some of their applications.}} \newblock San Diego, CA: Academic Press, 1999.
\bibitem[PP16]{Pauly2016} D.~Pauly and R.~Picard. \newblock A note on the justification of the eddy current model in   electrodynamics. \newblock Technical report, 2016. \newblock arXiv: 1606.00472.
\bibitem[Pr{\"u}84]{Pruss1984} J.~Pr{\"u}ss. \newblock {On the spectrum of $C_0$-semigroups.} \newblock {\em {Trans. Am. Math. Soc.}}, 284:847--857, 1984.
\bibitem[Pr{\"u}93]{pruss1993evolutionary} J.~Pr{\"u}ss. \newblock {\em Evolutionary integral equations and applications}. \newblock Monographs in mathematics. Birkh{\"a}user Verlag, 1993.
\bibitem[Pr{\"u}09]{Pruss2009} J.~Pr{\"u}ss. \newblock {Decay properties for the solutions of a partial differential   equation with memory.} \newblock {\em Arch. Math.}, 92(2):158--173, 2009.
\bibitem[PSC02]{Pratchett2002} T.~Pratchett, I.~Stewart, and J.~Cohen. \newblock {\em The Science of Discworld II: The Globe}. \newblock Random House, 2002.
\bibitem[PSTW16]{Picard2016_graddiv} R.~{Picard}, S.~{Seidler}, S.~{Trostorff}, and M.~{Waurick}. \newblock {On abstract grad-div systems.} \newblock {\em {J. Differ. Equations}}, 260(6):4888--4917, 2016.
\bibitem[PTW13]{Picard2012_conservative} R.~Picard, S.~Trostorff, and M.~Waurick. \newblock A note on a class of conservative, well-posed linear control systems. \newblock In M.~Reissig and M.~Ruzhansky, editors, {\em Progress in Partial   Differential Equations}, volume~44 of {\em Springer Proceedings in   Mathematics \& Statistics}, pages 261--286. Springer International   Publishing, 2013.
\bibitem[PTW14a]{Picard2012_delay_BS} R.~{Picard}, S.~{Trostorff}, and M.~{Waurick}. \newblock {A functional analytic perspective to delay differential equations.} \newblock {\em {Oper. Matrices}}, 8(1):217--236, 2014.
\bibitem[PTW14b]{Picard2012_boundary_control} R.~{Picard}, S.~{Trostorff}, and M.~{Waurick}. \newblock {On a class of boundary control problems.} \newblock {\em {Oper. Matrices}}, 8(1):185--204, 2014.
\bibitem[PTW15a]{Picard2013_fractional} R.~{Picard}, S.~{Trostorff}, and M.~{Waurick}. \newblock {On evolutionary equations with material laws containing fractional   integrals.} \newblock {\em {Math. Methods Appl. Sci.}}, 38(15):3141--3154, 2015.
\bibitem[PTW15b]{Picard2015_micropoloar} R.~{Picard}, S.~{Trostorff}, and M.~{Waurick}. \newblock {On some models for elastic solids with micro-structure.} \newblock {\em {ZAMM, Z. Angew. Math. Mech.}}, 95(7):664--689, 2015.
\bibitem[PTW16a]{Picard2012_comprehensive_control} R.~Picard, S.~Trostorff, and M.~Waurick. \newblock {On a comprehensive Class of Linear Control Problems.} \newblock {\em IMA J. Math. Control Inf.}, 33(2):257--291, 2016.
\bibitem[PTW16b]{Picard2016_maxreg} R.~Picard, S.~Trostorff, and M.~Waurick. \newblock {On maximal regularity for a class of evolutionary equations.} \newblock {\em J. Math. Anal. Appl.}, 2016. \newblock http://dx.doi.org/10.1016/j.jmaa.2016.12.057.
\bibitem[PTWW13]{Picard2013_nonauto} R.~Picard, S.~Trostorff, M.~Waurick, and M.~Wehowski. \newblock {On Non-autonomous Evolutionary Problems.} \newblock {\em J. Evol. Equ.}, 13(4):751--776, 2013.
\bibitem[PW34]{Paley_Wiener} R.~E. {Paley} and N.~{Wiener}. \newblock {Fourier transforms in the complex domain.} \newblock {(Am. Math. Soc. Colloq. Publ. 19) New York: Am. Math. Soc. VIII},   1934.
\bibitem[QR07]{Quintanilla2007} R.~{Quintanilla} and R.~{Racke}. \newblock {Qualitative aspects in dual-phase-lag heat conduction.} \newblock {\em {Proc. R. Soc. Lond., Ser. A, Math. Phys. Eng. Sci.}},   463(2079):659--674, 2007.
\bibitem[QR08]{Quintanilla2008} R.~{Quintanilla} and R.~{Racke}. \newblock {A note on stability in three-phase-lag heat conduction.} \newblock {\em {Int. J. Heat Mass Transfer}}, 51(1-2):24--29, 2008.
\bibitem[{Qui}02]{Quintanilla2002} R.~{Quintanilla}. \newblock {Exponential stability in the dual-phase-lag heat conduction theory.} \newblock {\em {J. Non-Equilibrium Thermodyn.}}, 27(3):217--227, 2002.
\bibitem[{Rad}19]{Rademacher1919} H.~{Rademacher}. \newblock {\"Uber partielle und totale Differenzierbarkeit von Funktionen   mehrerer Variablen I.} \newblock {\em {Math. Ann.}}, 79:340--359, 1919.
\bibitem[{Rei}07]{Reis2007} T.~{Reis}. \newblock {Consistent initialization and perturbation analysis for abstract   differential-algebraic equations.} \newblock {\em {Math. Control Signals Syst.}}, 19(3):255--281, 2007.
\bibitem[Rud87]{rudin1987real} W.~Rudin. \newblock {\em {Real and complex analysis}}. \newblock Mathematics series. McGraw-Hill, 1987.
\bibitem[{Rud}91]{Rudin_fa} W.~{Rudin}. \newblock {\em {Functional analysis. 2nd ed.}} \newblock New York, NY: McGraw-Hill, 2nd ed. edition, 1991.
\bibitem[{Rue}09]{Ruess2009} W.~M. {Ruess}. \newblock {Flow invariance for nonlinear partial differential delay equations.} \newblock {\em {Trans. Am. Math. Soc.}}, 361(8):4367--4403, 2009.
\bibitem[{Sae}70]{Saeks1970} R.~{Saeks}. \newblock {Causality in Hilbert space.} \newblock {\em {SIAM Rev.}}, 12:357--383, 1970.
\bibitem[Sho97]{showalter_book} R.~E. Showalter. \newblock {\em Monotone Operators in Banach Space and Nonlinear Partial   Differential Equations}. \newblock American Mathematical Society, 1997.
\bibitem[{Sho}00]{Showalter2000} R.~{Showalter}. \newblock {Diffusion in poro-elastic media.} \newblock {\em {J. Math. Anal. Appl.}}, 251(1):310--340, 2000.
\bibitem[SS03]{stein2003fourier} E.~Stein and R.~Shakarchi. \newblock {\em Fourier Analysis: An Introduction}. \newblock Number Bd. 10 in Princeton Lectures in Analysis. Princeton University   Press, 2003.
\bibitem[Tro11]{Trostorff_2011} S.~Trostorff. \newblock {\em Well-posedness and causality for a class of evolutionary   inclusions}. \newblock PhD thesis, TU Dresden, 2011.
\bibitem[Tro12]{Trostorff2012_NA} S.~Trostorff. \newblock {An alternative approach to well-posedness of a class of differential   inclusions in Hilbert spaces.} \newblock {\em Nonlinear Anal., Theory Methods Appl., Ser. A, Theory Methods},   75(15):5851--5865, 2012.
\bibitem[Tro13a]{Trostorff2012_nonlin_bd} S.~Trostorff. \newblock {Autonomous Evolutionary Inclusions with Applications to Problems   with Nonlinear Boundary Conditions.} \newblock {\em Int. J. Pure Appl. Math.}, 85(2):303--338, 2013.
\bibitem[Tro13b]{Trostorff2013_stability} S.~Trostorff. \newblock {Exponential Stability for Linear Evolutionary Equations.} \newblock {\em Asymptotic Anal.}, 85:179--197, 2013.
\bibitem[{Tro}14a]{Trostorff2013_bd_maxmon} S.~{Trostorff}. \newblock {A characterization of boundary conditions yielding maximal monotone   operators.} \newblock {\em {J. Funct. Anal.}}, 267(8):2787--2822, 2014.
\bibitem[Tro14b]{Trostorff2014_PAMM} S.~Trostorff. \newblock {A Note on Exponential Stability for Evolutionary Equations.} \newblock {\em Proc. Appl. Math. Mech.}, 14:983--984, 2014.
\bibitem[{Tro}15a]{Trostorff2015_secondorder} S.~{Trostorff}. \newblock {Exponential stability for second order evolutionary problems.} \newblock {\em {J. Math. Anal. Appl.}}, 429(2):1007--1032, 2015.
\bibitem[Tro15b]{Trostorff2015_syst_theo} S.~Trostorff. \newblock On a class of block operator matrices in system theory. \newblock Technical report, TU Dresden, 2015. \newblock arXiv:1502.05496.
\bibitem[Tro15c]{Trostorff2012_integro} S.~Trostorff. \newblock {On Integro-Differential Inclusions with Operator-valued Kernels.} \newblock {\em Math. Methods Appl. Sci.}, 38(5):834--850, 2015.
\bibitem[TW14]{Trostorff2013_nonautoincl} S.~{Trostorff} and M.~{Wehowski}. \newblock {Well-posedness of non-autonomous evolutionary inclusions.} \newblock {\em {Nonlinear Anal., Theory Methods Appl., Ser. A, Theory   Methods}}, 101:47--65, 2014.
\bibitem[Tzo95]{tzou1995unified} D.~Tzou. \newblock A unified field approach for heat conduction from macro-to   micro-scales. \newblock {\em J. Heat Transfer}, 117(1):8--16, 1995.
\bibitem[{Wal}03]{Walther2003} H.-O. {Walther}. \newblock {Differentiable semiflows for differential equations with   state-dependent delays.} \newblock {\em {Zesz. Nauk. Uniw. Jagiell., Univ. Iagell. Acta Math.}},   1269:57--66, 2003.
\bibitem[{Wal}09]{Walther2009} H.-O. {Walther}. \newblock {Algebraic-delay differential systems, state-dependent delay, and   temporal order of reactions.} \newblock {\em {J. Dyn. Differ. Equations}}, 21(1):195--232, 2009.
\bibitem[{Wau}12]{Waurick2012} M.~{Waurick}. \newblock {A Hilbert space approach to homogenization of linear ordinary   differential equations including delay and memory terms.} \newblock {\em {Math. Methods Appl. Sci.}}, 35(9):1067--1077, 2012.
\bibitem[{Wau}13]{Waurick2013} M.~{Waurick}. \newblock {Homogenization of a class of linear partial differential equations.} \newblock {\em {Asymptotic Anal.}}, 82(3-4):271--294, 2013.
\bibitem[{Wau}14a]{Wauricl2014} M.~{Waurick}. \newblock {$G$-convergence of linear differential equations.} \newblock {\em {Z. Anal. Anwend.}}, 33(4):385--415, 2014.
\bibitem[Wau14b]{Waurick2013_fractional} M.~Waurick. \newblock {Homogenization in fractional elasticity.} \newblock {\em SIAM J. Math. Anal.}, 46(2):1551--1576, 2014.
\bibitem[Wau15a]{Waurick2013_causality} M.~Waurick. \newblock {A note on causality in reflexive Banach spaces}. \newblock {\em Indag. Math.}, 26(2):404--412, 2015.
\bibitem[{Wau}15b]{Waurick2015_nonauto} M.~{Waurick}. \newblock {On non-autonomous integro-differential-algebraic evolutionary   problems.} \newblock {\em {Math. Methods Appl. Sci.}}, 38(4):665--676, 2015.
\bibitem[Wau16a]{Waurick_habil} M.~Waurick. \newblock {\em On the continuous dependence on the coefficients of evolutionary   equations}. \newblock Habilitation thesis, TU Dresden, 2016.
\bibitem[{Wau}16b]{Waurick2016} M.~{Waurick}. \newblock {On the homogenization of partial integro-differential-algebraic   equations.} \newblock {\em {Oper. Matrices}}, 10(2):247--283, 2016.
\bibitem[{Web}76]{Webb1976} G.~{Webb}. \newblock {Functional differential equations and nonlinear semigroups in $L^p$   spaces.} \newblock {\em {J. Differ. Equations}}, 20:71--89, 1976.
\bibitem[Weh15]{wehowski2015} M.~Wehowski. \newblock {\em Well-posedness of degenerate nonlinear Cauchy problems in   Hilbert spaces}. \newblock PhD thesis, TU Dresden, 2015.
\bibitem[Wei80]{Weidmann} J.~Weidmann. \newblock {\em Linear Operators in Hilbert Spaces}. \newblock Springer, 1980.
\bibitem[{Wei}91]{Weiss1991} G.~{Weiss}. \newblock {Representation of shift-invariant operators on $L\sp 2$ by   $H\sp{\infty}$ transfer functions: An elementary proof, a generalization to   $L\sp p$, and a counterexample for $L\sp{\infty}$.} \newblock {\em {Math. Control Signals Syst.}}, 4(2):193--203, 1991.
\bibitem[{Wei}95]{Weis1995} L.~{Weis}. \newblock {The stability of positive semigroups on $L\sb p$ spaces.} \newblock {\em {Proc. Am. Math. Soc.}}, 123(10):3089--3094, 1995.
\bibitem[{Wei}01]{Weis2001} L.~{Weis}. \newblock {Operator-valued Fourier multiplier theorems and maximal   $L_p$-regularity.} \newblock {\em {Math. Ann.}}, 319(4):735--758, 2001.
\bibitem[{Wid}34]{Widder1934} D.~{Widder}. \newblock {The inversion of the Laplace integral and the related moment   problem.} \newblock {\em {Trans. Am. Math. Soc.}}, 36:107--200, 1934.
\bibitem[{Wid}71]{Widder1971} D.~{Widder}. \newblock {An introduction to transform theory.} \newblock {Pure and Applied Mathematics, 42. New York-London: Academic Press.   XIV}, 1971.
\bibitem[Yos95]{Yosida} K.~Yosida. \newblock {\em Functional Analysis}. \newblock Springer, 6th edition, 1995.
\end{thebibliography}
\end{document}